\crefname{equation}{}{}
\newtheorem{proposition}{Proposition}[section]
\newtheorem{theorem}[proposition]{Theorem}
\newtheorem{corollary}[proposition]{Corollary}
\newtheorem{lemma}[proposition]{Lemma}
\newtheorem{definition}[proposition]{Definition}
\newtheorem{remark}[proposition]{Remark}
\newtheorem{example}[proposition]{Example}
\numberwithin{equation}{section}
\newcommand*\samethanks[1][\value{footnote}]{\footnotemark[#1]}
\title{Growth of structure constants of free Lie algebras \\ relative to  Hall bases}
\author{Karine Beauchard\texorpdfstring{\thanks{Univ Rennes, CNRS, IRMAR - UMR 6625, F-35000 Rennes, France}}{}, J\'er\'emy Le Borgne\texorpdfstring{\samethanks}{},  Fr\'ed\'eric Marbach\texorpdfstring{\samethanks}{}}
\newcommand{\C}{\mathbb{C}}
\newcommand{\K}{\mathbb{K}}
\newcommand{\N}{\mathbb{N}}
\newcommand{\R}{\mathbb{R}}
\newcommand{\Z}{\mathbb{Z}}
\newcommand{\B}{\mathcal{B}}
\newcommand{\intset}[1]{\llbracket #1 \rrbracket}
\DeclareMathOperator{\Br}{Br}
\DeclareMathOperator{\Lyn}{Lyn}
\DeclareMathOperator{\Tr}{Tr}
\DeclareMathOperator{\ABr}{ABr}
\DeclareMathOperator{\br}{br}
\DeclareMathOperator{\ad}{ad}
\newcommand{\dad}{\underline{\ad}}
\DeclareMathOperator{\vect}{span}
\newcommand{\supp}{\operatorname{supp}}
\newcommand{\BrStarAlpha}{\ABr^\star}
\newcommand{\ev}{\textsc{e}}
\newcommand{\e}{\textsc{i}}
\newcommand{\step}[1]{\medskip \noindent \emph{#1}.}
\newcommand{\rf}[2]{T_{#1}(#2)}
\begin{document}

\maketitle

\begin{abstract}
    We derive \emph{a priori} bounds on the size of the structure constants of the free Lie algebra over a set of indeterminates, relative to its Hall bases. 
    We investigate their asymptotic growth, especially as a function of the length of the involved Lie brackets. 
    
    First, using the classical recursive decomposition algorithm, we obtain a rough upper bound valid for all Hall bases.
    We then introduce new notions (which we call \emph{alphabetic subsets} and \emph{relative foldings}) related to structural properties of the Lie brackets created by the algorithm, which allow us to prove a sharp upper bound for the general case. 
    We also prove that the length of the relative folding provides a strictly decreasing indexation of the recursive rewriting algorithm.
    Moreover, we derive lower bounds on the structure constants proving that they grow at least geometrically in all Hall bases.
    
    Second, for the celebrated historical length-compatible Hall bases and the Lyndon basis, we prove tighter sharp upper bounds, which turn out to be geometric in the length of the brackets.
    
    Third, we construct two new Hall bases, illustrating two opposite behaviors in the two-indeterminates case.
    One is designed so that its structure constants have the minimal growth possible, matching exactly the general lower bound, linked with the Fibonacci sequence.
    The other one is designed so that its structure constants grow super-geometrically.
    
    Eventually, we investigate asymmetric growth bounds which isolate the role of one particular indeterminate. 
    Despite the existence of super-geometric Hall bases, we prove that the asymmetric growth with respect to each fixed indeterminate is uniformly at most geometric in all Hall bases.
\end{abstract}

\newpage
\setcounter{tocdepth}{1}
\tableofcontents

\newpage

\newpage
\section{Introduction}


\subsection{Context and motivation}

We consider Hall bases of the free Lie algebra $\mathcal{L}(X)$ over a set $X$ with $|X| \geq 2$. 
This class of bases, understood in the generalized sense of Viennot \cite[Theorem~1.2]{MR516004} or Shirshov \cite[Definition~1]{zbMATH03234480}, includes many well-known and widely used bases of $\mathcal{L}(X)$ (see \cref{ss:def-hall} for more details).

Given a Hall set $\B \subset \Br(X)$, the free magma over $X$, we investigate the growth of the structure constants of $\mathcal{L}(X)$ relative to the Hall basis associated with $\B$.
More precisely, we estimate the $\ell^1$ norm of the Lie bracket $[a,b]$ of two elements $a$ and $b$ of the basis, denoted by $\| [a, b] \|_\B$, and we track its growth with respect to the size of the involved basis elements.
We investigate both \emph{symmetric estimates}, where the growth is measured with respect to the usual lengths $|a|$ and $|b|$ of the involved elements of the basis, and \emph{asymmetric estimates}, where the growth is tracked with a special focus on the number of occurrences of a singled-out indeterminate $X_0 \in X$.

To the best of our knowledge, this work is the first providing estimates on the structure constants of free Lie algebras.
Classical mathematical questions around structure constants involve their computation for particular algebras (see e.g.\ \cite{zbMATH05122976} for spherical Hecke algebras, \cite{zbMATH05853368} for symmetric Hopf algebras, \cite{zbMATH00062646} for a superoperator algebra,  \cite{zbMATH06436605} for Kac-Moody Lie algebras, \cite{zbMATH06458932, casselman2020simple, zbMATH03111938} for semisimple Lie algebras), the investigation of their sign \cite{zbMATH05304624, zbMATH05822267}, or the identification and reconstruction of the algebra from them \cite{zbMATH04093693, zbMATH03234481}.

Our initial motivation, both for symmetric and asymmetric estimates, stems from convergence issues for series of Lie brackets of analytic vector fields (see e.g.\ the open problem raised in \cite[Section 2.4.3]{2020arXiv201215653B} and the conditional result in \cite[Section 4.4.3]{2020arXiv201215653B}).
From an algebraic point of view, such estimates are linked with the intricacies of the Lie product in a Hall basis.
In particular, the methods we develop could probably be extended to estimate the computational complexity (with respect to time or space) of algorithms used by Lie algebraic packages to decompose Lie brackets of the free Lie algebra on a Hall basis (see \cref{sec:rewriting} for more details).

\subsection{Main results and plan of the paper}
\label{s:intro-results}

In this paragraph, we state rough pedagogical versions of our main results and sketch the plan of the paper.
We refer to \cref{ss:def-algebra} for definitions of the algebraic notions involved, to \cref{ss:def-hall} for definitions and properties relative to Hall sets and our choice of norm relative to a basis (see in particular \eqref{eq:norm-ab}), and to the sequel of the paper for enhanced statements of the results.

\bigskip

First, in \cref{sec:exp-n-n2}, using the classical recursive algorithm which allows to decompose a Lie bracket $[a,b]$ of two basis elements on the Hall basis (see e.g.\ \cite[Section~9]{zbMATH02156381}), we obtain the following very rough bound. 

\begin{theorem} \label{thm:enn2-easy}
    Let $\B \subset \Br(X)$ be a Hall set.
    For every $a < b \in \B$,
    \begin{equation}
        \| [a, b] \|_\B \leq 2^{
        \left(
        n^{\frac{n(n+1)}{2}}
        \right)
        }
        \quad \textrm{where} \quad n := |a| + |b|.
    \end{equation}
\end{theorem}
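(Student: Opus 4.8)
The plan is to run the classical recursive rewriting algorithm (see \cite[Section~9]{zbMATH02156381}) on $[a,b]$ and to bound, very crudely, how deep the recursion can go and by how much the $\ell^1$ norm can grow along the way. Recall the shape of the algorithm applied to $[u,v]$ with $u,v\in\B$: it returns $0$ if $u=v$; it returns $-[v,u]$ if $u>v$; it returns the basis element $(u,v)$ if $u<v$ and $(u,v)\in\B$; and otherwise (that is, $u<v$, $v=(v_1,v_2)$, $v_1>u$, hence $u<v_1<v_2$) it uses the Jacobi identity
\[
 [u,v]=[[u,v_1],v_2]-[[u,v_2],v_1],
\]
decomposes recursively $[u,v_1]$ and $[u,v_2]$ — whose lengths $|u|+|v_1|$ and $|u|+|v_2|$ are \emph{strictly smaller} than $|u|+|v|$ — as $\sum_i\lambda_i c_i$ and $\sum_j\mu_j d_j$ with $c_i,d_j\in\B$, and then decomposes recursively the brackets $[c_i,v_2]$ and $[d_j,v_1]$, all of length $|u|+|v|$ again. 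Two structural features matter here: every bracket ever produced has the same total length $n$ as $[a,b]$; and the recursion calls itself either on strictly shorter brackets (the inner decompositions) or on equally long brackets (the outer ones).

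First I would reduce to a finite alphabet: $[a,b]$ involves only the letters occurring in $a$ or in $b$, at most $n$ of them, so one may assume $|X|\le n$; then the number $d_k$ of length-$k$ Hall basis elements is finite and, by Witt's formula, $d_k\le |X|^k/k\le n^k/k$ for $k\le n$. Next, for the same-length part of the recursion I would invoke the classical fact that this rewriting terminates: there is a well-founded measure on length-$k$ brackets (of the kind revisited and sharpened later in the paper) that strictly decreases at each outer step. All I need from it for this rough bound is an upper bound $T_k$ on the number of same-length steps performed while decomposing a single bracket, and I would take $T_k$ to be the number of Hall basis elements of length $\le k$, so that $T_k=O(n^k/k)$.

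Then I would set $M(k):=\max\{\,\|[u,v]\|_\B : u,v\in\B,\ |u|+|v|\le k\,\}$, note that $M(k)=1$ for $k\le 3$, and unfold a recursion of the shape
\[
 M(k)\le\bigl(2\,M(k-1)\bigr)^{T_k},
\]
obtained by observing that the inner decompositions carry a total $\ell^1$ weight at most $2M(k-1)$, so that each of the at most $T_k$ successive outer steps multiplies the running $\ell^1$ norm by at most that factor. Writing $L(k):=1+\log_2 M(k)$, this reads $L(k)\le 1+T_kL(k-1)\le 2T_kL(k-1)$, and iterating from $k=n$ down multiplies the factors $n^k$ for $k=1,\dots,n$, producing $n^{1+2+\cdots+n}=n^{n(n+1)/2}$ in the exponent, with the $1/k$ in Witt's bound (equivalently, the factorial in the denominator of $\prod_k n^k/k$) absorbing the constants picked up along the way; one gets $\|[a,b]\|_\B\le M(n)\le 2^{n^{n(n+1)/2}}$, the remaining small values of $n$ being trivial since the claimed bound is already astronomically large. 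The one non-formal ingredient, and hence the main obstacle, is the termination of the same-length recursion together with the bound $T_k$ on its length: a crude, far-from-optimal well-founded measure suffices here, and it is precisely the search for a \emph{sharp} such measure — via the alphabetic subsets and relative foldings announced in the abstract — that will occupy the rest of the paper.
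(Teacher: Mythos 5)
Your proposal follows essentially the same route as the paper's proof: the same double induction (outer on the total length $n$, inner on the chain of same-length rewriting steps), with the depth of the inner recursion bounded by the number of Hall elements of length $<n$ (at most $|X|^n\le n^n$), yielding the recursion $C_n\le(2C_{n-1})^{|X|^n-1}$ which is solved by taking logarithms twice to get $2^{n^{n(n+1)/2}}$. The one ingredient you leave as an acknowledged black box — the well-founded measure guaranteeing termination and bounding the number $T_k$ of same-length steps — is precisely the invariant the paper carries through its induction, namely that every $c\in\supp[a,b]$ satisfies $\lambda(c)>a$, so that the left argument strictly increases along the finite decreasing list $a^1>\dots>a^r$ of candidate left factors.
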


Then, in \cref{sec:n!}, we introduce the notion of \emph{alphabetic subsets} of $\B$. As a prime example, the set $X \subset \B$ is an alphabetic subset. This notion, along with an associated stability property, allows us to obtain the following upper bound on iterated Lie brackets of elements of such subsets, which is sharp in the sense of \cref{Cor:optn!}.

\begin{theorem} \label{thm:n!-easy}
    Let $\B \subset \Br(X)$ be a Hall set.
    Let $A$ be an alphabetic subset of $\B$.
    For every $n \geq 2$, and $t$ an iterated Lie bracket of $n$ elements of $A$,
    \begin{equation}
        \| t \|_\B \leq (n-1)!.
    \end{equation}
\end{theorem}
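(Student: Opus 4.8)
The plan is to argue by strong induction on $n$, the number of leaves of the iterated bracket $t$. For $n = 2$, $t = [a_1, a_2]$ with $a_1, a_2 \in A$; if $a_1 = a_2$ then $t = 0$ and there is nothing to prove, and otherwise $t = \pm [b_1, b_2]$ for $b_1 < b_2 \in A$, so the claim reduces to showing $\| [b_1, b_2] \|_\B \le 1 = (2-1)!$. This base case should follow from the defining/stability property of alphabetic subsets: the whole point of the definition of an alphabetic subset $A$ (and the ``associated stability property'' the authors allude to) must be precisely that the bracket of two elements of $A$ is, up to sign, either zero or a single element of $\B$ — i.e.\ it has $\ell^1$ norm at most $1$. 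So the first thing I would do is unpack that stability property and record the consequence that $\|[b,b']\|_\B \le 1$ for all $b, b' \in A$, and moreover that when the bracket is nonzero the resulting basis element ``inherits'' membership in some controlled set (this is what will let the induction go through).

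For the inductive step, write $t = [t', t'']$ where $t'$ and $t''$ are iterated brackets of $n'$ and $n''$ elements of $A$ respectively, with $n' + n'' = n$ and $n', n'' \ge 1$. By the induction hypothesis (applied when $n', n'' \ge 2$; the cases $n' = 1$ or $n'' = 1$ being handled by the base-case observation), we have $\|t'\|_\B \le (n'-1)!$ and $\|t''\|_\B \le (n''-1)!$. Expanding $t'$ and $t''$ on the basis $\B$ and using bilinearity of the bracket gives $\|t\|_\B \le \|t'\|_\B \cdot \|t''\|_\B \cdot \max_{b,b'} \|[b,b']\|_\B$ where the max is over basis elements $b, b'$ appearing in the expansions — but this naive estimate only yields $(n'-1)!(n''-1)!$, which is too weak. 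The key refinement I expect to need is that the basis elements appearing in the expansion of an iterated bracket of elements of $A$ again lie in (or are controlled by) an alphabetic-type set, so that each individual bracket $[b,b']$ in the expansion still has norm $\le 1$; then $\|t\|_\B \le \|t'\|_\B \cdot \|t''\|_\B$, and I would combine this with the standard binomial-coefficient bookkeeping. Concretely, I would prove the stronger statement that $t$ is a $\Z$-linear combination of basis elements with total $\ell^1$ norm at most $(n-1)!$ \emph{and} that every basis element occurring has length $n$ and is built from leaves in $A$ (so that the alphabetic stability property applies to it as well); the induction then closes because the number of ways to interleave the leaves is what produces the factorial, and more precisely one gets a bound like $\binom{n-2}{n'-1}(n'-1)!(n''-1)! \le (n-1)!$ after summing over the (at most two, by antisymmetry and Jacobi) ways the outermost bracket can be resolved.

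The main obstacle, and the place I would spend the most care, is the bookkeeping that converts ``each elementary bracket has norm $\le 1$'' into the clean $(n-1)!$ bound rather than a messier product of factorials. The honest way to see the $(n-1)!$ is probably to unfold the recursion completely: an iterated bracket of $n$ elements of $A$, after repeatedly applying the stability rule $[b, b'] = \pm(\text{element of } \B)$, gets rewritten using the Jacobi identity, and each application of Jacobi to combine a length-$k$ piece with a length-$1$ piece introduces at most a factor of $k$ (the number of positions a leaf can be inserted), giving $\prod_{k=2}^{n} (k-1)$ hmm — wait, that product is exactly $(n-1)!/1 = (n-1)!$ only if one is careful about which recursions actually occur. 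So the real content is a careful induction showing the bound $(n-1)!$ is preserved, and I would guard against the off-by-one/over-counting errors by tracking not just the norm but also which ``alphabetic'' set the output elements live in, exactly as the theorem's hypothesis and the referenced stability property are set up to allow. I would also keep \cref{Cor:optn!} in mind as a sanity check: since the bound is claimed to be sharp, any proof that loses even a constant factor is doing something wrong, which is a useful constraint while writing the induction.
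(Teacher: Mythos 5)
There is a genuine gap. The load-bearing step of your inductive argument is the claim that, after expanding $t'$ and $t''$ on the basis, ``each individual bracket $[b,b']$ in the expansion still has norm $\le 1$'', which would give $\|t\|_\B \le \|t'\|_\B\,\|t''\|_\B \le (n'-1)!\,(n''-1)! \le (n-2)!$. This is false, and your own sanity check detects it: the bound is sharp at $(n-1)!$ (\cref{Prop:sature_n!}), so no argument yielding $(n-2)!$ can be right. Concretely, in the optimality example one has $\|[\dotsb[a_n,a_{n-1}],\dotsc,a_1]\|_\B=(n-1)!$ while the inner bracket $t'=[\dotsb[a_n,a_{n-1}],\dotsc,a_2]$ has norm $(n-2)!$; hence the brackets $[b,a_1]$ for $b\in\supp t'$ have norm $n-1$ on average, not $1$. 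The supporting basis elements of $t'$ are \emph{not} elements of $A$, and alphabeticity of $A$ gives you no control on brackets between them; the stability property (\cref{lem:alphabetic}) only says that adjoining $(a_0,a)$ to $A$ keeps $A$ alphabetic when $a_0=\min A$ — minimality is essential and cannot be dispensed with.

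The mechanism you gesture at in your last paragraph is the right one, but you never commit to it or make it precise. The paper's proof does not split $t=[t',t'']$ with both factors large; instead it inducts on the number of leaves of $t$ as a tree over $\B$: single out $a_0=\min L_\B(t)$, locate the subtree $\langle a_0,v\rangle$ (or $\langle v,a_0\rangle$) where $v$ does not contain $a_0$, and use the iterated Jacobi (Leibniz) identity to rewrite $[a_0,\e(v)]$ as a sum of $k\le n-1$ terms, each obtained by replacing one leaf $a_{i_j}$ of $v$ by $\langle a_0,a_{i_j}\rangle$. Each resulting tree has $n-1$ leaves and, precisely because $a_0$ is minimal, its leaf set is again alphabetic by \cref{lem:alphabetic}; the induction hypothesis then gives $\|t\|_\B\le (n-1)\cdot(n-2)!=(n-1)!$. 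To repair your write-up you would need to replace the $[t',t'']$ decomposition and the false ``norm $\le 1$'' claim by this minimal-leaf distribution step, and to state explicitly that the alphabetic property is preserved only when the distributed element is the minimum.
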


In \cref{Sec:refined_general_bound}, we introduce the notion of \emph{relative folding} of an element $b \in \B$ with respect to another element $a \in \B$ such that $a < b$.
This construction, along with the key remark that it yields an iterated Lie bracket over an alphabetic subset, allows us to describe the elements of $\B$ involved in the decomposition of $[a,b]$ and prove the following bound, which is sharp in the sense of \cref{cor:x-infini-sature}.
We also prove that the length of the relative folding provides a nice strictly decreasing indexation for the classical recursive rewriting algorithm (see \cref{ss:rec-theta}). 

\begin{theorem} \label{thm:en1-easy}
    Let $\B \subset \Br(X)$ be a Hall set.
    For every $a < b \in \B$,
    \begin{equation} \label{eq:en1-easy}
        \| [a, b] \|_\B \leq \lfloor e(|b|-1)! \rfloor.
    \end{equation}
\end{theorem}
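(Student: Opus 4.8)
The plan is to combine the ``relative folding'' construction with the sharp factorial bound of \cref{thm:n!-easy}, and then optimize over the length of the folding. First I would invoke the key structural property announced just before the statement: the relative folding of $b$ with respect to $a$ expresses $[a,b]$ (or, more precisely, realizes the rewriting of $[a,b]$ on the Hall basis) as an iterated Lie bracket of some number $k$ of elements drawn from an alphabetic subset $A$ of $\B$ — morally, the letters of $a$ together with the ``pieces'' of $b$ cut out along the folding. The crucial quantitative input is a bound on $k$, the length of this folding: I expect that $k \leq |b|$, possibly with $|a|$ contributing only a bounded additive amount, so that the number of bracketed factors is controlled purely by $|b|$. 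Granting this, \cref{thm:n!-easy} immediately gives $\|[a,b]\|_\B \leq (k-1)! \leq (|b|-1)!$ — but that is weaker than \eqref{eq:en1-easy}, so the real work is to do the bookkeeping more carefully.

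The sharper bound with the factor $e$ comes from the fact that one does not bracket all $k$ factors ``freshly'': many of the iterated brackets appearing along the folding are brackets \emph{with $a$ itself} or with elements already known to lie in $\B$, which cost a factor $1$ rather than a full factorial. Concretely, I would set up an induction on $|b|$ (or on the length of the folding) in which the recursive step peels off one layer of the folding, pays a multiplicative cost bounded by the current ``remaining length,'' and recurses on a strictly shorter object — here is exactly where the strictly decreasing indexation by folding length, mentioned in the paragraph before the theorem, is used to justify the recursion. Summing the resulting product telescopes to something like $\sum_{j=0}^{|b|-1} \frac{(|b|-1)!}{j!}$, and since $\sum_{j\geq 0} 1/j! = e$ this partial sum is at most $e\,(|b|-1)!$; taking the floor (the left side being an integer) yields \eqref{eq:en1-easy}.

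The main obstacle, I expect, is not the arithmetic of the $e$-estimate — that is the routine ``incomplete exponential series'' computation — but rather establishing cleanly that the relative folding genuinely produces an iterated bracket over an alphabetic subset of the \emph{correct length}, and that each recursive peeling decreases the folding length while keeping us inside an alphabetic subset (so that \cref{thm:n!-easy} remains applicable at every stage). In other words, the heart of the argument is the combinatorial lemma about relative foldings — their definition, the verification that $\supp$ of the folded object sits in an alphabetic subset, and the length count $k \leq |b| + O(1)$ — and once that infrastructure is in place, Theorem \ref{thm:en1-easy} should follow by the induction-plus-telescoping-sum scheme sketched above. A secondary subtlety is handling the base cases and the degenerate situations (e.g.\ $a$ and $b$ sharing structure, or $|a|$ large relative to $|b|$) so that the additive contribution of $a$ really is absorbed and does not inflate the $(|b|-1)!$ to $(|a|+|b|-1)!$.
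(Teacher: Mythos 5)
Your plan follows the paper's route: fold $b$ relative to $a$, observe that $\langle a, \rf{a}{b}\rangle$ is an iterated bracket over an alphabetic subset of $\B$, apply the factorial bound of \cref{thm:n!-easy}, and then refine down to the incomplete exponential series $\sum_{p=0}^{n-1}\binom{n-1}{p}(n-p-1)!=\lfloor e(n-1)!\rfloor$. But your naive count is off in a way that matters: the iterated bracket has $\theta_a(b)+1$ factors ($a$ itself plus the $\theta_a(b)$ leaves of the folding), so \cref{thm:n!-easy} gives $\theta_a(b)!\leq |b|!$, not $(k-1)!\leq(|b|-1)!$. Note also that $(|b|-1)!$ is \emph{smaller} than $\lfloor e(|b|-1)!\rfloor$, so if the naive bound really were $(|b|-1)!$ there would be nothing left to prove; the gap to be closed is the full factor $|b|/e$ separating $|b|!$ from $e(|b|-1)!$. (Your secondary worry about $|a|$ inflating the bound is unfounded: $a$ enters the folding as a single letter of the alphabetic subset, so its length never appears.)

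The genuine gap is in the refinement mechanism. ``Peel off one layer, pay a multiplicative cost bounded by the remaining length, recurse'' does not produce the binomial-weighted sum --- a multiplicative recursion of that shape just reproduces $n!$. The paper's argument is additive with controlled branching: one first shows, using $b\in\B$, that the minimal leaf $b_1$ of $\rf{a}{b}$ has a \emph{leaf} sibling; then at each stage the minimal element of the current alphabetic leaf set (either $b_1$ or the accumulated bracket $a_\pi$) is distributed by Jacobi over its sibling. Each step either \emph{terminates} --- the minimal element's sibling is a leaf, so \cref{thm:n!-easy} applies to a tree of size $n-p$ and contributes $(n-p-1)!$ --- or \emph{branches} into one term per leaf of the sibling subtree, indexed by paths $\pi$ of length $p$, of which there are at most $\binom{n-1}{p}$. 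It is this termination-or-branching dichotomy, together with the path count, that yields $\sum_p\binom{n-1}{p}(n-p-1)!$; without it the telescoping you describe is not justified. Finally, the strictly decreasing indexation by $\theta$ is not what drives this induction (it is used for the recursion depth of the rewriting algorithm, a separate result): here the induction is on $|B_\pi|_\B=n-p+1$, which decreases by construction at each Jacobi step.
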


Concerning lower bounds in the general case, we prove in \cref{sec:geom-lower} that the growth is always at least geometric in all Hall bases, with a common ratio depending on $|X|$.

\begin{theorem} \label{thm:lower-easy}
   Let $\B \subset \Br(X)$ be a Hall set.
   For every $n \geq 3$, there exist $a < b \in \B$ such that $|a| = 1$, $|b| = n - 1$ (so $|a| + |b| = n$) and
   \begin{equation} \label{eq:lower}
        \| [a, b] \|_\B \geq 
        \begin{cases}
            2^{n-2}
            & \text{when } |X| \geq 3, \\
            F_{n-2}
            & \text{when } |X| = 2, \\
        \end{cases}
   \end{equation}
   where $(F_\nu)_{\nu \in \N}$ denote the Fibonacci numbers.
\end{theorem}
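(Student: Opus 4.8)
The plan is to prove \eqref{eq:lower} by exhibiting, for each $n \geq 3$, an explicit letter $a \in X$ together with an explicit element $b \in \B$ of length $n-1$ with $a < b$, and then bounding $\| [a, b] \|_\B$ from below by producing a large number of Hall basis elements that occur with coefficient $\pm 1$ in the decomposition of $[a,b]$, with no cancellation. After relabeling, let $X_0 < X_1$ be the two smallest letters (and $X_2$ a third one when $|X| \geq 3$). When $|X| \geq 3$ I would set $b_2 := [X_1, X_2]$ and $b_{k+1} := [X_1, b_k]$, and check by induction that each $b_k$ lies in $\B$: since the left factor $X_1$ is a letter, the defining side condition of a Hall set is automatic, so only the order relation $X_1 < b_k$ has to be verified, which follows from the elementary monotonicity of a Hall order. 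We then take $a := X_0$ and $b := b_{n-1}$; crucially $[X_0, b_{n-1}] \notin \B$, because the left factor $X_1$ of $b_{n-1}$ satisfies $X_1 > X_0$, so the rewriting algorithm genuinely expands it.

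The engine of the lower bound is the Jacobi identity in the form
\begin{equation}
    [X_0, [X_1, c]] = [[X_0, X_1], c] + [X_1, [X_0, c]],
\end{equation}
where $[X_0, X_1] \in \B$. Applying it repeatedly, peeling off one copy of $X_1$ from $b$ at each step and splitting every resulting term in two, produces a binary tree of Lie monomials of length $n$, each carrying coefficient $\pm 1$ and each containing exactly one occurrence of $X_0$; the number of leaves is $2^{n-2}$ (modulo routine bookkeeping of the recursion). The decisive point --- and the main obstacle --- is to show that, once each leaf is rewritten on the Hall basis, the supports obtained are pairwise disjoint, so that no cancellation can occur when $[X_0, b_{n-1}]$ is re-expressed in $\B$. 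For this I would introduce an invariant that is preserved by the rewriting of each leaf and separates the leaves, the natural candidate being the position, or nesting depth, of the unique occurrence of $X_0$ inside the associated Hall word; combined with the standard unitriangularity of a Hall basis with respect to the lexicographic order on associative words --- the very property used to establish that a Hall set is a basis --- this exhibits $2^{n-2}$ distinct elements of $\B$ in the support of $[X_0, b_{n-1}]$, hence $\| [X_0, b_{n-1}] \|_\B \geq 2^{n-2}$, which is the first line of \eqref{eq:lower}.

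When $|X| = 2$ the same scheme applies, but only $X_0$ and $X_1$ are available, so the ``first'' term $[[X_0, X_1], c]$ produced by the Jacobi identity can no longer be split afresh at every level --- only at alternate ones --- and the count of surviving leaves then obeys the recursion $N_k = N_{k-1} + N_{k-2}$ in place of $N_k = 2 N_{k-1}$. With the initial values read off from small $n$ (a non-Hall bracket has $\ell^1$ norm at least $1$), this yields $N_{n-1} \geq F_{n-2}$, hence the second line of \eqref{eq:lower}. The Fibonacci growth obtained here is exactly the one matched, from above, by the extremal two-letter Hall basis constructed later in the paper.

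I expect the delicate part to be the non-cancellation statement, and especially its uniformity over all Hall sets: the Hall order dictates which brackets are ``Hall'', hence the precise output of the rewriting algorithm on each leaf, so the separating invariant must be chosen to be insensitive to that order while still distinguishing the $2^{n-2}$ (resp. $F_{n-2}$) leaves. A robust way to arrange this is to work one level up, in $\K\langle X \rangle$: for each leaf, track the coefficient of a carefully chosen associative word, check that these words are pairwise distinct and are the leading (Hall) words of distinct basis elements, and then conclude by triangularity. The combinatorial core of the argument is precisely this choice of witness word at each leaf and the verification that it is leading; everything else is recursion and sign bookkeeping.
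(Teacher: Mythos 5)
Your choice of witness bracket for $|X|\geq 3$ is the same as the paper's ($a=X_0$, $b=\ad_{X_1}^{n-2}(X_2)$ with $X_0<X_1<X_2$), and Jacobi iteration is indeed the engine. But the core of your argument — that the $2^{n-2}$ leaves of the binary Jacobi recursion have pairwise disjoint supports, so that one exhibits $2^{n-2}$ \emph{distinct} basis elements each with coefficient $\pm 1$ — is false, and no choice of ``separating invariant'' can repair it, because the leaves are not distinct to begin with. Already for $[X_0,[X_1,[X_1,X_2]]]$ the four leaves are $[[[X_0,X_1],X_1],X_2]$, $[X_1,[[X_0,X_1],X_2]]$ (appearing \emph{twice}), and $[X_1,[X_1,[X_0,X_2]]]$: the expansion collapses onto linearly many monomials carrying binomial multiplicities. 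This is exactly what the Leibniz-type identity $[X_0,\ad_{X_1}^m(X_2)]=\sum_k(-1)^k\binom{m}{k}\ad_{X_1}^{m-k}[\ad_{X_1}^k(X_0),X_2]$ makes precise; the $\ell^1$ mass $2^m$ comes from $\sum_k\binom{m}{k}$ over only $m+1$ support elements, not from exponentially many support elements. What actually has to be proved is (a) that these $O(n)$ collected brackets are distinct elements of $\B$ — which requires the case analysis of \cref{prop:some_elements_hall}, since e.g.\ whether $\dad_{X_1}^k(X_0)\in\B$ depends on the Hall order — and (b) that within each multiplicity class the signs agree. Your proposal addresses neither, and your invariant (nesting depth of $X_0$) takes only $O(n)$ values, contradicting your own claim of $2^{n-2}$ separated classes.

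The second, independent gap is the case where $r:=r(X_0,X_1)$ is finite, i.e.\ $\dad_{X_1}^k(X_0)\notin\B$ for $k>r$: then the naively collected terms are not basis elements, must be re-expanded as $\ad_{X_1}^{k-r}\dad_{X_1}^r(X_0)$-brackets, and the signs genuinely threaten to cancel. The paper has to compute the resulting coefficients exactly ($\alpha_s=\binom{n}{s}\binom{s-1}{r-1}$ in \eqref{alphas:valeur}) and check $\alpha_s\geq\binom{n}{s}$ to salvage the bound; this is the technical heart of the lower bound and is absent from your sketch. The same two issues recur in the $|X|=2$ case, where your Fibonacci recursion $N_k=N_{k-1}+N_{k-2}$ is only a heuristic: the paper instead proves the closed-form identity \eqref{wi,i+n} with coefficients $\binom{n-1-p}{p}$ summing to $F_n$, verifies via \cref{prop:some_elements_hall} that the brackets $\dad_{X_1}^{m}(w_i)$ appearing there lie in $\B$, and again treats finite $r$ separately via \eqref{eq:w0n} and the appendix estimates. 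As it stands your proposal identifies the right objects but not a correct mechanism for the absence of cancellation, which is the entire content of the theorem.
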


We then turn to estimates specific to particular well-known Hall bases. 
In \cref{sec:length}, for the case of the historical Hall bases for which the order is compatible with the length of the brackets, we obtain the following geometric bound, which is sharp in the sense of \cref{p:length-sharp}.

\begin{theorem} \label{thm:length-easy}
    Let $\B \subset \Br(X)$ be a length-compatible Hall set.
    For every $a < b \in \B$,
    \begin{equation}
        \| [a, b] \|_\B \leq 2^{|b|-1}.
    \end{equation}
\end{theorem}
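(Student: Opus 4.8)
The plan is to argue by strong induction on $|b|$, running the classical recursive rewriting algorithm to decompose $[a,b]$ and exploiting the key simplification afforded by length-compatibility: whenever we need to rewrite a bracket $[u,v]$ with $u,v \in \B$ but $u > v$ (or a bracket $[u,[v,w]]$ produced in a Jacobi step), the length-compatibility of the order guarantees that the ``outer'' pieces produced by reordering or by the Jacobi identity are Lie brackets whose two arguments each have strictly smaller length than $|b|$ — so the recursion is controlled entirely by $|b|$ and never inflates it. First I would set up the base case $|b| = 1$, where $a < b$ forces $a, b \in X$ and $[a,b]$ is either already a basis element (so $\|[a,b]\|_\B = 1 \le 2^0$) or equals $-[b,a]$ with $[b,a] \in \B$ (again norm $1$).

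For the inductive step, assume the bound $\|[a',b']\|_\B \le 2^{|b'|-1}$ for all basis elements $a' < b'$ with $|b'| < |b|$, and also (by a nested induction or by Theorem~1.4 applied with the sharper constant) for pairs with $|b'| = |b|$ but smaller in some auxiliary measure. Write $b = [b_1, b_2]$ with $b_1, b_2 \in \B$, $b_1 < b_2$, $b_1 \le a$ (Hall condition). The algorithm distinguishes cases according to how $a$ compares with $b_1$ and $b_2$. If $a < b$ and the pair $(a,b)$ already satisfies the Hall admissibility condition, $[a,b] \in \B$ and the norm is $1$. Otherwise one applies the Jacobi identity $[a,[b_1,b_2]] = [[a,b_1],b_2] + [b_1,[a,b_2]]$ (up to sign/reordering), and the crucial point is that by length-compatibility each of $[a,b_1]$ and $[a,b_2]$ is a bracket of basis elements with total length $|a| + |b_i| \le |b|$, hence after recursively rewriting it on $\B$ each resulting basis element has length $\le |a| + |b_i|$; bracketing the result with $b_2$ (resp. $b_1$) and recursing again stays within length $|b|$. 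Carefully bookkeeping the two branches and invoking the induction hypothesis on each, one gets a recursion of the shape $\|[a,b]\|_\B \le \|[a,b_1]\|_\B \cdot (\text{something} \le 2^{|b_2|-1+?}) + \|[a,b_2]\|_\B \cdot(\ldots)$, which I would arrange to collapse to $\|[a,b]\|_\B \le 2^{|b_1|-1} + 2^{|b_2|-1} \le 2^{|b_1|+|b_2|-1} = 2^{|b|-1}$, using $|b_1|+|b_2| = |b|$ and $2^{p-1}+2^{q-1} \le 2^{p+q-1}$ for $p,q \ge 1$.

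The main obstacle I anticipate is making the induction hypothesis strong enough to close: the naive induction on $|b|$ alone fails because the Jacobi step produces brackets $[[a,b_1],b_2]$ where the first argument is not yet a basis element, so one must first rewrite $[a,b_1]$ — a bracket with $|a|$ possibly larger than $|b_1|$ — and then bracket a whole \emph{linear combination} of basis elements with $b_2$. Controlling this requires either (i) a secondary induction on $|a|+|b|$ with $|a|$ allowed to grow relative to $|b_1|$, while still keeping the right-hand argument's length bounded by length-compatibility, or (ii) isolating a cleaner invariant: that throughout the algorithm every bracket $[u,v]$ that ever appears has $|u|+|v| \le |a|+|b|$ and, more importantly, $\max(|u|,|v|)$ is nonincreasing so that length-compatibility keeps forcing the ``new'' brackets to be shorter. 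I would formalize this as a lemma on the recursion tree of the rewriting algorithm — essentially a length-monotonicity statement — and then the exponential bound follows by counting leaves of this tree, each leaf contributing a single basis element, with the branching being binary (Jacobi) and the depth bounded by $|b|$. Finally I would double-check the arithmetic $2^{|b_1|-1}+2^{|b_2|-1}\le 2^{|b_1|+|b_2|-1}$ and the edge cases $|b_1|=1$ or $|b_2|=1$ to confirm the constant is exactly $2^{|b|-1}$ and not off by a factor, which also sets up the sharpness claim of Proposition~\ref{p:length-sharp}.
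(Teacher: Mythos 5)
Your overall strategy (induction plus Jacobi, with length-compatibility forcing many of the produced brackets directly into the basis) points in the right direction, and you correctly identify the main obstacle; but the proposal does not resolve that obstacle, and the claimed collapse to $2^{|b_1|-1}+2^{|b_2|-1}$ is exactly the step that fails as written. After one Jacobi application $[a,b]=[[a,b_1],b_2]+[b_1,[a,b_2]]$, the first branch can indeed be tamed: one can show that for every $d\in\supp[a,b_1]$ the bracket $[d,b_2]$ is, up to sign, a single basis element or zero — but this is not automatic; it rests on the specific dividend of length-compatibility that $a<b$ and $|b|<2|a|$ force $(a,b)\in\B$, together with structural information about $\supp[a,b_1]$ (namely $\lambda(d)\geq a$, hence $|d|=|a|+|b_1|$ and comparisons of $d$ with $\lambda(b_2)$). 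The second branch is worse: for $e\in\supp[a,b_2]$ the bracket $[b_1,e]$ (or $[e,b_1]$) is \emph{not} a single basis element in general, and if you fall back on the induction hypothesis to bound $\|[e,b_1]\|_\B$ you get a product of two exponentials, not a sum. Your proposed invariants (total length nonincreasing, $\max(|u|,|v|)$ nonincreasing, binary branching of depth $\leq|b|$) do not by themselves prevent this multiplicative blow-up, and the leaf-counting argument would at best give $2^{|b|}$ times an uncontrolled multiplicity.

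The paper closes this gap differently: it does not perform a single binary Jacobi split but unfolds $b$ completely into its right-comb $b=(a_1,(a_2,\dotsc,(a_{i_*},b_{i_*})\dotsb))$, chooses the first index $k$ with $a_{k+1}<a$, and applies the iterated Jacobi identity once, producing one term $[a_1,[\dotsc,[a_k,[a,b_k]]\dotsb]]$ that is shown to be a single basis element, plus $k$ terms of the form $[a_1,[\dotsc,[[a,a_i],b_i]\dotsb]]$. Each of these is bounded by $2^{i-1}\,\|[a,a_i]\|_\B$ via a separate proposition that propagates a structural description of the supporting elements (either $\lambda(c)=a_1$, or $c=(c',c'')$ with each factor in $X$ or having left factor $\leq a_1$), which is what guarantees that at each level the extra bracketing by $a_1$ costs a factor of at most $2$ rather than an exponential. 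The induction is run on $\theta_a(b)$ (the length of the relative folding), not on $|b|$, and the final summation $\sum_i 2^{i-1}2^{q_i-1}\leq 2^{Q}-1$ with $\sum q_i\leq\theta_a(b)-1$ yields the stronger bound $\|[a,b]\|_\B\leq 2^{\theta_a(b)-1}\leq 2^{\lfloor|b|/|a|\rfloor-1}$, of which your target $2^{|b|-1}$ is a corollary. To repair your proof you would need to supply, at minimum, the lemma that $|b|<2|a|$ implies $(a,b)\in\B$, a structural description of $\supp[a,b]$ strong enough to survive the repeated outer bracketings, and an induction parameter that actually decreases (the $\theta$-based one); none of these is present in the proposal.
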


In \cref{sec:lyndon}, we obtain a geometric bound for the celebrated Lyndon basis, which is sharp in the sense of \cref{p:lyndon-sharp}.

\begin{theorem} \label{thm:lyndon-easy}
    Let $\B \subset \Br(X)$ be the Hall set of the Lyndon basis.
    For every $a < b \in \B$,
    \begin{equation}
        \| [a, b] \|_\B \leq 2^{|b|-1}.
    \end{equation}
\end{theorem}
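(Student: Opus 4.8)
The plan is to argue by induction on $|b|$ (with, if needed, a secondary induction on $|a|$), exploiting the standard factorization of the Lyndon word $b$ together with the Jacobi identity, in close parallel with the proof of \cref{thm:length-easy}, but substituting for the length bookkeeping the combinatorics of the lexicographic order on Lyndon words. For the base case $|b|=1$, $b$ is a letter and one checks directly that $[a,b]$ is, up to sign, a single Lyndon bracket, so $\|[a,b]\|_\B = 1 = 2^{|b|-1}$. For $|b|\geq 2$, write $b = b_1b_2$ for the standard factorization: $b_1,b_2$ are Lyndon words with $b_1 < b_2$, $b_1 < b < b_2$, $|b_1|+|b_2| = |b|$, and $[b] = [[b_1],[b_2]]$; in particular $a < b < b_2$. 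The Jacobi identity gives $[[a],[b]] = [[[a],[b_1]],[b_2]] + [[b_1],[[a],[b_2]]]$, and in both summands the ``active'' second component ($b_2$, resp.\ $b_1$) is strictly shorter than $b$, which is what feeds the induction; the point is then to control the recursive rewriting of each summand well enough to count terms.

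The crux is an order-control statement, proved by a parallel induction as a strengthened hypothesis: every Lyndon word $\ell$ occurring in the decomposition of $[a,b]$ lies, in lexicographic order, between $ab$ and an explicit upper bound depending only on $a$ and $b$ (and has the same multidegree). Combined with the classical facts that (i) the concatenation $uv$ of Lyndon words $u<v$ is Lyndon, (ii) the standard factorization behaves in a controlled way under such concatenations, and (iii) monotonicity of the lexicographic order under prefix/suffix comparisons, this should guarantee that throughout the unfolding of the two summands no element of length $|a|$ ever becomes the larger partner of a bracket that still has to be rewritten — so that the induction hypothesis is always applied with the ``large'' partner being $b_1$ or $b_2$ (or a Lyndon word shorter than them). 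This order-control step, which requires a case analysis on how $a$ compares with $b_1$ and with the leading letter of $b_2$, is exactly the Lyndon analogue of the length argument used in the length-compatible case, and I expect it to be the main obstacle.

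Granting the order control, the counting closes immediately. The induction hypothesis gives $\|[[a],[b_1]]\|_\B \leq 2^{|b_1|-1}$, and bracketing each resulting $[\ell]$ with $[b_2]$ costs a further factor at most $2^{|b_2|-1}$, so the first summand contributes at most $2^{|b_1|-1}\cdot 2^{|b_2|-1} = 2^{|b|-2}$; symmetrically, the second summand contributes at most $2^{|b_2|-1}\cdot 2^{|b_1|-1} = 2^{|b|-2}$. Adding, $\|[a,b]\|_\B \leq 2^{|b|-2} + 2^{|b|-2} = 2^{|b|-1}$, completing the induction. Finally, the bound is shown to be optimal in \cref{p:lyndon-sharp}.
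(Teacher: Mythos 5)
Your plan is the same as the paper's (the paper proves the finer bound $\|[a,b]\|_\B \leq 2^{\theta_a(b)-1}$ in \cref{thm:lyndon-bound} by the same Jacobi decomposition on the standard factorization, then uses $\theta_a(b)\leq|b|$), but the proposal as written has a genuine gap: the ``order-control statement'' that you defer is the entire mathematical content of the proof, and without it neither the counting nor the termination of your induction is justified. Concretely, the paper's induction carries three structural properties of each $c\in\supp[a,b]$ — namely $a\in\Lambda(c)$, $\lambda(c)\leq\max\{a,\lambda(b)\}$, and $c<b$ — and each is used at a specific point. The critical one occurs in your second summand $[b_1,[a,b_2]]$: after decomposing $[a,b_2]=\sum\alpha_d\,\ev(d)$, a support element $d$ may satisfy $b_1<d$. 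There your induction on $|b|$ breaks down, because the bracket to be rewritten is $[b_1,d]$ with larger partner $d$ of length $|a|+|b_2|$, which need not be smaller than $|b|$ (it is $\geq|b|$ whenever $|a|\geq|b_1|$); and even if the inductive bound applied, it would give $2^{|d|-1}$, not the $2^{|b_1|-1}$ your count requires. The only way to close this case is to show that $(b_1,d)\in\B$ outright (cost $1$), which requires knowing $\lambda(d)\leq\max\{a,\lambda(b_2)\}\leq b_1$ — exactly the second structural property above, proved by items (3) and (4) of \cref{p:lyndon-order}. Similarly, your claim that the first summand costs $2^{|b_1|-1}\cdot 2^{|b_2|-1}$ silently uses $d<b_1<b_2$ for every $d\in\supp[a,b_1]$, i.e.\ the property $c<b$ of the strengthened hypothesis.

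A related structural point: inducting on $|b|$ (even with a secondary induction on $|a|$) is the wrong variable for this rewriting, for the reason the paper flags in \cref{sec:rewriting} — the sub-brackets $[d,\mu(b)]$ have the same total length as $[a,b]$, and as noted above the larger partner can grow. The paper's choice of $\theta_a(b)$ as induction variable (justified by \cref{p:theta-dec} and \cref{lem:theta-dec2/support-absurde}) is what makes the recursion well-founded. So the skeleton of your argument is right and the arithmetic $2^{|b|-2}+2^{|b|-2}=2^{|b|-1}$ does close once the order control is in place, but the lemma you label as ``the main obstacle'' is not an expected technicality: it is the theorem.
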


When $|X| \geq 3$, the lower bound of \eqref{eq:lower} is optimal since it matches the upper bounds for the length-compatible and Lyndon bases.
When $|X| = 2$, we construct in \cref{s:fibo} an example of a Hall set of ``minimal worst product size'' in the sense that the following result holds.

\begin{theorem} \label{thm:fibo-easy}
    When $|X| = 2$, there exists a Hall set $\B \subset \Br(X)$ such that for every $a < b \in \B$ with $n := |a|+|b| \geq 3$,
    \begin{equation}
        \| [a, b] \|_\B \leq F_{n-2}.
    \end{equation}
\end{theorem}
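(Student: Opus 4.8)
The plan is to exhibit one explicit Hall set $\B$ on $X=\{X_0,X_1\}$ and to prove the bound by following the recursive rewriting algorithm, the decisive feature of $\B$ being that bracketing on the right by the largest letter is ``free''. I would build $\B$ together with its total order by induction on the length of brackets: having ordered all admissible brackets of length $<\ell$, insert each admissible bracket $t=[t',t'']$ of length $\ell$ (i.e.\ $t',t''\in\B$, $t'<t''$, and $t''\in X$ or the left factor of $t''$ is $\le t'$) immediately after $t'$ in the current order, with a fixed tie-break rule among brackets sharing the same left factor. Starting from $X_1<X_0$, one checks that this produces a total order satisfying the Hall--Viennot axioms, hence a Hall set, with the structural properties: (i) $X_1=\min\B$ and $X_0=\max\B$, so every bracket of length $\ge2$ lies strictly between the two letters; (ii) consequently $[c,X_0]\in\B$ for every $c\in\B\setminus\{X_0\}$, i.e.\ $[c,X_0]$ never needs rewriting; and (iii) for any $a$ and any $b_1>a$, the bracket $[b_1,[a,X_0]]$ lies in $\B$ up to sign, since $[a,X_0]\in\B$ has left factor $a\le b_1$, so whichever of $b_1,[a,X_0]$ is smaller, the Hall condition is met.

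The main estimate is $\|[a,b]\|_\B\le F_{n-2}$ with $n:=|a|+|b|\ge3$, proved by strong induction on $n$, and for fixed $n$ by an inner induction on the length of the relative folding $\rf{a}{b}$, which strictly decreases along the rewriting algorithm by \cref{ss:rec-theta}. The cases $n\in\{3,4\}$ are checked by hand (all such brackets reduce to a single basis element up to sign, and $F_1=F_2=1$). For $n\ge5$: if $[a,b]\in\B$ there is nothing to prove. Otherwise $b=[b_1,b_2]$ with $b_1>a$, and Jacobi gives $[a,b]=[[a,b_1],b_2]+[b_1,[a,b_2]]$.

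In the special case $b_2=X_0$ one closes the argument immediately: $|b_1|=n-2$, so by the outer induction $[a,b_1]=\sum_c\lambda_c\,c$ with $\sum_c|\lambda_c|\le F_{n-3}$ and each $c\in\B$; by (ii) each $[c,X_0]\in\B$, whence $\|[[a,b_1],X_0]\|_\B\le F_{n-3}$; and by (iii) the term $[b_1,[a,X_0]]$ contributes $1$; since $n\ge5$ we have $F_{n-4}\ge1$, so the total is $\le F_{n-3}+1\le F_{n-3}+F_{n-4}=F_{n-2}$. For the general case $b_2\ne X_0$, I would analyze the relative folding $\rf{a}{b}$ and, using the way the order was defined, show that it is an iterated Lie bracket over a small ``chain-like'' alphabetic subset of $\B$ (essentially built from $a$, $X_0$ and the backbone elements $[X_1,X_0],[[X_1,X_0],X_0],\dots$), so that re-expanding $[[a,b_1],b_2]$ and $[b_1,[a,b_2]]$ on $\B$ either falls back on the case $b_2=X_0$ or produces brackets whose relative foldings are strictly shorter; the inner induction then applies and the Fibonacci budget $F_{n-2}=F_{n-3}+F_{n-4}$ is respected. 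This uses a Fibonacci refinement of \cref{thm:n!-easy} for iterated brackets over these particular alphabetic subsets.

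The main obstacle is precisely this last step: proving that, for the order chosen, the relative folding of \emph{every} pair $a<b$ stays inside the restricted class of alphabetic subsets, and that this class is stable under the rewriting steps — this is what forces the specific ``insert right after the left factor'' rule and is the analogue, in this extremal example, of the structural analysis behind \cref{thm:en1-easy}. Once it is established, the remaining work is the routine Fibonacci recursion, and comparison with \cref{thm:lower-easy} shows that $\B$ is optimal: its worst product size is exactly $F_{n-2}$ at every length.
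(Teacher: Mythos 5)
Your choice of Hall set is essentially the paper's (up to swapping the names of the two letters: the paper puts $X_0$ at the bottom and $X_1$ at the top and orders the rest lexicographically on $(\lambda,\mu)$), and your necessary structural properties (i)--(iii) match the paper's Lemma~\ref{lem:CN-fibonacci}. But the proof has two genuine gaps. First, property (iii) is wrongly justified: when $[a,X_0]<b_1$, membership of $([a,X_0],b_1)$ in $\B$ requires $\lambda(b_1)\leq [a,X_0]$, not $\lambda([a,X_0])\leq b_1$, and this can fail. In the paper's proof of the corresponding case ($\mu(b)$ equal to the maximal letter, Case~3 of \cref{th:fibo-size-new}), this term is \emph{not} a single basis element; it is re-expanded by induction and controlled only via the strict-decrease inequality $2n_h(b)+\nu_h(b)<2n_a(b)+\nu_a(b)$ of \cref{Cor:surcoupe_strict}. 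So even your ``easy'' case $b_2=X_0$ does not close as written: the budget is not $F_{n-3}+1$.

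Second, and more seriously, the part you flag as ``the main obstacle'' is in fact almost the entire proof, and your proposed mechanism for it — an additive Fibonacci budget $F_{n-2}=F_{n-3}+F_{n-4}$ distributed over the two Jacobi terms — is not the right one. After the Jacobi step, the first term $[[a,\lambda(b)],\mu(b)]$ contributes a \emph{product} of norms (one factor from $[a,\lambda(b)]$, one from each $[d,\mu(b)]$), so one needs (a) the structural theorem (\cref{th:fibo-structure-new}) showing that every $c\in\supp[a,b]$ satisfies $\theta_{\lambda(b)}(c)\leq 2$, so that second-stage brackets cost at most $2$; (b) induction on the weighted leaf count $2n_a(b)+\nu_a(b)$ rather than on $n$ or on $\theta_a(b)$ alone, with the monotonicity and strict-decrease lemmas; and (c) the multiplicative identity $F_pF_{q-1}+F_{p-1}F_q\leq F_{p+q-1}$. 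Even then the induction only yields $F_{n-1}$ (\cref{cor:fibo-light}); reaching $F_{n-2}$ requires a separate refined analysis of the critical brackets with $|a|=1$ and $|b|=2n_a(b)+\nu_a(b)$ (\cref{th:X0b-fibo}), which your sketch does not address. None of these steps is routine, so the proposal as it stands establishes only the setup, not the theorem.
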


In the opposite direction, we construct in \cref{sec:super-geom} a particular Hall set which shows that, contrary to the three previous cases, the growth can be super-geometric.
Our construction holds even when $|X| = 2$.

\begin{theorem} \label{thm:Mn-easy}
    There exists a Hall set $\B \subset \Br(X)$ such that, for every $M \geq 1$, for every $n \in \N^*$ large enough, there exist $a < b \in \B$ with $|a| = 2$ and $|b| = n$ such that
    \begin{equation}
        \| [a, b] \|_\B \geq M^n.
    \end{equation}
\end{theorem}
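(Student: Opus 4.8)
The plan is to construct, when $|X| = 2$ with $X = \{X_0, X_1\}$, a Hall set $\B$ designed so that the relative folding construction of \cref{Sec:refined_general_bound} is forced to produce, in the decomposition of a suitable bracket $[a,b]$, an iterated Lie bracket over an alphabetic subset whose associated product size saturates the $(n-1)!$ growth of \cref{thm:n!-easy} — which is genuinely super-geometric. Concretely, I would pick $a = [X_1,X_0]$ (so $|a| = 2$) and build a family of elements $b_n \in \B$ with $|b_n| = n$ whose expansion forces repeated "refoldings" that each contribute a multiplicative factor growing with the current length, so that $\|[a,b_n]\|_\B$ grows like a factorial in $n$. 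Since for every fixed $M$ one has $n! \geq M^n$ for $n$ large, this yields the claimed bound $\|[a,b]\|_\B \geq M^n$.

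The first step is to specify the order on $\Br(X)$ defining $\B$. Following the template behind \cref{thm:n!-easy} and its sharpness statement \cref{Cor:optn!}, I would arrange the order so that there is a long chain $c_1 < c_2 < \cdots$ of right-combs (or a similar nested family) lying in an alphabetic subset $A$, with $a$ sitting below all of them, and so that the Hall conditions make the decomposition of $[a, c_k]$ recursively invoke $[a, c_{k-1}]$ together with a fresh factor. The delicate point is verifying that the proposed set of brackets really is a Hall set: one must check Viennot's/Shirshov's axioms (closure under the decomposition rule, the inequality $t_1 < t_2$ for $t = (t_1, t_2) \in \B$, and that $(t_1,t_2) \in \B$ iff $t_1, t_2 \in \B$, $t_1 < t_2$, and $t_2 \in X$ or the left factor of $t_2$ is $\le t_1$). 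This is the kind of bookkeeping that is routine in principle but must be done carefully for a bespoke order.

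The second step is to run the recursive rewriting algorithm of \cref{ss:rec-theta} on $[a, b_n]$ and extract a lower bound on $\|[a,b_n]\|_\B$. The strategy is \emph{not} to compute the full decomposition but to track a single monomial (a single basis element) in it and show its coefficient has absolute value at least $n!$ (or a factorial-type quantity), or alternatively to track a positivity/sign-coherence phenomenon guaranteeing no cancellation among a factorial number of terms each of size $\ge 1$. The relative folding viewpoint is the natural tool here: since the relative folding of $b_n$ with respect to $a$ is an iterated bracket over the alphabetic subset $A$, and its length grows linearly in $n$, the general machinery shows the number of surviving basis elements in the expansion is controlled by — and, for this construction, forced to realize — factorial growth. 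I would make the alphabetic subset "thin" enough (e.g. built from two generators with a very asymmetric order) that the stability property of alphabetic subsets still applies but the combinatorial count is as large as $(n-1)!$ up to constants.

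The main obstacle I anticipate is the lower bound on cancellation: showing that the factorially many terms produced by the algorithm do not collapse. Upper bounds like $\|[a,b]\|_\B \le \lfloor e(|b|-1)!\rfloor$ come "for free" from counting, but a matching lower bound requires genuine control of signs or of a distinguished coordinate. The cleanest route is probably to engineer the order so that, at each recursive step, the rewriting of the relevant bracket produces terms all of whose relevant coefficients are nonnegative (a positivity-preservation invariant along the chain $c_1 < c_2 < \cdots$), so that the $\ell^1$ norm is literally the sum of the counts and hence factorial. Establishing that invariant — i.e. that the particular Hall conditions chosen never introduce a sign change in the tracked coordinate as the algorithm descends — is the technical heart of the argument, and is where I would spend most of the effort. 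Once that invariant is in place, translating "$\sim n!$ surviving terms" into "$\ge M^n$ for large $n$" is immediate from $\lim_{n\to\infty} (n!)^{1/n} = \infty$.
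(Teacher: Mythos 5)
There is a genuine gap, and it is precisely the one the paper isolates in \cref{rk:fail-embed} (``A natural but unfitted strategy''). Your plan is to realize super-geometric growth by saturating the factorial bound of \cref{thm:n!-easy} over an alphabetic subset, i.e.\ to produce $\sim (n-1)!$ non-cancelling terms. For a finite alphabet this mechanism cannot exceed geometric growth in $n=|b|$. To approach the factorial count you need the leaves of the relative folding to be \emph{distinct} (and free) elements of $\B$ — this is exactly the hypothesis of \cref{Prop:sature_n!}, and the optimality construction of \cref{p:en1-optimal} uses $n$ distinct indeterminates. With $|X|=2$ there are at most $2^\ell$ basis elements of length $\ell$, so $p$ distinct basis elements have total length at least of order $p\ln p$ (estimate \eqref{eq:louche}); hence $p\ln p \lesssim n$ and $(p-1)!\le p^p=e^{p\ln p}\le C^n$, which is geometric. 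If instead you keep $\theta_a(b)$ close to $n$ by repeating leaves (your ``thin'' two-generator alphabetic subset), antisymmetry and the Jacobi relations collapse the count far below $(n-1)!$: the subalgebra generated by two free alphabetic elements is again a free Lie algebra on two generators (\cref{Prop:free-lie-alphabetic}), so you have merely reproduced the original problem one level up. Your anticipated obstacle — sign cancellation — is therefore not the real issue; the counting obstruction above kills the approach before signs enter.

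The paper's actual proof uses a different mechanism: super-geometric growth comes from \emph{large coefficients}, not from factorially many terms. One builds blocks $A_i=\ad_{X_0}^i(X_1)$, a score-based Hall order (\cref{ss:super-geom-basis}), and brackets $B_p^\nu$ carrying $\nu$ extra copies of $X_1$; the Leibniz-type identity of \cref{p:ad-k} distributes these $\nu$ copies over $p$ nested factors, producing multinomial coefficients summing to $p^\nu$ via \eqref{eq:multinomial-sum}, with all resulting brackets shown to be distinct basis elements (\cref{Prop:estimation_bstar}). Optimizing $p\approx\sqrt{n/\ln n}$ and $\nu\approx n-p^2$ then gives $\|[a,b]\|_\B\gtrsim(\sqrt{n/(c_0\ln n)})^n$, which exceeds $M^n$ for every fixed $M$. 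Note that even this only yields roughly $n^{n/2}$, well short of the $n!$ your plan targets; whether factorial growth is attainable with $|X|=2$ is not established by the paper and your argument does not supply it.
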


Eventually, we investigate in \cref{sec:asym} what we call \emph{asymmetric estimates}, i.e.\ estimates where we single out the role of a particular indeterminate $X_0 \in X$. 
For $b \in \Br(X)$, we denote by $n_0(b)$ the number of occurrences of $X_0$ in $b$, and $n(b) := |b|-n_0(b)$ the number of leaves of $b$ that are different from $X_0$.
We seek estimates on the structure constants where we attempt to have the lowest asymptotic growth with respect to $n_0$.
Our main result is the following.

\begin{theorem}
    \label{p:thm-asym}
    There exists a (universal) sequence $(C(n))_{n \in \N} \in \N^\N$, such that, if $\B \subset \Br(X)$ is a Hall set and $X_0 \in X$, then, for every $a < b \in \B$,
    \begin{equation}
        \| [a, b] \|_\B \leq C(n(b))^{n_0(b)} (n(b))!.
    \end{equation}
\end{theorem}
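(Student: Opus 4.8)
The plan is to combine the relative folding of \cref{Sec:refined_general_bound} with a \emph{two-color} refinement of \cref{thm:n!-easy}, in which one keeps track, among the factors of an iterated bracket over an alphabetic subset, of those equal to the singled-out letter $X_0$ as opposed to the others. The guiding idea is that each factor equal to $X_0$ should cost only a bounded multiplicative factor, whereas each of the remaining factors advances the factorial by one step; the folding will then exhibit $[a,b]$ as such an iterated bracket involving at most $n_0(b)+1$ factors equal to $X_0$ and at most $n(b)+1$ other factors.

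\emph{Step 1 (two-color alphabetic estimate).} I would first prove: \emph{there is a universal sequence $(C(m))_{m\in\N}$ — one may take $C(m)=m+1$ — such that for every Hall set $\B$, every alphabetic subset $A\ni X_0$ of $\B$, and every iterated Lie bracket $t$ of elements of $A$ among which at most $m$ are different from $X_0$ and at most $p$ are equal to $X_0$, one has $\|t\|_\B\le C(m)^p\,m!$.} This is proved by induction on $p$, the case $p=0$ being \cref{thm:n!-easy}. For the step, write $t=\dad(c_1,\dots,c_N)$ ($N\le1$ being trivial); if $c_N=X_0$, one first absorbs it into $c_{N-1}$ via $\ad_{c_{N-1}}(X_0)=-[X_0,c_{N-1}]$ (which is $0$ when $c_{N-1}=X_0$, and otherwise lowers the $X_0$-count without raising the other one), so one may assume $c_N\ne X_0$. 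Picking an index $i$ with $c_i=X_0$, one slides $\ad_{c_i}$ inwards using $\ad_u\ad_v=\ad_v\ad_u+\ad_{[u,v]}$: crossing a factor equal to $X_0$ creates no correction term (since $[X_0,X_0]=0$), so at most $m$ correction brackets appear, each obtained by merging a consecutive pair $(c,X_0)$ with $c\ne X_0$ into the single factor $[X_0,c]$, and a last bracket in which $\ad_{c_i}$ has been absorbed into $c_N$ as $[X_0,c_N]$. By the stability property of alphabetic subsets (\cref{sec:n!}), all the merged factors $[X_0,c]$ again lie in an alphabetic subset containing $X_0$, and they are still counted among the factors $\ne X_0$; hence $t$ is a signed sum of at most $m+1$ iterated brackets over alphabetic subsets, each with at most $m$ factors $\ne X_0$ and at most $p$ factors $=X_0$. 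Invoking the induction hypothesis and $C(m)\ge m+1$ gives $\|t\|_\B\le(m+1)\,C(m)^p\,m!\le C(m)^{p+1}m!$.

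\emph{Step 2 (insertion of the folding and repackaging).} Given $a<b$ in $\B$, the construction of \cref{Sec:refined_general_bound} exhibits $[a,b]$ — an element of $\mathcal{L}(X)$, hence with the same norm $\|\cdot\|_\B$ — as an iterated Lie bracket $\dad(a,c_1,\dots,c_\ell)$ in which $a$ appears as a single atom, the leaves of the $c_j$ form a partition of the leaves of $b$, and $\{a,c_1,\dots,c_\ell\}$ lies in an alphabetic subset. Thus at most $n_0(b)$ of the $c_j$ equal $X_0$ and at most $n(b)$ are $\ne X_0$, so the whole bracket has at most $n_0(b)+1$ factors $=X_0$ and at most $n(b)+1$ factors $\ne X_0$. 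If at least one factor equals $X_0$, then $X_0$ belongs to the ambient alphabetic subset and Step 1 yields $\|[a,b]\|_\B\le C(n(b)+1)^{\,n_0(b)+1}(n(b)+1)!$; using $(n(b)+1)!=(n(b)+1)\,n(b)!$ together with $C(n(b)+1)^{\,n_0(b)+1}(n(b)+1)\le\big(C(n(b)+1)^2(n(b)+1)\big)^{n_0(b)}$ (valid because $n_0(b)\ge1$) turns this into $\widetilde C(n(b))^{\,n_0(b)}\,n(b)!$ for the universal sequence $\widetilde C(n):=C(n+1)^2(n+1)$. In the remaining case $n_0(b)=0$ (so $b$ is $X_0$-free): if $|b|\ge2$, then $|b|=n(b)$ and \cref{thm:en1-easy} gives $\|[a,b]\|_\B\le\lfloor e(n(b)-1)!\rfloor=\sum_{j=0}^{n(b)-1}(n(b)-1)!/j!\le n(b)!$; if $|b|=1$, then $[a,b]\in\B$, so $\|[a,b]\|_\B=1=n(b)!$. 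Taking for $C$ a common majorant of $\widetilde C$ and $1$ completes the proof.

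\emph{Expected main obstacle.} The delicate point is Step 1 and, within it, the requirement to remain throughout within the hypotheses of \cref{thm:n!-easy}: one must verify that the iterated absorptions $c\mapsto[X_0,c]\mapsto[X_0,[X_0,c]]\mapsto\cdots$ of copies of $X_0$ into a factor of an alphabetic subset again produce elements of an alphabetic subset (containing $X_0$). This is precisely where the stability property of \cref{sec:n!} must be used, and it is conceivable that the argument needs to be adapted — for instance by enlarging $A$ to a canonical alphabetic superset, or by re-deriving the bound $(m-1)!$ directly for the specific bracket shapes that arise rather than quoting \cref{thm:n!-easy} verbatim. A second, purely combinatorial, point is to confirm that the folding of \cref{Sec:refined_general_bound} genuinely keeps $a$ as a single atom, so that the factor counts are exactly $n_0(b)$ and $n(b)$ as used above — a feature already implicit in the $a$-independence of the bound of \cref{thm:en1-easy}.
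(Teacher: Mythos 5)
Your Step~2 is sound and matches the paper's own reduction (the folding $\langle a, \rf{a}{b}\rangle$ is an iterated bracket over the alphabetic set $\{a\}\cup L_\B(\rf{a}{b})$, with the leaf counts you describe; cf.\ \cref{p:thm-asym-refined}). The gap is in Step~1, and it is exactly the obstacle you flag at the end — but it is not a repairable technicality in the form you propose. The absorption $c\mapsto[X_0,c]$ preserves alphabeticity only when the absorbed element is the \emph{minimum} of the alphabetic set (\cref{lem:alphabetic}), and in the situation produced by Step~2 the letter $X_0$ is never minimal in the relevant case: if $X_0$ occurs as a leaf of $\rf{a}{b}$ then $(a,X_0)\in\B$, hence $a<X_0$. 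After a merge, the ambient set would have to contain both $a$ and $(X_0,c)$, with $a<X_0<(X_0,c)$; alphabeticity would force $(a,(X_0,c))\in\B$, i.e.\ $\lambda((X_0,c))=X_0\leq a$, a contradiction. So no alphabetic superset containing the merged factor exists, and the induction hypothesis of your two-color estimate cannot be invoked on the terms produced by the sliding identity. A further warning sign: your Step~1 would give $C(m)=m+1$, hence a polynomial geometric rate after repackaging, whereas the paper obtains $C(n)=2^{n(n-1)/2+1}$ and explicitly lists the improvement of this rate as an open problem.

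The paper's proof circumvents this by going in the opposite direction: the $X_0$ factors are encoded as weights on the nodes of a tree over $\B\setminus\{X_0\}$ (\cref{def:weighted-brackets}), and one always distributes the \emph{minimal} element of the alphabetic set. When $X_0$ is minimal, all $X_0$ factors are pushed down to the leaves at a cost $|t|^{\rho}$ (\cref{p:asym-X0-min}); otherwise the minimal non-$X_0$ leaf $g$ is distributed, and the terms in which $g$ hits an $X_0$ factor create the new leaf $(g,X_0)$ — legitimate by \cref{lem:alphabetic} precisely because $g$ is minimal — which decreases the number of $X_0$ factors but \emph{not} the number of leaves. Controlling how many times this can happen requires the auxiliary depth functional $P$ and the resolution of a functional inequality (\cref{p:asym-jac-main,p:asym-main}); this is where the $2^{n^2}$-type constant originates. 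Reorganizing your absorption so that only the minimal element is ever merged essentially forces you back onto this path.
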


\subsection{Usual algebraic structures and notations}
\label{ss:def-algebra}

Let $\K=\R$ or $\C$. 
Implicitly, all vector spaces and algebras are constructed over the base field $\K$. 
Here and in the sequel, $X$ is a set, containing at least two elements, not necessarily finite.

\subsubsection{Free magma and free monoid}

For more detailed constructions of free magmas and monoids, we refer to \cite[Chapter~1, §7]{zbMATH00194100}. 

\begin{definition}[Free magma]
    We consider $\Br(X)$ the \emph{free magma} over $X$. 
    This set can be defined by induction: for $X_i \in X$, $X_i \in \Br(X)$ and if $t_1, t_2 \in \Br(X)$, then the ordered pair $(t_1, t_2)$ belongs to $\Br(X)$. 
    Intuitively, $\Br(X)$ can be seen as the set of formal brackets of elements of $X$. 
    It is also the set of rooted binary trees, with leaves labeled by elements of $X$. 
\end{definition}

\begin{definition}[Length, left and right factors]
    For $t \in \Br(X)$, $|t|$ denotes the length of $t$ i.e.\ the number of leaves of the tree.
    If $|t| > 1$, $t$ can be written in a unique way as $t = (t_1, t_2)$, with $t_1, t_2 \in \Br(X)$. 
    We use the notations $\lambda(t) = t_1$ and $\mu(t) = t_2$, which define maps $\lambda,\mu: \Br(X)\setminus X \to \Br(X)$. 
\end{definition}

\begin{definition}[Set of iterated left factors] \label{def:Lambda}
	For $t \in \Br(X)$, define 
	\begin{equation} \label{eq:LAMBDA}
		\Lambda(t) := \{ \lambda^k(t) ; \enskip k \in \N \text{ such that } |\lambda^k(t)| \geq 1 \} \subset \Br(X).
	\end{equation}
\end{definition}

\begin{example}\label{ex:tree}
    The element $t := (((X_0,X_1),((X_0,X_1),X_1)),((X_0,X_1),X_1))$ of $\Br(\{X_0,X_1\})$ can be visualized as the following tree:
    \begin{equation}
        \Tree [.  [. [. $X_0$ $X_1$ ] [. [. $X_0$ $X_1$ ] $X_1$ ] ] [. [. $X_0$ $X_1$ ] $X_1$ ] ]
    \end{equation}
    Here, $|t| = 8$, $\lambda(t) = ((X_0,X_1),((X_0,X_1),X_1))$
    and $\mu(t) = ((X_0,X_1),X_1)$.
    Moreover, $\Lambda(t) = \{ t, \lambda(t), (X_0, X_1), X_0 \}$.
\end{example}

\begin{definition}[Iterated bracketing] \label{def:ad-br}
    For $t_1, t_2 \in \Br(X)$, we introduce $\ad_{t_1}(t_2) := (t_1, t_2)$, the left bracketing by $t_1$ and $\dad_{t_1}(t_2) := (t_2, t_1)$, the right bracketing by $t_1$, which allow us to write compactly iterated brackets (e.g.\ $(((t_2, t_1), t_1), t_1) = \dad^3_{t_1}(t_2)$).
\end{definition}

\begin{definition}[Free monoid] \label{def:free.monoid}
    We denote by $X^*$ the \emph{free monoid} over $X$. 
    It is the set of finite sequences of elements of $X$, endowed with the concatenation operation.
    It can be thought of as the set of words over the alphabet whose letters are the elements of $X$.
\end{definition}


\subsubsection{Higher order brackets}

\begin{definition}[Higher order brackets]
    For $A \subset \Br(X)$, $\Br(A)$ denotes the free magma over~$A$.
    Thus $\Br(A)$ is a set of formal brackets of formal brackets, i.e.\ a subset of $\Br(\Br(X))$.
    By convention, the brackets in $\Br(A)$ are denoted by $\langle \cdot, \cdot \rangle$ to distinguish them from the brackets $(\cdot, \cdot)$ in $\Br(X)$. 
    We denote by $\e$ the canonical morphism of magmas from $\Br(\Br(X))$ to $\Br(X)$.
    We also denote by $\e$ its restriction to $\Br(A)$.
    For $t \in \Br(A)$, $|t|_A$ denotes its length in $\Br(A)$ and $|\e(t)|$ its length in $\Br(X)$.
\end{definition}

\begin{example}
    If $X = \{ X_1, X_2, X_3 \}$ and $A = \{ (X_1, X_2), X_3 \}$, then $t = \langle X_3, \langle X_3, (X_1, X_2) \rangle \rangle \in \Br(A)$, $|t|_A = 3$, $\e(t) = (X_3, (X_3, (X_1, X_2)))$ and $|\e(t)| = 4$.
\end{example}

\begin{definition}[Submagma generated by a subset]
    Let $A \subset \Br(X)$. 
    We denote by $\Br_A \subset \Br(X)$ the submagma of $\Br(X)$ generated by $A$, \emph{i.e} the image of the canonical morphism of magmas $\e : \Br(A) \to \Br(X)$.
\end{definition}

\begin{lemma}[Free subset of $\Br(X)$] \label{def:free-subset}
    Let $A \subset \Br(X)$. We say that the subset $A$ is \emph{free} when $A \cap (\Br_A,\Br_A)=\emptyset$. 
    Then the canonical surjection $\e : \Br(A) \rightarrow \Br_A$ is an isomorphism. 
\end{lemma}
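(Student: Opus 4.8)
The claim is that if $A \subset \Br(X)$ satisfies $A \cap (\Br_A, \Br_A) = \emptyset$, then the canonical magma morphism $\e : \Br(A) \to \Br_A$ is an isomorphism. Since $\e$ is surjective onto $\Br_A$ by the definition of $\Br_A$, the only thing to prove is injectivity. The plan is to argue by induction on the length $|\e(t)|$ in $\Br(X)$ (equivalently, one can induct on $|t|_A + |s|_A$), showing that for $t, s \in \Br(A)$, $\e(t) = \e(s)$ forces $t = s$.

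\textbf{Key steps.} First, the base case: if $\e(t) = \e(s)$ has minimal length, I would like to conclude $t, s \in A$ and $t = s$. The crucial structural observation — which is where the hypothesis $A \cap (\Br_A, \Br_A) = \emptyset$ enters — is a \emph{dichotomy} for elements of $\Br_A$: every element $w \in \Br_A$ either lies in $A$, or is of the form $(w_1, w_2)$ with $w_1, w_2 \in \Br_A$, and \emph{these two cases are mutually exclusive}. Indeed, if $w = \e(t)$ with $|t|_A = 1$ then $w \in A$; if $|t|_A \geq 2$ then $t = \langle t_1, t_2\rangle$ and $w = (\e(t_1), \e(t_2))$ with $\e(t_1), \e(t_2) \in \Br_A$, so $w \in (\Br_A, \Br_A)$; and the hypothesis says precisely that no element of $A$ lies in $(\Br_A, \Br_A)$, so the two situations cannot overlap. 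Second, given $\e(t) = \e(s) =: w$, apply this dichotomy: either $w \in A$, in which case both $t$ and $s$ must have $A$-length $1$ (the alternative would put $w \in (\Br_A,\Br_A)$, contradicting $w \in A$), so $t = w = s$ as elements of $A$; or $w \notin A$, in which case both $t$ and $s$ have $A$-length $\geq 2$, say $t = \langle t_1, t_2 \rangle$ and $s = \langle s_1, s_2 \rangle$, and then $(\e(t_1), \e(t_2)) = w = (\e(s_1), \e(s_2))$ in $\Br(X)$. Since $\Br(X)$ is the \emph{free} magma over $X$, the unique factorization of a bracket of length $> 1$ gives $\e(t_1) = \e(s_1)$ and $\e(t_2) = \e(s_2)$, with both pairs strictly shorter in $\Br(X)$; the induction hypothesis yields $t_1 = s_1$ and $t_2 = s_2$, hence $t = s$.

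\textbf{Main obstacle.} The proof is essentially a clean induction; the only genuinely load-bearing point, and the one I would state carefully as a preliminary claim, is the mutually-exclusive dichotomy for $\Br_A$. One must check that $w \in A$ together with $|t|_A \geq 2$ for some preimage $t$ is genuinely impossible — this is exactly the freeness hypothesis — and also that one cannot have $w \in A$ reached with $A$-length $1$ by one preimage and $A$-length $\geq 2$ by another, which would break the induction step; both follow from the same disjointness $A \cap (\Br_A, \Br_A) = \emptyset$. Beyond that, one should remark that an isomorphism of magmas automatically has an inverse that is a magma morphism, so "isomorphism" here is just bijectivity of a morphism, and nothing further need be verified.
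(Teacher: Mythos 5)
Your proof is correct and is essentially the paper's argument: the paper runs the same reduction as a descent by contradiction (shrinking a counterexample pair via left factors until one element is a leaf and the other is not, then invoking $A \cap (\Br_A,\Br_A) = \emptyset$), whereas you package it as a direct induction with the leaf/non-leaf dichotomy stated up front. The load-bearing points — unique factorization in the free magma $\Br(X)$ and the freeness hypothesis ruling out a leaf preimage coexisting with a non-leaf preimage — are identical in both.
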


\begin{proof}
    By contradiction, assume that there exists a couple $t_1 \neq t_2 \in \Br(A)$ such that $\e(t_1) = \e(t_2)$.
    If $|t_1|_A > 1$ and $|t_2|_A > 1$, since $t_1 \neq t_2$, we can assume by symmetry that $\lambda(t_1) \neq \lambda(t_2)$. 
    Since $\e(t_1) = \e(t_2)$, $\e(\lambda(t_1)) = \e(\lambda(t_2))$, so we have found a couple with the same property but shorter lengths in $\Br(A)$.
    Hence, we can assume that $|t_1|_A = 1$ or $|t_2|_A = 1$.
    One cannot have $|t_1|_A = |t_2|_A = 1$, because the canonical surjection is the identity for trees which are a single leaf.
    By symmetry, assume that $|t_1|_A = 1$ but $|t_2|_A > 1$, then $\e(t_1) \in A$ and $\e(t_2) \in (\Br_A, \Br_A)$, which contradicts the assumption that $A$ is free.
\end{proof}

\subsubsection{Free Lie algebra}

\begin{definition}[Free algebra] \label{def:free.algebra}
    We consider $\mathcal{A}(X)$ the \emph{free associative algebra} generated by $X$ over the field $\K$, i.e.\ the unital associative algebra of polynomials of the noncommutative indeterminates $X$ (see also \cite[Chapter~3, Section~2.7, Definition~2]{zbMATH00194100}).
    The algebra $\mathcal{A}(X)$ is the free vector space over $X^*$, and therefore is a graded algebra:
    \begin{equation}
        \mathcal{A}(X) = \underset{n \in \N}{\bigoplus} \mathcal{A}_n(X),
    \end{equation}
    where $\mathcal{A}_n(X)$ is the finite-dimensional $\K$-vector space spanned by monomials of degree $n$ over $X$ (i.e.\ elements of $X^*$ of length $n$). 
    In particular $\mathcal{A}_0(X) = \K$ and $\mathcal{A}_1(X) = \vect_{\K}(X)$.
\end{definition}

Let us now recall the definition of the main objects that we will be interested in in this paper: free Lie algebras. We refer to the books \cite{MR559927, zbMATH00417855,Serre1992} and the essay \cite{CasselmanFree} for thorough introductions to Lie algebras and free Lie algebras.

\begin{definition}[Free Lie algebra] \label{def:free.lie}
    The algebra $\mathcal A(X)$ is endowed with a natural structure of Lie algebra, the Lie bracket operation being defined by $[a,b] := ab - ba$. 
    This operation satisfies $[a, a] = 0$ and the Jacobi identity $[a,[b,c]]+[c,[a,b]]+[b,[c,a]] = 0$. 
    We consider $\mathcal{L}(X)$, the \emph{free Lie algebra} generated by $X$ over the field $\K$, which is defined as the Lie subalgebra generated by $X$ in $\mathcal A(X)$. 
    It can be seen as the smallest linear subspace of $\mathcal{A}(X)$ containing all elements of $X$ and stable by the Lie bracket (see also \cite[Theorem~0.4]{zbMATH00417855}). 
    The Lie algebra $\mathcal{L}(X)$ is a graded Lie algebra: 
    \begin{equation}
        \mathcal{L}(X) = \underset{n \in \N}{\bigoplus} \mathcal{L}_n(X), \qquad  [\mathcal{L}_m(X),\mathcal{L}_n(X)] \subset \mathcal{L}_{m+n}(X)
    \end{equation}
    where, for each $n \in \N$, we define $\mathcal{L}_n(X) := \mathcal{L}(X) \cap \mathcal{A}_n(X)$ 
\end{definition}

\begin{definition}[Natural evaluation] \label{rk:eval}
    By universal property of the free magma $\Br(X)$, there is an ``evaluation'' mapping $\ev$ : $\Br(X)\to\mathcal{L}(X)$. 
    It is such that $\ev(X_i) := X_i$ for $X_i \in X$, and $\ev(t) := [\ev(\lambda(t)), \ev(\mu(t))]$ when $t \in \Br(X) \setminus X$. 
\end{definition}

\begin{remark}
    The evaluation map $\ev$ is not injective: for example, $(X_0, X_0)$ and $(X_0, (X_1, X_1))$ are two different elements of $\Br(X)$, both evaluated to zero in $\mathcal{L}(X)$. 
\end{remark}

\begin{remark}[Implicit evaluations] \label{rk:conventions}
    For the sake of readability, we will sometimes, when no confusion is possible, omit the evaluation $\ev$ when writing equalities in $\mathcal{L}(X)$.
    In particular, since, by convention, we use parentheses $(\cdot, \cdot)$ as brackets in $\Br(X)$ and $[\cdot, \cdot]$ as brackets in $\mathcal{L}(X)$, we will simply write $[a,b]$ as a shorthand for $[\ev(a), \ev(b)] = \ev((a,b))$ when $a, b \in \Br(X)$.
\end{remark}

\begin{definition}[Iterated bracketing]
    As in $\Br(X)$ (see \cref{def:ad-br}), for $t_1, t_2 \in \mathcal{L}(X)$, we will use the notation $\ad_{t_1}(t_2) := [t_1, t_2]$ for the left bracketing by $t_1$ and $\dad_{t_1}(t_2) := [t_2, t_1]$ for the right bracketing by $t_1$, which allow us to write compactly iterated brackets.
    For each $t \in \mathcal{L}(X)$, the maps $\ad_t(\cdot)$ and $\dad_t(\cdot)$ are derivations on $\mathcal{L}(X)$, so we can use Leibniz' formula.
\end{definition}


\subsection{Hall sets and bases}
\label{ss:def-hall}

This paragraph introduces Hall sets, the associated Hall bases and some of their properties. Hall bases are of paramount importance, notably due to the nice rewriting algorithm (recalled in \cref{sec:rewriting}) and to their deep link with Lazard's elimination process, itself linked with the resolution of formal linear differential equations (as illustrated by \cite{zbMATH04006080}).
They are of course not the only bases of $\mathcal{L}(X)$ (see \cref{s:other-bases} for a short discussion in this direction).

\subsubsection{Definition}

There are different conventions in the literature for the definition of Hall sets.
Indeed, one may decide to swap left and right factors, or swap the order, or both.
We follow Viennot's convention of \cite{MR516004} (also used in control theory by Sussmann \cite{zbMATH04006080}), which is different from the original one by Marshall Hall \cite{zbMATH03059664} or from Reutenauer's in \cite{zbMATH02156381}.
    
\begin{definition}\label{Def2:Laz}
    A \emph{Hall set} is a subset $\B$ of $\Br(X)$ endowed with a total order $<$ such that
    \begin{itemize}
        \item $X \subset \B$,
        \item for all $b_1, b_2 \in \Br(X)$, $(b_1, b_2) \in \B$ iff $b_1, b_2 \in \B$, $b_1 < b_2$ and either $b_2 \in X$ or $\lambda(b_2) \leq b_1$, 
        \item for all $b_1, b_2 \in \B$ such that $(b_1,b_2) \in \B$ then $b_1 < (b_1,b_2)$.
    \end{itemize}
\end{definition}

\begin{remark}
    \label{rk:hall-construction} (See \cite[§5]{Serre1992})
    All Hall sets can be built by induction on the length. 
    One starts with the set $X$ as well as an order on it. 
    To find all Hall elements of length $n$ given those of smaller length, one adds first all $(a, b)$ with $a \in \B$, $|a|=n-1$, $b \in X$ and $a < b$. 
    Then for each bracket $b = (b_1, b_2) \in \B$ of length $2 \leq |b| < n$ one adds all the $(a, b)$ with $a \in \B$ with $|a|=n-|b|$ and $b_1 \leq a < b$. 
    Finally, one inserts the newly generated elements of length $n$ into an ordering, maintaining the condition that $a < (a, b)$. 
\end{remark}

\begin{remark}
    When $c = (a,(b_1,b_2)) \in \B$ then $a$ is ``sandwiched'' between $b_1$ and $c$: $b_1 \leq a < c$. 
    Moreover, iterating the third point of the definition yields $\min(X)=\min(\B)$.
\end{remark}

\cref{Def2:Laz} is due to Viennot, who also proved that the third item is necessary.
It was also known to Shirshov~\cite{zbMATH03234480} (albeit with an opposite convention).
It unifies multiple previous disjoint constructions and narrower definitions, such as the Lyndon basis (see \cref{sec:lyndon}) and the historical length-compatible Hall sets (see \cref{sec:length}).
We refer the interested reader to~\cite{zbMATH00035953} and~\cite{MR516004} for short accounts of the history of Hall sets.

The importance of this unified definition is linked with the following result.

\begin{theorem}[Viennot, \cite{MR516004}]
    \label{thm:viennot}
    Let $\B \subset \Br(X)$ be a Hall set. 
    Then $\ev(\B)$ is a basis of $\mathcal{L}(X)$.
\end{theorem}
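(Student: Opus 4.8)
\textbf{Proof plan for \cref{thm:viennot}.}

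The plan is to prove the two classical facts separately: that $\ev(\B)$ spans $\mathcal{L}(X)$, and that $\ev(\B)$ is linearly independent, each by induction on length using the defining properties of a Hall set from \cref{Def2:Laz}. For the spanning part, I would first observe that $\mathcal{L}(X)$ is generated as a Lie algebra by $X \subset \B$, so it suffices to show that the span $V$ of $\ev(\B)$ is stable under bracketing, i.e.\ that $[\ev(a), \ev(b)] \in V$ for all $a, b \in \B$. This is exactly the content of the recursive rewriting algorithm alluded to in \cref{sec:rewriting}: by antisymmetry one may assume $a < b$; if $(a,b) \in \B$ we are done; otherwise $b = (b_1, b_2)$ with $b_1 \leq a$ being false in the required sense, i.e.\ $b_1 > a$ or the inequality is non-strict in the wrong place — and one uses the Jacobi identity to write $[\ev(a),\ev(b)] = [[\ev(a),\ev(b_1)],\ev(b_2)] + [\ev(b_1),[\ev(a),\ev(b_2)]]$, where each inner bracket involves strictly smaller data with respect to a suitable well-founded order (lexicographic on $(|a|+|b|$, something measuring how far $b$ is from being admissible over $a)$). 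The main obstacle here is setting up the termination argument cleanly: one must exhibit an ordinal-valued or $\N$-valued quantity that strictly decreases through the Jacobi rewriting, so that an induction closes; the third axiom $b_1 < (b_1,b_2)$ in \cref{Def2:Laz} is precisely what makes such a measure exist, and I would cite the classical treatment (e.g.\ \cite[Section~9]{zbMATH02156381} or \cite{MR516004}) rather than reproving termination from scratch.

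For linear independence, I would use a counting/dimension argument combined with the spanning statement. Since $\mathcal{L}(X)$ is graded with finite-dimensional homogeneous components $\mathcal{L}_n(X)$ (when $X$ is finite; the infinite case reduces to the finite one since any finite linear dependence involves only finitely many letters), and since the Hall elements of length $n$ span $\mathcal{L}_n(X)$, it is enough to check that the number of Hall elements of length $n$ equals $\dim \mathcal{L}_n(X)$, which by the Poincaré--Birkhoff--Witt theorem and Witt's formula is the necessary dimension count $\frac{1}{n}\sum_{d \mid n} \mu(d)\, |X|^{n/d}$. The combinatorial identity that the Hall set has exactly this many elements in each degree can itself be proved by induction using the inductive construction of \cref{rk:hall-construction} together with Lazard's elimination, or by invoking the generating-function identity $\prod_{b \in \B}(1 - t^{|b|})^{-1} = (1 - |X| t)^{-1}$ which encodes PBW for $\mathcal{L}(X)$; this product identity is the standard way to see simultaneously that the $\ev(b)$ are independent. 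Alternatively, and more self-containedly, I would give the direct argument via the bijection between Hall words and the free monoid (the ``Hall--Viennot'' factorization): every word in $X^*$ factors uniquely as a non-increasing product of Hall words, which yields both the PBW-type basis of the universal enveloping algebra $\mathcal{A}(X)$ by Hall monomials and, by restriction, linear independence of $\ev(\B)$ in $\mathcal{L}(X)$.

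Putting it together: Step 1, reduce to finitely generated $X$. Step 2, prove $\vect_\K \ev(\B)$ is bracket-stable via Jacobi-identity rewriting, with a termination measure supplied by the axioms of \cref{Def2:Laz}; conclude spanning. Step 3, prove linear independence either by the dimension count against Witt's formula or by the unique-factorization / PBW argument. I expect Step 2's termination bookkeeping to be the real heart of the matter — it is the place where Viennot's third axiom (which he showed is genuinely necessary) does its work — whereas Step 3, while requiring PBW, is essentially bookkeeping with generating functions once the unique factorization of words over Hall words is established. I would remark that the quantitative refinements of this rewriting (how many basis terms appear, with what coefficients) are exactly the subject of the rest of the paper, so here only the qualitative finiteness and the dimension match are needed.
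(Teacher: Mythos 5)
First, a point of calibration: the paper does not prove \cref{thm:viennot} at all — it is quoted from Viennot \cite{MR516004} and used as a black box — so there is no internal proof to compare your plan against line by line. The closest the paper comes is the spanning half: \cref{sec:rewriting} and the proof of \cref{thm:Reutenauer} carry out exactly your Step~2, with the termination measure made precise, namely a double induction on the pair $(|a|+|b|, \min\{a,b\})$ whose engine is the fact (proved along the induction) that every $c \in \supp [a,b]$ with $(a,b)\notin\B$ satisfies $\lambda(c) > a$; this is what guarantees that in the secondary brackets $[d,\mu(b)]$ and $[\lambda(b),d]$ the minimum of the two arguments has strictly increased inside a finite set. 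Your plan correctly identifies termination as the heart of the matter but leaves the measure unspecified (``something measuring how far $b$ is from being admissible''); to be self-contained you should either reproduce that double induction or use the quantity $\theta_a(b)$ which the paper later shows (\cref{p:rec-theta}) is strictly decreasing along the rewriting. Also, your description of the case split is garbled: for $a<b$ with $(a,b)\notin\B$ one has precisely $b\notin X$ and $a<\lambda(b)<\mu(b)$, which is what licenses the Jacobi step.

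For linear independence, your two options are not equally safe. The dimension-count route has a circularity hazard: the statement that a Hall set has exactly $\frac{1}{n}\sum_{d\mid n}\mu(d)\,|X|^{n/d}$ elements of length $n$ is not a formal consequence of \cref{rk:hall-construction}; in the literature it is derived either from Lazard elimination or from the unique-factorization theorem, i.e.\ from the same input as your second option, and in addition you need PBW together with the identification of $\mathcal{A}(X)$ as the enveloping algebra of $\mathcal{L}(X)$ to know that $\dim\mathcal{L}_n(X)$ is given by Witt's formula in the first place. The genuinely self-contained route is the one you name last: the unique non-increasing factorization of words over Hall words, the resulting triangular basis of $\mathcal{A}(X)$ by decreasing Hall monomials, and independence of $\ev(\B)$ by restriction. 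That factorization theorem is the actual content of Viennot's theorem and is the one step your plan neither proves nor reduces to anything simpler; as a plan this is acceptable (the paper itself just cites Viennot), but be aware that essentially all of the remaining work lives there, not in the ``bookkeeping'' you relegate it to.
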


\begin{example}
    For instance, with $X=\{X_0,X_1\}$, the elements of length at most $4$ of each Hall set $\B \subset \Br(X)$ with an order $<$ such that $X_0<X_1$ and $X < \Br(X) \setminus X$ are: $X_0$, $X_1$, $(X_0,X_1)$, $(X_0,(X_0,X_1))$, $(X_1,(X_0,X_1))$, $(X_0,(X_0,(X_0,X_1)))$, $(X_1,(X_0,(X_0,X_1)))$, $(X_1,(X_1,(X_0,X_1)))$. 
    But $b := (X_0,(X_1,(X_0,X_1)))$ does not lie in $\B$ because $\lambda(\mu(b))=X_1>X_0$. 
    In fact, the following equality holds in $\mathcal{L}(X)$:
    \begin{equation}[X_0,[X_1,[X_0,X_1]]]=[[X_0,X_1],[X_0,X_1]]+[X_1,[X_0,[X_0,X_1]]]=[X_1,[X_0,[X_0,X_1]]].
    \end{equation}
    This illustrates how \Cref{Def2:Laz} prevents elements from $\Br(X)$, whose evaluations in $\mathcal{L}(X)$ are linked by Jacobi relations, to appear simultaneously in $\B$. 
\end{example}

\subsubsection{Extension of a Hall order}

Constructing a Hall set relies on the construction of an order.
In the sequel, we will often need to construct Hall sets from appropriate totally ordered subsets of $\Br(X)$.
We therefore introduce the following notions and extension result.

\begin{definition}[$\lambda$-stability]
    We say that $A \subset \Br(X)$ is \emph{$\lambda$-stable} when $\lambda(A\setminus X) \subset A$.
\end{definition}

\begin{definition}[Hall order] \label{def:hall-order}
    Let $A$ be a $\lambda$-stable subset of $\Br(X)$. 
    We say that an order $<_A$ on $A$ is a \emph{Hall order} when $<_A$ is a total order on $A$ such that, for every $t \in A \setminus X$, $\lambda(t) <_A t$.
\end{definition}

\begin{proposition} \label{Prop:Prolonger_ordre}
Let $A$ be a $\lambda$-stable non empty subset of $\Br(X)$ and $<_A$ a Hall order on $A$. There exists a Hall order $<$ on $\Br(X)$ that extends $<_A$, i.e.\ for every $t_1, t_2 \in A$, $t_1<t_2$ iff $t_1 <_A t_2$.
\end{proposition}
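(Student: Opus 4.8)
The plan is to identify the Hall orders on $\Br(X)$ with the linear extensions of a canonical strict partial order. For $s,t \in \Br(X)$, write $s \prec_0 t$ when $s$ is a \emph{proper iterated left factor} of $t$, i.e.\ $s = \lambda^k(t)$ for some integer $k \geq 1$ (equivalently $s \in \Lambda(t)$ and $s \neq t$). Since $|\lambda(u)| < |u|$ for $u \notin X$, lengths strictly decrease along $\prec_0$-chains, so $\prec_0$ is a strict partial order on $\Br(X)$. Moreover, a total order $<$ on $\Br(X)$ is a Hall order in the sense of \cref{def:hall-order} (note that $\Br(X)$ is trivially $\lambda$-stable) if and only if $<$ contains $\prec_0$: indeed $\lambda(t) \prec_0 t$ gives one direction, and iterating the inequalities $\lambda(u) < u$ gives $\lambda^k(t) < t$, hence the converse. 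Thus the proposition reduces to producing a total order on $\Br(X)$ containing both $\prec_0$ and $<_A$. By Szpilrajn's order-extension theorem (a plain topological sort when $X$ is finite), it is enough to prove that the transitive closure $\prec$ of $\prec_0 \cup {<_A}$ is irreflexive, hence a strict partial order.

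The core of the argument is therefore the following acyclicity claim: there is no closed walk $w_0 \to w_1 \to \cdots \to w_n = w_0$ with $n \geq 1$ whose steps all belong to $\prec_0$ or to $<_A$. I would take such a walk of minimal length and show first that every vertex lies in $A$. If some $w_j \notin A$, then neither edge incident to $w_j$ can be a $<_A$-edge (those have both endpoints in $A$), so both are $\prec_0$-edges; hence $w_{j-1}$ is a proper iterated left factor of $w_j$, itself a proper iterated left factor of $w_{j+1}$, so $w_{j-1}$ is a proper iterated left factor of $w_{j+1}$. Splicing these two edges through $w_j$ into the single $\prec_0$-edge $(w_{j-1}, w_{j+1})$ produces a strictly shorter closed walk (or, in the degenerate case $n = 2$, the impossible relation $w_0 \prec_0 w_0$), contradicting minimality. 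So all $w_i \in A$.

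It remains to see that every edge of this walk goes strictly upward for $<_A$. This is immediate for $<_A$-edges. For a $\prec_0$-edge, say $w_i = \lambda^k(w_{i+1})$ with $k \geq 1$, the $\lambda$-stability of $A$ (applied to $w_{i+1} \in A \setminus X$ and then inductively) puts $\lambda(w_{i+1}), \dots, \lambda^k(w_{i+1})$ in $A$, and then the defining property of the Hall order $<_A$, namely $\lambda(u) <_A u$ for $u \in A \setminus X$, yields $w_i = \lambda^k(w_{i+1}) <_A w_{i+1}$ by iteration. Chaining all the inequalities around the walk gives $w_0 <_A w_1 <_A \cdots <_A w_n = w_0$, contradicting the antisymmetry of $<_A$. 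This establishes the claim, and with it the existence of a total order $<$ on $\Br(X)$ extending $\prec$.

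Finally I would record the two required properties of such a $<$: it contains $\prec_0$, so it is a Hall order on $\Br(X)$; and it contains $<_A$ while both $<$ and $<_A$ are total on $A$, so for $t_1, t_2 \in A$ one has $t_1 < t_2$ iff $t_1 <_A t_2$, i.e.\ $<$ extends $<_A$. Nothing here is deep; the only step demanding real care is the acyclicity claim, where one must handle the combinatorics of a minimal closed walk and use $\lambda$-stability together with the Hall-order inequality exactly once each. The appeal to Szpilrajn's theorem is avoidable by an explicit induction on length when $X$ is finite, and some form of choice is unavoidable — but harmless — for infinite $X$.
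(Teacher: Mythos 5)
Your proof is correct, and it takes a genuinely different route from the paper's. The paper proceeds by Zorn's lemma on the poset of Hall-ordered $\lambda$-stable subsets extending $(A, <_A)$, and shows a maximal element must have domain all of $\Br(X)$ because any missing element $t$ can be adjoined together with its chain of iterated left factors, placed above everything already ordered; a companion remark also gives a fully explicit extension by chaining $A$ in front of $\Br(X)\setminus A$ ordered by any pre-existing Hall order. You instead characterize Hall orders on $\Br(X)$ as the linear extensions of the strict partial order $\prec_0$ of proper iterated left factors, reduce the proposition to Szpilrajn's theorem, and carry the real content in the acyclicity claim for $\prec_0 \cup {<_A}$ — where your minimal-closed-walk argument correctly splices consecutive $\prec_0$-edges at vertices outside $A$ and then uses $\lambda$-stability plus the Hall inequality $\lambda(u) <_A u$ exactly where needed. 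Your version isolates cleanly why the two constraints are compatible (the would-be cycle must live entirely in $A$ and ascend strictly for $<_A$), at the cost of invoking the order-extension theorem; the paper's version avoids any acyclicity verification by growing the domain one left-factor chain at a time, and its remark yields a concrete order rather than a bare existence statement. Both arguments require some form of choice when $X$ is infinite, as you note.
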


\begin{proof}
    We consider the set $\mathcal{A}$ of the Hall-ordered $\lambda$-stable subsets $(B,<_B)$ with $B \subset \Br(X)$ such that, $A \subset B$ and for every $t_1,t_2 \in A$, $t_1 <_B t_2$ iff $t_1<_A t_2$.
    The set $\mathcal{A}$ is equipped with the (partial) order $\ll$ defined by $B \ll B'$ if $B \subset B'$ and for every $t_1,t_2 \in B$, $t_1 <_{B} t_2$ iff $t_1 <_{B'} t_2$. 

\step{Step 1: We prove that $\ll$ is an inductive order on $\mathcal{A}$} 
Let $(B_i)_{i\in I}$ be a family of  $\mathcal{A}$ totally ordered for $\ll$.
Let $\widetilde{B}:=\cup_{i\in I} B_i$. For $t_1, t_2 \in \widetilde{B}$, there exists $i \in I$ such that both $t_1, t_2 \in B_i$, then we say that $t_1 <_{\widetilde{B}} t_2$ if $t_1 <_{B_i} t_2$.
Then $(\widetilde{B},<_{\widetilde{B}})$ belongs to $\mathcal{A}$ and $B_i\ll\widetilde{B}$ for every $i \in I$.

\medskip

Thus, by Zorn's lemma, one may consider a maximal element $(B,<)$ in $\mathcal{A}$. 

\step{Step 2: We prove that $B=\Br(X)$} 
By contradiction, assume the existence of $t \in \Br(X) \setminus B$.
Let $p_0:=\max\{ k \in \N ; |\lambda^k(t)| \geq 1 \}$ and $p:=\max\{ k \in \intset{0, p_0} ; \lambda^k(t) \notin B \}$. Then $\lambda^k(t) \notin B$ for $k=0,\cdots,p$ and $\lambda^k(t) \in B$ for $k=p+1,\cdots,p_0$, because $B$ is $\lambda$-stable. 
Let $\overline{B}:=B \cup\{ \lambda^k(t);k=0,\cdots, p\}$ be equipped with the order $<$ that extends the order $<_B$ on $B$ and satisfies
$\tau <\lambda^p(t)<\dots < \lambda(t) < t$ for every $\tau \in B$.
Then $(\overline{B},<) \in \mathcal{A}$ and $B \ll \overline{B}$, which is a contradiction.
\end{proof}

\begin{remark}
    In particular, for every Hall set $(\B,<)$, since $\B$ is $\lambda$-stable, the order $<$, defined on $\B$, can be extended into a Hall order on $\Br(X)$. 
    This means that, although all that is required to construct a Hall set $\B$ is an order on $\B$ itself, all Hall sets can be seen as being constructed from a Hall order on the whole $\Br(X)$.
    This proves a verification left to the reader in \cite[page~85]{zbMATH00417855}.
\end{remark}

\begin{remark} \label{Rk:Zorn}
    Using Zorn's lemma is quite natural in this context, however, it is not necessary as one can also perform the construction by hand.
    Let $\prec$ be a given Hall order on $\Br(X)$, possibly unrelated with $<_A$. 
    Then, one defines an order $<$ on $\Br(X)$ by saying that, for $a, b \in \Br(X)$,
    \begin{itemize}
        \item if $a, b \in A$, $a < b$ iff $a <_A b$,
        \item if $a \in A$ and $b \in \Br(X) \setminus A$, $a < b$,
        \item if $a, b \in \Br(X) \setminus A$, $a < b$ iff $a \prec b$.
    \end{itemize}
    One easily checks that $<$ is indeed a Hall order on $\Br(X)$ using the assumption that $A$ is $\lambda$-stable.
    
    This construction requires the \emph{a priori} knowledge of a Hall order $\prec$ on $\Br(X)$.
    Such an order obviously exists.
    Given a total order $\prec_X$ on $X$, one can extend it to $\Br(X) \setminus X$ using the lexicographic order (see \cref{rk:lexico}) on the triple $(|t|, \lambda(t), \mu(t))$, which is a Hall order because $|\lambda(t)| < |t|$ for each $t \in \Br(X) \setminus X$.
    
    It remains to construct a total order $\prec_X$ on $X$. 
    If $X$ is countable, by definition there exists an injection $f : X \to \N$ and one can define that $x \prec_X x'$ iff $f(x) < f(x')$.
    If $X$ is not countable, the existence of a total order on $X$ is given by the ordering principle, which is strictly weaker than the axiom of choice (see \cite{zbMATH00769562}). 
\end{remark}

\begin{remark} \label{rk:lexico}
    In \cref{Rk:Zorn} and in the sequel, we often construct Hall orders by partitioning the set to be ordered and/or chaining a finite number of preorders.
    For example, given an order $\prec_X$ on $X$, we define the associated ``lexicographic order'' on the triple $(|t|, \lambda(t), \mu(t))$ by $t_1 < t_2$ iff
    \begin{itemize}
        \item either $|t_1| < |t_2|$,
        \item or $|t_1|=|t_2| = 1$ and $t_1 \prec_X t_2$,
        \item or $|t_1|=|t_2|>1$ and $\lambda(t_1) < \lambda(t_2)$,
        \item or $|t_1|=|t_2|>1$ and $\lambda(t_1) = \lambda(t_2)$ and $\mu(t_1) < \mu(t_2)$.
    \end{itemize}
    Such constructions obviously yield reflexive transitive relations, which are moreover total orders provided that enough preorders are combined.
    Although the definition uses recursively the order on $\lambda(t)$ and $\mu(t)$, this recursion makes sense since one can construct the order by induction on the length of the brackets.
    Therefore, in the sequel, when handling such orders of lexicographic nature, we mostly focus on checking that they satisfy the Hall condition $\lambda(t) < t$.
\end{remark}

We conclude this section by an elementary remark on the possibility to construct Hall sets from Hall orders defined only on parts of $\Br(X)$.

\begin{lemma} \label{Lem:G}
    Let $G$ be a $\lambda$-stable subset of $\Br(X)$ with $X \subset G$, endowed with a (total) Hall order such that, for every $b_1<b_2\in G$, $(b_1,b_2)\in G$.
    There exists a unique Hall set $\B \subset G$ associated with the order on $G$.
\end{lemma}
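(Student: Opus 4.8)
The plan is to construct $\B$ explicitly by the inductive procedure of \cref{rk:hall-construction}, but carried out \emph{inside} $G$, and then verify that the construction is forced (hence unique) and that the result is a genuine Hall set in the sense of \cref{Def2:Laz}. Concretely, I would set $\B := \{ t \in G : t \in X \text{ or } \lambda(\mu(t)) \leq \mu(t) \text{ in the appropriate recursive sense} \}$; more carefully, I would define $\B$ recursively on length: $X \subset \B$, and for $t = (t_1,t_2) \in G$ with $|t| \geq 2$, declare $t \in \B$ iff $t_1, t_2 \in \B$, $t_1 < t_2$, and ($t_2 \in X$ or $\lambda(t_2) \leq t_1$). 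The order on $\B$ is the restriction of the given Hall order on $G$. Since $G$ is $\lambda$-stable and the order on $G$ is a Hall order, the conditions appearing in this definition only ever refer to elements of $G$ of strictly smaller length, so the recursion is well-founded.

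The three bullets of \cref{Def2:Laz} then need checking. The inclusion $X \subset \B$ is immediate. For the second bullet: the ``if'' direction is exactly the defining condition of $\B$, using the hypothesis that $(b_1,b_2) \in G$ whenever $b_1 < b_2$ in $G$ (so that a pair satisfying the Hall condition is never excluded merely for failing to lie in $G$); the ``only if'' direction is again immediate from the definition. The subtle point is the second bullet's requirement that it hold \emph{for all $b_1, b_2 \in \Br(X)$}, not just those in $G$ --- but if $(b_1,b_2) \in \B \subset G$ then by $\lambda$-stability $b_1 = \lambda((b_1,b_2)) \in G$ and one checks $b_2 \in G$ as well (here one uses that $G$ is $\lambda$-stable together with the fact that $\mu(b_2)$-type factors of a bracket in $\B$ lie in $\B$; if $G$ is not $\mu$-stable a priori, note $b_2 \in \B$ follows because $b_2$ is needed as a factor, and $\B \subset G$). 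For the third bullet, $b_1 < (b_1,b_2)$: this is where the Hall-order hypothesis on $G$ is used, via $\lambda((b_1,b_2)) = b_1 < (b_1,b_2)$.

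For uniqueness: suppose $\B'$ is another Hall set contained in $G$ whose order is (the restriction of) the one on $G$. I would show $\B = \B'$ by induction on length. Length one: both contain exactly $X$. For the inductive step, a bracket $t = (t_1,t_2) \in G$ of length $n$ lies in $\B'$ iff $t_1,t_2 \in \B'$, $t_1 < t_2$, and ($t_2 \in X$ or $\lambda(t_2) \leq t_1$) --- this is precisely the characterization in the second bullet of \cref{Def2:Laz} applied to $\B'$, and by the induction hypothesis $t_1, t_2 \in \B'$ iff $t_1, t_2 \in \B$. Hence the membership criterion for $\B'$ at length $n$ coincides with that defining $\B$, giving $\B = \B'$. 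The order then automatically agrees since both are the restriction of the order on $G$.

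The main obstacle I anticipate is the bookkeeping around the universal quantifier ``for all $b_1, b_2 \in \Br(X)$'' in the second bullet of \cref{Def2:Laz}: one must be careful that the defining condition of $\B$, which only examines pairs inside $G$, really captures \emph{all} the ways an element of $\Br(X)$ could or could not be in $\B$. The resolution is that membership in $\B$ forces membership of the left factor in $G$ (by $\lambda$-stability) and of both factors in $\B$; combined with the hypothesis that $G$ is closed under forming pairs $(b_1,b_2)$ with $b_1 < b_2$, this ensures no element is wrongly excluded, and the equivalence in the second bullet holds verbatim. Everything else is a routine induction on length.
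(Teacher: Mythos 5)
Your proposal is correct and follows essentially the same route as the paper: the paper also builds $\B$ by induction on length, declaring $(b_1,b_2)$ of length $\ell$ to be in $\B$ exactly when its factors are already in $\B$ and the Hall conditions $b_1<b_2$ and ($b_2\in X$ or $\lambda(b_2)\le b_1$) hold, and uses the closure hypothesis on $G$ to conclude $\B_\ell\subset G$; uniqueness is likewise forced by the recursion. Your extra care about the universal quantifier in the second bullet of \cref{Def2:Laz} and about where $\lambda$-stability is needed is sound and only makes explicit what the paper leaves implicit.
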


\begin{proof}
    By induction on $\ell \in \N^*$, we construct and determine the sets $\B_\ell \subset G$ of elements of $\mathcal{B}$ with length $\ell$.
    For $\ell=1$, $\mathcal{B}_1=X \subset G$.
    Let $\ell \geq 2$. We assume $\B_j$ constructed for every $j < \ell$. Let 
    $\mathcal{B}_{\ell}:=\cup_{1\leq j \leq \ell-1} \B_{\ell}^j$ where $\B_\ell^1 := \{ (b_1,b_2) ; b_1 \in \B_{\ell-1}, b_2 \in \B_1, b_1<b_2 \}$ and $\B_\ell^j:=\{ (b_1,b_2) ; b_1 \in \B_{\ell-j}, b_2 \in \B_j, \lambda(b_2) \leq b_1 < b_2 \}$ for $j=2,\dots,\ell-1$.
    By the last property of $G$, $\B_\ell \subset G$. 
\end{proof}

\subsubsection{Structure constants} \label{s:structure-constants}

Structure constants of an algebra relative to a basis are the expression of the product of pairs of elements of this basis as linear combinations over it. 
They are for example often used in physics to describe some specific algebras.
We recall this classical notion in the context of Hall bases of free Lie algebras.

\begin{definition}[Structure constants]
    Let $\B \subset \Br(X)$ be a Hall set.
    Using the convention of \cref{rk:conventions}, for any $a, b \in \B$, since $[a, b] \in \mathcal{L}(X)$, it can be written as a finite linear combination over $\B$, say
    \begin{equation} \label{eq:ab-sum-c}
        [a, b] = \sum_{c \in \B} \gamma^c_{a,b} \ev(c),
    \end{equation}
    where the coefficients $\gamma^c_{a,b} \in \K$ and only a finite number of them are non-zero. 
    The set of all coefficients $\gamma^c_{a,b}$ are called the structure constants of $\mathcal{L}(X)$ relative to the basis $\B$. 
\end{definition}

Our choice is to measure the size of the structure constants with the help of the following $\ell^1$ norm of the coordinates of decompositions in the basis.

\begin{definition}[Norm relative to a basis]
    Let $\B \subset \Br(X)$ be a Hall set.
    Any $a \in \mathcal{L}(X)$ can be written as a finite linear combination of basis elements, say
    \begin{equation} \label{eq:a-sum-c}
        a = \sum_{c \in \B} \alpha_a^c \ev(c),
    \end{equation}
    where the coefficients $\alpha_a^c \in \K$ and only a finite number of them are non-zero. We endow $\mathcal{L}(X)$ with the following norm
    \begin{equation}
        \| a \|_\B := \sum_{c \in \B} | \alpha_a^c |.
    \end{equation}
\end{definition}

In particular,
\begin{equation} \label{eq:norm-ab}
    \| [a,b] \|_\B = \sum_{c \in \B} |\gamma^c_{a,b}|,
\end{equation}
which justifies why the statements of \cref{s:intro-results} measure the growth of the structure constants.

\begin{definition}[Support]
    Let $\B \subset \Br(X)$ be a Hall set.
    For $a \in \mathcal{L}(X)$, we will denote by $\supp a$ the set of $c \in \B$ involved in~\eqref{eq:a-sum-c} such that $\alpha_a^c \neq 0$.
    In particular, for $a, b \in \B$, $\supp [a,b]$ denotes the set of $c \in \B$ involved in \eqref{eq:ab-sum-c} such that $\gamma_{a,b}^c \neq 0$.
\end{definition}

\subsubsection{Subsets of a Hall set}

In the sequel, we will use the following elementary results on  subsets of Hall sets.

\begin{lemma} \label{Lem:BRA}
Let $\B \subset \Br(X)$ be a Hall set, $A$ be a finite subset of $\B$.
Then, for every $t \in \B \cap \Br_A$, we have $t \geq \min(A)$.
\end{lemma}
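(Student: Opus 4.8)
We want to show: if $\B$ is a Hall set, $A$ a finite subset of $\B$, and $t \in \B \cap \Br_A$, then $t \geq \min(A)$.

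The natural approach is induction on the length $|t|$ (equivalently, on $|t|_A$ via some $\Br(A)$-representative). Write $m := \min(A)$, which exists since $A$ is finite. Since $t \in \Br_A$, there is $u \in \Br(A)$ with $\e(u) = t$. If $|u|_A = 1$, then $t \in A$, so $t \geq m$ trivially. If $|u|_A > 1$, write $u = \langle u_1, u_2 \rangle$ with $t = (t_1, t_2)$, $t_i = \e(u_i) \in \Br_A$. The subtlety is that $t_1, t_2$ need not themselves lie in $\B$ — only their bracket $t$ does — so we cannot directly apply the induction hypothesis to $t_1$ and $t_2$. Instead we use the structure of Hall sets: since $(t_1,t_2) = t \in \B$, the second axiom of Definition \ref{Def2:Laz} forces $t_1 \in \B$ (and $t_2 \in \B$, and $t_1 < t_2$). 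Now $t_1 \in \B \cap \Br_A$ with $|t_1| < |t|$, so by the induction hypothesis $t_1 \geq m$. Finally, the third axiom of Definition \ref{Def2:Laz} gives $t_1 < (t_1,t_2) = t$, hence $t > t_1 \geq m = \min(A)$, which closes the induction.

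The one point that deserves care — and which I expect to be the main (minor) obstacle — is justifying that a length-$1$ element $u_i$ of $\Br(A)$ maps to an element of $A \subset \B$, and more importantly that when $|u|_A > 1$ the left factor $t_1 = \e(\lambda(u))$ genuinely lies in $\B$: this is exactly where the Hall axioms are used, since membership $t \in \B$ with $t = (t_1,t_2)$ is equivalent to $t_1, t_2 \in \B$ with the order/left-factor conditions. Note we do not even need $A$ to be free for this argument; we only need that $\B$ is a Hall set and that $t \in \B$ has a bracket decomposition whose left factor is again a bracket of elements of $A$ (which follows since $\e$ is a magma morphism, so $\e(\lambda(u)) = \lambda(\e(u))$ when $|u|_A>1$). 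The rest is a one-line application of the two relevant Hall axioms together with the induction hypothesis.
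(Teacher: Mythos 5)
Your proof is correct and follows essentially the same route as the paper's: pick a $\Br(A)$-representative, handle the leaf case directly, and otherwise use the Hall axioms to see that the left factor $\e(u_1)$ lies in $\B\cap\Br_A$, apply the induction hypothesis to it, and conclude via $t>\lambda(t)$. The only cosmetic difference is the induction variable (you use $|t|$, the paper uses the minimal $\Br(A)$-length of a representative); both decrease strictly when passing to the left factor, so either works.
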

\begin{proof}
The proof is by induction on $\ell(t) := \min\{ |\tau|_A ; \tau \in \Br(A), \e(\tau)=t \} \in \N^*$. If $t \in \B \cap \Br_{A}$ and $\ell(t)=1$, then $t \in A$ thus the conclusion holds. Now, let $t \in \B \cap \Br_A$ such that $\ell(t) \geq 2$. There exists $\tau_1,\tau_2 \in \Br(A)$ such that $t= \e(\langle \tau_1,\tau_2 \rangle)=(\e(\tau_1),\e(\tau_2))$ and $|\tau_1|_A + |\tau_2|_A = \ell(t)$.
Then $\e(\tau_1) \in \B \cap \Br_A$ and $\ell(\e(\tau_1)) \leq |\tau_1|_A < \ell(t)$ thus, by the induction assumption, $\e(\tau_1) \geq \min(A)$. Finally, $t>\e(\tau_1) \geq \min(A)$.
\end{proof}

\begin{lemma} \label{Lem:libre}
    Let $\B \subset \Br(X)$ be a Hall set, $A \subset \B$ free and $\Delta$ a finite subset of $\B \cap \Br_{A}$ such that, for every $t \in \Delta \setminus A$, $\lambda(t) < \min(\Delta)$. 
    Then $\Delta$ is free. 
\end{lemma}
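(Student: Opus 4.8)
The plan is to argue by contradiction straight from the definition of a free subset (\cref{def:free-subset}): suppose there exists $t \in \Delta \cap (\Br_\Delta, \Br_\Delta)$, write $t = (u,v)$ with $u, v \in \Br_\Delta$, and derive a contradiction. The first step is a containment remark: since $\Delta \subset \Br_A$ and $\Br_A$ is a submagma of $\Br(X)$, we get $\Br_\Delta \subset \Br_A$, hence $u, v \in \Br_A$ and $t = (u,v) \in (\Br_A, \Br_A)$. As $A$ is free, this forces $t \notin A$, so $t \in \Delta \setminus A$; in particular $|t| \geq 2$, so $t \notin X$ and $\lambda(t) = u$ is well defined.

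Next I would play the two remaining hypotheses against each other. On the one hand, the standing assumption on $\Delta$ applied to $t \in \Delta \setminus A$ gives $\lambda(t) = u < \min(\Delta)$. On the other hand, since $t = (u,v) \in \Delta \subset \B$ and $\B$ is a Hall set, the second item of \cref{Def2:Laz} yields in particular $u \in \B$; as moreover $u \in \Br_\Delta$ and $\Delta$ is a finite subset of $\B$, \cref{Lem:BRA} (applied with $\Delta$ in the role of the finite subset) gives $u \geq \min(\Delta)$. These two inequalities are incompatible, so no such $t$ exists, i.e.\ $\Delta \cap (\Br_\Delta, \Br_\Delta) = \emptyset$, which means $\Delta$ is free.

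The argument involves no estimate or induction of its own; the only genuinely load-bearing point — and hence the one to state carefully — is that the left factor $u = \lambda(t)$ belongs simultaneously to $\B$ (so that \cref{Lem:BRA} applies) and to $\Br_\Delta$, and that it is exactly the element constrained by the hypothesis on $\Delta \setminus A$. All the real content is already packaged in \cref{Lem:BRA}, which encodes the ``sandwiching'' of left factors of Hall elements lying in a submagma; the present lemma is essentially the observation that this sandwiching, combined with freeness of $A$, propagates to $\Delta$.
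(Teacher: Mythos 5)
Your proof is correct and follows essentially the same route as the paper's: assume $t \in \Delta \cap (\Br_\Delta,\Br_\Delta)$, use freeness of $A$ together with $\Br_\Delta \subset \Br_A$ to force $t \in \Delta \setminus A$, and then play the hypothesis $\lambda(t) < \min(\Delta)$ against \cref{Lem:BRA} applied to $\lambda(t) \in \B \cap \Br_\Delta$. The only cosmetic difference is that the paper phrases the exclusion of $t \in A$ as a preliminary remark rather than deriving it inside the contradiction, but the content is identical.
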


\begin{proof} 
    If $t \in A$, then $t \notin (\Br_A,\Br_A)$ because $A$ is free, thus $t \notin (\Br_\Delta,\Br_\Delta)$ because $\Br_\Delta \subset \Br_A$.
    We assume the existence of $ t \in \Delta \cap (\Br_\Delta,\Br_\Delta)$. 
    Then $t \in \Delta \setminus A$ and $t \in \B$ thus $\lambda(t) \in \B \cap \Br_\Delta$. 
    By \cref{Lem:BRA}, $\lambda(t) \geq \min(\Delta)$, which is a contradiction.
\end{proof}

\section{A rough bound for the decomposition algorithm}
\label{sec:exp-n-n2}

The main goal of this section is to prove \cref{thm:enn2-easy}, which provides a first rough bound of structure constants of $\mathcal{L}(X)$ relative to Hall sets.
We start by an introduction to the classical decomposition algorithm over Hall sets in \cref{sec:rewriting} then proceed to the proof of our first estimate in \cref{sec:rewrite-estimate}.

\subsection{The classical decomposition algorithm} 
\label{sec:rewriting}

Let $\B \subset \Br(X)$ be a Hall set.
For every $a < b \in \B$, the bracket $[a, b]$ can be decomposed in $\mathcal{L}(X)$ on $\B$ using the following recursive algorithm:
\begin{itemize}
    \item if $b \in X$ or $\lambda(b) \leq a$, then $(a, b) \in \B$ so we are done,
    \item otherwise, writing $b = (\lambda(b), \mu(b))$ with $a < \lambda(b) < \mu(b)$, the Jacobi identity yields
    \begin{equation}
        [a, b]  = [[a, \lambda(b)], \mu(b)] + [\lambda(b), [a, \mu(b)]],
    \end{equation}
    then, 
    \begin{itemize}
        \item apply the algorithm to decompose $[a, \lambda(b)]$ as a $\sum \alpha_d \ev(d)$ over a finite subset of $\B$ and, for each $d$, apply the algorithm to decompose $[d, \mu(b)]$ or $- [\mu(b), d]$ on the basis (depending on whether $d < \mu(b)$ or $\mu(b) < d$),
        \item apply the algorithm to decompose $[a, \mu(b)]$ as a $\sum_{d} \alpha_d \ev(d)$ over a finite subset of $\B$ and, for each $d$, apply the algorithm to decompose $[\lambda(b), d]$ or $- [d, \lambda(b)]$ on the basis (depending on whether $\lambda(b) < d$ or $d < \lambda(b)$),
        \item return the sum of all these decompositions.
    \end{itemize}
\end{itemize}
It is not immediate to verify that the recursion  terminates.
One could hope to make a proof by induction on $|a|+|b|$.
Indeed, when decomposing $[a, \lambda(b)]$, one has $|a|+|\lambda(b)| < |a|+|b|$; but this argument is insufficient since, when decomposing $[d, \mu(b)]$, one has $|d|+|\mu(b)| = |a|+|b|$.

Nevertheless, it is well-known that this algorithm does converge, thanks to the properties of Hall sets, see e.g.\ \cite[Section~9]{zbMATH02156381} for a nice exposition (albeit with reversed order conventions), or \cite{zbMATH03217927,zbMATH04136169,zbMATH00417855,zbMATH03133557} for earlier occurrences of this algorithm.
The key point is to prove along the induction that, when $(a,b) \notin \B$, the elements $c \in \supp [a,b]$ satisfy $\lambda(c) > a$. 
This allows for an induction on the couple $(|a|+|b|, \min \{ a, b \})$.

In \cref{ss:rec-theta}, we give a cleaner interpretation of the induction process at stake.

\medskip

In particular, this algorithm yields a constructive proof that $\ev(\B)$ indeed spans $\mathcal{L}(X)$.

\medskip

This algorithm is described in many theoretical works (see all references above) and implemented in most Lie algebraic packages (see e.g.\ \texttt{Hall.\_rewrite\_bracket} in SageMath \cite{sagemath}, \texttt{lie\_basis.\_prod} in CoRoPa's LibAlgebra \cite{libalgebra} or \texttt{phbize} in LTP \cite{zbMATH02237869}).
However, its complexity has, up to our knowledge, not been investigated before, even in papers such as \cite{duleba2019algorithm} where the authors strove to optimize the complexity of the generation of the basis, but without considering the complexity of the decomposition algorithm.

The interpretation of \cref{ss:rec-theta} bounds the size of the associated call stack.
The estimates on the structure constants proved in the next paragraph (and in the sequel of the paper) can be seen as being first steps towards estimating the computational complexity of this algorithm (both with respect to space and time).
Indeed, assuming that representing in memory a bracket $b \in \Br(X)$ requires a space of size $\mathcal{O}(|b|)$, representing the result of the algorithm, i.e.\ the decomposition of $[a,b]$ on the basis, requires a space of size $\mathcal{O}((|a|+|b|) \| [a,b] \|_\B)$, if the algorithm returns a list of (signed) brackets whose sum equals $[a,b]$ in $\mathcal{L}(X)$.
Giving a precise estimate of the time-complexity of the decomposition algorithm is more difficult, since it depends on the time-complexity of the comparison operation, which depends on the specific order defining $\B$.
However, we expect that, given the time-complexity of the comparison operation and the implementation details of the algorithm, adapting the methods we develop in the sequel could lead to bounds on the asymptotic time-complexity of the algorithm.

\subsection{A first naive bound for structure constants}
\label{sec:rewrite-estimate}

Even if $X$ is infinite, any given bracket $[a, b]$ only involves at most $|a|+|b|$ elements of $X$.
Thus, \cref{thm:enn2-easy} of the introduction is a direct consequence of the following estimate.
This estimate is by no means optimal and we will prove much tighter upper bounds in the next sections.
Nevertheless, we believe that it is interesting to remark that naively tracking the double induction on $(|a|+|b|, \min \{ a, b \})$ of this classical algorithm does not yield a very practical estimate.

\begin{theorem} \label{thm:Reutenauer}
    Assume that $X$ is finite.
    Let $\B \subset\Br(X)$ be a Hall set. For every $a < b \in \B$,
    \begin{itemize}
    \item either $(a,b) \in \B$ and then $\left\| [a,b] \right\|_\B=1$,
    \item or $[a,b]=\sum \alpha_c \ev(c)$ where the sum is finite, $\alpha_c \in \Z$, $c \in \B \setminus X$,
    \begin{equation} \label{eq:a<c'}
        a < \lambda(c)
    \end{equation}
    and one has the size estimate
    \begin{equation} \label{eq:2Xn2}
        \left\| [a,b] \right\|_\B = \sum |\alpha_c| \leq 2^{|X|^{\frac{n(n+1)}{2}}},
    \end{equation}
    where $n=|a|+|b|$.
    \end{itemize}
\end{theorem}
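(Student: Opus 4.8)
The plan is to run the double induction on the pair $(n, \min\{a,b\})$ — ordered lexicographically — that is sketched after the algorithm description, and to carefully propagate three things simultaneously: (i) the claimed structural property $a < \lambda(c)$ for every $c \in \supp[a,b]$ with $(a,b)\notin\B$, (ii) integrality of the coefficients, and (iii) the quantitative bound \eqref{eq:2Xn2}. The base cases are immediate: if $b \in X$ or $\lambda(b) \leq a$, then $(a,b) \in \B$ and $\|[a,b]\|_\B = 1$. Otherwise we write $b = (\lambda(b), \mu(b))$ with $a < \lambda(b) < \mu(b)$ and invoke the Jacobi identity $[a,b] = [[a,\lambda(b)],\mu(b)] + [\lambda(b),[a,\mu(b)]]$, and analyze the two terms.

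For the first term, I would first decompose $[a,\lambda(b)] = \sum_d \alpha_d\,\ev(d)$: here $|a|+|\lambda(b)| < n$, so the induction hypothesis applies and gives both integrality and a bound on $\sum|\alpha_d|$ in terms of the smaller length. Crucially, the induction hypothesis also gives $a < \lambda(d)$ for each such $d$ (when $(a,\lambda(b)) \notin \B$; the case $(a,\lambda(b))\in\B$ just gives the single term $d = (a,\lambda(b))$ for which $\lambda(d) = a$, but then one checks $d$ behaves well anyway). Now for each $d$ we must decompose $[d,\mu(b)]$ (or its negative): here $|d| + |\mu(b)| = |a| + |\lambda(b)| + |\mu(b)| = n$, so the length does not drop — this is exactly the subtle point flagged in the text. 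The resolution is that $\min\{d,\mu(b)\}$ is strictly larger than $\min\{a,b\} = a$: indeed $\mu(b) > \lambda(b) > a$, and $d > a$ because $\lambda(d) \geq a$ forces $d > \lambda(d) \geq a$ by the Hall condition (with equality $\lambda(d)=a$ still giving $d>a$). So the second coordinate of the induction parameter strictly increases while the first stays fixed, and the induction hypothesis applies to $[d,\mu(b)]$. The second Jacobi term $[\lambda(b),[a,\mu(b)]]$ is handled symmetrically: decompose $[a,\mu(b)]$ with $|a|+|\mu(b)| < n$ (strict drop), obtaining terms $d$ with $a < \lambda(d)$, then bracket each with $\lambda(b)$; again the relevant minimum rises above $a$. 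Assembling, the output is an integer combination, and one reads off $a < \lambda(c)$ for the final summands $c$ from the innermost application of the induction hypothesis (the last bracketing operation applied).

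The main obstacle — and the only genuinely delicate bookkeeping — is turning the recursion into the explicit double-exponential bound \eqref{eq:2Xn2}. The idea is to define $N(n)$ as a uniform bound on $\|[a,b]\|_\B$ over all Hall sets and all $a<b$ with $|a|+|b| = n$, and separately to bound the number $D(n)$ of distinct elements of $\B$ that can occur in such supports — here finiteness of $X$ is essential, since $\dim \mathcal L_k(X) = \frac{1}{k}\sum_{m \mid k}\mu(m)|X|^{k/m} \leq |X|^k$, so $D(n) \leq \sum_{k \leq n}|X|^k \leq |X|^{n+1}$ roughly, or more crudely the total number of basis elements of degree exactly $n$ is at most $|X|^n$. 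One then unwinds the recursion: decomposing $[a,\lambda(b)]$ produces at most (something like) $N(n-1)$ terms $d$; for each, $[d,\mu(b)]$ is a bracket of total length $n$ but with a strictly larger minimum, and the minimum can increase at most $D(n) \leq |X|^n$ times before $(a',b')\in\B$ becomes forced; so the "horizontal" iteration at fixed length $n$ contributes a factor that is at worst $(2N(n-1))^{|X|^n}$-ish. Chaining this over $n' = n, n-1, \dots$ and being generous with constants yields a recursion of the shape $N(n) \leq N(n-1)^{c\,|X|^n}$, which telescopes to $N(n) \leq 2^{|X|^{n(n+1)/2}}$ after summing the exponents $1 + 2 + \dots + n = n(n+1)/2$ (absorbing multiplicative constants into the base $2$ and the exponent). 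I would be somewhat loose here on purpose: the theorem explicitly advertises that this bound is "by no means optimal," so the cleanest writeup picks convenient crude majorations at each step rather than fighting for the sharp constant, and simply checks that the accumulated exponent is dominated by $|X|^{n(n+1)/2}$. Finally, Theorem~\ref{thm:enn2-easy} follows since any bracket $[a,b]$ with $|a|+|b| = n$ involves at most $n$ letters, so one may replace $|X|$ by $n$.
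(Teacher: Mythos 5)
Your proposal is correct and follows essentially the same route as the paper: a double induction on the pair $(|a|+|b|,\min\{a,b\})$, where the inner parameter can increase at most $|X|^n$ times (the paper formalizes this as an induction on the rank of $a$ in the decreasing enumeration of admissible left arguments, with the maximal one forcing $(a,b)\in\B$), leading to the recursion $C_n \leq (2C_{n-1})^{|X|^n-1}$ and the telescoped exponent $\sum_{k\leq n} k = n(n+1)/2$. The only looseness is in the bookkeeping of constants, which the paper also absorbs generously, so nothing essential is missing.
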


\begin{proof}
    The proof consists in applying the decomposition algorithm described in \cref{sec:rewriting}, while keeping track of the number of required iterations. 
    Heuristically, each iteration consists in applying the Jacobi identity, which doubles the number of terms.
    We construct by induction on $n \geq 2$ a non-decreasing sequence $C_n \in [1,+\infty)$ such that the following property holds.
    
    $\mathcal{H}_n$: ``\emph{for every $a < b\in \B$ such that $|a|+|b|=n$, either $(a,b) \in \B \setminus X$, or $[a, b]=\sum \alpha_c \ev(c)$ where the sum is finite, $\alpha_c \in \Z$, $c \in \B$, $\lambda(c)>a$ and, in both cases $\| [a, b] \|_\B \leq C_n$}''.
    
    \medskip \noindent \textbf{Initialization for $n = 2$}.
    Let $a < b \in \B$ with $|a|+|b| = 2$.
    Then $a, b \in X$ and $(a,b) \in \B$ so the conclusion holds with $C_2 := 1$.

    \medskip \noindent \textbf{Induction on $n \geq 3$}.
    The set
    \begin{equation}
        \mathcal{G}_n := \{ a \in \B ; \enskip \exists b \in \B \text{ such that } a < b \text{  and  } |a|+|b|=n  \}
    \end{equation}
    is finite and $r:=|\mathcal{G}_n| \leq |\{ a \in \B ; |a| \leq n-1 \}| \leq |X|^n$ (for a more precise estimate, albeit generically of the same magnitude, one could use Witt's formula \cite{zbMATH03116668}).
    Let $a^1>\dots>a^r$ be the elements of $\mathcal{G}_n$ in decreasing order.
    We prove by induction on $j \in \intset{1, r}$ the following property.
    
    $\mathcal{H}_n'(j)$: ``\emph{for every $b \in \B$ such that $a^j<b$ and $|a^j|+|b|=n$ then either $(a^j,b) \in \B$, or $[a^j,b]=\sum \alpha_c \ev(c)$ where the sum is finite, $\alpha_c \in \Z$, $c \in \B \setminus X$, $\lambda(c)>a^j$ and, in both cases $\| [a^j, b] \|_\B \leq C_n'(j)$ where $C_n'(j) := (2 C_{n-1})^{j-1}$}''.

    \step{Initialization for $j=1$} 
    Let $b \in \B$ such that $a^1<b$ and $|a^1|+|b|=n$. If $b \in X$, then $(a^1,b) \in \B$ and the conclusion holds. From now on, we assume that $b \notin X$. 
    Then 
\begin{equation} \label{b2=[b3,b4]}
    b=(\lambda(b),\mu(b))  \text{  where  } \lambda(b) <  \mu(b) \in \B \text{  and  } \lambda(\mu(b)) \leq \lambda(b) < b.
\end{equation}
If $\lambda(b) \leq a^1$, then $(a^1,b)=(a^1,(\lambda(b),\mu(b))) \in \B$ and the conclusion holds. 
Now, let us prove that, by definition of $a^1$, the situation $a^1<\lambda(b)$ cannot happen. 
Working by contradiction, we assume that $a^1<\lambda(b)$.
By Jacobi's identity
\begin{equation}
    [a^1,b]=[a^1,[\lambda(b),\mu(b)]]=[[a^1,\lambda(b)],\mu(b)]+[\lambda(b),[a^1,\mu(b)]].
\end{equation}
Thus, at least one of the two terms in the right-hand side is nonzero. 
Let us assume it is the first one.
By the induction assumption on $n$, we have $[a^1,\lambda(b)] = \sum \alpha_d \ev(d)$ where the sum is finite and non trivial, $\alpha_d \in \Z$, $d \in \B$ and $d > \lambda(d) \geq a^1$. 
Therefore $[[a^1,\lambda(b)],\mu(b)]=\sum\alpha_d [d,\mu(b)]$ where the sum is finite and non trivial. 
Let $d \in \B \setminus \{\mu(b)\}$ such that $\alpha_d \neq 0$ and $d \neq \mu(b)$. 
Then $d$ and $\mu(b)$ are two distinct elements of $\B$, with total length $n$, and strictly greater than $a^1$, which is in contradiction with the definition of $a^1$.

This concludes the proof for $j=1$ with $C_n'(1) := 1 = (2 C_{n-1})^{0}$.

    \step{Induction on $j$} 
    Let $j \in \intset{2 , r}$. 
    We assume that $\mathcal{H}_n'(k)$ holds for $k \in \intset{1,j-1}$. 
    Let $b \in \B$ such that $a^j<b$ and $|a^j|+|b|=n$. 
    If $b \in X$, then $(a^j,b) \in \B$ and the conclusion holds. 
    From now on, we assume that $b \notin X$. 
    Then \eqref{b2=[b3,b4]} holds.
    If $\lambda(b) \leq a^j$ then $(a^j,b)=(a^j,(\lambda(b),\mu(b))) \in \B$ and the conclusion holds. 
    From now on, we assume that $a^j<\lambda(b)$.
    By Jacobi's identity
    \begin{equation}
        [a^j,b]=[a^j,[\lambda(b),\mu(b)]]=[[a^j,\lambda(b)],\mu(b)]+[\lambda(b),[a^j,\mu(b)]].
    \end{equation}
    Let us decompose both terms.
    
\begin{itemize}
    \item \emph{Study of $[[a^j,\lambda(b)],\mu(b)]$}. 
    By the induction assumption on $n$, we have $[a^j,\lambda(b)]=\sum \alpha_d \ev(d)$ where the sum is finite, $\alpha_d \in \Z$, $d \in \B$, $\lambda(d) \geq a^j$ and $\sum|\alpha_d| \leq C_{|a^j|+|\lambda(b)|} \leq C_{n-1}$ (since the sequence is non-decreasing).
    Then $[[a^j,\lambda(b)],\mu(b)]=\sum \alpha_d [d,\mu(b)]$. 
    For every $d \in \B$ such that $\alpha_d \neq 0$, $\min\{d,\mu(b)\} > a^j$ (because $d>\lambda(d)\geq a^j$ and $\mu(b)>\lambda(b)>a^j$), thus it belongs to $\{a^1,\dotsc,a^{j-1}\}$. 
    Therefore $[d,\mu(b)]=\sum \beta^d_c \ev(c)$ where the sum is finite, $\beta^d_c \in \Z$, $c \in \B$, $\lambda(c) \geq \min\{d,\mu(b)\} > a^j$ 
    (which proves \eqref{eq:a<c'})
    and $\|[d,\mu(b)]\|_\B=\sum|\beta^c_c| \leq C_n'(j-1)$.
    (We implicitly use the fact that $i \mapsto C_n'(i)$ is non-decreasing). 
    Eventually
    \begin{equation}
        \| [[a^j,\lambda(b)],\mu(b)]\|_\B \leq \sum_{d \in \B} |\alpha_d| \sum_{c \in \B} |\beta^d_c| \leq C_{n-1} C_n'(j-1).
    \end{equation}
    
    \item \emph{Study of $[\lambda(b),[a^j,\mu(b)]]$:} Working in the same way, $\| [\lambda(b),[a^j,\mu(b)]]\|_\B \leq  C_{n-1} C_n'(j-1)$.
\end{itemize}
Finally, we get the expected decomposition of $[a^j,b]$ together with
\begin{equation}
    \|[a^j,b] \|_\B \leq 2 C_{n-1} C_n'(j-1) = C_n'(j) = (2 C_{n-1})^{j-1}.
\end{equation}
Thus, $\mathcal{H}_n$ holds with
\begin{equation} \label{eq:rewrite-cn}
    C_n := C_n'(r) = (2 C_{n-1})^{r-1} \leq (2 C_{n-1})^{|X|^n-1}.
\end{equation}

    \medskip \noindent \textbf{Estimate of $C_n$}.
    Our strategy is to find and estimate a sequence $D_n$ for $n \geq 2$ such that $D_2 \geq C_2$ and $D_n = (2 D_{n-1})^{|X|^n-1}$ for $n \geq 3$.
    Assuming temporarily that $D_{n-1} \geq 2^{|X|^n-1}$ (which we check below), the induction relation yields
    \begin{equation}
        D_n \leq D_{n-1}^{|X|^n}.
    \end{equation}
    Taking the logarithm twice and solving the nested recurrence yields
    \begin{equation}
        \ln \ln D_n \leq \ln \ln D_2 + \sum_{k=3}^n k \ln |X|.
    \end{equation}
    So $D_n \leq D_2^{|X|^{\frac{n(n+1)}{2}-3}}$. 
    In particular, choosing $D_2 = 2^{|X|^{\frac{2(2+1)}{2}}} > 1 = C_2$ yields
    \begin{equation}
        D_n \leq 2^{|X|^{\frac{n(n+1)}{2}}}
    \end{equation}
    for every $n \geq 2$. 
    Thus we check \emph{a posteriori} that $D_{n-1} \geq 2^{|X|^n-1}$ for every $n \geq 3$ so the estimate was legitimate.
    
    Then, by construction, one has $C_n \leq D_n$ for every $n \geq 2$ which concludes the proof of \eqref{eq:2Xn2}.
\end{proof}

\section{Iterated brackets over alphabetic subsets}
\label{sec:n!}

The main goal of this section is to prove \cref{thm:n!-easy}, which yields a sharp bound for the norm of brackets of elements known to lie in some specific subsets of a Hall set.
We start by introducing the notion of \emph{alphabetic subsets} in \cref{subsec:alphabetic}.
Then, we prove a precised version of \cref{thm:n!-easy} in \cref{subsec:refinedbound}.
Eventually, we explain in what sense this estimate can be seen as optimal in \cref{subsec:Optim_n!}.

\subsection{Alphabetic subsets}
\label{subsec:alphabetic}

\begin{definition}[Alphabetic subset] \label{def:alphabetlike}
   Let $\B \subset \Br(X)$ be a Hall set and $A \subset \B$. We say that $A$ is \emph{alphabetic} when for all $a_1, a_2 \in A$ such that $a_1 < a_2$, we have $(a_1,a_2) \in \B$.
\end{definition}

For instance, $X$ is alphabetic. 
Every subset of an alphabetic set is also alphabetic. 

\begin{remark}
    The set of indeterminates $X$ is sometimes referred to as the ``alphabet''.
    The fact that subsets of $\B$ satisfying \cref{def:alphabetlike} behave like $X$ explains our choice of terminology.
\end{remark}

A characterization of alphabetic subsets of $\Br(X)$ is given by the following statement.

\begin{lemma} \label{Prop:Ba}
    Let $\B \subset \Br(X)$ be a Hall set.
    For every $a \in \B$, $\B_a := \{a\} \cup \{b \in \B; \enskip (a,b) \in \B\}$ is an alphabetic subset of $\B$. 
    Conversely, for any finite alphabetic subset $A$ of $\B$, there exists $a \in \B$ such that $A \subset \B_a$.
\end{lemma}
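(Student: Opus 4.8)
\textbf{Proof plan for Lemma~\ref{Prop:Ba}.}

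The plan is to treat the two implications separately. For the first claim, I fix $a \in \B$ and take $b_1 < b_2$ in $\B_a$; I must show $(b_1, b_2) \in \B$. If $b_1 = a$ this is immediate from the definition of $\B_a$ (either $b_2 = a$, excluded since $b_1 < b_2$, or $b_2 \in \B$ with $(a, b_2) \in \B$). So assume $a < b_1 < b_2$, with $(a, b_1) \in \B$ and $(a, b_2) \in \B$. To conclude $(b_1, b_2) \in \B$ using \cref{Def2:Laz}, since $b_1 < b_2$ I only need to check that $b_2 \in X$ or $\lambda(b_2) \leq b_1$. The key observation is that $(a, b_2) \in \B$ forces, via the Hall condition applied to $(a, b_2)$, that either $b_2 \in X$ or $\lambda(b_2) \leq a$; in the latter case $\lambda(b_2) \leq a < b_1$, so $\lambda(b_2) \leq b_1$, and in the former case $b_2 \in X$ directly. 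Either way the condition holds and $(b_1, b_2) \in \B$, so $\B_a$ is alphabetic.

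For the converse, I am given a finite alphabetic subset $A$ of $\B$ and must produce $a \in \B$ with $A \subset \B_a$. The natural candidate is to take $a$ small enough: specifically I will show that if $a \in \B$ satisfies $a < \min(A)$ and, for every $t \in A$, the pair $(a, t)$ passes the Hall test, then $A \subset \B_a$. The constraint $(a, t) \in \B$ requires $a < t$ (which holds for all $t \in A$ once $a < \min(A)$) and requires $t \in X$ or $\lambda(t) \leq a$. So the real task is to find $a \in \B$ with $a < \min(A)$ and $a \geq \lambda(t)$ for every $t \in A \setminus X$ — equivalently, $a$ lies in the (possibly trivial) interval between $\max\{\lambda(t) : t \in A \setminus X\}$ and $\min(A)$. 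Here I would use the hypothesis that $A$ is \emph{alphabetic}: for any $t \in A \setminus X$ and any $s \in A$ with $s \neq t$, comparing $s$ and $t$ and using that the relevant pair is in $\B$, together with the "sandwiching" remark that $(a', (b_1, b_2)) \in \B$ implies $b_1 \leq a'$, I can show $\lambda(t) < \min(A)$ for every $t \in A \setminus X$ (roughly: if $s < t$ then $(s,t) \in \B$ forces $\lambda(t) \leq s$; if $t$ is itself $\min(A)$ one uses $\lambda(t) < t$). Hence $M := \max\{\lambda(t): t \in A\setminus X\}$ (or $M := \min(X)$ if $A \subset X$) satisfies $M < \min(A)$, and I take $a := M$ when $A \not\subset X$ or $a := \min(X)$ otherwise; one checks $a \in \B$ and $A \subset \B_a$.

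The main obstacle I anticipate is the converse direction, specifically verifying that a \emph{single} element $a$ works simultaneously for all $t \in A$ — i.e.\ that the candidate interval $(\max_t \lambda(t), \min(A)]$ is nonempty and contains the chosen $a$. This is exactly where finiteness of $A$ and the alphabetic hypothesis must be combined carefully: finiteness makes $M = \max_t \lambda(t)$ well-defined, and the alphabetic hypothesis (through the Hall sandwiching property applied to the pairs $(\lambda(t), t)$ and to pairs of elements of $A$) is what guarantees $M < \min(A)$, so that $a = M \in [\lambda(t), \min(A))$ for all $t$. I would also double-check the edge cases $A \subset X$ and $|A| \leq 1$ separately, since then there may be no constraint of the form $\lambda(t) \leq a$ at all and one simply picks $a = \min(X)$ (using $\min(X) = \min(\B)$ from the remarks following \cref{Def2:Laz}), or $a$ anything below $\min(A)$.
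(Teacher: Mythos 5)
Your first half (that $\B_a$ is alphabetic) is correct and is essentially the paper's argument: the only content is that $(a,b_2)\in\B$ forces $b_2\in X$ or $\lambda(b_2)\leq a<b_1$.

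The converse direction has a genuine gap. You claim that the alphabetic hypothesis gives $\lambda(t) < \min(A)$ for every $t\in A\setminus X$, but your own argument only yields $\lambda(t)\leq \min(A)$: taking $s=\min(A)<t$, the relation $(s,t)\in\B$ gives $\lambda(t)\leq s$, with equality perfectly possible. Concretely, in a length-compatible Hall set over $\{X_0<X_1\}$, take $s:=(X_0,X_1)$ and $t:=((X_0,X_1),(X_0,(X_0,X_1)))$; then $A=\{s,t\}$ is alphabetic ($\lambda(t)=s\leq s$), yet $M:=\max\{\lambda(b):b\in A\setminus X\}=s=\min(A)$. Worse, in this example \emph{no} $a<\min(A)$ can work: $(a,t)\in\B$ forces $a\geq\lambda(t)=s=\min(A)$, while $(a,s)\in\B$ forces $a<s$. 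So your stated sufficient condition ("$a<\min(A)$ and every $(a,t)$ passes the Hall test") is unattainable in general, and the interval you describe is genuinely empty. The repair is exactly the clause you are not using: $\B_a$ contains $a$ itself by definition, so one should aim for $a\leq t$ for all $t\in A$ rather than $a<t$, take $a:=M$ (which lies in $\B$ since it is $\lambda$ of an element of $\B\setminus X$), and treat the case $t=a$ by membership of $a$ in $\B_a$ rather than by the Hall test. This is what the paper's proof does, distinguishing $t=a^+$, $t<a^+$ (with the sub-case $t=a$), and $t>a^+$ where $a^+\in A\setminus X$ realizes the maximum. A secondary point: in the case $A\subset X$ you propose $a:=\min(X)$, which need not exist when $X$ is infinite with no least element; $a:=\min(A)$ (finiteness of $A$) is the safe choice, again using $a\in\B_a$ for the element $\min(A)$ itself.
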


\begin{proof}
    Let $b_1<b_2 \in \B_a$. If $b_1=a$ or $b_2 \in X$ then $(b_1,b_2) \in \B$. 
    Otherwise $\lambda(b_2) \leq a < b_1$, because $(a,b_2)$ and $(a,b_1) \in \B$, thus $(b_1,b_2) \in \B$.

    Now let $A \subset \B$ be a finite alphabetic subset. If $A \subset X$, let $a = \min A$, otherwise let $a = \max \{\lambda(b); \enskip b \in A \setminus X \}$.
    The case $A \subset X$ is straightforward.
    Assume $A \not\subset X$, let $a^+ \in A\setminus X$ such that $a = \lambda(a^+)$.
    Let $t \in A$. 
    We must show that $t = a$ or $(a,t) \in \B$.
    \begin{itemize}
        \item If $t = a^+$, then $a = \lambda(a^+) < a^+ = t$ and $\lambda(t) = a \leq a$ so $(a, t) \in \B$.
        \item If $t < a^+$, since $A$ is alphabetic, $(t, a^+) \in \B$ so $\lambda(a^+) = a \leq t$.
        If $t = a$, we are done.
        Else, either $t \in X$ and thus $(a,t) \in \B$ or $t \in A\setminus X$ and, by definition of $a$, $\lambda(t) \leq a$ so $(a,t)\in \B$.
        \item If $t > a^+$, then $t > a$ because $a = \lambda(a^+) < a^+$.
        Moreover, either $t \in X$ and thus $(a,t) \in \B$ or $t \in A\setminus X$ and, by definition of $a$, $\lambda(t) \leq a$ so $(a,t)\in \B$.
    \end{itemize}
    This concludes the proof of the reciprocal statement.
\end{proof}

\begin{example}
    If $X=\{X_0,X_1\}$ and $\B \subset \Br(X)$ is a Hall set with $X_0<X_1$, then the set $\{X_0,(X_0,X_1),(X_0,(X_0,X_1))\}$ is an alphabetic subset of $\B$ because it is a subset of $\B_{X_0}$.
\end{example}

The following result (which will be helpful in the sequel) shows that a free alphabetic subset behaves just like a set of indeterminates in the Lie subalgebra that it generates.

\begin{proposition}\label{Prop:free-lie-alphabetic}
    Let $\B \subset \Br(X)$ be a Hall set and $A \subset \B$ be alphabetic and free. 
    By \cref{def:free-subset}, the canonical surjection $\e$ : $\Br(A) \to \Br_A$ is an isomorphism.
    Then $\B' := \e^{-1}(\B \cap \Br_A)$, endowed with (the restriction of the preimage by $\e$ of) the order on $\B$, is a Hall set of $\Br(A)$.
    Moreover, the canonical map $\mathcal{L}(A) \to \mathcal{L}(X)$ is an isometry with respect to the norms $\|\cdot\|_{\B'}$ and~$\|\cdot\|_\B$.
\end{proposition}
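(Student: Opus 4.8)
The plan is to verify the three Hall-set axioms of \cref{Def2:Laz} for $\B'$ inside $\Br(A)$, transport the order via $\e$, and then observe that the norm statement is essentially tautological once one knows that $\ev(\B')$ maps onto $\ev(\B\cap\Br_A)$. Since $A$ is free, \cref{def:free-subset} gives an isomorphism of magmas $\e\colon\Br(A)\to\Br_A$; we equip $\B'=\e^{-1}(\B\cap\Br_A)$ with the order $t_1<'t_2$ iff $\e(t_1)<\e(t_2)$, which is a total order because $<$ is total on $\B$ and $\e$ is injective on $\Br(A)$.

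First I would check the three axioms. (i) $A\subset\B'$: for $\alpha\in A$ we have $\e(\alpha)=\alpha\in\B\cap\Br_A$ (using $A\subset\B$), so the ``letters'' of $\Br(A)$ lie in $\B'$. (ii) The bracketing condition: let $t_1,t_2\in\Br(A)$ and consider $\langle t_1,t_2\rangle\in\Br(A)$. Then $\e(\langle t_1,t_2\rangle)=(\e(t_1),\e(t_2))$, and by \cref{Def2:Laz} applied to $\B$ (with $b_1=\e(t_1)$, $b_2=\e(t_2)$), $(\e(t_1),\e(t_2))\in\B$ iff $\e(t_1),\e(t_2)\in\B$, $\e(t_1)<\e(t_2)$, and either $\e(t_2)\in X$ or $\lambda(\e(t_2))\le\e(t_1)$. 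The first two translate directly to $t_1,t_2\in\B'$ and $t_1<'t_2$. For the third disjunct one must show it is equivalent to: $t_2\in A$ or $\lambda'(t_2)\le' t_1$, where $\lambda'$ is the left-factor map in $\Br(A)$. If $t_2\in A$, then $\e(t_2)\in A\subset\B$ and $t_1<'t_2$ with $t_1\in A$ forces $(t_1,t_2)\in\B$ since $A$ is alphabetic — so the magma condition holds; conversely if $t_2\notin A$ then $t_2=\langle\lambda'(t_2),\mu'(t_2)\rangle$ and $\e(\lambda'(t_2))=\lambda(\e(t_2))$, so $\lambda(\e(t_2))\le\e(t_1)$ is exactly $\lambda'(t_2)\le' t_1$. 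The case $t_2\in A$ but $\e(t_2)\notin X$ is where the alphabetic hypothesis does the work, replacing the ``$b_2\in X$'' escape clause. (iii) The sandwich axiom $t_1<'( t_1,t_2)$ follows from $\e(t_1)<(\e(t_1),\e(t_2))$, which holds in $\B$ by its third axiom (when $(\e(t_1),\e(t_2))\in\B$). So $\B'$ is a Hall set of $\Br(A)$, and by \cref{thm:viennot}, $\ev(\B')$ is a basis of $\mathcal{L}(A)$.

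For the isometry: the canonical magma morphism $A\hookrightarrow\mathcal{L}(X)$ extends to a Lie morphism $\Phi\colon\mathcal{L}(A)\to\mathcal{L}(X)$ with $\Phi(\ev_A(t))=\ev_X(\e(t))$ for $t\in\Br(A)$. Since $A$ is free, $\Br_A$ together with the bracket structure is a free magma on $A$, and one checks $\Phi$ is injective (its image is the Lie subalgebra generated by $\ev(A)$, and freeness of $A$ together with $A$ being a subset of a basis makes $\ev(A)$ algebraically independent in the appropriate sense — here \cref{Prop:free-lie-alphabetic}'s own hypotheses let us invoke that $\ev$ restricted to $\Br_A$ factors through the free Lie algebra $\mathcal{L}(A)$). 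Given $v\in\mathcal{L}(A)$, write $v=\sum_{t\in\B'}\alpha_t\ev_A(t)$; then $\Phi(v)=\sum_{t\in\B'}\alpha_t\ev_X(\e(t))$ and $\{\e(t):t\in\B'\}=\B\cap\Br_A\subset\B$, so this is already the decomposition of $\Phi(v)$ on the basis $\ev(\B)$ (no further terms appear because $\Phi(v)$ lies in the span of $\ev(\B\cap\Br_A)$). Hence $\|\Phi(v)\|_\B=\sum|\alpha_t|=\|v\|_{\B'}$.

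The main obstacle is step (ii), specifically making the equivalence of the two forms of the Hall bracketing condition airtight: one has to carefully handle the trichotomy between $t_2\in A$ with $\e(t_2)\in X$, $t_2\in A$ with $\e(t_2)\notin X$, and $t_2\notin A$, and check that ``$\lambda'(t_2)\le' t_1$'' in $\Br(A)$ corresponds correctly to ``$\lambda(\e(t_2))\le\e(t_1)$'' in $\Br(X)$ — which relies on $\e\circ\lambda'=\lambda\circ\e$ on $\Br(A)\setminus A$, itself a consequence of $\e$ being a magma isomorphism. A secondary subtlety is justifying that $\Phi$ is injective (equivalently that $\mathcal{L}(A)\to\mathcal{L}(X)$ is well-defined and faithful); this is where freeness of $A$ is essential and should be spelled out, perhaps by noting that $\Br_A$ is free as a magma so $\mathcal{L}(X)$ contains a copy of the free Lie algebra on $A$.
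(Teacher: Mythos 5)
Your overall strategy matches the paper's: transport the order via $\e$, verify the three Hall axioms for $\B'$, and read off the isometry from the fact that $\e$ maps $\B'$ bijectively onto the subset $\B\cap\Br_A$ of the basis. Axiom (i), the forward direction of (ii), axiom (iii), and the isometry computation are all fine (your worry about injectivity of $\Phi$ is settled exactly as you hint: $\ev(\B')$ is a basis of $\mathcal{L}(A)$ by \cref{thm:viennot} and its image is a subset of the basis $\ev(\B)$, hence linearly independent). But there is a genuine gap in the reverse direction of (ii), precisely in the case you yourself flag as ``where the alphabetic hypothesis does the work''. You must show: if $t_1,t_2\in\B'$, $t_1<'t_2$ and $t_2\in A$, then $(\e(t_1),\e(t_2))\in\B$; when $\e(t_2)\notin X$ this requires $\lambda(\e(t_2))\le\e(t_1)$. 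Your argument only treats the subcase $t_1\in A$ (``$t_1<'t_2$ with $t_1\in A$ forces $(t_1,t_2)\in\B$ since $A$ is alphabetic''), but $t_1$ is an arbitrary element of $\B'$ and can perfectly well be a composite bracket over $A$, in which case the alphabetic property says nothing directly about the pair $(\e(t_1),t_2)$. This case is not vacuous: for instance in the Hall set of \cref{s:fibo}, $X_1=\max\B$ lies in alphabetic subsets and dominates every composite bracket.

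The missing step, which is how the paper closes this case, is to descend to $A$ along iterated left factors: let $a:=\lambda^k(\e(t_1))$ where $k$ is the smallest integer with $\lambda^k(\e(t_1))\in A$ (such a $k$ exists because $t_1\in\Br(A)$ and $\e$ commutes with the left-factor maps). Then $a\in\B$, and iterating the Hall-order property $\lambda(t)<t$ gives $a\le\e(t_1)<\e(t_2)=t_2$. Since $a,t_2\in A$ with $a<t_2$ and $A$ is alphabetic, $(a,t_2)\in\B$, whence $\lambda(t_2)\le a\le\e(t_1)$ by the second Hall axiom for $\B$, and $(\e(t_1),t_2)\in\B$ follows. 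With this insertion your proof is complete and coincides with the paper's.
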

 
\begin{proof}
    First $\B'$ is a totally ordered subset of $\Br(A)$ with the order $b_1 <' b_2$ iff $\e(b_1) < \e(b_2)$ (in $\B$).
    Let us check that the three items of \cref{Def2:Laz} are satisfied.
    \begin{itemize}
        \item $A \subset \B'$. Indeed, $A \subset \B$ and $\e$ is the identity on $A$.
        
        \item For all $b_1, b_2 \in \Br(A)$, $(b_1,b_2) \in \B'$ iff $b_1, b_2 \in \B'$, $b_1 <' b_2$ and $b_2 \in A$ or $\lambda(b_2) \leq' b_1$.
        
        Indeed, let $b_1, b_2 \in \Br(A)$.
        Assume that $(b_1, b_2) \in \B'$.
        Then $\e(b_1,b_2) \in \B$.
        Since $\B$ is a Hall set, $\e(b_1), \e(b_2) \in \B$, $\e(b_1) < \e(b_2)$ and $\e(b_2) \in X$ or $\lambda(\e(b_2)) \leq \e(b_1)$.
        Thus, $b_1, b_2 \in \B'$ and $b_1 <' b_2$.
        Moreover, if $b_2 \notin A$, then $\lambda(b_2) \in \Br(A)$ and $|b_2|_A > 1$ so $|\e(b_2)| > 1$ so $\e(b_2) \notin X$ and $\lambda(\e(b_2)) \in \B$.
        Hence, $\lambda(\e(b_2)) \leq \e(b_1)$ yields $\lambda(b_2) \leq' b_1$.
        
        Conversely, assume that $b_1, b_2 \in \B'$, $b_1 <' b_2$ and $b_2 \in A$ or $\lambda(b_2) \leq' b_1$.
        When $\lambda(b_2) \leq' b_1$, we obtain that $(b_1, b_2) \in \B'$ using the fact that $\B$ is a Hall set.
        When $b_2 \in A$, let $a := \lambda^k(b_1)$ where $k \in \N$ is chosen such that $a \in A$ (iterated left factor, up to falling in $A$).
        Since $b_1 \in \B'$, $\e(b_1) \in \B$ and $\e(a) = \lambda^k(\e(b_1)) \in \B$.
        Moreover, by the Hall order property, $a = \e(a) \leq \e(b_1) < \e(b_2) = b_2$ (with equality if and only if $k = 0$).
        Since $A$ is alphabetic, $(a, b_2) \in \B$, so $\lambda(b_2) \leq a$.
        Hence $\lambda(b_2) \leq \e(b_1)$ and $\e(b_1, b_2) \in \B$ so $(b_1, b_2) \in \B'$.
        
        \item For all $b_1, b_2 \in \B'$ such that $(b_1,b_2) \in \B'$, one has $b_1 <' (b_1,b_2)$. 
        
        Indeed, since $(b_1, b_2) \in \B'$, $\e(b_1, b_2) \in \B$, so $\e(b_1) < \e(b_1, b_2)$. 
    \end{itemize}
    This shows that $\B'$ is indeed a Hall set over $A$.
    
    By the universal property of the free Lie algebra $\mathcal{L}(A)$, there is a canonical morphism of Lie algebras $h$ : $\mathcal{L}(A) \to \mathcal{L}(X)$ (which maps any element of $A$ to itself in $\mathcal{L}(X)$). 
    Since $\B'$ is a basis of $\mathcal{L}(A)$ and $h(\B') \subset \B$ is a linearly independent subset of $\mathcal{L}(X)$, this map is injective. 
    By definition, its image is the Lie subalgebra of $\mathcal{L}(X)$ generated by $A$, which is the linear subspace generated by $\B'$. 
    Let $x = \sum_{b \in \B'} \alpha_b \ev(b)$, then by definition $h(x) = \sum_{b \in \B} \alpha_b \ev(b)$ (where $\alpha_b = 0$ when $b \notin \B \cap \Br_A$). 
    This shows that this map is indeed an isometry.
\end{proof}

The following stability property of alphabetic subsets of $\B$ is a key point of this article.

\begin{lemma}\label{lem:alphabetic}
Let $\B \subset \Br(X)$ be a Hall set and $A \subset \B$ be alphabetic. Assume $a_0 = \min A$ exists, then for all $a \in A$, $a \neq a_0$, the set $A \cup \{ (a_0,a) \}$ is alphabetic.
\end{lemma}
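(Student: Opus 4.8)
The plan is to show that $A' := A \cup \{(a_0, a)\}$ is alphabetic by verifying the defining condition of \cref{def:alphabetlike}: for every pair $b_1 < b_2$ in $A'$, we must have $(b_1, b_2) \in \B$. Since $A$ itself is alphabetic, the only new pairs to check are those involving the new element $c := (a_0, a)$. Note first that $c \in \B$: indeed $a_0 < a$ (as $a_0 = \min A$ and $a \neq a_0$), $a_0, a \in \B$, and $\lambda(a) \leq a_0$ must hold — wait, this is exactly where one must be careful. If $a \in X$ then $(a_0, a) \in \B$ directly from $a_0 < a$. If $a \notin X$, then since $A$ is alphabetic and $a_0 < a$ we already have $(a_0, a) \in \B$ by hypothesis applied to the pair $a_0 < a$ in $A$. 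So in all cases $c = (a_0, a) \in \B$, and moreover $\lambda(c) = a_0$. Also, by the third Hall axiom, $a_0 < c$.

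Now I would split into the two types of new comparisons. \emph{First}, the pair $a_0 < c$: here $\lambda(c) = a_0 \leq a_0$, so $(a_0, c) \in \B$ by the second Hall axiom. \emph{Second}, for $b \in A$ with $b \neq a_0$, I must compare $b$ with $c$ and produce the relevant bracket. Since $a_0 = \min A$, we have $a_0 < b$.
\begin{itemize}
    \item If $c < b$: I need $(c, b) \in \B$. Here $b \neq a_0$, and if $b \in X$ then $(c,b) \in \B$ from $c < b$. If $b \notin X$, I need $\lambda(b) \leq c$; but $A$ alphabetic gives $(a_0, b) \in \B$ (from $a_0 < b$), hence $\lambda(b) \leq a_0 < c$, so $(c, b) \in \B$.
    \item If $b < c$: I need $(b, c) \in \B$. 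Since $c \notin X$, I need $\lambda(c) = a_0 \leq b$, which holds because $a_0 = \min A \leq b$. So $(b, c) \in \B$.
\end{itemize}
This exhausts all pairs in $A'$, so $A'$ is alphabetic.

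The only genuinely delicate point — and the one I'd flag as the ``main obstacle'' — is making sure that $(a_0, a) \in \B$ and extracting $\lambda((a_0,a)) = a_0$ cleanly; this is really just an application of the hypothesis that $A$ is alphabetic to the pair $a_0 < a$, combined with the structure of the free magma (the left factor of $(a_0,a)$ is literally $a_0$). Everything else is a mechanical case analysis using the second Hall axiom in the form ``$(b_1,b_2) \in \B$ iff $b_1 < b_2 \in \B$ and ($b_2 \in X$ or $\lambda(b_2) \leq b_1$)'', together with the observation that $a_0 = \min A$ dominates all left factors of elements of $A$ from below. I would write the argument in exactly the itemized structure above, prefaced by the short paragraph establishing $c \in \B$ and $\lambda(c) = a_0$.
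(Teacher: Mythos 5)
Your proof is correct and follows essentially the same route as the paper's: establish $(a_0,a)\in\B$ from the alphabetic hypothesis, then check the two new pair types, using minimality of $a_0$ when $(a_0,a)$ is the larger element and $\lambda(b)\leq a_0 < (a_0,a)$ (again from $A$ alphabetic) when it is the smaller one. The separate treatment of the pair $a_0 < (a_0,a)$ is redundant (it is subsumed by your ``$b < c$'' case) but harmless.
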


\begin{proof}
Let $a \in A$ such that $a \neq a_0$. Then since $A$ is alphabetic, $(a_0,a) \in \B$. Hence $A \cup \{ (a_0,a) \} \subset \B$.

Let $a_1, a_2 \in A \cup \{ (a_0,a) \}$ with $a_1<a_2$. If $a_1, a_2 \in A$, then $(a_1, a_2) \in \B$, so we may assume that either $a_1$ or $a_2$ is equal to $(a_0,a)$. If $a_1 < a_2 = (a_0,a)$, then the minimality of $a_0$ implies that $a_1 \geq a_0$, so that $(a_1, a_2) \in \B$. Now if  $(a_0,a) = a_1 < a_2$, then since $A$ is alphabetic, either $a_2 \in X$ or $a_2 \notin X$ and $a_0 \geq \lambda(a_2)$. In both cases, since $a_0 < (a_0,a)$, we have $(a_1, a_2) \in \B$.
\end{proof}

\subsection{Bound for iterated brackets over alphabetic subsets}
\label{subsec:refinedbound}

\begin{definition}[Multisets, i.e.\ sets with multiplicity]
By convention, we use blackboard bold font to name them and bag brackets to define them, e.g.\ $\mathbb{A} := \lbag 2, 2, 3 \rbag$.
Multiplicity matters, so, $\mathbb{A} \neq \lbag 2, 3 \rbag$. 
But order does not matter, so $\mathbb{A} = \lbag 2, 3, 2 \rbag$.
For such multisets, we define their cardinal as the sum of their multiplicities (so $|\mathbb{A}| = 3$) and their support as the underlying set (so $\supp \mathbb{A} = \{ 2, 3 \}$). If $\mathbb{A}_1, \mathbb{A}_2$ are two multisets, $\mathbb{A}_1 + \mathbb{A}_2$ denotes their sum, i.e.\ the multiset in which the multiplicities are the sum of those within $\mathbb{A}_1$ and $\mathbb{A}_2$. 
We can also define the difference $\mathbb{A}_1 - \mathbb{A}_2$ (if the multiplicities in $\mathbb{A}_1$ are greater than those in $\mathbb{A}_2$). More formally, a multiset whose support is $A$ is a map $\mathbb{A} : A\to \N$. The cardinal of $\mathbb{A}$ is $\sum_{a \in A} \mathbb{A}(a)$, while the sum (resp.\ the difference) of the multisets is the sum (resp.\ the difference, when it exists) of the maps. 
\end{definition}


\begin{definition}[Trees on a multiset supported in $\B$]
    Let $\B \subset \Br(X)$ be a Hall set and $\mathbb{A}$ be a multiset with $A := \supp \mathbb{A} \subset \B$.
    We denote by $\Tr(\mathbb{A})$ the subset of $\Br(A)$ (and thus of $\Br(\B)$) whose elements are brackets of the elements of $\mathbb{A}$, involved according to their multiplicity in $\mathbb{A}$. 
    Thus, for $t \in \Tr(\mathbb{A})$, $|t|_A=|t|_\B=|\mathbb{A}|$.
\end{definition}

\begin{definition}[Leaves]
    For $A \subset \B$ and $t \in \Br(A)$, $L_A(t)$ (resp.\ $\mathbb{L}_A(t)$) denotes the set (resp.\ multiset) of leaves of $t$ in $A$. 
    In particular $t \in \Tr( \mathbb{L}_{A}(t) )$ and $\supp \mathbb{L}_A(t) = L_A(t)$.
\end{definition}

\begin{theorem} \label{Prop:n!}
    Let $\B \subset \Br(X)$ be a Hall set and $t \in \Br(\B)$ such that $L_\B(t)$ is alphabetic. Then
    \begin{equation}
        \| \ev(\e(t)) \|_\B \leq (|t|_\B-1)!
    \end{equation}
    and $\supp \ev(\e(t)) \subset \e(\Tr(\mathbb{L}_\B(t)))$: the supporting elements are obtained by creating new brackets by combining the leaves (with their multiplicity) of $t$.
\end{theorem}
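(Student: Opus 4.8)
The natural approach is induction on $|t|_\B$. When $|t|_\B = 1$, $t$ is a single leaf $a \in A := L_\B(t)$, so $\ev(\e(t)) = \ev(a)$ and $\|\ev(a)\|_\B = 1 = 0!$, while $\e(\Tr(\mathbb{L}_\B(t))) = \{\e(t)\}$, so both conclusions hold. When $|t|_\B = 2$, $t = \langle a_1, a_2\rangle$ with $a_1, a_2 \in A$; since $A$ is alphabetic, either $(a_1,a_2) \in \B$ or $(a_2,a_1) \in \B$ (or the bracket evaluates to $0$ if $a_1 = a_2$), so $\|\ev(\e(t))\|_\B \leq 1 = 1!$ and the support is contained in $\e(\Tr(\mathbb{L}_\B(t)))$.

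For the inductive step, write $t = \langle t_1, t_2 \rangle$ with $|t_1|_\B, |t_2|_\B \geq 1$ and $|t_1|_\B + |t_2|_\B = |t|_\B =: n$. Set $\mathbb{A}_i := \mathbb{L}_\B(t_i)$, so $\mathbb{A}_1 + \mathbb{A}_2 = \mathbb{L}_\B(t)$ and $L_\B(t_i) \subset A$ is alphabetic (as a subset of an alphabetic set). By the induction hypothesis, $\ev(\e(t_i)) = \sum_j \alpha^{(i)}_j \ev(c^{(i)}_j)$ where each $c^{(i)}_j \in \e(\Tr(\mathbb{A}_i))$ and $\sum_j |\alpha^{(i)}_j| \leq (|t_i|_\B - 1)!$. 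Then $\ev(\e(t)) = [\ev(\e(t_1)), \ev(\e(t_2))] = \sum_{j,k} \alpha^{(1)}_j \alpha^{(2)}_k [\ev(c^{(1)}_j), \ev(c^{(2)}_k)]$. The crux is to bound $\|[\ev(c^{(1)}_j), \ev(c^{(2)}_k)]\|_\B$ and to control the support of each such bracket. Here I would invoke the already-available structural machinery: each $c^{(i)}_j$ is the evaluation of a bracket in $\Tr(\mathbb{A}_i) \subset \Br(A)$, so $c^{(1)}_j$ and $c^{(2)}_k$ together are the evaluations of brackets whose combined leaf multiset is $\mathbb{L}_\B(t)$, supported in the alphabetic set $A$. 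The decomposition algorithm of \cref{sec:rewriting} applied to $[c^{(1)}_j, c^{(2)}_k]$ produces, via Jacobi identities, only brackets that recombine these same leaves — this is exactly the ``support'' half of the statement, and it must be proved hand-in-hand with the norm bound. The key quantitative input is that, because $A$ is alphabetic, the recursive calls of the algorithm when decomposing a bracket of two elements of $\Br_A$ stay within $\Br_A$ and the relevant ``alphabetic subset'' grows in a controlled way thanks to the stability \cref{lem:alphabetic}: each application of Jacobi that is forced (i.e.\ when the pair is not already a Hall element) is charged against a reduction in a suitable complexity measure, and the total number of leaves $n$ bounds the branching. I expect the cleanest route is to prove, by a subsidiary induction tailored to brackets $\langle u, v\rangle$ with $u \in A$ a single leaf and $v \in \Br(A)$ arbitrary with $|v|_A = m$, that $\|[\ev(u), \ev(\e(v))]\|_\B \leq m!$ with support in $\e(\Tr(\mathbb{L}_\B(v) + \lbag u \rbag))$; one reduces the general bracket $[c^{(1)}_j, c^{(2)}_k]$ to this ``one leaf against a tree'' case by Leibniz-expanding one factor down to its leaves, paying a factor equal to the number of leaves peeled off.

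Assembling the counts: if $|t_1|_\B = p$ and $|t_2|_\B = q$ with $p + q = n$, then expanding $[\ev(\e(t_1)), \ev(\e(t_2))]$ and reducing to the one-leaf case costs at most $(p-1)! \cdot (q-1)! \cdot (\text{combinatorial factor})$, and the combinatorial factor — accounting for which of the $p$ leaves of the left tree gets peeled in which order and into which of the growing right-hand tree of size up to $n-1$ — should telescope to give exactly $(n-1)!$. Concretely, the identity $(p-1)! \, (q-1)! \binom{n-2}{p-1} = (n-2)! \leq (n-1)!$ is the kind of bookkeeping I expect to appear, possibly with the binomial replaced by a slightly larger factor still bounded by $(n-1)!/((p-1)!(q-1)!)$. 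The support claim is then automatic: every bracket surviving in the expansion has leaf multiset equal to $\mathbb{L}_\B(t)$ and is supported in the alphabetic-generated submagma, hence lies in $\e(\Tr(\mathbb{L}_\B(t)))$.

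\textbf{Main obstacle.} The hard part is not the arithmetic of factorials but the correct formulation and proof of the auxiliary statement controlling the decomposition algorithm on brackets of elements of $\Br_A$: one must show that during the recursion the pairs being bracketed always have their first argument a genuine element of $A$ (or can be massaged into that form), so that \cref{lem:alphabetic} keeps applying and no brackets with leaves outside $\mathbb{L}_\B(t)$ are ever created. This is where the alphabetic hypothesis is doing real work, and getting the induction variable right — something like the pair $(|t|_\B, \text{depth of the right factor})$, mirroring the $(|a|+|b|, \min\{a,b\})$ induction of the classical algorithm — will be the delicate point. Once that structural lemma is in place, the norm bound $(n-1)!$ drops out of a clean counting argument and the support statement is a byproduct.
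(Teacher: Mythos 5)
There is a genuine gap. Your overall strategy — split $t = \langle t_1, t_2\rangle$ at the root, apply induction to each half, then decompose the resulting brackets $[c^{(1)}_j, c^{(2)}_k]$ of basis elements — is circular in its naive form (such a bracket is again $\ev(\e(\cdot))$ of an $n$-leaf tree over $A$, i.e.\ the original problem), and the fix you propose does not survive scrutiny. Your subsidiary lemma treats an \emph{arbitrary} leaf $u \in A$ distributed over a tree $v$, but the mechanism that makes such a distribution admissible — namely that $(u, a) \in \B$ for every other leaf $a$ and, crucially, that the enlarged leaf set $A \cup \{(u,a)\}$ is again alphabetic — is exactly \cref{lem:alphabetic}, and that lemma requires $u = \min A$. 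For a non-minimal $u$, an element $a_1 \in A$ with $a_1 < u$ gives $a_1 < (u,a)$ with $\lambda((u,a)) = u > a_1$, so $(a_1,(u,a)) \notin \B$ and the alphabetic property is destroyed; the induction cannot continue and the support claim fails. Likewise, the reduction of the general bracket $[c^{(1)}_j, c^{(2)}_k]$ to the one-leaf case by ``peeling'' leaves off one factor is not a controlled operation: each Jacobi application doubles the number of terms rather than contributing a single binomial factor, and the identity $(p-1)!\,(q-1)!\binom{n-2}{p-1} = (n-2)!$ is correct arithmetic attached to an unestablished count.

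The missing idea, which is the entire content of the paper's proof, is to select the \emph{globally minimal} leaf $a_0$ of $L_\B(t)$ (not the root split), locate the subtree $\langle a_0, v\rangle$ or $\langle v, a_0\rangle$ containing it, and distribute $a_0$ over the $k \leq n-1$ leaves of its sibling $v$ by the iterated Jacobi identity. Minimality guarantees both that each $(a_0, a_{i_j})$ lies in $\B$ (so each resulting tree genuinely has $n-1$ leaves) and, via \cref{lem:alphabetic}, that each new leaf multiset $\mathbb{L}_\B(t) + \lbag (a_0,a_{i_j})\rbag - \lbag a_0, a_{i_j}\rbag$ is still alphabetic. The induction hypothesis then gives $(n-2)!$ per term and $k \leq n-1$ terms, hence $(n-1)!$, with the support statement coming along for free. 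Your write-up gestures at ``a suitable complexity measure'' and at \cref{lem:alphabetic}, but never commits to the minimal-leaf selection that makes the lemma applicable, so the argument as proposed does not close.
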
 

\begin{proof}
    The proof is by induction on $n=|t|_\B$ and obvious when $n = 1$. Now let $n\geq 2$, assume that the result holds for all $k<n$, and prove that it holds for $n$.
    
    Let $a_0$ be the smallest element of $L_\B(t)$. 
    There exists $k \in \{1,\dots,n-1\}$,  $a_{i_1}, \dots, a_{i_k} \in \mathbb{L}_\B(t)$ and $v \in \Tr(\lbag a_{i_1}, \dots, a_{i_k} \rbag)$ such that $\langle a_0,v \rangle$ or $\langle v,a_0 \rangle$ is a sub-tree of $t$ and either $v=a_0$ or $a_0$ does not appear in $v$. 
    In the first case, $\| \e(t) \|_\B=0$, so we may assume that $a_0$ does not appear in $v$. Using the iterated Jacobi identity, we can write $[a_0,\e(v)] = \sum_{j=1}^k \ev(\e(v_{i_j}))$, where $v_{i_j} \in \Br(\B)$ is the bracket obtained from $v$ by replacing $a_{i_j}$ by $\langle a_0,a_{i_j} \rangle$. 
    Then, by bilinearity of $[\cdot,\cdot]$, we get $\ev(\e(t)) = \sum_{j=1}^k \ev(\e(t_j))$, where for all $j \in \intset{1,k}$, $t_{j} \in \Br(\B)$ and $\mathbb{L}_\B(t_j)= \mathbb{L}_\B(t) + \lbag (a_0,a_{i_j}) \rbag - \lbag a_0, a_{i_j} \rbag $
    By \cref{lem:alphabetic}, $L_\B(t_j)$ is alphabetic.
    by the induction hypothesis, for all $j \in \intset{1,k}$, we have $\|\ev(\e( t_j ))\|_\B \leq (n-2)!$ and $\supp \ev(\e(t_j)) \subset \e ( \Tr( \mathbb{L}_\B(t_j) ) ) \subset \e ( \Tr( \mathbb{L}_\B(t)  ) )$.
    The expression $\ev(\e(t)) = \sum_{j=1}^k \ev(\e(t_j ))$ and the fact that $k <n$ then show that $\|\ev(\e(t))\|_\B \leq (n-1)!$.
\end{proof}

\subsection{Optimality case}
\label{subsec:Optim_n!}

\begin{remark}
    For $b \in \Br(A)$, $L_A(b)$ denotes the set of leaves in $A$ of the tree $b$. 
    When $A$ is a free subset, this notion can be transported on $\Br_A$, because $\e:\Br(A) \rightarrow \Br_A$ is an isomorphism.
    Setting $L_A(\e(b)) := L_A(b)$ for $b \in \Br(A)$ extends $L_A$ to $\Br_A$.
\end{remark}

\begin{proposition} \label{Prop:sature_n!}
    Let $\B \subset \Br(X)$ be a Hall set, $n\in\N^*$, $a_1<\dots<a_{n} \in \B$. We assume that
    \begin{itemize}
    \item $A :=\{a_1,\dots,a_{n}\}$ is a free alphabetic subset of $\B$,
    \item for all $b_1, b_2 \in \B \cap \Br_A$ with $L_A(b_1) \cap L_A(b_2) = \emptyset$ then $b_1<b_2$ iff $\max L_A(b_1) < \max L_A(b_2)$.
    \end{itemize}
    Then
    \begin{equation}
        \| [ \dotsb [ a_{n},a_{n-1} ], \dotsc , a_1 ] \|_\B = (n-1)!.
    \end{equation}
\end{proposition}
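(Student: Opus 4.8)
The plan is to prove by induction on $n$ that the iterated bracket $[\dotsb[a_n,a_{n-1}],\dotsc,a_1]$ decomposes on $\B$ with \emph{all} structure constants equal to $\pm 1$ (no cancellation), so that its $\B$-norm is exactly the number of terms, which we will show is $(n-1)!$. The case $n=1$ is trivial, and for $n=2$ we note $a_1<a_2$ and $A$ alphabetic give $(a_2,a_1)\in\B$ up to reordering, so $\|[a_2,a_1]\|_\B=1=1!$. For the induction step, first apply \cref{Prop:free-lie-alphabetic}: since $A$ is free and alphabetic, we may replace $\mathcal L(X)$ by $\mathcal L(A)$ and $\B$ by the Hall set $\B'=\e^{-1}(\B\cap\Br_A)$, working isometrically; thus we may as well assume $X=A=\{a_1<\dots<a_n\}$, with $a_1=\min\B$, and that the hypothesis on the order reads: disjoint-support brackets are ordered by their largest leaf.

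The key step is to peel off the innermost pair $[a_n,a_{n-1}]$ and push $a_1$ outward. I would argue as follows. Because $a_1=\min\B$ and $A$ is alphabetic, at each stage of the rewriting every bracket $[a_1, u]$ with $a_1\notin L(u)$ is handled by the Jacobi/Leibniz expansion $\ad_{a_1}$, which, being a derivation, distributes $a_1$ over the leaves of $u$: writing the iterated bracket $w_n:=[\dotsb[a_n,a_{n-1}],\dotsc,a_2]$ (an iterated bracket over the free alphabetic set $\{a_2,\dots,a_n\}$, to which the induction hypothesis applies, giving $\|w_n\|_\B\le (n-2)!$ with all coefficients $\pm1$ and supported on trees reusing the leaves $a_2,\dots,a_n$), we have $[w_n,a_1]=-\ad_{a_1}(w_n)$, and Leibniz replaces, in each of the $(n-2)!$ sign-$\pm1$ terms $t$, exactly one leaf $a_{i}$ ($i\ge 2$) by $(a_1,a_i)$, summing over the $n-1$ leaf positions. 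So a priori $\|[w_n,a_1]\|_\B\le (n-1)\cdot(n-2)!=(n-1)!$. The heart of the matter is to show (a) these $(n-1)!$ resulting brackets are genuine distinct basis elements of $\B$, so no further rewriting occurs and no two coincide, and (b) no cancellation happens when collecting them. For (a): \cref{lem:alphabetic} shows each new leaf-multiset $L_\B(t)+\lbag(a_1,a_i)\rbag-\lbag a_i\rbag$ is alphabetic; one checks the resulting tree, after the single substitution, is already a Hall tree — here the hypothesis "disjoint supports $\Rightarrow$ ordered by max leaf" is exactly what guarantees that inserting $(a_1,a_i)$ as a leaf keeps the Hall inequalities $\lambda(\cdot)\le(\cdot)$ intact at the modified node and above it, since $(a_1,a_i)$ compares to any sibling subtree not containing $a_1$ the same way $a_i$ did (same max leaf), and $(a_1,a_i)$ sits just above $a_i$ in the order. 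Injectivity of $t\mapsto$ (term) follows because the leaf-multiset together with the tree shape determines the term, and distinct $t$'s give distinct shapes or distinct marked positions, hence distinct multisets/trees. For (b): since every intermediate coefficient is $\pm1$ and the supports are pairwise disjoint (distinct basis elements), the $\ell^1$ norm adds, giving exactly $(n-1)!$.

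The main obstacle I anticipate is step (a): carefully verifying that the single leaf-replacement $a_i\rightsquigarrow(a_1,a_i)$ inside a Hall tree $t\in\supp w_n$ yields again a \emph{Hall} tree, with no Jacobi rewriting triggered. This is where the second hypothesis of the proposition must be used in full force, propagating the comparison "$(a_1,a_i)$ behaves like $a_i$ vis-à-vis disjoint-support neighbours, and lies immediately above $a_i$" up every ancestor node of the modified leaf; one must check the Hall condition $\lambda(\text{node})\le\text{node}$ at each such ancestor, distinguishing whether the modified leaf lies in the left or right factor. Once this structural lemma is in place, counting the $(n-1)!$ terms and ruling out collisions and cancellations is routine, and the isometry of \cref{Prop:free-lie-alphabetic} together with the induction hypothesis of \cref{Prop:n!} (which already gives the upper bound) closes the argument, showing the bound is attained.
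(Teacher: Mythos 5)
Your plan is correct and arrives at the right count, but it genuinely reverses the order of operations compared with the paper's argument, and this changes where the work lies. The paper first distributes $a_1$ over the $n-1$ leaves of the \emph{formal} left-nested bracket via the iterated Jacobi identity, producing $n-1$ new left-nested brackets $w_k$ over the modified alphabets $A_k=(A\setminus\{a_1,a_k\})\cup\{(a_1,a_k)\}$; it then checks only that each $A_k$ is again free, alphabetic and max-leaf-compatible, so the induction hypothesis applies wholesale to each $w_k$ and no claim about individual trees being Hall trees is ever needed. You instead decompose $[\dotsb[a_n,a_{n-1}],\dots,a_2]$ into basis elements first and then distribute $a_1$ over the leaves of each supporting tree, which forces exactly the structural lemma you flag as the main obstacle: that substituting $(a_1,a_i)$ for the leaf $a_i$ in a Hall tree over $\{a_2,\dots,a_n\}$ yields again a Hall tree. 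That lemma is true here: after your reduction via \cref{Prop:free-lie-alphabetic} every $a_j$ is an indeterminate and $a_1$ is the minimum of the Hall set, and the node-by-node check goes through as you sketch (disjoint supports plus max-leaf compatibility settle every comparison and every $\lambda(\cdot)\leq\cdot$ condition at ancestors, while \cref{Lem:BRA} supplies $a_1<u$ for the new $\lambda$-condition when $a_i$ was a right child). But it is a nontrivial verification that the paper's ordering sidesteps entirely by folding it into the induction hypothesis. On the other hand, your upfront isometric reduction to $\mathcal{L}(A)$ replaces the paper's appeal to \cref{Lem:libre} in the disjointness step, since in $\Br(A)$ distinct trees with distinct leaf multisets or shapes are automatically distinct basis elements — a small simplification. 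To make the plan a proof you must write out the substitution lemma in full; the rest is in place.
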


\begin{proof}
    We proceed by induction on $n \in \N^*$. 
    The conclusion holds for $n=1$.
    Let $n \geq 2$. 
    We assume the property proved up to $(n-1)$. 
    Let $A=\{a_1<\dots<a_n\}$ be a free alphabetic subset of $\B$ such that the order of $\B$ is compatible with $\max L_A$(in the sense of the second point above). 
    Let $w \in \Br(A)$ be defined by $w:=\langle \dotsb \langle a_{n},a_{n-1} \rangle, \dots,a_1 \rangle$.

    For any $k \in \intset{2,n}$, we have $(a_1,a_k) \in \B$ because $A$ is alphabetic and, by compatibility of the Hall set order $<$ with $\max L_A$, $a_{k-1}<(a_1,a_k)<a_{k+1}$. 
    Thus $a_2 < \dots < a_{k-1} < (a_1,a_k) < a_{k+1} < \dots < a_n$.
    Let $\mathbb{A}_k:=\lbag a_2 , \dots , a_{k-1} , (a_1,a_k) , a_{k+1} , \dots , a_n  \rbag$ and $A_k=\supp(\mathbb{A}_k)$.
    By \cref{lem:alphabetic}, $A_k$ is alphabetic.
    By \cref{Lem:libre}, $A_k$ is free. 
    Indeed, $A_k \subset \B \cap \Br_A$ and $a_1=\lambda((a_1,a_k))<\min(A_k)$.

    \step{Step 1: We prove the compatibility of the order $<$ with $\max L_{A_k}$}
    Let $b_1, b_2 \in \B \cap \Br_{A_k}$ with $L_{A_k}(b_1) \cap L_{A_k}(b_2) = \emptyset$. 
    We assume $\max L_{A_k}(b_1) < \max L_{A_k}(b_2)$. 
    Let us prove that $b_1<b_2$.

    We have $b_1, b_2 \in \B \cap \Br_A$ and $L_A(b_1) \cap L_A(b_2)=\emptyset$. 
    Moreover, $\max L_A(b_j) = \max L_{A_k}(b_j)$ except when $\max L_{A_k}(b_j)=(a_1,a_k)$ and then $\max L_{A}(b_j)=a_k$. 
    In any case, $\max L_A(b_1) < \max L_A(b_2)$ thus $b_1<b_2$ because $(a_1, a_k)$ is inserted at the position of $a_k$ thanks to the order on $A$.
    
    \step{Step 2: We apply the induction assumption} 
    Let $w_k \in \Br(A_k)$ be defined by
    \begin{equation}
        w_k:=\langle\dotsb \langle\langle\langle\langle a_{n},a_{n-1}\rangle,\dots a_{k+1}\rangle, ( a_1,a_k) \rangle, a_{k-1} \rangle \dots ,a_2 \rangle .
    \end{equation}
    Then $\| \ev(\e(w_k)) \|_\B = (n-2)!$.
    

    \step{Step 3: We prove that the sets $\B \cap \e(\Tr(\mathbb{A}_{k}))$ for $k \in \intset{2,n}$ are two by two disjoint} 
    Let $j \neq k \in \intset{2,n}$, $B_j \in \Tr(\mathbb{A}_{j})$ and $B_k \in \Tr(\mathbb{A}_{k})$ such that $b_j=\e(B_j)$ and $b_k = \e (B_k)$ belong to $\B$.
    Then $B_j$ and $B_k$ are two different elements of $\Br(\Delta)$ where 
    \begin{equation}
        \Delta:=A_j \cup A_k=\{a_2,\dotsc,a_n,(a_1,a_j),(a_1,a_k)\}.
    \end{equation}
    By \cref{Lem:libre}, $\Delta$ is free because $\Delta \subset \B \cap \Br_A$, $A$ is alphabetic and $a_1<\min(\Delta)$.
    Thus $\e$ is injective on $\Br(\Delta)$ and $b_j \neq b_k$.

    \step{Step 4: Conclusion}
    Iterating Jacobi's identity proves that $- \ev(\e(w)) = \ev(\e(w_2)) + \dotsb + \ev(\e(w_n))$. 
    By Steps 2 and 3, $\| \ev(\e(w)) \|_\B = (n-1)!$.
\end{proof}

\begin{corollary} \label{Cor:optn!}
    Let $n \geq 2$ and $X=\{X_1,\dots,X_n\}$. 
    There exists a Hall set $\B \subset \Br(X)$ such that 
    \begin{equation} \label{eq:cor-optn!}
        \| [\dotsb[X_n,X_{n-1}],\dotsc,X_1] \|_\B = (n-1)!.
    \end{equation}
\end{corollary}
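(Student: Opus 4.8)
The plan is to construct a Hall set $\B$ over $X = \{X_1, \dots, X_n\}$ whose order satisfies the two hypotheses of \cref{Prop:sature_n!} with $A = X = \{X_1 < \dots < X_n\}$, and then simply invoke that proposition. First I would observe that $X$ is automatically a free subset of $\Br(X)$ (it satisfies the condition of \cref{def:free-subset}) and that $X$ is alphabetic essentially by fiat in any Hall set. So the only real work is to produce an order on $\Br(X)$ making it into a Hall order such that, for all $b_1, b_2 \in \B$ with $L_X(b_1) \cap L_X(b_2) = \emptyset$, one has $b_1 < b_2$ iff $\max L_X(b_1) < \max L_X(b_2)$. Here of course $\B \subset \Br(X)$ is the Hall set attached to the order, so $L_X = L_{\{X_1,\dots,X_n\}}$ just records which indeterminates occur in a bracket.

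The key idea is to define the order lexicographically, prioritizing $\max L_X$ first. Concretely, I would set, for $t_1, t_2 \in \Br(X)$ distinct, $t_1 < t_2$ iff either $\max L_X(t_1) < \max L_X(t_2)$ (comparing indices in $X$), or $\max L_X(t_1) = \max L_X(t_2)$ and then some fixed auxiliary Hall order $\prec$ on $\Br(X)$ decides, say the lexicographic order on the triple $(|t|, \lambda(t), \mu(t))$ described in \cref{rk:lexico}. One checks this is a total order. The point to verify is the Hall condition $\lambda(t) < t$ for $t \in \Br(X) \setminus X$: since $L_X(\lambda(t)) \subset L_X(t)$, we have $\max L_X(\lambda(t)) \leq \max L_X(t)$; if the inequality is strict we are done, and if it is an equality then $\lambda(t) < t$ follows because $|\lambda(t)| < |t|$ makes $\prec$ put $\lambda(t)$ before $t$. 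Once this is a Hall order on $\Br(X)$, \cref{Prop:Prolonger_ordre} (or directly the construction) gives an associated Hall set $\B$, and by construction the compatibility hypothesis of \cref{Prop:sature_n!} holds: if $L_X(b_1) \cap L_X(b_2) = \emptyset$ and $\max L_X(b_1) < \max L_X(b_2)$, then $b_1 < b_2$ holds at the first lexicographic stage, with no appeal to $\prec$ needed.

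With these hypotheses in place, \cref{Prop:sature_n!} applied with this $A = X$ yields exactly $\| [\dotsb[X_n,X_{n-1}],\dotsc,X_1] \|_\B = (n-1)!$, which is the claimed identity \eqref{eq:cor-optn!}. I expect the only subtlety — and the step most likely to need care — to be checking that the proposed order genuinely is a total Hall order, in particular that the tie-breaking by $\prec$ does not conflict with the Hall condition; but as sketched above this reduces to the trivial observation that $\lambda(t)$ is strictly shorter than $t$ while having no larger maximal leaf. Everything else is a direct citation of \cref{Prop:sature_n!}, whose proof already did the combinatorial heavy lifting via the iterated Jacobi identity and the disjointness of the supports $\B \cap \e(\Tr(\mathbb{A}_k))$.
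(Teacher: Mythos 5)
Your proposal is correct and follows essentially the same route as the paper: the paper likewise defines the order lexicographically on the pair $(\max L, \prec)$ for an auxiliary Hall order $\prec$, verifies the Hall condition exactly as you do (strict inequality of maximal leaves, or equality and then $\lambda(t) \prec t$), and concludes by applying \cref{Prop:sature_n!} to $A = X$. The only cosmetic difference is that the paper lists the ordering of the letters $X_1 < \dots < X_n$ as a separate first clause, which your $\max L_X$ comparison already subsumes.
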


\begin{proof}
    For $b \in \Br(X)$, we use the notation $L(b) \subset X$ to denote the set of leaves involved in $b$.

    The strategy consists in constructing a Hall order $<$ on $\Br(X)$ such that $X_1<\dots<X_n$ and, for every $b_1, b_2 \in \Br(X)$ with $L(b_1) \cap L(b_2)=\emptyset$, then $b_1<b_2$ iff $\max L(b_1) < \max L(b_2)$.
    Then, considering the Hall set $\B$ of $\Br(X)$ associated with this order and applying \cref{Prop:sature_n!} to the free subset $A:=\{X_1,\dots,X_n\}$ yields \eqref{eq:cor-optn!}.

    Let $\prec$ be any Hall order on $\Br(X)$. 
    Such an order does exist: see the second paragraph of \cref{Rk:Zorn} or \cref{Prop:Prolonger_ordre}. 
    We will use $\prec$ as an arbitrary order to compare brackets for which we have no particular requirement. 
    For $b_1 \neq b_2 \in \Br(X)$, we write $b_1<b_2$ when
    \begin{itemize}
        \item either $b_1=X_j$ and $b_2=X_k$ with $j<k$ (i.e.\ $X_1<\dots<X_n$),
        \item or $\max L(b_1) < \max L(b_2)$,
        \item or $\max L(b_1) = \max L(b_2)$ and $b_1 \prec b_2$.
    \end{itemize}
    Since $<$ is defined as the lexicographic order on the couple $(\max L, \prec)$, it is a total order on $\Br(X)$ which satisfies the desired properties.
    
    It remains to check that $<$ is indeed a Hall order.
    Let $t \in \Br(X)$ with $|t| \geq 2$. 
    If $\max L(\lambda(t))<\max L(t)$ then $\lambda(t)<t$. 
    Otherwise, $\max L(\lambda(t)) =\max L(t)$ and $\lambda(t) \prec t$ since $\prec$ is a Hall order, thus $\lambda(t)<t$.
\end{proof}

\section{A refined bound stemming from brackets structure}
\label{Sec:refined_general_bound}

The main goal of this section is to prove \cref{thm:en1-easy}, which provides a sharp bound for the structure constants of $\mathcal{L}(X)$ relative to Hall sets in the general case.
We start by introducing the notion of \emph{relative folding} in \cref{subset:Treeb1/b2} and estimate its length in \cref{ss:theta}.
Then, we prove a precised version of \cref{thm:en1-easy} in \cref{subsec:bound_2}.
We explain in what sense this estimate can be seen as optimal in \cref{subsec:optim_2}.
Eventually, we prove in \cref{ss:rec-theta} that the \emph{length of the relative folding} provides a natural strictly decreasing indexation of the classical recursive decomposition algorithm of \cref{sec:rewriting}.

\subsection{Relative folding}
\label{subset:Treeb1/b2}

Let $\B \subset \Br(X)$ be a Hall set. 
We define a notion of \emph{relative folding} which will be of paramount importance for tracking, during the execution of the decomposition algorithm of \cref{sec:rewriting}, which brackets fall directly in the basis and which must be split in two using Jacobi's identity.


\begin{definition} \label{def:folding}
   For $a < b \in \B$, the \emph{relative folding} of $b$ with respect to $a$  is the tree $\rf{a}{b} \in \Br(\B)$ defined by induction by
   \begin{equation}
        \rf{a}{b} := \begin{cases}
        b & \text{ when } (a,b) \in \B, \\
        \langle \rf{a}{\lambda(b)},\rf{a}{\mu(b)} \rangle & \text{ otherwise}, 
       \end{cases}
   \end{equation}
   which makes sense as, when $(a,b) \notin \B$, then $b \notin X$ and $a < \lambda(b) < \mu(b)$.
\end{definition}

\begin{lemma} \label{Prop:Ta(b)_feuille_b}
    For each $c \in L_\B(\rf{a}{b})$, $(a,c) \in \B$.
    Moreover, $b=\e( \rf{a}{b} )$.
\end{lemma}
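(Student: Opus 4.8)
The plan is to argue by induction on $|b|$, mirroring exactly the two-case recursive structure of \cref{def:folding}, and to prove both assertions (``$(a,c)\in\B$ for every leaf $c$'' and ``$b=\e(\rf{a}{b})$'') simultaneously.

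\textbf{First case: $(a,b)\in\B$.} This case in particular subsumes $b\in X$, since $a<b$ together with the second item of \cref{Def2:Laz} forces $(a,b)\in\B$. Here $\rf{a}{b}=b$, viewed as a single-leaf element of $\Br(\B)$, so $L_\B(\rf{a}{b})=\{b\}$ and the leaf condition is precisely the hypothesis of this case. Moreover $\e(\rf{a}{b})=\e(b)=b$, because $\e$ acts as the identity on leaves.

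\textbf{Second case: $(a,b)\notin\B$.} First I would record the side conditions that make the recursion legitimate, exactly as already noted in \cref{def:folding}: $b\notin X$ (otherwise $(a,b)$ would lie in $\B$), and writing $b=(\lambda(b),\mu(b))$ with $\lambda(b),\mu(b)\in\B$, the failure of the Hall condition forces $\lambda(b)>a$, while $\lambda(b)<\mu(b)$ holds because $b\in\B$; hence $a<\lambda(b)<\mu(b)$, so in particular $a<\lambda(b)$ and $a<\mu(b)$. Since $|\lambda(b)|,|\mu(b)|<|b|$, the induction hypothesis applies to both $\rf{a}{\lambda(b)}$ and $\rf{a}{\mu(b)}$. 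Now $\rf{a}{b}=\langle\rf{a}{\lambda(b)},\rf{a}{\mu(b)}\rangle$, so $L_\B(\rf{a}{b})=L_\B(\rf{a}{\lambda(b)})\cup L_\B(\rf{a}{\mu(b)})$; every leaf $c$ of $\rf{a}{b}$ is therefore a leaf of one of the two sub-foldings, and $(a,c)\in\B$ by the induction hypothesis. Finally, since $\e$ is a morphism of magmas, $\e(\rf{a}{b})=(\e(\rf{a}{\lambda(b)}),\e(\rf{a}{\mu(b)}))=(\lambda(b),\mu(b))=b$, again by the induction hypothesis.

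I do not expect a genuine obstacle here: the statement essentially repackages the structure of the recursion itself, and the only point requiring a little care is verifying the side conditions ($b\notin X$ and $a<\lambda(b)<\mu(b)$) that legitimize the recursive call — and these are exactly the facts already invoked when \cref{def:folding} is stated.
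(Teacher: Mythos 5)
Your proof is correct and matches the paper's intent: the paper simply declares both assertions to be immediate consequences of \cref{def:folding}, and your induction on $|b|$ following the two cases of the recursion is exactly the argument that makes this precise. No issues.
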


\begin{proof}
    These are immediate consequences of \cref{def:folding}.
\end{proof}

\begin{proposition} \label{Prop:Ta(b)_alphabetic}
    For $a < b \in \B$, $\{a\} \cup L_\B( \rf{a}{b} )$ is an alphabetic subset of $\B$. 
\end{proposition}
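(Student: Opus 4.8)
The plan is to recognize that the set $\{a\}\cup L_\B(\rf{a}{b})$ is in fact a subset of a set already known to be alphabetic, namely $\B_a := \{a\}\cup\{c\in\B;\ (a,c)\in\B\}$. \cref{Prop:Ba} states precisely that $\B_a$ is an alphabetic subset of $\B$, and it was recorded just after \cref{def:alphabetlike} that any subset of an alphabetic set is alphabetic. So the whole argument reduces to the inclusion $\{a\}\cup L_\B(\rf{a}{b})\subseteq\B_a$.

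To establish that inclusion, the first step is to invoke \cref{Prop:Ta(b)_feuille_b}: it guarantees that every leaf $c\in L_\B(\rf{a}{b})$ satisfies $(a,c)\in\B$, hence $c\in\B_a$; and of course $a\in\B_a$. This gives $\{a\}\cup L_\B(\rf{a}{b})\subseteq\B_a$, and the second step is simply to apply \cref{Prop:Ba} together with the hereditary nature of the alphabetic property to conclude.

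If one prefers a self-contained argument, the same statement follows by induction on $|b|$. When $(a,b)\in\B$ one has $\rf{a}{b}=b$, so the set in question is $\{a,b\}$, which is alphabetic since $a<b$ and $(a,b)\in\B$. When $(a,b)\notin\B$, then $b\notin X$ and $a<\lambda(b)<\mu(b)$, so by the induction hypothesis both $\{a\}\cup L_\B(\rf{a}{\lambda(b)})$ and $\{a\}\cup L_\B(\rf{a}{\mu(b)})$ are alphabetic; since $L_\B(\rf{a}{b})$ is the union of the leaf sets of the two subtrees, the only genuinely new case to treat is a pair $a_1<a_2$ of leaves coming from the two different subtrees. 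For such a pair, \cref{Prop:Ta(b)_feuille_b} gives $(a,a_1)\in\B$ and $(a,a_2)\in\B$; hence either $a_2\in X$, in which case $(a_1,a_2)\in\B$ because $a_1<a_2$, or $\lambda(a_2)\leq a<a_1$, which again forces $(a_1,a_2)\in\B$. (The subcases where $a_1$ or $a_2$ equals $a$ are subsumed, since $a$ lies in both candidate subsets.) I expect no real obstacle here: the substantive content has already been isolated in \cref{Prop:Ta(b)_feuille_b} and \cref{Prop:Ba}, and the inductive variant only requires minor bookkeeping of the leaf of $\rf{a}{b}$ playing the role of $a$.
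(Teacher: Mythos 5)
Your first argument is exactly the paper's proof: \cref{Prop:Ta(b)_feuille_b} places the set inside $\B_a$, which is alphabetic by \cref{Prop:Ba}, and subsets of alphabetic sets are alphabetic. The self-contained inductive variant is also correct but merely re-derives the relevant half of \cref{Prop:Ba}, so it adds nothing essential.
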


\begin{proof}
    By \cref{Prop:Ta(b)_feuille_b}, the considered set is a subset of $\B_a$, which is alphabetic by  \cref{Prop:Ba}.
\end{proof}

\begin{definition}
   For $a < b \in \B$, we define $\theta_a(b) := |\rf{a}{b}|_\B \in \N^*$.
\end{definition}

\begin{example}\label{ex:tree_fold}
    Assume that $\B \subset \Br(\{ X_0, X_1 \})$ is a Hall set such that the tree of \cref{ex:tree}, $t := (((X_0,X_1),((X_0,X_1),X_1)),((X_0,X_1),X_1)))$ belongs to $\B$.
    Then $a := (X_0,X_1) \in \B$ and $\rf{a}{t} = \langle b_1, b_2 \rangle$, where $b_1 = ((X_0,X_1),((X_0,X_1),X_1))$ and $b_2 = ((X_0,X_1),X_1))$.
    So $\theta_a(t) = 2$. This is illustrated by the following trees:
    \begin{equation}
        t = {\Tree [. [. [. $X_0$ $X_1$ ] [. [. $X_0$ $X_1$ ] $X_1$ ] ] [. [. $X_0$ $X_1$ ] $X_1$ ] ]}
    \end{equation}
    and
    \begin{equation}
        \rf{a}{t} = {\Tree [. $((X_0,X_1),((X_0,X_1),X_1))$ $((X_0,X_1),X_1))$ ]}
    \end{equation}
\end{example}

\subsection{Properties of the length of the folding}
\label{ss:theta}

In order to achieve our goal of proving estimates depending on $|b|$, we prove in this paragraph bounds relating $\theta_a(b)$ and $|b|$, along with other elementary remarks on $\theta$.
These results will be used also in the sequel of the paper for other purposes. We start with the following elementary result.

\begin{lemma}\label{lem:some_elements_hall}
    Let $\B \subset \Br(X)$ be a Hall set and $a,b \in \B$ such that $(a,b) \in \B$. 
    There exists a unique $r=r(a,b)\in\N^* \cup \{+\infty\}$ such that, for each $n \in \intset{0,r}$, $\dad_{b}^{n}(a) \in \B$ and, for each $n > r$, $\ad_{b}^{n-r} \dad_{b}^{r}(a) \in\B$. 
    Moreover, for each $n \in \intset{0,r-1}$, $\dad_{b}^{n}(a)<b$ and, if $r$ is finite, $b<\dad_b^r(a)$.
\end{lemma}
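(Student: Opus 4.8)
The plan is to follow the sequence $x_n := \dad_{b}^{n}(a) = (\dots((a,b),b),\dots,b) \in \Br(X)$ and to \emph{define} $r$ through membership in $\B$: let $r \in \N \cup \{+\infty\}$ be the smallest integer $n$ such that $x_{n+1} \notin \B$, and $r := +\infty$ if no such $n$ exists. Since $x_1 = (a,b) \in \B$ by hypothesis, $0$ never belongs to this set, so $r \in \N^* \cup \{+\infty\}$; and by minimality of $r$ one gets $x_n \in \B$ for every $n \in \intset{0,r}$, while $x_{r+1} \notin \B$ when $r$ is finite. This already yields the assertion $\dad_{b}^{n}(a) \in \B$ for $n \in \intset{0,r}$. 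Moreover, iterating the third item of \cref{Def2:Laz} along the chain $x_0, x_1, \dots, x_r$ — each step being $x_{k+1} = (x_k, b) \in \B$ for $k \le r-1$ — yields $a = x_0 < x_1 < \dots < x_r$, hence $a \le x_n$ for all $n \in \intset{0,r}$.

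I would then read off the inequalities of the ``moreover'' part directly from the definition of $r$. For $n \in \intset{0,r-1}$ we have $x_{n+1} = (x_n, b) \in \B$, so the second item of \cref{Def2:Laz} forces $x_n < b$, which is the claimed decreasing range. When $r$ is finite, $x_{r+1} = (x_r, b) \notin \B$ although $x_r, b \in \B$; now $(a,b) \in \B$ gives $b \in X$ or $\lambda(b) \le a \le x_r$, so the ``left factor'' clause of the Hall condition holds for the pair $(x_r, b)$, and therefore the only way $(x_r,b)$ can fail to be in $\B$ is that $x_r \not< b$; since $|x_r| = |a| + r|b| > |b|$ forces $x_r \neq b$, totality of the order gives $b < x_r$.

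For the $\ad$-tail (still with $r$ finite), set $c := x_r$ and $z_m := \ad_{b}^{m}(c)$, and prove by induction on $m \ge 0$ that $z_m \in \B$ and $b < z_m$. The case $m = 0$ is $b < c$, just proved. For the inductive step, $z_{m+1} = (b, z_m)$ belongs to $\B$ by \cref{Def2:Laz}: indeed $b < z_m$ by the induction hypothesis, and the left-factor clause holds because $\lambda(z_m) = b$ when $m \ge 1$, while for $m = 0$ one has $\lambda(c) = x_{r-1} < b$ by the decreasing range (valid since $r \ge 1$); then the third item gives $b < (b, z_m) = z_{m+1}$. Consequently $\ad_{b}^{n-r}\dad_{b}^{r}(a) = z_{n-r} \in \B$ for every $n > r$, which completes the existence part. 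Uniqueness follows immediately: if $r'$ satisfies the three stated properties, then $x_n \in \B$ for $n \le r'$ and, when $r'$ is finite, the property $b < x_{r'}$ forces $(x_{r'}, b) \notin \B$ (its membership would require $x_{r'} < b$); thus $r'$ is exactly the smallest index $n$ with $x_{n+1} \notin \B$, i.e.\ $r' = r$, and the case $r' = +\infty$ is read off the same way.

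The step that needs the most care is deducing $b < x_r$ from the failure of the Hall condition for the pair $(x_r,b)$: one has to treat the subcases $b \in X$ and $b \notin X$ separately and use $a \le x_r$ to guarantee the left-factor clause $\lambda(b) \le x_r$, so that ``$x_r \not< b$'' is isolated as the cause of failure. Everything else is routine induction using the three defining properties of a Hall set.
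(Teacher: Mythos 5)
Your proof is correct and follows essentially the same approach as the paper: define $r$ by minimality of the first failure of $\dad_b^{n}(a)\in\B$, read off the inequalities from the Hall axioms, and handle the $\ad_b$-tail by induction. The only (harmless) variation is in deriving $b<\dad_b^r(a)$: the paper argues by contradiction using $\dad_b^r(a)\in\B$ to get $\lambda(b)\le\dad_b^{r-1}(a)$, whereas you obtain the left-factor clause directly from $(a,b)\in\B$ together with $a\le\dad_b^r(a)$; you also spell out the uniqueness, which the paper leaves implicit.
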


\begin{proof}
    Assume that $\{\dad_{b}^n(a);n \in \N\} \not\subset \B$.
    Let $r$ be the smallest integer such that $\dad_{b}^{r+1}(a) \notin \B$. 
    By definition, for all $n \in \intset{0,r}$, $\dad_{b}^n(a) \in \B$, thus $\dad_b^n(a)<b$ for $n \in \intset{0,r-1}$.
    Assume $\dad_{b}^r(a) < b$, then by hypothesis $b \notin X$ (otherwise $\dad_{b}^{r+1}(a) \in \B$), and since $\dad_{b}^r(a) > \lambda(\dad_{b}^r(a)) =  \dad_{b}^{r-1}(a) \geq \lambda(b)$ because $\dad_{b}^r(a) \in \B$, one sees that $(\dad_{b}^r(a),b) \in \B$ which is a contradiction. Hence, $b < \dad_{b}^r(a)$, and $b>\dad_{b}^{r-1}(a) = \lambda(\dad_{b}^r(a))$, so that $(b,\dad_{b}^r(a)) \in \B$, and  $\ad_b^p(\dad_{b}^r(a)) \in \B$ for all $p \in \N$.
\end{proof}

\begin{proposition} \label{Prop:theta/length}
    Let $\B \subset \Br(X)$ be a Hall set.
    \begin{enumerate}
    
    \item If $|X| \geq 3$ then, for every $a<b \in \B$, $\theta_a(b) \leq |b|$ and equality holds for arbitrarily long $b \in \B$.

    \item If $|X|=2$ then, for every $a<b \in\B$ with $|b|\geq 2$, $\theta_a(b) \leq |b|-1$ and equality holds for arbitrarily long $b \in \B$.
    \end{enumerate}
\end{proposition}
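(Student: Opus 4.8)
The plan is to prove the upper bounds by induction on $|b|$ using the recursive structure of $\rf{a}{b}$, and to exhibit the equality cases by explicitly constructing suitable families of Hall elements. First consider the upper bound. Fix $a < b \in \B$. If $(a,b)\in\B$, then $\rf{a}{b}=b$, so $\theta_a(b)=1$, and the bound $\theta_a(b)\le |b|$ (resp.\ $\le |b|-1$ when $|b|\ge 2$) holds trivially in this case. Otherwise, by \cref{def:folding}, $b\notin X$ and $a<\lambda(b)<\mu(b)$, so $\rf{a}{b}=\langle\rf{a}{\lambda(b)},\rf{a}{\mu(b)}\rangle$ and hence $\theta_a(b)=\theta_a(\lambda(b))+\theta_a(\mu(b))$. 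By the induction hypothesis applied to the strictly shorter brackets $\lambda(b)$ and $\mu(b)$, we get $\theta_a(\lambda(b))\le|\lambda(b)|$ and $\theta_a(\mu(b))\le|\mu(b)|$, so $\theta_a(b)\le|\lambda(b)|+|\mu(b)|=|b|$, which already gives the bound of item~1. For item~2 (the case $|X|=2$) we need the sharper bound $\theta_a(b)\le|b|-1$ when $|b|\ge2$. Here the key point is that when $|X|=2$, since $a<\lambda(b)<\mu(b)$ are three elements of $\B$ with $a$ being a leaf-support constraint, one of $\lambda(b)$ or $\mu(b)$ must in fact have $(a,\cdot)\in\B$ — more precisely, since $a<\lambda(b)$ and $a<\mu(b)$, if neither $(a,\lambda(b))$ nor $(a,\mu(b))$ is in $\B$ then both are split, but one should argue using \cref{lem:some_elements_hall} and the structure of $b$ that at least one leaf contributes only $1$ to the count, forcing the loss of $1$. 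Concretely, I would show that at least one of the two children, say $\mu(b)$, satisfies $(a,\mu(b))\in\B$ whenever $|\mu(b)|=1$ would be the base, but in general the gain comes from the fact that $\lambda^k(b)$ eventually reaches something of length $1$, i.e.\ an element of $X$, and with only two letters the left spine is forced; one then tracks that $\theta_a$ on the left spine contributes exactly the number of branchings, which is $|b|-1$ minus a positive correction.

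For the equality cases, the plan is explicit construction. For item~1 ($|X|\ge3$), I would pick three distinct letters $X_0<X_1<X_2$ and build, for each $\ell$, a Hall element $b$ of length $\ell$ all of whose internal left factors fail the membership test $(a,\cdot)\in\B$ for $a=X_0$, so that $\rf{a}{b}$ is fully expanded down to the leaves and $\theta_a(b)=|b|$. A natural candidate is a right-comb $b=\dad$-type bracket, or rather a left-comb built from $X_1,X_2$ with $a=X_0<X_1$; one must choose the Hall order (using \cref{Prop:Prolonger_ordre} or the lexicographic construction of \cref{Rk:Zorn}) so that at every node $(X_0,\cdot)\notin\B$, which requires $\lambda$ of each internal node to be $>X_0$; using a third letter gives enough room to keep all left factors strictly above $X_0$. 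For item~2 ($|X|=2$), the analogous construction with $X=\{X_0,X_1\}$ gives $\theta_a(b)=|b|-1$: here at the very bottom of the tree one is forced to have $(X_0,X_1)$ as a leaf of the folding (since $(X_0,X_1)\in\B$), costing exactly one unit compared to full expansion, and one checks by induction that no further loss occurs. In both cases I would verify $b\in\B$ and that the claimed order exists by invoking the extension results already proved.

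The main obstacle I expect is the sharp bound $\theta_a(b)\le|b|-1$ for $|X|=2$ in item~2: the naive induction $\theta_a(b)=\theta_a(\lambda(b))+\theta_a(\mu(b))$ only yields $\le|b|$, and one needs an extra structural observation — essentially that in the two-letter case the folding cannot be fully expanded down to individual letters, because the only letters are $X_0$ and $X_1$ and any length-$2$ subtree is $(X_0,X_1)$ with $(a,(X_0,X_1))\in\B$ when $a=X_0$, or more generally that the recursion must terminate at a non-leaf node somewhere, and counting shows this saves at least one. Making this precise likely requires a secondary induction or a careful case analysis on whether $a=\min\B$ and on the shape of the left spine $\Lambda(b)$, possibly leaning on \cref{lem:some_elements_hall} to control which $\dad_b^n$ and $\ad_b^n$ iterates lie in $\B$. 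The construction of the equality examples, by contrast, should be routine once the right order is set up.
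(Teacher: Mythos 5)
Your upper bounds are essentially fine for item~1 (your induction via $\theta_a(b)=\theta_a(\lambda(b))+\theta_a(\mu(b))$ is equivalent to the paper's direct count of the leaves of $\rf{a}{b}$, whose lengths sum to $|b|$), but the two places you flag as delicate are exactly where the proposal has gaps. For the $|X|=2$ bound $\theta_a(b)\le|b|-1$, the clean argument you are circling but never state is: run the induction on $|b|\ge 2$ with base case $|b|=2$, where necessarily $b=(X_0,X_1)$ and, since $X_0=\min\B\le a$, $(a,b)\in\B$ so $\theta_a(b)=1=|b|-1$; in the inductive step with $(a,b)\notin\B$, at least one of $\lambda(b),\mu(b)$ has length $\ge2$ (otherwise $|b|=2$ and we are in the base case), so apply the item-2 bound to that child and the item-1 bound to the other, giving $\theta_a(b)\le|\lambda(b)|+|\mu(b)|-1$. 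Your text instead proposes showing "at least one of the two children satisfies $(a,\mu(b))\in\B$", which is not what happens and is not needed; as written the step "forcing the loss of $1$" is not justified.

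The more serious gap is in the equality cases. The proposition fixes an arbitrary Hall set $\B$ at the outset, and the claim is that equality is attained for arbitrarily long $b$ \emph{in that same $\B$}; you are not free to "choose the Hall order (using \cref{Prop:Prolonger_ordre}...) so that at every node $(X_0,\cdot)\notin\B$". The paper instead exhibits explicit families that lie in \emph{every} Hall set and saturate the bound: for $|X|\ge3$, take any $X_1<X_2<X_3$ in $X$ (ordered by the given Hall order) and $b=\ad_{X_2}^p(X_3)$, which belongs to $\B$ by the Hall axioms and satisfies $\theta_{X_1}(b)=p+1=|b|$ since $X_1<X_2=\lambda(b)$ at every level; for $|X|=2$, take $b_n$ the unique basis element containing $X_0$ once and $X_1$ $n$ times (whose shape, $\dad_{X_1}^k$ then $\ad_{X_1}$ iterates, is pinned down by \cref{lem:some_elements_hall}), for which $\theta_{X_0}(b_n)=n=|b_n|-1$. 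Your plan, as stated, proves a weaker existence statement ("there is a Hall set in which equality occurs") rather than the universal one, so this part of the argument would need to be redone along the lines above.
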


\begin{proof}
Let us prove the items successively.
\begin{enumerate}
    \item By Lemma \cref{Prop:Ta(b)_feuille_b}, $|b|$ is the sum of the lengths of the leaves of $\rf{a}{b}$, that are in number $\theta_a(b)$ and each leaf has length at least one $1$, one must have $|b| \geq \theta_a(b)$.
    
    If $X$ has at least 3 elements $X_1<X_2<X_3$ then, $\ad_{X_2}^p(X_3) \in \B$ for every $p \in \N$. 
    We prove by induction on $p \in \N$ that  $\theta_{X_1}(\ad_{X_2}^p(X_3))=p+1$.
    For $p=0$, $\theta_{X_1}(X_3)=1$ because $(X_1,X_3) \in \B$. For $p \geq 2$, $\theta_{X_1}(\ad_{X_2}^p(X_3))=\theta_{X_1}(X_2)+\theta_{X_1}(\ad_{X_2}^{p-1}(X_3))=1+p$.

    \item  We assume $X=\{X_0,X_1\}$ with $X_0<X_1$. Then, $X_0$ is the minimal element of $\B$. We prove the estimate by induction on $|b| \geq 2$. If $|b|=2$ then $b=(X_0,X_1)$ and $X_0 \leq a$ thus $(a,b) \in \B$ and $\theta_a(b)=1 = |b|-1$. If $|b| \geq 3$, then $|\lambda(b)| \geq 2$ or  $|\mu(b)| \geq 2$, thus, using the first statement and the induction assumption $\theta_a(b)=\theta_a(\lambda(b))+\theta_a(\mu(b))\leq |\lambda(b)|+|\mu(b)|-1 =|b|-1$.  
    
    Let $b_n$ be the unique element of $\B$ containing $X_0$ exactly once and $X_1$ exactly $n$ times, whose form is given in \cref{lem:some_elements_hall} and depends on $r(X_0,X_1)$.
    Then $|b_n| = n + 1$ and we easily get $\theta_{X_0}(b_n)=n$ by induction on $n \in \N^*$. \qedhere
\end{enumerate}
\end{proof}

\begin{lemma} \label{p:theta-dec}
    Let $\B \subset \Br(X)$ be a Hall set.
    For all $a \leq \tilde{a} < b \in \B$, one has $\theta_{\tilde a}(b) \leq \theta_{a}(b)$.
\end{lemma}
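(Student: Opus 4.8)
The natural approach is induction on $|b|$, mirroring the recursive structure of \cref{def:folding}. The base case is $|b| = 1$, i.e.\ $b \in X$: then for any $a < b$ we have $(a,b) \in \B$ and $(\tilde a, b) \in \B$ (the second item of \cref{Def2:Laz} for a leaf $b$ only requires $\tilde a < b$), so $\rf{a}{b} = \rf{\tilde a}{b} = b$ and $\theta_{\tilde a}(b) = 1 = \theta_a(b)$. For the inductive step, fix $a \le \tilde a < b$ with $|b| \ge 2$ and assume the statement for all strictly shorter elements. The key dichotomy is whether $(\tilde a, b) \in \B$ or not. If $(\tilde a, b) \in \B$, then $\rf{\tilde a}{b} = b$, so $\theta_{\tilde a}(b) = 1 \le \theta_a(b)$ trivially, since $\theta_a(b) \in \N^*$. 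So the only case requiring work is $(\tilde a, b) \notin \B$, which (as noted after \cref{def:folding}) forces $b \notin X$ and $\tilde a < \lambda(b) < \mu(b)$; in particular $a \le \tilde a < \lambda(b)$, so $(a, b) \in \B$ is also impossible because $(a,b) \in \B$ would require $\lambda(b) \le a$. Hence in this case \emph{both} foldings recurse: $\rf{\tilde a}{b} = \langle \rf{\tilde a}{\lambda(b)}, \rf{\tilde a}{\mu(b)} \rangle$ and $\rf{a}{b} = \langle \rf{a}{\lambda(b)}, \rf{a}{\mu(b)} \rangle$, whence $\theta_{\tilde a}(b) = \theta_{\tilde a}(\lambda(b)) + \theta_{\tilde a}(\mu(b))$ and likewise for $a$.

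It then suffices to apply the induction hypothesis to $\lambda(b)$ and $\mu(b)$, both of which are strictly shorter than $b$ and both of which satisfy $a \le \tilde a < \lambda(b)$ and $a \le \tilde a < \lambda(b) < \mu(b)$ (recall $\lambda(b), \mu(b) \in \B$ since $b \in \B \setminus X$). This gives $\theta_{\tilde a}(\lambda(b)) \le \theta_a(\lambda(b))$ and $\theta_{\tilde a}(\mu(b)) \le \theta_a(\mu(b))$, and summing yields $\theta_{\tilde a}(b) \le \theta_a(b)$, completing the induction.

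The only subtlety — and the single point that deserves care in writing it up — is confirming that when $(\tilde a, b) \notin \B$ we genuinely have $(a,b) \notin \B$ as well, so that the two foldings recurse in lockstep and the summation identity for $\theta_a(b)$ is available; this is exactly the observation that $(\tilde a, b) \notin \B$ with $b \notin X$ forces $\tilde a < \lambda(b)$, hence $a \le \tilde a < \lambda(b)$, hence $(a,b) \notin \B$. No genuine obstacle arises beyond this bookkeeping; the lemma is essentially a structural induction that tracks how lowering $\tilde a$ to $a$ can only turn brackets that were already split (in the $\tilde a$-folding) into brackets that must still be split (in the $a$-folding), never the reverse, because the set $\{c : (c, b) \in \B\}$ of "good" right-partners shrinks as the left element decreases past $\lambda(b)$.
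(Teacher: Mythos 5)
Your proof is correct and follows essentially the same route as the paper: the paper inducts on $\theta_a(b)$ rather than $|b|$, but the case split is identical (noting that $\theta_{\tilde a}(b)=1$ is exactly the case $(\tilde a,b)\in\B$, and that otherwise $a\le\tilde a<\lambda(b)$ forces both foldings to recurse in lockstep). The choice of induction variable is immaterial here, so there is nothing to flag.
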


\begin{proof}
    We proceed by induction on $n := \theta_{a}(b)$.
    
    \step{Initialization for $n = 1$}
    Either $b \in X$, or $|b| \geq 2$ and $\lambda(b) \leq a \leq \tilde{a}$. 
    In both cases, $\theta_{\tilde{a}}(b) = 1$. 
    
    \step{Inductive step for $n \geq 2$}
    Let $a \leq \tilde a < b$ such that $\theta_a(b) = n$. 
    If $\theta_{\tilde{a}}(b) = 1$, the inequality holds.
    Otherwise, $a \leq \tilde{a} < \lambda(b)$, so $\theta_a(b) = \theta_a(\lambda(b)) + \theta_a(\mu(b))$ and $\theta_{\tilde{a}}(b) = \theta_{\tilde{a}}(\lambda(b)) + \theta_{\tilde{a}}(\mu(b))$, and thus the estimate follows from $\theta_{\tilde a}(\lambda(b)) \leq \theta_{a}(\lambda(b))$ and $\theta_{\tilde a}(\mu(b)) \leq \theta_{a}(\mu(b))$.
\end{proof}

\subsection{Bound for a bracket of two basis elements}
\label{subsec:bound_2}

\subsubsection{Warm-up version and structure of the supporting basis elements}
\label{sss:warm-up}

We start with a warm-up version of our main estimate \cref{thm:en1-easy}.
Let $\B \subset \Br(X)$ be a Hall set and $a < b \in \B$.
By \cref{Prop:Ta(b)_feuille_b}, $[a,b] = \ev(\e(t))$ where $t = \langle a, \rf{a}{b} \rangle$ is a bracket of length $|t|_\B = \theta_a(b)+1$ over $\{ a \} \cup L_\B(\rf{a}{b})$ which is an alphabetic subset of $\B$ by \cref{Prop:Ta(b)_alphabetic}. 
By \cref{Prop:n!} and the first item of \cref{Prop:theta/length},
\begin{equation} \label{eq:theta!}
   \|[a, b]\|_\B 
   \leq \theta_{a}(b) ! 
   \leq |b|!.
\end{equation}
Moreover, \cref{Prop:n!} also proves that $\supp [a,b] \subset \e(\Tr(\mathbb{A}))$ where $\mathbb{A} := \lbag a \rbag + \mathbb{L}_\B(\rf{a}{b})$.
This is a strong information on the structure of these supporting elements: they are obtained by creating new brackets with a single $a$ and the leaves (with their multiplicity) of $\rf{a}{b}$.

Estimate \eqref{eq:theta!} is nice because it is straight-forward and much better than \eqref{eq:2Xn2}.
However, it is not sharp.
This boils down to the fact that, contrary to \cref{Prop:n!} where no assumption is made on the structure of the considered bracket, here, $\rf{a}{b}$ models $b$, an element of the Hall set $\B$.
So there is a little more structure in $\langle a, \rf{a}{b} \rangle$ than the mere fact that its leaves form an alphabetic subset of~$\B$.
We exploit this remark in the next paragraph to tighten the bound from $\theta_a(b)!$ down to $\lfloor e (\theta_a(b)-1)! \rfloor$. 

\subsubsection{Sharp version}
 
In this paragraph, we prove an enhanced, sharp version of \cref{thm:en1-easy}.
We start by introducing the following operation, linked with the Jacobi identity, which will be helpful for the proof.

\begin{definition}[Jacobi distribution] \label{def:J}
    Let $\B \subset \Br(X)$ be a Hall set.
    Let $t \in \Br(\B)$ and $\ell_1, \ell_2 \in \B$ such that $(\ell_1, \ell_2) \in \B$, where $\ell_1$ is a localized leaf of $t$ and $\ell_2$ is a localized leaf of the sibling $t_1$ of $\ell_1$ in $t$ (see \cref{rk:localized-leaf-sibling}).
    Let $t_2$ be the tree constructed from $t_1$ where $\ell_2$ has been replaced by $(\ell_1, \ell_2)$.
    We denote by $J(t,\ell_1,\ell_2)$ the tree constructed from $t$, where the subtree $\langle \ell_1, t_1 \rangle$ (or $\langle t_1, \ell_1 \rangle$) has been replaced by $t_2$.
    In particular, one has $|J(t,\ell_1,\ell_2)|_\B = |t|_\B - 1$.
\end{definition}

\begin{example}
    Let $t$ be defined as
    \begin{equation}
        \Tree [. [. $\ell_3$ [. $\ell_1$ $\ell_2$ ] ] [. $\ell_4$ [. [. $\ell_5$ [. $\ell_6$ $\ell_7$ ] ] $\ell_8$ ] ] ]
    \end{equation}
    Then $J(t, \ell_4, \ell_6)$ is the tree
    \begin{equation}
        \Tree [. [. $\ell_3$ [. $\ell_1$ $\ell_2$ ] ]  [. [. $\ell_5$ [. $(\ell_4,\ell_6)$ $\ell_7$ ] ] $\ell_8$ ] ]
    \end{equation}
\end{example}
    
\begin{remark} \label{rk:localized-leaf-sibling}
    In \cref{def:J}, ``localized leaf'' means that the position of the leaf within the tree matters.
    There will be no ambiguity when using this notation in the sequel concerning which occurrence of $\ell_1$ and $\ell_2$ we wish to modify. 
    The ``sibling'' of a localized leaf $\ell$ in a tree $t$ denotes the other subtree sharing the same parent as $\ell$.
    
    This wording could be formalized using notions coming from graph theory and labeled trees, but we consider that, in our context, a full formalization would make the comprehension harder.
\end{remark}

We can now prove the following estimate, which of course implies \cref{thm:en1-easy} since $\theta_a(b) \leq |b|$ by the first item of \cref{Prop:theta/length}.
In the particular case $|X| = 2$, the refined estimate $\theta_a(b) \leq |b| - 1$ (second item of \cref{Prop:theta/length}) yields an even smaller upper bound.

\begin{theorem}
    \label{thm:en1}
    Let $\B \subset \Br(X)$ be a Hall set. 
    For any $a < b \in \B$,
    \begin{equation} \label{eq:e-theta}
        \| [a, b] \|_\B \leq \lfloor e (\theta_a(b) - 1)! \rfloor.
    \end{equation}
    Moreover, $\supp [a,b] \subset \e(  \Tr( \mathbb{A}))$ where $\mathbb{A}=\lbag a \rbag +\mathbb{L}_\B(\rf{a}{b})$.
\end{theorem}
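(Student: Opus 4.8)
The idea is to prove the refined bound $\|[a,b]\|_\B \leq \lfloor e (\theta_a(b)-1)! \rfloor$ by induction on $n := \theta_a(b)$, setting up a more refined inductive statement than the naive one behind~\eqref{eq:theta!}. Recall from \cref{sss:warm-up} that $[a,b] = \ev(\e(t))$ where $t = \langle a, \rf{a}{b}\rangle \in \Br(\B)$, that $L_\B(t) = \{a\} \cup L_\B(\rf{a}{b})$ is an alphabetic subset of $\B$ (by \cref{Prop:Ta(b)_alphabetic}), and that $a = \min L_\B(t)$ (since each leaf $c$ of $\rf{a}{b}$ satisfies $(a,c)\in\B$, hence $a < c$). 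The extra structure to exploit, compared to the proof of \cref{Prop:n!}, is that $\rf{a}{b}$ is not an arbitrary tree over an alphabetic set: its leaves $c$ all satisfy $(a,c) \in \B$, and more importantly, \emph{$a$ does not occur inside $\rf{a}{b}$}. So the first application of the iterated Jacobi identity distributing $a$ across $\rf{a}{b}$ produces, for a tree with $k$ leaves, only $k$ terms; but crucially, in each resulting term the new bracket $(a, c_j)$ is already in $\B$, and the trees appearing are then genuine trees over an alphabetic set to which \cref{Prop:n!} can be re-applied — except that they now have the right size to push the bound down by a factor.

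\textbf{Key steps.} First I would make precise the single ``first Jacobi pass'': writing $\rf{a}{b}$ as a tree with leaves $c_1,\dots,c_k$ (with multiplicity, so $k = \theta_a(b)$), the iterated Jacobi identity gives $\ev(\e(t)) = [a, \ev(b)] = \sum_{j=1}^{k} \ev(\e(t_j))$, where $t_j$ is obtained from $\rf{a}{b}$ by replacing the $j$-th leaf occurrence $c_j$ by $(a, c_j) \in \B$; this uses that $a$ is the minimal leaf and does not occur in $\rf{a}{b}$ (as in the proof of \cref{Prop:n!}). Each $t_j$ is a tree in $\Br(\B)$ over the alphabetic set $L_\B(\rf{a}{b}) \cup \{(a,c_j)\}$ (alphabetic by \cref{lem:alphabetic}) with $|t_j|_\B = k$. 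Now I apply \cref{Prop:n!} to each $t_j$: $\|\ev(\e(t_j))\|_\B \leq (k-1)!$ and $\supp \ev(\e(t_j)) \subset \e(\Tr(\mathbb{L}_\B(t_j))) \subset \e(\Tr(\mathbb{A}))$ with $\mathbb{A} = \lbag a \rbag + \mathbb{L}_\B(\rf{a}{b})$, which already gives the support statement. Summing would only give $k \cdot (k-1)! = k!$, the same as~\eqref{eq:theta!}, so the work is to do better: one must not throw away the first term of the Jacobi sum (the one replacing $c_1 = \max$? no — rather the one where the new leaf stays minimal among the remaining leaves), or, more likely, to run the induction so that in expanding each $t_j$ one again distributes the \emph{new} minimal leaf and observes it does not occur elsewhere, getting again a sum of $k-1$ (not $k$) terms at the next level. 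Chasing this through, the recursion for the worst-case bound $u_n$ on $\|[a,b]\|_\B$ when $\theta_a(b) = n$ becomes something like $u_n \leq \sum_{\text{contributions}} u_{n-1}$ but with one fewer summand than in the crude count, giving $u_n \leq n\, u_{n-1} - (\text{something})$, or more cleanly $u_n \leq (n-1) u_{n-1} + (\text{lower order})$; solving this with $u_1 = 1$ produces $u_n = \lfloor e(n-1)!\rfloor$, since $e(n-1)! = \sum_{j=0}^{n-1} (n-1)!/j!$ satisfies exactly the recursion $u_n = (n-1)u_{n-1} + 1$ with $u_1 = 1$ and $\lfloor \cdot \rfloor$ absorbing the tail.

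\textbf{The main obstacle.} The delicate point is identifying \emph{exactly} which term(s) in the successive Jacobi expansions do not need to be split further — i.e., bookkeeping a refined inductive invariant of the form ``$t$ is a tree over an alphabetic set, $a_0 := \min L_\B(t)$ occurs with multiplicity one and is a leaf whose sibling subtree has $m$ leaves, and then $\|\ev(\e(t))\|_\B \leq v_{m}$'' for a sharper sequence $v_m$, keeping track of when the newly-created bracket $(a_0, c)$ becomes the new minimum (so the distribution terminates early) versus when it does not. One must check carefully, using \cref{lem:some_elements_hall} and the Hall axioms, that after forming $(a_0, c)$ the only elements below it among the current leaves are among the $\dad$-iterates, which constrains the branching; getting the off-by-one in the count right — so that $n!$ becomes $(n-1)!\cdot e$ and not, say, $n!/2$ or $(n-1)!$ — is where the argument is easy to botch. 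I would handle this by proving a clean lemma: if $t \in \Br(\B)$ has alphabetic leaf set with unique minimum $a_0$ occurring once, then $\|\ev(\e(t))\|_\B \leq \lfloor e(|t|_\B - 2)!\rfloor \cdot (\text{correction when } |t|_\B \leq 2)$, peeling off $a_0$ first, and then deducing \cref{thm:en1} by applying it to $t = \langle a, \rf{a}{b}\rangle$ which has $|t|_\B = \theta_a(b)+1$. The support claim is the easy part and follows verbatim from the support claim of \cref{Prop:n!} as noted above.
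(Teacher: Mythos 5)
Your overall strategy is the right one — iterate the Jacobi distribution of the current minimal leaf, exploit that each created bracket $(a,c)$ already lies in $\B$, invoke \cref{Prop:n!} on the branches that terminate, and read off the support claim from \cref{Prop:n!} — and your identification of the target recursion for $\lfloor e(n-1)!\rfloor$ (sequence A000522) is correct. But the part you yourself flag as ``the main obstacle'' is precisely the content of the theorem, and what you propose there does not close. Two concrete problems. First, your candidate lemma (``alphabetic leaf set with unique minimum $a_0$ occurring once'') is not inductively stable: after replacing a leaf $c_j$ by $(a_0,c_j)$, the new minimum of $t_j$ may be a leaf occurring with multiplicity greater than one (the leaves of $\rf{a}{b}$ need not be distinct), or it may be the old second-smallest leaf rather than $(a_0,c_j)$, so the hypothesis of your lemma fails at the next level and the recursion $u_n \leq (n-1)u_{n-1}+1$ is never actually established. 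Second, you never identify the structural fact that makes one branch terminate cheaply: because $b=\e(\rf{a}{b})\in\B$, the tree $\rf{a}{b}$ has a \emph{minimal leaf $b_1$ whose sibling is itself a leaf} (this uses the Hall axioms on every subtree of $\rf{a}{b}$ and is where the hypothesis $b\in\B$ enters beyond mere alphabeticity). Without this, you cannot say which Jacobi term stops recursing, and summing $k$ branches each bounded by $(k-1)!$ only returns $k!$, i.e.\ \eqref{eq:theta!}.

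The paper's bookkeeping is also organized differently from your recursion, in a way that sidesteps the multiplicity issue: one indexes the surviving branches by \emph{paths} $\pi$ (sequences of leaf positions, each contained in the sibling subtree of the previous one), shows there are at most $\binom{n-1}{p}$ paths of length $p$ because a path is determined by its underlying set of positions, and proves for each $\pi$ that either $b_1$ or the accumulated bracket $a_\pi$ is minimal and that in every terminating configuration \cref{Prop:n!} applies to a tree of length $n-p$, contributing $(n-p-1)!$. The total is then $\sum_{p=0}^{n-1}\binom{n-1}{p}(n-p-1)!=\sum_{p=0}^{n-1}(n-1)!/p!=\lfloor e(n-1)!\rfloor$, which is numerically the same as your recursion but is proved by this path-counting rather than by a first-order recurrence. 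To repair your write-up you would need to (i) prove the ``minimal leaf with leaf sibling'' structure of $\rf{a}{b}$, (ii) replace your lemma by an invariant carried along paths (tracking the accumulated bracket $a_\pi$, the fact that the leaf multiset stays alphabetic via \cref{lem:alphabetic}, and that $\min\{b_1,a_\pi\}$ is the minimum of the current leaf multiset), and (iii) do the binomial count of paths. The support claim, as you say, is immediate from \cref{Prop:n!}.
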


\begin{proof}
    The part concerning the structure of supporting  basis elements has already been proved in \cref{sss:warm-up}. 
    So we only need to prove the size estimate.
    
    Let $n := \theta_a(b)$.
    When $n = 1$, $(a,b) \in \B$ so $\|[a,b]\|_\B = 1$.
    When $n = 2$, $(a,b) = (a,(b_1, b_2))$ where $b_1, b_2 \in \B$ and the set $\{ a, b_1, b_2 \}$ is alphabetic.
    Hence, by \cref{Prop:n!}, $\|[a,b]\|_\B \leq (3-1)!=2 = \lfloor e 1! \rfloor$. 
    We now assume that $n \geq 3$.
    
    \step{Strategy} 
    The proof is inspired by the method deployed in the proof of \cref{Prop:n!}: seeing $[a,b]$ as $\langle a, \rf{a}{b} \rangle$, a tree of $\Br(\B)$ over an alphabetic subset of $\B$, one iteratively selects its minimal leaf, distributes it on the leaves of its sibling using the Jacobi identity, remarks that the leaves of the new tree are an alphabetic subset of $\B$ and that the new tree is strictly shorter, and continues the process.
    As announced in \cref{sss:warm-up}, this sharper estimate comes from the additional structure of $T_a(b)$ inherited from the fact that $b \in \B$.
    Thus, the key features of the following argument are to identify this additional structure, prove that it is preserved during the iterative process, and eventually use it to tighten the bound from $n!$ down to $\lfloor e (n-1)! \rfloor$. 
    
    \step{Structure of $\rf{a}{b}$}
    Let $b_1, \dotsc, b_n \in \B$ be the (not necessarily distinct) leaves of $\rf{a}{b}$.
    By \cref{Prop:Ta(b)_feuille_b}, recall that $b = \e(\rf{a}{b})$.
    Let us prove that there exist $i \neq j \in \intset{1,n}$ such that $\langle b_i, b_j \rangle$ is a subtree of $\rf{a}{b}$ and $b_i$ is minimal among $b_1, \dotsc, b_n$.
    Let $b_k$ be a minimal leaf of maximal depth. 
    If its sibling is also a leaf $b_{k'}$, then, since $b \in \B$, $b_k < b_{k'}$ and the couple $(i,j) := (k,k')$ is a valid choice.
    If its left sibling is a tree $w$, $\langle w, b_k \rangle$ is a subtree of $\rf{a}{b}$.
    But, since $\e(\langle w, b_k \rangle) \in \B$, $\e(w) < b_k$ so there exists a strictly smaller leaf within $w$, which contradicts the minimality of $b_k$.
    If its right sibling is a tree $w$, $\langle b_k, w \rangle$ is a subtree of $\rf{a}{b}$.
    But, since $\e(\langle b_k, w \rangle) \in \B$, $\lambda(\e(w)) \leq b_k$ so one can find a leaf smaller or equal to $b_k$ within $w$ of greater depth.
    Eventually, up to reindexing, we can assume that $\rf{a}{b}$ contains $\langle b_1, b_2 \rangle$ as a subtree, where $b_1 \leq b_j$ for every $j \in \intset{1,n}$.

    For example, this yields the following structure, with $b_1$ minimal:
    \begin{equation} \label{ex:tab-b1min}
       \rf{a}{b} = \Tree [. [. $b_3$ [. $b_1$ $b_2$ ] ] [. $b_4$ [. [. $b_5$ [. $b_6$ $b_7$ ] ] $b_8$ ] ] ] 
    \end{equation}
    For this tree structure, we knew from the start that $b_1$ or $b_6$ was minimal (and we chose $b_1$ up to re-indexing).
    Indeed, using repeatedly the axioms of a Hall set and the fact that $b \in \B$ yields: $b_1 < b_2$, $b_1 \leq b_3$, $b_6 < b_7$, $b_6 \leq b_5$, $b_5 < (b_5, (b_6, b_7)) < b_8$ and $(b_5, (b_6, b_7)) \leq b_4$.
    
    \step{Paths within $\rf{a}{b}$}
    Let $p \in \intset{0,n-1}$. 
    We say that $\pi = (\pi_1, \dotsc, \pi_p) \in \intset{2,n}^{p}$ is a path within $\rf{a}{b}$ when, for each $i \in \intset{1,p-1}$, the sibling of $b_{\pi_i}$ is not a leaf and contains $b_{\pi_{i+1}}$ as a (deeper) leaf.
    For $\pi = (\pi_1, \dotsc, \pi_p)$ a path of length $p \geq 1$, we define $\pi' = (\pi_1, \dotsc, \pi_{p-1})$ which is a path of length $(p-1)$. 
    
    For the tree given in \eqref{ex:tab-b1min}, examples of such paths are $\emptyset$, $(3)$, $(4, 6)$ or $(4, 5, 7)$.
    However, $(3, 6)$ is not a path for this definition because $b_6$ is not a leaf of the sibling of $b_3$ (which is $\langle b_1, b_2 \rangle$).
    
    There are at most $\binom{n-1}{p}$ such paths of length $p$. 
    Indeed, given a subset of $p$ elements of $\intset{2,n}$, there is at most a single permutation of its elements such that the property ``each next index must be a leaf of the sibling tree'' is satisfied.
    
    \step{Iterative construction}
    We intend to define by induction on $p \in \intset{0,n-1}$, a set $\Pi_p$ of admissible paths of length $p$ and, for each $\pi \in \Pi_p$, an element $a_\pi \in \B$ and a bracket $B_\pi \in \Br(\B)$ such that
    \begin{itemize}
        \item $\mathbb{L}_\B(B_\pi) = \lbag a_\pi \rbag + \lbag b_j ; \enskip j \in \intset{1,n} \setminus \pi \rbag$ and in particular 
        $|B_\pi|_\B = n - p + 1$,
        \item $\mathbb{L}_\B(B_\pi)$ is alphabetic,
        \item the right-sibling of $b_1$ in $B_\pi$ is a leaf,
        \item if $p \geq 1$, the sibling of $a_\pi$ in $B_\pi$ is the sibling of $b_{\pi_p}$ in $\rf{a}{b}$,
        \item $\min \{ b_1, a_\pi \} = \min \mathbb{L}_\B(B_\pi)$.
    \end{itemize}
    For $p = 0$, we let $\Pi_0 := \{ \emptyset \}$, $a_\emptyset := a$ and $B_\emptyset := \langle a, \rf{a}{b} \rangle$ and all properties are satisfied thanks to the remarks made above concerning the structure of $\rf{a}{b}$.
    
    Let $p \in \intset{1,n-1}$. We assume that we have constructed $\Pi_{p-1}$ and for $\pi \in \Pi_{p-1}$, $a_\pi$ and $B_\pi$ with the claimed properties.
    We now define $\Pi_{p}$ as the set of paths $\pi$ of length $p$ such that $\pi' \in \Pi_{p-1}$ and $a_{\pi'} = \min \mathbb{L}_\B(B_{\pi'}) < b_1$. Then $a_{\pi'} < b$ for every $b \in \lbag b_1,\dots,b_n \rbag$.
    For such a path $\pi \in \Pi_p$, we define
    \begin{itemize}
        \item $a_\pi := (a_{\pi'}, b_{\pi_p})$,
        \item $B_{\pi} := J(B_{\pi'}, a_{\pi'}, b_{\pi_p})$.
    \end{itemize}
    Then $a_\pi \in \B$ because $\mathbb{L}_\B(B_{\pi'})$ is alphabetic and $a_{\pi'}< b_{\pi_p}$. The first, fourth and fifth claimed properties above are obviously satisfied. The second one is a consequence of \cref{lem:alphabetic} because $a_{\pi'} \neq b_{\pi_p}$. We also have the third one because the right-sibling of $b_1$ in $B_{\pi}$ is either the right sibling of $b_1$ in $B_{\pi'}$, let us call it $\tilde{b}\in \B$, or $(a_{\pi'},\tilde{b})$ and in this case $\tilde{b} = b_{\pi_p}$ thus 
    $(a_{\pi'},\tilde{b})=(a_{\pi'}, b_{\pi_p})=a_{\pi} \in \B$.
    See \cref{sec:ex-bpi} for an example of the construction of $B_\pi$.
    
	\step{Jacobi identity and size estimate}
    We now prove that, for each $p \in \intset{0,n-1}$ and $\pi \in \Pi_p$, there holds
    \begin{equation} \label{eq:ebpi}
        \| \e(B_\pi) \| \leq (n-p-1)! + \sum_{\bar \pi \in S(\pi)} \| \e(B_{\bar \pi}) \|,
    \end{equation}
    where 
    \begin{equation}
        S(\pi) := \{ \bar \pi \in \Pi_{p+1} ; \enskip \bar \pi' = \pi \},
    \end{equation}
    where, by convention, $\Pi_{n} := \emptyset$.
    Let $p \in \intset{0,n-1}$ and $\pi \in \Pi_p$. 
    Then either $b_1$ or $a_\pi$ is a minimal element of $L_\B(B_\pi)$.
    \begin{itemize}
    	\item Assume that $b_1$ is a minimal element of the leaves of $B_\pi$.
    	Let $\ell \in \B$ be its sibling leaf ($\ell$ is either some $b_k$ for $k \neq 1$ or $a_\pi$).
    	Then $(b_1, \ell) \in \B$. 
    	Let $t := J(B_\pi, b_1, \ell)$.
    	Then $\e(t) = \e(B_\pi)$ and $|t|_\B = |B_\pi|_\B - 1 = n - p$.
    	By \cref{lem:alphabetic}, $L_\B(t)$ is still alphabetic.
    	By \cref{Prop:n!}, $\| \e(t) \| \leq (n-p-1)!$.
    	
    	\item Otherwise, $a_\pi$ is a minimal element of the leaves of $B_\pi$.
    	\begin{itemize}
    		\item Assume that the sibling of $a_\pi$ in $B_\pi$ is a leaf $b_k$.
    		Then $(a_\pi, b_k) \in \B$.
    		Let $t := J(B_\pi, a_\pi, b_k)$.
    		Then $\ev(\e(t)) = \pm \ev(\e(B_\pi))$ in $\mathcal{L}(X)$ and $|t|_\B = |B_\pi|_\B - 1 = n - p$.
            By \cref{lem:alphabetic}, $L_\B(t)$ is still alphabetic.
    	    By \cref{Prop:n!}, $\| \e(t) \| \leq (n-p-1)!$.
    		
    		\item Otherwise, the sibling of $a_\pi$ is a tree $w$. Two cases occur.
    		\begin{itemize}
    			\item First, if $b_1$ is not a leaf of $w$, using the iterated Jacobi identity, we have in $\mathcal{L}(X)$, $\ev(\e(B_\pi)) = \sum_{\bar \pi \in S(\pi)} \pm     			\ev(\e(B_{\bar \pi}))$ so $\| \e(B_\pi) \| \leq \sum_{\bar \pi \in S(\pi)} \|\e(B_{\bar \pi})\|$.

    			\item Second, if $b_1$ is a leaf of $w$, using the iterated Jacobi identity, we have in $\mathcal{L}(X)$, $\ev(\e(B_\pi)) = \pm \ev(\e(t_\pi)) + \sum_{\bar \pi \in S(\pi)} \pm \ev(\e(B_{\bar \pi}))$ so $\| \e(B_\pi) \| \leq \| \e(t_1) \| +\sum_{\bar \pi \in S(\pi)} \|\e(B_{\bar \pi})\|$, where $t_\pi := J(B_\pi, a_\pi, b_1)$ and $|t_\pi|_\B = |B_\pi|_\B-1 = n-p$ so $\| \e(t_\pi) \| \leq (n-p-1)!$
    		\end{itemize} 
    	\end{itemize}
    \end{itemize}
    
    \step{Final estimate}
    Iterating \eqref{eq:ebpi} proves that
	\begin{equation} \label{sequence_ineq}
	    \begin{split}
		\| \e(B_\emptyset) \| & \leq
		\sum_{p=0}^{n-1} \sum_{\pi \in \Pi_p} (n-p-1)! 
		\leq \sum_{p=0}^{n-1} \binom{n-1}{p} (n-p-1)! 
		= \sum_{p=0}^{n-1} \frac{(n-1)!}{p!}
		= \lfloor e (n-1)! \rfloor
	    \end{split}
	\end{equation}
	where the last equality holds for $n \geq 2$ (see e.g.\ \href{https://oeis.org/A000522}{sequence A000522 in OEIS}).
\end{proof}

In the next paragraph, we prove the optimality of estimate \eqref{eq:e-theta}, by forcing both inequalities in \eqref{sequence_ineq} to be equalities: we construct a Hall set and candidates $a, b$ for which any admissible path is in the last configuration (i.e.\ $a_{\pi}$ is a minimal element of $\mathbb{L}_\B(B_\pi)$,
the sibling of $a_{\pi}$ in $B_{\pi}$ is a tree and $b_1$ is a leaf of this tree), and the number of the admissible paths of length $p$ is exactly $\binom{n-1}{p}$.

\subsection{Optimality case}
\label{subsec:optim_2}

Let $n \geq 2$. 
We prove that \eqref{eq:e-theta} is sharp with $\theta_a(b) = n$.
Since we chose $b$ such that $|b|=n$, our construction proves that the length-based estimate \eqref{eq:en1-easy} of the introduction is also sharp.
We start by constructing an order and the associated Hall set, then we exhibit brackets for which the estimate is saturated.

\bigskip

Let $\overline X := \{ X_2, \dotsc, X_n \}$ and $X := \{ X_0, X_1 \} \cup \overline X$.
Let
\begin{equation} \label{eq:Pi}
    \Pi := \bigcup_{0 \leq p \leq n-1}\{\pi = (\pi_1, \dotsc, \pi_p) \in \intset{2,n}^p;\pi_1<\cdots<\pi_p\},
\end{equation}
the set of multi-indexes in increasing order. 
For $\pi \in \Pi$, we define
\begin{itemize}
    \item $a_\pi := (\dotsb((X_0, X_{\pi_1}), X_{\pi_2}), \dotsc X_{\pi_p})$,
    \item $y_\pi := (a_\pi, X_1)$.
\end{itemize}
In particular, $a_\emptyset = X_0$ and $y_\emptyset = (X_0, X_1)$.
By \cref{def:free-subset}, $\Br(\overline X)$ is isomorphic to $\Br_{\overline X}$, the submagma of $\Br(X)$ generated by $\overline X$.
Let $\Br_\Pi$ denote the subset of $\Br(X)$ made of brackets involving exactly one $y_\pi$ and any additional number of indeterminates of $\overline X$.
($\Br_\Pi$ is well-defined, since an equivalent characterization is that it is subset of trees $t \in \Br(X)$ involving $X_0$ and $X_1$ exactly once, within which the sibling of $X_1$ is of the form $a_\pi$ for some $\pi \in \Pi$ and all other leaves are in~$\overline{X}$).

\begin{lemma} \label{p:order-sharp-en1}
    There exists a total Hall order $<$ on $\Br(X)$ such that
    \begin{align}
        & && X_0 < X_1 < X_2 < \dotsb < X_{n}, \label{OO1} \\
        & \forall j \in \intset{2, n-1}, && (\dotsb(X_1,X_n)\dots,X_{j+1}) < X_j \label{OO1bis} \\
        & \forall \pi \in \Pi, &&  a_\pi < X_1, \label{OO2} \\
        & \forall c_1, c_2 \in \Br(\overline X), &&  \max L_{\overline X}(c_1) < \max L_{\overline X}(c_2) \Rightarrow c_1 < c_2, \label{OO3}\\
        & \forall c_1 \in \Br(\overline X), \forall c_2 \in \Br_\Pi, &&  
        c_1 < c_2. \label{OO4}
    \end{align}
\end{lemma}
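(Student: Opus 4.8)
The plan is to construct the order explicitly as a lexicographic-style order built from a finite chain of simpler preorders, following the recipe spelled out in \cref{rk:lexico} and \cref{Rk:Zorn}. The key observation is that the five requirements \eqref{OO1}--\eqref{OO4} are ``mostly'' about brackets of small length or about the position of a few distinguished brackets (the $a_\pi$, the $y_\pi$, the single-leaf $X_j$, and the right-combs $(\dotsb(X_1,X_n)\dots,X_{j+1})$), and they only genuinely constrain the relative order on a thin, $\lambda$-stable piece of $\Br(X)$; everything else can be filled in by an arbitrary background Hall order $\prec$. So I would first isolate a $\lambda$-stable subset $A \subset \Br(X)$ on which a Hall order $<_A$ satisfying (the restrictions of) \eqref{OO1}--\eqref{OO4} can be written down directly, then invoke \cref{Prop:Prolonger_ordre} (or the hand construction of \cref{Rk:Zorn}) to extend $<_A$ to a Hall order on all of $\Br(X)$, checking that the extension does not destroy \eqref{OO3}--\eqref{OO4} (which is where one must be a little careful, since those are universally quantified over $\Br(\overline X)$ and $\Br_\Pi$, not just over $A$).

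**Key steps.** First I would fix a background Hall order $\prec$ on $\Br(X)$ (it exists, e.g.\ the lexicographic one on $(|t|,\lambda(t),\mu(t))$ from \cref{Rk:Zorn}), and a ``primary ranking'' function that assigns to each $t \in \Br(X)$ a tuple encoding, in priority order: (i) whether $t$ is a single leaf, and if so which one (to force $X_0 < X_1 < \dots < X_n$); (ii) whether $t \in \Br_{\overline X}$, in which case its rank within that class is governed by $\max L_{\overline X}(t)$ and then by $\prec$ (this gives \eqref{OO3}); (iii) whether $t \in \Br_\Pi$, placed strictly above all of $\Br_{\overline X}$ (this gives \eqref{OO4}); and (iv) everything else, ordered by $\prec$. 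Then I refine: among single leaves, use \eqref{OO1}; among $\Br_\Pi$ elements, make sure each $a_\pi$ and each $y_\pi$ land where \eqref{OO2} demands — concretely, $a_\pi$ must sit below $X_1$, which I arrange by declaring $a_\pi < X_1$ directly and propagating via the lexicographic tie-break; finally \eqref{OO1bis} is a finite list of constraints among the right-combs $(\dotsb(X_1,X_n)\dots,X_{j+1})$ and the leaves $X_j$, which I insert by hand into the ordering of the relevant length classes. The last, essential step is to verify the Hall condition $\lambda(t) < t$ for every $t \in \Br(X)\setminus X$: for $t \in \Br_{\overline X}$ this follows from $\max L_{\overline X}(\lambda(t)) \le \max L_{\overline X}(t)$ together with $\lambda(t) \prec t$; for $t \in \Br_\Pi$ one notes $\lambda(t)$ is either in $\Br_{\overline X}$ (hence strictly below, by \eqref{OO4}) or again in $\Br_\Pi$, where the lexicographic tie-break reduces to $\prec$; and for the remaining $t$ it is again inherited from $\prec$. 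One must also double-check that the distinguished elements $a_\pi$, $y_\pi$, and the right-combs are genuinely comparable consistently, i.e.\ that the finitely many hand-imposed inequalities \eqref{OO1bis}, \eqref{OO2} are mutually consistent and compatible with $\lambda(t) < t$ — e.g.\ $\lambda(a_\pi) = a_{\pi'}$ (with $\pi' = (\pi_1,\dots,\pi_{p-1})$) must be below $a_\pi$, and $\lambda(y_\pi) = a_\pi < y_\pi$, both of which hold by construction since adding leaves moves an element up in the $\Br_\Pi$ class.

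**Main obstacle.** The delicate point is reconciling \eqref{OO1bis} with the Hall condition and with \eqref{OO3}. The right-combs $(\dotsb(X_1,X_n),\dots,X_{j+1})$ all have $\max L_{\overline X} = X_n$, and we want them \emph{below} the leaves $X_j$ for $j \le n-1$, even though $X_j < X_n = \max L$ of the comb; so \eqref{OO1bis} cannot be deduced from \eqref{OO3} and must be imposed as an override living ``between'' priority levels (i) and (ii) above. I would handle this by treating these $n-2$ right-combs as a special finite block inserted immediately below $X_j$ (for the appropriate $j$) and above $X_{j-1}$, and then check that this insertion is still a Hall order: the left factor of $(\dotsb(X_1,X_n),\dots,X_{j+1})$ is $(\dotsb(X_1,X_n),\dots,X_{j+2})$, one step shorter in the same block, placed just below it, so $\lambda(t) < t$ is preserved within the block, and the bottom of the block, $(X_1,X_n)$, has $\lambda = X_1$, which sits below it by \eqref{OO1} and the block's placement above $X_1$. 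Once this block is consistently interleaved, the verification that the whole relation is a total order and a Hall order is routine lexicographic bookkeeping of the kind \cref{rk:lexico} licenses us to treat lightly, and \cref{Prop:Prolonger_ordre} then gives the desired Hall order on all of $\Br(X)$.
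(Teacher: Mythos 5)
Your proposal is correct and follows essentially the same route as the paper: partition a $\lambda$-stable subset containing the $a_\pi$'s, $X_1$ together with the right-combs, $\Br(\overline X)$, and $\Br_\Pi$ into ordered blocks, use $\max L_{\overline X}$ (with an arbitrary background Hall order $\prec$ as tie-breaker) inside $\Br(\overline X)$, verify $\lambda(t)<t$ case by case, and extend via \cref{Prop:Prolonger_ordre}. The paper simply places the whole comb block together with $X_1$ in a single class $A_2$ sitting between the $a_\pi$'s and $\Br(\overline X)$ rather than interleaving it among the leaves, but this is an inessential variation.
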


\begin{proof}
    Our strategy consists in defining a Hall order on a $\lambda$-stable subset $A$ of $\Br(X)$, ensuring the desired conditions, and then extend it to $\Br(X)$ using \cref{Prop:Prolonger_ordre}.
    Let $A := A_1 \cup A_2 \cup A_3 \cup A_4$, where $A_1, A_2, A_3, A_4$ are the pairwise disjoint sets
    \begin{equation} \label{eq:a1a2a3a4}
        \begin{aligned}
         & A_1 := \{ a_\pi ; \enskip \pi \in \Pi \}, \quad
         && A_2 := \{ X_1 \} \cup \{ (\dotsb(X_1,X_n)\dots,X_j) ; j \in \intset{2, n} \}, \\
         & A_3 := \Br(\overline{X}), 
         && A_4 := \Br_\Pi.
        \end{aligned}
    \end{equation}
    Let $\prec$ be any Hall order on $\Br(X)$.
    We will use $\prec$ as an arbitrary order to compare brackets for which we have no particular requirement.
    
    \step{Step 1: We define a total order $<$ on $A$ in the following way}
    \begin{itemize}
        \item (O1) If $c_1 \in A_i$ and $c_2 \in A_j$ with $i \neq j$, $c_1 < c_2$ if and only if $i < j$ (i.e.\ $A_1<A_2<A_3<A_4$)
        \item (O2) If $c_1 \neq c_2 \in A_1$ (resp. $A_2$, $A_4$) then $c_1 < c_2$ if and only if $c_1 \prec c_2$.
        \item (O3) If $c_1 \neq c_2 \in A_3 = \Br(\overline X)$, $c_1 < c_2$ if and only if $\max L_{\overline X}(c_1) < \max L_{\overline X}(c_2)$ or $\max L_{\overline X}(c_1) = \max L_{\overline X}(c_2)$ and $c_1 \prec c_2$, where the maximum is computed for the natural ordering $X_2 < \dotsb < X_n$ of $\overline{X}$.
    \end{itemize}
    One easily checks that the relation $<$ is indeed transitive (due to its lexicographic nature) and a total order on $A$ (thanks to the use of $\prec$ as a last resort to break equalities).
    
    
    \step{Step 2: We check that $A$ is $\lambda$-stable and that $<$ is a Hall order on $A$, i.e.\ for any $c \in A \setminus X$, $\lambda(c) \in A$ and $\lambda(c)<c$}
    \begin{itemize}
        
        \item Let $c \in A_1 \setminus X = A_1 \setminus \{ X_0 \}$. 
        Then $c = a_\pi$ with $\pi \neq \emptyset$ so $\lambda(c) = a_{\pi'} \in A_1$.
        Hence, since $\prec$ is a Hall order, we have $\lambda(c) \prec c$, and by definition of $<$ on $A_1$, $\lambda(c) < c$.
        
        \item Let $c \in A_2 \setminus X = A_2 \setminus \{ X_1 \}$.
        Then $\lambda(c) \in A_2$, so $\lambda(c) \prec c$ so $\lambda(c) < c$.
        
        \item Let $c \in A_3 \setminus X = \Br(\overline X) \setminus \overline{X}$.
        Then $\lambda(c) \in \Br(\overline X)$.
        Moreover $\max L_{\overline X}(\lambda(c)) \leq \max L_{\overline X}(c)$ because each leaf of $\lambda(c)$ is also a leaf of $c$.
        So, by definition of $<$ on $A_3$, either $\max L_{\overline X}(\lambda(c)) < \max L_{\overline X}(c)$ and $\lambda(c) < c$, or $\max L_{\overline X}(\lambda(c)) = \max L_{\overline X}(c)$ and $\lambda(c) < c$ if and only if $\lambda(c) \prec c$, which is the case because $\prec$ is a Hall order on $\Br(X)$.
        
        \item Let $c \in A_4 = \Br_\Pi$.
        First, if $c = y_\pi$ for some $\pi \in \Pi$, then $\lambda(y_\pi) = a_\pi \in A_1$ and $\lambda(c) < c$ by (O1).
        Then, 
        \begin{itemize}
            \item either $\lambda(c) \in \Br(\overline X) = A_3$, in which case $\lambda(c) < c$ by (O1),
            \item or $\lambda(c) \in \Br_\Pi$, 
            in which case $\lambda(c) < c$ if and only if $\lambda(c) \prec c$ (see (O2)), which is the case since $\prec$ is a Hall order on $\Br(X)$.
        \end{itemize} 
        
    \end{itemize}
    
    \step{Step 3: Conclusion} 
    By \cref{Prop:Prolonger_ordre}, $<$ can be extended as a Hall order on $\Br(X)$. 
    Then (O1) ensures \eqref{OO1}, \eqref{OO1bis}, \eqref{OO2} and \eqref{OO4} while (O3) ensures \eqref{OO3}, which concludes the proof.
    The purpose of (O2) is to fill in the unconstrained comparisons to build a total Hall order on $A$.
\end{proof}

\begin{proposition} \label{p:en1-optimal}
    Let $<$ be the order constructed in \cref{p:order-sharp-en1} and $\B \subset \Br(X)$ the associated Hall set. 
    Then $b :=(\dotsb((X_1,X_n),X_{n-1}), \dotsc,X_2)\in \B$ and $\| [X_0,b] \|_\B = \lfloor e (n-1)! \rfloor$.
\end{proposition}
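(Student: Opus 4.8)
The plan is to show that $b \in \B$ with relative folding $\rf{X_0}{b}$ equal to the left comb of length $n$, hence $\theta_{X_0}(b) = n = |b|$, and then to replay the iterative construction from the proof of \cref{thm:en1} on the pair $(X_0, b)$, checking that every inequality used there is in fact an equality. First I would examine the iterated left factors of $b$: writing $c_2 := (X_1, X_n)$ and $c_k := (c_{k-1}, X_{n-k+2})$ for $k \in \intset{3,n}$, so that $c_n = b$, each $c_k$ has the shape $(c_{k-1}, X_j)$ with $X_j \in X$, hence lies in $\B$ as soon as $c_{k-1} < X_j$ — and this is \eqref{OO1} for $k = 2$ and \eqref{OO1bis} for $k \geq 3$ (since $c_{k-1} = (\dotsb(X_1, X_n)\dots, X_{n-k+3})$ and $n-k+2 \in \intset{2, n-1}$). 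Then I would compute the folding: since $(X_0, X_j) \in \B$ for every $j \in \intset{2,n}$ (as $X_0 < X_j$ by \eqref{OO1}) while $(X_0, c_k) \notin \B$ (because $\lambda(c_k) \geq X_1 > X_0$), the folding unfolds completely down the right spine, so $\rf{X_0}{b} = \langle\dotsb\langle\langle X_1, X_n\rangle, X_{n-1}\rangle\dotsb, X_2\rangle$ is a comb with $\theta_{X_0}(b) = n$ leaves. In the notation of the proof of \cref{thm:en1}, $B_\emptyset = \langle X_0, \rf{X_0}{b}\rangle$, the minimal leaf is $b_1 = X_1$, and $b_k = X_{n-k+2}$ for $k \in \intset{2,n}$.

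Then I would run the iterative construction of the proof of \cref{thm:en1} and show by induction on $p$ that $\Pi_p$ is exactly the set of strictly decreasing $\pi = (\pi_1, \dots, \pi_p) \in \intset{2,n}^p$ (there are $\binom{n-1}{p}$ of these), that the element $a_\pi$ produced by that proof equals, after sorting its entries increasingly, one of the elements $a_\mu$ of \eqref{eq:Pi}, and that $B_\pi$ is the comb obtained from $\rf{X_0}{b}$ by deleting positions $\pi_1, \dots, \pi_{p-1}$ and replacing position $\pi_p$ by $a_\pi$. The point which makes the chain collapse is \eqref{OO2}: it gives $a_\pi = a_\mu < X_1 \leq$ every leaf of $B_\pi$, hence $a_\pi = \min \mathbb{L}_\B(B_\pi) < b_1$ at every step; this both feeds the recursive definition of $\Pi_{p+1}$ (so $S(\pi)$ ranges over all decreasing length-$(p+1)$ extensions of $\pi$, matching the count used in the proof) and places us in the \emph{last} sub-case of the ``Jacobi identity and size estimate'' step — the sibling of $a_\pi$ in $B_\pi$ is the left comb $w$, which contains $b_1 = X_1$ (or equals $X_1$ when $\pi_p = 2$, the same sub-case in degenerate form), so the extra term $t_\pi := J(B_\pi, a_\pi, X_1)$ of \cref{def:J} is always produced. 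One checks that $t_\pi$ is the comb obtained by deleting positions $\pi_1, \dots, \pi_p$ and placing $y_\mu := (a_\mu, X_1)$ at position $1$; it has $|t_\pi|_\B = n - p$ leaves, namely $y_\mu$ together with the $X_k$ for $k$ in $\intset{2,n}$ outside $\{n - \pi_i + 2\}$.

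Next I would identify $\ev(\e(t_\pi))$ with the decreasing iterated bracket $[\dotsb[a'_q, a'_{q-1}], \dots, a'_1]$ over $A'_\pi := \{y_\mu\} \cup \{X_k\}$, with $q = n - p$: the ordering $a'_1 < \dots < a'_q$ holds because the $X_k$ lie in $\Br(\overline X) = A_3$ in index order while $y_\mu \in \Br_\Pi = A_4$ sits above all of them by \eqref{OO4}. I would check that $A'_\pi$ is free (its only non-leaf $y_\mu$ contains $X_0, X_1 \notin A'_\pi$, hence cannot be a bracket over $A'_\pi$) and alphabetic (each $(X_k, y_\mu) \in \B$ since $\lambda(y_\mu) = a_\mu < X_1 < X_k$, and $(X_i, X_j) \in \B$), and that it satisfies the compatibility hypothesis of \cref{Prop:sature_n!}: two elements of $\B \cap \Br_{A'_\pi}$ with disjoint leaf sets either both lie in $\Br(\overline X)$ and are compared through \eqref{OO3}, or exactly one of them contains $y_\mu$, hence lies in $\Br_\Pi = A_4$, hence is both the larger one (by \eqref{OO4}) and the one with the larger value of $\max L_{A'_\pi}$. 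Then \cref{Prop:sature_n!} yields $\|\ev(\e(t_\pi))\|_\B = (q-1)! = (n - p - 1)!$, the equality — not merely the bound of \cref{Prop:n!}.

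Finally, telescoping the identity $\ev(\e(B_\pi)) = \pm \ev(\e(t_\pi)) + \sum_{\bar\pi \in S(\pi)} \pm \ev(\e(B_{\bar\pi}))$ down all admissible paths gives $[X_0, b] = \ev(\e(B_\emptyset)) = \sum_\pi \pm \ev(\e(t_\pi))$. To upgrade the triangle inequality into an equality I would prove that the supports $\supp \ev(\e(t_\pi))$ are pairwise disjoint: by \cref{Prop:n!}, every $c \in \supp \ev(\e(t_\pi))$ lies in $\e(\Tr(\mathbb{L}_\B(t_\pi)))$, so $c$ contains $y_\mu = (a_\mu, X_1)$ as a subtree, whence the sibling of the unique occurrence of $X_1$ in $c$ is precisely $a_\mu$; since $\mu \mapsto a_\mu$ is injective and $\mu$ determines $\pi$, distinct paths give disjoint supports. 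Hence $\|[X_0, b]\|_\B = \sum_{p=0}^{n-1} \binom{n-1}{p}(n-p-1)! = \sum_{p=0}^{n-1} \frac{(n-1)!}{p!} = \lfloor e(n-1)! \rfloor$. The main obstacle is the bookkeeping of the middle step — tracking precisely, through the recursion, which positions of the comb have been consumed and the exact shape of $B_\pi$ and $t_\pi$ — together with the disjointness argument, which is what rules out cancellations and makes the whole chain of inequalities collapse to an equality.
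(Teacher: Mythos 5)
Your proposal is correct and follows essentially the same route as the paper's proof: verify $b \in \B$ via \eqref{OO1}--\eqref{OO1bis}, use \eqref{OO2} to show that every path is admissible and that the recursion always lands in the sub-case producing $t_\pi$, apply \cref{Prop:sature_n!} to get the exact value $(n-p-1)!$ for each $t_\pi$, and conclude by pairwise disjointness of the supports. The only (harmless) deviations are your relabeling of the leaves and your disjointness argument via the sibling of the unique occurrence of $X_1$ in a supporting element, where the paper instead invokes \cref{Lem:libre} on $\{y_\pi, y_{\pi^\flat}\} \cup \{X_j\}$ and the injectivity of $\e$ --- both work.
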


\begin{proof}
    \emph{Step 1: We prove that $b \in \B$.} 
    By \eqref{OO1}, $(X_1,X_n) \in \B$. 
    Using \eqref{OO1bis}, we prove by induction on $j \in \{ n-1 , \dots, 2 \}$ that $(\dotsb,(X_1,X_n) \dotsc,X_j) \in \B$. The case $j=2$ gives $b\in \B$.

    \step{Step 2: We prove that, for any $\pi \in \Pi$, then $a_\pi, y_\pi \in \B$} 
    By definition of $y_\pi$ and \eqref{OO2}, it is sufficient to prove that $a_{\pi} \in \B$, which can be obtained by induction on the length of $\pi$, because $a_{\pi}=(a_{\pi'},X_{\pi_p})$ and $a_{\pi'}<X_{\pi_p}$ by \eqref{OO2} and \eqref{OO1}.

    \bigskip

    From now on, we use the same vocabulary as in the proof of \cref{thm:en1}, with $a \leftarrow X_0$ and $b_j \leftarrow X_j$ for $j \in \intset{1,n}$. 

    \step{Step 3: Any $\pi \in \Pi$ is an admissible path}
    The proof is by induction on its length $p \in \intset{0, n-1}$ and trivially holds for $p=0$. 
    When $p \geq 1$ then $\pi'$ is an admissible path by the induction assumption and $\mathbb{L}_\B(B_{\pi'}) = \lbag a_{\pi'} , X_1 \rbag + \lbag X_j ; j \in \intset{2,n} \setminus \pi \rbag$ thus $a_{\pi'}=\min \mathbb{L}_\B(B_{\pi'}) < X_1$ by \eqref{OO2}.

    \bigskip
    
    For $\pi \in \Pi$, we define $t_\pi, \widetilde{t}_\pi \in \Br(\B)$ by
    \begin{align}
        t_\pi & = \langle \dotsb \langle y_{\pi},X_{j_{n-p-1}} \rangle \dots  , X_{j_1} \rangle, \\
        \widetilde{t}_\pi & = \langle \dotsb \langle \langle X_1 , a_\pi \rangle , X_{j_{n-p-1}} \rangle \dots , X_{j_1} \rangle,
    \end{align}
    where $\{j_1<\dots<j_{n-p-1}\} \cup \{\pi_1<\dots<\pi_p\}$ is a partition of $\intset{2, n}$. 
    Then $\ev (\e ( \widetilde{t}_\pi )) = -\ev ( \e ( t_\pi ))$.

    \step{Step 4: We prove the following equality in $\mathcal{L}(X)$}
    \begin{equation} \label{eq:ab-eitpi}
        [a,b] = \sum_{\pi \in \Pi, \pi_p \leq n-1} \pm \ev(\e(t_\pi))
        + \sum_{\pi \in \Pi, \pi_p = n} \pm \ev(\e(\widetilde{t}_\pi)) = \sum_{\pi \in \Pi} \pm \ev(\e(t_\pi)).
    \end{equation}
    Let $\pi \in \Pi$.
    If $p = |\pi| \geq 1$ and $\pi_p = n$, then $B_\pi$ is of the form $\widetilde{t}_\pi$ so $\e(B_\pi) = \e(\widetilde{t}_\pi)$.
    Otherwise, the sibling of $a_{\pi}$ in $B_{\pi}$ is a non-trivial tree having $X_1$ as a leaf.
    Therefore (see the last step in the proof of \cref{thm:en1}), we have in $\mathcal{L}(X)$, $\ev(\e(B_\pi)) = \pm \ev(\e(t_\pi)) + \sum_{\bar \pi \in S(\pi)} \pm \ev(\e(B_{\bar \pi}))$.
    By iterating this relation starting from $B_\emptyset$, we obtain the conclusion \eqref{eq:ab-eitpi}.

    \step{Step 5: We prove that, for any $\pi \in \Pi$, $\| \e(t_\pi) \|_\B = (n-p-1)!$ and $\supp \ev(\e(t_\pi)) \subset \e(\Tr(\mathbb{L}_\B(t_\pi)))$} 
    Let $\pi \in \Pi$. 
    We already know that the set $\Delta := L_\B(t_\pi) = \{ X_{j_1} , \dots , X_{j_{n-p-1}} , y_{\pi} \}$ is alphabetic. Moreover, $ X_{j_1} < \dots < X_{j_{n-p-1}} < y_\pi$ by \eqref{OO1} and \eqref{OO4}. 
    By \cref{Lem:libre}, $\Delta$ is free because $\Delta \subset \B \cap \Br_X$ and $a_\pi=\lambda(y_\pi)<\min(\Delta)$ (see \eqref{OO1} and \eqref{OO2}).

    Let us check that, for every $c_1,c_2 \in \B \cap \Br_\Delta$ with $L_\Delta(c_1) \cap L_\Delta(c_2)=\emptyset$, then $c_1<c_2$ iff $\max L_\Delta(c_1) < \max L_\Delta(c_2)$. 
    If $c_1, c_2 \in \Br(\{X_{j_1},\dots,X_{j_{n-p-1}}\})$ then \eqref{OO3} gives the conclusion. 
    Otherwise, $y_\pi$ is a leaf of (exactly) one of the two elements $c_1, c_2$ so \eqref{OO4} gives the conclusion.
    
    Hence, the conclusion of this step follows from \cref{Prop:sature_n!} (and \cref{Prop:n!} for the structural part).

    \step{Step 6: We prove that the sets $\B \cap \e(\Tr(\mathbb{L}_\B)(t_\pi))$, for $\pi \in \Pi$, are two by two disjoint} 
    Let $\pi \neq \pi^\flat \in \Pi$, $c \in \B \cap \e (\Tr(\mathbb{L}_\B)(t_\pi))$ and $c^\flat \in \B \cap \e (\Tr(\mathbb{L}_\B)(t_{\pi^\flat}))$. 
    Then $c=\e(B)$ and $c^\flat=\e(B^\flat)$, where $B$ and $B^\flat$ are two different elements of $\Br(\Delta)$ where $\Delta := \{ y_\pi , y_{\pi^\flat} \} \cup \{ X_j ; \enskip j \in \intset{2, n} \}$.
    By \cref{Lem:libre} (with $A = X$), $\Delta$ is free because $\Delta \subset \B \cap \Br_{X}$, $a_\pi = \lambda(y_\pi) < \min(\Delta)$ and $a_{\pi^\flat} = \lambda(y_{\pi^\flat}) < \min (\Delta)$ (see \eqref{OO1} and \eqref{OO2}). 
    Thus $\e:\Br(\Delta) \rightarrow \Br_\Delta$ is injective and $c \neq c^\flat$.

    \bigskip

    We deduce from Steps 4, 5 and 6 that
    \begin{equation}
        \| [X_0,b] \|_\B = \sum_{\pi \in \Pi} \| \e(t_\pi) \|_\B = \sum_{p=0}^{n-1} \binom{n-1}{p} (n-p-1)! = \lfloor e (n-1)! \rfloor,
    \end{equation}
    which concludes the proof.
\end{proof}

The example constructed above is stated in a finite setting.
However, straightforward adaptations lead to the following consequence for infinite sets of indeterminates.

\begin{corollary} \label{cor:x-infini-sature}
    Assume that $X$ is infinite.
    There exists a Hall set $\B \subset \Br(X)$ such that, for every $n \geq 2$, there exists $a < b \in \B$ with $|a| = 1$, $\theta_a(b) = |b| = n$ and $\|[a,b]\|_\B = \lfloor e(n-1)! \rfloor$.
\end{corollary}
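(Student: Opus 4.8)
The statement to prove is \cref{cor:x-infini-sature}, which claims that for an infinite set of indeterminates $X$, there is a single Hall set $\B$ on $\Br(X)$ realizing the bound $\lfloor e(n-1)!\rfloor$ simultaneously for \emph{all} $n \geq 2$. The natural approach is to embed, for each $n$, a copy of the finite construction of \cref{p:en1-optimal} into $\Br(X)$, using disjoint blocks of indeterminates, and to glue the corresponding Hall orders together.

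\medskip

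\emph{Step 1: Partition the alphabet.} Since $X$ is infinite (and, being well-orderable as discussed in \cref{Rk:Zorn}, admits a total order), pick a countable family of pairwise disjoint finite subsets $X^{(n)} \subset X$ for $n \geq 2$, with $|X^{(n)}| = n+1$, and label $X^{(n)} = \{ X_0^{(n)}, X_1^{(n)}, X_2^{(n)}, \dotsc, X_n^{(n)} \}$. (The leftover indeterminates in $X \setminus \bigcup_n X^{(n)}$ will be ordered arbitrarily at the end.) Each $X^{(n)}$ plays the role of the set $X = \{X_0, X_1\} \cup \overline X$ from \cref{subsec:optim_2}, and for each $n$ we have, via \cref{p:order-sharp-en1}, a Hall order $<_n$ on $\Br(X^{(n)})$ satisfying \eqref{OO1}--\eqref{OO4}.

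\medskip

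\emph{Step 2: Build a single Hall order on a $\lambda$-stable subset.} Let $A := \bigcup_{n \geq 2} A^{(n)}$, where $A^{(n)} \subset \Br(X^{(n)})$ is the $\lambda$-stable set $A_1 \cup A_2 \cup A_3 \cup A_4$ constructed in the proof of \cref{p:order-sharp-en1} for the block $X^{(n)}$ (relabeled with superscripts). Since the blocks $X^{(n)}$ are disjoint, the sets $\Br(X^{(n)})$ are disjoint except for... in fact they are genuinely disjoint as subsets of $\Br(X)$ because a bracket records which leaves it uses, so $A$ is a disjoint union and is $\lambda$-stable. Define a Hall order $<_A$ on $A$ by: within each block, use $<_n$; across blocks, order $A^{(n)} <_A A^{(m)}$ whenever $n < m$ (any fixed order between blocks works since cross-block brackets are not in $A$, hence the Hall condition $\lambda(c) <_A c$ is only ever tested within a single block). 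One checks $<_A$ is a total Hall order on $A$ exactly as in \cref{p:order-sharp-en1}.

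\medskip

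\emph{Step 3: Extend and conclude.} By \cref{Prop:Prolonger_ordre}, extend $<_A$ to a Hall order $<$ on all of $\Br(X)$, and let $\B \subset \Br(X)$ be the associated Hall set (via \cref{thm:viennot}). For each $n \geq 2$, set $a := X_0^{(n)}$, which has $|a| = 1$, and $b := (\dotsb((X_1^{(n)}, X_n^{(n)}), X_{n-1}^{(n)}), \dotsc, X_2^{(n)})$, which has $|b| = n$. The entire argument of the proof of \cref{p:en1-optimal} — Steps 1 through 6 there — takes place inside $\Br_{X^{(n)}}$ and uses only the relations \eqref{OO1}--\eqref{OO4} restricted to that block, all of which hold for $<$ by construction (the extension in Step 2 preserves the block-internal orderings). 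In particular all the relevant brackets $a_\pi, y_\pi, b$ and the auxiliary trees $t_\pi$ lie in $\B \cap \Br_{X^{(n)}}$, \cref{Prop:Ta(b)_feuille_b}, \cref{Prop:n!}, \cref{Prop:sature_n!}, \cref{Lem:libre} apply verbatim, and one concludes $\theta_a(b) = |b| = n$ and $\| [a,b] \|_\B = \lfloor e(n-1)! \rfloor$.

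\medskip

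\emph{Main obstacle.} The only real subtlety is verifying that nothing from a different block $X^{(m)}$ ($m \neq n$) can creep into the support of $[X_0^{(n)}, b]$ or interfere with the disjointness arguments of Steps 5 and 6 of \cref{p:en1-optimal}. This follows because the decomposition algorithm applied to a bracket whose leaves all lie in $X^{(n)}$ only ever produces brackets whose leaves lie in $X^{(n)}$ (the set of trees with leaves in $X^{(n)}$ is closed under the operations of the algorithm), so the computation is genuinely internal to $\Br_{X^{(n)}} \cong \Br(X^{(n)})$ — and by \cref{Prop:free-lie-alphabetic} (applied with the free alphabetic set $X^{(n)}$, noting $X^{(n)}$ is trivially free and alphabetic since any pair $X_i^{(n)} < X_j^{(n)}$ with $X_j^{(n)} \in X$ gives $(X_i^{(n)}, X_j^{(n)}) \in \B$), the inclusion $\mathcal{L}(X^{(n)}) \hookrightarrow \mathcal{L}(X)$ is an isometry for the relevant norms, so the value $\lfloor e(n-1)!\rfloor$ computed in the finite setting of \cref{p:en1-optimal} transfers unchanged.
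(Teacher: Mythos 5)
Your proof is correct, but it takes a genuinely different route from the paper's. The paper keeps a single distinguished pair $X_0, X_1$ and makes the finite construction of \cref{p:order-sharp-en1} uniform in $n$: it replaces $\Pi$ by the union over all lengths and enlarges the set $A_2$ to $\{X_1\} \cup \{(\dotsb(X_1,X_n)\dots,X_j) ;\, j \in \intset{2,n},\, n \geq 2\}$, so that one order simultaneously satisfies \eqref{OO1}--\eqref{OO4} for every $n$, and the witnessing brackets $b_n$ for different $n$ share the leaves $X_0, X_1, X_2, \dotsc$. You instead partition the infinite alphabet into disjoint blocks $X^{(n)}$, run the finite construction verbatim inside each block, glue the block orders (which is legitimate precisely because $A = \bigcup_n A^{(n)}$ contains no cross-block bracket, so the Hall condition $\lambda(t) < t$ is only ever tested inside one block), and extend by \cref{Prop:Prolonger_ordre}. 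Your key observation --- that the decomposition of $[X_0^{(n)}, b]$ stays inside $\Br_{X^{(n)}}$ (by \cref{thm:en1}, the support lies in $\e(\Tr(\lbag a\rbag + \mathbb{L}_\B(\rf{a}{b})))$, whose leaves are subtrees of $b$) and that \cref{Prop:free-lie-alphabetic} makes $\mathcal{L}(X^{(n)}) \hookrightarrow \mathcal{L}(X)$ an isometry --- is exactly what is needed to transfer the value $\lfloor e(n-1)!\rfloor$ from the finite case, and is cleaner than re-verifying the order axioms for an enlarged $A_2$ as the paper does. What your approach buys is modularity (the finite lemma is used as a black box, with no re-examination of its proof); what it costs is profligacy with indeterminates, which is harmless here since $X$ is infinite but means your argument, unlike the paper's, does not specialize to a countable alphabet shared across all $n$ in any economical way. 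Two cosmetic points: the existence of the Hall set associated with the extended order follows from \cref{rk:hall-construction} rather than \cref{thm:viennot} (which only asserts that $\ev(\B)$ is a basis), and you should note explicitly, as the section's preamble does, that $\theta_{X_0^{(n)}}(b) = |b| = n$ follows from $X_0^{(n)}$ being smaller than every iterated left factor of $b$ (a consequence of (O1), since $X_0^{(n)} \in A_1^{(n)}$ and those factors lie in $A_2^{(n)}$).
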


\begin{proof}
    We can assume that $X$ contains indeterminates labeled as $X_i$ for $i \in \N$.
    We then perform the following slight modifications to the construction detailed above.
    First, $\Pi$ defined in \eqref{eq:Pi} is changed to $\Pi := \bigcup_{p \in \N} \{\pi = (\pi_1, \dotsc, \pi_p) ; 2 \leq \pi_1<\cdots<\pi_p\}$.
    Then, we prove that there exists a total Hall order on $\Br(X)$ satisfying the all conditions of \cref{p:order-sharp-en1} (where \eqref{OO1} and \eqref{OO1bis} are to be understood as holding for every $n \geq 2$).
    This solely requires to modify the set $A_2$ of \eqref{eq:a1a2a3a4} into $A_2 := \{ X_1 \} \cup \{ (\dotsb(X_1,X_n)\dots,X_j) ; j \in \intset{2, n} ; n \geq 2 \}$.
    Then the result follows by considering the Hall set associated with this order and the brackets of \cref{p:en1-optimal}.
\end{proof}

\subsection{Recursion depth of the decomposition algorithm}
\label{ss:rec-theta}

In this paragraph, we prove that $\theta_a(b)$ is strictly decreasing along the historical recursive decomposition algorithm described in \cref{sec:rewriting}, within any Hall set $\B \subset \Br(X)$.

Hence, from a computational point of view, although this algorithm is classically seen as an induction on the couple $(|a|+|b|, \min\{a,b\})$, this shows that the size of the associated call stack (number of simultaneous nested executions) is bounded by $\theta_a(b) \leq |b|$. 
This is not straightforward, since it is not guaranteed that $|b|$ or $|a|+|b|$ decrease along the iterations.

From a theoretical point of view, we will use this property to prove results relying on the recursive decomposition algorithm by induction on $\theta_a(b)$ for some particular Hall sets.

We start with a technical lemma which states a kind of stability of $\theta_a(b)$ with respect to $b$ within a certain class (monotony with respect to $a$ is covered in \cref{p:theta-dec}).

\begin{lemma} \label{lem:theta-dec2/support-absurde}
    Let $a < b \in \B$ and $c \in \supp [a,b]$.
    Then $\theta_a(c) = \theta_a(b)$ and $\lambda(c) \neq b$.
\end{lemma}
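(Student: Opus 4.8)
The plan is to prove both claims simultaneously by induction on $\theta_a(b)$, using the warm-up observation from \cref{sss:warm-up} that $[a,b] = \ev(\e(\langle a, \rf{a}{b}\rangle))$ together with the structural information $\supp[a,b] \subset \e(\Tr(\mathbb{A}))$ with $\mathbb{A} = \lbag a\rbag + \mathbb{L}_\B(\rf{a}{b})$ from \cref{thm:en1}. First I would dispose of the base case $\theta_a(b) = 1$: then $(a,b) \in \B$, so $\supp[a,b] = \{(a,b)\}$, and $c = (a,b)$ satisfies $\theta_a(c) = 1 = \theta_a(b)$ (since $(a,c) = (a,(a,b))$ has $\lambda((a,b)) = a \leq a$, so $(a,(a,b)) \in \B$) and $\lambda(c) = a \neq b$ because $a < b$.

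For the inductive step, $\theta_a(b) \geq 2$, so $(a,b) \notin \B$, hence $b \notin X$ and $a < \lambda(b) < \mu(b)$, with $\rf{a}{b} = \langle \rf{a}{\lambda(b)}, \rf{a}{\mu(b)}\rangle$ and $\theta_a(b) = \theta_a(\lambda(b)) + \theta_a(\mu(b))$. The Jacobi identity gives $[a,b] = [[a,\lambda(b)],\mu(b)] + [\lambda(b),[a,\mu(b)]]$. The natural approach is to expand $[a,\lambda(b)]$ on the basis: $[a,\lambda(b)] = \sum_d \alpha_d \ev(d)$ with each $d \in \supp[a,\lambda(b)]$, so by the induction hypothesis $\theta_a(d) = \theta_a(\lambda(b))$ and $\lambda(d) \neq \lambda(b)$; moreover from the rewriting algorithm recollection (the key point that $\lambda(d) > a$ when $(a,\lambda(b)) \notin \B$, and trivially $\lambda(d) = a$ when $(a,\lambda(b)) \in \B$ so $d = (a,\lambda(b))$ anyway has $\lambda(d) = a$) one knows $\lambda(d) \geq a$. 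Then for each such $d$ one has $d > \lambda(d) \geq a$ and $\mu(b) > \lambda(b) > a$; one continues to expand $[d,\mu(b)]$ (or $-[\mu(b),d]$). The crux is to show that each $c$ appearing ultimately in $\supp[a,b]$ satisfies $\theta_a(c) = \theta_a(b)$ and $\lambda(c) \neq b$; for this, I would establish the auxiliary claim that $\theta_a$ is \emph{additive} in the sense that if $u, v \in \B$ with $a \leq \min\{u,v\}$ and $w \in \supp[u,v]$ (or $\supp[v,u]$) then $\theta_a(w) = \theta_a(u) + \theta_a(v)$ — note $\theta_a(u) + \theta_a(v) = \theta_u(\rf{a}{(u,v)})$ morally — which combined with $\theta_a(d) = \theta_a(\lambda(b))$ and $\theta_a(\mu(b))$ unchanged yields $\theta_a(c) = \theta_a(\lambda(b)) + \theta_a(\mu(b)) = \theta_a(b)$.

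The main obstacle, and where the argument needs care, is proving $\lambda(c) \neq b$ for the elements $c$ produced in the second round of bracketing (decomposing $[d,\mu(b)]$ or $[\lambda(b),[a,\mu(b)]]$). A clean way around this is to use the structural containment: $\supp[a,b] \subset \e(\Tr(\mathbb{A}))$ where the leaves of trees in $\Tr(\mathbb{A})$ are a single $a$ together with leaves of $\rf{a}{b}$, each of which is some $c'$ with $(a,c') \in \B$, hence $c' \neq b$ and moreover $a \leq c'$; since $b \notin \supp$-leaves and $b$ would have to be an iterated left factor $\lambda^k(c)$ for some $c$ built entirely from leaves all of which are $\geq a$ but the whole decomposition uses $\theta_a$-value strictly larger structure — more precisely, by \cref{Prop:Ta(b)_alphabetic} and \cref{Prop:Ba}, every $c \in \supp[a,b]$ lies in $\Br_A$ for $A = \{a\} \cup L_\B(\rf{a}{b}) \subseteq \B_a$, so every iterated left factor of $c$ down to a leaf has the form of some element of $\B_a$ bracketed, and one checks $b \notin \Br_A$ because $b = \e(\rf{a}{b})$ has $|\rf{a}{b}|_\B = \theta_a(b) \geq 2$ while any tree in $\Br(A)$ evaluating (via $\e$) to $b$ would have to be $\rf{a}{b}$ itself by the rigidity of relative folding, whose root is not in $\B_a$. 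I expect the bookkeeping of which $c$ can have $b$ as a left factor to be the delicate point, and I would resolve it by showing directly that $\lambda(c) \in \B \cap \Br_A$ with $\lambda(c) \geq \min(A) \geq a$ via \cref{Lem:BRA}, while $b \notin \Br_A$ would force $\lambda(c) \neq b$.
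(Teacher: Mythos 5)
Your plan contains a genuine gap in the argument for $\lambda(c) \neq b$. You propose to conclude by showing $b \notin \Br_A$ for $A = \{a\} \cup L_\B(\rf{a}{b})$, but this is false: by \cref{Prop:Ta(b)_feuille_b} one has $b = \e(\rf{a}{b})$, and $\rf{a}{b}$ is by construction a tree over $L_\B(\rf{a}{b}) \subset A$, so $b$ lies in $\Br_A$ by definition of the generated submagma. Your own sentence even records that a tree in $\Br(A)$ (namely $\rf{a}{b}$) evaluates to $b$; the observation that its root ``is not in $\B_a$'' is irrelevant to membership in $\Br_A$, which only asks for $b$ to be an iterated bracket of elements of $A$. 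So the final step of your argument collapses. The paper's route is much shorter: once $\theta_a(c) = \theta_a(b)$ is known, $\lambda(c) = b$ is impossible because it would force $a < b = \lambda(c)$, hence $\theta_a(c) = \theta_a(\lambda(c)) + \theta_a(\mu(c)) = \theta_a(b) + \theta_a(\mu(c)) > \theta_a(b)$, a contradiction. You should derive the second claim from the first rather than from a membership argument.

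For the first claim, your ``additivity'' auxiliary statement is essentially a restatement of the lemma itself (and as stated it even fails when $a = \min\{u,v\}$, e.g.\ $u = a$ and $(u,v)\in\B$ gives $\theta_a((u,v)) = 1 \neq 1 + \theta_a(v)$ in general), so embedding it inside an induction on $\theta_a(b)$ via the Jacobi decomposition leaves all the work undone and risks circularity. The paper instead proves the needed fact directly from the structural containment $\supp[a,b] \subset \e(\Tr(\mathbb{A}))$ of \cref{thm:en1}, by two short inductions on $|t|_\B$: first $\theta_a(\e(t)) = |t|_\B$ for $t \in \Br(L_\B(\rf{a}{b}))$ with $\e(t) \in \B$, then $\theta_a(\e(t)) = |t|_\B - 1$ for such trees containing $a$ exactly once. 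Since every $c \in \supp[a,b]$ is $\e(t)$ for some $t$ of the second kind with $|t|_\B = \theta_a(b)+1$, this gives $\theta_a(c) = \theta_a(b)$ without any Jacobi bookkeeping. I recommend you restructure along these lines.
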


\begin{proof}
    Let $a < b \in \B$. 
    We start by proving two elementary claims, which lead to the result.
    
    \step{Step 1: For each $t \in \Br(L_\B(T_a(b)))$ with $\e(t) \in \B$, one has $\theta_a(\e(t)) = |t|_\B$}
    
    We proceed by induction on $|t|_\B$.
    For each such $t$, by \cref{Lem:BRA}, $a < \e(t)$.
    When $|t|_\B = 1$, $t$ is of the form $b_i$ for some $b_i \in L_\B(T_a(b))$ so $(a, \e(t)) = (a, b_i) \in \B$ hence $\theta_a(\e(t)) = 1$.
    When $|t|_\B > 1$, $\lambda(t) \in \Br(L_\B(T_a(b)))$ so $a < \lambda(\e(t))$.
    Hence $\theta_a(\e(t)) = \theta_a(\e(\lambda(t))) + \theta_a(\e(\mu(t)))$ and the conclusion follows by induction.
    
    \step{Step 2: For each $t \in \Br(\{a\} \cup L_\B(T_a(b)))$ with $\e(t) \in \B$, $|t|_\B \geq 2$ involving $a$ exactly once, $\theta_a(\e(t)) = |t|_\B - 1$}
    
    We also proceed by induction on $|t|_\B$.
    When $|t|_\B = 2$, $t$ is of the form $\langle a, b_i \rangle$ for some $b_i \in L_\B(\rf{a}{b})$, so $\e(t) = (a, b_i)$ and $\lambda(\e(t)) = a$, hence $\theta_a(\e(t)) = 1$.
    When $|t|_\B > 2$, one cannot have $\lambda(t) = a$. 
    Indeed, if $\lambda(t) = a$, since $|\mu(t)|_\B \geq 2$, $\lambda(\mu(t)) \in \Br(L_\B(\rf{a}{b}))$ so $a < \e(\lambda(\mu(t)))$, which contradicts the fact that $\e(t) \in \B$.
    Hence, $\theta_a(\e(t)) = \theta_a(\e(\lambda(t))) + \theta_a(\e(\mu(t)))$, where $\lambda(t)$ and $\mu(t)$ satisfy respectively the hypotheses of Step 1 and Step 2 (or the converse).
    Thus the conclusion follows by induction.

    \step{Step 3: Conclusion}
    Let $c \in \supp [a,b]$. 
    By \cref{thm:en1}, $c \in \e(\Tr(\mathbb{A}))$ where $\mathbb{A} = \lbag a \rbag + \mathbb{L}_\B(T_a(b))$.
    Hence, by the previous argument, $\theta_a(c) = (\theta_a(b)+1)-1 = \theta_a(b)$.
    By contradiction, if $\lambda(c) = b$, then one would have $\theta_a(c) = \theta_a(\lambda(c)) + \theta_a(\mu(c)) > \theta_a(b)$. 
\end{proof}

\begin{proposition} \label{p:rec-theta}
    Let $\B \subset \Br(X)$ be a Hall set.
    Let $\texttt{Rewrite}(\cdot, \cdot)$ denote the algorithm described in \cref{sec:rewriting} which associates to each couple $a < b \in \B$ the decomposition of $[a,b]$ on $\B$.
    For every $a < b \in \B$, calling $\texttt{Rewrite}(a,b)$ requires a call stack of size at most $\theta_a(b)$.
\end{proposition}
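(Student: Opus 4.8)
The plan is to prove, by induction on $\big(|a|+|b|,\,\min\{a,b\}\big)$ ordered lexicographically --- the ordering already used for the classical termination proof recalled in \cref{sec:rewriting} --- that $\texttt{Rewrite}(a,b)$ needs a call stack of size at most $\theta_a(b)$. Since only the finitely many indeterminates occurring in $a$ and $b$ are ever involved in the recursion, I would first reduce to $X$ finite, so that the brackets of bounded length form a finite, hence well-ordered, set and the induction is legitimate. Write $S(a,b)$ for the maximal call-stack size of $\texttt{Rewrite}(a,b)$. Reading off the algorithm, its recursive sub-calls are performed sequentially --- the decomposition of $[a,\lambda(b)]$ is completed and returns, then the calls on the ordered pairs $\{d,\mu(b)\}$ for $d\in\supp[a,\lambda(b)]$, then the decomposition of $[a,\mu(b)]$, then the calls on the pairs $\{\lambda(b),d\}$ for $d\in\supp[a,\mu(b)]$ --- so that $S(a,b)=1+\max\mathcal S$, where $\mathcal S$ is the finite set of stack sizes of all these sub-calls (with $S(a,b)=1$ when there is none).

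I would then dispatch the base case $(a,b)\in\B$ (which includes $b\in X$): there $\texttt{Rewrite}(a,b)$ returns at once, so $S(a,b)=1=\theta_a(b)$. Otherwise $b=(\lambda(b),\mu(b))$ with $a<\lambda(b)<\mu(b)$ and $\theta_a(b)=\theta_a(\lambda(b))+\theta_a(\mu(b))\geq2$, and it suffices to bound each element of $\mathcal S$ by $\theta_a(b)-1$. For the two ``outer'' sub-calls $\texttt{Rewrite}(a,\lambda(b))$ and $\texttt{Rewrite}(a,\mu(b))$ the total length strictly decreases, so the induction hypothesis bounds their stacks by $\theta_a(\lambda(b))\leq\theta_a(b)-1$ and $\theta_a(\mu(b))\leq\theta_a(b)-1$. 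For an ``inner'' sub-call stemming from $d\in\supp[a,\lambda(b)]$ (we may assume $d\neq\mu(b)$, else the term vanishes), the call is on the ordered pair $\{d,\mu(b)\}$, whose total length equals $|a|+|b|$; I would check $d>a$ (either $(a,\lambda(b))\notin\B$, so $\lambda(d)>a$ by the classical structural fact recalled in \cref{sec:rewriting}, whence $d>\lambda(d)>a$; or $(a,\lambda(b))\in\B$, so $d=(a,\lambda(b))>a$), hence $\min\{d,\mu(b)\}>a=\min\{a,b\}$ and the induction hypothesis applies. If $d<\mu(b)$, \cref{p:theta-dec} gives $\theta_d(\mu(b))\leq\theta_a(\mu(b))\leq\theta_a(b)-1$; if $d>\mu(b)$, \cref{p:theta-dec} gives $\theta_{\mu(b)}(d)\leq\theta_a(d)$ and \cref{lem:theta-dec2/support-absurde} gives $\theta_a(d)=\theta_a(\lambda(b))\leq\theta_a(b)-1$. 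The sub-calls stemming from $d\in\supp[a,\mu(b)]$ are symmetric, now using $\theta_a(d)=\theta_a(\mu(b))$ from \cref{lem:theta-dec2/support-absurde}. Thus $\mathcal S\subset\intset{1,\theta_a(b)-1}$ and $S(a,b)\leq\theta_a(b)$, closing the induction.

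The main obstacle is exactly what makes termination of this algorithm itself delicate: the ``inner'' sub-calls on $\{d,\mu(b)\}$ and $\{\lambda(b),d\}$ leave the total length $|a|+|b|$ unchanged, so one cannot induct on length alone and must descend on the second coordinate $\min\{a,b\}$. The decisive inputs are (i) that this coordinate does strictly drop, which rests on the classical fact that every $c\in\supp[a,b']$ with $(a,b')\notin\B$ satisfies $\lambda(c)>a$, and (ii) the two statements \cref{p:theta-dec} (monotonicity of $\theta$ in the base point) and \cref{lem:theta-dec2/support-absurde} (stability $\theta_a(c)=\theta_a(b')$ on the support), which turn the qualitative ``$d>a$'' into the quantitative ``$\theta$ of the new pair is at most $\theta_a(\lambda(b))$ or $\theta_a(\mu(b))$, hence at most $\theta_a(b)-1$''. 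A minor point to handle with care is the sequential rather than nested character of the four families of sub-calls, which is what makes $S$ obey $S(a,b)=1+\max\mathcal S$ rather than a sum, so that the ``$+1$'' is absorbed by a single unit of $\theta_a(b)=\theta_a(\lambda(b))+\theta_a(\mu(b))$.
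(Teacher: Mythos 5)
Your proof is correct, and the two key quantitative inputs you invoke --- \cref{p:theta-dec} for the monotonicity $\theta_{\tilde a}(b)\leq\theta_a(b)$ and \cref{lem:theta-dec2/support-absurde} for the identity $\theta_a(d)=\theta_a(\lambda(b))$ (resp.\ $\theta_a(\mu(b))$) on the support --- are exactly the ones the paper uses, in exactly the same places. Where you diverge is the choice of induction. The paper inducts directly on $\theta_a(b)$: it shows that \emph{every} recursive sub-call $\texttt{Rewrite}(\tilde a,\tilde b)$ satisfies $\theta_{\tilde a}(\tilde b)<\theta_a(b)$, and the stack bound is then immediate since a chain of strictly decreasing positive integers starting at $\theta_a(b)$ has length at most $\theta_a(b)$. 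You instead induct on the classical lexicographic pair $(|a|+|b|,\min\{a,b\})$ used for the termination proof of \cref{sec:rewriting}, carry the statement $S(a,b)\leq\theta_a(b)$ along as the induction hypothesis, and recover the same bound via $S(a,b)=1+\max\mathcal S$. Both are valid; yours piggybacks on an already-justified well-founded order (at the cost of the reduction to finite $X$ and the bookkeeping on $\mathcal S$), whereas the paper's choice makes $\theta_a(b)$ itself the decreasing index of the recursion --- which is the structural point this subsection is advertising, namely that the classical double induction can be replaced by a single induction on the length of the relative folding. In short: same decomposition, same lemmas, same numerics, but your argument proves the bound without exhibiting $\theta$ as the recursion's own well-founded index, while the paper's does both at once.
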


\begin{proof}
    We proceed by induction on $n := \theta_a(b)$.
    First, when $n = \theta_a(b) = 1$, $(a,b) \in \B$ so there is no nested call.
    Let $n \geq 2$ and $a < b \in \B$ such that $\theta_a(b) = n$.
    We need to check that $\texttt{Rewrite}(a,b)$ only calls $\texttt{Rewrite}(\tilde{a},\tilde{b})$ with arguments $\tilde{a} < \tilde{b}$ such that $\theta_{\tilde{a}}(\tilde{b}) < \theta_a(b)$. 
    Since $\theta_a(b) > 1$, $b \notin X$ and $a < \lambda(b) < \mu(b)$. 
    Jacobi's identity yields
    \begin{equation}
        [a, b]  = [[a, \lambda(b)], \mu(b)] + [\lambda(b), [a, \mu(b)]].
    \end{equation}
    We study both terms separately, but with the same method.
    \begin{itemize}
        \item \emph{First term}. Calling $\texttt{Rewrite}(a,\lambda(b))$, we obtain $[a,\lambda(b)] = \sum \alpha_d \ev(d)$.
        Moreover, one has $\theta_a(\lambda(b)) < \theta_a(b)$ (by definition, because $\theta_a(\mu(b)) \geq 1$).
        For each $d$ in this decomposition, 
        \begin{itemize}
            \item If $d < \mu(b)$, we call $\texttt{Rewrite}(d,\mu(b))$.
            By \eqref{eq:a<c'}, $a \leq \lambda(d) < d$.
            Hence, by \cref{p:theta-dec}, $\theta_d(\mu(b)) \leq \theta_a(\mu(b)) < \theta_a(b)$.
            
            \item If $\mu(b) < d$, we call  $\texttt{Rewrite}(\mu(b),d)$.
            By \cref{p:theta-dec}, $\theta_{\mu(b)}(d) \leq \theta_a(d)$.
            Moreover, since $d \in \supp [a,\lambda(b)]$, by \cref{lem:theta-dec2/support-absurde}, $\theta_a(d) \leq \theta_a(\lambda(b))$.
            Hence $\theta_{\mu(b)}(d) < \theta_a(b)$.
        \end{itemize}
        
        \item \emph{Second term}. Calling $\texttt{Rewrite}(a,\mu(b))$, we obtain $[a,\mu(b)] = \sum \alpha_d \ev(d)$.
        Moreover, one has $\theta_a(\mu(b)) < \theta_a(b)$ (by definition, because $\theta_a(\lambda(b)) \geq 1$).
        For each $d$ in this decomposition, 
        \begin{itemize}
            \item If $d < \lambda(b)$, we call $\texttt{Rewrite}(d,\lambda(b))$.
            By \eqref{eq:a<c'}, $a \leq \lambda(d) < d$.
            Hence, by \cref{p:theta-dec}, $\theta_d(\lambda(b)) \leq \theta_a(\lambda(b)) < \theta_a(b)$.
            
            \item If $\lambda(b) < d$, we call  $\texttt{Rewrite}(\lambda(b),d)$.
            By \cref{p:theta-dec}, $\theta_{\lambda(b)}(d) \leq \theta_a(d)$.
            Moreover, since $d \in \supp [a,\mu(b)]$, by \cref{lem:theta-dec2/support-absurde}, $\theta_a(d) \leq \theta_a(\mu(b))$.
            Hence $\theta_{\lambda(b)}(d) < \theta_a(b)$.
        \end{itemize}
    \end{itemize}
    This concludes the proof.
\end{proof}

\begin{remark}
    With this approach, one could also estimate the size of the structure constants. 
    By induction on $n := \theta_a(b) \geq 1$, one constructs a sequence $C(n)$ such that $\|[a,b]\|_\B \leq C(\theta_a(b))$.
    If the worst cases happen simultaneously, the previous proof shows that one can take $C(n) := 2 C(n-1)^2$.
    Hence $C(n) := 2^{2^{n-1}-1}$ is an admissible bound.
    This yields an estimate of the form
    \begin{equation}
        \| [a,b] \|_\B \leq 2^{2^{\theta_a(b)-1}-1},
    \end{equation}
    which, since $\theta_a(b) \leq |b|$, is of course much better than \eqref{eq:2Xn2}, but also very far from the previous optimal estimate \eqref{eq:e-theta}, which stems from a tighter tracking of the unfavorable cases. 
\end{remark}

\section{Systematic geometric lower bounds}
\label{sec:geom-lower}

The main goal of this section is to prove \cref{thm:lower-easy} which yields a geometric lower bound for the structure constants relative to any Hall set.
In \cref{subsec:prel}, we gather elementary results concerning systematic basis elements used in the next sections. 
We then prove \cref{thm:lower-easy} in \cref{subsec:gen_case} when $|X| \geq 3$ (which is straightforward and easily checked to be optimal) and in \cref{subsec:2letter_case} when $|X| = 2$ (which relies on more involved computations).
The optimality of the lower bound in this latter case is discussed in \cref{s:fibo}.
Eventually, we prove in \cref{sec:lower-theta} a systematic $\theta$-based geometric lower-bound.

To measure the worst-case growth of the structure constants of $\mathcal{L}(X)$ relative to a given Hall set $\B \subset \Br(X)$, we will use the following quantity for $n \geq 2$:
\begin{equation} \label{eq:beta-n}
    \beta_n(\B):=\sup \left\{  \|[a,b]\|_\B ; \enskip a<b\in\B , |a|+|b| = n \right\} .
\end{equation}

\subsection{Some linearly independent elements of any Hall set}
\label{subsec:prel}

We identify some elements which belong to a Hall set without knowing the underlying order.

\begin{proposition}\label{prop:some_elements_hall}
    Let $\B \subset \Br(X)$ be a Hall set.
    \begin{itemize}
    \item If $\{X_0,X_1\} \subset X$, $b_0 \in \B$ with $b_0 < X_1$ and $r:=r(b_0,X_1)$ is defined by \cref{lem:some_elements_hall}, then the following elements of $\Br(X)$ belong to $\B$: 
    \begin{enumerate}
        \item $b_k:=\dad_{X_1}^k(b_0)$ for $1 \leq k \leq r$, 
        \item $\ad_{X_1}^n(b_r)$ for $n \in \N$, 
        \item $w_i := (b_i,b_{i+1})=\ad_{b_i}^2(X_1)$ for $0 \leq i \leq r-1$, 
        \item $(\ad_{X_1}^n(b_r),\ad_{X_1}^m(b_r))$ for $m,n \in \N$ such that $\ad_{X_1}^n(b_r) < \ad_{X_1}^m(b_r)$,
        \item for $0 \leq i \leq r-1$, if $w_i < X_1$, $\dad_{X_1}^n(w_i)$  for $n \leq q_i := r(w_i, X_1)$ and $\ad_{X_1}^{n-q_i} \dad_{X_1}^{q_i} (w_i)$ for $n > q_i$, otherwise $\ad_{X_1}^n (w_i)$ for all $n \geq 0$.
    \end{enumerate}
    \item If $\{X_0,X_1,X_2\} \subset X$ with $X_0<X_1<X_2$, denoting by $b_i=\dad_{X_1}^k(X_0)$ and $r:=r(X_0,X_1)$, the following elements of $\Br(X)$ lie in $\B$: 
    \begin{enumerate} 
        \item $\ad_{X_1}^n(X_2)$ for $n \in \N$,
        \item $\ad_{X_1}^n(b_i,X_2)$ for $0 \leq i \leq r-1$, $n\in\N$, 
        \item either $(\ad_{X_1}^n(b_r),\ad_{X_1}^m(X_2))$ or $(\ad_{X_1}^m(X_2),\ad_{X_1}^n(b_r))$ when $m,n \in \N$.
    \end{enumerate}
\end{itemize}
\end{proposition}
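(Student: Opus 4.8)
The plan is to read every assertion as a membership statement ``this explicit tree belongs to $\B$'' and to verify it directly from the three defining conditions of \cref{Def2:Laz}, invoking \cref{lem:some_elements_hall} whenever a pattern of the form ``$\dad_c^n(d)$ for $n\le r(c,d)$, then $\ad_c^{n-r}\dad_c^r(d)$ for $n>r(c,d)$'' appears, since that lemma packages exactly such a pattern. Two elementary consequences of the Hall axioms will be used over and over. (i) If $c\in\B\setminus X$ then $\lambda(c)<c$, hence by iteration every element of $\Lambda(c)$ lies strictly below $c$. (ii) If $c,d\in\B$ with $c<d$ and either $d\in X$ or $\lambda(d)\le c$, then $(c,d)\in\B$ and, by the third axiom, $c<(c,d)$; consequently, as soon as $c<d$, $d\notin X$ and $\lambda(d)\le c$, one gets $\ad_c^n(d)\in\B$ for every $n\ge 0$ by an immediate induction, because $\lambda(\ad_c^k(d))=c\le c$ and, by (i), $c<\ad_c^k(d)$.

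For the first bullet, I would start from the observation that $(b_0,X_1)\in\B$ (second axiom, since $X_1\in X$ and $b_0<X_1$), so \cref{lem:some_elements_hall} applies to $(a,b)=(b_0,X_1)$ and delivers items (1) and (2) at once, together with the comparisons $b_k<X_1$ for $0\le k\le r-1$ and, when $r<\infty$, $X_1<b_r$. Item (3) is then immediate: $b_{i+1}=(b_i,X_1)$, so $\lambda(b_{i+1})=b_i$ and $b_i<b_{i+1}$, whence $(b_i,b_{i+1})=w_i\in\B$. For item (4) I would first prove $X_1\le\ad_{X_1}^n(b_r)$ for all $n\ge 0$ (base case $X_1<b_r$, inductive step by (i)); then, noting $\lambda(\ad_{X_1}^m(b_r))$ is $X_1$ if $m\ge 1$ and $b_{r-1}<X_1$ if $m=0$, the hypothesis $\ad_{X_1}^n(b_r)<\ad_{X_1}^m(b_r)$ yields $\lambda(\ad_{X_1}^m(b_r))\le\ad_{X_1}^n(b_r)$, hence the element is in $\B$. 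For item (5) I would split on the position of $w_i$: if $w_i<X_1$ then $(w_i,X_1)\in\B$ and \cref{lem:some_elements_hall} applied to $(a,b)=(w_i,X_1)$ produces exactly the listed $\dad$- and $\ad$-iterates with $q_i=r(w_i,X_1)$; if $X_1<w_i$ then $\lambda(w_i)=b_i<X_1$, so (ii) gives $\ad_{X_1}^n(w_i)\in\B$ for all $n$.

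For the second bullet, taking $b_0:=X_0<X_1$ makes the first bullet applicable, and in particular $b_k<X_1$ for $k\le r-1$ and $X_1<b_r$. Item (1) follows from $(X_1,X_2)\in\B$ (as $X_2\in X$, $X_1<X_2$) and then (ii), with $X_1\le\ad_{X_1}^n(X_2)$ kept along the induction. Item (2): $(b_i,X_2)\in\B$ because $b_i<X_1<X_2$ and $X_2\in X$, and since $\lambda((b_i,X_2))=b_i<X_1$, iterating (ii) gives $\ad_{X_1}^n(b_i,X_2)\in\B$ for all $n$ once the comparison $X_1<(b_i,X_2)$ is secured. Item (3): for $m,n\in\N$, if $\ad_{X_1}^n(b_r)<\ad_{X_1}^m(X_2)$ then $\lambda(\ad_{X_1}^m(X_2))=X_1\le\ad_{X_1}^n(b_r)$, so $(\ad_{X_1}^n(b_r),\ad_{X_1}^m(X_2))\in\B$; otherwise $\ad_{X_1}^m(X_2)<\ad_{X_1}^n(b_r)$ and $\lambda(\ad_{X_1}^n(b_r))\in\{X_1,b_{r-1}\}$ is $\le X_1\le\ad_{X_1}^m(X_2)$, so the reversed bracket lies in $\B$.

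The part I expect to be the real work is the propagation of order relations: every time I want $\ad_c^n(d)\in\B$ I must know $c<d$, and after forming a new bracket I must re-establish that $c$ still sits below the freshly created subtrees. The clean way is to carry, for each produced element, its chain of iterated left factors $\Lambda(\cdot)$ together with the single fact (from \cref{lem:some_elements_hall}) that $X_1$ lies strictly above $b_0,\dots,b_{r-1}$ and strictly below $b_r$; everything mechanical then follows from (i) and (ii). The genuinely delicate comparisons are those locating a newly created bracket (a $w_i$, or a $(b_i,X_2)$) relative to $X_1$, since their position is not forced by length — and this is exactly the source of the case split ``$w_i<X_1$ vs.\ $X_1<w_i$'' in item (5) of the first bullet and of the symmetric alternative in item (3) of the second bullet.
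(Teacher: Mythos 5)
Your route is the paper's: items (1)--(2) of the first bullet come from \cref{lem:some_elements_hall} applied to the pair $(b_0,X_1)$, and every remaining membership is checked directly against the axioms of \cref{Def2:Laz} using the comparisons that the lemma supplies ($b_k<X_1$ for $k\le r-1$, and $X_1<b_r$ when $r$ is finite), with a case split whenever the position of a newly created bracket relative to $X_1$ is undetermined. Items (3)--(5) of the first bullet and items (1) and (3) of the second bullet are handled exactly as in the paper and are correct.

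The gap is item (2) of the second bullet. You write that $\ad_{X_1}^n(b_i,X_2)\in\B$ follows ``once the comparison $X_1<(b_i,X_2)$ is secured'', but you never secure it, and it cannot be secured: the Hall axioms only force $b_i<(b_i,X_2)$, not $X_1<(b_i,X_2)$. Concretely, the total order $X_0<(X_0,X_2)<X_1<X_2<(X_0,X_1)<(X_1,X_2)$ is a Hall order on a $\lambda$-stable subset (each $\lambda(t)<t$), hence extends by \cref{Prop:Prolonger_ordre} to a Hall order whose associated Hall set satisfies $(X_0,X_2)<X_1$ and therefore $(X_1,(X_0,X_2))\notin\B$; in that Hall set $r(X_0,X_1)=1$, so item (2) with $i=0$, $n=1$ fails as literally stated. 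The step you deferred is thus exactly where the argument breaks --- and, for what it is worth, the paper's own proof simply asserts $(X_1,(b_i,X_2))\in\B$ at this point without justification, so you have put your finger on a genuine soft spot of the statement rather than merely on a missing routine verification. The repair is to give item (2) the same shape as item (5) of the first bullet (split on whether $(b_i,X_2)<X_1$, introducing $q_i:=r((b_i,X_2),X_1)$) or at least the ``one of the two orderings lies in $\B$'' alternative of item (3); this weaker form is all that the application in \cref{thm:X3-2^n} actually uses, since only $\pm$-membership of the evaluations in $\ev(\B)$ and their distinctness enter the norm computation.
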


\begin{proof}
    Let us prove both parts.
    \begin{itemize}
        \item 
        Items $(1)$ and $(2)$ are \cref{lem:some_elements_hall}. 
        Item $(3)$ holds because, for $0 \leq i \leq r-1$, $b_i<X_1$. 
    
        For item $(4)$, let $m,n \in \N$ such that $\ad_{X_1}^m(b_r)<\ad_{X_1}^n(b_r)$. 
        If $n \geq 1$, then $\lambda(\ad_{X_1}^n(b_r)) = X_1  \leq \ad_{X_1}^m(b_r)$, either by Hall order property when $m \geq 1$, or because $b_r>X_1$ when $m=0$, thus $(\ad_{X_1}^n(b_r),\ad_{X_1}^m(b_r)) \in \B$.
        If $n=0$, then $m \geq 1$ thus $\lambda(\ad_{X_1}^n(b_r))=\lambda(b_r)=b_{r-1}\leq X_1 \leq \ad_{X_1}^m(b_r)$ which gives the conclusion.
    
        For item $(5)$, if $w_i < X_1$, it suffices to apply \cref{lem:some_elements_hall}.
        Of $X_1 < w_i$, since $\lambda(w_i) = b_i < X_1$ because $i < r$, so $(X_1, w_i) \in \B$ and the same holds for $\ad_{X_1}^n(w_i)$ for all $n \geq 0$.
    
        \item 
        Since $X_1<X_2$, item $(1)$ is clear. 
        Concerning item (2), for $0\leq i \leq r-1$, we have $b_i<X_1<X_2$ thus $(b_i,X_2) \in \B$ and $(X_1,(b_i,X_2))\in\B$, which gives the conclusion. For item (3), let $m,n\in\N$. We assume $\ad_{X_1}^n(b_r) < \ad_{X_1}^m(X_2)$. If $m=0$ then $(\ad_{X_1}^n(b_r),\ad_{X_1}^m(X_2))=(\ad_{X_1}^n(b_r),X_2) \in \B$.
        If $m\geq 1$, then $\lambda(\ad_{X_1}^m(X_2))=X_1 \leq \ad_{X_1}^n(b_r)$, by Hall order property when $n\geq 1$, because $b_r>X_1$ when $n=0$, thus $(\ad_{X_1}^n(b_r),\ad_{X_1}^m(X_2))\in\B$.
        The same reasoning gives the conclusion when $\ad_{X_1}^m(X_2) < \ad_{X_1}^n(b_r)$.
        \qedhere
    \end{itemize}
\end{proof}

\subsection{The general case}
\label{subsec:gen_case}

We prove an optimal systematic lower bound when $|X| \geq 3$.

\begin{theorem} \label{thm:X3-2^n}
    Let $\B \subset \Br(X)$ be a Hall set.
    Assume that $|X| \geq 3$ and let $X_0 < X_1 < X_2 \in X$.
    For every $n \in \N$, $\|[X_0,\ad_{X_1}^n(X_2)] \|_\B \geq 2^n$ with equality if $r(X_0, X_1) \in \{ 1, + \infty \}$.
    Thus $\beta_{n+2}(\B) \geq 2^n$.
\end{theorem}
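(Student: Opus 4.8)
The strategy is to bracket $X_0$ against $\ad_{X_1}^n(X_2)$ and control both the lower bound (valid for every Hall set) and the equality case (when $r(X_0,X_1)\in\{1,+\infty\}$) by identifying explicit linearly independent basis elements in the support of the result. First I would apply the relative folding machinery: since $\ad_{X_1}^n(X_2)\in\B$ by \cref{prop:some_elements_hall} (item $(1)$ of the second part, or directly from \cref{lem:some_elements_hall}), \cref{thm:en1} tells us that $\supp[X_0,\ad_{X_1}^n(X_2)]\subset\e(\Tr(\mathbb{A}))$ with $\mathbb{A}=\lbag X_0\rbag+\mathbb{L}_\B(\rf{X_0}{\ad_{X_1}^n(X_2)})$. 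Computing $\rf{X_0}{\ad_{X_1}^n(X_2)}$ explicitly: because $(X_0,X_1)$ and $(X_0,X_2)$ are always in $\B$ (as $X_0=\min\B$), unfolding the definition of \cref{def:folding} by induction on $n$ shows the folding is a fully ``right-combed'' binary tree with $n$ leaves equal to $X_1$ and one leaf equal to $X_2$, so $\theta_{X_0}(\ad_{X_1}^n(X_2))=n+1$ and $\mathbb{A}=\lbag X_0, X_2, X_1, \dots, X_1\rbag$ ($n$ copies of $X_1$). Thus every supporting element is an iterated bracket over the alphabetic subset $\{X_0,X_1,X_2\}$ using exactly one $X_0$, one $X_2$ and $n$ copies of $X_1$.

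Next, for the lower bound I would exhibit $2^n$ distinct such brackets that actually appear with nonzero coefficient and are linearly independent. The natural candidates, guided by \cref{prop:some_elements_hall}, are the brackets built from the ``atoms'' $\ad_{X_1}^k(X_2)$ and $\ad_{X_1}^k(b_i,X_2)$: expanding $[X_0,\ad_{X_1}^n(X_2)]$ by iterated Jacobi, each of the $n$ copies of $X_1$ independently either ``stays outside'' (contributing a further $\ad_{X_1}$) or ``absorbs $X_0$'' on the $X_2$-side (turning $X_2$ into $(b_i,X_2)$, i.e.\ a deeper left-nesting of $X_0$ among the $X_1$'s); iterating the Jacobi identity as in the proof of \cref{Prop:n!} yields a sum of $2^n$ terms indexed by subsets of $\intset{1,n}$, each a bracket lying in $\B$ by items $(1)$ and $(2)$ of the second part of \cref{prop:some_elements_hall}. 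These $2^n$ elements are pairwise distinct elements of $\B$ (their leaf-multisets in the alphabetic subset differ, or they are distinguished by the position of $X_0$), hence linearly independent, and by a sign/cancellation check (all contributions to a fixed basis element come with the same sign, as the Jacobi expansion is ``monotone'' here) their coefficients do not cancel. This gives $\|[X_0,\ad_{X_1}^n(X_2)]\|_\B\ge 2^n$, and then $\beta_{n+2}(\B)\ge 2^n$ since $|X_0|+|\ad_{X_1}^n(X_2)| = 1+(n+1) = n+2$.

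For the equality case $r(X_0,X_1)\in\{1,+\infty\}$, I would argue that under this hypothesis no further splitting occurs: the condition $r=1$ or $r=+\infty$ controls whether $(X_0,X_1)\in\B$ forces the intermediate brackets $(b_i,\ad_{X_1}^k(X_2))$ and $\ad_{X_1}^k(\cdot)$ to already lie in $\B$, so that the $2^n$-term Jacobi expansion above is \emph{exactly} the decomposition on $\B$, with all coefficients $\pm1$. Concretely, when $r=+\infty$ we have $\dad_{X_1}^k(X_0)\in\B$ for all $k$, so each partial bracket stays in the basis; when $r=1$, the relevant reordered brackets are again directly Hall. In either case each of the $2^n$ terms is a single basis element with coefficient of modulus $1$, and by the disjointness/freeness argument (via \cref{Lem:libre}, as in the proof of \cref{p:en1-optimal}) they are all distinct, giving $\|[X_0,\ad_{X_1}^n(X_2)]\|_\B = 2^n$ exactly.

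The main obstacle I expect is the bookkeeping in the equality case: one must verify carefully that when $r(X_0,X_1)\in\{1,+\infty\}$ every intermediate bracket produced by the iterated Jacobi identity is \emph{already} in $\B$ (so that the $2^n$-term expansion is the final decomposition, with no secondary cancellations or further rewritings), whereas for other values of $r$ some of these brackets $\dad_{X_1}^k(X_0)$ with $k>r$ must themselves be rewritten, potentially creating new terms — which is precisely why equality can fail in general. The lower-bound direction is comparatively soft: it only needs that the $2^n$ leading terms survive, which follows from the structural support statement of \cref{thm:en1} together with a sign-coherence observation ensuring no cancellation among contributions to the same basis element.
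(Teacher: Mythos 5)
Your overall route is the same as the paper's: expand $[X_0,\ad_{X_1}^n(X_2)]$ by the iterated Jacobi/Leibniz rule (\cref{p:ad-2}) and identify the resulting brackets as basis elements via \cref{prop:some_elements_hall}. Two corrections on the bookkeeping first. The expansion does not produce $2^n$ pairwise distinct basis elements: it produces the $n+1$ brackets $\ad_{X_1}^{n-k}[\dad_{X_1}^k(X_0),X_2]$, $k\in\intset{0,n}$, each with coefficient $\pm\binom{n}{k}$; the $\ell^1$ norm $2^n$ comes from summing the binomial coefficients, not from counting $2^n$ distinct supporting elements. Also, the folding leaves $(X_0,X_1),(X_0,X_2)$ lie in $\B$ because $X_1,X_2\in X$ and $X_0<X_1<X_2$, not because $X_0=\min\B$ (the theorem does not assume $X_0$ is minimal in $X$).

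The genuine gap is in the case where $r:=r(X_0,X_1)$ is finite and $n\geq r$, which you dismiss as ``comparatively soft.'' There the terms with $k\geq r$ involve $\dad_{X_1}^k(X_0)\notin\B$, and rewriting them on the basis (a second application of \cref{p:ad-2}) reassigns the mass $\sum_{k\geq r}\binom{n}{k}$ to the brackets $[\ad_{X_1}^{s-r}\dad_{X_1}^r(X_0),\ad_{X_1}^{n-s}(X_2)]$ with coefficients given by an \emph{alternating} double sum; a priori this could shrink the $\ell^1$ norm below $2^n$. Neither the support statement of \cref{thm:en1} nor a generic ``sign-coherence observation'' rules this out: the paper must actually evaluate the coefficients, via \eqref{eq:X0X1nX2-JacR} and the binomial identity in \eqref{alphas:valeur}, obtaining $\alpha_s=\binom{n}{s}\binom{s-1}{r-1}\geq\binom{n}{s}$, which is what saves the lower bound. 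The same computation is what yields equality for $r=1$ (where $\alpha_s=\binom{n}{s}$ exactly); your claim that for $r=1$ ``each partial bracket stays in the basis'' is not accurate, since $\dad_{X_1}^2(X_0)\notin\B$ already and the rewriting genuinely changes the brackets. Only the subcase $n<r$ (in particular $r=+\infty$) is covered by your argument as written.
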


\begin{proof}
    Let $r:=r(X_0,X_1)$ be defined in \cref{lem:some_elements_hall}. 
    By \cref{p:ad-2} (applied to the derivation $\ad_{X_1}$ on $\mathcal{L}(X)$, $\nu \leftarrow n$, $b_1 \leftarrow X_0$ and $b_2 \leftarrow X_2$),
    \begin{equation} \label{eq:X0X1nX2-Jac}
        [X_0,\ad_{X_1}^n(X_2)]
        = \sum_{k=0}^{n} (-1)^k \binom{n}{k} \ad_{X_1}^{n-k} [\ad_{X_1}^k(X_0),X_2].
    \end{equation}
    
    \step{Case $n < r$}
    Then \eqref{eq:X0X1nX2-Jac} implies that
    \begin{equation}
        \begin{split}
            [X_0,\ad_{X_1}^n(X_2)]
            = \sum_{k=0}^{n} \binom{n}{k}  \ad_{X_1}^{n-k} [\dad_{X_1}^k(X_0),X_2].
        \end{split}
    \end{equation}
    By item (2) of the second point of \cref{prop:some_elements_hall}, the elements of the right-hand side are (evaluations of) distinct elements of $\B$. 
    Thus, $\|[X_0,\ad_{X_1}^n(X_2)]\|_\B=\sum_{k=0}^{n}  \binom{n}{k} = 2^n$.
    In particular, when $r=+\infty$ this equality holds for every $n\in\N$.
    
    \step{Case $r$ finite and $n \geq r$} 
    Starting from \eqref{eq:X0X1nX2-Jac}, using successively Jacobi's formula, the index change $s=k+j$ and a re-arrangement, we obtain
    \begin{equation} \label{eq:X0X1nX2-JacR}
        \begin{split}
        [X_0,\ad_{X_1}^n(X_2)] & - \sum_{k=0}^{r-1} \binom{n}{k} \ad_{X_1}^{n-k} [\dad_{X_1}^k(X_0),X_2]
        \\ & = \sum_{k=r}^{n} (-1)^k \binom{n}{k} \sum_{j=0}^{n-k} \binom{n-k}{j} [\ad_{X_1}^{k+j}(X_0),\ad_{X_1}^{n-k-j}(X_2)]
        \\ & = \sum_{s=r}^{n} \alpha_s     [\ad_{X_1}^{s-r} \dad_{X_1}^r (X_0),\ad_{X_1}^{n-s}(X_2)],
        \end{split}
    \end{equation}
    where, using \cite[(1.5)]{zbMATH03380631} for the last equality,
    \begin{equation} \label{alphas:valeur}
        \alpha_s 
        := (-1)^r \sum_{k=r}^{s} (-1)^k \binom{n}{k} \binom{n-k}{s-k} 
        = (-1)^r \binom{n}{s} \sum_{k=r}^{s} (-1)^k \binom{s}{k} 
        = \binom{n}{s}\binom{s-1}{r-1}.
    \end{equation}
    By item (3) of the second point of \cref{prop:some_elements_hall}, the elements in the right-hand side of \eqref{eq:X0X1nX2-JacR} are, up to a sign which depends on $\B$, (evaluations of) distinct elements of $\B$.
    Thus $\| [X_0,\ad_{X_1}^n(X_2)] \|_\B = \sum_{k=0}^{r-1} \binom{n}{k} +\sum_{s=r}^{n} \alpha_s \geq 2^n$, because $\alpha_s\geq\binom{n}{s}$, with equalities when $r = 1$.
\end{proof}

\begin{remark} 
    This lower bound is optimal, in the sense that there exist Hall sets $\B \subset \Br(X)$ such that $\beta_{n+2}(\B) \leq 2^n$.
    In particular, this is the case of the length-compatible and Lyndon bases for which \cref{thm:length-easy} and \cref{thm:lyndon-easy} prove that $\| [a, b] \|_\B \leq 2^{|b|-1} \leq 2^{|a|+|b|-2}$.
    Hence, for these bases $\beta_{n+2}(\B) = 2^n$ when $|X| \geq 3$.
\end{remark}

\subsection{The two-indeterminates case}
\label{subsec:2letter_case}

We prove a systematic lower bound when $|X| = 2$ in \cref{thm:LB2}.
The construction requires more complex brackets, for which we start by giving elementary formulas.

\begin{proposition}\label{prop:geometric_general_formulas}
    Let $a,b \in \mathcal{L}(X)$. 
    For $n \in \N$, let $b_n := \ad_{a}^n(b)$. 
    For $i,j \in \N$ with $i \leq j$, let $w_{i,j} := [b_i, b_j]$ and $w_i := w_{i,i+1} = [b_i, b_{i+1}]$. 
    The following relations hold for all $i \in \N$ and $n \in \N^*$ 
    \begin{align} \label{wi,i+n}
    w_{i,i+n} & = \sum_{p=0}^{\lfloor \frac{n-1}{2}\rfloor} (-1)^p\binom{n-1-p}{p} \ad_a^{n-1-2p}(w_{i+p}), \\
     \label{adan(wi)}
    \ad_a^{n}(w_i) & = \sum_{p=0}^{\lfloor\frac{n-1}{2}\rfloor} \left[ \binom{n}{p} - \binom{n}{p-1}\right]w_{i+p,i+1+n-p}.
    \end{align}
    The following relation holds for all $n \in \N$ and $r \in \intset{1, \lfloor\frac{n-1}{2}\rfloor}$,
    \begin{multline} \label{eq:w0n}
        w_{0,n} = \sum_{p=0}^{r-1} (-1)^p \binom{n-1-p}{p} \ad_a^{n-1-2p}(w_{p}) \\ + \sum_{s=r}^{\lfloor\frac{n-1}{2}\rfloor}\left[\sum_{p=r}^s (-1)^p\binom{n-1-p}{p}\left( \binom{n-1-2p}{s-p} - \binom{n-1-2p}{s-p-1}\right) \right]w_{s,n-s}.
    \end{multline}
\end{proposition}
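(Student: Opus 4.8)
The plan is to prove the three identities by a single mechanism: since $\ad_a$ is a derivation of $\mathcal{L}(X)$ we have $\ad_a b_m = b_{m+1}$, and the Leibniz rule gives, for all $i \leq j$,
\begin{equation*}
    \ad_a(w_{i,j}) = [\ad_a b_i, b_j] + [b_i, \ad_a b_j] = w_{i+1,j} + w_{i,j+1},
\end{equation*}
with the conventions $w_{j,j} = 0$ and $w_{j,j+1} = w_j$. Everything else is an induction on $n$ combined with Pascal's rule, and no step uses freeness. For \eqref{wi,i+n}, the base case $n = 1$ is $w_{i,i+1} = w_i$, and isolating $w_{i,i+n}$ in the relation above yields the recursion $w_{i,i+n} = \ad_a(w_{i,i+n-1}) - w_{i+1,i+n-1}$. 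Applying the induction hypothesis to $w_{i,i+(n-1)}$ and to $w_{i+1,(i+1)+(n-2)}$, reindexing the second expansion by $p \mapsto p+1$, and adding, the coefficient of $\ad_a^{n-1-2p}(w_{i+p})$ becomes $(-1)^p\bigl(\binom{n-2-p}{p} + \binom{n-2-p}{p-1}\bigr) = (-1)^p\binom{n-1-p}{p}$, which is the claimed coefficient.

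For \eqref{adan(wi)}, I would induct on $n$ again, with base case $\ad_a(w_i) = w_{i,i+2}$. Applying $\ad_a$ to the formula for $\ad_a^n(w_i)$ and expanding each $\ad_a(w_{i+p,\,\cdot})$ via the relation above, the coefficient of a given $w_{i+q,\,\cdot}$ in $\ad_a^{n+1}(w_i)$ is the sum of the contributions of the two summands with indices $p = q$ and $p = q - 1$, namely
\begin{equation*}
    \Bigl(\binom{n}{q} - \binom{n}{q-1}\Bigr) + \Bigl(\binom{n}{q-1} - \binom{n}{q-2}\Bigr) = \binom{n}{q} - \binom{n}{q-2} = \binom{n+1}{q} - \binom{n+1}{q-1}
\end{equation*}
by two applications of Pascal's rule, which is exactly the claimed coefficient.

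Finally, \eqref{eq:w0n} should follow by feeding \eqref{adan(wi)} into \eqref{wi,i+n}. Take \eqref{wi,i+n} with $i = 0$, keep the terms with $p < r$ unchanged, and rewrite each $\ad_a^{n-1-2p}(w_p)$ with $p \geq r$ using \eqref{adan(wi)} applied with $p$ in place of $i$ and $n - 1 - 2p$ in place of $n$, which expresses it as a linear combination of the $w_{p+k,\,n-p-k}$. Setting $s = p+k$ and exchanging the order of the two finite summations collects the coefficient of each $w_{s,n-s}$ into precisely the bracketed double sum appearing in \eqref{eq:w0n}.

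The substance of the argument is pure bookkeeping: tracking the floor-function summation bounds, the reindexings $p \mapsto p+1$ and $(p,k) \mapsto (p, s-p)$, and the boundary terms in the two inductions, where some $w_{i,j}$ vanish because their two indices coincide and where a Pascal identity degenerates at the ends of the range. I expect this verification to be the only real difficulty; beyond it, no step needs more than the derivation property of $\ad_a$ and elementary binomial identities.
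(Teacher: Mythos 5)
Your proposal is correct and follows essentially the same route as the paper: the same Leibniz/Jacobi relation $\ad_a(w_{i,j}) = w_{i+1,j} + w_{i,j+1}$, the same inductions on $n$ with the same Pascal-rule coefficient computations for \eqref{wi,i+n} and \eqref{adan(wi)}, and the same splitting of \eqref{wi,i+n} at $p=r$ followed by substitution of \eqref{adan(wi)} and an exchange of summation order for \eqref{eq:w0n}. The bookkeeping you defer (floor bounds, vanishing $w_{j,j}$ terms, degenerate Pascal identities at the ends of the range) is exactly what the paper also leaves to the reader.
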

\begin{proof}
    The validity for every $i \in \N$ of \eqref{wi,i+n} and \eqref{adan(wi)} is proved by induction on $n\in\N^*$. 
    For $n=1$, they hold because $w_{i,i+1}=w_i$.  
    Let us prove the result for $(n+1)$ knowing it for $n$. 

    For \eqref{wi,i+n}, we use the Jacobi relation $w_{i,i+n+1}=[b_i,\ad_{a}(b_{i+n})]=\ad_{a}(w_{i,i+n})-w_{i+1,i+n}$, inject the induction assumption, re-organize terms and conclude with $\binom{n-1-p}{p}+\binom{n-1-p}{p-1}=\binom{n-p}{p}$.

    For \eqref{adan(wi)}, we start from the induction assumption, use the Jacobi relation
    \begin{equation}
        \ad_{a}(w_{i+p,i+1+n-p})=w_{i+p+1,i+1+n-p}+w_{i+p,i+2+n-p},
    \end{equation}
    reorganize terms and conclude with $\binom{n}{p}-\binom{n}{p-1}+\binom{n}{p-1}-\binom{n}{p-2}=\binom{n+1}{p}-\binom{n+1}{p-1}$.

    Finally, to prove \eqref{eq:w0n}, we start from \eqref{wi,i+n} with $i=0$, then split the sum in two sums over $p \in \intset{0, r-1}$ and $p \in \intset{r,  \lfloor \frac{n-1}{2} \rfloor}$. 
    In each term of the second sum, we incorporate \eqref{adan(wi)} and re-organize terms.
\end{proof}

We now prove the main theorem of this paragraph.
We include here an upper bound, see \eqref{eq:X0bn-r1-2n} below, which is used in the proof of \cref{p:length-sharp}.
We prove this upper bound here because it concerns the same bracket as the one which we use to prove systematic lower bounds.

\begin{theorem} \label{thm:LB2}
    Let $X=\{X_0,X_1\}$, $\B \subset \Br(X)$ be a Hall set such that $X_0<X_1$, $r:=r(X_0,X_1)$ defined by \cref{lem:some_elements_hall}, $b_n:=\dad_{X_1}^n(X_0)$ for $n \in \intset{0,r}$ and $b_n:=\ad_{X_1}^{n-r}(\dad_{X_1}^r(X_0))$ for $n>r$.
    Then, for every $n \in \N^*$,
    \begin{align}
        \label{eq:X0bn-Fn}
        & \| [X_0, b_{n}] \|_\B = F_n && \text{if } n<2r+1, \\
        \label{eq:X0bn-FnCr}
        & \| [X_0, b_{n}] \|_\B \geq \max\{ F_n, C(r) n^{r-\frac{5}{2}} 2^{n} \} && \text{if } n \geq 2r+1, \\
        \label{eq:X0bn-r1-2n}
        & \| [X_0, b_{n}] \|_\B \leq 2^{n-2} && \text{if } r = 1 \text{ and } n \geq 2.
    \end{align}
    where $(F_\nu)_{\nu \in \N}$ denote the $0$-based Fibonacci numbers.
    In particular $\beta_{n+2}(\B) \geq F_n$.
\end{theorem}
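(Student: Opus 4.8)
The plan is to reduce every claim to the element $w_{0,n}:=[X_0,\ad_{X_1}^n(X_0)]$ and then feed it into the explicit formulas of \cref{prop:geometric_general_formulas}. Since $\ad_{X_1}=-\dad_{X_1}$ as operators on $\mathcal{L}(X)$, one has $b_n=\pm\ad_{X_1}^n(X_0)$ in each branch of the definition of $b_n$, hence $\|[X_0,b_n]\|_\B=\|w_{0,n}\|_\B$; so I work throughout with $a:=X_1$, $b:=X_0$, $b_i:=\ad_{X_1}^i(X_0)$ and the brackets $w_{i,j},w_i$ of \cref{prop:geometric_general_formulas}. The crucial structural observation is that, by \cref{prop:some_elements_hall}, every bracket occurring on the right-hand side of \eqref{wi,i+n}, \eqref{adan(wi)} and \eqref{eq:w0n} is, up to a sign depending only on $\B$, the evaluation of a single element of $\B$: for $0\le p\le r-1$ one has $w_p=\pm\ev\big((\dad_{X_1}^p(X_0),\dad_{X_1}^{p+1}(X_0))\big)\in\ev(\B)$ by item~(3) of the first part; $\ad_{X_1}^m(w_p)=\pm\ev(\B)$ by item~(5) of the first part together with the sign-flip identity $\ad_{X_1}^m=(-1)^m\dad_{X_1}^m$; and for $s\ge r$, writing $\beta:=\dad_{X_1}^r(X_0)\in\B$, one has $w_{s,n-s}=[\ad_{X_1}^{s-r}(\beta),\ad_{X_1}^{n-s-r}(\beta)]=\pm\ev(\B)$ by item~(4) of the first part. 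Moreover all these basis elements are pairwise distinct: for distinct $p$ the ``core'' $w_p$ has distinct length $2p+3$, for distinct $s$ the leaf $b_s=\ad_{X_1}^{s-r}(\beta)$ has distinct length $s+1$, and the family indexed by $p\le r-1$ is separated from the family indexed by $s\ge r$ because in the former the smallest subtree containing both occurrences of $X_0$ has at most $2r+1$ leaves, whereas in the latter it has $n+2>2r+1$ leaves.

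For $n<2r+1$ (so $\lfloor(n-1)/2\rfloor\le r-1$, a condition that in particular covers $r=+\infty$), formula \eqref{wi,i+n} with $i=0$ reads $w_{0,n}=\sum_{p=0}^{\lfloor(n-1)/2\rfloor}(-1)^p\binom{n-1-p}{p}\ad_{X_1}^{n-1-2p}(w_p)$. Here every index $p$ satisfies $p\le r-1$, so each summand is a nonzero integer multiple of a single basis element and these basis elements are pairwise distinct, whence $\|w_{0,n}\|_\B=\sum_{p=0}^{\lfloor(n-1)/2\rfloor}\binom{n-1-p}{p}=F_n$ by the classical identity $F_n=\sum_{k\ge 0}\binom{n-1-k}{k}$. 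This gives \eqref{eq:X0bn-Fn}.

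For $n\ge 2r+1$ (so $r$ is finite and $r\le\lfloor(n-1)/2\rfloor$), I apply \eqref{eq:w0n} with its free parameter chosen equal to $r=r(X_0,X_1)$. By the structural observation above, this exhibits $w_{0,n}$ as an integer combination of pairwise distinct basis elements, so that
\[
\|w_{0,n}\|_\B=\sum_{p=0}^{r-1}\binom{n-1-p}{p}+\sum_{s=r}^{\lfloor(n-1)/2\rfloor}|\gamma_s|,\qquad
\gamma_s:=\sum_{p=r}^{s}(-1)^p\binom{n-1-p}{p}\!\left(\binom{n-1-2p}{s-p}-\binom{n-1-2p}{s-p-1}\right).
\]
Using the identity $\binom{n-1-p}{p}\binom{n-1-2p}{k}=\binom{n-1-p}{p+k}\binom{p+k}{p}$ and the Vandermonde-type evaluation $\sum_{p}(-1)^p\binom{s}{p}\binom{n-1-p}{s}=1$, one rewrites $\gamma_s=\pm\sum_{p=0}^{r-1}(-1)^p\binom{n-1-p}{p}\big(\binom{n-1-2p}{s-p-1}-\binom{n-1-2p}{s-p}\big)$, a sum of $r$ explicit terms. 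The two quantitative bounds of \eqref{eq:X0bn-FnCr} then follow from estimates on $\gamma_s$: first, proving $|\gamma_s|\ge\binom{n-1-s}{s}$ for every $s\in\intset{r,\lfloor(n-1)/2\rfloor}$ (e.g.\ by isolating the diagonal term $p=s$, which equals $\pm\binom{n-1-s}{s}$, and controlling the remaining terms), one gets $\sum_{s}|\gamma_s|\ge\sum_{s=r}^{\lfloor(n-1)/2\rfloor}\binom{n-1-s}{s}$ and hence $\|w_{0,n}\|_\B\ge F_n$; second, estimating the single coefficient $|\gamma_s|$ at $s=\lfloor(n-1)/2\rfloor$, where $\binom{n-1-2p}{s-p}-\binom{n-1-2p}{s-p-1}$ is the difference of two near-central binomial coefficients of order $2^{n-1-2p}/n^{3/2}$ and the term $p=r-1$ dominates, produces a bound of the form $C(r)n^{r-5/2}2^{n}$. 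Finally, \eqref{eq:X0bn-r1-2n} is the case $r=1$ of this analysis: then the first sum collapses to the single term $\ad_{X_1}^{n-1}(w_0)$ and $\gamma_s=\pm\big(\binom{n-1}{s}-\binom{n-1}{s-1}\big)$, so $\|w_{0,n}\|_\B=1+\sum_{s=1}^{\lfloor(n-1)/2\rfloor}\big(\binom{n-1}{s}-\binom{n-1}{s-1}\big)=\binom{n-1}{\lfloor(n-1)/2\rfloor}\le 2^{n-2}$. The concluding inequality $\beta_{n+2}(\B)\ge F_n$ is then immediate, since $|X_0|+|b_n|=n+2$ and $\|[X_0,b_n]\|_\B\ge F_n$ in all cases.

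The main obstacle is the pair of binomial estimates in the range $n\ge 2r+1$: because of the alternating sign, $\gamma_s$ is a sum of $r$ terms that is neither monotone nor of constant sign, so one must quantify the cancellation precisely enough to secure both $|\gamma_s|\ge\binom{n-1-s}{s}$ for all admissible $s$ and the correct polynomial factor $n^{r-5/2}$ in the exponential bound valid for every $n\ge 2r+1$ (not merely asymptotically). Everything else is bookkeeping: translating \cref{prop:some_elements_hall} into the ``each bracket is $\pm$ a basis element'' statements, keeping track of the $\ad/\dad$ signs, and invoking the Fibonacci and Vandermonde identities.
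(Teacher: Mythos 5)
Your algebraic skeleton is exactly the paper's: reduce to $w_{0,n}$, use \cref{prop:geometric_general_formulas} (formula \eqref{wi,i+n} for $n<2r+1$ and \eqref{eq:w0n} with parameter $r$ for $n\geq 2r+1$), invoke \cref{prop:some_elements_hall} to see that every summand is $\pm$ the evaluation of a basis element and that these elements are pairwise distinct, and read off the norm as a sum of absolute values of coefficients. Your $\gamma_s$ is the paper's $A^r_s(n)$ from \eqref{eq:def-arsn}, your reindexing of $\gamma_s$ as a sum over $p\in\intset{0,r-1}$ via a complete-sum vanishing identity is the first half of \cref{p:ars-final}, and your telescoping for $r=1$ recovers the value $\binom{n-1}{\lfloor(n-1)/2\rfloor}$ of \cref{p:sum-a1sn} (your telescoping is in fact a bit slicker than the paper's computation there). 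The deduction of $\beta_{n+2}(\B)\geq F_n$ and the sign bookkeeping between $\ad_{X_1}$ and $\dad_{X_1}$ are fine.

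The genuine gap is in the two lower bounds for $n\geq 2r+1$, which you yourself flag as ``the main obstacle'' and only sketch. For $|\gamma_s|\geq\binom{n-1-s}{s}$, the plan of ``isolating the diagonal term $p=s$ and controlling the remaining terms'' will not go through as stated: in the original sum the terms $\binom{n-1-p}{p}\bigl(\binom{n-1-2p}{s-p}-\binom{n-1-2p}{s-p-1}\bigr)$ for intermediate $p$ are exponentially larger than the target $\binom{n-1-s}{s}$ when $s$ is near $(n-1)/2$, so the alternating sum cannot be bounded term by term; and after your reindexing there is no $p=s$ term left to isolate. The paper resolves this by pushing the reindexing all the way to a closed form, $A^r_s(n)=\frac{n-2s}{s}\binom{n-r}{n-s}\binom{n-s-1}{r-1}$ (\cref{p:sum-r-binom} and \cref{p:ars-final}), after which the inequality $A^r_s(n)\geq\binom{n-s-1}{s}$ reduces to checking that a product of ratios $\frac{n-j}{n-s-j}$ times an integer binomial is at least $1$ (\cref{p:arsn-fibo}), and the bound $A^r_{\lfloor(n-1)/2\rfloor}(n)\geq C_r n^{r-5/2}2^n$ for \emph{all} $n\geq 2r+1$ follows from Stirling plus positivity of the exact expression (\cref{p:arsn-2n}). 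Without that exact evaluation (or an equivalent cancellation-free representation), your two estimates remain unproved, so you should either derive the closed form or supply a genuinely different argument controlling the alternating sum.
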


\begin{proof}
    We use the notations of \cref{prop:geometric_general_formulas} with $b_0 = X_0$.
    If $n<2r+1$, then by item (5) of the first point of \cref{prop:some_elements_hall}, the right-hand side elements of \eqref{wi,i+n} with $i=0$ are, up to sign, evaluations of distinct elements of $\B$, so we have
    \begin{equation}
        \|[b_0,b_n]\|_\B = \sum_{p=0}^{\lfloor \frac{n-1}{2} \rfloor} \binom{n-1-p}{p} = F_n,
    \end{equation}
    which proves \eqref{eq:X0bn-Fn}.
    Now, we assume that $r$ is finite and $n \geq 2r+1$. 
    By items (4) and (5) of the first point of \cref{prop:some_elements_hall}, the right-hand side elements of \eqref{eq:w0n} are, up to sign, evaluations of distinct elements of $\B$, so we have
    \begin{equation} \label{eq:norm-b0-b0n}
       \| [b_0,b_n] \|_\B = \sum_{p=0}^{r-1} \binom{n-1-p}{p}
        + \sum_{s=r}^{\lfloor \frac{n-1}{2} \rfloor} A^r_s(n)
    \end{equation}
    where
    \begin{equation} \label{eq:def-arsn}
    	A^r_s(n) := \left| 
    	\sum_{p=r}^s (-1)^p \binom{n-1-p}{p} \left(
    	\binom{n-1-2p}{s-p} - \binom{n-1-2p}{s-p-1}
    	\right)
    	\right|.
    \end{equation}
    Then \eqref{eq:norm-b0-b0n} and \cref{p:arsn-fibo} prove that
    \begin{equation}
        \|[b_0,b_n]\|_\B \geq \sum_{p=0}^{\lfloor \frac{n-1}{2} \rfloor} \binom{n-1-p}{p} = F_n.
    \end{equation}
    Moreover \eqref{eq:norm-b0-b0n} and \cref{p:arsn-2n} prove that
    \begin{equation}
        \|[b_0,b_n]\|_\B \geq A^r_{\lfloor \frac{n-1}{2} \rfloor}(n)
        \geq C_r n^{r - \frac 5 2}2^n.
    \end{equation}
    which concludes the proof of \eqref{eq:X0bn-FnCr}.

    \medskip
    
    Eventually, the last estimate \eqref{eq:X0bn-r1-2n} follows from \eqref{eq:norm-b0-b0n} with $r = 1$ and \cref{p:sum-a1sn}.
\end{proof}

\begin{remark}
    This estimate is optimal, in the sense that there exists a Hall set $\B \subset \Br(X)$ such that $\beta_{n+2}(\B) = F_n$ for all $n \geq 0$.
    We construct such a Hall set in \cref{s:fibo}.
    Up to our knowledge, this Hall set has not been studied before. 
    When $|X| = 2$, neither the length-compatible nor the Lyndon basis satisfy $\beta_{n+2}(\B) = F_n$ (see \cref{s:fibo-motivation} for more insight on this topic, and \cref{rk:length-beta} and \cref{rk:lyndon-beta} for the computation of the exact values in the case of length-compatible and Lyndon bases).
\end{remark}

\subsection{A \texorpdfstring{$\theta$}{theta}-based lower bound}
\label{sec:lower-theta}

The previous paragraphs prove that the length-based lower bound is geometric in any Hall set. 
We prove that this also holds for the $\theta$-based lower bound, with a geometric ratio of 2, even in the two-indeterminates case.

\begin{proposition} \label{prop:lower-theta}
    Let $\B \subset \Br(X)$ be a Hall set with $|X| \geq 2$. 
    For every $\theta \in \N^*$ there exist $a < b \in \B$ such that $\theta_a(b) = \theta$ and $\|[a,b]\|_\B \geq 2^{\theta-1}$.
\end{proposition}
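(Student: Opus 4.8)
\emph{Plan.} I would argue by cases on $|X|$, the case $|X|=2$ being the crux. For $|X|\ge 3$, fix indeterminates $X_0<X_1<X_2$ in $X$ and, for $\theta\ge 1$, set $a:=X_0$ (so $a=\min\B$) and $b:=\ad_{X_1}^{\theta-1}(X_2)$; then $a<b$. I would first check by induction on $\theta$ that $\theta_{X_0}(b)=\theta$: for $\theta=1$ we have $(X_0,X_2)\in\B$ since $X_2\in X$ and $X_0<X_2$, so $\rf{X_0}{X_2}=X_2$ and $\theta_{X_0}(X_2)=1$; for $\theta\ge 2$, $\lambda(\ad_{X_1}^{\theta-1}(X_2))=X_1>X_0$ and $b\notin X$, hence $(X_0,b)\notin\B$ and \cref{def:folding} gives $\rf{X_0}{b}=\langle\rf{X_0}{X_1},\rf{X_0}{\ad_{X_1}^{\theta-2}(X_2)}\rangle=\langle X_1,\rf{X_0}{\ad_{X_1}^{\theta-2}(X_2)}\rangle$, which has exactly one leaf more than $\rf{X_0}{\ad_{X_1}^{\theta-2}(X_2)}$, so $\theta_{X_0}(b)=1+(\theta-1)=\theta$. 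Then \cref{thm:X3-2^n} yields $\|[X_0,\ad_{X_1}^{\theta-1}(X_2)]\|_\B\ge 2^{\theta-1}$, which is the claim.

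\emph{Case $|X|=2$.} Write $X=\{X_0,X_1\}$ with $X_0<X_1$. The case $\theta=1$ is settled by $a:=X_0<b:=X_1$, for which $\theta_a(b)=1$ and $\|[a,b]\|_\B=1=2^{0}$. For $\theta\ge 2$ I would still take $a:=X_0$ and look for $b$ whose relative folding is a comb over $\theta$ leaves of the form $\langle X_1,\langle X_1,\dots,\langle X_1,\gamma\rangle\dots\rangle\rangle$, where $\gamma$ is a single, suitably chosen element of $A_\infty:=\{\ad_{X_0}^{k}(X_1);\ k\in\N\}$. One first notes that $\{X_0\}\cup A_\infty=\B_{X_0}$ (with the notation of \cref{Prop:Ba}): indeed, since $X_0=\min\B$, an element $b\neq X_0$ satisfies $(X_0,b)\in\B$ iff $b=X_1$ or $\lambda(b)=X_0$, and unwinding the latter gives $b=\ad_{X_0}^{k}(X_1)$. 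In particular $A_\infty$ is an alphabetic subset of $\B$, since for $\ad_{X_0}^{k_1}(X_1)<\ad_{X_0}^{k_2}(X_1)$ the needed bracket lies in $\B$ (because $\lambda(\ad_{X_0}^{k_2}(X_1))=X_0$ if $k_2\ge 1$, and $X_1\in X$ if $k_2=0$). If one can pick $\gamma\in A_\infty$ with $\gamma>X_1$ and $(X_1,\gamma)\in\B$, then $\ad_{X_1}^{\theta-1}(\gamma)\in\B$ with $\rf{X_0}{\ad_{X_1}^{\theta-1}(\gamma)}=\langle X_1,\dots,\langle X_1,\gamma\rangle\dots\rangle$, hence $\theta_{X_0}(\ad_{X_1}^{\theta-1}(\gamma))=\theta$; and the purely algebraic iterated Leibniz identity used in the proof of \cref{thm:X3-2^n} (applied to the derivation $\ad_{X_1}$) gives
\[
[X_0,\ad_{X_1}^{\theta-1}(\gamma)]=\sum_{k=0}^{\theta-1}(-1)^{k}\binom{\theta-1}{k}\ad_{X_1}^{\theta-1-k}\bigl[\ad_{X_1}^{k}(X_0),\gamma\bigr],
\]
whose coefficients have absolute values summing to $\sum_{k}\binom{\theta-1}{k}=2^{\theta-1}$. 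It would then remain to show that the $\theta$ summands $\ad_{X_1}^{\theta-1-k}[\ad_{X_1}^{k}(X_0),\gamma]$ are, up to sign, pairwise distinct elements of $\ev(\B)$; writing $\ad_{X_1}^{k}(X_0)=\pm b_k$ for $k\le r:=r(X_0,X_1)$ and $\ad_{X_1}^{k}(X_0)=\pm\ad_{X_1}^{k-r}(b_r)$ for $k>r$ (notation of \cref{lem:some_elements_hall}), this reduces to membership and distinctness statements of the type collected in \cref{prop:some_elements_hall}, with $\gamma$ playing the role the fresh letter $X_2$ plays when $|X|\ge 3$.

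\emph{Main obstacle.} The hard part is exactly this last point for $|X|=2$. First, an element $\gamma\in A_\infty$ with $\gamma>X_1$ need not exist — for instance it does not in the Lyndon basis, where every $\ad_{X_0}^{k}(X_1)$ with $k\ge 1$ lies below $X_1$ — so when $\gamma<X_1$ one is forced to replace the left comb $\ad_{X_1}^{\theta-1}(\gamma)$ by a comb that switches from iterated right bracketings $\dad_{X_1}$ to iterated left bracketings $\ad_{X_1}$ at the index $r(\gamma,X_1)$, in the spirit of \cref{lem:some_elements_hall}; this complicates both the computation of $\theta_{X_0}(b)$ and the Leibniz expansion. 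Second, and most seriously, any admissible $\gamma$ necessarily contains occurrences of $X_0$ (the only element of $\B_{X_0}$ without an $X_0$ is $X_1$, which would make the comb degenerate), so the ``distributed'' $X_0$ interferes with the $X_0$'s already present inside $\gamma$; ensuring no cancellation occurs among the $\theta$ terms — equivalently, choosing $\gamma$ generic enough with respect to the given Hall order and carrying out the distinctness bookkeeping, likely via a case distinction on $r(X_0,X_1)$ — is the core of the argument. Note that \cref{thm:LB2} by itself does not suffice here: it yields geometric growth of ratio $2$ for $\|[X_0,b_n]\|_\B$ only when $r(X_0,X_1)$ is finite and $\ge 3$, whereas for $r(X_0,X_1)\in\{1,2,+\infty\}$ that bracket grows only like $F_n$ (or slower), so the two-indeterminates case genuinely needs the construction above rather than a direct appeal to the earlier lower bounds.
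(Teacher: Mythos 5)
Your $|X|\ge 3$ case is fine and coincides with the paper's underlying mechanism (it is exactly \cref{thm:X3-2^n} together with the computation $\theta_{X_0}(\ad_{X_1}^{\theta-1}(X_2))=\theta$ from \cref{Prop:theta/length}). But the case $|X|=2$ is a genuine gap, and you have in effect said so yourself: the ``main obstacle'' you describe — the nonexistence in general of $\gamma\in A_\infty$ with $\gamma>X_1$, the switch between $\ad_{X_1}$ and $\dad_{X_1}$ at $r(\gamma,X_1)$, and above all the interference between the distributed $X_0$ and the $X_0$'s inside $\gamma$ — is precisely the part of the argument you do not carry out, and without it there is no proof. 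Your diagnosis that \cref{thm:LB2} cannot be quoted instead is correct (for $r(X_0,X_1)=+\infty$ the bracket $[X_0,b_n]$ only gives $F_n\ll 2^{n-1}$), so something else is needed.

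The idea you are missing is to drop the insistence that $a=X_0$. The paper takes the three elements $(X_0,X_1)$, $(X_0,(X_0,X_1))$, $X_1$, which all lie in $\B_{X_0}$ and hence form an alphabetic subset $\mathcal{X}$ of $\B$ (by \cref{Prop:Ba}); one checks easily that $\mathcal{X}$ is also free (none of the three can be written as a bracket of elements of $\Br_{\mathcal X}$, since $X_0\notin\Br_{\mathcal X}$). By \cref{Prop:free-lie-alphabetic}, $\B\cap\Br_{\mathcal X}$ pulls back to a Hall set of $\Br(\mathcal X)$ and the inclusion $\mathcal{L}(\mathcal X)\to\mathcal{L}(X)$ is an isometry for the corresponding norms. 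Writing $\mathcal X=\{x_0<x_1<x_2\}$ and $b_\theta:=\ad_{x_1}^{\theta-1}(x_2)$, one has $\theta_{x_0}(b_\theta)=\theta$, and \cref{thm:X3-2^n} applied \emph{inside} $\mathcal{L}(\mathcal X)$ (a free Lie algebra on three generators) gives $\|[x_0,b_\theta]\|_\B\ge 2^{\theta-1}$. This reduction to three ``synthetic indeterminates'' sidesteps every one of the obstacles you list: there is no case distinction on $r(X_0,X_1)$, no cancellation bookkeeping, and no need for an element of $A_\infty$ above $X_1$. If you want to salvage your own route, you would have to redo the distinctness analysis of \cref{prop:some_elements_hall} with $\gamma$ in place of $X_2$ for an arbitrary Hall order on two letters, which is substantially harder than the reduction above.
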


\begin{proof}
    Let $X_0, X_1 \in X$ with $X_0<X_1$. 
    Now let $\mathcal{X} := \{x_0,x_1,x_2\} = \{(X_0,X_1), (X_0,(X_0,X_1)), X_1\}$, numbered such that $x_0<x_1<x_2$. 
    The set $\mathcal{X}$ is a free alphabetic subset of $\B$. 
    Let $\theta \in \N^*$, and let $b_\theta := \ad_{x_1}^{\theta-1}(x_2)$. Since $x_0<x_1<x_2$, $x_0< b_\theta \in \B$, and a straightforward induction on $\theta$ shows that $\theta_{x_0}(b_\theta) = \theta$. 
    By \cref{Prop:free-lie-alphabetic}, the canonical map $\mathcal{L}(\mathcal{X}) \to \mathcal{L}(X)$ is an isometry with respect to the norms relative to $\B \cap \Br_\mathcal{X}$ and $\B$ respectively. 
    The fact that $\|[x_0,b_\theta]\|_\B \geq 2^{\theta-1}$ is then a direct consequence of \cref{thm:X3-2^n} applied in $\mathcal{L}(\mathcal{X})$.
\end{proof}

\begin{remark}
    As we will see in forthcoming sections, this lower bound of $2^{\theta-1}$ for the $\theta$-based estimate is attained in the case of the length-compatible Hall sets (see \cref{Thm:lgth-comp-main}) and in the case of the Lyndon basis (see \cref{thm:lyndon-bound}). 
    However, none of these Hall sets attains the lower bound for the length-based estimate in the case $|X|=2$.

    When $|X|=2$, we will construct in \cref{s:fibo} another Hall set which does not attain the lower bound for the $\theta$-based estimate, but attains the lower bound for the length-based estimate.
\end{remark}

\section{Length-compatible Hall sets}
\label{sec:length}

The main goal of this section is to prove \cref{thm:length-easy}, which yields a sharp bound of structure constants relative to length-compatible Hall sets.

\begin{definition}[Length-compatible Hall sets]
    We say that $\B$ is a \emph{length-compatible Hall set} when its order is length-compatible, i.e.\ satisfies 
    \begin{equation} \label{eq:order-length}
        \forall a, b \in \B, \quad a < b \Rightarrow |a| \leq |b|.
    \end{equation}
\end{definition}

Since \eqref{eq:order-length} does not impose the order between elements of the same length, which then determines whether some brackets belong or not the Hall set, there are many different length-compatible Hall sets, even for a fixed $X$.
The lower and upper bounds we prove in this section are valid for all length-compatible Hall sets without distinction, highlighting that, with respect to our focus on worst-case growth of structure constants, they behave similarly.

We start with a short terminology discussion in \cref{sec:length-historic} and preliminary results concerning structure constants for such bases in \cref{subsec:Preliminary_lengthcomp}.
We then prove an enhanced version of \cref{thm:length-easy} in \cref{subsec:Proof_lengthcomp}.
Eventually, we discuss the optimality of length-focused estimates in \cref{subsec:optim_lengthcomp}.

\subsection{Naming length-compatible Hall sets}
\label{sec:length-historic}

Historically, Hall sets were introduced by Marshall Hall in \cite{zbMATH03059664}, from ideas implicit in the work~\cite{zbMATH03010343} of Philip Hall concerning group theory.
In Marshall Hall's definition, the third condition in \cref{Def2:Laz} was replaced by the stronger condition \eqref{eq:order-length}.
Since one always has $|a| < |a| + |b|$, condition \eqref{eq:order-length} implies that $a < (a,b)$ so it is indeed stronger.

\begin{remark}
    Some authors refer to such length-compatible sets as ``\emph{Philip Hall bases}'' (e.g.\ \cite{zbMATH04006080}) or ``\emph{Philip Hall families}'' (e.g.\ \cite{zbMATH00426005, Serre1992}) or sometimes even simply ``\emph{Hall bases}'' (e.g.\ \cite{kawski2000calculating}, in which the generalized ones are called ``\emph{Hall-Viennot bases}'') or ``\emph{Hall sets}'' (e.g.\ Bourbaki's choice in \cite{zbMATH00194100}, anterior to Viennot's work).
    
    We follow Viennot's definition (which seems to be the modern convention, also used in \cite{zbMATH00035953, zbMATH00417855}) and use the name ``\emph{Hall set}'' to refer to \cref{Def2:Laz}.
    To avoid confusion, we introduce the non-standard but more descriptive name ``\emph{length-compatible Hall sets}'' instead of relying on the subtle distinction between ``\emph{Hall sets}'' and ``\emph{Philip Hall sets}'' (moreover, historically speaking, both Philip's and Marshall's definitions involved length-compatibility).
\end{remark}

Both in Philip Hall's description of his ``\emph{commutator collecting process}'' and in Marshall Hall's definition of his ``\emph{standard monomials}'', it is explicit that the order between elements of the same length can be chosen arbitrarily.

Nevertheless, it seems that the mathematical literature and the software community has somehow progressively defined the most natural ``\emph{Philip Hall basis of $\mathcal{L}(X)$}'' as the Hall set on $X$ whose order is given by the lexicographic order on the triple $(|b|, \lambda(b), \mu(b))$ (which is indeed well-defined by induction on the length).
This order appears explicitly in \cite{meier1952note} associated with the name ``\emph{basic commutator}'', in  \cite{simpson1966hall} with the name ``\emph{natural ordering}'' or in \cite{duleba1997checking} with the name ``\emph{optimal Philip Hall basis}'' (this work indeed claims that this particular choice allows to write an algorithm generating the basis up to some fixed length while minimizing the number of required bracket comparisons).
This precise choice of length-compatible order also appears implicitly in many Lie algebraic packages, where it is linked with implementation choices (see e.g.\ \texttt{Hall.\_generate\_hall\_set} in SageMath \cite{sagemath} or \texttt{hall\_basis.\_growup} in CoRoPa's LibAlgebra \cite{libalgebra}).
Some packages however make other choices (see e.g.\ \texttt{phb} in LTP \cite{zbMATH02237869}).

\subsection{Preliminary results} \label{subsec:Preliminary_lengthcomp}

In this paragraph, $\B \subset \Br(X)$ denotes a length-compatible Hall set.

\begin{lemma}\label{Lem:length-comp-x2}
    Let $a < b \in \B$ such that $|b|<2|a|$. 
    Then $(a,b) \in \B$.
\end{lemma}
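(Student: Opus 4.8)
The plan is to read the conclusion directly off the definition of a Hall set (\cref{Def2:Laz}). Since $a,b\in\B$ and $a<b$ are already given, proving $(a,b)\in\B$ amounts to checking that \emph{either} $b\in X$ \emph{or} $\lambda(b)\leq a$. So the whole argument reduces to establishing this dichotomy from the length hypothesis $|b|<2|a|$.

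First I would dispatch the trivial case $b\in X$: then the second bullet of \cref{Def2:Laz} gives $(a,b)\in\B$ at once (and this case is harmless, since $|b|=1$). So assume $b\notin X$ and write $b=(\lambda(b),\mu(b))$ with $\lambda(b),\mu(b)\in\B$ and $\lambda(b)<\mu(b)$. This is where length-compatibility \eqref{eq:order-length} is used: from $\lambda(b)<\mu(b)$ we get $|\lambda(b)|\leq|\mu(b)|$, and since $|\lambda(b)|+|\mu(b)|=|b|$, this yields $2|\lambda(b)|\leq|b|$. Now argue by contradiction: if $\lambda(b)>a$, then applying \eqref{eq:order-length} to $a<\lambda(b)$ gives $|a|\leq|\lambda(b)|\leq|b|/2<|a|$, where the last inequality is exactly the hypothesis $|b|<2|a|$ — absurd. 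Since the Hall order is total on $\B$, we conclude $\lambda(b)\leq a$, hence $(a,b)\in\B$ by \cref{Def2:Laz}.

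I do not expect any real obstacle here; the lemma is essentially an immediate consequence of the definitions together with the length-compatibility axiom. The only two points that require a moment of care are treating $b\in X$ separately (so that $\lambda(b)$ is even defined before invoking it) and using totality of the Hall order on $\B$ to pass from ``it is not the case that $a<\lambda(b)$'' to ``$\lambda(b)\leq a$''.
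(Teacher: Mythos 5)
Your proof is correct and follows essentially the same route as the paper's: split on whether $b\in X$, then use length-compatibility to compare $|\lambda(b)|$ with $|b|/2<|a|$ and conclude $\lambda(b)\leq a$. The only cosmetic difference is that you phrase the last step as a contradiction where the paper argues directly that $|\lambda(b)|<|a|$ forces $\lambda(b)<a$.
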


\begin{proof}
    If $b \in X$, then $(a,b) \in \B$. 
    Otherwise, we can write $b = (a_1, b_1)$ with $a_1 <  b_1 \in \B$. In particular, using \eqref{eq:order-length}, $|a_1| \leq |b_1|$ thus $|a_1| \leq \frac{1}{2} |b| < |a|$ which shows that $a_1<a$ and $(a,b) \in \B$.
\end{proof}

\begin{lemma}\label{Lem:length-comp-x3}
    Let $b = (a_1,b_1) \in \B$, $a \in \B$ be such that $a<a_1$ and $h \in \supp [a,a_1]$.
    Then, either $(h,b_1)\in\B$, or $(b_1,h)\in\B$ or $h=b_1$.
    In each case $[h,b_1] \in \pm \ev(\B)\cup \{0\}$.
\end{lemma}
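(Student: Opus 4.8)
The plan is to argue by a length count, exploiting the length-compatibility of $\B$ together with the structural information on $\supp[a,a_1]$ provided by the general theory. First I would record the relevant lengths. Since $b = (a_1,b_1) \in \B$, length-compatibility gives $|a_1| \le |b_1|$. Now $h \in \supp[a,a_1]$, so $|h| = |a| + |a_1|$; in particular $|h| > |a_1|$, hence also $|h| > |a|$, and therefore (using $a < a_1$, so $|a| \le |a_1| \le |b_1|$) we get $|h| = |a|+|a_1| \le 2|b_1|$. The key inequality to extract is $|h| < 2|b_1|$ unless we are in a degenerate equality case: indeed $|h| = |a|+|a_1| \le |a_1| + |b_1| \le 2|b_1|$, with $|h| = 2|b_1|$ forcing $|a| = |a_1| = |b_1|$.

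Next I would split on the comparison between $h$ and $b_1$. If $h = b_1$ we are in the third listed case and $[h,b_1] = 0$. If $h < b_1$, I want to apply \cref{Lem:length-comp-x2} with the pair $(h,b_1)$: this needs $|b_1| < 2|h|$. Since $|h| = |a|+|a_1| \ge 2|a|$... more directly, $|h| = |a| + |a_1| \ge 1 + |a_1| \ge \ldots$; the clean bound is $|h| \ge |a_1| + 1 > |a_1| \ge \tfrac{1}{2}|b|$, but what I actually need is $|h| > \tfrac12 |b_1|$, and in fact $|h| = |a|+|a_1| > |a_1| \ge |b_1|$ is false in general — so I need to be careful: when $|a_1| = |b_1|$ we only get $|h| = |a| + |b_1| > |b_1|$, which does give $|b_1| < |h| \le 2|h|$, but I need the strict $|b_1| < 2|h|$, which holds trivially since $|b_1| \le |h| < 2|h|$. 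Wait — whenever $h < b_1$ we have $|h| = |a|+|a_1|$, and since $a,a_1\in\B$ with $a<a_1$ we also know $h\in\B$ (structure constants land in the basis) and $|h|\ge |a_1|$; combined with $|a_1|\le|b_1|$ this gives $|h|\ge|a_1|$, not $|h|>|b_1|$. The correct observation: from $h<b_1$ and length-compatibility $|h|\le|b_1|$, and we separately have $|h|=|a|+|a_1|\ge 1+|a_1|\ge 1+|h|$?? That is absurd, so in fact $h<b_1$ together with $|h|=|a|+|a_1|$ and $|a_1|\le|b_1|$ forces... I would instead invoke \cref{Lem:length-comp-x2} the other way: it requires $|b_1| < 2|h|$. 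Since $|h| = |a| + |a_1|$ and $|a| \ge 1$, we have $|h| \ge |a_1| + 1$, so $2|h| \ge 2|a_1| + 2 \ge |a_1| + |b_1| + 2 > |b_1|$ using $|a_1| \le |b_1|$ — wait that needs $|a_1|+2>0$, fine, but I wrote $2|a_1|\ge |a_1|+|b_1|$ which is $|a_1|\ge|b_1|$, the wrong direction. So the honest situation is: $2|h| = 2|a| + 2|a_1|$, and I want this $> |b_1|$; since $|b_1| \ge |a_1|$ this is not automatic from below. The safe route: if $h < b_1$, apply \cref{Lem:length-comp-x2} provided $|b_1| < 2|h|$; if instead $|b_1| \ge 2|h|$, then since also $h<b_1$ we... hmm. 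Let me reorganize.

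The cleanest approach, which I would actually write up: consider the three cases $h < b_1$, $h = b_1$, $h > b_1$. The case $h = b_1$ gives $[h,b_1]=0$. If $b_1 < h$: then $|b_1| \le |h|$ by length-compatibility, so $|b_1| < 2|h|$ unless $|b_1| = 0$, impossible; hence \cref{Lem:length-comp-x2} applied to the pair $b_1 < h$ gives $(b_1,h) \in \B$, and $[h,b_1] = -\ev((b_1,h)) \in \pm\ev(\B)$. If $h < b_1$: I need $(h,b_1)\in\B$. By \cref{Lem:length-comp-x2} this follows once $|b_1| < 2|h|$. Here the structural input is essential: by \cref{eq:a<c'} (or the warm-up structural statement), every $h \in \supp[a,a_1]$ with $(a,a_1)\notin\B$ satisfies $\lambda(h) > a$ and $h$ is a bracket built from $a$ and leaves of $T_a(a_1)$ — in particular $|h| = |a|+|a_1|$, and since $h < b_1$ with $b = (a_1,b_1) \in \B$ we have $\lambda(b) = a_1 \le h$, hence $|a_1| \le |h|$, so $|b_1| \ge |a_1|$ combined with $2|h| = 2(|a|+|a_1|) \ge 2|a_1| + 2 > |a_1| + |b_1|$ iff $|a_1| + 2 > |b_1|$... still not automatic. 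Therefore the genuine content — and the main obstacle — is showing $|b_1| < 2|h|$ in the subcase $h < b_1$: this must use that $b=(a_1,b_1)\in\B$ forces $\lambda(b_1) \le a_1$ (the Hall condition), so $|b_1| = |\lambda(b_1)| + |\mu(b_1)| \le |a_1| + |\mu(b_1)|$, and one pushes this recursively, or one uses $|b_1| < |b| = |a_1| + |b_1|$ trivially — the real bound being $|b_1| \le |a_1| + (\text{something} < |b_1|)$... I expect the intended argument is simply: $h < b_1$ and $|h| = |a| + |a_1| > |a_1| \ge |\lambda(b_1)|$ (since $b=(a_1,b_1)\in\B$ forces $\lambda(b_1)\le a_1$ when $b_1\notin X$, and if $b_1\in X$ then trivially $(h,b_1)\in\B$), together with $h\in\B$, $h>\lambda(b_1)$, hence $(h,b_1)\in\B$ directly from the definition of a Hall set — \emph{not} from \cref{Lem:length-comp-x2} at all. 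That is the slick finish: when $h<b_1$, either $b_1\in X$ so $(h,b_1)\in\B$, or $b_1=(\lambda(b_1),\mu(b_1))$ with $\lambda(b_1)\le a_1 \le h$ (the last inequality since $h\ge\lambda(b)=a_1$), whence $(h,b_1)\in\B$. So the main obstacle is correctly identifying that one uses the Hall-set definition plus the inequality $h \ge a_1$ (which comes from $h\in\supp[a,a_1]$ and $h < b_1 \Rightarrow$ we must still show $h\ge a_1$; this is where \cref{eq:a<c'} $\lambda(h)>a$ does not immediately suffice and one may need $|b|<2|a|$-type reasoning or to simply note $a_1 = \lambda(b)\le h$ fails in general) — I would resolve this by a careful length inequality $|h|=|a|+|a_1|$ and $h<b_1$, $|b_1|\le |b|-1$, applying \cref{Lem:length-comp-x2}, which I now believe does work because $2|h| = 2|a|+2|a_1| \ge 2 + 2|a_1| > |a_1| + |b_1|$ would need $|b_1| < |a_1|+2$; since that can fail, the argument must instead be: $h<b_1$, and $|b_1| < 2|a_1| + 2|a| = 2|h|$ holds iff $|b_1| - |a_1| < |a_1| + 2|a|$, and $|b_1| - |a_1| = |b| - 2|a_1|$, $b=(a_1,b_1)\in\B \Rightarrow |a_1|\le|b_1| \Rightarrow |b|\le 2|b_1|$, circular. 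I will present the case split, invoke \cref{Lem:length-comp-x2} in the two inequality cases with the length bookkeeping $|h| = |a|+|a_1|$, $|b_1| = |b| - |a_1|$, and flag the subcase $h<b_1$, $|b_1|\ge 2|h|$ as the one requiring the Hall-condition shortcut $\lambda(b_1) \le a_1 \le h$; this is the step I expect to be the crux and the one most likely to need the precise earlier lemmas.
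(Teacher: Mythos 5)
Your case $h < b_1$ does, in the end, contain the paper's argument: if $b_1 \in X$ then $(h,b_1)\in\B$ outright, and otherwise $b=(a_1,b_1)\in\B$ forces $\lambda(b_1)\leq a_1$, so $|\lambda(b_1)|\leq |a_1| < |a|+|a_1| = |h|$, and length-compatibility turns this into $\lambda(b_1) < h$, whence $(h,b_1)\in\B$ by the Hall-set definition. You should simply commit to that; your worry that ``$a_1\leq h$ fails in general'' is unfounded in this setting, because $|h|=|a|+|a_1|>|a_1|$ already forces $h>a_1$ in a length-compatible Hall set. There is no residual subcase to ``flag'', and \cref{Lem:length-comp-x2} is not needed here at all.

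The genuine gap is in the case $b_1 < h$. You check $|b_1| < 2|h|$ and then invoke \cref{Lem:length-comp-x2}, but that lemma, applied to the pair $b_1 < h$, requires the \emph{reverse} inequality $|h| < 2|b_1|$ (the longer element must be shorter than twice the shorter one), so your justification is a non sequitur. The inequality that actually holds is $|h| = |a| + |a_1| \leq 2|a_1| \leq 2|b_1|$, which is strict unless $|a| = |a_1| = |b_1|$. In the strict case \cref{Lem:length-comp-x2} does apply and gives $(b_1,h)\in\B$. In the equality case $|a|=|a_1|$ forces $(a,a_1)\in\B$ (again by \cref{Lem:length-comp-x2}), hence $h = (a,a_1)$, and then $\lambda(h) = a < a_1 < b_1$ gives $(b_1,h)\in\B$ directly from the Hall-set definition. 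Your write-up neither supplies the correct inequality nor treats this boundary case.
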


\begin{proof} 
    We assume $h \neq b_1$. 
    
    \step{First case: $h < b_1$} 
    If $b_1 \in X$, then $(h,b_1) \in \B$.
    Otherwise, we may write $b_1 = (a_2,b_2)$ with $a_2<b_2 \in \B$ and $a_2 \leq a_1$. 
    Then $|h| = |a|+|a_1| > |a_2|$, so $h > a_2$ and $(h,b_1) \in \B$.
    
    \step{Second case: $b_1 < h$} 
    We have $|h| = |a| + |a_1| \leq 2|a_1| \leq 2|b_1|$. 
    If this inequality is strict, then $(b_1, h) \in \B$ by \cref{Lem:length-comp-x2}. 
    If equality holds, then $|a| = |a_1|$, so $(a,a_1) \in \B$ and $h$ must be $(a,a_1)$. 
    In this case, since $b_1 > a_1 > a$, we have again $(b_1,h) \in \B$.
\end{proof}


\begin{proposition} \label{Prop:estim_Lengthcomp}
    Let $i \in\N^*$, $b=(a_1,(\dotsc,(a_i,b_i)\dotsb)) \in \B$ and $a \in \B$ such that $a<a_i$. 
    Then
    \begin{equation} \label{eq:a1aibi}
        [a_1,[\dotsc,[[a,a_i],b_i]\dotsb]]=\sum \alpha_c \ev(c)
    \end{equation}
    where the sum is finite, $\alpha_c \in \Z$, $\sum |\alpha_c| \leq 2^{i-1} \|[a,a_i]\|_\B$ and the $c \in \B$ satisfy either $\lambda(c) = a_1$ or $c = (c',c'')$ with $c' \in X$ or $\lambda(c') \leq a_1$ and $c'' \in X$ or $\lambda(c'') \leq a_1$.
\end{proposition}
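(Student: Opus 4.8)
The plan is to prove \eqref{eq:a1aibi} by induction on $i \geq 1$, peeling off one outer left factor $a_1$ at a time, and carefully tracking, at each stage, that every bracket produced either falls directly in the Hall set $\B$ (so it contributes one basis element) or splits into exactly two pieces via Jacobi, while preserving the structural invariant on left factors. Throughout, $\B$ is length-compatible, so \cref{Lem:length-comp-x2} and \cref{Lem:length-comp-x3} are available, and these will be the workhorses.

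\emph{Base case $i=1$.} Here $b=(a_1,b_1)$ with $a<a_1$. Applying the decomposition algorithm to $[[a,a_1],b_1]$: first write $[a,a_1]=\sum_h \alpha_h \ev(h)$ with $\sum|\alpha_h| = \|[a,a_1]\|_\B$ and, by \eqref{eq:a<c'} (from \cref{thm:Reutenauer}), each $h$ satisfies $\lambda(h) \geq a$ hence in particular $h \in \B\setminus X$ unless $(a,a_1)\in\B$ and $h$ is that single element. Then $[[a,a_1],b_1]=\sum_h \alpha_h [h,b_1]$, and by \cref{Lem:length-comp-x3} each $[h,b_1]$ is $\pm\ev(c)$ for some $c\in\B$ or $0$. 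It remains to check the structural condition: if $(h,b_1)\in\B$ then $c=(h,b_1)$ and $\lambda(c)=h$; since $|h|=|a|+|a_1|$ we need $\lambda(h)\leq a_1$ or $h\in X$ — but $h\notin X$ (as $|h|\geq 2$ when $|a|+|a_1|\geq 2$, and the degenerate case $a_1\in X$, $h=(a,a_1)$ is handled directly since $\lambda(h)=a<a_1$), and $\lambda(h)\geq a$, so we do not immediately get $\lambda(h)\leq a_1$; instead observe $c=(h,b_1)$ has $c'=h$ with $\lambda(h)$ possibly large, so we record $c$ in the ``$c=(c',c'')$'' clause with $c'=h$: we must then have $c''=b_1 \in X$ or $\lambda(b_1)\leq a_1$. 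Since $b=(a_1,b_1)\in\B$, indeed $b_1\in X$ or $\lambda(b_1)\leq a_1$. If instead $(b_1,h)\in\B$ or $h=b_1$, then $c=(b_1,h)$ (or $c$ vanishes), and $\lambda(c)=b_1$; again $b_1\in X$ or $\lambda(b_1)\leq a_1$, matching the clause with $c'=b_1$, $c''=h$ where we need $h\in X$ or $\lambda(h)\leq a_1$ — here we should instead use the other disjunct of the structural claim. The cleanest bookkeeping is: the structural condition as stated has three options ($\lambda(c)=a_1$; or $c=(c',c'')$ with $[c'\in X$ or $\lambda(c')\leq a_1]$; or $[c''\in X$ or $\lambda(c'')\leq a_1]$), and in every case above one of $c', c''$ is $b_1$, so the clause about $b_1$ is satisfied. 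The count is $\sum|\alpha_h|\cdot 1 = \|[a,a_1]\|_\B = 2^0\|[a,a_1]\|_\B$, as required.

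\emph{Inductive step.} Assume the statement for $i-1$ and consider $b=(a_1,b')$ where $b'=(a_2,(\dots,(a_i,b_i)\dots))\in\B$ (this is $\lambda$-closure of $b$, so $b'\in\B$) and $a<a_i$. Apply the inductive hypothesis to $b'$: $[a_2,[\dots,[[a,a_i],b_i]\dots]] = \sum_c \alpha_c\ev(c)$ with $\sum|\alpha_c|\leq 2^{i-2}\|[a,a_i]\|_\B$, each $c\in\B$ having $\lambda(c)=a_2$ or the $(c',c'')$ structure with left factors $\leq a_2$. Now we must compute $[a_1,\,\sum_c\alpha_c\ev(c)] = \sum_c\alpha_c[a_1,c]$ and decompose each $[a_1,c]$ on $\B$. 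The key point: for each such $c$, either $(a_1,c)\in\B$ directly — contributing one term — or $\lambda(c)>a_1$ and Jacobi yields $[a_1,c]=[[a_1,\lambda(c)],\mu(c)]+[\lambda(c),[a_1,\mu(c)]]$, two pieces. In the first case we need to verify the structural condition on $(a_1,c)$: $\lambda((a_1,c))=a_1$, done. In the second case, we have $c=(c',c'')$; since $\lambda(c)>a_1\geq a$... wait, we know $\lambda(c)=a_2$ or $\lambda(c')\leq a_2$; since $a_1<a_2$ (as $b=(a_1,b')\in\B$ forces... actually only $a_1<b'$, and $\lambda(b')=a_2$, so the Hall condition gives $\lambda(b')\leq a_1$, i.e. $a_2\leq a_1$!). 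This sign observation — that $a_2\leq a_1$ inside a Hall element $(a_1,b')=(a_1,(a_2,\dots))$ — is exactly what forces $\lambda(c)\leq a_1$ in most cases, so that $(a_1,c)\in\B$ directly and no Jacobi split is needed at the outer level. The place where a split can still occur is when $\lambda(c)=a_1$ is false because $c$ was produced with $\lambda(c')$ strictly between, or when $c$ itself is a leaf. I expect that in fact at the outer step, each $[a_1,c]$ contributes at most $2$ basis elements, giving the factor $2^{i-1}$, and that the structural condition on the new $c$'s (now with left factor $a_1$) is inherited via the same $\lambda$-factor analysis plus \cref{Lem:length-comp-x2}.

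\emph{Main obstacle.} The hard part will be the bookkeeping in the inductive step: showing that at the outer bracketing by $a_1$ the number of terms at most doubles (not triples or worse, as a naive Jacobi count might suggest), and simultaneously that the refined structural condition on left factors of the output elements is preserved. This requires using the Hall-length inequality $|a_j|\leq|b_j|$ to control which of the two Jacobi children actually fall directly in $\B$, essentially re-running the \cref{Lem:length-comp-x3} argument one level up; I would isolate a sub-lemma along the lines of ``if $\lambda(c)\leq a_1$ then $(a_1,c)\in\B$, else $|\mu(c)|\geq\tfrac12(|a_1|+|c|)$ so at least one of the two Jacobi pieces is immediately in $\B$,'' and then verify the structural clause case by case on whether $c=(c',c'')$ with $c''$ or $c'$ carrying the small left factor. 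The $2^{i-1}$ bound then follows by multiplying the per-step factor $2$ across the $i-1$ outer bracketings, with the base contributing $\|[a,a_i]\|_\B$.
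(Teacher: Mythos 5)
Your overall strategy (induction on $i$, peeling off the outer left factor $a_1$ at each step) is exactly the paper's, and you correctly spot the key fact $a_2 \leq a_1$ coming from the Hall condition on $b = (a_1,(a_2,\dotsc))$. However, there are two genuine gaps. First, you misparse the structural condition: it is a \emph{conjunction} — $c=(c',c'')$ with ($c'\in X$ or $\lambda(c')\leq a_1$) \emph{and} ($c''\in X$ or $\lambda(c'')\leq a_1$) — not a three-way disjunction. In the base case you only verify the clause about $b_1$ and explicitly give up on showing $\lambda(h)\leq a_1$ for $h\in\supp[a,a_1]$. But that half is provable and is needed: if $|a|=|a_1|$ then $(a,a_1)\in\B$, so $h=(a,a_1)$ and $\lambda(h)=a<a_1$; otherwise $|a|<|a_1|$ by length-compatibility, so $|h|=|a|+|a_1|<2|a_1|$, hence $|\lambda(h)|<|a_1|$ and $\lambda(h)<a_1$. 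Without this second conjunct the invariant you carry into the induction is too weak.

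Second, the inductive step's counting claim is not established, and the sub-lemma you propose — ``at least one of the two Jacobi pieces is immediately in $\B$'' — would not give the bound: if only one piece lands in the basis, the other must be decomposed further and the per-step factor is no longer $2$. What actually closes the argument is that \emph{both} Jacobi pieces $[[a_1,c'],c'']$ and $[c',[a_1,c'']]$ lie in $\pm\ev(\B)\cup\{0\}$, and this follows precisely from the full conjunctive invariant: since $c'\in X$ or $\lambda(c')\leq a_2\leq a_1$ one has $(a_1,c')\in\B$, and then $((a_1,c'),c'')$ or $(c'',(a_1,c'))$ is in $\B$ because $c''\in X$ or $\lambda(c'')\leq a_1 < (a_1,c')$ while $\lambda((a_1,c'))=a_1<c''$; symmetrically for the other piece. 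The two remaining cases $\lambda(c)=a_2$ and $\lambda(c)\leq a_1$ give $(a_1,c)\in\B$ directly, so each $[a_1,c]$ contributes at most $2$, yielding $2\cdot 2^{i-2}\|[a,a_i]\|_\B$. Fixing the invariant to the conjunctive form and supplying this case analysis would complete your proof along the paper's lines.
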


\begin{proof}
    The proof is by induction on $i \in \N^*$.

    \step{Initialization for $i=1$} 
    Let $b=(a_1,b_1) \in \B$ and $a \in \B$ such that $a<a_1$. 
    We introduce the decomposition of $[a,a_1]$ on $\B$: $[a,a_1]=\sum_{h \in \B} \beta_h \ev(h)$ with $\beta_h \in \K$. 
    Then 
    \begin{equation}
        [[a,a_1],b_1]=\sum_{h \in \B} \beta_h [h,b_1].
    \end{equation}
    By \cref{Lem:length-comp-x3}, for every $h \in \B$ such that $\beta_h \neq 0$, then $[h,b_1] \in \pm \ev(\B) \cup \{0\}$. 
    Thus the above expression is the expansion of $[[a,a_1],b_1]$ on $\B$, up to the sign of the basis elements. 
    Therefore,
    \begin{equation}
        \| [[a,a_1],b_1] \|_\B \leq \sum_{h \in \B} \left| \beta_h \right| = \| [a,a_1] \|_\B.
    \end{equation}
    Moreover, let us prove the desired structural properties on the supporting basis elements. 
    \begin{itemize}
        \item Since $b \in \B$, $b_1 \in X$ or $\lambda(b_1) \leq a_1$.
        \item If $|a|=|a_1|$, then $(a,a_1) \in \B$ so $h = (a, a_1)$ and $\lambda(h) = a < a_1$. Otherwise, $|h| = |a|+|a_1| < 2 |a_1|$ so $|\lambda(h)| < |a_1|$ so $\lambda(h) < a_1$.
    \end{itemize}
    
    \step{Induction}
    Let $i \geq 2$.
    We assume the statement holds until $(i-1)$. 
    Let $b=(a_1,(\dots(a_i,b_i)\dots)) \in \B$ and $a \in \B$ such that $a<a_i$. 
    Then $a_1 \geq \dotsb \geq a_i> a$ thus $|a_1| \geq \dotsb \geq |a_i| \geq |a|$. 
    By the induction hypothesis,
    \begin{equation}
        [a_2,[\dotsc,[[a,a_i],b_i]\dotsb]]=\sum \alpha_c \ev(c),
    \end{equation}
    where the sum is finite, $\alpha_c \in \Z$, $\sum |\alpha_c| \leq 2^{i-2} \|[a,a_i]\|_\B$ and the $c \in \B$ satisfy either $\lambda(c) = a_2$ or $c = (c', c'')$ with $c' \in X$ or $\lambda(c') \leq a_2$ and $c'' \in X$ or $\lambda(c'') \leq a_2$.
Then, 
\begin{equation}
    [a_1,[a_2,[\dotsc,[[a,a_i],b_i]\dotsb]]] = \sum \alpha_c  [a_1,c]
\end{equation}
and, by the triangular inequality,
\begin{equation}
    \left\| [a_1,[a_2,[\dotsc,[[a,a_i],b_i]\dotsb]]] \right\|_\B \leq \sum |\alpha_c|  \left\| [a_1,c] \right\|_\B.
\end{equation}
To conclude, it is thus sufficient to prove that, for every $c \in \B$ such that $\alpha_c \neq 0$, then $\|[a_1,c]\|_\B \leq 2$ and the expansion of $[a_1,c]$ on $\B$ only involves brackets of the desired form. 

First, for each such $c$, since $|c|=|a|+|b|-|a_1| > |b|-|a_1| \geq |a_1|$ (because $b = (a_1, \mu(b)) \in \B$) one has $a_1 < c$.
We must know consider three cases.
\begin{itemize}
    \item If $\lambda(c) = a_2$, then $(a_1, c) \in \B$ directly because $a_2 \leq a_1$.
    \item If $\lambda(c) \leq a_1$, then $(a_1, c) \in \B$ directly also.
    \item If $c = (c', c'')$ with $a_1 < c' < c''$ and ($c' \in X$ or $\lambda(c') \leq a_2$) and ($c'' \in X$ or $\lambda(c'') \leq a_2$), we use Jacobi's identity to write $[a_1,c]=[[a_1,c'],c'']+[c',[a_1,c'']]$.
    Using the pieces of information in the previous sentence suffices to ensure that these both brackets, up to sign, either vanish or belong to the basis and are of the desired form.
\end{itemize}
In particular, in all cases $\|[a_1,c]\|_\B \leq 2$, which concludes the proof.
\end{proof}

\begin{remark} \label{rk:form-c-length}
    The same proof also yields more information on the structure of elements $c \in \B$ involved in \eqref{eq:a1aibi}.
    Indeed, they are of the form
    \begin{equation}
        c = \Big(a_1, \Big(\dotsc, \Big(a_p, \Big( 
        \quad \big( a_{i_1}, (\dotsc, (a_{i_r}, v)) \big) \quad , 
        \quad \big( a_{j_1}, (\dotsc, (a_{j_s}, w)) \big) \quad 
        \Big) \Big) \dotsb \Big)\Big)
    \end{equation}
    where $\lbag v, w \rbag = \lbag h, b_i \rbag$ for some $h \in \B$ in the $\supp [a, a_i]$, $p \in \intset{0, i-1}$ and the sets $\{i_1 <  \dotsc < i_r\}$, $\{j_1 < \dotsc < j_s\}$ are a partition of $\intset{p+1, i-1}$.
\end{remark}

\subsection{Proof of the main upper bound} \label{subsec:Proof_lengthcomp}

We now prove the following refined version of \cref{thm:length-easy}.

\begin{theorem}\label{Thm:lgth-comp-main}
    Let $\B \subset \Br(X)$ be a length-compatible Hall set.
    For all $a<b \in \B$,
    \begin{equation} \label{Estim:lgth-comp}
        \|[a,b]\|_\B \leq 2^{\theta_{a}(b)-1} \leq 2^{\left\lfloor \frac{|b|}{|a|} \right\rfloor-1}.
    \end{equation}
\end{theorem}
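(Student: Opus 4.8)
The plan is to prove the bound $\|[a,b]\|_\B \leq 2^{\theta_a(b)-1}$ by induction on $n := \theta_a(b)$, using the relative folding $\rf{a}{b}$ as the organizing structure, and then deduce the length-based bound $2^{\lfloor |b|/|a|\rfloor - 1}$ as a corollary by showing $\theta_a(b)\le \lfloor |b|/|a|\rfloor$ in the length-compatible setting. The base case $n=1$ is immediate: $\theta_a(b)=1$ means $(a,b)\in\B$, so $\|[a,b]\|_\B=1=2^0$. For the inductive step with $n\ge 2$, we have $(a,b)\notin\B$, so $b=(\lambda(b),\mu(b))$ with $a<\lambda(b)<\mu(b)$, and $\theta_a(b)=\theta_a(\lambda(b))+\theta_a(\mu(b))$ with both summands $\geq 1$ and hence $<n$. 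Jacobi's identity gives
\begin{equation}
    [a,b] = [[a,\lambda(b)],\mu(b)] + [\lambda(b),[a,\mu(b)]].
\end{equation}

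The main work is to bound $\|[[a,\lambda(b)],\mu(b)]\|_\B$ (the second term is handled symmetrically). First apply the induction hypothesis to $[a,\lambda(b)]$, writing $[a,\lambda(b)] = \sum_h \beta_h \ev(h)$ with $\sum|\beta_h| \leq 2^{\theta_a(\lambda(b))-1}$; moreover, by the structural part of \cref{thm:en1} (or by the known property $\lambda(h)>a$ of the decomposition algorithm), each such $h$ satisfies $\lambda(h)\ge a$, equivalently $a\le\lambda(h)<h$ when $h\notin X$. Then $[[a,\lambda(b)],\mu(b)] = \sum_h \beta_h [h,\mu(b)]$, and the point is that each $[h,\mu(b)]$ can be re-expressed so that $\|[h,\mu(b)]\|_\B$ is controlled by $2^{\theta_a(\mu(b))-1}$ — NOT just by $2^{\theta_h(\mu(b))-1}$ or $2^{\theta_{\mu(b)}(h)-1}$, since we must get the \emph{exponents to add up to} $\theta_a(\lambda(b))+\theta_a(\mu(b))-1$. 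Here is where length-compatibility is essential and where \cref{Prop:estim_Lengthcomp} does the heavy lifting: writing $\mu(b)$ in its iterated left-factor form and using that $\lambda(b)\le a_i$ for the relevant interior left factors (which follows from $b\in\B$), \cref{Prop:estim_Lengthcomp} directly yields that the full expression $[\lambda(b),[\dots,[[a,\mu(b)],\dots]]]$-type brackets are bounded by $2^{(\text{number of peeled factors})}\|[a,\cdot]\|_\B$.

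So the cleaner route is probably to not split via the folding at the top level but to apply \cref{Prop:estim_Lengthcomp} more globally: prove by induction on $\theta_a(b)$ that $[a,b]$ expands, up to signs of basis elements, with $\|[a,b]\|_\B \le 2^{\theta_a(b)-1}$ and with supporting elements of the controlled shape described in \cref{Prop:estim_Lengthcomp} and \cref{rk:form-c-length}. Concretely: if $(a,b)\in\B$ we are done; otherwise, by \cref{Prop:estim_Lengthcomp} applied with $a_1 = \lambda(b)$ and $b$ written via its iterated left-factor decomposition, $[a,b]$ reduces to a $\Z$-combination of brackets $[\lambda(b), c]$ with $c\in\supp[a,\mu(b)]$, each $c$ of controlled form, and the combinatorics there contributes exactly the factor $2^{\theta_a(b)-\theta_a(\mu(b))}$ relative to $\|[a,\mu(b)]\|_\B$; the induction hypothesis on $[a,\mu(b)]$ (legitimate since $\theta_a(\mu(b))<\theta_a(b)$ by \cref{p:theta-dec} and the definition of $\theta$) supplies $\|[a,\mu(b)]\|_\B\le 2^{\theta_a(\mu(b))-1}$, and multiplying gives $2^{\theta_a(b)-1}$.

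For the length-based half of \eqref{Estim:lgth-comp}, it suffices to show $\theta_a(b)\le\lfloor|b|/|a|\rfloor$. This follows from \cref{Prop:Ta(b)_feuille_b}: each leaf $c$ of $\rf{a}{b}$ satisfies $(a,c)\in\B$, and in a length-compatible Hall set $(a,c)\in\B$ forces $a<c$ hence $|a|\le|c|$ (using \eqref{eq:order-length}); since $|b|=\sum_{c\in\mathbb{L}_\B(\rf{a}{b})}|c|$ (counted with multiplicity) and there are $\theta_a(b)$ leaves each of length $\ge|a|$, we get $|b|\ge \theta_a(b)\,|a|$, so $\theta_a(b)\le\lfloor|b|/|a|\rfloor$. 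I expect the main obstacle to be the bookkeeping in the inductive step: carefully stating a strengthened induction hypothesis (on norm \emph{and} on the structural form of the supporting basis elements, à la \cref{rk:form-c-length}) so that the application of \cref{Prop:estim_Lengthcomp} is valid at each stage — in particular verifying that the elements $c\in\supp[a,\mu(b)]$ really do have left-factors $\le\lambda(b)$ so that the Jacobi reductions in \cref{Prop:estim_Lengthcomp}'s proof either vanish or land in the basis without further splitting.
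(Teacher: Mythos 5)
Your base case, your derivation of the length-based bound from $\theta_a(b)\le\lfloor |b|/|a|\rfloor$, and your identification of \cref{Prop:estim_Lengthcomp} as the workhorse are all correct, and you are right that the naive top-level binary Jacobi split does not close. But the ``global'' route you propose in its place has a genuine gap: \cref{Prop:estim_Lengthcomp} does \emph{not} reduce $[a,b]$ to a $\Z$-combination of brackets $[\lambda(b),c]$ with $c\in\supp[a,\mu(b)]$, and it does not produce a factor $2^{\theta_a(b)-\theta_a(\mu(b))}$ in front of $\|[a,\mu(b)]\|_\B$. That proposition only controls the specific brackets $[a_1,[\dotsc,[[a,a_i],b_i]\dotsb]]$ obtained by pushing $a$ onto the $i$-th iterated \emph{left} factor of $b$, by the quantity $2^{i-1}\|[a,a_i]\|_\B$; the recursion must therefore be applied to the brackets $[a,a_i]$ with $a_i:=\lambda(\mu^{i-1}(b))$, not to $[a,\mu(b)]$. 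The decisive step missing from your plan is the choice of the stopping index $k$, the smallest $i$ with $a_{i+1}<a$ (or $k=i_*$): length-compatibility and the sandwich $a_{k+1}<a\le a_k$ force $(a,b_k)\in\B$, so the $k$-fold iterated Jacobi identity \eqref{Eqn:iterated-jacobi} writes $[a,b]$ as one genuine basis element plus $\sum_{i=1}^k[a_1,[\dotsc,[[a,a_i],b_i]\dotsb]]$. Each summand is at most $2^{i-1}2^{\theta_a(a_i)-1}$ by \cref{Prop:estim_Lengthcomp} and the induction hypothesis, and the exponents close only because $\sum_{i\le k}\theta_a(a_i)=\theta_a(b)-\theta_a(b_k)\le\theta_a(b)-1$ combined with the separate numerical inequality $\sum_i 2^{i-1}2^{q_i-1}\le 2^{Q}-1$ (\cref{Lem:sum2ki}). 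The quantity $2^{\theta_a(b)-\theta_a(\mu(b))}$ that you assign to ``the combinatorics'' has no counterpart in this argument, so the multiplication you perform at the end is unjustified.

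For what it is worth, the binary split can be pushed further than you suggest: \cref{Lem:length-comp-x3} shows that each $[h,\mu(b)]$ with $h\in\supp[a,\lambda(b)]$ is already $\pm$ a basis element or zero, so the first Jacobi term costs only $2^{\theta_a(\lambda(b))-1}$. But the second term $[\lambda(b),[a,\mu(b)]]$ then requires bounding $\|[\lambda(b),d]\|_\B$ for $d\in\supp[a,\mu(b)]$ with $\lambda(b)<d$, and the best structural bound available in the length-compatible setting is $2$ (not $1$ as in the Lyndon case), so the total overshoots $2^{\theta_a(b)-1}$ precisely when $\theta_a(\lambda(b))=1$, where one gets $1+2^{\theta_a(b)-1}$. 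This is exactly why the paper abandons the binary split in favour of the $k$-fold decomposition along the iterated left factors.
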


\begin{proof}
    The second estimate is a consequence of the first one. 
    Indeed, any leaf $c$ of $\rf{a}{b}$ satisfies $|c| \geq |a|$ because $(a,c) \in \B$, and these leaves are in number $\theta_a(b)$, thus $|b| \geq \theta_a(b)|a|$. Since $\theta_a(b)$ is an integer, we can take the integral part of $|b|/|a|$.
    
    We prove the first estimate by induction on $q := \theta_a(b) \in \N^*$. 
    
    The initialization for $q=1$ is given by \cref{Lem:length-comp-x2}.
    
    Let $q \geq 2$. 
    We assume the property proved until $(q-1)$.
    Let $a<b \in \B$ be such that $\theta_a(b)=q$. 
    We define two sequences $(a_i)_{i \in \intset{1 , i_*}}$, $(b_i)_{i \in \intset{1 , i_*}}$ of $\B$ by the following relations: for every $i \in \intset{1 , i_*}$,
    \begin{equation}
        b=(a_1,(\dotsc,(a_i,b_i)\dotsb)),
    \end{equation}
    i.e.\ $a_i:=\lambda(\mu^{i-1}(b))$, $b_i:=\mu^{i}(b)$.
    The integer $i_*$ is the smallest value of $i$ for which $b_i \in X$.
    Then $a_1 \geq \dotsb \geq a_{i_*}$ and $a_i < b_i$ for every $i \in \intset{1 , i_*}$. 
    We define
    \begin{equation}
        k := \begin{cases}
            \min\{ i \in \intset{1, i_*-1} ; \enskip a_{i+1}< a \} \text{ if this set is not empty}, \\
            i_* \text{ otherwise.}
        \end{cases}
    \end{equation}
    Then $(a,b_k) \in \B$. 
    Indeed, if $k \in \intset{1, i_*-1}$ then $a_{k+1} < a \leq a_k < b_k$ so $(a,b_k)=(a,(a_{k+1},b_{k+1})) \in \B$. If $k=i_*$ then $a<a_{i_*}<b_{i_*}$ and $b_{i_*} \in X$ thus $(a,b_{i_*}) \in \B$.

    Iterating Jacobi's identity, we get the expression
    \begin{equation}
        \label{Eqn:iterated-jacobi}
        [a,b] = [a_1,[ \dotsc,[a_k,[a,b_k]]\dotsb]] + \sum_{i=1}^{k} [a_1,[\dotsc,[[a,a_i],b_{i}]\dotsb]].
    \end{equation}
    The first term belongs to $\ev(\B)$. 
    Indeed $(a_k,(a,b_k)) \in \B$ because $a_k < b_k < (a,b_k)$. 
    Moreover, for every $i \in \intset{1 , k-1}$, $a_i<b_i=(a_{i+1}(\dotsc,(a_k,b_k)\dotsb))<(a_{i+1},(\dotsc,(a_k,(a,b_k)) \dotsb))$ because the order is length-compatible and $a_i \geq a_{i+1}$ thus $(a_i,(\dotsc,((a,a_k),b_k) \dotsb)) \in \B$.

    Then, by \cref{Prop:estim_Lengthcomp} and the induction assumption,
    \begin{equation}
        \| [a,b] \|_\B \leq 1 + \sum_{i=1}^{k} 2^{i-1} 2^{\theta_a(a_i)-1}.
    \end{equation}
    Moreover, by definition of the map $\theta_a$, we have $\sum_{i=1}^k \theta_a(a_i)= \theta_a(b) - \theta_a(b_k) \leq \theta_a(b)-1$.
    Finally, we get the conclusion by the following lemma.
\end{proof}

\begin{lemma}\label{Lem:sum2ki}
    Let $k \in \N^*$, $q_1, \dotsc, q_{k} \in \N^*$ and $Q := q_1+\dotsb+q_k $. 
    Then
    \begin{equation}
        \sum_{i=1}^{k} 2^{i-1} 2^{q_i-1} \leq 2^Q -1.
    \end{equation}
\end{lemma}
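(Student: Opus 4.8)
The plan is to prove the inequality directly, by bounding each term's exponent against a strictly increasing integer sequence. First I rewrite the summand as $2^{i-1}2^{q_i-1} = 2^{i+q_i-2}$, so that the claim reduces to $\sum_{i=1}^{k} 2^{i+q_i-2} \leq 2^Q - 1$. The key point will be that the exponents $i+q_i-2$, as $i$ ranges over $\intset{1,k}$, are pairwise distinct and all lie in $\intset{0,Q-1}$.

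To make this precise, I would introduce the partial sums $S_i := q_1 + \dots + q_i$ for $i \in \intset{1,k}$, with the convention $S_0 := 0$; thus $S_k = Q$. Since each $q_j \geq 1$, one has $S_{i-1} \geq i-1$, whence
\[
    i + q_i - 2 = (i-1) + (q_i-1) \leq S_{i-1} + (q_i - 1) = S_i - 1 .
\]
Moreover, because the $q_i$ are strictly positive, the sequence $(S_i)_{1 \leq i \leq k}$ is strictly increasing, so the integers $S_i - 1$ are pairwise distinct and all belong to $\intset{0, S_k - 1} = \intset{0, Q-1}$.

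Combining $2^{i+q_i-2} \leq 2^{S_i-1}$ with the fact that $\{S_1 - 1, \dots, S_k - 1\}$ is a set of pairwise distinct elements of $\intset{0,Q-1}$, I conclude
\[
    \sum_{i=1}^{k} 2^{i+q_i-2} \leq \sum_{i=1}^{k} 2^{S_i-1} \leq \sum_{j=0}^{Q-1} 2^j = 2^Q - 1 ,
\]
which is exactly the claimed bound. I expect no real obstacle here: the only thing to spot is the monotone quantity $S_i$ against which to compare the exponents. Equivalently, one could argue by induction on $k$, splitting off the $i=1$ term (which contributes $2^{q_1-1}$), applying the induction hypothesis to $q_2,\dots,q_k$ after the reindexing $i \mapsto i-1$, and checking the resulting elementary inequality $2^{q_1-1} + 2^{Q-q_1+1} \leq 2^Q + 1$ (valid since $q_1 \le Q-1$ when $k \ge 2$); but the direct counting argument above is shorter and avoids this bookkeeping.
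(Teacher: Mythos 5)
Your proof is correct. The key step — bounding each exponent $i+q_i-2$ by $S_i-1$ where $S_i := q_1+\dots+q_i$, and observing that these are pairwise distinct integers in $\intset{0,Q-1}$ so their powers of two sum to at most $2^Q-1$ — is sound: indeed $S_{i-1}\geq i-1$ since each $q_j\geq 1$, and the strict monotonicity of $(S_i)$ gives the distinctness. The paper instead argues by induction on $k$, splitting off the $i=1$ term and reducing to the identity $2^{q_1-1}+2(2^{Q-q_1}-1)=2^Q-1-(2^{q_1-1}-1)(2^{Q-q_1+1}-1)$, which is essentially the inductive variant you sketch at the end of your proposal (your side inequality $2^{q_1-1}+2^{Q-q_1+1}\leq 2^Q+1$ is exactly the nonnegativity of that product). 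Your direct counting argument is arguably cleaner and makes the combinatorial reason for the bound transparent — the $k$ terms are dominated by $k$ distinct summands of the geometric series $\sum_{j=0}^{Q-1}2^j$ — whereas the paper's induction has the minor advantage of exhibiting the exact deficit $(2^{q_1-1}-1)(2^{Q-q_1+1}-1)$ at each step and hence the equality case, which however is not needed for the lemma as stated.
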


\begin{proof}
    The proof is by induction on $k \in \N^*$.
    If $k=1$, then $2^{q_1 - 1} \leq 2^{q_1}-1 = 2^Q - 1$ holds.
    Let $k \geq 2$. 
    We assume the result holds until $(k-1)$.
    Let $q_1,\dotsc,q_k$ and $Q$ as in the statement.
    Then
    \begin{equation}
        \begin{split}
            \sum_{i=1}^{k} 2^{i-1} 2^{q_i-1} 
            & = 2^{q_1-1} + 2 \sum_{i=1}^{k-1} 2^{i-1} 2^{q_{i+1}-1} \\
            & \leq 2^{q_1-1} + 2 \left(2^{Q-q_1}-1\right) \\
            & = 2^{Q} - 1 - (2^{q_1-1} - 1)(2^{Q-q_1+1}-1) \leq 2^Q - 1,
        \end{split}
    \end{equation}
    which concludes the proof and shows that equality holds if and only if $q_i = 1$ for $i \in \intset{1,k-1}$.
\end{proof}

\begin{remark}
    The same proof also yields more information on the structure of the elements of $\supp [a,b]$: either $(a_1, (\dotsc, (a_k,(a,b_k))\dotsb))$ or of the form described in \cref{rk:form-c-length} for $i \in \intset{1,k}$.
    In any case, they satisfy the same structural property as in \cref{Prop:estim_Lengthcomp}, i.e.\ either $\lambda(c) = \lambda(b)$ or $c = (c',c'')$ with ($c'\in X$ or $\lambda(c') \leq \lambda(b)$) and ($c'' \in X$ or $\lambda(c'') \leq \lambda(b)$).
\end{remark}

\subsection{Optimality cases} 
\label{subsec:optim_lengthcomp}

We now investigate the optimality of the estimate proved in the previous paragraph. 
We start with the following elementary bound on $\theta_a(b)$, which should be seen as a refinement of \cref{Prop:theta/length} in the case of length-compatible Hall sets when $|X| = 2$.

\begin{lemma} \label{p:theta-b2}
    Let $X = \{ X_0, X_1 \}$.
    Let $\B \subset \Br(X)$ be a length-compatible Hall set such that $X_0 < X_1$.
    Then, for every $a<b \in \B$ with $|b|\geq 3$,
    \begin{equation}
        \theta_a(b) 
        \begin{cases}
            = |b|-1 \text{ if } a=X_0 \text{ and } b=\ad_{X_1}^{n}(X_0,X_1) \text{ for some } n\in\N, \\
            \leq |b|-2 \text{ otherwise. }
        \end{cases}
    \end{equation}
\end{lemma}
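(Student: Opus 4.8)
The plan is to prove \cref{p:theta-b2} by combining the length bound $\theta_a(b) \leq |b| - 1$ from the second item of \cref{Prop:theta/length} with a structural analysis of when this bound is saturated. Recall $X_0 = \min(\B)$, so the relevant comparison reduces to whether various left factors are $\leq a$ or not. First I would note that if $(a,b) \in \B$ then $\theta_a(b) = 1 \leq |b| - 2$ since $|b| \geq 3$, so we may assume $(a,b) \notin \B$; then $b \notin X$, $a < \lambda(b) < \mu(b)$, and $\theta_a(b) = \theta_a(\lambda(b)) + \theta_a(\mu(b))$. I would then run the induction on $|b|$: the base case $|b| = 3$ is checked directly by enumerating the two Hall elements $\ad_{X_1}(X_0,X_1)$ and $(X_0,(X_0,X_1))$, noting $\theta_{X_0}(\ad_{X_1}(X_0,X_1)) = 2 = |b|-1$ while any other situation (either $a \neq X_0$, forcing $a = (X_0,X_1)$ so $(a,b) \in \B$ and $\theta = 1$; or $b = (X_0,(X_0,X_1))$, where one checks $\theta_{X_0}(b) = 1$) gives $\theta_a(b) \leq 1 = |b| - 2$.

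For the inductive step with $|b| \geq 4$, write $b = (\lambda(b), \mu(b))$. The key dichotomy is on $|\lambda(b)|$. Since the order is length-compatible and $b = (\lambda(b),\mu(b)) \in \B$ forces $|\lambda(b)| \leq |\mu(b)|$, we have $|\lambda(b)| \leq |b|/2$. If $|\lambda(b)| \geq 2$, then using \cref{Prop:theta/length} on the factor of length $\geq 2$ and the crude bound $\theta_a(\cdot) \leq |\cdot|$ on the other, we get $\theta_a(b) = \theta_a(\lambda(b)) + \theta_a(\mu(b)) \leq |\lambda(b)| + |\mu(b)| - 1 = |b| - 1$, but in fact since both factors have length $\geq 2$ we can apply \cref{Prop:theta/length} to \emph{both}, obtaining $\theta_a(b) \leq (|\lambda(b)| - 1) + (|\mu(b)| - 1) = |b| - 2$, which lands in the "otherwise" case — and moreover $b$ is not of the special form $\ad_{X_1}^n(X_0,X_1)$ because the latter has $\lambda = X_1 \in X$. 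So the only way to reach $|b| - 1$ is $|\lambda(b)| = 1$, i.e. $\lambda(b) \in X$; since $\lambda(b) > a \geq X_0$ we must have $\lambda(b) = X_1$ and $a = X_0$. Then $\theta_a(b) = \theta_{X_0}(X_1) + \theta_{X_0}(\mu(b)) = 1 + \theta_{X_0}(\mu(b))$ with $|\mu(b)| = |b| - 1 \geq 3$.

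Now apply the induction hypothesis to $\mu(b)$ (with $a = X_0$): either $\mu(b) = \ad_{X_1}^m(X_0,X_1)$ for some $m \in \N$ and then $\theta_{X_0}(\mu(b)) = |\mu(b)| - 1 = |b| - 2$, giving $\theta_a(b) = |b| - 1$; and here $b = (X_1, \mu(b)) = (X_1, \ad_{X_1}^m(X_0,X_1)) = \ad_{X_1}^{m+1}(X_0,X_1)$, which is exactly the claimed special form. Otherwise $\theta_{X_0}(\mu(b)) \leq |\mu(b)| - 2 = |b| - 3$, so $\theta_a(b) \leq |b| - 2$; and in this sub-case $b$ is not of the special form, since if it were, $\mu(b)$ would be $\ad_{X_1}^{n-1}(X_0,X_1)$ and fall in the special case for the induction hypothesis — a contradiction. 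I expect the main obstacle to be bookkeeping the small-length base cases carefully (especially verifying $\theta_{X_0}((X_0,(X_0,X_1))) = 1$ and that $(X_0,(X_0,X_1)) \in \B$ for every length-compatible Hall set with $X_0 < X_1$, which follows from \cref{Lem:length-comp-x2} since $|(X_0,X_1)| = 2 < 2 = 2|X_0|$ fails — so one must instead argue directly that $(X_0,(X_0,X_1)) \in \B$ because $\lambda((X_0,X_1)) = X_0 \leq X_0$), and making sure the "not of the special form" implications are stated cleanly at each stage. One should also double-check the edge case $|\mu(b)| = 3$ inside the induction, where the base case of the induction hypothesis is invoked.
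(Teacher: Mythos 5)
Your proof is correct, but it follows a genuinely different route from the paper's. The paper's argument is non-inductive: starting from $\theta_a(b)\leq|b|-1$ (second item of \cref{Prop:theta/length}), it assumes equality and uses the fact that every leaf $c$ of $\rf{a}{b}$ satisfies $|c|\geq|a|$ (because $(a,c)\in\B$ forces $a<c$, hence $|a|\leq|c|$ by length-compatibility) to get $|b|\geq\theta_a(b)\,|a|=(|b|-1)|a|$, whence $|a|=1$; a count of leaf lengths then shows $\rf{a}{b}$ has $|b|-2$ leaves of length one (indeterminates strictly greater than $a$, forcing $a=X_0$ and those leaves to be $X_1$) and one leaf equal to the unique length-two basis element $(X_0,X_1)$, from which $b=\ad_{X_1}^n(X_0,X_1)$ is read off. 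Your induction on $|b|$, splitting on whether $|\lambda(b)|\geq 2$ (where applying \cref{Prop:theta/length} to \emph{both} factors already yields $|b|-2$, using $|\lambda(b)|\leq|\mu(b)|$ from length-compatibility) or $|\lambda(b)|=1$ (which forces $\lambda(b)=X_1$ and $a=X_0$, and one recurses on $\mu(b)$), reaches the same conclusion. It is longer, but it has the merit of producing the shape $b=(X_1,\mu(b))=\ad_{X_1}^{n}(X_0,X_1)$ explicitly through the recursion, a step the paper's proof asserts rather tersely, and your handling of the degenerate cases ($(a,b)\in\B$, the base case $|b|=3$, and the membership $(X_0,(X_0,X_1))\in\B$) is sound. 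One cosmetic slip in the base case: for $b=(X_1,(X_0,X_1))$ and $a\neq X_0$, the element $a$ is not forced to be $(X_0,X_1)$ (it could be $X_1$, or another element of length $3$ below $b$); but in every such case $X_1=\lambda(b)\leq a$, so $(a,b)\in\B$ and $\theta_a(b)=1$, and your conclusion stands.
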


\begin{proof}
    One already knows that $\theta_a(b) \leq |b|-1$ by the second item of \cref{Prop:theta/length}.
    Let $a<b \in \B$ with $|b|\geq 3$ such that $\theta_a(b)=|b|-1$. 
    Then $|b|-1=\theta_a(b) \leq \frac{|b|}{|a|}$. 
    Indeed, any leaf $c$ of $\rf{a}{b}$ satisfies $|c| \geq |a|$ because $(a,c) \in \B$, and these leaves are in number $\theta_a(b)$, thus $|b| \geq \theta_a(b)|a|$. 
    Therefore, $|a| \leq \frac{|b|}{|b|-1} \leq \frac{3}{2}$ and $a \in X$.
    Necessarily, $\rf{a}{b}$ contains $\theta_a(b)-1=|b|-2$ leaves labeled by an indeterminate strictly greater than $a$, and one leaf labeled by the only basis element with length two, i.e.\ $(X_0,X_1)$. 
    In conclusion, $a=X_0$, and $b=\ad_{X_1}^{n}(X_0,X_1)$ for some $n \in \N$, for which one checks that $a < b \in \B$ and $\theta_a(b) = n + 1 = |b| - 1$.
\end{proof}

\begin{proposition} \label{p:length-sharp}
    Let $\B \subset \Br(X)$ a length-compatible Hall set.
    \begin{enumerate}
        \item If $|X|\geq 3$, for every $a<b \in \B$, $\|[a,b]\|_\B \leq 2^{|b|-1}$.
        \item If $X \supset \{ X_0, X_1, X_2 \}$ with $X_0 < X_1 < X_2$, for every $n\in \N$, $\ad_{X_1}^n (X_2) \in \B$ and 
        \begin{equation}
            \| [X_0, \ad_{X_1}^n(X_2)] \|_\B = 2^n
        \end{equation}
        so the previous estimate is optimal.
        \item If $|X|=2$, for every $a<b \in \B$ with $|b|\geq 3$, $\|[a,b]\|_\B \leq 2^{|b|-3}$.
        \item If $X=\{X_0,X_1\}$ with $X_0<X_1$, for every $n \in \N$, $\ad_{X_1}^n\ad_{X_0}^2(X_1) \in \B$ and
        \begin{equation}
            \|[X_0,\ad_{X_1}^n\ad_{X_0}^2(X_1)]\|_\B=2^n
        \end{equation}
        so the previous estimate is optimal.
    \end{enumerate}
\end{proposition}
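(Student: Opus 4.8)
The four assertions split into two upper bounds (parts~1 and~3) and two optimality examples (parts~2 and~4), and only the last one requires real work. For part~1, combining \cref{Thm:lgth-comp-main} with the first item of \cref{Prop:theta/length} immediately gives $\|[a,b]\|_\B \le 2^{\theta_a(b)-1} \le 2^{|b|-1}$. For part~3, I would invoke the dichotomy of \cref{p:theta-b2}: when $\theta_a(b) \le |b|-2$, \cref{Thm:lgth-comp-main} already yields $\|[a,b]\|_\B \le 2^{|b|-3}$; otherwise $a = X_0$ and $b = \ad_{X_1}^{m}(X_0,X_1)$ with $m \ge 1$ (the bound $m\ge1$ coming from $|b|\ge 3$). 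In this exceptional case I would first note that length-compatibility forces $r(X_0,X_1)=1$, since $\dad_{X_1}(X_0)=(X_0,X_1)$ has length $2>1=|X_1|$, hence $X_1<(X_0,X_1)$, which by \cref{lem:some_elements_hall} is incompatible with $r(X_0,X_1)\ge 2$; then $b$ is exactly the element $b_{m+1}$ of \cref{thm:LB2}, and \eqref{eq:X0bn-r1-2n} (applicable because $r=1$ and $m+1\ge2$) gives $\|[X_0,b]\|_\B \le 2^{(m+1)-2}=2^{m-1}=2^{|b|-3}$.

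For part~2, the membership $\ad_{X_1}^n(X_2)\in\B$ is item~(1) of the second point of \cref{prop:some_elements_hall}; and since length-compatibility again forces $r(X_0,X_1)=1$, the equality $\|[X_0,\ad_{X_1}^n(X_2)]\|_\B = 2^n$ is precisely the equality case of \cref{thm:X3-2^n}. For part~4, put $w_0 := \ad_{X_0}^2(X_1) = (X_0,(X_0,X_1))$; as $r(X_0,X_1)=1$ and $X_1<w_0$, item~(5) of the first point of \cref{prop:some_elements_hall} gives $\ad_{X_1}^n(w_0)\in\B$ for all $n$. To evaluate the norm, I would set $b_k := \ad_{X_1}^{k-1}(X_0,X_1)\in\B$ for $k\ge1$ (so $\ad_{X_1}^k(X_0)=-\ev(b_k)$), iterate the Jacobi identity (cf.\ \cref{p:ad-2}) to get the operator identity $[\ad_{X_0},\ad_{X_1}^n]=\sum_{k=1}^{n}\binom{n}{k}\ad_{\ev(b_k)}\ad_{X_1}^{n-k}$ on $\mathcal{L}(X)$, and apply it to $w_0$, using $\ad_{X_0}(w_0)=\ad_{X_0}^3(X_1)=\ev(\delta_0)$ with $\delta_0:=(X_0,(X_0,(X_0,X_1)))\in\B$, to obtain
\[ [X_0,\ad_{X_1}^n(w_0)] \;=\; \ev\!\big(\ad_{X_1}^n(\delta_0)\big) + \sum_{k=1}^{n}\binom{n}{k}\,\big[\ev(b_k),\,\ad_{X_1}^{n-k}(w_0)\big]. \]

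A short case analysis using only $X_0=\min\B$ and comparisons of lengths then shows that $\ad_{X_1}^n(\delta_0)\in\B$ and that each bracket on the right-hand side equals $\pm\ev(\sigma_k)$ for some $\sigma_k\in\B$. The decisive point is that the $n+1$ basis elements $\ad_{X_1}^n(\delta_0),\sigma_1,\dots,\sigma_n$ are pairwise distinct: one immediate subtree of $\sigma_k$ contains exactly one occurrence of $X_0$ (it is $\ev(b_k)$, whose length $k+1$ recovers $k$) and the other contains two, whereas all three occurrences of $X_0$ in $\ad_{X_1}^n(\delta_0)$ lie in its unique non-leaf immediate subtree. Distinctness rules out cancellation, so $\|[X_0,\ad_{X_1}^n(w_0)]\|_\B = 1 + \sum_{k=1}^n\binom{n}{k}=2^n$; this is also consistent with the upper bound $2^{\theta_{X_0}(\ad_{X_1}^n(w_0))-1}=2^n$ coming from \cref{Thm:lgth-comp-main}, since one checks directly (by computing the relative folding) that $\theta_{X_0}(\ad_{X_1}^n(w_0))=n+1$.

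The main obstacle is exactly this last step of part~4: tracking the explicit decomposition finely enough to exclude cancellations, i.e.\ proving that the $n+1$ supporting basis elements are genuinely distinct. All the remaining steps are either direct consequences of results already proved (parts~1 and~2, and the generic case of part~3) or reduce to verifying Hall-set membership conditions by inspection; the recurring elementary observation that makes everything explicit is that, for a length-compatible Hall set with $X_0<X_1$, one has $r(X_0,X_1)=1$.
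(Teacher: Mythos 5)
Your proposal is correct and follows essentially the same route as the paper: parts 1--3 are handled by the identical combination of \cref{Thm:lgth-comp-main}, \cref{Prop:theta/length}, \cref{p:theta-b2} and \eqref{eq:X0bn-r1-2n}, and your part-4 decomposition coincides term-by-term with the paper's (its coefficients $\alpha_j=\sum_{k\ge j}\binom{k}{j}$ resum to your $\binom{n}{j+1}$), merely derived via the adjoint operator identity instead of by iterating Jacobi by hand. Your explicit observations that length-compatibility forces $r(X_0,X_1)=1$ and that the $n+1$ supporting basis elements are pairwise distinct (counting occurrences of $X_0$ in immediate subtrees) are correct and fill in checks the paper leaves implicit.
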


\begin{proof}
    We proceed step by step.
    Some computations are postponed to \cref{sec:geom-lower} since they can be carried out in more generality, with any Hall order.
\begin{enumerate}
    \item This follows from \cref{Thm:lgth-comp-main} and the first item of \cref{Prop:theta/length}.
    
    \item This example is detailed in \cref{thm:X3-2^n} in full generality.
    
    \item Using \cref{Thm:lgth-comp-main} and \cref{p:theta-b2}, we obtain the expected bound on $\|[a,b]\|_\B$, except when $a=X_0$ and $b=\ad_{X_1}^{n-1}(X_0,X_1)$ for some $n \geq 2$.
    Then, \eqref{eq:X0bn-r1-2n} yields $\|[a,b]\|_\B \leq 2^{n-2} = 2^{|b|-3}$.
    
    \item For any $j \in \N$, $b_{j}:=\ad_{X_1}^{j}\ad_{X_0}^2(X_1)$ belongs to $\B$ because $X_0<X_1$ and the order is length-compatible. By iterating the Jacobi relation, we obtain
    \begin{equation}
        \begin{split}
            [X_0,\ad_{X_1}^n\ad_{X_0}^2(X_1)] 
       & = \sum_{k=0}^{n-1} \ad_{X_1}^k [[X_0,X_1],b_{n-k-1}]  + \ad_{X_1}^n \ad_{X_0}^3(X_1)
       \\ & = \sum_{k=0}^{n-1}\sum_{j=0}^{k} \binom{k}{j} [\ad_{X_1}^j[X_0,X_1], b_{n-j-1} ]  + \ad_{X_1}^n \ad_{X_0}^3(X_1)
       \\ & = \sum_{j=0}^{n-1} \alpha_j [\ad_{X_1}^j[X_0,X_1], b_{n-j-1} ]  + \ad_{X_1}^n \ad_{X_0}^3(X_1)
        \end{split}
    \end{equation}
    where $\alpha_j:=\sum_{k=j}^{n-1} \binom{k}{j}$. 
    Note that $\ad_{X_1}^n \ad_{X_0}^3(X_1) \in \B$ because $X_0<X_1$.
    Let us prove that $\pm [\ad_{X_1}^j[X_0,X_1], b_{n-j-1} ]\in\ev(\B)$ for every $j \in \intset{0, n-1}$.

    If $\ad_{X_1}^j(X_0,X_1) < b_{n-j-1}$ then $(\ad_{X_1}^j(X_0,X_1), b_{n-j-1} ) \in \B$ because $\lambda(b_{n-j-1})$ is either $X_1$ or $X_0$ thus $\lambda(b_{n-j-1}) \leq \ad_{X_1}^j(X_0,X_1)$ because the order is length-compatible.

    If $b_{n-j-1} < \ad_{X_1}^j(X_0,X_1)$ then $(b_{n-j-1},\ad_{X_1}^j(X_0,X_1)) \in \B$ because $\lambda(\ad_{X_1}^j(X_0,X_1))$ is either $X_1$ or $X_0$ thus $\leq b_{n-j-1}$ because the order is length-compatible.

    Eventually,
    \begin{equation}
        \| [X_0,\ad_{X_1}^n\ad_{X_0}^2(X_1)] \|_\B = \sum_{j=0}^{n-1} \alpha_j + 1 = \sum_{k=0}^{n-1} 2^k + 1 = 2^n.
    \end{equation}
\end{enumerate}
    This concludes the proof of the optimality of the length-based estimates.
\end{proof}

Interpreted in terms of the symmetric quantity $\beta_n(\B)$ defined in \eqref{eq:beta-n}, these examples yield the following consequences.

\begin{corollary} \label{rk:length-beta}
    Let $\B \subset \Br(X)$ be length-compatible Hall set.
    When $|X| \geq 3$, $\beta_n(\B) = 2^{n-2}$ for every $n \geq 2$.
    When $|X| = 2$, $\beta_n(\B) = \max \{ 1, 2^{n-4} \}$ for every $n \geq 2$.
\end{corollary}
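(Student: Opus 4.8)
The plan is to obtain both a matching upper bound and a matching lower bound for $\beta_n(\B)$ directly from \cref{p:length-sharp} and the definition \eqref{eq:beta-n}, after disposing of a few small values of $n$ by hand.

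I would first treat $|X| \geq 3$. For the upper bound, given any admissible pair $a < b \in \B$ with $|a|+|b| = n$, item (1) of \cref{p:length-sharp} gives $\|[a,b]\|_\B \leq 2^{|b|-1}$, and $|a| \geq 1$ forces $|b| \leq n-1$, so $\|[a,b]\|_\B \leq 2^{n-2}$; taking the supremum yields $\beta_n(\B) \leq 2^{n-2}$. For the reverse inequality I would use item (2) of \cref{p:length-sharp} (equivalently \cref{thm:X3-2^n}) applied with the exponent $n-2$: fixing $X_0 < X_1 < X_2$ in $X$, the element $b := \ad_{X_1}^{n-2}(X_2)$ lies in $\B$, has length $n-1$ (so $|X_0|+|b| = n$), and satisfies $\|[X_0,b]\|_\B = 2^{n-2}$, whence $\beta_n(\B) \geq 2^{n-2}$. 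This already covers all $n \geq 2$, including $n = 2$, where both sides equal $1$.

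For $|X| = 2$ the argument is analogous, with items (3) and (4) replacing (1) and (2). For the upper bound: when $n \geq 4$, length-compatibility forces $|b| \geq 3$ for every admissible pair $a < b$ with $|a|+|b| = n$, so item (3) yields $\|[a,b]\|_\B \leq 2^{|b|-3} \leq 2^{n-4}$; when $n \in \{2,3\}$ one checks directly that every such pair has $(a,b) \in \B$ (using that $\lambda((X_0,X_1)) = X_0 = \min \B$), hence $\|[a,b]\|_\B = 1$; in all cases $\beta_n(\B) \leq \max\{1,2^{n-4}\}$. For the lower bound: for $n \geq 4$, item (4) provides $b := \ad_{X_1}^{n-4}\ad_{X_0}^2(X_1) \in \B$ of length $n-1$ with $\|[X_0,b]\|_\B = 2^{n-4}$, and for every $n \geq 2$ there is an admissible pair with nonzero bracket (e.g.\ $X_0 < \ad_{X_0}^{n-2}(X_1)$, whose bracket lies in $\ev(\B)$), giving $\beta_n(\B) \geq 1$; combining, $\beta_n(\B) \geq \max\{1,2^{n-4}\}$.

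I do not anticipate a genuine obstacle: the substantive work is already contained in \cref{p:length-sharp}. The only care required is the bookkeeping at the small values $n \in \{2,3,4\}$, where the exponents $n-2$ and $n-4$ degenerate and one simply verifies by inspection that the brackets of total length $n$ match the closed formulas. I would present the final proof as two short paragraphs, one per value of $|X|$, each split into its ``$\leq$'' half (from the relevant upper-bound item) and its ``$\geq$'' half (from the relevant optimality example).
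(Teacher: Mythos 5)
Your proposal is correct and follows exactly the route the paper intends: \cref{rk:length-beta} is stated as an immediate consequence of \cref{p:length-sharp}, with the upper bounds coming from items (1) and (3) together with $|b| \leq n-1$, and the lower bounds from the explicit brackets of items (2) and (4). The only detail worth flagging is that for $|X|=2$ and $n=4$ the exclusion of $|a|=|b|=2$ uses not just length-compatibility but also the fact that $(X_0,X_1)$ is the unique basis element of length $2$; this is trivial and does not affect the argument.
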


\section{Lyndon basis}
\label{sec:lyndon}

The main goal of this section is to prove \cref{thm:lyndon-easy}, which yields a sharp bound of the structure constants relative to the Lyndon basis.
We start with definitions and a short introduction to the Lyndon basis in \cref{ss:lyndon-intro}, then prove a refined version of \cref{thm:lyndon-easy} in \cref{ss:lyndon-bound}.
Eventually, we investigate the optimality of this estimate in \cref{ss:lyndon-optimal}.

\subsection{Definitions and preliminary remarks}
\label{ss:lyndon-intro}

In this section, $X$ is totally ordered and $X^*$ (recall \cref{def:free.monoid}) is endowed with the induced lexicographic order. 
We study the case of a classical Hall basis of $\mathcal{L}(X)$ indexed by Lyndon words in $X^*$, the Lyndon basis. 
This basis is sometimes referred to as the ``\emph{Chen-Fox-Lyndon basis}'' due to the important related results proved in \cite{zbMATH03229405}, or as the ``\emph{Shirshov basis}'' due to the work \cite{zbMATH03131962}.
As in \cite{zbMATH00417855}, we choose the name ``\emph{Lyndon basis}'' for brevity and to highlight the source work \cite{zbMATH03092594} where Lyndon introduced ``\emph{standard sequences}'' (which are now named Lyndon words).

For further details on the combinatorics of Lyndon words and their relations with Hall sets, the reader can refer to~\cite[Chapter 5]{zbMATH01024080}, \cite[Section 5.1]{zbMATH00417855}.

\begin{definition}[Length, prefixes and suffixes]
    We use the following notions for elements of $X^*$:
    \begin{itemize}
        \item If $u \in X^*$, its length $|u|$ is the length of the corresponding sequence.
        \item If $u,v \in X^*$, we say that $u$ is a \emph{prefix} of $v$ if there exists $w \in X^*$ such that $v = uw$.
        \item If $u,v \in X^*$, we say that $u$ is a \emph{suffix} of $v$ if there exists $w \in X^*$ such that $v = wu$.
    \end{itemize}
\end{definition}

\begin{definition}[Lyndon word]
    A word $w \in X^*$ is a Lyndon word if either $w \in X$, or for all $u,v \in X^*$ such that $w = uv$, $w <vu$.
    Denote by $\Lyn(X)$ the set of Lyndon words in $X^*$.
\end{definition}

 As a consequence of this definition, if $u,v\in \Lyn(X)$ are such that $u<v$, then $uv<v$. 
 Every Lyndon word that is not a letter can be written as the concatenation of two shorter Lyndon words. 
 Such a factorization is not unique in general, but we can single out one of them: the \emph{standard factorization} of $w$ is the factorization $w=uv$ with $u,v \in \Lyn(X)$ such that $u$ has maximal length. 
 Standard factorizations allow us to recursively define a map $\br$: $\Lyn(X) \to \Br(X)$ by mapping each letter to itself and a Lyndon word $w$ to $\br(w) = (\br(u),\br(v))$, where $w=uv$ is the standard factorization of $w$. 
 Endow $\Br(X)$ with the preorder given by the lexicographic order on the foliage of trees, that we will call the \emph{Lyndon order} (this order is not a total order, but it does not matter as we only intend to use its restriction to $\br{\Lyn(X)}$, which is a total order).

\begin{definition}[Lyndon basis]
   The subset $\B=\br(\Lyn(X))$ is a Hall set (see \cite[Theorem 5.1]{zbMATH00417855}).
   The associated basis of $\mathcal{L}(X)$ is  called the Lyndon basis.
\end{definition}

In this section, if $u \in \Lyn(X)$, extending the convention for left and right factors of the elements of $\Br(X)$, we will denote by $\lambda(u)$ the maximal strict Lyndon prefix of $u$, and by $\mu(u)$ the suffix of $u$ such that $u = \lambda(u)\mu(u)$. 
If $u \in X$, we use the convention that $\lambda(u)$ is the empty word $\varepsilon$.

Let us recall some useful properties of the lexicographic order:
\begin{itemize}
    \item If $m,m',m'' \in X^*$ and $m'<m''$, then $mm' < mm''$.
    \item If $m,m',m'' \in X^*$ and $m<m'<mm''$, then $m$ is a prefix of $m'$.
\end{itemize}

\begin{lemma} \label{p:lyndon-order}
    Let $b_1, b_2, b_3 \in \B$. Then:
	\begin{enumerate}
		\item $\lambda(b_1) < b_1 < \mu(b_1)$,
		\item if $b_1 < b_2 \text{ and } |b_1| > |b_2| \text{ and } b_1 \in \Lambda(b_3)$, then $b_3 < b_2$,
		\item if $b_1 < b_2 < b_3 \text{ and } b_1 \in \Lambda(b_3)$, then $b_1 \in \Lambda(b_2)$,
		\item if $b_2 < b_3$, then $(b_1, b_2) < (b_1, b_3)$.
    \end{enumerate}
\end{lemma}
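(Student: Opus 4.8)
All four assertions are statements about the Lyndon order on $\B = \br(\Lyn(X))$, so the natural strategy is to transport each of them to the lexicographic order on the corresponding Lyndon words via the bijection $\br : \Lyn(X) \to \B$, and then invoke the two recalled properties of the lexicographic order together with the basic facts about standard factorizations. Throughout, I identify a basis element $b = \br(w)$ with its foliage word $w \in \Lyn(X)$, and recall that for $b = \br(w)$ one has $\lambda(b) = \br(\lambda(w))$, $\mu(b) = \br(\mu(w))$ where $w = \lambda(w)\mu(w)$ is the standard factorization.

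\textbf{Item (1).} The inequality $\lambda(w) < w$ is immediate since $\lambda(w)$ is a proper prefix of $w$ and $w$, being Lyndon, is not a proper prefix of itself (a Lyndon word is strictly smaller than all its proper suffixes, hence cannot equal a prefix-extension situation trivially — more simply: $\lambda(w)$ is a prefix of $w$ and $\lambda(w) \neq w$, so $\lambda(w) < w$). For $w < \mu(w)$: write $w = \lambda(w)\mu(w)$; since $w$ is Lyndon and $\lambda(w)\mu(w)$ is a nontrivial factorization, $w < \mu(w)\lambda(w)$; and I claim $\mu(w)\lambda(w)$ and $\mu(w)$ compare so that $w < \mu(w)$ follows. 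Indeed, if $\mu(w)$ were $\leq w$, then since $\mu(w)$ is Lyndon and $w < \mu(w)\lambda(w)$, the prefix property would force $w < \mu(w)$ or $\mu(w)$ a prefix of $w$; the latter combined with $w$ Lyndon and standard factorization maximality of $\lambda(w)$ gives a contradiction. I would carry this out cleanly using the classical fact (which I may cite from \cite{zbMATH00417855} or prove in two lines) that in the standard factorization $w = \lambda(w)\mu(w)$ one has $\lambda(w) < w < \mu(w)$.

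\textbf{Items (2) and (3).} These are the ones where the prefix properties of the lexicographic order do the real work, and I expect item (2) to be the main obstacle. For (2): $b_1 \in \Lambda(b_3)$ means (translating $\Lambda$ to words) that $b_1 = \lambda^k(b_3)$ for some $k$, i.e.\ the word of $b_1$ is an iterated-standard-left-factor of the word of $b_3$; in particular the word $w_1$ of $b_1$ is a \emph{prefix} of the word $w_3$ of $b_3$. Write $w_3 = w_1 s$ with $s \neq \varepsilon$. Since $b_1 < b_2$, i.e.\ $w_1 < w_2$, and $|w_1| > |w_2|$, the word $w_1$ cannot be a prefix of $w_2$ (it is too long), so by totality $w_1 < w_2$ together with "not a prefix" forces — using the second recalled property contrapositively — that $w_2$ is not squeezed between $w_1$ and $w_1$-extensions; concretely, $w_1 < w_2$ and $w_1$ not a prefix of $w_2$ imply $w_2 < w_1 t$ for every $t$, in particular $w_2 < w_1 s = w_3$, i.e.\ $b_3 < b_2$. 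For (3): from $b_1 \in \Lambda(b_3)$ we get $w_1$ a prefix of $w_3$, say $w_3 = w_1 s$; from $b_1 < b_2 < b_3$ we get $w_1 < w_2 < w_3 = w_1 s$, so by the second recalled property $w_1$ is a prefix of $w_2$; it then remains to upgrade "prefix" to "iterated standard left factor", i.e.\ $w_1 = \lambda^j(w_2)$, which follows from the standard combinatorial fact that a Lyndon prefix of a Lyndon word that is itself a prefix of a longer Lyndon word sits on the $\lambda$-chain — here I would lean on \cref{p:lyndon-order}'s own framework or cite \cite[Chapter 5]{zbMATH01024080}.

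\textbf{Item (4).} This should be the easiest. With $b_i = \br(w_i)$, the key point is that $\br$ is \emph{compatible} with concatenation in the sense that the foliage of $(b_1, b_2)$ is $w_1 w_2$ and $(b_1,b_2) \in \B$ precisely when this concatenation is handled by the Hall/Lyndon conditions; but since we are only comparing foliages under the lexicographic (Lyndon) order, we simply note that the foliage of $(b_1,b_2)$ is $w_1 w_2$ and that of $(b_1,b_3)$ is $w_1 w_3$, and $w_2 < w_3$ implies $w_1 w_2 < w_1 w_3$ by the first recalled property of the lexicographic order. The only subtlety is that $(b_1,b_2)$ and $(b_1,b_3)$ need to actually be elements of $\B$ for the statement to be non-vacuous, but the comparison of their foliages is valid regardless, and the Lyndon order on $\br(\Lyn(X))$ is exactly the foliage order, so we are done. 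I would write items (1)–(4) in this order, spending the bulk of the argument on (2) and on the prefix-chain bookkeeping in (3).
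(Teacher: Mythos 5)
Your approach is the same as the paper's: the paper's own proof of this lemma is a one-line deferral to the definition of $\Lambda$, the two recalled properties of the lexicographic order, and classical facts about Lyndon words, and your proposal fills in exactly those details by identifying each $b_i$ with its foliage word. Items (1), (3) and (4) are fine; in particular you correctly isolate the two classical facts that carry the real content (a Lyndon word is smaller than its proper suffixes, giving $w<\mu(w)$; and a Lyndon proper prefix of a Lyndon word sits on the $\lambda$-chain, which upgrades ``prefix'' to ``$\in\Lambda$'' in item (3)), which is the same level of citation the paper relies on.

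There is one direction slip in item (2) that must be fixed. You assert that $w_1<w_2$ together with ``$w_1$ is not a prefix of $w_2$'' implies $w_2 < w_1 t$ for every $t$, and then write ``in particular $w_2 < w_1 s = w_3$, i.e.\ $b_3<b_2$''. As written, the implication is false (take $w_1=a$, $w_2=b$) and the displayed inequality $w_2<w_3$ contradicts the conclusion $b_3<b_2$ you draw from it. The correct chain is: $w_1$ is not a prefix of $w_2$ because $|w_1|>|w_2|$, and $w_2$ is not a proper prefix of $w_1$ because that would force $w_2<w_1$; hence the two words first differ at some position at most $|w_2|$, where $w_1$ carries the strictly smaller letter since $w_1<w_2$. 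Consequently $w_1 t < w_2$ for every $t\in X^*$, in particular $w_3=w_1 s<w_2$, i.e.\ $b_3<b_2$. With that correction (and the trivial remark that the case $b_1=b_3$, where $s=\varepsilon$, is immediate), your argument is complete.
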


\begin{proof}
	These are straightforward consequences of \eqref{eq:LAMBDA}, the previously mentioned properties of the lexicographic order, and specific properties of Lyndon words.
\end{proof}

\subsection{Proof of the main upper bound}
\label{ss:lyndon-bound}

We now prove the following refined version of \cref{thm:lyndon-easy}.

\begin{theorem} \label{thm:lyndon-bound}
	Let $\B \subset \Br(X)$ be the Hall set of the Lyndon basis.
	For all $a < b \in \B$,
	\begin{equation}
		\| [a, b] \|_\B \leq 2^{\theta_a(b)-1}
	\end{equation}
	and for each $c \in \supp [a,b]$, $a \in \Lambda(c)$ (so, in particular, $a < c$), $\lambda(c) \leq \max\{a, \lambda(b)\}$, and $c < b$.
\end{theorem}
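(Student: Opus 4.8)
The strategy is an induction on $n := \theta_a(b)$, mirroring the structure already used for \cref{Prop:theta/length}, \cref{p:theta-dec} and the $\theta$-indexed recursion of \cref{p:rec-theta}. The base case $n=1$ is immediate: then $(a,b)\in\B$, so $[a,b]=\ev((a,b))$, the norm is $1 = 2^{0}$, and the structural claims follow from $a = \lambda((a,b))$, from $\lambda(b)\leq b$ together with the Hall axioms, and from $b < (a,b)$ by \cref{p:lyndon-order}(1) applied to $\mu((a,b))=b$. For the inductive step with $n\geq 2$, I would write $b=(\lambda(b),\mu(b))$ with $a < \lambda(b) < \mu(b)$ and expand via Jacobi:
\begin{equation}
    [a,b] = [[a,\lambda(b)],\mu(b)] + [\lambda(b),[a,\mu(b)]].
\end{equation}
Since $\theta_a(b) = \theta_a(\lambda(b)) + \theta_a(\mu(b))$, both $\theta_a(\lambda(b))$ and $\theta_a(\mu(b))$ are strictly smaller than $n$, so the induction hypothesis applies to $[a,\lambda(b)]$ and $[a,\mu(b)]$.

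\textbf{Controlling the two Jacobi terms.} For the first term, write $[a,\lambda(b)] = \sum_d \alpha_d \ev(d)$ over the basis. By induction each such $d$ satisfies $a\in\Lambda(d)$, $\lambda(d)\leq\max\{a,\lambda(\lambda(b))\}\leq\lambda(b)$ (using the Hall/Lyndon property $\lambda(\lambda(b))\leq\lambda(b)$ and $a<\lambda(b)$), and $d<\lambda(b)$. Since $d<\lambda(b)<\mu(b)$, the bracket $[d,\mu(b)]$ is the ``ascending'' orientation, and I must show $\theta_d(\mu(b)) \leq \theta_a(\mu(b))$: this follows from \cref{p:theta-dec} since $a\leq\lambda(d)<d$ by \eqref{eq:a<c'}-type reasoning, wait — more carefully, $a\in\Lambda(d)$ gives $a\leq d$, and in fact $a<d$ unless $a=d$; applying \cref{p:theta-dec} with $a \leq d$ (or handling $a=d$ trivially) yields $\theta_d(\mu(b))\leq\theta_a(\mu(b))$. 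Hence by induction $\|[d,\mu(b)]\|_\B \leq 2^{\theta_d(\mu(b))-1}\leq 2^{\theta_a(\mu(b))-1}$. The total contribution of the first term is then bounded by $\|[a,\lambda(b)]\|_\B \cdot 2^{\theta_a(\mu(b))-1} \leq 2^{\theta_a(\lambda(b))-1}\cdot 2^{\theta_a(\mu(b))-1} = 2^{\theta_a(b)-2}$. For the second term I proceed symmetrically with $[a,\mu(b)] = \sum_d \alpha_d\ev(d)$: each $d$ has $\lambda(b)$ to be bracketed against it, so I compare $\lambda(b)$ and $d$. Using \cref{p:lyndon-order}(2)--(3) and the fact that $a\in\Lambda(d)$, $\lambda(d)\leq\max\{a,\lambda(\mu(b))\}$, one shows $\theta_{\min\{\lambda(b),d\}}(\max\{\lambda(b),d\})\leq\theta_a(\mu(b))$ — the key point being that $\lambda(b)<\mu(b)$ and $d$ lives ``inside'' the alphabetic subset attached to $a$ and $\mu(b)$, so \cref{p:theta-dec} together with the analog of \cref{lem:theta-dec2/support-absurde} for the Lyndon setting controls it. This gives the second term a bound of $2^{\theta_a(\lambda(b))-1}\cdot 2^{\theta_a(\mu(b))-1} = 2^{\theta_a(b)-2}$ as well, and the two halves sum to $2^{\theta_a(b)-1}$.

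\textbf{Propagating the structural properties.} The membership $a\in\Lambda(c)$ and $\lambda(c)\leq\max\{a,\lambda(b)\}$ for $c\in\supp[a,b]$ follow by composing the inductive structural information through the two Jacobi re-bracketings: creating $(d,\mu(b))$ or $(\mu(b),d)$, resp.\ $(\lambda(b),d)$ or $(d,\lambda(b))$, only prepends $a$ deeper into an iterated left factor (Lyndon standard factorization makes $a$ the leftmost Lyndon factor of any $c$ it is involved in), and the new $\lambda(c)$ is either the old $\lambda(d)\leq\max\{a,\lambda(b)\}$ or $\lambda(b)$ itself or $d$ with $\lambda(d)\leq\max\{a,\lambda(b)\}$ — in every case bounded by $\max\{a,\lambda(b)\}$. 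For the bound $c<b$: since $a\in\Lambda(c)$ and $|c| = |a|+|b| > |b|$, if $c\geq b$ then \cref{p:lyndon-order}(2) (with $b_1\leftarrow a$, and noting $a\in\Lambda(c)$, $|a|<|c|$... one needs the right instance) forces a contradiction; more directly, the supporting elements all arise as $\e(\Tr(\lbag a\rbag+\mathbb{L}_\B(\rf{a}{b})))$ by \cref{thm:en1}, and since every leaf $c'$ of $\rf{a}{b}$ satisfies $c'<b$ (as $(a,c')\in\B$ forces $c'$ strictly below $b$ along the folding, by \cref{p:lyndon-order}(4) and induction on the folding), combining one $a$ with these leaves yields elements still $<b$. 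The \textbf{main obstacle} is the second Jacobi term: verifying that $\theta$ does not increase when $d$ and $\lambda(b)$ are bracketed in either order requires carefully combining \cref{p:theta-dec} with a Lyndon-specific control on $\theta_a(d)$ for $d\in\supp[a,\mu(b)]$ — essentially a Lyndon analog of \cref{lem:theta-dec2/support-absurde}, which may need its own short lemma; getting the orientation $(\lambda(b),d)$ vs $(d,\lambda(b))$ and the corresponding $\lambda$-bounds right is where the Lyndon order properties of \cref{p:lyndon-order} do the real work.
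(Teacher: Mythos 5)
Your proposal follows the paper's proof almost exactly: induction on $\theta_a(b)$, the same Jacobi splitting, and propagation of the three structural invariants ($a\in\Lambda(d)$, $\lambda(d)\leq\max\{a,\lambda(\cdot)\}$, $d<\cdot$) through the induction; the first Jacobi term is handled exactly as you describe.

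The one place you flag as the main obstacle — the second Jacobi term — resolves more simply than you fear, and no Lyndon analog of \cref{lem:theta-dec2/support-absurde} is needed. For $d\in\supp[a,\mu(b)]$ the inductive invariant gives $\lambda(d)\leq\max\{a,\lambda(\mu(b))\}\leq\lambda(b)$ (using $a<\lambda(b)$ and $\lambda(\mu(b))\leq\lambda(b)$ from $b\in\B$). Hence in the sub-case $\lambda(b)<d$ the pair is \emph{directly} a basis element: $c:=(\lambda(b),d)\in\B$, so $\|[\lambda(b),d]\|_\B=1$, $\lambda(c)=\lambda(b)$, $a\in\Lambda(\lambda(b))\subset\Lambda(c)$ by item (3) of \cref{p:lyndon-order}, and $c<b$ by item (4) since $d<\mu(b)$. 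In the sub-case $d<\lambda(b)$ the correct comparison is $\theta_d(\lambda(b))\leq\theta_a(\lambda(b))$ via \cref{p:theta-dec}; your displayed inequality $\theta_{\min\{\lambda(b),d\}}(\max\{\lambda(b),d\})\leq\theta_a(\mu(b))$ has the wrong right-hand side, although the final product $2^{\theta_a(\lambda(b))-1}\cdot 2^{\theta_a(\mu(b))-1}$ you write down is the bound one actually obtains. Finally, for $c<b$ the case-by-case propagation is the route to take (e.g.\ in the first term, $d<\lambda(b)$ with $|d|>|\lambda(b)|$ and $d\in\Lambda(c)$ give $c<\lambda(b)<b$ by item (2)); your "more direct" alternative through \cref{thm:en1} and the leaves of $\rf{a}{b}$ is not obviously sound, since recombining leaves into new brackets does not by itself control the lexicographic order of the resulting foliage.
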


\begin{proof}
	We proceed by induction on $n := \theta_a(b) \geq 1$ and execute a refined version of the decomposition algorithm \cref{sec:rewriting}.
	
	\step{Initialization for $n = 1$}
	By definition of $\theta_a(b) = 1$ implies that $(a,b) \in \B$ so is its own decomposition. 
	Let $c := (a, b)$.
	Then one has all the desired properties, since $\lambda(c) = a$, $a \in \Lambda(c)$, $\lambda(c) \leq \max \{ a, \lambda(b) \}$ and $c < b$ by item (1) of \cref{p:lyndon-order}.
	Moreover, $\| [a, b] \|_\B = 1 = 2^{1-1}$.
	
	\step{Induction}
	Assume that the result is proved up to some $n \in \N^*$.
	Let $a < b \in \B$ such that $\theta_a(b) = n+1$.
	In particular, $b = (\lambda(b), \mu(b))$ with $a < \lambda(b) < \mu(b)$ (otherwise $\theta_a(b) = 1$).
	By Jacobi's identity,
	\begin{equation}
		[a, b] = [[a, \lambda(b)], \mu(b)] + [\lambda(b), [a, \mu(b)]]
	\end{equation}
	and we now treat both terms separately.
	
	\begin{itemize}
		\item \emph{Study of $[[a, \lambda(b)], \mu(b)]$.}
		Since $\theta_a(\lambda(b)) \leq \theta_a(b)-1 \leq n$, the induction hypothesis yields
		\begin{equation}
			[a, \lambda(b)] = \sum \alpha_d \ev(d)
		\end{equation}
		with $d \in \B$, $a \in \Lambda(d)$, $\lambda(d) \leq \max \{ a, \lambda^2(b) \}$, $d < \lambda(b)$ and $\sum |\alpha_d| \leq 2^{\theta_a(\lambda(b))-1}$.

		For each $d$ in this sum, $a < d < \lambda(b) < \mu(b)$ and, by \cref{p:theta-dec}, $\theta_d(\mu(b)) \leq \theta_a(\mu(b)) \leq n$, so the induction hypothesis applied to $[d, \mu(b)]$ yields $[d, \mu(b)] = \sum \beta_c \ev(c)$ where $d \in \Lambda(c)$, $\lambda(c) \leq \max \{ d, \lambda(\mu(b)) \}$, $c < \mu(b)$ and $\sum |\beta_c| \leq 2^{\theta_a(\mu(b))-1}$.
		
		Since $a \in \Lambda(d)$ and $d \in \Lambda(c)$, $a \in \Lambda(c)$.
		Since $b \in \B$, $\lambda(\mu(b)) \leq \lambda(b)$ and, since $d < \lambda(b)$, $\lambda(c) \leq \max \{d, \lambda(\mu(b))\}$ implies that $\lambda(c) \leq \lambda(b) = \max \{ a, \lambda(b) \}$.
		Moreover, as $d < \lambda(b)$, $|d| > |\lambda(b)|$ and $d \in \Lambda(c)$, item (2) of \cref{p:lyndon-order} implies that $c < \lambda(b) < b$.
		
		This proves that the support of $[[a,\lambda(b)],\mu(b)]$ has the desired properties and the size estimate $\|[[a,\lambda(b)],\mu(b)]\|_\B \leq 2^{\theta_a(\lambda(b))-1} 2^{\theta_a(\mu(b))-1} = 2^{\theta_a(b)-2}$.
		
		\item \emph{Study of $[\lambda(b), [a, \mu(b)]]$.}
		Since $\theta_a(\mu(b)) \leq n$, the induction hypothesis yields
		\begin{equation}
			[a, \mu(b)] = \sum \alpha_d \ev(d)
		\end{equation}
		with $d \in \B$, $a \in \Lambda(d)$, $\lambda(d) \leq \max \{ a, \lambda(\mu(b)) \}$, $d < \mu(b)$ and $\sum |\alpha_d| \leq 2^{\theta_a(\mu(b))-1}$.
		
		Let $d \in \B$ be part of this sum. 
		Unlike the previous case, we do not know how $d$ and $\lambda(b)$ compare, so we treat both cases separately.
		
		\begin{itemize}
			\item If $d < \lambda(b)$, since by \cref{p:theta-dec}, $\theta_d(\lambda(b)) \leq \theta_a(\lambda(b)) \leq n$, the induction hypothesis applied to $[d, \lambda(b)]$ yields $[d, \lambda(b)] = \sum \beta_c \ev(c)$ where $d \in \Lambda(c)$, $\lambda(c) \leq \max \{ d, \lambda^2(b) \}$, $c < \lambda(b)$ and $\sum |\beta_c| \leq 2^{\theta_a(\lambda(b))-1}$.
			Since $a \in \Lambda(d)$ and $d \in \Lambda(c)$, $a \in \Lambda(c)$.
			Also, $\lambda(c) \leq \max \{d, \lambda^2(b)\} < \lambda(b) = \max \{ a, \lambda(b) \}$ and $c < \lambda(b) < b$.
			So $c$ has the required properties and $\|[d,\lambda(b)]\|_\B \leq 2^{\theta_a(\lambda(b))-1}$.
			
			\item  If $\lambda(b)<d$, since $\lambda(d) \leq \max\{a,\lambda(\mu(b))\} \leq \lambda(b)$, we have $c := (\lambda(b),d) \in \B$.
			Since $a < \lambda(b) < d$ and $a \in \Lambda(d)$, item (3) of \cref{p:lyndon-order} yields $a \in \Lambda(\lambda(b)) \subset \Lambda(c)$.
			Obviously $\lambda(c) = \lambda(b) \leq \max \{ a, \lambda(b) \} = \lambda(b)$.
			Finally, since $d < \mu(b)$, item (4) of \cref{p:lyndon-order} yields $c = (\lambda(b), d) < (\lambda(b), \mu(b)) = b$. 
			So $c$ has the required properties and $\| [\lambda(b), d] \|_\B = 1$.
		\end{itemize}
		
		This proves that the support of $[[a,\lambda(b)],\mu(b)]$ has the desired properties and the size estimate $\|[[a,\lambda(b)],\mu(b)]\|_\B \leq 2^{\theta_a(\mu(b))-1} \max \{ 2^{\theta_a(\lambda(b))-1} , 1 \} = 2^{\theta_a(b)-2}$.
	\end{itemize}
	Combining both studies and summing the estimates concludes the proof.
\end{proof}

\subsection{Optimality cases}
\label{ss:lyndon-optimal}

\cref{thm:lyndon-bound} yields the following optimal length-based estimates.

\begin{proposition} \label{p:lyndon-sharp}
    Let $\B \subset \Br(X)$ the Hall set of the Lyndon basis.
    \begin{enumerate}
        \item If $|X|\geq 3$, for every $a<b \in \B$, $\|[a,b]\|_\B \leq 2^{|b|-1}$.
        \item If $X \supset \{ X_0, X_1, X_2 \}$ with $X_0 < X_1 < X_2$, for every $n\in \N$, $\ad_{X_1}^n (X_2) \in \B$ and 
        \begin{equation}
            \| [X_0, \ad_{X_1}^n(X_2)] \|_\B = 2^n
        \end{equation}
        so the previous estimate is optimal.
        \item If $|X|=2$, for every $a<b \in \B$ with $|b|\geq 2$, $\|[a,b]\|_\B \leq 2^{|b|-2}$.
        \item If $X=\{X_0,X_1\}$ with $X_0<X_1$, for every $n \in \N^*$, $\ad_{X_0}^2(X_1) < \dad_{X_1}^n(X_0) \in \B$ and
        \begin{equation} \label{eq:lyndon-n4-n1}
            \|[\ad_{X_0}^2(X_1), \dad_{X_1}^n(X_0)]\|_\B=2^{n-1}
        \end{equation}
        so the previous estimate is optimal.
    \end{enumerate}
\end{proposition}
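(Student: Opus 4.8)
The plan is to handle the four items in increasing order of difficulty. Items (1) and (3) follow immediately by combining the sharp $\theta$-based estimate of \cref{thm:lyndon-bound}, $\|[a,b]\|_\B\le 2^{\theta_a(b)-1}$, with \cref{Prop:theta/length}: its first item gives $\theta_a(b)\le|b|$ when $|X|\ge3$ (whence item (1)), and its second item gives $\theta_a(b)\le|b|-1$ when $|X|=2$ and $|b|\ge2$ (whence item (3)). For item (2), I would first check that $\ad_{X_1}^n(X_2)\in\B$: as a Lyndon word this is $X_1^nX_2$, whose standard factorization is $X_1\cdot X_1^{n-1}X_2$, so $\lambda(\ad_{X_1}^n(X_2))=X_1$ and Hall membership follows by induction on $n$ (alternatively one may invoke \cref{prop:some_elements_hall}). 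Then \cref{thm:X3-2^n}, which holds for any Hall set, supplies the lower bound $\|[X_0,\ad_{X_1}^n(X_2)]\|_\B\ge 2^n$, while item (1), together with $|\ad_{X_1}^n(X_2)|=n+1$, supplies the matching upper bound $2^n$; hence equality, which shows the estimate of item (1) is attained.

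Only item (4) requires real work. Writing $a:=\ad_{X_0}^2(X_1)$ and $b_n:=\dad_{X_1}^n(X_0)$, these are the images under $\br$ of the Lyndon words $X_0X_0X_1$ and $X_0X_1^n$, so $a,b_n\in\B$, and $a<b_n$ because $X_0X_0X_1<X_0X_1^n$ lexicographically; since $|b_n|=n+1$, item (3) already gives $\|[a,b_n]\|_\B\le 2^{n-1}$, so only the reverse inequality is needed (and the computation below in fact recovers the exact value). I would first reduce to a three-letter computation. The set $A:=\{\alpha,\beta,\gamma\}$ with $\alpha:=a<\beta:=(X_0,X_1)<\gamma:=X_1$ is free in the sense of \cref{def:free-subset} — a count of lengths shows no element of $A$ lies in $(\Br_A,\Br_A)$ — and alphabetic in the sense of \cref{def:alphabetlike}, since $(\alpha,\beta),(\alpha,\gamma),(\beta,\gamma)\in\B$ by a direct check with the Lyndon order. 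By \cref{Prop:free-lie-alphabetic}, the canonical map $\mathcal L(A)\to\mathcal L(X)$ is an isometry onto its image for the norms $\|\cdot\|_{\B'}$ and $\|\cdot\|_\B$, where $\B'$ is the Hall set on $\Br(A)$ associated with $A$; since $b_n=\dad_\gamma^{n-1}(\beta)$ lies in that image, it suffices to compute $\|[\alpha,\dad_\gamma^{n-1}(\beta)]\|_{\B'}$.

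Inside $\mathcal L(A)$ one has $[\alpha,\beta]=\ev((\alpha,\beta))$ and $[\alpha,\gamma]=\ev((\alpha,\gamma))=:\ev(\delta)$, with $(\alpha,\beta),\delta\in\B'$ because $\alpha$ is minimal in $A$ and $A$ is alphabetic. Expanding $[\alpha,\dad_\gamma^{n-1}(\beta)]$ by Leibniz' formula along the left comb with leaves $\beta,\gamma,\dots,\gamma$, the leaf $\beta$ contributes $\ev(\dad_\gamma^{n-1}((\alpha,\beta)))$ (with $\dad_\gamma^{n-1}((\alpha,\beta))\in\B'$), while the $j$-th leaf $\gamma$ contributes $\dad_\gamma^{n-1-j}([u_j,\delta])$, where $u_j:=\dad_\gamma^{j-1}(\beta)$. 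Comparing foliages shows $u_j>\delta_m$ in $\B'$ (for all $j\ge1$, $m\ge0$) for the elements $\delta_m:=\dad_\gamma^{m}(\delta)\in\B'$, so $[u_j,\delta]=-[\delta,u_j]$, and Jacobi's identity — using $[\delta_m,\gamma]=\ev(\delta_{m+1})$ and the fact that right-bracketing by $\gamma$ keeps every supporting element in $\B'$ (its foliage starting with $X_0$) — yields $[\delta_m,u_1]=\ev((\delta_m,\beta))$ and the recursion $[\delta_m,u_j]=\dad_\gamma([\delta_m,u_{j-1}])-[\delta_{m+1},u_{j-1}]$ for $j\ge2$. Since the operators ``right-bracket by $\gamma$'' and ``shift $m\mapsto m+1$'' commute, this solves to $[\delta_m,u_j]=\sum_{k=0}^{j-1}(-1)^k\binom{j-1}{k}\dad_\gamma^{j-1-k}(\ev((\delta_{m+k},\beta)))$; substituting back, exchanging the summations, and using the hockey-stick identity $\sum_{j=k+1}^{n-1}\binom{j-1}{k}=\binom{n-1}{k+1}$, one gets
\begin{equation*}
  [\alpha,\dad_\gamma^{n-1}(\beta)]=\ev\big(\dad_\gamma^{n-1}((\alpha,\beta))\big)+\sum_{k=0}^{n-2}(-1)^{k+1}\binom{n-1}{k+1}\,\ev\big(\dad_\gamma^{n-2-k}((\delta_k,\beta))\big).
\end{equation*}
The decisive point — and the only genuinely delicate step — is that the $n$ elements of $\B'$ appearing here are pairwise distinct, so that no cancellation occurs: their foliages are $X_0X_0X_1X_0X_1^{n}$ for the first term and $X_0X_0X_1^{k+2}X_0X_1^{n-1-k}$ for the $k$-th, all distinct. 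Hence $\|[a,b_n]\|_\B=\|[\alpha,\dad_\gamma^{n-1}(\beta)]\|_{\B'}=1+\sum_{k=0}^{n-2}\binom{n-1}{k+1}=1+(2^{n-1}-1)=2^{n-1}$, proving item (4). I expect item (4) to be the main obstacle: one has to choose the auxiliary families $\delta_m,u_j$ so the Jacobi recursion closes, and then certify the absence of cancellation via the explicit description of the foliages of the supporting basis elements — the reduction to the free alphabetic set $A$ being what makes this bookkeeping manageable.
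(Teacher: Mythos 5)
Your proposal is correct and follows essentially the same route as the paper: items (1)--(3) are handled identically, and for item (4) your final decomposition of $[\ad_{X_0}^2(X_1),\dad_{X_1}^n(X_0)]$ — supporting basis elements with foliages $X_0X_0X_1^{i+1}X_0X_1^{n-i}$ and coefficients $\pm\binom{n-1}{i}$ — is exactly the one the paper exhibits. The only difference is that you re-derive this identity by hand via a Leibniz expansion, a Jacobi recursion and the hockey-stick identity, whereas the paper obtains it in one line from \cref{p:ad-2} applied to $D=\dad_{X_1}$, $b_1=\ad_{X_0}^2(X_1)$, $b_2=[X_0,X_1]$; likewise your detour through the free alphabetic set $\{\ad_{X_0}^2(X_1),(X_0,X_1),X_1\}$ and \cref{Prop:free-lie-alphabetic} is harmless but unnecessary, since you end up checking membership of the supporting elements in $\B$ directly anyway.
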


\begin{proof}
    We proceed step by step.
    Some computations are postponed to \cref{sec:geom-lower} since they can be carried out in more generality, with any Hall order.
\begin{enumerate}
    \item This follows from \cref{thm:lyndon-bound} and the first item of \cref{Prop:theta/length}.
    
    \item This example is detailed in \cref{thm:X3-2^n} in full generality.
    
    \item This follows from \cref{thm:lyndon-bound} and the second item of \cref{Prop:theta/length}.
    
    \item 
    Let $a := \ad_{X_0}^2(X_1)$.
    By \cref{p:ad-2} (applied to the derivation $\dad_{X_1}$ on $\mathcal{L}(X)$, $\nu \leftarrow n-1$, $b_1 \leftarrow a$ and $b_2 \leftarrow [X_0, X_1]$),
    \begin{equation}
        \begin{split}
            [ a, \dad_{X_1}^n(X_0) ]
            & = [ a, \dad_{X_1}^{n-1} ([X_0,X_1]) ] \\
            & = \sum_{i=0}^{n-1} (-1)^i \binom{n-1}{i} \dad_{X_1}^{n-1-i} [ \dad_{X_1}^i (a), [X_0, X_1] ].
        \end{split}
    \end{equation}
    One checks that the elements of the right-hand side are (evaluations of) distinct basis elements, so the norm estimate readily follows.
\end{enumerate}
    This concludes the proof of the optimality of the Lyndon basis estimates.
\end{proof}

Interpreted in terms of the symmetric quantity $\beta_n(\B)$ defined in \eqref{eq:beta-n}, these examples yield the following consequences.

\begin{corollary} \label{cor:lyndon-beta-X3}
	Let $\B \subset \Br(X)$ be the Hall set of the Lyndon basis over $X$ with $|X| \geq 3$.
	Then, for every $n \geq 2$, $\beta_n(\B) = 2^{n-2}$.
\end{corollary}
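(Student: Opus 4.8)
The plan is to combine the two sharp results about the Lyndon basis that have already been established in this section, namely the general upper bound of \cref{thm:lyndon-bound} and the explicit saturating family of \cref{p:lyndon-sharp}.

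First I would prove the inequality $\beta_n(\B) \leq 2^{n-2}$. Fix any $a < b \in \B$ with $|a| + |b| = n$. Since $a \in \B$ we have $|a| \geq 1$, hence $|b| \leq n - 1$. Applying \cref{thm:lyndon-bound} together with the first item of \cref{Prop:theta/length} (which gives $\theta_a(b) \leq |b|$) yields
\begin{equation}
    \|[a,b]\|_\B \leq 2^{\theta_a(b)-1} \leq 2^{|b|-1} \leq 2^{n-2}.
\end{equation}
Taking the supremum over all admissible pairs gives $\beta_n(\B) \leq 2^{n-2}$.

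Next I would prove the matching lower bound $\beta_n(\B) \geq 2^{n-2}$. Since $|X| \geq 3$ we may choose $X_0 < X_1 < X_2$ in $X$. Given $n \geq 2$, set $a := X_0$ and $b := \ad_{X_1}^{n-2}(X_2)$, so that $|a| = 1$, $|b| = n-1$, and $|a| + |b| = n$. By item (2) of \cref{p:lyndon-sharp} we have $b \in \B$ (and $a < b$, since $X_0 < X_1 = \lambda(b) < b$ when $n \geq 3$, and $X_0 < X_2 = b$ when $n = 2$), together with $\|[X_0, \ad_{X_1}^{n-2}(X_2)]\|_\B = 2^{n-2}$. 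Hence this pair witnesses $\beta_n(\B) \geq 2^{n-2}$, and combined with the previous paragraph we conclude $\beta_n(\B) = 2^{n-2}$ for every $n \geq 2$.

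There is essentially no analytic obstacle here: the substantive work was already carried out in \cref{thm:lyndon-bound} and \cref{p:lyndon-sharp}, and the only points requiring care are bookkeeping, namely converting the $|b|$-dependent bound of \cref{thm:lyndon-bound} into one depending on the total length $n = |a| + |b|$ via $|a| \geq 1$, and checking that the saturating family of \cref{p:lyndon-sharp} indeed provides, for every $n \geq 2$, an ordered pair of $\B$ of total length exactly $n$ (which it does, with $|a| = 1$ forced). One could additionally remark that the hypothesis $|X| \geq 3$ is genuinely used: when $|X| = 2$ the worst-case growth of the structure constants of the Lyndon basis is strictly smaller, as reflected by \cref{p:lyndon-sharp} and \cref{thm:LB2}, so the value of $\beta_n(\B)$ changes in that regime.
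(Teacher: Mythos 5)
Your proposal is correct and follows essentially the same route as the paper, which derives the corollary directly from item (1) of \cref{p:lyndon-sharp} (itself a combination of \cref{thm:lyndon-bound} and \cref{Prop:theta/length}) for the upper bound and item (2) for the saturating family $[X_0,\ad_{X_1}^{n-2}(X_2)]$. The bookkeeping steps you highlight ($|a|\geq 1$ giving $|b|\leq n-1$, and the check that the witness pair has total length exactly $n$ for every $n\geq 2$) are exactly what the paper leaves implicit.
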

	
\begin{proposition} \label{rk:lyndon-beta}
	Let $\B \subset \Br(X)$ be the Hall set of the Lyndon basis over $X$ with $|X| = 2$.
	Then, for every $n \geq 2$, $\beta_n(\B) = \max \{ 1, F_{n-2}, 2^{n-5} \}$.
\end{proposition}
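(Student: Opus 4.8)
The plan is to establish the two bounds $\beta_n(\B) \leq \max\{1, F_{n-2}, 2^{n-5}\}$ and $\beta_n(\B) \geq \max\{1, F_{n-2}, 2^{n-5}\}$ separately. For the lower bound, the three constituent quantities are realized by explicit brackets already at hand. The value $1$ is trivial (take $a=X_0$, $b=X_1$, so $\|[X_0,X_1]\|_\B = 1$, which handles $n=2$). The value $F_{n-2}$ comes from \cref{thm:LB2}: its conclusion states $\beta_{m+2}(\B) \geq F_m$ for every Hall set with $|X|=2$, hence $\beta_n(\B) \geq F_{n-2}$ for $n \geq 2$ (interpreting $F_0 = 0$, $F_1 = 1$ in the $0$-based convention, this is vacuous or trivial for small $n$ and meaningful from $n \geq 5$ or so). The value $2^{n-5}$ comes from the fourth item of \cref{p:lyndon-sharp}: with $a := \ad_{X_0}^2(X_1)$ (so $|a| = 3$) and $b := \dad_{X_1}^{n-4}(X_0)$ (so $|b| = n-3$), one has $|a|+|b| = n$ and $\|[a,b]\|_\B = 2^{(n-4)-1} = 2^{n-5}$ by \eqref{eq:lyndon-n4-n1} applied with the index $n-4$ in place of $n$. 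This requires $n - 4 \geq 1$, i.e.\ $n \geq 5$; for $2 \leq n \leq 4$ the term $2^{n-5} < 1$ is dominated. So $\beta_n(\B) \geq \max\{1, F_{n-2}, 2^{n-5}\}$ in all cases.

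For the upper bound, I would bound $\|[a,b]\|_\B$ for an arbitrary pair $a < b \in \B$ with $|a|+|b| = n$, splitting on the value of $|b|$ (equivalently $|a| = n - |b|$). First, by the third item of \cref{p:lyndon-sharp}, when $|b| \geq 2$ we always have $\|[a,b]\|_\B \leq 2^{|b|-2}$. Combined with $|b| \leq n-1$ this gives $\|[a,b]\|_\B \leq 2^{n-3}$, which is \emph{not} quite good enough; the point of the refined statement is that the bound $2^{n-5}$ is only attained when $|a|$ is as small as possible given that the worst case needs $|a| = 3$. So the strategy is: (i) if $|a| \geq 3$, then $|b| \leq n - 3$ and the cited bound gives $\|[a,b]\|_\B \leq 2^{(n-3)-2} = 2^{n-5}$; (ii) if $|a| = 2$, so $a = [X_0,X_1]$ and $|b| = n-2$, I would use a $\theta$-based argument — by \cref{thm:lyndon-bound}, $\|[a,b]\|_\B \leq 2^{\theta_a(b)-1}$, and I would show $\theta_a(b) \leq |b| - 2 = n - 4$ unless $b$ lies in a very restricted family, dealt with by direct computation (this is the Lyndon analogue of \cref{p:theta-b2}; I expect $\theta_{[X_0,X_1]}(b)$ to be small because $[X_0,X_1]$ is large relative to the letters); (iii) if $|a| = 1$, so $a = X_0$ and $|b| = n-1$, the worst case is governed by the Fibonacci-type bracket and I would invoke \eqref{eq:X0bn-Fn}–\eqref{eq:X0bn-r1-2n} from \cref{thm:LB2} — note that the theorem's proof gives \emph{equalities} or explicit bounds for $\|[X_0, b_n]\|_\B$ in terms of $F_n$ and an $A^r_s$ term, and one must check that over all $b$ of length $n-1$ the maximum of $\|[X_0,b]\|_\B$ is exactly $\max\{F_{n-2}, \text{(something)}\}$; this likely needs the optimal Fibonacci Hall set comparison or a direct length-$(n-1)$ case analysis showing $2^{n-5}$ is not exceeded.

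The main obstacle I anticipate is case (ii)/(iii): controlling $\|[a,b]\|_\B$ tightly when $|a| \in \{1,2\}$ and $b$ ranges over \emph{all} Lyndon brackets of the complementary length, rather than the specific $\dad_{X_1}^k(X_0)$ or $\ad_{X_1}^k(X_2)$ families for which the earlier sections give exact formulas. In particular I would need a Lyndon-specific refinement of \cref{Prop:theta/length} bounding $\theta_{X_0}(b)$ and $\theta_{[X_0,X_1]}(b)$ from above in terms of $|b|$, with the extremal cases identified (mirroring \cref{p:theta-b2} for length-compatible sets), and then I would feed those $\theta$-bounds into \cref{thm:lyndon-bound}. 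Concretely: show $\theta_{X_0}(b) \leq |b| - 1$ always (this is the second item of \cref{Prop:theta/length}) with equality only for $b$ of the form $\ad_{X_1}^k(X_0,X_1)$, handle that family via \eqref{eq:X0bn-r1-2n}-type estimates, and otherwise get $\theta_{X_0}(b) \leq |b|-2 = n-3$ hence $\|[a,b]\|_\B \leq 2^{n-4}$ — which still slightly overshoots $2^{n-5}$, so one further layer of refinement (using the structural information $\lambda(c) \leq \max\{a,\lambda(b)\}$, $a \in \Lambda(c)$ from \cref{thm:lyndon-bound}, or a parity/sign cancellation argument as in \cref{thm:LB2}) is needed to close the gap. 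I would therefore expect the bulk of the work to be a careful length-$n$ case distinction combining the $\theta$-bound, the Fibonacci formula \eqref{eq:X0bn-Fn}, and the explicit optimality brackets, rather than any single clean estimate.
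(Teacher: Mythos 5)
Your overall strategy coincides with the paper's: lower bounds from \eqref{eq:lyndon-n4-n1} and from $\|[X_0,\dad_{X_1}^{n-2}(X_0)]\|_\B=F_{n-2}$, and an upper bound by splitting on $|a|$, with $|a|\geq 3$ killed by the third item of \cref{p:lyndon-sharp} and $|a|\in\{1,2\}$ handled by stratifying on $\theta_a(b)$ and feeding the result into \cref{thm:lyndon-bound}. Case (i) and case (ii) are essentially what the paper does (for $|a|=2$ the extremal family $\theta_a(b)=|b|-1$ is exactly $b=\dad_{X_1}^{m}(X_0,X_1)$, whose norm is $F_{n-4}$ by \eqref{wi,i+n}, and everything else drops to $2^{\theta_a(b)-1}\leq 2^{n-5}$).

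The genuine gap is in case (iii), and you have correctly located it but not closed it. For $a=X_0$ the stratum $\theta_{X_0}(b)=|b|-1$ is the family $b=\dad_{X_1}^m(X_0)$ (not $\ad_{X_1}^k(X_0,X_1)$ — that is the length-compatible extremal family; for Lyndon one has $r(X_0,X_1)=+\infty$, so the relevant formula is \eqref{eq:X0bn-Fn} giving exactly $F_{n-2}$, not the $r=1$ bound \eqref{eq:X0bn-r1-2n}). The stratum $\theta_{X_0}(b)\leq|b|-3$ gives $2^{|b|-4}=2^{n-5}$ directly. What remains — and what your proposal leaves open with "one further layer of refinement is needed" — is the intermediate stratum $\theta_{X_0}(b)=|b|-2$, where the generic bound only yields $2^{n-4}$. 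The paper's resolution is not a sign-cancellation or parity argument: it is an explicit classification of this stratum into two families, $b=\dad_{X_1}^m\ad_{X_0}^2(X_1)$ and $b=\dad_{X_1}^q(H_p)$ with $H_p=(A_p,A_{p+1})$, $A_p=\dad_{X_1}^p(X_0)$, each then bounded by a bespoke Jacobi computation (an induction giving a linear-in-$n$ bound for the first family, and a combination of \eqref{eq:lyndon-wm}, \cref{thm:lyndon-bound} and the Fibonacci inequality \eqref{eq:fibo-2+2} giving roughly $2^{p-1}F_{p+3}+2^{2p-2}(2^q-1)$ for the second, which one checks is below $\max\{2^{n-5},F_{n-2}\}$). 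Without this classification and these computations the upper bound $\beta_n(\B)\leq\max\{1,F_{n-2},2^{n-5}\}$ is not established, so the proof as proposed is incomplete at its decisive step.
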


\begin{proof}
	Let $n \in \N$.
	Let $a < b \in \B$ with $|a|+|b| = n$.
	We separate cases depending on $|a|$.
	For $m \in \N$, we introduce the bracket $A_m := \dad_{X_1}^m(X_0) \in \B$.

    \step{Case $|a| \geq 3$}
	By the third item of \cref{p:lyndon-sharp}, $\|[a,b]\|_\B \leq 2^{|b|-2} \leq 2^{n-5}$.
	
	\step{Case $|a| = 2$}
	Then $a = (X_0, X_1)$.
	By \cref{Prop:theta/length}, $\theta_a(b) \leq |b|-1$.
	\begin{itemize}
		\item If $\theta_a(b) = |b|-1$, then $b = \dad_{X_1}^{m}(X_0, X_1) = A_{m+1}$ with $m = n-4 \geq 1$.
		Formula \eqref{wi,i+n} (applied with $a \gets X_1$, $b \gets (X_0, X_1)$, $n \gets m$) yields
		\begin{equation}
			[A_1, A_{m+1}] =
			\sum_{p=0}^{\lfloor \frac{m-1}{2} \rfloor} (-1)^p \binom{m-1-p}{p} \dad_{X_1}^{m-1-2p} [A_{p+1}, A_{p+2}].
		\end{equation} 
		This is indeed the decomposition of $[a,b]$ on the basis, so $\|[a,b]\|_\B = F_{m} = F_{n-4}$.
		
		\item Otherwise, $\theta_a(b) \leq |b|-2$, so, by \cref{thm:lyndon-bound}, $\|[a,b]\|_\B \leq 2^{\theta_a(b)-1} \leq 2^{|b|-3} = 2^{n-5}$.
	\end{itemize}
		
	\step{Case $|a| = 1$}
	Then $a = X_0$.
	By \cref{Prop:theta/length}, $\theta_a(b) \leq |b|-1$.
	\begin{itemize}
		\item If $\theta_a(b) = |b| - 1$, then $b = \dad_{X_1}^m(X_0) = A_m$ with $m = n - 2 \geq 1$.
		Formula \eqref{wi,i+n} (applied with $a \gets X_1$, $b \gets X_0$, $n \gets m$) yields
		\begin{equation} \label{eq:lyndon-wm}
			[X_0, A_m] = 
			\sum_{p=0}^{\lfloor \frac{m-1}{2} \rfloor} (-1)^p \binom{m-1-p}{p} \dad_{X_1}^{m-1-2p} [A_p, A_{p+1}].
		\end{equation} 
		This is indeed the decomposition of $[a,b]$ on the basis, so $\|[a,b]\|_\B = F_{m} = F_{n-2}$.
		
		\item If $\theta_a(b) = |b| - 2$, $b$ is of one of the following forms.
		\begin{itemize}
		    \item \emph{Subcase $b = \dad_{X_1}^m\ad_{X_0}^2(X_1) =: S_m$ with $m = n - 4 \geq 0$}.
		    One has $[X_0, S_0] = \ad_{X_0}^3(X_1) \in \B$ so $\|[X_0,S_0]\|_\B = 1$.
		    Moreover, by Jacobi's identity,
		    \begin{equation}
		        [X_0, S_{m+1}] = [X_0, [S_m, X_1]]
		        = [[X_0, S_m], X_1] + [S_m, [X_0, X_1]].
		    \end{equation}
		    Since $(S_m, (X_0, X_1)) \in \B$, a straightforward induction on $m$ proves that $\|[X_0, S_m]\|_\B \leq m+1$.
		    Hence $\|[a,b]\|_\B \leq n-3 \leq F_{n-2}$.
		    
		    \item \emph{Subcase $b = \dad_{X_1}^q (H_p)$ with $p \geq 1$, $\in \N$, $2p+q+4 = n$, where $H_p := (A_p, A_{p+1})$.}
    		We start with the case $q = 0$.
    		Using Jacobi's identity,
    		\begin{equation}
    		    [X_0,H_p] = [[X_0,A_p],A_{p+1}] - [[X_0,A_{p+1}],A_p].
    		\end{equation}
    		If $d \in \supp [X_0, A_p]$ (respectively $d \in \supp [X_0, A_{p+1}]$), then, by \eqref{eq:lyndon-wm}, $d < A_{p+1}$ (resp.\ $d < A_p$) and $\theta_d(A_{p+1}) \leq |A_{p+1}|-1=p+1$ (resp.\ $\theta_d(A_p) \leq p$).
    		Hence, by \cref{thm:lyndon-bound},
    		\begin{equation} \label{eq:2pfp}
    		    \| [X_0, H_p] \|_\B 
    		    \leq F_p 2^p + F_{p+1} 2^{p-1}
    		    \leq 2^{p-1} F_{p+3},
    		\end{equation}
    		by \eqref{eq:fibo-2+2}.
    		We now proceed by induction on $q \in \N$.
    		By Jacobi's identity,
    		\begin{equation}
    		    [X_0, \dad_{X_1}^{q+1}(H_p)]
    		    = [[X_0, \dad_{X_1}^{q}(H_p)], X_1]
    		    - [[X_0, X_1], \dad_{X_1}^q(H_p)].
    		\end{equation}
    		Since $\theta_{(X_0,X_1)}(\dad_{X_1}^q(H_p)) = 2p-1+q$, by \cref{thm:lyndon-bound} and induction,
    		\begin{equation}
    		    \| [X_0, \dad_{X_1}^{q}(H_p)] \|_\B 
    		    \leq \| [X_0, H_p] \|_\B 
    		    + \sum_{r=0}^{q-1} 2^{(2p-1+r)-1}
    		    \leq 2^{p-1} F_{p+3} + 2^{2p-2} (2^q-1)
    		\end{equation}
    		using \eqref{eq:2pfp}.
    		Hence, for $n$ large enough, $\| [X_0, \dad_{X_1}^{q}(H_p)] \|_\B < 2^{n-5}$.
    		For small values of $p$ and $q$, one checks that the right-hand side is indeed bounded by $\max \{ 2^{n-5}, F_{n-2} \}$.
		\end{itemize}
		
		\item Otherwise, $\theta_a(b) \leq |b|-3$, so, by \cref{thm:lyndon-bound}, $\|[a,b]\|_\B \leq 2^{\theta_a(b)-1} \leq 2^{|b|-4} = 2^{n-5}$.
	\end{itemize}
	This concludes the proof of the upper bounds.
	The lower bounds come from the fourth item of \cref{p:lyndon-sharp} (see \eqref{eq:lyndon-n4-n1}) and from the case $\| [X_0, A_{n-2}] \|_\B = F_{n-2}$ (see \cref{eq:lyndon-wm}).
\end{proof}

\section{A minimal Hall set}
\label{s:fibo}

The main goal of this section is to prove \cref{thm:fibo-easy}, which illustrates the optimality of the lower bound of \cref{thm:LB2} when $|X| = 2$ by exhibiting a Hall set $\B \subset \Br(X)$ with $|X| = 2$ for which the growth of the structure constants has the magnitude of the Fibonacci sequence, i.e.\ such that $\beta_{n+2}(\B) = F_n$.
We define a quite natural Hall set to answer this question, which seems new.

We give some motivations for the properties that we are looking for in \cref{s:fibo-motivation}.
We define the Hall set we will consider in \cref{s:fibo-def} and prove elementary structural properties in \cref{s:fibo-elementary}.
We then start by a $\theta$-based size estimate for a particular family of right-nested brackets in \cref{s:fibo-right-quick}.
In \cref{s:fibo-general}, we then deduce from it a general size estimate, valid for all brackets, which distinguishes the role of the maximal indeterminate and allows to prove a slightly weaker version of \cref{thm:fibo-easy} (see \cref{cor:fibo-light}).
In \cref{s:fibo-refined}, we prove a refined version of our general estimate which concludes the proof of \cref{thm:fibo-easy}.

Eventually, in \cref{s:fibo-theta}, we investigate an independent question concerning the optimal $\theta$-based estimate for our Hall set, which, quite surprisingly, turns out to be larger than the $\theta$-based estimates for length-compatible Hall sets and the Lyndon basis.

As in \cref{subsec:2letter_case}, throughout all this section, $(F_\nu)_{\nu \in \N}$ denote the $0$-based Fibonacci numbers.

\subsection{Motivation}
\label{s:fibo-motivation}

Let $X := \{X_0,X_1\}$ and let $\B\subset \Br(X)$ be a Hall set with $X_0<X_1$. 
We start by deriving necessary conditions for $\beta_n(\B)$ to have a geometric growth with a rate smaller than $2$.

\begin{lemma}\label{lem:CN-fibonacci}
    Let $X := \{X_0,X_1\}$ and let $\B\subset \Br(X)$ be a Hall set with $X_0<X_1$. 
    With the notation of \eqref{eq:beta-n}, assume that 
    \begin{equation} \label{eq:beta-2n}
        \lim_{n\to\infty} \frac{\beta_n(\B)}{2^n} = 0.
    \end{equation}
    Then:
    \begin{itemize}
        \item $\forall a \in \B$ such that $a<X_1$, $r(a,X_1) = + \infty$,
        \item $X_1 = \max\B$,
        \item $\forall a,b \in \B\setminus\{X_1\}$, $\exists n \in \N$ such that $\dad_{X_1}^n(a) > b$.
    \end{itemize}
\end{lemma}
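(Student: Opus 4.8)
The plan is to prove the three conclusions essentially by contrapositive, exhibiting in each case a family of brackets whose $\ell^1$ norm grows like $2^n$, which contradicts \eqref{eq:beta-2n}. All three arguments rely on the explicit computations already available in \cref{sec:geom-lower}, in particular \cref{thm:X3-2^n} and \cref{thm:LB2}, together with the structural facts about elements of a Hall set collected in \cref{prop:some_elements_hall}.

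\textbf{First item.} Suppose there exists $a \in \B$ with $a < X_1$ and $r := r(a, X_1) < \infty$. I would like to produce long brackets whose norm is $\geq 2^n$ up to a multiplicative constant. The natural candidate is to reuse the mechanism of \cref{thm:LB2}: working in the (free alphabetic) setting generated by $a$ and $X_1$, one has elements $b_n$ (defined from $a$ and $X_1$ as in the statement of \cref{thm:LB2}, with $b_0 \leftarrow a$) lying in $\B$, and \eqref{eq:X0bn-FnCr} gives $\|[a,b_n]\|_\B \geq C(r)\, n^{r-5/2}\, 2^n$ once $n \geq 2r+1$. Since $|a|+|b_n|$ grows linearly in $n$ while $\|[a,b_n]\|_\B / 2^{|a|+|b_n|}$ stays bounded below by a positive constant (the extra $|a|$ in the exponent is a fixed shift, and the polynomial factor $n^{r-5/2}$ only helps when $r \geq 3$; when $r \in \{1,2\}$ one still has $\|[a,b_n]\|_\B \geq F_n$ which already violates a rate strictly below the golden ratio, but to contradict \eqref{eq:beta-2n} specifically one uses the $2^n$ lower bound which holds as soon as $r$ is finite), this contradicts \eqref{eq:beta-2n}. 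The one subtlety to check carefully is that the constant $C(r)$ and the shift coming from $|a|$ do not conspire to make the limit zero; since $2^{|a|}$ is a fixed constant, $\beta_n(\B) \geq \|[a, b_{n-|a|}]\|_\B \geq c\, 2^{n-|a|} = (c\,2^{-|a|})\, 2^n$, so the ratio is bounded below — done.

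\textbf{Second item.} Suppose $X_1 \neq \max \B$, so there exists $c \in \B$ with $c > X_1$, and among all such $c$ pick one; then $X_1 < c$, and in fact one can arrange (by taking $c$ of minimal length, or directly) that $\{X_0, X_1, c\}$ behaves like a three-letter alphabet: $X_0 < X_1 < c$, the pair $(X_0, X_1) \in \B$, and $\{X_0,X_1,c\}$ is a free alphabetic subset of $\B$ (freeness because $X_0, X_1$ are letters and $c \notin (\Br_{\{X_0,X_1,c\}},\Br_{\{X_0,X_1,c\}})$, which holds since $c > X_1$ and any bracket in that submagma of length $\geq 2$ and $\neq c$ would be comparable appropriately — this needs a short verification, but is exactly the kind of argument in \cref{Lem:libre}). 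Then \cref{Prop:free-lie-alphabetic} makes $\mathcal{L}(\{X_0,X_1,c\}) \to \mathcal{L}(X)$ an isometry onto the subalgebra it generates, and \cref{thm:X3-2^n} applied there gives $\|[X_0, \ad_{X_1}^n(c)]\|_\B \geq 2^n$ with $|X_0| + |\ad_{X_1}^n(c)| = 1 + n + n|c|$ growing linearly, so again $\beta_n(\B)$ is bounded below by a constant times $2^n$, contradicting \eqref{eq:beta-2n}. The main obstacle here is the careful justification that a suitable $c$ yields a \emph{free} alphabetic subset; if the cleanest $c$ does not work one falls back on $c$ of minimal length among elements $> X_1$ and uses \cref{Lem:libre} or \cref{Lem:BRA} to rule out relations.

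\textbf{Third item.} Suppose there exist $a, b \in \B \setminus \{X_1\}$ such that $\dad_{X_1}^n(a) \leq b$ for all $n \in \N$. First note $\dad_{X_1}^n(a)$ need not lie in $\B$ for all $n$; but by \cref{lem:some_elements_hall} applied to the pair giving $(a, X_1) \in \B$ — which holds because, by the first two items already proved, $r(a',X_1) = +\infty$ for every $a' < X_1$ and $X_1 = \max\B$, so in particular $a < X_1$ and $(a, X_1) \in \B$, and moreover $r(a, X_1) = +\infty$, meaning $\dad_{X_1}^n(a) \in \B$ for \emph{all} $n \in \N$ and each satisfies $\dad_{X_1}^n(a) < X_1$. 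So the hypothesis says all these (infinitely many, pairwise distinct) basis elements of unbounded length are $\leq b$, i.e. there are infinitely many basis elements below the fixed element $b$. That is impossible: $\{t \in \B : t < b\}$ is finite (indeed, for a Hall set, $t < b$ forces structural constraints — in the length-compatible intuition $|t| \leq |b|$, but in general one argues that $t < b$ with $t = \dad_{X_1}^n(a)$ and $n$ large eventually forces $\lambda^k(t) = X_1$ to sit below $b$ in a way that, combined with $X_1 = \max\B$, gives a contradiction). The clean way: since $X_1 = \max \B$ and $\dad_{X_1}^{n+1}(a) = (\dad_{X_1}^n(a), X_1) \in \B$ with $\lambda$ of it equal to $\dad_{X_1}^n(a)$, the Hall condition $\lambda(t) < t$ gives a strictly increasing chain $a < \dad_{X_1}(a) < \dad_{X_1}^2(a) < \cdots$, all bounded above by $b$; but then consider $[\dad_{X_1}^k(a), \dad_{X_1}^m(a)]$-type brackets, or more simply observe that this forces $\{X_0, X_1, b\}$-style geometric growth again via \cref{thm:X3-2^n} with $X_2 \leftarrow b$ (legitimate since $a < b < X_1$ and one rebrackets). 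I expect this third item to be the trickiest: the cleanest route is probably to set $x_2 := b$, note $a < b < X_1$, check $\{a, b, X_1\}$ is a free alphabetic subset (using $(a, X_1), (b, X_1), (a, b) \in \B$, which follow from the earlier items and $a<b<X_1 = \max\B$), apply \cref{Prop:free-lie-alphabetic} and \cref{thm:X3-2^n} to get $\|[a, \ad_{b}^n(X_1)]\|_\B \geq 2^n$ — wait, one needs $\dad$ rather than $\ad$ to match $\dad_{X_1}^n(a) \leq b$; so instead use the hypothesis directly to build right-nested brackets $\dad_{X_1}^n(a)$ and apply the two-letter computation \eqref{eq:X0bn-FnCr}/\cref{thm:LB2} with the roles arranged so that the assumption $\dad_{X_1}^n(a) \leq b$ makes certain supporting elements collide or not — and derive a $2^n$ growth, contradiction. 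The key obstacle is matching the direction of nesting with the available lemmas; resolving it amounts to picking the right specialization of \cref{thm:X3-2^n} or \cref{thm:LB2} and verifying the freeness/alphabetic hypotheses, which is routine given the earlier material.
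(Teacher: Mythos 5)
Your first item is essentially the paper's own argument: transport \cref{thm:LB2} to the free alphabetic pair $\{a,X_1\}$ via \cref{Prop:free-lie-alphabetic} and absorb the fixed length shift $2|a|$. The other two items have genuine gaps.

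Second item: the set $\{X_0,X_1,c\}$ is alphabetic only if $(X_0,c)\in\B$, which by \cref{Def2:Laz} forces $\lambda(c)\le X_0$, i.e.\ $\lambda(c)=X_0$. Taking $c$ of minimal length among elements $>X_1$ only gives $\lambda(c)<\mu(c)\le X_1$, so $\lambda(c)$ can be any basis element below $X_1$ (for instance $(X_0,X_1)$), and $(X_0,c)\notin\B$ in general. You located the difficulty in freeness, but freeness is the easy part; it is alphabeticity that fails. The repair — and the paper's actual choice — is to replace $X_0$ by $a:=\lambda(c)$: then $a<X_1<c$, and using the already-proved first item ($r(a,X_1)=+\infty$) every $\dad_{X_1}^n(a)$ lies in $\B$, is $<X_1<c$, and satisfies $\lambda(c)=a\le\dad_{X_1}^n(a)$, so $(\dad_{X_1}^n(a),c)\in\B$; the Leibniz expansion (\cref{p:ad-2}) of $[a,\ad_{X_1}^n(c)]$ then consists of $2^n$ distinct basis elements, and $|a|+|\ad_{X_1}^n(c)|=n+|a|+|c|$ grows with slope $1$ in $n$, giving the contradiction. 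Your isometry-plus-\cref{thm:X3-2^n} route goes through after the same substitution.

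Third item: the proposal does not contain a proof, and two of the ideas you float are false or insufficient. The claim that $\{t\in\B:t<b\}$ is finite is false: the paper notes immediately after the lemma that in the Lyndon basis $\dad_{X_1}^n((X_0,(X_0,X_1)))<(X_0,X_1)$ for all $n$, so an infinite increasing chain below a fixed element is perfectly possible, and the contradiction must come from norm growth, not cardinality. The fallback of applying \cref{thm:X3-2^n} to $\{a,b,X_1\}$ with the brackets $\ad_b^n(X_1)$ fails for a quantitative reason you do not address: $|\ad_b^n(X_1)|=n|b|+1$ grows with slope $|b|$, so a lower bound $2^n$ only yields $\beta_m(\B)$ of order $2^{m/|b|}$, which does not contradict \eqref{eq:beta-2n} at all (this is exactly the length-accounting trap discussed in \cref{rk:fail-embed}). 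The missing idea is a double minimality argument: choose $b$ of minimal length such that $\dad_{X_1}^n(a_0)<b$ for all $n$; since $X_1=\max\B$ one has $\lambda(b)\neq X_1$, so by minimality some $\dad_{X_1}^k(a_0)\ge\lambda(b)$; take $k$ minimal and set $a:=\dad_{X_1}^k(a_0)$, so that $\lambda(b)\le\dad_{X_1}^n(a)<b$ and hence $(\dad_{X_1}^n(a),b)\in\B$ for all $n$; finally expand $[a,\dad_{X_1}^n(b)]$ by Leibniz to get norm exactly $2^n$ with total length $n+|a|+|b|$. None of this is routine bookkeeping; it is the content of the paper's proof of this item.
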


\begin{proof}
    We prove each item successively by contradiction.
    
    \step{First item}
    Assume that there exists $a \in \B$ such that $a<X_1$ and $r(a,X_1)=r<+\infty$. 
    For $n \geq r$, let $b_n := \ad_{X_1}^{n-r}(\dad_{X_1}^r(a)) \in \mathcal B$ (see \cref{lem:some_elements_hall}). 
    Then the lower bound \eqref{eq:X0bn-FnCr} of \cref{thm:LB2}, combined with \cref{Prop:free-lie-alphabetic} (which is allowed since $\{a,X_1\}$ is alphabetic and free) shows that $2^{-n} \|[a,b_n]\|_\B$ does not go to zero as $n \to +\infty$, which contradicts \eqref{eq:beta-2n} since $|a|+|b_n| = 2|a| + n$.

    \step{Second item}
    Assume that there exists $b \in \B$ such that $X_1<b$. We may choose $b$ such that $|b|$ is minimal (note that $|b| \geq 2$ since $X_0<X_1$). 
    In this case, by minimality of $b$, $\lambda(b) < \mu(b) \leq X_1$. 
    Let $a := \lambda(b)$. 
    By hypothesis, $a<X_1<b$, and since $r(a,X_1) = + \infty$, for all $n \in \N$, $\dad_{X_1}^n(a)<X_1<b$. This shows in particular that for all $n \in \N$, $(\dad_{X_1}^n(a),b) \in \mathcal B$, because $\lambda(b) = a < \dad_{X_1}^n(a)$. 
    Let $b_n := \ad_{X_1}^n(b) \in \mathcal B$.
    By \cref{p:ad-2}:
    \begin{equation}
        [a,b_n] = \sum_{k=0}^n \binom{n}{k} \ad_{X_1}^{n-k}([\dad_{X_1}^{k}(a),b]),
    \end{equation}
    and the previous discussion shows that for all $k,n \in \N$, $\ad_{X_1}^{n-k}([\dad_{X_1}^{k}(a),b]) \in \ev(\B) $, so that $\|[a,b_n]\|_\B = 2^n$, with $|a| + |b_n| = n + |a| + |b|$, which contradicts \eqref{eq:beta-2n}.

    \step{Third item}
    Let $a_0 \in \B\setminus\{X_1\}$ and assume that there exists $b\in \B$ with $b \neq X_1$ such that for all $n \in \N$, $\dad_{X_1}^n(a_0) <b$. 
    We may assume that $|b|$ is minimal (by assumption, $|b| \geq 2$). 
    By minimality of $b$ and since $\lambda(b) \neq X_1$, there exists $k \geq 0$ such that $\dad_{X_1}^{k}(a_0)\geq \lambda(b)$. 
    Let $k \in \N$ be minimal such that $\dad_{X_1}^{k}(a_0)\geq \lambda(b)$, and let $a := \dad_{X_1}^{k}(a_0)$. 
    Then, by construction, for all $n \in \N$ we have $\dad_{X_1}^n(a)<b$ and $(\dad_{X_1}^n(a),b) \in \B$. 
    Therefore, if we let $b_n := \dad_{X_1}^n(b)$, a computation similar to that of the second item using \cref{p:ad-2} shows that $\|[a,b_n]\|_\B = 2^n$, which is again a contradiction.
\end{proof}

This lemma shows that if we intend to exhibit a Hall set $\B$ for which the growth of the structure constants has the magnitude of the Fibonacci sequence, then it is necessary that $X_1$ be maximal in $\B$ (which excludes length-compatible Hall sets), but also that the third property (which is reminiscent of the Archimedean property) of the lemma holds. 
For example, in the case of the Lyndon basis over $\{X_0<X_1\}$, the element $X_1$ is maximal but the third property does not hold (for example, $\dad_{X_1}^n((X_0,(X_0,X_1))) < (X_0,X_1)$ for all $n \in \N$).

An example of a known Hall set satisfying all three conditions is the Spitzer-Foata basis (as named and described in \cite[Chapter~I, Section~5]{MR516004}; see also \cite{zbMATH03645309}).
The order defining this Hall set is compatible with the increasing order on the ratio $s(b) := n_1(b) / n_0(b) \in [0;+\infty]$, where $n_i(b)$ denotes the number of occurrences of $X_i$ in~$b$.
In particular, $X_1$ is maximal and, for every $a, b \in \B \setminus \{X_1\}$, $s(\dad_{X_1}^n(a)) \to + \infty$ as $n \to +\infty$, so it is strictly greater than $s(b)$ for $n$ large enough, and the third property is satisfied.
Unfortunately, numerical simulations we conducted indicate that this basis does not satisfy $\beta_{n+2}(\B) = F_n$, which motivated our construction of a new basis, explained in \cref{s:fibo-def}.
We do not know whether the basis we construct is the unique one satisfying $\beta_{n+2}(\B) = F_n$ or if others exist.

\subsection{Definition of our minimal Hall set} \label{s:fibo-def}

We will base our construction on the following ``$\lambda,\mu$''-lexicographic order, where one indeterminate is considered maximal.

\begin{definition} \label{def:fibo-order}
    Let $X = \{X_0,X_1\}$ and consider the following order on $\Br(X)$:
    \begin{itemize}
        \item $X_0$ is minimal and $X_1$ is maximal, i.e., $X_0 < \Br(X) \setminus X < X_1$,
        \item for $t_1,t_2 \in \Br(X) \setminus X$, $t_1<t_2$ iff either $\lambda(t_1)<\lambda(t_2)$, or $\lambda(t_1) = \lambda(t_2)$ and $\mu(t_1)<\mu(t_2)$.
    \end{itemize}
\end{definition}

\begin{lemma} \label{lem:fibo-set}
    There exists a unique Hall set $\B \subset \Br(X)$ associated with the order of \cref{def:fibo-order}.
\end{lemma}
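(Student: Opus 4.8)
The statement asks to show that the order of \cref{def:fibo-order}, restricted appropriately, determines a unique Hall set. The most economical route is to verify that the order of \cref{def:fibo-order} is a Hall order on all of $\Br(X)$ and then apply \cref{Lem:G} with $G := \Br(X)$. Indeed, $\Br(X)$ is trivially $\lambda$-stable, contains $X$, and for every $b_1 < b_2 \in \Br(X)$ the pair $(b_1, b_2)$ lies in $\Br(X)$; so \cref{Lem:G} yields a unique Hall set $\B \subset \Br(X)$ associated with the order, provided the order is a (total) Hall order, i.e.\ a total order such that $\lambda(t) < t$ for every $t \in \Br(X) \setminus X$.

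\textbf{Step 1: totality and transitivity.} First I would check that the relation of \cref{def:fibo-order} is a total order. The order is built by the usual lexicographic recipe on the triple $(\text{``is a letter''}, \lambda(t), \mu(t))$, chaining the trivial total order on $X$ (with $X_0 < X_1$, and every letter below every non-letter except that $X_1$ sits at the very top) with the lexicographic comparison on $(\lambda(t), \mu(t))$ for non-letters. As noted in \cref{rk:lexico}, such a construction makes sense by induction on $|t|$ (since $|\lambda(t)| < |t|$ and $|\mu(t)| < |t|$), and yields a reflexive, transitive, total relation; I would simply invoke that remark, spelling out only that the placement of $X_1$ as the global maximum is consistent, because once $t$ has length $\geq 2$ it is compared to every other length-$\geq 2$ tree purely via $(\lambda, \mu)$, and $X_1$ is never a value of $\lambda(t)$ or $\mu(t)$ in a way that would conflict — actually $X_1$ \emph{can} be $\lambda(t)$ or $\mu(t)$, which is exactly why care is needed, and this is precisely what makes the Hall condition $\lambda(t) < t$ require a short argument in Step 2.

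\textbf{Step 2: the Hall condition $\lambda(t) < t$.} This is the only genuine content. Let $t \in \Br(X) \setminus X$, so $t = (\lambda(t), \mu(t))$. I must show $\lambda(t) < t$. If $\lambda(t) \in X$: then $\lambda(t) = X_0$ (it cannot be $X_1$, since $X_1$ is maximal and $\lambda(t) < \mu(t)$ would be forced… wait, actually the definition does not a priori force $\lambda(t) < \mu(t)$ for arbitrary trees — but here $t$ is an arbitrary element of $\Br(X)$, not yet known to be in $\B$, so $\lambda(t)$ could equal $X_1$). So I handle both: if $\lambda(t) = X_0$, then $X_0 < t$ since $X_0$ is minimal; if $\lambda(t) = X_1$, then $t$ is a non-letter whose left factor is $X_1$, and I compare $\lambda(t) = X_1$ with $t$: since $X_1$ is a \emph{letter} and $t$ is not, and letters (other than $X_1$ itself) are below non-letters — but $X_1$ is the maximum! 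So I need $X_1 < t$ where $t = (X_1, \mu(t))$; this would be false. Hence I should instead argue that no Hall tree can have $\lambda(t) = X_1$, but \cref{Lem:G} requires the Hall \emph{order} condition on all of $G = \Br(X)$. The fix: the correct $G$ is not all of $\Br(X)$. I would instead take $G$ to be the $\lambda$-stable subset of $\Br(X)$ consisting of trees none of whose iterated left factors is $X_1$ — equivalently, trees $t$ with $\lambda^k(t) \neq X_1$ for all $k$ — check this $G$ contains $X$, is $\lambda$-stable, satisfies ``$b_1 < b_2 \in G \Rightarrow (b_1,b_2) \in G$'' (true because $\lambda((b_1,b_2)) = b_1 \in G$ and iterated left factors of $(b_1,b_2)$ are $b_1$ and its iterated left factors, all in $G$, noting $b_1 \neq X_1$ since $b_1 < b_2$ and $X_1$ is maximal), and is totally ordered by a Hall order: for $t \in G \setminus X$, $\lambda(t) \in G$ and $\lambda(t) \neq X_1$, so $\lambda(t)$ is either $X_0$ (then $< t$ since $X_0$ minimal) or a non-letter, in which case $\lambda(\lambda(t)) < \lambda(t)$ by induction on length gives $\lambda(t) < t$ directly by the first clause of the lexicographic comparison (as $\lambda(\lambda(t)) = \lambda(\lambda(t))$ trivially and we compare via $\mu$ only if left factors tie — here I compare $\lambda(t)$ and $t$, whose left factors are $\lambda(\lambda(t))$ and $\lambda(t)$ respectively, and $\lambda(\lambda(t)) < \lambda(t)$ by the inductive hypothesis, so $\lambda(t) < t$). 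Then \cref{Lem:G} applied to this $G$ gives the unique Hall set.

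\textbf{Main obstacle.} The subtlety — and the step I expect to be delicate — is pinning down the right ambient set $G$ on which the order is genuinely a Hall order: the naive choice $G = \Br(X)$ fails the condition $\lambda(t) < t$ precisely for trees with left factor $X_1$, so one must restrict to the $\lambda$-stable hull of the trees avoiding $X_1$ as an iterated left factor, and then verify the three hypotheses of \cref{Lem:G} for that $G$. Once $G$ is correctly chosen, everything else is the routine lexicographic bookkeeping of \cref{rk:lexico}, and uniqueness is free from \cref{Lem:G}. I would close by remarking that the resulting $\B$ automatically satisfies $X_0 = \min \B$ and $X_1 = \max \B$, matching the motivation of \cref{lem:CN-fibonacci}.
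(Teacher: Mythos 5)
Your proof is correct and, after the mid-course correction, follows exactly the paper's route: apply \cref{Lem:G} to $G:=\{b\in\Br(X);\,X_1\notin\Lambda(b)\}\cup\{X_1\}$ and verify the Hall condition $\lambda(t)<t$ on $G$ by induction on $|t|$ (the paper phrases the same argument as a minimal counterexample). Just be sure your description of $G$ explicitly adjoins $X_1$ itself (in the paper's convention $\Lambda(X_1)=\{X_1\}$, so $X_1$ is excluded by the defining condition), since \cref{Lem:G} requires $X\subset G$.
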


\begin{proof}
    This order is not a Hall order on $\Br(X)$ because there exists $t \in \Br(X)$ such that $\lambda(t)>t$, for instance $t=(X_1,X_0)$. But we can construct $\B$ by applying \cref{Lem:G} to the set $G:=\{b\in\Br(X);X_1 \notin \Lambda(b) \} \cup \{X_1\}$ (see \cref{def:Lambda}). Clearly, $G$ is a $\lambda$-stable subset of $\Br(X)$, containing $X$, endowed with a total order and for every $b_1<b_2 \in G$, $(b_1,b_2) \in G$. 
    It remains to prove that $<$ is a Hall order on $G$. 
    By contradiction, let $b \in G \setminus X$ of minimal length such that $\lambda(b) > b$.
    Since $\lambda(b) > b$, $\lambda(b) \neq X_0$.
    Since $b \in G$, $\lambda(b) \neq X_1$.
    Hence $\lambda(b) \in G \setminus X$ and satisfies $\lambda(\lambda(b)) < \lambda(b)$ since $b$ was of minimal length.
    By definition of the order, this implies $\lambda(b) < b$.
\end{proof}

In the sequel of this section, $\B$ denotes the Hall set constructed in \cref{lem:fibo-set}.

\begin{remark}
By definition, $X_1 = \max \B$ (thus $r(a,X_1) = +\infty$ for all $a \in \B$), and if $a,b \in \B$ with $a,b \neq X_1$, then a straightforward induction on $|b|$ shows that there exists $n \in \N$ such that $\dad_{X_1}^n (a) >b$. Therefore, all the necessary conditions of \cref{lem:CN-fibonacci} hold for $\B$.
\end{remark}

\subsection{Elementary structural properties}
\label{s:fibo-elementary}

We state in the following lemmas useful properties of this Hall set.

\begin{lemma} \label{sx:order}
    Let $a < b \in \B$ with $b \neq X_1$ and $\lambda(b) < a$.
    For every $n < n' \in \N$ and $p < p' \in \N$,
    \begin{equation}
        \ad_a^n (b) < \ad_a^{n'}(b) < \ad_a^{p'}(X_1) < \ad_a^{p}(X_1).
    \end{equation}
\end{lemma}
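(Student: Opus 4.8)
Lemma~\ref{sx:order} asks us to compare the evaluations (in the Hall set $\B$) of the brackets $\ad_a^n(b)$ and $\ad_a^p(X_1)$ for various exponents, under the hypothesis $a < b \in \B$, $b \neq X_1$ and $\lambda(b) < a$. The plan is to analyze everything through the $\lambda,\mu$-lexicographic order of \cref{def:fibo-order}.

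\textbf{Plan of proof.} First I would record the basic fact that all the brackets appearing in the statement actually lie in $\B$: since $\lambda(b) < a < b$ we have $(a,b) \in \B$, hence by induction on $n$ (using $\lambda(\ad_a^n(b)) = a \le a$ and $a < \ad_a^{n-1}(b)$, which follows from the Hall axiom $a < (a,\cdot)$) each $\ad_a^n(b) \in \B$; similarly $a < X_1$ gives $(a,X_1) \in \B$ and then $\ad_a^p(X_1) \in \B$ for all $p$. Note also that for $n \ge 1$ and $p \ge 1$ all these elements have left factor equal to $a$, while $\ad_a^0(b) = b$ and $\ad_a^0(X_1) = X_1$ are the edge cases (and $X_1 = \max \B$ handles any comparison with $\ad_a^0(X_1)$ trivially). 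The core of the argument is then the comparison of the right factors $\mu$, which by \cref{def:fibo-order} controls the order once the $\lambda$'s agree.

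\textbf{Key steps.} The comparison splits into three pieces. (i) For $\ad_a^n(b) < \ad_a^{n'}(b)$ with $n < n'$: when $n \ge 1$ both have $\lambda = a$, so one must show $\ad_a^{n-1}(b) < \ad_a^{n'-1}(b)$, i.e.\ reduce $n$; iterating, it reduces to $b < \ad_a^{n'-n}(b)$, which holds because $b < (a,b) = \ad_a(b)$ and then by monotonicity. When $n = 0$ it is exactly $b < \ad_a^{n'}(b)$, already noted. So this piece is an induction on $n$ (or on $n' - n$). (ii) For $\ad_a^{n'}(b) < \ad_a^{p'}(X_1)$: again peel off common left factors $a$ until one reaches a comparison of the form $\ad_a^j(b) < \ad_a^k(X_1)$ where one of $j,k$ is $0$, or $j = 0$ (since $n' \ge 1$ we can always reduce the $b$-side to $0$, but we must be careful the $X_1$-side might hit $0$ first). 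The crucial base comparison is $b < X_1$ (true since $X_1 = \max \B$) and $b < \ad_a^k(X_1)$ for $k \ge 1$, which holds because $\ad_a^k(X_1)$ has $\lambda = a > \lambda(b)$, so \cref{def:fibo-order} gives $\ad_a^k(X_1) > b$ directly once $\lambda(b) < a = \lambda(\ad_a^k(X_1))$; alternatively $\ad_a^n(b) < \ad_a^p(X_1)$ when $n,p\ge1$ reduces (peeling $\min(n,p)$ copies of $a$) to $b < \ad_a^{p-n}(X_1)$ or $\ad_a^{n-p}(b) < X_1$, both of which are base cases. (iii) For $\ad_a^{p'}(X_1) < \ad_a^p(X_1)$ with $p < p'$: when $p \ge 1$, common $\lambda = a$, reduce to $\ad_a^{p'-1}(X_1) < \ad_a^{p-1}(X_1)$, iterating down to $\ad_a^{p'-p}(X_1) < X_1$, which holds since $X_1 = \max\B$. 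When $p = 0$ it is $\ad_a^{p'}(X_1) < X_1$, again from maximality of $X_1$.

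\textbf{Main obstacle.} The only subtlety is bookkeeping in step (ii): when peeling common left factors $a$ off $\ad_a^n(b)$ and $\ad_a^p(X_1)$, one exhausts the smaller exponent first, and one must check the resulting ``base'' comparison in each of the two resulting cases. I expect to organize this as a single induction on $\min(n,p)$ (or on $n+p$), with base cases $n=0$ or $p=0$ handled by the remarks above ($b < X_1$, $b < \ad_a^k(X_1)$ via the $\lambda$-comparison with $\lambda(b) < a$, and $\ad_a^j(b) < X_1 = \max\B$). Everything else is a mechanical unwinding of \cref{def:fibo-order}, so there is no deep difficulty; the proof will essentially be: all elements are in $\B$, all the ``interior'' elements share left factor $a$, so \cref{def:fibo-order} reduces every comparison to one with strictly smaller exponents or to the base inequalities $b < X_1$, $b < (a,X_1)$, $b < (a,b)$, and $\,\cdot < \max\B = X_1$, each of which is immediate.
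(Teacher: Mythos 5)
Your proposal is correct and follows essentially the same route as the paper: reduce each comparison via the lexicographic rule of \cref{def:fibo-order} (peeling common left factors $a$, which is the same as the paper's ``apply $\ad_a^k$ to a base inequality''), with the base cases $b<\ad_a^k(b)$ and $b<\ad_a^k(X_1)$ coming from $\lambda(b)<a$ and the remaining ones from the maximality of $X_1$, including the same $n'\lessgtr p'$ case split for the middle inequality. The preliminary verification that the brackets lie in $\B$ is harmless but not needed, since the order of \cref{def:fibo-order} is defined on all of $\Br(X)$.
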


\begin{proof}
    These inequalities stem from \cref{def:fibo-order}.
    First, since $\lambda(b) < a$, $b < (a, \ad_{a}^{n'-n-1}(b))$, which yields the first inequality by applying $\ad^n_a$ to both sides.
    Second, since $X_1$ is maximal, $\ad^{p'-p}_a(b)(X_1) < X_1$, which yields the third inequality by applying $\ad^p_a$ to both sides.
    Eventually, when $n' \geq p'$, the middle inequality holds because $\ad^{n'-p'}_a(b) < X_1$ and, when $n' < p'$, it holds because $b < \ad^{p'-n'}_a(X_1)$ since $\lambda(b) < a$ by assumption.
\end{proof}

\begin{lemma} \label{sx:b-mub}
    Let $b \in \B \setminus X$.
    Then $b < \mu(b)$ iff $b=\ad_{\lambda(b)}^m(X_1)$ for some $m \in \N^*$.
\end{lemma}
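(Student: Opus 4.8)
The plan is to prove the equivalence by unwinding \cref{def:fibo-order} together with the Hall-set axioms, characterizing exactly when a Hall element $b = (\lambda(b),\mu(b))$ satisfies $b < \mu(b)$. Write $a := \lambda(b)$ and $c := \mu(b)$, so $b = (a,c)$ with $a < c$ and $\lambda(c) \leq a$ (Hall axiom, since $b \in \B \setminus X$; the case $c \in X$ forces $c = X_1$ as $a < c$ and $a = \lambda(b)$, covered below). The comparison $b < c$ is a comparison between two non-leaf elements of $\Br(X)$ when $|c| \geq 2$, so \cref{def:fibo-order} applies: $b < c$ iff $\lambda(b) < \lambda(c)$, or $\lambda(b) = \lambda(c)$ and $\mu(b) < \mu(c)$.

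First I would handle the backward direction, which is the easy one: if $b = \ad_a^m(X_1)$ for some $m \in \N^*$, then $\mu(b) = \ad_a^{m-1}(X_1)$, and since $X_1 = \max \B$ and $\ad_a$ applied repeatedly to $b$ keeps producing elements $< X_1$ (they lie in $G \setminus \{X_1\}$), one gets $b = \ad_a^m(X_1) < \ad_a^{m-1}(X_1) = \mu(b)$ directly — this is exactly the third inequality pattern already recorded in \cref{sx:order} with $n'=0$... more precisely it follows because $\ad_a^{m}(X_1) = (a, \ad_a^{m-1}(X_1))$ and, $X_1$ being maximal, $(a,\ad_a^{k}(X_1)) < X_1$ for all $k$, hence by an immediate induction $\ad_a^{j+1}(X_1) < \ad_a^{j}(X_1)$, giving $b < \mu(b)$.

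For the forward direction, assume $b < \mu(b) = c$. If $c = X_1$, then $b = (a, X_1) = \ad_a^1(X_1)$ and we are done with $m = 1$. Otherwise $|c| \geq 2$, so $\lambda(c) \leq a$ and we compare via \cref{def:fibo-order}. I claim $\lambda(c) = a$: indeed if $\lambda(c) < a = \lambda(b)$, then $c < b$ by \cref{def:fibo-order}, contradicting $b < c$; so $\lambda(c) = a$. Then \cref{def:fibo-order} forces $\mu(b) < \mu(c)$, i.e. $c < \mu(c)$. Thus $c$ is again a non-leaf Hall element with $\lambda(c) = a$ satisfying $c < \mu(c)$; by induction on $|c|$ (strictly smaller than $|b|$), $c = \ad_a^{m'}(X_1)$ for some $m' \in \N^*$, whence $b = (a, c) = \ad_a^{m'+1}(X_1)$, giving the claim with $m = m'+1$. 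The base case of this induction is $|c| = 1$, forcing $c = X_1$ as above.

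The only subtlety — and the main point to be careful about — is making sure the induction is well-founded and that the Hall-membership of the intermediate elements $\ad_a^j(X_1)$ is never in question: these all belong to the set $G$ of \cref{lem:fibo-set} (they contain no $X_1$ as an iterated left factor, since $\lambda$ of each is $a \neq X_1$... and $a$ itself, being $\lambda(b)$ for $b \in \B$, has $X_1 \notin \Lambda(a)$ unless $a = X_1$, which is excluded here since $a = \lambda(b) < b$), and by \cref{lem:some_elements_hall} applied with $r(a,X_1) = +\infty$ (which holds since $X_1 = \max\B$) all of them lie in $\B$. So no obstacle of substance arises; it is essentially a direct unwinding of the definition plus one structural induction on length.
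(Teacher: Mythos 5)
Your proof is correct and follows essentially the same route as the paper's: both rest on writing $b$ in the canonical form $\ad_{\lambda(b)}^m(v)$ and comparing consecutive iterates via the lexicographic order of \cref{def:fibo-order} (equivalently, the inequalities of \cref{sx:order}). The paper packages this as a single appeal to \cref{sx:order} applied to that decomposition, whereas you run the forward direction as an explicit induction on length; the content is the same.
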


\begin{proof}
    Let $b \in \B \setminus X$.
    There exists $m \in \N^*$ such that $b = \ad_{\lambda(b)}^m(v)$ with $v = X_1$ or $v \in \B \setminus X$ with $\lambda(v) < \lambda(b)$.
    Since $\mu(b) = \ad^{m-1}_{\lambda(b)}(v)$,
    \cref{sx:order} proves that $b < \mu(b)$ iff $v = X_1$.
\end{proof}

\begin{lemma} \label{sx:a<mub}
    Let $a \in \B$ and $b \in \B \setminus X$ with $\mu(b) \neq X_1$ such that $a < \mu(b)$. Then $a \leq b$.
\end{lemma}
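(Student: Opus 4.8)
The plan is to prove \cref{sx:a<mub} by a short induction on the length $|b|$, using the structural characterization of elements of $\B$ provided by the order of \cref{def:fibo-order}. Recall that for $b \in \B \setminus X$ we may write $b = \ad_{\lambda(b)}^m(v)$ with $m \in \N^*$ and either $v = X_1$ or $v \in \B \setminus X$ with $\lambda(v) < \lambda(b)$. In particular $\mu(b) = \ad_{\lambda(b)}^{m-1}(v)$, and the hypothesis $\mu(b) \neq X_1$ forces us into one of two cases: either $m \geq 2$ (so $\mu(b) = \ad_{\lambda(b)}^{m-1}(v)$ is a genuine bracket with left factor $\lambda(b)$), or $m = 1$ and $v \in \B \setminus X$ (so $\mu(b) = v$).

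First I would dispose of the case $m \geq 2$. Then $\lambda(\mu(b)) = \lambda(b)$, and since $(\lambda(b), \mu(b)) = b \in \B$ we have $\lambda(b) \leq \lambda(b)$ trivially and $\lambda(b) < b$; more to the point, for $b \in \B$ the Hall axioms give $\lambda(\mu(b)) \leq \lambda(b) < b < \mu(b)$ is \emph{not} automatic — rather, what we know is $\lambda(b) < \mu(b)$ and, when we want to compare $a$ and $b$, we use $a < \mu(b)$ together with the order. Concretely: if $a \leq \lambda(b)$ then $a \leq \lambda(b) < b$ and we are done. Otherwise $\lambda(b) < a < \mu(b) = \ad_{\lambda(b)}^{m-1}(v)$. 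Now apply \cref{sx:order} with the roles $a \gets \lambda(b)$ and $b \gets$ (here $v$, when $v \neq X_1$, noting $\lambda(v) < \lambda(b)$, or directly when $v = X_1$ which is excluded): the chain $\ad_{\lambda(b)}^0(v) < \ad_{\lambda(b)}^1(v) < \dots$ is strictly increasing, so $a < \ad_{\lambda(b)}^{m-1}(v)$ combined with $\lambda(b) < a$ and the fact that $a \in \B$ forces $a$ to be one of the $\ad_{\lambda(b)}^{k}(v)$ with $k \leq m-1$ — no, that is too strong; instead one argues that any $a \in \B$ with $\lambda(b) < a$ satisfies, by the order of \cref{def:fibo-order}, $a \leq \ad_{\lambda(b)}^{m-1}(v) = \mu(b)$ only if $\lambda(a) \leq \lambda(b)$, and then $a \leq b = \ad_{\lambda(b)}^m(v)$ since $b$ has the same left factor iterated one more time. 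The cleanest route: compare $\lambda(a)$ with $\lambda(b)$. If $\lambda(a) < \lambda(b)$ then $a < b$ by the first clause of \cref{def:fibo-order}. If $\lambda(a) = \lambda(b)$, then $a = \ad_{\lambda(b)}^{k}(w)$ for some $k \geq 1$ and $w$; since $a < \mu(b) = \ad_{\lambda(b)}^{m-1}(v)$, \cref{sx:order} (applied with common left factor $\lambda(b)$) yields $k \leq m-1$ or, if $k = m-1$, $w < v$ in the appropriate sense, and in all these subcases $a < b = \ad_{\lambda(b)}^m(v)$ by one more application of \cref{sx:order}. If $\lambda(a) > \lambda(b)$, I would derive a contradiction with $a < \mu(b)$ using that every leaf-side comparison in \cref{sx:order} respects the left factor.

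For the remaining case $m = 1$, $v \in \B \setminus X$, so $b = (\lambda(b), v)$ with $\lambda(b) < v = \mu(b)$, and the hypothesis is simply $a < \mu(b) = v$. We want $a \leq b$. If $a \leq \lambda(b)$ this is immediate. Otherwise $\lambda(b) < a < v$ with $a, v \in \B$; then since $(\lambda(b), v) = b \in \B$, the Hall axiom $\lambda(v) \leq \lambda(b) < a$ holds, and now I compare $\lambda(a)$ with $\lambda(b)$ exactly as above: $\lambda(a) < \lambda(b)$ gives $a < b$; $\lambda(a) = \lambda(b)$ together with $a < v$ and $\lambda(v) \leq \lambda(b) = \lambda(a)$ feeds into \cref{sx:order} to give $a < (\lambda(b), \text{something} \geq v) $, in particular $a$ compared against $b$; and $\lambda(a) > \lambda(b)$ is excluded by $a < v$ and $\lambda(v) \leq \lambda(b)$ (which would make $\lambda(a)$ too large). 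In all sub-cases we conclude $a \leq b$.

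The main obstacle I anticipate is organizing the case analysis on $\lambda(a)$ versus $\lambda(b)$ cleanly so that \cref{sx:order} applies with matching left factors, and in particular handling the boundary where $\lambda(a) = \lambda(b)$ but $a$ and $b$ differ only in their iterated structure — there one must carefully invoke \cref{sx:order} (or re-derive the relevant monotonicity directly from \cref{def:fibo-order}) to see that $a < \mu(b)$ together with $a$ being a Hall element pins down $a < b$. I expect the argument to be genuinely short once the right invariant ("either $\lambda(a)$ is strictly smaller than $\lambda(b)$, or $a$ lies in the $\ad_{\lambda(b)}$-chain below $\mu(b)$") is isolated, so no induction beyond a single layer of case-splitting on the shape of $b$ should be needed; if it is, a clean induction on $|b|$ via $\mu(b)$ is the fallback.
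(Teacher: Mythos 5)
Your overall strategy (a direct case split on $\lambda(a)$ versus $\lambda(b)$ after writing $b = \ad_{\lambda(b)}^m(v)$) differs from the paper's and can in principle be completed, but as written it has a genuine gap exactly at the step you yourself flag as delicate. In the case $m \geq 2$, $\lambda(a) = \lambda(b)$, you write $a = \ad_{\lambda(b)}^k(w)$ and $\mu(b) = \ad_{\lambda(b)}^{m-1}(v)$ and then invoke \cref{sx:order} to extract ``$k \leq m-1$ or, if $k = m-1$, $w < v$ in the appropriate sense''. But \cref{sx:order} only compares iterated brackets over a \emph{common} base (and non-$X_1$ towers against $X_1$-towers); it says nothing about $\ad_{\lambda(b)}^k(w)$ versus $\ad_{\lambda(b)}^{m-1}(v)$ when $w \neq v$ are two distinct non-$X_1$ bases. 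To close this you need the separate observation that $t \mapsto (\lambda(b), t)$ is order-preserving (second clause of \cref{def:fibo-order}), so that $a < \mu(b)$ can be unwound level by level until one side loses its $\lambda(b)$ left factor and the first clause decides; carried out, this shows $k \leq m-1 < m$ when $w \neq X_1$, and rules out $w = X_1$ with $v \neq X_1$. Your $m=1$ discussion also contains a confusion: if $\lambda(a) = \lambda(b) > \lambda(v) = \lambda(\mu(b))$, then $a > \mu(b)$ outright by \cref{def:fibo-order}, so that subcase is vacuous rather than something that ``feeds into \cref{sx:order}''.

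The paper avoids all of this with a shorter contradiction argument that you did not find: assume $b < a < \mu(b)$. Lexicographic comparison together with the Hall axiom $\lambda(\mu(b)) \leq \lambda(b)$ immediately forces $\lambda(a) = \lambda(b) = \lambda(\mu(b))$, and --- the key point --- $b < \mu(b)$, so \cref{sx:b-mub} pins $b$ down as $\ad_{\lambda(b)}^n(X_1)$ with $n \geq 2$. Everything in sight is then a $\lambda(b)$-tower of the precise kind \cref{sx:order} covers, and $b < a < \mu(b)$ yields $n > m > n-1$, a contradiction. I would recommend either adopting that route (you never invoke \cref{sx:b-mub}, which is what makes the clean reduction possible) or, if you keep the direct argument, stating and using the monotonicity of $t \mapsto (\lambda(b),t)$ explicitly instead of stretching \cref{sx:order} beyond its hypotheses.
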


\begin{proof}
    By contradiction, if $b < a < \mu(b)$, then $\lambda(b) \leq \lambda(a) \leq \lambda(\mu(b)) \leq \lambda(b)$ so there exists $m \in \N^*$ such that $a = \ad^m_{\lambda(b)}(v)$ with $v = X_1$ or $v \in \B \setminus X$ with $\lambda(v) < \lambda(b)$.
    Since $b < \mu(b)$, by \cref{sx:b-mub}, $b = \ad^n_{\lambda(b)}(X_1)$ for some $n \in \N^*$.
    Moreover, $n \geq 2$ because $\mu(b) \neq X_1$ by assumption.
    Using \cref{sx:order}, $b < a$ implies $v = X_1$ and $b < a < \mu(b)$ implies $n>m>n-1$.
\end{proof}

\begin{lemma}\label{lem:theta<=2}
    Let $a<b\in\B$ with $b \notin X$.
    Then $\theta_a(b) \leq 2$ if both of the following conditions hold
    \begin{itemize}
        \item either $\lambda(b) = X_0$ or $\lambda^2(b) \leq a$,
        \item either $\mu(b)=X_1$ or $\lambda(\mu(b)) \leq a$.
    \end{itemize}
\end{lemma}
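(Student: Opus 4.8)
The goal is to bound $\theta_a(b)$ by $2$ under the two stated hypotheses. Recall that by \cref{def:folding}, since $b \notin X$, if $(a,b) \in \B$ then $\theta_a(b) = 1 \leq 2$ and we are done; otherwise $a < \lambda(b) < \mu(b)$ and $\theta_a(b) = \theta_a(\lambda(b)) + \theta_a(\mu(b))$. So the strategy is to show that, under the hypotheses, both $\theta_a(\lambda(b)) = 1$ and $\theta_a(\mu(b)) = 1$, i.e.\ $(a,\lambda(b)) \in \B$ and $(a,\mu(b)) \in \B$.

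First I would handle $\mu(b)$. We have $a < \mu(b)$ (from $a < \lambda(b) < \mu(b)$), so by the Hall set axiom, $(a,\mu(b)) \in \B$ as soon as either $\mu(b) \in X$ or $\lambda(\mu(b)) \leq a$. The second hypothesis is precisely the dichotomy ``$\mu(b) = X_1$ or $\lambda(\mu(b)) \leq a$''; if $\mu(b) = X_1 \in X$ then $(a,\mu(b)) \in \B$ directly, and if $\lambda(\mu(b)) \leq a$ then again $(a,\mu(b)) \in \B$. In both cases $\theta_a(\mu(b)) = 1$.

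Next I would handle $\lambda(b)$, using the first hypothesis. We need $(a,\lambda(b)) \in \B$; since $a < \lambda(b)$, this holds if $\lambda(b) \in X$ or $\lambda^2(b) \leq a$. If $\lambda(b) = X_0$, then $\lambda(b) \in X$ and we are done. Otherwise the hypothesis gives $\lambda^2(b) \leq a$, so $(a,\lambda(b)) \in \B$. Hence $\theta_a(\lambda(b)) = 1$ in all cases, and therefore $\theta_a(b) = \theta_a(\lambda(b)) + \theta_a(\mu(b)) \leq 2$.

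\textbf{Main obstacle.} The argument is essentially a direct unwinding of \cref{def:folding} together with the defining axioms of a Hall set (\cref{Def2:Laz}); I do not anticipate a serious obstacle. The only subtlety is making sure the case split is exhaustive: one must first dispose of the trivial case $(a,b) \in \B$ (where $\theta_a(b) = 1$), and then note that when $(a,b) \notin \B$ the inequalities $a < \lambda(b) < \mu(b)$ are guaranteed by the ``which makes sense'' clause in \cref{def:folding}, which is what licenses writing $\theta_a(b) = \theta_a(\lambda(b)) + \theta_a(\mu(b))$ and applying the two hypotheses to the left and right factors separately. One should also double-check the edge case $\mu(b) = X_1$ (respectively $\lambda(b) = X_0$) is genuinely covered by ``$\mu(b) \in X$'' in the Hall axiom — which it is, since $X_1, X_0 \in X$.
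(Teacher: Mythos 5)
Your proof is correct and follows essentially the same route as the paper's: dispose of the case $(a,b)\in\B$ (equivalently $\lambda(b)\leq a$), then use $\theta_a(b)=\theta_a(\lambda(b))+\theta_a(\mu(b))$ and check that each hypothesis forces the corresponding summand to equal $1$ via the Hall set axiom. No gaps.
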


\begin{proof}
    If $\lambda(b) \leq a$ then $\theta_a(b)=1$. 
    Otherwise $a<\lambda(b)<\mu(b)$ so $\theta_a(b)=\theta_a(\lambda(b))+\theta_a(\mu(b))$. The first (resp.\ second) condition gives $\theta_{a}(\lambda(b))=1$ (resp.\ $\theta_a(\mu(b))=1$).
\end{proof}

\subsection{Structural properties of supporting basis elements} 
\label{s:fibo-thm-structure}

We prove the following structural properties on elements of the support of $[a,b]$.
These properties are of independent interest for the understanding of $\B$ and will play a key role in estimating the size of its structure constants, since they allow to stop the decomposition algorithm much earlier than in the general case.

\begin{theorem} \label{th:fibo-structure-new}
    For every $a<b \in \B$, each $c \in \supp [a,b]$ satisfies: $c \notin X$,
    \begin{itemize}
        \item $a \leq \lambda(c) < b$ where the first relation is an equality if and only if $(a,b) \in \B$,
        \item and, when $b \neq X_1$, either $c \leq \lambda(b)$ or $\theta_{\lambda(b)}(c) \leq 2$. 
    \end{itemize}
\end{theorem}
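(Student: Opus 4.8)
The plan is to prove the statement by induction on $\theta_a(b)$, mimicking the refined execution of the decomposition algorithm of \cref{sec:rewriting} that was already used in the proof of \cref{thm:lyndon-bound}, but now tracking the three structural invariants: $c \notin X$, the sandwich $a \leq \lambda(c) < b$ with the equality characterization, and the dichotomy ``$c \leq \lambda(b)$ or $\theta_{\lambda(b)}(c) \leq 2$'' (the latter only claimed when $b \neq X_1$). First I would dispose of the base case $\theta_a(b) = 1$: then $(a,b) \in \B$, so $c = (a,b)$, $\lambda(c) = a$, and $c \notin X$ trivially; the relation $\lambda(c) < b$ is $a < b$; and, when $b \neq X_1$, one checks directly from \cref{def:fibo-order} that either $\lambda(b) \leq a$ (so $c \leq \lambda(b)$ fails but $\theta_{\lambda(b)}(c) = 1$ since $\lambda(c) = a \geq \lambda(b)$) or $a < \lambda(b)$ is impossible here because $(a,b)\in\B$ forces $b\in X$ or $\lambda(b)\leq a$; so the case $b \in X$ gives $c = (a,b)$ with $\lambda(c)=a$ and $\theta_{\lambda(b)}(\cdot)$ is vacuous anyway. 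This bookkeeping is routine.

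For the inductive step, write $b = (\lambda(b),\mu(b))$ with $a < \lambda(b) < \mu(b)$ (otherwise $\theta_a(b)=1$), apply Jacobi's identity $[a,b] = [[a,\lambda(b)],\mu(b)] + [\lambda(b),[a,\mu(b)]]$, and treat the two terms. In each term, decompose the inner bracket by the induction hypothesis (legitimate since $\theta_a(\lambda(b)), \theta_a(\mu(b)) < \theta_a(b)$), then bracket each resulting basis element $d$ against $\mu(b)$ (resp.\ $\lambda(b)$) and invoke the induction hypothesis again, using \cref{p:theta-dec} to control the $\theta$'s and \eqref{eq:a<c'} to get $a \leq \lambda(d) < d$. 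The transitivity-type facts I would use to propagate the invariants are: $a \leq \lambda(c) < b$ combines through the two nested calls because $\lambda$-factors only grow upward in the order and the ``$<b$'' part is inherited from ``$< \lambda(b)$'' or ``$< \mu(b)$''; for the last invariant the key elementary tool is \cref{lem:theta<=2}, whose two hypotheses on $\lambda^2(c)$ and $\lambda(\mu(c))$ I would verify are forced by $c \in \B$ together with $\lambda(c)$ lying in the right position relative to $\lambda(b)$. The ordering lemmas \cref{sx:order}, \cref{sx:b-mub}, \cref{sx:a<mub} are exactly what is needed to decide, in the sub-case where $d$ and $\lambda(b)$ (or $d$ and $\mu(b)$) are incomparable a priori, which of ``$\leq \lambda(b)$'' versus ``$\theta_{\lambda(b)}(\cdot) \leq 2$'' holds, and to rule out $c \in X$ (a new bracket is always created, or an already-non-letter element persists).

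The main obstacle I anticipate is the propagation of the second invariant (``$c \leq \lambda(b)$ or $\theta_{\lambda(b)}(c) \leq 2$'') through the term $[\lambda(b),[a,\mu(b)]]$: here one brackets $d \in \supp[a,\mu(b)]$ against $\lambda(b)$ \emph{from the left}, and when $\lambda(b) < d$ the new element is $(\lambda(b),d)$ or must be further decomposed, so one has to show $\theta_{\lambda(b)}((\lambda(b),d)) \leq 2$ or that the recursion on $[\lambda(b),d]$ terminates in one Jacobi step producing elements satisfying \cref{lem:theta<=2}. This requires knowing, via the induction hypothesis applied to $[a,\mu(b)]$, that $d$ already satisfies ``$d \leq \lambda(\mu(b))$ or $\theta_{\lambda(\mu(b))}(d) \leq 2$'', and then using $\lambda(\mu(b)) \leq \lambda(b)$ (which holds since $b = (\lambda(b),\mu(b)) \in \B$) together with \cref{p:theta-dec} and \cref{lem:theta<=2} to upgrade this to the required bound with respect to $\lambda(b)$. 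Carefully organizing which of $\lambda(b), \mu(b), \lambda(\mu(b))$ plays the role of the ``pivot'' in each sub-case, and checking the two bullet conditions of \cref{lem:theta<=2} in each, is where the real work lies; the rest is the same disciplined case analysis already rehearsed for the Lyndon basis.
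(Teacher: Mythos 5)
Your overall strategy — induction on $\theta_a(b)$ through the Jacobi rewriting, with \cref{p:theta-dec}, \cref{lem:theta<=2}, \eqref{eq:a<c'} and \cref{lem:theta-dec2/support-absurde} as the propagation tools — is exactly the paper's, and you correctly single out the second Jacobi term as the hard part. But two of the steps you dismiss as routine are not, and one of them would fail as written.

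The claim that the invariant $\lambda(c)<b$ ``is inherited from $<\lambda(b)$ or $<\mu(b)$'' is false as stated: in this Hall set one can have $b<\mu(b)$ (precisely when $b=\ad_{\lambda(b)}^m(X_1)$, by \cref{sx:b-mub}), so $\lambda(c)<\mu(b)$ does not give $\lambda(c)<b$ for free. The paper isolates this as a preliminary remark: when $\mu(b)\neq X_1$, \cref{sx:a<mub} upgrades $\lambda(c)<\mu(b)$ to $\lambda(c)\leq b$, and \cref{lem:theta-dec2/support-absurde} then excludes $\lambda(c)=b$. You list \cref{sx:a<mub} among your tools but assign it the wrong job (deciding the dichotomy in the last bullet), so this step is genuinely missing from your plan. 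Relatedly, in the worst subcase of the second term (a $d\in\supp[a,\mu(b)]$ with $\lambda(b)<\lambda(d)$), after the one extra Jacobi step you still have to verify $\lambda(c)<b$ for each $c\in\supp[\lambda(b),d]$; the paper does this by invoking the support description of \cref{thm:en1}, namely $c\in\e(\Tr(\lbag\lambda(b),\lambda(d),\mu(d)\rbag))$, and checking each possible left factor against \cref{def:fibo-order} before applying the preliminary remark — none of which appears in your outline. Finally, a small slip in your base case: when $\lambda(b)<a$ you assert $\theta_{\lambda(b)}((a,b))=1$ ``since $\lambda(c)=a\geq\lambda(b)$'', but $(\lambda(b),c)\in\B$ requires $\lambda(c)\leq\lambda(b)$, the opposite inequality; the correct value is $2$, obtained from $\lambda(a)\leq\lambda(b)$ (which follows from $a<b$ and \cref{def:fibo-order}) together with \cref{lem:theta<=2}, and this still satisfies the theorem.
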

\begin{proof}
    The relation between $a$ and $\lambda(c)$ is proved in \cref{thm:Reutenauer} for any Hall set (see \eqref{eq:a<c'}). 
    Thus, we only prove the other relations.

    We proceed, as in \cref{p:rec-theta}, by induction on $\theta_a(b)$, by applying the classical rewriting scheme.
    We refer to the proof of \cref{p:rec-theta} for a detailed justification of why the induction on $\theta_a(b)$ is legitimate in this setting.

    If $\theta_a(b) = 1$, then $c=(a,b) \in \B$ and satisfies $\lambda(c)=a<b$.
    If $b \neq X_1$, then $\lambda(b) \leq a < c$.
    If $a = X_0$, then $\theta_{\lambda(b)}(c) = 1$.
    Otherwise, by \cref{def:fibo-order}, $\lambda(a) \leq \lambda(b)$, so, by \cref{lem:theta<=2}, $\theta_{\lambda(b)}(c) \leq 2$.

    From now on, we consider $\theta_a(b) > 1$ and we assume the result proved for strictly smaller values. 
    Then $b \neq X_1$.
    Moreover, $\lambda(b) \notin X$, because $X_0 \leq a < \lambda(b)<\mu(b)\leq X_1$, thus $\lambda^2(b)$ and $\mu(\lambda(b))$ are well defined. 
    By Jacobi's identity
    \begin{equation} \label{eq:jacobi-ab-alambdabu-new}
        [a,b]=[[a,\lambda(b)],\mu(b)]+[\lambda(b),[a,\mu(b)]].
    \end{equation}
    
    \step{A preliminary remark} 
    When $\mu(b) \neq X_1$, for each $c \in \supp[a,b]$, $\lambda(c) < \mu(b)$ implies $\lambda(c) < b$. 
    Indeed, by \cref{sx:a<mub}, $\lambda(c) < \mu(b)$ implies $\lambda(c) \leq b$ and, by \cref{lem:theta-dec2/support-absurde}, one cannot have $\lambda(c) = b$.

    \step{Study of the first term}
    By induction,
    \begin{equation}
        [a,\lambda(b)]=\sum \gamma_d \ev(d), 
        \quad \text{ where } 
        \quad a \leq \lambda(d)<\lambda(b)  
        \text{ and } 
        \big(\text{either } d \leq \lambda^2(b) \text{ or } \theta_{\lambda^2(b)}(d) \leq 2 \big).
    \end{equation}
    Let $d \in \supp [a,\lambda(b)]$. Then $a < d < b$    because $a \leq \lambda(d)<d$ and $\lambda(d)<\lambda(b)$.
    
    \begin{itemize}
        \item \emph{Case $\mu(b) = X_1$}. 
        Then $c := (d, \mu(b))=(d,X_1) \in \B$ satisfies $\lambda(c) = d < b$.
        If $\lambda(b) < c$, since $\lambda(d)<\lambda(b)$, then, by \cref{lem:theta<=2}, $\theta_{\lambda(b)}(c) \leq 2$. 
        \item \emph{Case $\mu(b) \neq X_1$}, so $\mu(b) \in \B \setminus X$ and $\lambda(\mu(b))\leq \lambda(b)$.
        \begin{itemize}
            \item \emph{Subcase $\mu(b) < d$}. 
            Then $c:=(\mu(b),d) \in \B$ because $\lambda(d)<\lambda(b)<\mu(b)$. 
            Moreover, $\lambda(c)=\mu(b) < d < b$.
            Then $\lambda(b) < \mu(b) < c$ and, since $\lambda(\mu(b)) \leq \lambda(b)$ and $\lambda(d) < \lambda(b)$, then, by \cref{lem:theta<=2}, $\theta_{\lambda(b)}(c) \leq 2$.
            
            \item \emph{Subcase $d < \mu(b)$}.
            By induction, $[d,\mu(b)]=\sum \alpha_c \ev(c)$ where $\lambda(c)<\mu(b)$ and, either $c \leq \lambda(\mu(b))$, or $\theta_{\lambda(\mu(b))}(c) \leq 2$.
            By the preliminary remark, $\lambda(c)<b$.
            If $\lambda(b) < c$, by \cref{p:theta-dec}, $\theta_{\lambda(b)}(c) \leq \theta_{\lambda(\mu(b))}(c) \leq 2$. 
        \end{itemize}
    \end{itemize}
    
    \step{Study of the second term}
    
    \begin{itemize}
        \item \emph{Case $\mu(b)= X_1$.} 
        Then $d:=(a,\mu(b))=(a,X_1)\in\B$ and $a<d$ by the Hall order property.
        \begin{itemize}
            \item If $d<\lambda(b)$, by induction, $[d,\lambda(b)]=\sum \alpha_c \ev(c)$ where $\lambda(c)<\lambda(b)$ and either $c \leq \lambda^2(b)$ or $\theta_{\lambda^2(b)}(c) \leq 2$.
            Thus, $\lambda(c)<b$ because $\lambda(b)<b$.
            If $\lambda(b) < c$, by \cref{p:theta-dec}, $\theta_{\lambda(b)}(c) \leq \theta_{\lambda^2(b)}(c) \leq 2$.
            
            \item If $\lambda(b)<d$ then $c:=(\lambda(b),d)=(\lambda(b),(a,X_1))\in\B$ because $a < \lambda(b)$. Moreover $\lambda(c)=\lambda(b)<b$, $\lambda(b) < c$ and $\theta_{\lambda(b)}(c) = 1$.
        \end{itemize}
    
    \item \emph{Case $\mu(b)\neq X_1$.}
    By induction, 
    \begin{equation} \label{amub-d-new}
        [a,\mu(b)] = \sum \gamma_{d} \ev(d) 
        \text{ where } a \leq \lambda(d)<\mu(b) \text{ and } 
        \big(\text{either } d \leq \lambda(\mu(b)) \text{ or } \theta_{\lambda(\mu(b))}(d) \leq 2 \big).
    \end{equation}    
    Let $d \in \supp [a,\mu(b)]$. Then $a < d$ because $a \leq \lambda(d) < d$.
    
    \begin{itemize}
        \item \emph{Subcase $d < \lambda(b)$}.
        By induction,
        $[d,\lambda(b)]=\sum \alpha_c \ev(c)$ where $\lambda(c)<\lambda(b)$ and (either $c \leq \lambda^2(b)$ or $\theta_{\lambda^2(b)}(c) \leq 2$), which implies the expected relations, because the order is a Hall order on $\B$ (so $\lambda(b) < b$ and $\lambda^2(b) < \lambda(b)$) and thanks to \cref{p:theta-dec}.
    
        \item \emph{Subcase $\lambda(b) < d$}.
        \begin{itemize}
            \item \emph{Subsubcase $\lambda(d) \leq \lambda(b)$}.
            Then $c := (\lambda(b), d) \in \B$ satisfies $\lambda(c) = \lambda(b) < b$ by the Hall order property and $\lambda(b) < c$ and $\theta_{\lambda(b)}(c) = 1$.
            
            \item \emph{Subsubcase $\lambda(b) < d$}.
            This implies $\lambda(\mu(b)) < d$ and, by \eqref{amub-d-new} and \cref{p:theta-dec}, $\theta_{\lambda(b)}(d) \leq \theta_{\lambda(\mu(b))}(d) \leq 2$.
            Thus, for each $c \in \supp [\lambda(b), d]$, $\lambda(b) < \lambda(c)$ by \eqref{eq:a<c'}, and $\theta_{\lambda(b)}(c) = \theta_{\lambda(b)}(d) \leq 2$ by \cref{lem:theta-dec2/support-absurde}.
        
            To get the conclusion, we only need to check that $\lambda(c)<b$. 
            By \cref{thm:en1}, $c \in \e(\Tr \lbag \lambda(b) < \lambda(d) < \mu(d) \rbag)$.
            By the Hall order property, $c_1 := \lambda(b) < b$.
            By \eqref{amub-d-new}, $c_2 := \lambda(d) < \mu(b)$.
            Hence, $c_3 := (\lambda(b), \lambda(d)) < b$ by \cref{def:fibo-order}.
            Since $\lambda(c) < \mu(c)$ and one of them is of one of the $c_i$'s, we obtain $\lambda(c) < b$ or $\lambda(c) < \mu(b)$, hence $\lambda(c) < b$ by the preliminary remark.
            \qedhere
        \end{itemize}
    \end{itemize}
    \end{itemize}
\end{proof}

\subsection{A first estimate for right-nested trees}
\label{s:fibo-right-quick}

In this paragraph, we prove a bound on $\|[a,b]\|_\B$ depending only on $\theta_a(b)$ in the particular case where $\lambda^2 (b) \leq a$.
We tighten this bound and remove the hypothesis in \cref{s:fibo-theta}.
We start here this lighter version since it is sufficient for the proof of \cref{thm:fibo-easy}.

\begin{lemma} \label{lem:tab-right}
    Let $a < b \in \B$ such that $(a,b) \notin \B$ and $\lambda^2(b) \leq a$. 
    Then $\rf{a}{b}$ is right-nested, in the sense that it has the following form
    \begin{equation} \label{eq:tab-right}
        \rf{a}{b} = \langle \lambda(b), \langle \lambda(\mu(b)), \langle \dotsc \langle \lambda(\mu^{\theta-2}(b)), \mu^{\theta-1}(b) \rangle \rangle \dotsb \rangle
    \end{equation}
    where $\theta = \theta_a(b)$ and, in particular, only the last leaf, $\mu^{\theta-1}(b)$, can be equal to $X_1$.
\end{lemma}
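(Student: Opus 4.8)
The plan is to argue by induction on $\theta:=\theta_a(b)$, unfolding the recursive definition of $\rf{a}{b}$ (\cref{def:folding}) one step at a time and checking that each step strips off exactly one leaf from the far left. First I would record the facts forced by the hypotheses: since $(a,b)\notin\B$ we have $b\notin X$ and $a<\lambda(b)<\mu(b)$, hence $\theta\geq 2$; moreover $\lambda(b)\notin X$, because $\lambda(b)=X_0=\min\B$ would contradict $a<\lambda(b)$, while $\lambda(b)=X_1=\max\B$ would force $\mu(b)\leq\lambda(b)$. In particular $\lambda^2(b)$ is well defined, so the hypothesis $\lambda^2(b)\leq a$ is meaningful, and from $a<\lambda(b)$, $\lambda(b)\notin X$ and $\lambda(\lambda(b))\leq a$ the Hall axioms of \cref{Def2:Laz} give $(a,\lambda(b))\in\B$. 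Hence $\rf{a}{\lambda(b)}=\lambda(b)$, $\theta_a(\lambda(b))=1$, and
\begin{equation*}
   \rf{a}{b}=\langle \lambda(b),\rf{a}{\mu(b)}\rangle,\qquad \theta_a(\mu(b))=\theta-1 .
\end{equation*}

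If $\theta=2$, then $\theta_a(\mu(b))=1$, i.e.\ $(a,\mu(b))\in\B$, so $\rf{a}{b}=\langle\lambda(b),\mu(b)\rangle$, which is the claimed form (only the last leaf $\mu(b)$ can equal $X_1$, as $\lambda(b)\notin X$). If $\theta\geq 3$, then $\theta_a(\mu(b))\geq 2$, hence $(a,\mu(b))\notin\B$, $\mu(b)\notin X$, and $a<\lambda(\mu(b))<\mu^2(b)$. The crucial step is then to show that $\lambda^2(\mu(b))\leq a$, so that the induction hypothesis applies to the pair $(a,\mu(b))$: since $b=(\lambda(b),\mu(b))\in\B$ with $\mu(b)\notin X$, the Hall axioms give $\lambda(\mu(b))\leq\lambda(b)$; both $\lambda(\mu(b))$ and $\lambda(b)$ lie in $\Br(X)\setminus X$ (for $\lambda(\mu(b))$, by the same argument as for $\lambda(b)$, using $a<\lambda(\mu(b))<\mu^2(b)$ and $X_1=\max\B$); and the order of \cref{def:fibo-order} compares elements of $\Br(X)\setminus X$ first by their left factor, so $t_1\leq t_2$ implies $\lambda(t_1)\leq\lambda(t_2)$ for such $t_1,t_2$. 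Applying this to $\lambda(\mu(b))\leq\lambda(b)$ gives $\lambda^2(\mu(b))\leq\lambda^2(b)\leq a$. The induction hypothesis applied to $(a,\mu(b))$ (legitimate since $\theta_a(\mu(b))=\theta-1<\theta$) then yields that $\rf{a}{\mu(b)}$ is right-nested, of the form $\langle\lambda(\mu(b)),\langle\dotsc,\langle\lambda(\mu^{\theta-2}(b)),\mu^{\theta-1}(b)\rangle\rangle\dotsb\rangle$ with only $\mu^{\theta-1}(b)$ possibly equal to $X_1$; substituting into $\rf{a}{b}=\langle\lambda(b),\rf{a}{\mu(b)}\rangle$ and using $\lambda(b)\notin X$ once more produces exactly \eqref{eq:tab-right}, and counting leaves confirms there are $\theta$ of them.

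I expect the main obstacle to be the inequality $\lambda^2(\mu(b))\leq a$, which is precisely what keeps the folding nested to the right: it rests on combining the Hall condition $\lambda(\mu(b))\leq\lambda(b)$ with the $\lambda$-monotonicity built into the defining order. A secondary point requiring attention is the bookkeeping that $\lambda(b)$, $\lambda(\mu(b))$, and inductively all the intermediate left factors $\lambda(\mu^k(b))$ for $k\leq\theta-2$ lie outside $X$ — needed both so that their $\lambda$'s are defined and so that only the terminal leaf $\mu^{\theta-1}(b)$ can be $X_1$ — which is where the maximality of $X_1$ is used repeatedly.
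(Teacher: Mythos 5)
Your proof is correct and follows essentially the same route as the paper's: induction on $\theta_a(b)$, with the base case $\theta=2$ and the inductive step hinging on the inequality $\lambda^2(\mu(b))\leq\lambda^2(b)\leq a$, itself obtained from the Hall condition $\lambda(\mu(b))\leq\lambda(b)$ together with the $\lambda$-monotonicity of the order of \cref{def:fibo-order}. Your write-up is in fact slightly more careful than the paper's on the bookkeeping that $\lambda(b)$ and $\lambda(\mu(b))$ lie outside $X$, which the paper only partly spells out.
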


\begin{proof}
    The assumption $(a,b) \notin \B$ implies that $b \neq X_1$, so $\lambda(b)$ makes sense and that $\lambda(b) \neq X_0$, so $\lambda^2(b)$ is also well-defined.
    We proceed by induction on $\theta_a(b)$.
    When $\theta_a(b) = 2$, it is true in any Hall set that $\rf{a}{b} = \langle \lambda(b), \mu(b) \rangle$.
    When $\theta_a(b) \geq 3$, since $\lambda^2(b) \leq a < \lambda(b)$, by definition, $\rf{a}{b} = \langle \lambda(b), \rf{a}{\mu(b)} \rangle$.
    The result follows by induction because $\lambda^2(\mu(b)) \leq \lambda^2(b) \leq a$.
    (The first inequality stems from \cref{def:fibo-order} and from the fact that $\lambda(\mu(b)) \leq \lambda(b)$, since $b \in \B$).
\end{proof}

We start by a definition which allows to distinguish, within $\|[a,b]\|_\B$, the part of the norm relative to the different categories of supporting basis elements.

\begin{definition}
    Let $a < b \in \B$ with $b \neq X_1$.
    Writing $[a,b] = \sum \alpha_c \ev(c)$ where the sum ranges over $\supp [a,b]$ and $\alpha_c \in \K$, we define, for $i \in \intset{1,2}$,
    \begin{equation}
        N_i(a,b) := \sum_{\theta_{\lambda(b)}(c) = i} |\alpha_c|.
    \end{equation}
\end{definition}

\begin{lemma} \label{lem:l2ba-supp}
    Let $a < b \in \B$ such that $(a,b) \notin \B$ and $\lambda^2(b) \leq a$.
    For each $c \in \supp [a,b]$, $\lambda(b) < c$.
    Moreover
    \begin{equation} \label{eq:size-n1n2}
        \|[a,b]\|_\B = N_1(a,b) + N_2(a,b).
    \end{equation}
\end{lemma}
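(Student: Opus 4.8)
The plan is to reduce everything to the structural information already available from \cref{th:fibo-structure-new} and the general bound \cref{thm:en1}, and then observe that the right-nested form of $\rf{a}{b}$ supplied by \cref{lem:tab-right} forces every supporting element to satisfy $\theta_{\lambda(b)}(c) \leq 2$. First I would record that, by \cref{th:fibo-structure-new}, every $c \in \supp[a,b]$ satisfies $c \notin X$ and $a \leq \lambda(c) < b$, with the left relation being strict since $(a,b) \notin \B$; moreover the same theorem gives, because $b \neq X_1$, that either $c \leq \lambda(b)$ or $\theta_{\lambda(b)}(c) \leq 2$. So the only thing to rule out is the alternative $c \leq \lambda(b)$, i.e.\ I must show $\lambda(b) < c$ for each $c \in \supp[a,b]$.

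For that, I would use \cref{thm:en1} in the precise form recalled there: $\supp[a,b] \subset \e(\Tr(\mathbb{A}))$ with $\mathbb{A} = \lbag a \rbag + \mathbb{L}_\B(\rf{a}{b})$. Under the hypothesis $\lambda^2(b) \leq a$ and $(a,b) \notin \B$, \cref{lem:tab-right} tells us that $\rf{a}{b}$ is right-nested as in \eqref{eq:tab-right}, so $\lambda(b) \in \mathbb{L}_\B(\rf{a}{b})$ and, crucially, $\lambda(b) = \min \mathbb{L}_\B(\rf{a}{b})$ together with $a < \lambda(b)$ shows $\lambda(b)$ is in fact the minimal leaf of $\mathbb{A}$ (all other leaves of $\rf{a}{b}$ are iterated $\mu$'s or $\lambda\mu$'s of $b$, each of which is $\geq \lambda(b)$ by the Hall axioms, and $a < \lambda(b)$). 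Since $c = \e(t)$ for some $t \in \Br(\{a\} \cup L_\B(\rf{a}{b}))$ with $|t|_\B \geq 2$ (as $c \notin X$), and $\{a\} \cup L_\B(\rf{a}{b})$ is alphabetic with minimum $a < \lambda(b)$, the bracket $c$ is strictly larger than its own left factor, and an induction on $|t|_\B$ (using that the minimal leaf $a$ of the alphabet cannot be a left factor unless $|t|_\B = 2$, together with \cref{Lem:BRA} which gives $c \geq a$) shows that $\lambda(c) \geq \lambda(b)$ whenever $a$ occurs in $t$; combined with \cref{thm:Reutenauer}'s $a < \lambda(c)$ and \cref{sx:a<mub}-type reasoning this forces $\lambda(b) < c$. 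Concretely: $\lambda(c) \geq \lambda(b)$ would already give $c > \lambda(c) \geq \lambda(b)$; and if $\lambda(c) = \lambda(b)$ is impossible we can even argue more directly, but the weaker $\lambda(c) \geq \lambda(b)$ suffices. I expect the cleanest route is to prove by induction on $|t|_\B$ that for $t \in \Br(\{a\} \cup L_\B(\rf{a}{b}))$ with $\e(t) \in \B$, one has $\e(t) > \lambda(b)$ unless $\e(t) = a$; the base case and the two-leaf case $\langle a, \lambda(b)\rangle$ are immediate, and the inductive step uses that $\lambda(\e(t)) \in \B$ is again of this form.

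Once $\lambda(b) < c$ is established for all $c \in \supp[a,b]$, the dichotomy of \cref{th:fibo-structure-new} collapses to $\theta_{\lambda(b)}(c) \leq 2$, so $\supp[a,b]$ is partitioned into the elements with $\theta_{\lambda(b)}(c) = 1$ and those with $\theta_{\lambda(b)}(c) = 2$. Writing $[a,b] = \sum_{c \in \supp[a,b]} \alpha_c \ev(c)$ and splitting the sum according to this partition yields, by the very definitions of $N_1(a,b)$ and $N_2(a,b)$, the identity $\|[a,b]\|_\B = \sum_c |\alpha_c| = N_1(a,b) + N_2(a,b)$, which is \eqref{eq:size-n1n2}. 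The main obstacle is the inductive argument showing $\lambda(b) < c$; everything after that is bookkeeping. One subtlety to watch is that $\lambda(b)$ itself need not lie in $\supp[a,b]$, and that $c$ could a priori equal $\lambda(b)$ — this is exactly what \cref{lem:theta-dec2/support-absurde} (the $\lambda(c) \neq b$ part, applied in the appropriate form) and the strict inequality $a < \lambda(c)$ prevent, so I would invoke those to close the gap.
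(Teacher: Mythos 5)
Your reduction of \eqref{eq:size-n1n2} to the single inequality $\lambda(b) < c$ via \cref{th:fibo-structure-new} is fine, but the argument you give for $\lambda(b) < c$ has a genuine gap and starts from a false premise. You assert that $\lambda(b) = \min \mathbb{L}_\B(\rf{a}{b})$ because ``all other leaves of $\rf{a}{b}$ are iterated $\mu$'s or $\lambda\mu$'s of $b$, each of which is $\geq \lambda(b)$ by the Hall axioms''. The Hall axiom gives the opposite inequality: since $b \in \B$, $\lambda(\mu(b)) \leq \lambda(b)$, and iterating, $\lambda(\mu^{k}(b)) \leq \lambda(\mu^{k-1}(b)) \leq \dots \leq \lambda(b)$, so in the right-nested form \eqref{eq:tab-right} the leaf $\lambda(b)$ is the \emph{largest} of the leaves $\lambda(\mu^k(b))$, not the smallest. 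Consequently your proposed induction (``$\e(t) > \lambda(b)$ unless $\e(t) = a$'') already fails at its base case for a single leaf $t = \lambda(\mu^k(b))$ with $k \geq 1$ whenever that leaf lies strictly below $\lambda(b)$. The intermediate claim $\lambda(c) \geq \lambda(b)$ is likewise not one you should expect to prove: a supporting element can have $\lambda(c) < \lambda(b)$ (for instance $c = ((a,\mu(b)),\lambda(b))$ when $(a,\mu(b)) < \lambda(b)$), and the lemma does not need it.

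The missing ingredient is \cref{def:fibo-order}. The whole point of this lemma is that in \emph{this particular} Hall set the order on $\Br(X)\setminus X$ is lexicographic in $(\lambda,\mu)$, so $\lambda(b)$ and $c$ are compared by first comparing their left factors $\lambda^2(b)$ and $\lambda(c)$. By \eqref{eq:a<c'} (which you correctly quote) one has $a < \lambda(c)$; combined with the hypothesis $\lambda^2(b) \leq a$ this gives $\lambda^2(b) < \lambda(c)$, hence $\lambda(b) < c$ immediately --- no induction on the tree structure and no appeal to \cref{thm:en1} or \cref{lem:tab-right} is required. Your write-up never invokes this lexicographic comparison, and without it the statement is not provable (it is false for a general Hall set), so the argument cannot be repaired along the lines you sketch. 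The final bookkeeping step deducing \eqref{eq:size-n1n2} is correct as written.
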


\begin{proof}
    Let $c \in \supp [a,b]$.
    Since $(a,b) \notin \B$, by \cref{eq:a<c'}, $a < \lambda(c)$.
    Hence $\lambda^2(b) < \lambda(c)$, so $\lambda(b) < c$ by \cref{def:fibo-order}.
    Then \eqref{eq:size-n1n2} follows from \cref{th:fibo-structure-new} since the case $c \leq \lambda(b)$ is excluded.
\end{proof}

\begin{lemma} \label{lem:size-n2-init}
    Let $a < b \in \B$ with $\theta_a(b) = 2$.
    Then $\| [a, b] \|_\B \leq 2$ and $N_2(a,b) \leq 1$.
\end{lemma}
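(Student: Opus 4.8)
The plan is to run a single step of the decomposition algorithm of \cref{sec:rewriting} and observe that everything collapses. First I would unpack the hypothesis: since $\theta_a(b)=2>1$, we have $(a,b)\notin\B$, hence $b\notin X$ and $a<\lambda(b)<\mu(b)$, and from $\theta_a(b)=\theta_a(\lambda(b))+\theta_a(\mu(b))=2$ I get $\theta_a(\lambda(b))=\theta_a(\mu(b))=1$, i.e.\ $d_1:=(a,\lambda(b))\in\B$ and $d_2:=(a,\mu(b))\in\B$. Since $X_0=\min\B\leq a<\lambda(b)<\mu(b)\leq X_1=\max\B$, the element $\lambda(b)$ is not in $X$, so $d_1\in\B$ forces $\lambda^2(b)\leq a$. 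In particular the hypotheses of \cref{lem:l2ba-supp} hold, so $\|[a,b]\|_\B=N_1(a,b)+N_2(a,b)$ and $\lambda(b)<c$ for every $c\in\supp[a,b]$ (so that $\theta_{\lambda(b)}(c)$ makes sense on the support).

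Next I would apply Jacobi's identity $[a,b]=[d_1,\mu(b)]+[\lambda(b),d_2]$ and show that each bracket on the right is, up to sign, a single basis element (or zero). For $[\lambda(b),d_2]$: if $\lambda(b)<d_2$ then $(\lambda(b),d_2)\in\B$ directly, since $\lambda(d_2)=a\leq\lambda(b)$; if $d_2<\lambda(b)$ then, using $a\leq d_2<\lambda(b)$ and $\theta_a(\lambda(b))=1$, \cref{p:theta-dec} gives $\theta_{d_2}(\lambda(b))=1$, i.e.\ $(d_2,\lambda(b))\in\B$. For $[d_1,\mu(b)]$: if $d_1<\mu(b)$ then $\theta_{d_1}(\mu(b))\leq\theta_a(\mu(b))=1$ by \cref{p:theta-dec}, so $(d_1,\mu(b))\in\B$; if $d_1=\mu(b)$ the bracket vanishes; if $\mu(b)<d_1$ then $(\mu(b),d_1)\in\B$ because $\lambda(d_1)=a<\mu(b)$. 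Hence $[a,b]$ is a combination of at most two basis elements with coefficients in $\{-1,0,1\}$, which already gives $\|[a,b]\|_\B\leq2$ (this bound could alternatively be quoted from \cref{thm:en1}, as $\lfloor e\,1!\rfloor=2$).

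For the refined claim $N_2(a,b)\leq1$, the key point is that the element $c$ produced by the second Jacobi term $[\lambda(b),d_2]$ always satisfies $\theta_{\lambda(b)}(c)=1$: in the subcase $\lambda(b)<d_2$ one has $c=(\lambda(b),d_2)$ with $\lambda(c)=\lambda(b)$, so $(\lambda(b),c)\in\B$; in the subcase $d_2<\lambda(b)$ one has $c=(d_2,\lambda(b))$, and since $\lambda^2(b)\leq a<d_2$, \cref{def:fibo-order} gives $\lambda(b)<c$ while $\lambda(c)=d_2<\lambda(b)$, so again $(\lambda(b),c)\in\B$. The element $c'$ coming from the first Jacobi term lies in $\supp[a,b]$, hence by \cref{lem:l2ba-supp} satisfies $\lambda(b)<c'$ and by \cref{th:fibo-structure-new} then $\theta_{\lambda(b)}(c')\leq2$; thus $c'$ contributes at most $1$ to $N_2$ while $c$ contributes $0$, giving $N_2(a,b)\leq1$. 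The step I expect to require the most care is the case distinction on where each Jacobi bracket lands together with the verification of the Hall condition $(\lambda(b),c)\in\B$ — in particular checking $\lambda(b)<c$ — which is precisely where the special order of \cref{def:fibo-order} and the inequality $\lambda^2(b)\leq a$ enter.
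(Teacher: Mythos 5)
Your proof is correct and follows essentially the same route as the paper: one application of Jacobi's identity, a case analysis showing each of the two resulting brackets is (up to sign) a single basis element, and the observation that the term coming from $[\lambda(b),(a,\mu(b))]$ always satisfies $\theta_{\lambda(b)}(\cdot)=1$, so only the other term can contribute to $N_2$. The only cosmetic differences are that you verify some Hall conditions via \cref{p:theta-dec} where the paper checks them directly, and you invoke \cref{lem:l2ba-supp} and \cref{th:fibo-structure-new}, which are available at this point but not strictly needed.
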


\begin{proof}
    Since $\theta_a(b) = 2$, $a < \lambda(b)$, $(a, \lambda(b)) \in \B$ and $(a, \mu(b)) \in \B$. 
    By Jacobi's identity
    \begin{equation}
        [a,b]=[(a,\lambda(b)),\mu(b)]+[\lambda(b),(a,\mu(b))].
    \end{equation}
    
    \step{First term}
    \begin{itemize}
        \item If $(a,\lambda(b))<\mu(b)$ then $c_1 :=((a,\lambda(b)),\mu(b))\in\B$ because $\lambda(\mu(b))\leq a < (a,\lambda(b))$.  
        \item If $\mu(b) < (a,\lambda(b))$ then $c_1:=(\mu(b),(a,\lambda(b)))\in\B$ because $a<\mu(b)$. 
    \end{itemize}
    
    \step{Second term}
    \begin{itemize}
        \item If $\lambda(b) < (a, \mu(b))$, then $c_2 := (\lambda(b), (a, \mu(b))) \in \B$ because $a < \lambda(b)$.
        
        \item If $(a, \mu(b)) < \lambda(b)$, then $c_2 := ((a, \mu(b)), \lambda(b)) \in \B$ because $\lambda^2(b) \leq a < (a,\mu(b))$.
    \end{itemize}
    In particular, $\theta_{\lambda(b)}(c_2) = 1$ because $\lambda(c_2) = \min \{ \lambda(b), (a, \mu(b)) \} \leq \lambda(b)$. 
    So $N_2(a,b) \leq 1$.
\end{proof}

We start with an easy case where $\lambda^2(b) < a$, leading to a straightforward geometric bound.

\begin{proposition} \label{p:lambda2b<a-easy}
	Let $a < b \in \B$ such that $(a,b) \notin \B$ and $\lambda^2(b) < a$. Then
	\begin{equation} \label{eq:l2b<a-n0n1n2}
	    \| [a,b] \|_\B \leq 2^{\theta-1}
	    \quad \text{and} \quad 
	    N_2(a,b) \leq 2^{\theta-1}-1,
	    \quad \text{where} \quad \theta = \theta_a(b).
	\end{equation}
\end{proposition}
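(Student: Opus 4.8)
The plan is to proceed by induction on $\theta := \theta_a(b)$, following the recursive decomposition of \cref{sec:rewriting} (which is legitimate as an induction on $\theta$ by \cref{p:rec-theta}). Since $(a,b)\notin\B$ we have $\theta\geq 2$ (and $b\notin X$, so $N_2(a,b)$ is defined). The base case $\theta=2$ is exactly \cref{lem:size-n2-init}, which gives $\|[a,b]\|_\B\leq 2=2^{\theta-1}$ and $N_2(a,b)\leq 1=2^{\theta-1}-1$. For $\theta\geq 3$ I would apply Jacobi's identity $[a,b]=[[a,\lambda(b)],\mu(b)]+[\lambda(b),[a,\mu(b)]]$ and estimate the two terms, tracking separately their contributions to $N_1(a,b)$ and $N_2(a,b)$.

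First I would collect the structural facts. Because $\lambda^2(b)<a$ we have $(a,\lambda(b))\in\B$, hence $\theta_a(\lambda(b))=1$ and $\theta_a(\mu(b))=\theta-1\geq 2$; in particular $\mu(b)\notin X$, and $(a,\mu(b))\notin\B$ forces $a<\lambda(\mu(b))\leq\lambda(b)$. Since for the order of \cref{def:fibo-order} one has $t_1\leq t_2\Rightarrow\lambda(t_1)\leq\lambda(t_2)$ on non-leaves, $\lambda(\mu(b))\leq\lambda(b)$ yields $\lambda^2(\mu(b))\leq\lambda^2(b)<a$. Thus the induction hypothesis applies to $[a,\mu(b)]$, giving $\|[a,\mu(b)]\|_\B\leq 2^{\theta-2}$ and $N_2(a,\mu(b))\leq 2^{\theta-2}-1$; moreover \cref{lem:l2ba-supp} gives $\|[a,\mu(b)]\|_\B=N_1(a,\mu(b))+N_2(a,\mu(b))$ and $\lambda(\mu(b))<c$ for every $c\in\supp[a,\mu(b)]$. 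For the first Jacobi term, set $d_0:=(a,\lambda(b))$; using \cref{def:fibo-order} to compare leftmost factors ($\lambda(d_0)=a$, while $\lambda(\lambda(b))=\lambda^2(b)<a<\lambda(\mu(b))$) one checks $\lambda(b)<d_0<\mu(b)$ and $\lambda(\mu(b))\leq\lambda(b)<d_0$, hence $(d_0,\mu(b))\in\B$, so $[[a,\lambda(b)],\mu(b)]=\ev((d_0,\mu(b)))$ is a single basis element $c_1$. A direct computation ($\lambda(c_1)=d_0>\lambda(b)$, $(\lambda(b),d_0)\in\B$, $(\lambda(b),\mu(b))=b\in\B$) gives $\theta_{\lambda(b)}(c_1)=2$, so this term contributes $1$ to $N_2(a,b)$ and $0$ to $N_1(a,b)$.

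For the second term $[\lambda(b),[a,\mu(b)]]=\sum_c\alpha_c[\lambda(b),c]$ I would analyze each $c\in\supp[a,\mu(b)]$. By \cref{th:fibo-structure-new} and the preceding remarks, $c\notin X$, $a<\lambda(c)<\mu(b)$, $\lambda(\mu(b))<c$, and $\theta_{\lambda(\mu(b))}(c)\in\{1,2\}$ (the alternative $c\leq\lambda(\mu(b))$ being excluded by \cref{lem:l2ba-supp}). If $\theta_{\lambda(\mu(b))}(c)=1$ then $\lambda(c)\leq\lambda(\mu(b))\leq\lambda(b)$, and in either case $c<\lambda(b)$ or $\lambda(b)<c$, using $\lambda^2(b)<a<\lambda(c)<c$ one gets that $[\lambda(b),c]$ is $\pm$ a single basis element ($(c,\lambda(b))$ or $(\lambda(b),c)$) with $\theta_{\lambda(b)}=1$, contributing only to $N_1(a,b)$. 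If $\theta_{\lambda(\mu(b))}(c)=2$ then \cref{p:theta-dec} gives $\theta_{\lambda(b)}(c)\leq 2$; if $(\lambda(b),c)\in\B$ (or $(c,\lambda(b))\in\B$) then $\|[\lambda(b),c]\|_\B=1$, otherwise $\theta_{\lambda(b)}(c)=2$ and \cref{lem:size-n2-init} gives $\|[\lambda(b),c]\|_\B\leq 2$; in all cases $\|[\lambda(b),c]\|_\B\leq 2$. Summing, and noting cancellations with $c_1$ can only decrease norms, $\|[a,b]\|_\B\leq 1+N_1(a,\mu(b))\cdot 1+N_2(a,\mu(b))\cdot 2=1+\|[a,\mu(b)]\|_\B+N_2(a,\mu(b))\leq 1+2^{\theta-2}+(2^{\theta-2}-1)=2^{\theta-1}$, while the contributions to $N_2(a,b)$ come only from $c_1$ (at most $1$) and from the $c$ with $\theta_{\lambda(\mu(b))}(c)=2$ (at most $2|\alpha_c|$ each), so $N_2(a,b)\leq 1+2N_2(a,\mu(b))\leq 1+2(2^{\theta-2}-1)=2^{\theta-1}-1$, closing the induction.

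The main obstacle is not the arithmetic but the bookkeeping: one must repeatedly translate the ``$\lambda,\mu$-lexicographic'' order of \cref{def:fibo-order} into precise membership statements of the form $(x,y)\in\B$ and into exact $\theta$-values relative to $\lambda(b)$ versus $\lambda(\mu(b))$ for the basis elements produced at each step, in particular establishing cleanly the dichotomy ``$\theta_{\lambda(\mu(b))}(c)=1$ produces one basis element with $\theta_{\lambda(b)}=1$, whereas $\theta_{\lambda(\mu(b))}(c)=2$ produces at most two'', and invoking the monotonicity of $\theta$ from \cref{p:theta-dec} at the right places.
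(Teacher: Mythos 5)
Your proof is correct and follows essentially the same route as the paper's: induction on $\theta_a(b)$ with base case \cref{lem:size-n2-init}, the same Jacobi splitting $[a,b]=[(a,\lambda(b)),\mu(b)]+[\lambda(b),[a,\mu(b)]]$ in which the first term is the single basis element $((a,\lambda(b)),\mu(b))$ with $\theta_{\lambda(b)}=2$, and the same recursion $\|[a,b]\|_\B\leq 1+\|[a,\mu(b)]\|_\B+N_2(a,\mu(b))$ together with $N_2(a,b)\leq 1+2N_2(a,\mu(b))$. The only cosmetic differences are that you organize the second term by $\theta_{\lambda(\mu(b))}(c)$ before passing to $\theta_{\lambda(b)}(c)$ via \cref{p:theta-dec} (the paper cases directly on $\theta_{\lambda(b)}(d)$) and that you invoke \cref{lem:size-n2-init} where the paper cites \cref{thm:en1} for the bound $\|[\lambda(b),c]\|_\B\leq 2$.
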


\begin{proof}
    By \cref{def:fibo-order}, $\lambda^2(b) < a$ implies that $\lambda(b) < (a, \lambda(b))$.
    We proceed by induction on $\theta_a(b) \geq 2$.
    Initialization is proved by \cref{lem:size-n2-init}. 
    We perform the inductive step.
    Assume $\theta_a(b) \geq 3$. 
    Then $\theta_a(\mu(b))=\theta_a(b)-1 \geq 2$ thus $a<\lambda(\mu(b))$, $\lambda(\mu(b)) \notin X$ and $\lambda^2(\mu(b))$ is well defined.
    Using Jacobi's identity,
	\begin{equation} \label{eq:ab-jac-new}
		[a,b] = [(a, \lambda(b)), \mu(b)] + [\lambda(b), [a, \mu(b)]].
	\end{equation}
	
	\step{First term of \eqref{eq:ab-jac-new}} 
	We have $(a,\lambda(b))<\mu(b)$ by \cref{def:fibo-order} because $a<\lambda(\mu(b))$. Then $c_1:=((a,\lambda(b)),\mu(b))\in\B$ because, since $b\in\B$, $\lambda(\mu(b))\leq \lambda(b) <(a,\lambda(b))$. Moreover $\theta_{\lambda(b)}(c_1)=2$ because $\lambda(b)<\lambda(c_1)=(a,\lambda(b))$.
	
    \step{Second term of \eqref{eq:ab-jac-new}}
    The induction hypothesis applies to $[a, \mu(b)]$ because $a < \mu(b)$, $\lambda^2(\mu(b)) \leq \lambda^2(b) < a$ by \cref{def:fibo-order} and assumption, and $\theta_a(\mu(b))=\theta_a(b)-1 \geq 2$. 
    Hence $[a, \mu(b)] = \sum \alpha_d \ev(d)$ with the claimed size estimates.
    Let $d \in \supp [a, \mu(b)]$.
    \begin{itemize}
        \item If $\theta_{\lambda(b)}(d) = 1$, then $c := (\lambda(b), d) \in \B$ and $\theta_{\lambda(b)}(c) = 1$.
        
        \item If $\theta_{\lambda(b)}(d) = 2$, by \cref{thm:en1}, $\|[\lambda(b), d]\|_\B \leq 2$ and, by \cref{lem:theta-dec2/support-absurde}, for each $c \in \supp[\lambda(b), d]$, $\theta_{\lambda(b)}(c) = 2$.
    \end{itemize}
    Moreover, by \cref{p:theta-dec}, $\theta_{\lambda(b)}(d) \leq \theta_{\lambda(\mu(b))}(d)$.
    
    \step{Conclusion}
	This analysis proves $N_2(a,b) \leq 1 + 2 N_2(a, \mu(b))$ and that
	\begin{equation} \label{eq:ab-n1-n2-rec1}
	    \|[a,b]\|_\B 
        \leq 1 +  N_1(a, \mu(b)) + 2 N_2(a, \mu(b))
        = 1 + \| [a, \mu(b)] \|_\B + N_2(a, \mu(b)),
	\end{equation}
    which concludes the proof of \eqref{eq:l2b<a-n0n1n2} by induction, since $\theta_a(b) = \theta_a(\mu(b)) + 1$.
\end{proof}

\begin{proposition} \label{p:lambda2b<=a-easy}
	Let $a < b \in \B$ such that $(a,b) \notin \B$ and $\lambda^2(b) \leq a$. Then
	\begin{equation} \label{eq:l2b<=a-n0n1n2}
	    \| [a,b] \|_\B \leq (\theta-2) 2^{\theta-2} + \theta
	    \quad \text{and} \quad 
	    N_2(a,b) \leq (\theta-2) 2^{\theta-2}+1,
	    \quad \text{where} \quad \theta = \theta_a(b).
	\end{equation}
\end{proposition}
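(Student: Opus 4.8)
The plan is to prove the two estimates simultaneously by induction on $\theta := \theta_a(b)$, running the classical decomposition algorithm once, exactly as in the proof of \cref{p:lambda2b<a-easy}. The base case $\theta = 2$ is \cref{lem:size-n2-init}, which gives precisely $\|[a,b]\|_\B \le 2 = (\theta-2)2^{\theta-2}+\theta$ and $N_2(a,b) \le 1 = (\theta-2)2^{\theta-2}+1$. For the inductive step ($\theta \ge 3$) I would first dispose of the strict case: if $\lambda^2(b) < a$, then \cref{p:lambda2b<a-easy} applies and yields $\|[a,b]\|_\B \le 2^{\theta-1}$ and $N_2(a,b) \le 2^{\theta-1}-1$, both of which are below the claimed bounds for all $\theta \ge 2$. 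It therefore remains to treat $\lambda^2(b) = a$ (bearing in mind that the induction hypothesis, invoked below on $\mu(b)$, covers both subcases). Recall that $(a,b)\notin\B$ forces $b\ne X_1$ and $a<\lambda(b)<\mu(b)$; that $\lambda^2(b)\le a$ gives $(a,\lambda(b))\in\B$, hence $\theta_a(\lambda(b))=1$ and $\theta_a(\mu(b))=\theta-1$; and that, since $\theta\ge3$ (so $\theta_a(\mu(b))\ge2$), one has $\mu(b)\notin X$ and $a<\lambda(\mu(b))\le\lambda(b)$. Moreover $\|[a,b]\|_\B=N_1(a,b)+N_2(a,b)$ with all supporting $c>\lambda(b)$ by \cref{lem:l2ba-supp}.

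Jacobi's identity gives
\[
    [a,b] = [(a,\lambda(b)),\mu(b)] + [\lambda(b),[a,\mu(b)]],
\]
and I would bound the contribution of each summand to $\|[a,b]\|_\B$ and to $N_2(a,b)$. For the \emph{second summand}, the induction hypothesis applies to $[a,\mu(b)]$ (since $\lambda^2(\mu(b))\le\lambda^2(b)=a$ by \cref{def:fibo-order} using $\lambda(\mu(b))\le\lambda(b)$, and $(a,\mu(b))\notin\B$), giving $[a,\mu(b)]=\sum\alpha_d\ev(d)$ with $\|[a,\mu(b)]\|_\B=N_1(a,\mu(b))+N_2(a,\mu(b))$ (\cref{lem:l2ba-supp}) and $N_2(a,\mu(b))\le(\theta-3)2^{\theta-3}+1$. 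For each such $d$: if $d<\lambda(b)$, then $a<d$ (by \cref{eq:a<c'}) and $\theta_d(\lambda(b))\le\theta_a(\lambda(b))=1$ by \cref{p:theta-dec}, so $(d,\lambda(b))\in\B$; if $\lambda(b)<d$, then by \cref{p:theta-dec} and \cref{th:fibo-structure-new} one has $\theta_{\lambda(b)}(d)\le\theta_{\lambda(\mu(b))}(d)\in\{1,2\}$, and for $\theta_{\lambda(b)}(d)=1$ one gets $(\lambda(b),d)\in\B$, while for $\theta_{\lambda(b)}(d)=2$ one has $\|[\lambda(b),d]\|_\B\le2$ by \cref{thm:en1} and every supporting $c$ keeps $\theta_{\lambda(b)}(c)=2$ by \cref{lem:theta-dec2/support-absurde}. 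Since any $d$ with $\theta_{\lambda(b)}(d)=2$ must satisfy $\theta_{\lambda(\mu(b))}(d)=2$, this summand contributes at most $\|[a,\mu(b)]\|_\B+N_2(a,\mu(b))$ to the norm and at most $2N_2(a,\mu(b))$ to $N_2(a,b)$.

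The \emph{first summand} is where the extra factor $(\theta-2)2^{\theta-2}$ originates. Since $a<\lambda(\mu(b))$ and $\mu(b)\notin X$, \cref{def:fibo-order} gives $(a,\lambda(b))<\mu(b)$. If moreover $(a,\lambda(b))\ge\lambda(\mu(b))$, then $((a,\lambda(b)),\mu(b))\in\B$, contributing $1$ to the norm and at most $1$ to $N_2(a,b)$. Otherwise $\lambda(\mu(b))>(a,\lambda(b))$, and I claim this forces $\lambda(b)=(a,X_1)$: indeed $\lambda(\mu(b))\le\lambda(b)$ rules out $(a,\lambda(b))\ge\lambda(b)$, and by \cref{def:fibo-order} together with \cref{sx:b-mub} (using $\lambda^2(b)=a$) the inequality $(a,\lambda(b))<\lambda(b)$ forces $\lambda(b)=\ad_a^m(X_1)$, which with $\lambda^2(b)=a$ forces $m=1$. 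In that degenerate case $(a,\lambda(b))=\ad_a^2(X_1)$, and since $\lambda^2(\mu(b))\le a<\ad_a^2(X_1)$ with $(\ad_a^2(X_1),\mu(b))\notin\B$, the \emph{already-proven} \cref{p:lambda2b<a-easy} applies to the strictly shorter bracket $[\ad_a^2(X_1),\mu(b)]$; combined with $\theta_{\ad_a^2(X_1)}(\mu(b))\le\theta_a(\mu(b))=\theta-1$ (\cref{p:theta-dec}), it bounds the norm of this bracket by $2^{\theta-2}$, and since $\lambda(\mu(b))\le(a,X_1)=\lambda(b)$ its contribution to $N_2(a,b)$ is at most $N_2(\ad_a^2(X_1),\mu(b))\le2^{\theta-2}-1$. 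Summing up, the first summand contributes $1$ (good case) or $2^{\theta-2}$ (bad case) to the norm and $1$ or $2^{\theta-2}-1$ to $N_2(a,b)$; plugging the induction hypothesis for $[a,\mu(b)]$ into the resulting inequalities and simplifying — the good case using the slack $2\le2^{\theta-2}$ valid for $\theta\ge3$, the bad case producing equalities — yields exactly $\|[a,b]\|_\B\le(\theta-2)2^{\theta-2}+\theta$ and $N_2(a,b)\le(\theta-2)2^{\theta-2}+1$.

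The main obstacle will be the analysis of the first summand: one must recognize that the unique configuration in which it fails to land directly in $\B$ is $\lambda(b)=(a,X_1)$ — pinned down by combining $\lambda^2(b)=a$ with \cref{sx:b-mub} — and, crucially, that this is precisely the situation in which \cref{p:lambda2b<a-easy}, whose hypothesis demands the \emph{strict} inequality $\lambda^2(\cdot)<a$, becomes applicable to the shorter bracket $[\ad_a^2(X_1),\mu(b)]$. A secondary, purely bookkeeping difficulty is tracking $N_2$ through the change of reference leaf from $\lambda(\mu(b))$ to $\lambda(b)$ and from $d$ to the supporting elements of $[\lambda(b),d]$; both are handled uniformly via the monotonicity \cref{p:theta-dec} and via \cref{lem:theta-dec2/support-absurde}.
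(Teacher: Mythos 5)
Your proof is correct and follows essentially the same route as the paper's: induction on $\theta$, the Jacobi split $[a,b]=[(a,\lambda(b)),\mu(b)]+[\lambda(b),[a,\mu(b)]]$, the invocation of \cref{p:lambda2b<a-easy} on the first term exactly when $(a,\lambda(b))<\lambda(\mu(b))$, and the same $N_1/N_2$ bookkeeping through \cref{p:theta-dec}, \cref{th:fibo-structure-new} and \cref{lem:theta-dec2/support-absurde}, leading to the same recurrences. One small factual slip: $\lambda^2(b)=a$ together with $\lambda(b)=\ad_a^m(X_1)$ does \emph{not} force $m=1$, since $\lambda(\ad_a^m(X_1))=a$ for every $m\ge 1$; the degenerate case is $\lambda(b)=\ad_a^m(X_1)$ with $m\ge 1$ arbitrary, so $(a,\lambda(b))=\ad_a^{m+1}(X_1)$ rather than $\ad_a^2(X_1)$. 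This is immaterial, because your application of \cref{p:lambda2b<a-easy} to $[(a,\lambda(b)),\mu(b)]$ only uses $\lambda^2(\mu(b))\le a<(a,\lambda(b))$ and $\lambda(\mu(b))\le\lambda(b)$, both of which hold for every $m$.
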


\begin{proof}
    We proceed by induction on $\theta_a(b) \geq 2$.
    Initialization is proved by \cref{lem:size-n2-init}. 
    We perform the inductive step.
    Assume $\theta_a(b) \geq 3$. 
    Then $\theta_a(\mu(b))=\theta_a(b)-1 \geq 2$ thus $a<\lambda(\mu(b))$, $\lambda(\mu(b)) \notin X$ and $\lambda^2(\mu(b))$ is well defined.
    We study both terms of Jacobi's identity \eqref{eq:ab-jac-new}.
	
	\step{First term}
	We have $(a,\lambda(b))<\mu(b)$ by \cref{def:fibo-order} because $a<\lambda(\mu(b))$.
    \begin{itemize}
    \item If $\lambda(\mu(b))\leq(a,\lambda(b))$ then $c:=((a,\lambda(b)),\mu(b))\in\B$ and $\theta_{\lambda(b)}(c) \in \{ 1, 2 \}$.
    
    \item If $(a,\lambda(b)) < \lambda(\mu(b))$, we apply  \cref{p:lambda2b<a-easy} to the bracket $[(a,\lambda(b)),\mu(b)]$.
    Indeed $\lambda^2(\mu(b))\leq \lambda^2(b)\leq a < (a,\lambda(b))$.
    By \cref{p:theta-dec}, $\theta_{(a,\lambda(b))}(\mu(b))\leq\theta_{a}(\mu(b)) = \theta_a(b) - 1$. 
    Therefore \cref{p:lambda2b<a-easy} gives
    $\|[(a,\lambda(b)),\mu(b)]\|_\B \leq 2^{\theta_a(b)-2}$ and $N_2((a, \lambda(b)), \mu(b)) \leq 2^{\theta_a(b)-2}-1$.
    Any $c\in\supp[(a,\lambda(b)),\mu(b)]$ satisfies $\theta_{\lambda(b)}(c) \leq \theta_{\lambda(\mu(b))}(c)$ by \cref{p:theta-dec}. 
    \end{itemize}

    \step{Second term} 
    By \cref{th:fibo-structure-new}, $[a,\mu(b)]=\sum \alpha_d \ev(d)$ where the sum ranges over $d\in\B$ such that $a<\lambda(d)<\mu(b)$, $\lambda(\mu(b)) < d$ and $\theta_{\lambda(\mu(b))}(d) \leq 2$ using \cref{lem:l2ba-supp}.
    Since $\lambda^2(\mu(b)) \leq \lambda^2(b) \leq a$, the induction hypothesis applies to $[a, \mu(b)]$.
    Let $d \in \supp [a, \mu(b)]$.
    By \cref{p:theta-dec}, $\theta_{\lambda(b)}(d) \leq \theta_{\lambda(\mu(b))}(d)$.
    Moreover, since $\lambda^2(b) \leq a < \lambda(d)$ (by \cref{eq:a<c'}), $\lambda(b) < d$ by \cref{def:fibo-order}.
    \begin{itemize}
    \item If $\theta_{\lambda(b)}(d)=1$, $c:=(\lambda(b),d)\in\B$ and $\theta_{\lambda(b)}(c)=1$.
    \item If $\theta_{\lambda(b)}(d)=2$, then $\|[\lambda(b),d]\|_\B\leq 2$.
    \end{itemize}
    
    \step{Conclusion}
	This analysis proves $N_2(a,b) \leq 2^{\theta_a(b)-2} - 1 + 2 N_2(a, \mu(b))$ and that
	\begin{equation} \label{eq:ab-n1-n2-rec2}
	    \|[a,b]\|_\B 
        \leq 2^{\theta_a(b)-2} +  N_1(a, \mu(b)) + 2 N_2(a, \mu(b))
        = 2^{\theta_a(b)-2} + \| [a, \mu(b)] \|_\B + N_2(a, \mu(b)),
	\end{equation}
    which concludes the proof of \eqref{eq:l2b<=a-n0n1n2} by induction, since $\theta_a(b) = \theta_a(\mu(b)) + 1$.
\end{proof}

\subsection{General estimate}
\label{s:fibo-general}

For $a < b \in \B$, we decompose $\theta_a(b) = n_a(b) + \nu_a(b)$ where $n_a(b)$ (resp.\ $\nu_a(b)$) denotes the number of leaves of $T_a(b)$ different from $X_1$ (resp.\ equal to $X_1$).
We start by stating elementary monotony properties of these quantities.
We then prove a bound on $\|[a,b]\|_\B$ involving these quantities in \cref{th:fibo-size-new}, which yields \cref{cor:fibo-light}, a slightly weaker version of our main goal, \cref{thm:fibo-easy}.

\begin{lemma}\label{Lem:Surcoupe_large}
Let $a \leq h < b \in \B$. Then $n_h(b) \leq n_a(b)$ and $\nu_h(b) \leq \nu_a(b)$.
\end{lemma}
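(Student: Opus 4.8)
The statement asserts monotonicity of $n_h(b)$ and $\nu_h(b)$ in the lower argument, and since $\theta = n + \nu$, this refines \cref{p:theta-dec} (which only gives $\theta_h(b) \leq \theta_a(b)$). The natural approach is to mimic the proof of \cref{p:theta-dec}: induct on $\theta_a(b)$, using the recursive definition of the relative folding together with the combinatorial description of its leaves. The key observation is that the leaves of $\rf{a}{b}$ (as a multiset, $\mathbb{L}_\B(\rf{a}{b})$) can be partitioned according to whether they equal $X_1$ or not, and that passing from $a$ to $h$ with $a \leq h$ can only ``stop the unfolding earlier'' — i.e.\ replace some subtree $\rf{a}{b'}$ of $\rf{a}{b}$ by a single leaf $b'$ — which can only decrease both counts.

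\emph{First} I would set up the induction on $n := \theta_a(b)$. When $n = 1$, either $b \in X$ or $\lambda(b) \leq a \leq h$, so in either case $\theta_h(b) = 1$ as well; then $\rf{h}{b} = \rf{a}{b} = b$ and the two counts agree trivially. \emph{For the inductive step}, suppose $\theta_a(b) = n \geq 2$, so $b \notin X$ and $a < \lambda(b) < \mu(b)$, whence $\rf{a}{b} = \langle \rf{a}{\lambda(b)}, \rf{a}{\mu(b)} \rangle$ and therefore $\mathbb{L}_\B(\rf{a}{b}) = \mathbb{L}_\B(\rf{a}{\lambda(b)}) + \mathbb{L}_\B(\rf{a}{\mu(b)})$, so that $n_a(b) = n_a(\lambda(b)) + n_a(\mu(b))$ and $\nu_a(b) = \nu_a(\lambda(b)) + \nu_a(\mu(b))$. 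Now distinguish two cases for $h$. If $\lambda(b) \leq h$, then $(h,b) \in \B$ (since $h < b$ and, $b$ being non-trivial, $\lambda(b) \leq h$), so $\rf{h}{b} = b$, giving $n_h(b) = \mathbbm{1}_{b \neq X_1} \leq 1$ and $\nu_h(b) = \mathbbm{1}_{b = X_1}$; since $n \geq 2$, the tree $\rf{a}{b}$ has at least two leaves, so $n_a(b) \geq 1$ if $b \neq X_1$ (indeed the leaf $\lambda^k(b)$ for the relevant $k$ is not $X_1$ when $\lambda(b) \neq X_0$… more simply $n_a(b) + \nu_a(b) = \theta_a(b) = n \geq 2 > 1 \geq n_h(b)$, and if $b = X_1$ then $\nu_h(b) = 1 \leq \nu_a(b)$ since $X_1$ appears among the leaves — here one uses that a leaf equal to $X_1$ can only arise from $b$ itself being $X_1$, which needs a short separate argument). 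If instead $h < \lambda(b)$, then $a \leq h < \lambda(b) < \mu(b)$, so $\rf{h}{b} = \langle \rf{h}{\lambda(b)}, \rf{h}{\mu(b)} \rangle$ and $n_h(b) = n_h(\lambda(b)) + n_h(\mu(b))$, $\nu_h(b) = \nu_h(\lambda(b)) + \nu_h(\mu(b))$; the conclusion follows by applying the induction hypothesis to $\lambda(b)$ and $\mu(b)$ (both of which have strictly smaller $\theta_a$-value, by definition of $\theta$).

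\emph{The main obstacle} is handling the case $\lambda(b) \leq h$ cleanly: one must check that $\nu_h(b) \leq \nu_a(b)$, i.e.\ that if $b = X_1$ then $X_1$ already occurs among the leaves of $\rf{a}{b}$. But this is in fact immediate: if $b = X_1 \in X$, then $(a,b) \in \B$ for any $a < b$, so $\rf{a}{b} = b = X_1$, whence $\nu_a(b) = 1 = \nu_h(b)$ and $n_a(b) = 0 = n_h(b)$ — so the case $b = X_1$ is trivial and need not be merged with the inductive step at all. With that special case dispatched up front, in the inductive step we may assume $b \neq X_1$, so $X_1$ never equals $b$, and the leaf $X_1$ can only appear strictly inside $\rf{a}{\lambda(b)}$ or $\rf{a}{\mu(b)}$; then the case $\lambda(b) \leq h$ gives $\rf{h}{b} = b \neq X_1$ with $n_h(b) = 1 \leq n \leq n_a(b) + \nu_a(b)$ — and actually $n_a(b) \geq 1$ because when $b \neq X_1$ and $(a,b) \notin \B$ the folding produces at least one leaf $c$ with $(a,c) \in \B$, $c \neq X_1$; when $(a,b) \in \B$ but $h$ with $\lambda(b) \le h$... this sub-case needs $\theta_a(b) = 1$, already excluded. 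So the only genuine bookkeeping is verifying $n_a(b) \geq 1$ when $b \neq X_1$ and $\theta_a(b) \geq 2$, which is clear since every leaf of $\rf{a}{b}$ arising from a non-trivial $b$ is some $\mu^j(\lambda^i(\cdots))$-type subtree that is either a letter ($X_0$, hence $\neq X_1$) or has a left factor $\leq a$, and in any event the left-nested leaf $\lambda(b)$ eventually reduces to $X_0$ along iterated $\lambda$'s unless it stops at something $\leq a$; in all cases the corresponding leaf is $\neq X_1$ because $X_1$ is maximal and cannot have a left factor $\leq a < X_1$. I would state this last point as a one-line remark rather than belabor it.
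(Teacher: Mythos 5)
Your proof is correct and follows essentially the same route as the paper's: induction on $\theta_a(b)$, with the base case $T_a(b)=b=T_h(b)$, the case $h<\lambda(b)$ handled by applying the induction hypothesis to $\lambda(b)$ and $\mu(b)$ and summing, and the case $\lambda(b)\leq h$ reduced to the observation that $n_h(b)=1\leq n_a(b)$ and $\nu_h(b)=0$. The only difference is expository: the paper simply asserts $n_a(b)\geq 1$ when $\theta_a(b)\geq 2$, whereas you justify it (correctly, if somewhat laboriously, after a first flawed attempt via $n_a(b)+\nu_a(b)\geq 2$) by noting that the leftmost leaf of $T_a(b)$ is an iterated left factor $\lambda^k(b)$ with $k\geq 1$, hence strictly below the maximal element $X_1$.
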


\begin{proof}
    We proceed by induction on $\theta_a(b)$.
    
    \step{Initialization for $\theta_a(b) = 1$} 
    Then either $b=X_1$ or $\lambda(b) \leq a \leq h$. 
    In both cases, $T_a(b)=b=T_h(b)$ so $(n_a(b),\nu_a(b))=(n_h(b),\nu_h(b))$. 
    
    \step{Induction for $\theta_a(b) \geq 2$}
    Then $b \neq X_1$ and $n_a(b) \geq 1$.
    \begin{itemize}
        \item \emph{Case $\lambda(b) \leq h$}. 
        Then $n_h(b) = 1 \leq n_a(b)$ and $\nu_h(b) = 0 \leq \nu_a(b)$.
        \item \emph{Case $h<\lambda(b)$}. 
        By the induction hypothesis, $n_h(\lambda(b)) \leq n_a(\lambda(b))$ and $n_h(\mu(b)) \leq n_h(\mu(b))$. Therefore, $n_h(b) \leq n_a(b)$. The same argument shows that $\nu_h(b) \leq \nu_a(b)$.
        \qedhere
    \end{itemize}
\end{proof}

Under additional assumptions, we now prove that their weighted sum is strictly decreasing. 
This remark plays a key role in the proof of \cref{th:fibo-size-new} and even more in its refined version for particular brackets stated in \cref{th:X0b-fibo}.

\begin{lemma} \label{Cor:surcoupe_strict}
Let $a < h < b \in \B$.	Assume that either $h = (a, X_1)$ or $a < \lambda(h)$. Then
\begin{equation} \label{ineg_surcoup}
2n_h(b)+\nu_h(b) \leq n_a(b) + \nu_a(b).
\end{equation}
In particular, if $b \neq X_1$, $2n_h(b) + \nu_h(b) < 2n_a(b) + \nu_a(b)$.
\end{lemma}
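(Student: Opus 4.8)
The plan is to establish the non-strict inequality $2n_h(b)+\nu_h(b)\le n_a(b)+\nu_a(b)$, i.e.\ $2n_h(b)+\nu_h(b)\le\theta_a(b)$, by strong induction on $\theta_a(b)$, carrying the hypothesis on $h$ unchanged through the recursion; the strict refinement when $b\neq X_1$ then follows at once.

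For the base case $\theta_a(b)=1$ we have $(a,b)\in\B$, so, since $X=\{X_0,X_1\}$ and $a<b$, either $b=X_1$ or $b\notin X$ with $\lambda(b)\le a$. If $b=X_1$ then $n_a(b)=0$, $\nu_a(b)=1$, and since $a<h<X_1$ forces $(h,X_1)\in\B$ we also get $n_h(b)=0$, $\nu_h(b)=1$, so both sides equal $1$. The crucial point is that the second alternative, $b\notin X$ with $\lambda(b)\le a$, is incompatible with the assumption on $h$: if $h=(a,X_1)$ then $\lambda(h)=a$ and $\mu(h)=X_1$, and comparing $h$ with $b$ via \cref{def:fibo-order} (both lie in $\Br(X)\setminus X$) shows that $h<b$ would require $a<\lambda(b)$ or $X_1<\mu(b)$, both impossible; if instead $a<\lambda(h)$ then $h\notin X$ and $h<b$ forces $\lambda(h)\le\lambda(b)$, whence $a<\lambda(h)\le\lambda(b)\le a$, absurd. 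I expect this verification to be the only genuinely delicate step — it is exactly where the two-pronged hypothesis on $h$ is used, and once it is in place the rest of the induction is bookkeeping.

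In the inductive step $\theta_a(b)\ge 2$ one has $b\notin X$ and $a<\lambda(b)<\mu(b)$, so $\rf{a}{b}=\langle\rf{a}{\lambda(b)},\rf{a}{\mu(b)}\rangle$ and the counters $n_a,\nu_a$ add over $\lambda(b),\mu(b)$. If $\lambda(b)\le h$ then $(h,b)\in\B$, hence $\rf{h}{b}=b$ with $b\notin X$ and $b\neq X_1$, giving $2n_h(b)+\nu_h(b)=2\le\theta_a(b)$. If $h<\lambda(b)$ then also $h<\mu(b)$ and $(h,b)\notin\B$, so $\rf{h}{b}$ splits additively too; the triples $(a,h,\lambda(b))$ and $(a,h,\mu(b))$ satisfy all the hypotheses with strictly smaller value of $\theta_a$ (because $\theta_a(b)=\theta_a(\lambda(b))+\theta_a(\mu(b))$ with both summands $\ge 1$), so applying the induction hypothesis to each and adding yields $2n_h(b)+\nu_h(b)\le n_a(b)+\nu_a(b)$. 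Finally, for the strict statement, a one-line induction on $\theta_a(b)$ shows $n_a(b)\ge 1$ whenever $b\neq X_1$ (the left factor $\lambda(b)$ is never $X_1$), hence $2n_a(b)+\nu_a(b)=n_a(b)+\theta_a(b)\ge 1+\theta_a(b)>\theta_a(b)\ge 2n_h(b)+\nu_h(b)$, which is the claimed strict inequality. Everything apart from the excluded base sub-case is routine manipulation with \cref{def:folding} and \cref{def:fibo-order}.
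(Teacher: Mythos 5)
Your proof is correct and follows essentially the same route as the paper's: induction on $\theta_a(b)$, with the base case ruling out the sub-case $b\neq X_1$, $\lambda(b)\leq a$ by exactly the same lexicographic comparison of $h$ and $b$, and the inductive step splitting on whether $\lambda(b)\leq h$ and summing the folding counters over $\lambda(b)$ and $\mu(b)$. Your explicit derivation of the strict inequality from $n_a(b)\geq 1$ when $b\neq X_1$ is the argument the paper leaves implicit.
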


\begin{proof}
    We proceed by induction on $\theta_a(b)$.
    
    \step{Initialization for $\theta_a(b) = 1$} 
    Then either $b=X_1$ or $\lambda(b)\leq a$.
    If $b = X_1$, $n_a(b)=n_h(b)=0$ and $\nu_a(b)=\nu_h(b)=1$, so \eqref{ineg_surcoup} holds.
    Let us prove that the case $b \neq X_1$ with $\lambda(b) \leq a$ cannot happen.
    We deduce from $h<b$ that $\lambda(h) \leq \lambda(b)$. 
    In the case $h=(a,X_1)$, this relation leads to $\lambda(b)=a$ and $b=(a,\mu(b)) \leq (a,X_1)=h$, which is a contradiction. 
    In the case $a<\lambda(h)$, this relation leads to $a<\lambda(h)\leq\lambda(b)\leq a$, also a contradiction.

    \step{Induction for $\theta_a(b) \geq 2$}
    Then $n_a(b) \geq 1$.
    \begin{itemize}
        \item  \emph{Case $\lambda(b) \leq h$.} Then $n_h(b)=1$, $\nu_h(b)=0$ thus \eqref{ineg_surcoup} holds because the right-hand side is equal to $\theta_a(b)\geq 2$.
        
        \item \emph{Case $h < \lambda(b)$.} 
        By the induction hypothesis
        $2n_h(\lambda(b))+\nu_h(\lambda(b)) \leq n_a(\lambda(b)) + \nu_a(\lambda(b))$ and
        $2n_h(\mu(b))+\nu_h(\mu(b)) \leq n_a(\mu(b)) + \nu_a(\mu(b))$
        and \eqref{ineg_surcoup} follows by summing these two inequalities.
        \qedhere
    \end{itemize}
\end{proof}

\begin{theorem} \label{th:fibo-size-new}
    For every $a<b \in \B$, 
    \begin{equation} \label{eq:ab-fibo}
        \| [a, b] \|_\B \leq F_{2n_a(b)+\nu_a(b)}.
    \end{equation}
\end{theorem}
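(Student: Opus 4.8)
The plan is to prove \eqref{eq:ab-fibo} by induction on $\theta_a(b)$, mirroring the structure of the two special estimates \cref{p:lambda2b<a-easy,p:lambda2b<=a-easy} but now tracking the refined quantities $n_a(b)$ and $\nu_a(b)$ rather than $\theta_a(b)$ alone, and exploiting the two Fibonacci identities $F_{k+2} = F_{k+1}+F_k$ and the monotonicity lemmas \cref{Lem:Surcoupe_large,Cor:surcoupe_strict}. First I would dispose of the base case: when $\theta_a(b)=1$ we have $(a,b)\in\B$, so $\|[a,b]\|_\B = 1 \le F_{2n_a(b)+\nu_a(b)}$ since $2n_a(b)+\nu_a(b)\ge 1$. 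For the inductive step, assume $\theta_a(b)\ge 2$, so $b\neq X_1$ and $a<\lambda(b)<\mu(b)$, and write Jacobi's identity $[a,b] = [[a,\lambda(b)],\mu(b)] + [\lambda(b),[a,\mu(b)]]$. The key is to bound the contribution of each of the two terms by one of the two ``consecutive'' Fibonacci numbers $F_{2n_a(b)+\nu_a(b)-1}$ and $F_{2n_a(b)+\nu_a(b)-2}$ (or their analogues), so that the sum telescopes into $F_{2n_a(b)+\nu_a(b)}$.

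The main work is analyzing the two terms. For the second term $[\lambda(b),[a,\mu(b)]]$: by induction $\|[a,\mu(b)]\|_\B \le F_{2n_a(\mu(b))+\nu_a(\mu(b))}$, and for each $d\in\supp[a,\mu(b)]$ we have by \cref{thm:Reutenauer} (i.e.\ \eqref{eq:a<c'}) that $a<\lambda(d)$, hence by \cref{Cor:surcoupe_strict} (applied with $a$ playing the role of $a$, $d$ the role of $h$, and $b$ the role of $b$, once we know $\lambda(b)<d$) the weighted count strictly drops; one then recurses on $[\lambda(b),d]$ using either $\theta_{\lambda(b)}(d)=1$ (so the bracket lands directly in $\B$) or \cref{th:fibo-structure-new} together with \cref{p:theta-dec} to control $\theta_{\lambda(b)}(d)$. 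For the first term $[[a,\lambda(b)],\mu(b)]$: by induction $\|[a,\lambda(b)]\|_\B \le F_{2n_a(\lambda(b))+\nu_a(\lambda(b))}$, and for each $d\in\supp[a,\lambda(b)]$ one brackets $d$ with $\mu(b)$ (in one order or the other), again invoking \cref{th:fibo-structure-new} to know $\theta_{\lambda(\mu(b))}(d)\le 2$ or $d\le\lambda(\mu(b))$, and \cref{p:theta-dec}, to conclude $\|[d,\mu(b)]\|_\B$ or $\|[\mu(b),d]\|_\B$ is bounded appropriately. Summing, and using $n_a(b) = n_a(\lambda(b))+n_a(\mu(b))$, $\nu_a(b)=\nu_a(\lambda(b))+\nu_a(\mu(b))$ (which hold whenever $\theta_a(b)\ge 2$), together with $F$-monotonicity and the recurrence $F_{k}=F_{k-1}+F_{k-2}$, should close the induction.

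The hard part will be choosing the right bookkeeping so that the two summands really do fit under $F_{2n_a(b)+\nu_a(b)-1}$ and $F_{2n_a(b)+\nu_a(b)-2}$ respectively: the naive split gives $2n_a(\lambda(b))+\nu_a(\lambda(b))$ and $2n_a(\mu(b))+\nu_a(\mu(b))$, whose exponents sum to $2n_a(b)+\nu_a(b)$, not to something allowing a clean Fibonacci recurrence, so one needs the strict decrease from \cref{Cor:surcoupe_strict} to shave off the extra units and the superadditivity $F_j F_k \le F_{j+k}$ (for $j,k\ge 1$, $\ge$-direction actually $F_{j+k}\ge F_j F_k$ when one of them is $\ge 2$; more precisely $F_{j+k-1} \ge F_j F_{k-1} + F_{j-1} F_{k-1}$ type bounds) to turn a product of Fibonacci bounds into a single one. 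I expect that the cleanest route, rather than fighting a raw product bound, is to route through the intermediate quantities $N_1,N_2$ exactly as in \cref{p:lambda2b<a-easy,p:lambda2b<=a-easy} but with the $2$-bound replaced by a Fibonacci bound, i.e.\ prove simultaneously by induction the pair of estimates ``$\|[a,b]\|_\B \le F_{2n_a(b)+\nu_a(b)}$ and (a suitable one-step-smaller bound on the part of the support with $\theta_{\lambda(b)}(c)=2$)''. Establishing the precise form of that auxiliary inequality so that the induction is self-sustaining is the real obstacle; once it is correctly guessed, each case is a short verification using the lemmas already proved in \cref{s:fibo-elementary,s:fibo-thm-structure} and the Fibonacci recurrence.
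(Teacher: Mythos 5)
Your skeleton is the right one — induction on $\theta_a(b)$, the Jacobi split, \cref{Cor:surcoupe_strict} to shave indices on the recursive calls, and a Fibonacci cross identity to recombine — but the proposal stops exactly where the proof gets hard, and the route you bet on to finish is not the one that works. You explicitly leave the ``auxiliary inequality'' unguessed and declare the rest routine; in fact the paper does \emph{not} carry an $N_2$-type auxiliary bound through the main induction at all. It fights precisely the ``raw product bound'' you want to avoid: in the generic case it establishes $\|[[a,\lambda(b)],\mu(b)]\|_\B \le F_{2n_1+\nu_1}F_{2n_2+\nu_2-n_2}$ and $\|[\lambda(b),[a,\mu(b)]]\|_\B \le F_{2n_2+\nu_2}\max\{2,F_{2n_1+\nu_1-n_1}\}$ (the shaved indices coming from \cref{Cor:surcoupe_strict} applied to the supports, and the factor $2$ from \cref{th:fibo-structure-new}), and then closes with \eqref{eq:fibo-cross}, $F_pF_{q-1}+F_{p-1}F_q\le F_{p+q-1}$, after splitting off the degenerate subcases $(n_1,\nu_1)\in\{(1,1),(2,0)\}$ where the $\max$ collapses to $2$ and \eqref{eq:fibo-2+2} is needed instead. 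Your ``two consecutive Fibonacci numbers that telescope'' picture is accurate only for the case $\mu(b)=X_1$.

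The more serious gap is the case $\theta_a(\lambda(b))=1$, i.e.\ $\lambda^2(b)\le a<\lambda(b)$. There your uniform Jacobi recursion fails: $h:=(a,\lambda(b))$ satisfies neither hypothesis of \cref{Cor:surcoupe_strict} (since $\lambda(h)=a$, not $a<\lambda(h)$), so only the non-strict \cref{Lem:Surcoupe_large} is available, and the two terms of the Jacobi identity each cost on the order of $F_{m-2}$ with a factor $2$ on the second (where $m=2n_a(b)+\nu_a(b)$); the resulting $3F_{m-2}$ exceeds $F_m=F_{m-1}+F_{m-2}$ for large $m$. The paper escapes by treating this case entirely separately: the folding is right-nested (\cref{lem:tab-right}), hence $\nu_a(b)\le 1$ and $m\ge 2\theta_a(b)-1$, the separately proven \cref{p:lambda2b<=a-easy} gives $\|[a,b]\|_\B\le(\theta-2)2^{\theta-2}+\theta$, and the numerical inequality \eqref{eq:n2n-light} converts this into $F_{2\theta-1}\le F_m$. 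You cite \cref{p:lambda2b<a-easy,p:lambda2b<=a-easy} as models for the whole induction, but their actual role is as an indispensable, non-inductive input for exactly this one case; without recognizing that, and without the $\nu_a(b)\le1$ observation and the comparison $(\theta-2)2^{\theta-2}+\theta\le F_{2\theta-1}$, the induction does not close.
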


\begin{proof}
We proceed, as in \cref{p:rec-theta}, by induction on $\theta_a(b)$, by applying the classical rewriting scheme.
    We refer to the proof of \cref{p:rec-theta} for a detailed justification of why the induction on $\theta_a(b)$ is legitimate in this setting.
    
    \step{Case 1: $\theta_a(b) = 1$ i.e.\ either $b=X_1$ or $\lambda(b)\leq a$}
    Then $\| [a, b] \|_\B = 1$, which is the value in the right-hand side of~\eqref{eq:ab-fibo} both when $b = X_1$ and when $n_a(b) = 1$, $\nu_a(b) = 0$ since $F_1 = F_2 = 1$.
    
    \medskip
    
    From now on, we assume $\theta_a(b) > 1$, so $b \notin X$ and $a<\lambda(b)$. Thus $\lambda(b) \notin X$.
    
    \step{Case 2: $\theta_a(\lambda(b))=1$ i.e.\ $\lambda^2(b)\leq a < \lambda(b)$} 
    We apply \cref{p:lambda2b<=a-easy}.
    Combining \eqref{eq:l2b<=a-n0n1n2} and \eqref{eq:n2n-light} yields $\| [a,b] \|_\B \leq F_{2\theta_a(b)-1}$.
    Moreover $2\theta_a(b)-1 = 2n_a(b) + \nu_a(b) + (\nu_a(b)-1)$.
    By \cref{lem:tab-right}, $\nu_a(b) \in \{ 0, 1 \}$, so $F_{2\theta_a(b)-1} \leq F_{2n_a(b)+\nu_a(b)}$, and  \eqref{eq:ab-fibo} holds.
    
    \medskip
    
    From now on, we also assume $a < \lambda^2(b)$. 
    Hence $\theta_a(\lambda(b)) > 1$.
    The proof relies on the Jacobi decomposition \eqref{eq:jacobi-ab-alambdabu-new}.
    We introduce the notations $n_1 := n_a(\lambda(b))$, $\nu_1 := \nu_a(\lambda(b))$, $n_2 := n_a(\mu(b))$, $\nu_2 := \nu_a(\mu(b))$.
    In particular $n_1 + \nu_1 > 1$. By the induction hypothesis,
    \begin{align}
        \label{eq:alambda-f}
        \| [a, \lambda(b)] \|_\B 
        & \leq F_{2n_1+\nu_1}, \\
        \label{eq:amu-f}
        \| [a, \mu(b)] \|_\B 
        & \leq F_{2n_2+\nu_2}.
    \end{align}

    \step{Case 3: $\theta_a(\lambda(b))\geq 2$ i.e.\ $a<\lambda^2(b)$ and $\mu(b)=X_1$}
    Concerning the first term of \eqref{eq:jacobi-ab-alambdabu-new}, for each $d \in \supp [a, \lambda(b)]$, $(d, \mu(b)) = (d, X_1) \in \B$, so~\eqref{eq:alambda-f} yields $\|[[a,\lambda(b)], \mu(b)]\|_\B \leq F_{2n_1+\nu_1}$.
    Concerning the second term of \eqref{eq:jacobi-ab-alambdabu-new}, $d := (a, \mu(b)) = (a, X_1) \in \B$.
    If $\lambda(b) < d$, then $(\lambda(b), d) \in \B$ because $a < \lambda(b)$.
    If $d < \lambda(b)$, the induction hypothesis yields $\| [[a, \mu(b)], \lambda(b)] \|_\B \leq F_{2 n_1'+\nu_1'}$ where $n_1' := n_d(\lambda(b))$ and $\nu_1' := \nu_d(\lambda(b))$.
    By \cref{Cor:surcoupe_strict} (applied to $a \gets a$, $h \gets d= (a, X_1)$, $b \gets \lambda(b)$),  $2n_1'+\nu_1' < 2n_1+\nu_1$.
    Hence $\| [[a, \mu(b)], \lambda(b)] \|_\B \leq F_{2 n_1 + \nu_1 - 1}$.
    Summing our estimates for both terms of \eqref{eq:jacobi-ab-alambdabu-new} provides
    \begin{equation}
         \| [a, b] \|_\B \leq 
        F_{2n_1+\nu_1} + F_{2n_1+\nu_1-1}
        = F_{2n_1 + (\nu_1+1)},
    \end{equation}
    which proves \eqref{eq:ab-fibo} when $\mu(b) = X_1$ since $n_a(b) = n_1$ and $\nu_a(b) = \nu_1 + 1$.

    \step{Case 4: $\theta_a(\lambda(b))\geq 2$ i.e.\ $a<\lambda^2(b)$, $\mu(b) \notin X$ and $\theta_a(\mu(b))=1$}
    Since $\mu(b) \neq X_1$, $n_2 = 1$ and $\nu_2 = 0$.
    We consider both terms of \eqref{eq:jacobi-ab-alambdabu-new}.
    \begin{itemize}
        \item For the first term, let $d \in \supp [a, \lambda(b)]$.
        Since $(a, \lambda(b)) \notin \B$, hence, by \eqref{eq:a<c'}, $a < \lambda(d)$.
        Since $\theta_a(\mu(b)) = 1$, $\lambda(\mu(b)) \leq a < \lambda(d)$ so $\mu(b) < d$ automatically.
        Then $(\mu(b), d) \in \B$ since, by \cref{th:fibo-structure-new}, $\lambda(d) < \lambda(b) < \mu(b)$.
        So $\| [ [a, \lambda(b)], \mu(b)]\|_\B \leq F_{2n_1+\nu_1}$.
        
        \item For the second term, when $\theta_a(\mu(b)) = 1$, $d := (a, \mu(b)) \in \B$.
            
        If $\lambda(b) < d$, $(\lambda(b), d) \in \B$ since $a < \lambda(b)$, so $\| [\lambda(b), [a, \mu(b)]]\|_\B = 1$. 
        
        If $d < \lambda(b)$, by the induction hypothesis, $\| [d, \lambda(b)] \|_\B \leq F_{2 n_1' + \nu_1'}$ where $n_1' := n_d(\lambda(b))$ and $\nu_1' := \nu_d(\lambda(b))$. 
        By \cref{Lem:Surcoupe_large} (applied to $a \gets a$, $h \gets d$, $b \gets \lambda(b)$), $n_1' \leq n_1$ and $\nu_1' \leq \nu_1$. 
        So $\| [\lambda(b), [a, \mu(b)]]\|_\B \leq F_{2n_1+\nu_1}$.
    \end{itemize}
    As a conclusion, using \eqref{eq:fibo-2+2},
    \begin{equation}
        \| [a,b] \|_\B \leq 2 F_{2n_1+\nu_1}
        \leq F_{2 n_1 + \nu_1 + 2} = F_{2(n_1+1) + \nu_1},
    \end{equation}
    which proves \eqref{eq:ab-fibo} in this case.

    \step{Case 5: $\theta_a(\lambda(b))\geq 2$ i.e.\ $a<\lambda^2(b)$, $\mu(b) \notin X$ and $\theta_a(\mu(b))\geq 2$}

    \begin{itemize}
        \item We consider the first term of \eqref{eq:jacobi-ab-alambdabu-new}.
        Let $d \in \supp [a, \lambda(b)]$.
        Since $(a, \lambda(b)) \notin \B$, $a < \lambda(d)$, by~\eqref{eq:a<c'}.
        
        If $\mu(b) < d$, then $\| [\mu(b), d] \|_\B = 1$ since, by \cref{th:fibo-structure-new}, $\lambda(d) < \lambda(b) < \mu(b)$.
        
        If $d < \mu(b)$, by the induction hypothesis, $\| [d, \mu(b)] \|_\B \leq F_{2n_2'+\nu_2'}$ where $n_2' := n_d(\mu(b))$ and $\nu_2' := \nu_d(\mu(b))$.
        Since $a < \lambda(d)$, we can apply \cref{Cor:surcoupe_strict} to ($a \gets a$, $h \gets d$, $b \gets \mu(b) \neq X_1$), which proves that $2n_2'+\nu_2' \leq 2n_2+\nu_2-n_2$.
        
        Hence, summing over $d$ and using \eqref{eq:alambda-f}, this provides the estimate
        \begin{equation} \label{eq:almu-bis}
            \| [[a, \lambda(b)], \mu(b)] \|_\B \leq 
            F_{2 n_1+\nu_1} F_{2 n_2 + \nu_2 - n_2}.
        \end{equation}
        
        \item We consider the second term of \eqref{eq:jacobi-ab-alambdabu-new}.
        Let $d \in \supp [a, \mu(b)]$.
        Since we already covered the case $\theta_a(\mu(b)) = 1$, $(a,\mu(b))\notin \B$, hence, by \eqref{eq:a<c'}, $a < \lambda(d)$.
        
        If $\lambda(b) < d$, we know from the proof of \cref{th:fibo-structure-new} that $\| [\lambda(b), d] \|_\B \leq 2$.
        
        If $d < \lambda(b)$, by the induction hypothesis, $\| [d, \lambda(b)] \|_\B \leq F_{2n_1'+\nu_1'}$ where $n_1' = n_d(\lambda(b))$ and $\nu_1' = \nu_d(\lambda(b))$.
        Since $a < \lambda(d)$, we can apply \cref{Cor:surcoupe_strict} to ($a \gets a$, $h \gets d$, $b \gets \mu(b)$), which proves that $2n_1'+\nu_1' \leq 2n_1+\nu_1-1$.
        \end{itemize}
        
        Hence, summing over $d$ and using \eqref{eq:amu-f}, this provides the estimate
        \begin{equation} \label{eq:lamu-max3}
            \| [\lambda(b), [a, \mu(b)]] \|_\B \leq F_{2n_2+\nu_2} \max \{ 2, F_{2 n_1+\nu_1-n_1} \}.
        \end{equation}
        
        \step{Subcase 5.1: $n_1 = 1$ and $\nu_1 = 1$} 
        Then the maximum in the right-hand side of \eqref{eq:lamu-max3} is $2$ because $F_2 = 1$.
        Summing \eqref{eq:almu-bis} and \eqref{eq:lamu-max3}, and using \eqref{eq:fibo-2+2} yields
        \begin{equation} \label{eq:ab-ff-11}
            \|[a,b]\|_\B \leq F_3 F_{2n_2+\nu_2-1} + 2 F_{2n_2+\nu_2}
            = 2 F_{2n_2+\nu_2+1}
            \leq F_{2n_2+\nu_2+3},
        \end{equation}
        which proves \eqref{eq:ab-fibo} in this case since $2n_1+\nu_1 = 3$.
        
        \step{Subcase 5.2: $n_1 = 2$ and $\nu_1 = 0$}
        Then the maximum in the right-hand side of \eqref{eq:lamu-max3} is $2$ because $F_2 = 1$.
        Summing \eqref{eq:almu-bis} and \eqref{eq:lamu-max3}, and using \eqref{eq:fibo-cross} yields
        \begin{equation} \label{eq:ab-ff-20}
            \|[a,b]\|_\B \leq F_4 F_{2n_2+\nu_2-1} + 2 F_{2n_2+\nu_2}
            \leq F_{2n_2+\nu_2+3}
            = F_{2(n_1+n_2)+(\nu_1+\nu_2)-1},
        \end{equation}
        which proves \eqref{eq:ab-fibo} in this case.
        
        \step{Subcase 5.3: $n_1+\nu_1 > 2$}
        Then the maximum in the right-hand side of \eqref{eq:lamu-max3} is $F_{n_1+\nu_1}$ (with equality when $n_1 + \nu_1 = 3$).
        Summing \eqref{eq:almu-bis} and \eqref{eq:lamu-max3}, and using \eqref{eq:fibo-cross} yields,
        \begin{equation} \label{eq:ab-ff-n1n2}
            \|[a,b]\|_\B \leq F_{2n_1+\nu_1} F_{2n_2+\nu_2-1}
            + F_{2n_1+\nu_1-1} F_{2n_2+\nu_2}
            \leq F_{2(n_1+n_2) + (\nu_1+\nu_2) - 1},
        \end{equation}
        which proves \eqref{eq:ab-fibo} in this case.
\end{proof}

An immediate consequence is that \cref{th:fibo-size-new} allows to prove a slightly weaker version of \cref{thm:fibo-easy} (with an estimate by $F_{n-1}$ instead of $F_{n-2}$).

\begin{corollary} \label{cor:fibo-light}
    Let $a < b \in \B$ with $n := |a|+|b| \geq 3$.
    Then $\| [a,b] \|_\B \leq F_{n-1}$.
\end{corollary}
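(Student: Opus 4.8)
The plan is to deduce this corollary from \cref{th:fibo-size-new} by converting the index $2n_a(b)+\nu_a(b)$ of the Fibonacci bound into a quantity controlled by $n=|a|+|b|$. Since \cref{th:fibo-size-new} gives $\|[a,b]\|_\B \leq F_{2n_a(b)+\nu_a(b)}$, it suffices to establish the index inequality $2n_a(b)+\nu_a(b) \leq n-1$ and then invoke the monotonicity of the $0$-based Fibonacci sequence.

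First I would recall from \cref{Prop:Ta(b)_feuille_b} that $b=\e(\rf{a}{b})$, so that $|b|$ equals the sum of the lengths of the leaves of $\rf{a}{b}$, and that each leaf $c$ of $\rf{a}{b}$ satisfies $(a,c)\in\B$, hence $a<c$. The key point, which is exactly where the specific structure of this Hall set enters, is that by \cref{def:fibo-order} one has $X_0=\min\B$, so no leaf can equal $X_0$; consequently every leaf of $\rf{a}{b}$ is either $X_1$ (of length $1$) or an element of $\B$ of length at least $2$. Counting the leaves according to their nature, the $\nu_a(b)$ leaves equal to $X_1$ contribute $\nu_a(b)$ to $|b|$ while the remaining $n_a(b)$ leaves contribute at least $2n_a(b)$, whence $|b|\geq 2n_a(b)+\nu_a(b)$.

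Combining this with $|a|\geq 1$ yields $2n_a(b)+\nu_a(b)\leq |b| \leq n-1$. Since the $0$-based Fibonacci sequence $(F_\nu)_{\nu\in\N}$ is non-decreasing, we get $F_{2n_a(b)+\nu_a(b)}\leq F_{n-1}$, and \cref{th:fibo-size-new} then gives $\|[a,b]\|_\B\leq F_{n-1}$. I do not expect any real obstacle in this argument: the only slightly delicate observation is that a leaf of $\rf{a}{b}$ different from $X_1$ must have length at least $2$, which follows from the minimality of $X_0$; everything else is the (trivial) monotonicity of $F_\nu$ and the bookkeeping of leaf lengths.
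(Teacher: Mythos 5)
Your proposal is correct and follows essentially the same route as the paper: bound the Fibonacci index $2n_a(b)+\nu_a(b)$ by $|b|$ using the fact that every leaf of $\rf{a}{b}$ exceeds $a\geq X_0=\min\B$, hence is either $X_1$ or has length at least $2$, then use $|a|\geq 1$ and the monotonicity of $(F_\nu)$ together with \cref{th:fibo-size-new}. No gaps.
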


\begin{proof}
    Let $c \in L_\B(\rf{a}{b})$.
    So $c \neq X_0$.
    If $c \neq X_1$, $|c| \geq 2$.
    Thus, $|b| \geq 2 n_a(b) + \nu_a(b)$.
    Since $|a| \geq 1$, $n-1=|a|+|b|-1\geq 2n_a(b) + \nu_a(b)$ so \eqref{eq:ab-fibo} proves that $\|[a,b]\|_\B \leq F_{n-1}$.
\end{proof}

\begin{remark} \label{rk:cor-fibo}
    As soon as $|a| > 1$ or $|b| > 2 n_a(b) + \nu_a(b)$, estimate \eqref{eq:ab-fibo} even proves that $\|[a,b]\|_\B \leq F_{n-2}$.
    Hence, to conclude the proof of \cref{thm:fibo-easy}, it only remains to prove that, when $|a| = 1$ (so $a = X_0$) and $|b| = 2n_a(b) + \nu_a(b)$, one has $\|[a,b]\|_\B \leq F_{|b|-1}$.
    This is the goal of \cref{s:fibo-refined} (see \cref{th:X0b-fibo}).
\end{remark}

\subsection{Enhanced estimate for brackets with simple leaves}
\label{s:fibo-refined}

In this paragraph, we focus on the special case when $a = X_0$ and $b \in \B \setminus X$ has simple leaves in the sense that $L_\B(\rf{a}{b}) \subset A$ where $A := \{ (X_0,X_1), X_1 \}$.
Such brackets are those satisfying $|b| = 2 n_a(b) + \nu_a(b)$ and $|b| \geq 2$.
In this setting, we prove in \cref{th:X0b-fibo} that the index of the Fibonacci bound \eqref{eq:ab-fibo} can be decreased.

We start with a short lemma on structural properties of such brackets, then prove the desired estimate in a particular case before proceeding to the main result.

\begin{lemma} \label{lem:x0x1min}
    Let $b \in \B \cap \e(\Br(A))$.
    Then $b \geq (X_0, X_1)$ with equality iff ($b \neq X_1$ and $\lambda(b) = X_0$).
\end{lemma}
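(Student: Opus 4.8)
\textbf{Plan for the proof of \cref{lem:x0x1min}.}

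The statement to prove is: for every $b \in \B \cap \e(\Br(A))$, where $A = \{(X_0,X_1), X_1\}$, one has $b \geq (X_0,X_1)$, with equality if and only if $b \neq X_1$ and $\lambda(b) = X_0$. The plan is to induct on the length $|b|$ (equivalently on $|b|_A = n_a(b) + \nu_a(b)$, which is well-defined since $A$ is a free subset, as both of its elements are Hall elements and $A \cap (\Br_A,\Br_A) = \emptyset$). The base case is $|b|_A = 1$: then $b = (X_0,X_1)$ or $b = X_1$. In the first case $b = (X_0,X_1)$ so $b \geq (X_0,X_1)$ and $\lambda(b) = X_0$, consistent with the equality clause. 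In the second case $b = X_1 = \max\B$ by \cref{def:fibo-order}, so $b > (X_0,X_1)$ strictly, and indeed $b = X_1$ falls in the ``not equal'' alternative, consistent.

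For the inductive step, take $b \in \B \cap \e(\Br(A))$ with $|b|_A \geq 2$. Then $b \notin X$, so $b = (\lambda(b), \mu(b))$, and since $b \in \e(\Br(A))$ and $A$ is free, both $\lambda(b)$ and $\mu(b)$ lie in $\B \cap \e(\Br(A))$. Now I distinguish cases according to $\lambda(b)$. If $\lambda(b) = X_0$, then immediately $b = (X_0, \mu(b))$; I must check $(X_0,\mu(b)) \geq (X_0,X_1)$. Since $\mu(b) \in \B \cap \e(\Br(A))$, by the induction hypothesis $\mu(b) \geq (X_0,X_1) > X_0 = \lambda(b)$, which is compatible with $b \in \B$; and by \cref{def:fibo-order}, $(X_0,\mu(b))$ compared with $(X_0,X_1)$ depends on comparing $\mu(b)$ with $X_1$, and $\mu(b) \leq X_1 = \max\B$, so $(X_0,\mu(b)) \leq (X_0,X_1)$; combined with the induction-hypothesis lower bound applied carefully we get $b \geq (X_0,X_1)$, and $b = (X_0,X_1)$ exactly when $\mu(b) = X_1$. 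Hmm — but we also need $b > (X_0,X_1)$ is impossible when $\lambda(b)=X_0$: actually when $\lambda(b) = X_0$ we have $b \le (X_0, X_1)$ because $\mu(b) \le X_1$, so together $b = (X_0,X_1)$; wait, that would force $|b|_A = 1$. Let me re-examine: if $\lambda(b) = X_0$ and $|b|_A \geq 2$ then $\mu(b) \in \B \cap \e(\Br(A))$ has $|\mu(b)|_A \geq 1$; if $\mu(b) = X_1$ then $b = (X_0,X_1)$ has $|b|_A = 1$, contradiction; so $\mu(b) \neq X_1$, hence $\mu(b) < X_1$, hence $b = (X_0,\mu(b)) < (X_0,X_1)$ by \cref{def:fibo-order}, contradicting $b \in \B$ only if... no, $b < (X_0,X_1)$ is fine as an order relation but contradicts the claim. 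So in fact $\lambda(b) = X_0$ with $|b|_A \geq 2$ \emph{cannot occur} — this needs to be shown, and it follows because $b = (\lambda(b),\mu(b)) \in \B$ requires $\lambda(b) < \mu(b)$ and $\lambda(\mu(b)) \leq \lambda(b) = X_0$, forcing $\lambda(\mu(b)) = X_0$ (if $|\mu(b)| > 1$) so $\mu(b) = (X_0, \dots)$ with $\mu(b) \in \e(\Br(A))$, but then $\mu(b) \leq (X_0,X_1)$ by the above and $\mu(b) > X_0$... one pushes this to a contradiction, or more cleanly: a minimal-length counterexample argument. The remaining case is $\lambda(b) \neq X_0$: then $\lambda(b) \in \B \cap \e(\Br(A))$ with $\lambda(b) \neq X_0$, and since $\lambda(b) \neq X_1$ (as $X_1 = \max\B > b > \lambda(b)$... actually $\lambda(b) < b \leq X_1$, and $\lambda(b) = X_1$ is impossible), the induction hypothesis gives $\lambda(b) > (X_0,X_1)$ strictly (the equality case of the IH would force $\lambda(\lambda(b)) = X_0$; we need to rule out $\lambda(b) = (X_0,X_1)$, which is possible — in that case $b = ((X_0,X_1), \mu(b))$ and by \cref{def:fibo-order}, $b > (X_0, \cdot)$ for any second coordinate since $\lambda(b) = (X_0,X_1) > X_0$, so $b > (X_0,X_1)$, giving the strict inequality which is what we want). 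Then by \cref{def:fibo-order}, $b = (\lambda(b),\mu(b))$ with $\lambda(b) \geq (X_0,X_1)$ gives $b > (X_0,X_1)$ (strictly, since the first coordinates already decide it, $\lambda(b) \geq (X_0,X_1) > X_0 = \lambda((X_0,X_1))$), which is consistent with $\lambda(b) \neq X_0$.

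\textbf{Main obstacle.} The delicate point is cleanly organizing the case analysis so that the equality characterization comes out correctly: one must verify that $\lambda(b) = X_0$ holds \emph{only} at length one (i.e., there is no Hall element of $\e(\Br(A))$ of the form $(X_0, w)$ with $|w|_A \geq 2$ other than... actually $(X_0, (X_0,X_1))$ has $\lambda = X_0$ and length $3$ — wait, is $(X_0,(X_0,X_1)) \in \e(\Br(A))$? Its leaves are $X_0, X_0, X_1$, but $A = \{(X_0,X_1), X_1\}$ and $X_0 \notin A$, so $(X_0,(X_0,X_1)) \notin \e(\Br(A))$). This is the crux: membership in $\e(\Br(A))$ forbids $X_0$ from appearing as a standalone leaf, so any $b \in \e(\Br(A)) \setminus A$ must have all its ``$X_0$'s'' bound inside copies of $(X_0,X_1)$, which forces $\lambda(b) \neq X_0$ once $|b|_A \geq 2$. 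I would make this precise first as a short preliminary observation — if $b \in \e(\Br(A))$ and $\lambda(b) = X_0$ then $X_0 \in L_\B(\text{tree of } b \text{ over } A)$ as a leaf, impossible unless the $A$-tree of $b$ is a single leaf, forcing $b = (X_0,X_1)$ — and then the rest of the induction is routine bookkeeping with \cref{def:fibo-order} and the Hall axioms. I expect this preliminary observation plus careful tracking of strict-vs-non-strict inequalities to be the only real work; everything else is mechanical.
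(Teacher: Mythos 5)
Your plan is correct, but it re-derives from scratch something the paper already has as a general tool. The inequality $b \geq (X_0,X_1)$ for every $b \in \B \cap \e(\Br(A))$ is exactly \cref{Lem:BRA} applied to the finite subset $A = \{(X_0,X_1),X_1\}$, whose minimum is $(X_0,X_1)$ because $X_1 = \max\B$; the paper's proof is just that citation plus the remark that the equality characterization follows from the minimality of $X_0$ (if $b \neq X_1$ and $\lambda(b) = X_0$ then $\mu(b) \leq X_1$ forces $b \leq (X_0,X_1)$ by \cref{def:fibo-order}, hence equality; conversely $b > (X_0,X_1)$ with $b \neq X_1$ forces $\lambda(b) > X_0$ since $\mu(b) > X_1$ is impossible). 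Your induction on $|b|_A$ is essentially the proof of \cref{Lem:BRA} specialized to this $A$, and your ``crux'' observation — that $X_0 \notin \e(\Br(A))$, so $\lambda(b) = X_0$ can only happen when $|b|_A = 1$ — is a valid substitute for the minimality argument. The writeup meanders (you first go down the wrong branch on whether $\lambda(b)=X_0$ can occur at length $\geq 2$ before correcting yourself), but the final argument is sound; the only improvement to suggest is to notice that \cref{Lem:BRA} does the inductive work for you.
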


\begin{proof}
    This is an immediate consequence of \cref{Lem:BRA} and the minimality of $X_0$ in $\B$.
\end{proof}

\begin{lemma} \label{lem:ab-forme-speciale}
    Let $a := X_0$ and $b \in \B \setminus X$ such that $|b| = 2 n_a(b) + \nu_a(b) \geq 2$.
    Assume that $\lambda^2(b) = X_0 < \lambda(b)$.
    Then $b = \ad_{(X_0,X_1)}^{n_a(b)}(X_1)$ and $\|[a,b]\|_\B \leq F_{2n_a(b)+\nu_a(b)-1}$.
\end{lemma}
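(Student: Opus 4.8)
The plan is to first pin down the exact shape of $b$, then to expand $[X_0,b]$ as an explicit Lie polynomial in the three basis elements $w:=(X_0,(X_0,X_1))$, $u:=(X_0,X_1)$ and $X_1$ — which form a free alphabetic subset of $\B$ — and finally to conclude by a counting argument. For the first step, note that $\lambda^2(b)=X_0=a<\lambda(b)$ forces $\theta_a(b)\geq 2$, hence $(a,b)\notin\B$ and $\lambda^2(b)\leq a$, so \cref{lem:tab-right} applies: $\rf{a}{b}$ is right-nested, with leaves $\lambda(b),\lambda(\mu(b)),\dots,\lambda(\mu^{\theta-2}(b)),\mu^{\theta-1}(b)$ (where $\theta:=\theta_a(b)$), only the last of which can equal $X_1$. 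The hypothesis $|b|=2n_a(b)+\nu_a(b)$ forces every non-$X_1$ leaf to have length exactly $2$, hence to equal the unique length-$2$ element $(X_0,X_1)$ of $\B$; therefore $\lambda(\mu^j(b))=(X_0,X_1)$ for $j\leq\theta-2$. If $\mu^{\theta-1}(b)=(X_0,X_1)$ then $\mu^{\theta-2}(b)=((X_0,X_1),(X_0,X_1))\notin\B$ (as $(X_0,X_1)\not<(X_0,X_1)$), a contradiction, so $\mu^{\theta-1}(b)=X_1$. Consequently $\nu_a(b)=1$, $m:=n_a(b)=\theta-1\geq 1$, and $b=\ad_{(X_0,X_1)}^{m}(X_1)$; it remains to prove $\|[X_0,b]\|_\B\leq F_{2m}$.

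For the expansion, observe that $[X_0,\ev(u)]=\ev(w)$ and $[X_0,X_1]=\ev(u)$, where $w,u\in\B$. Applying Leibniz' rule for the derivation $\ad_{X_0}$ to $\ev(b)=\ad_{\ev(u)}^{m}(X_1)$ and dropping the contribution of the innermost leaf (it produces $[\ev(u),\ev(u)]=0$) yields, in $\mathcal{L}(X)$,
\begin{equation}
    [X_0,b]=\sum_{i=1}^{m}\ad_{\ev(u)}^{\,i-1}\bigl([\ev(w),\ad_{\ev(u)}^{\,m-i}(X_1)]\bigr).
\end{equation}
Since $\mathcal{X}:=\{w,u,X_1\}$ is a free alphabetic subset of $\B$ with $w<u<X_1$ (this is exactly the set used in \cref{prop:lower-theta}), \cref{Prop:free-lie-alphabetic} provides a Hall set $\B':=\e^{-1}(\B\cap\Br_\mathcal{X})$ over $\mathcal{X}$, with letters $x_0:=w<x_1:=u<x_2:=X_1$, such that $\|[X_0,b]\|_\B=\|S_m\|_{\B'}$ where $S_m:=\sum_{i=1}^{m}\ad_{x_1}^{i-1}([x_0,\ad_{x_1}^{m-i}(x_2)])$. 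A one-line check gives $r(x_0,x_1)=1$ in $\B'$: $\dad_{x_1}(x_0)=(x_0,x_1)\in\B'$, while $\e(\dad_{x_1}^{2}(x_0))=((w,u),u)\notin\B$ because $\lambda((w,u))=w>X_0=\lambda(u)$ forces $(w,u)>u$.

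For the counting, I would re-run the computation underlying \cref{thm:X3-2^n} in $\B'$ (with $r=1$): for each $k\in\intset{1,m-1}$,
\begin{equation}
    [x_0,\ad_{x_1}^{k}(x_2)]=\ad_{x_1}^{k}((x_0,x_2))+\sum_{s=1}^{k}\binom{k}{s}[\ad_{x_1}^{s-1}((x_0,x_1)),\ad_{x_1}^{k-s}(x_2)],
\end{equation}
and $[x_0,x_2]=\ev((x_0,x_2))$ when $k=0$, where by \cref{prop:some_elements_hall} every displayed bracket is, up to sign, a single element of $\B'$. Applying $\ad_{x_1}^{m-1-k}$ through Leibniz and invoking \cref{prop:some_elements_hall} once more (the relevant powers $\ad_{x_1}^{j}((x_0,x_1))$, $\ad_{x_1}^{j}((x_0,x_2))$, $\ad_{x_1}^{j}(x_2)$ and the brackets of \cref{prop:some_elements_hall}(3) all lie in $\B'$), $S_m$ becomes a signed sum of single basis elements, each equal either to $\ad_{x_1}^{m-1}((x_0,x_2))$ (with total multiplicity $m$) or to $[\ad_{x_1}^{\alpha}((x_0,x_1)),\ad_{x_1}^{\beta}(x_2)]$ with $\alpha+\beta=m-2$ (with multiplicity $\sum\binom{k}{s}\binom{m-1-k}{t}$ over $(k,s,t)$ with $s-1+t=\alpha$, $s\geq 1$). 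By the triangle inequality $\|S_m\|_{\B'}$ is at most the total multiplicity, which the Vandermonde identity $\sum_{k}\binom{k}{s}\binom{m-1-k}{t}=\binom{m}{s+t+1}$ evaluates to
\begin{equation}
    m+\sum_{\alpha=0}^{m-2}(\alpha+1)\binom{m}{\alpha+2}=m+\sum_{\ell=2}^{m}(\ell-1)\binom{m}{\ell}=(m-2)2^{m-1}+m+1.
\end{equation}
An elementary induction shows $(m-2)2^{m-1}+m+1\leq F_{2m}$ for all $m\geq 1$ (the defect $F_{2m}-(m-2)2^{m-1}-(m+1)$ satisfies a linear recurrence whose source term is nonnegative for $m\geq 6$, and the cases $m\leq 5$ are checked directly), and since $\nu_a(b)=1$ this gives $\|[X_0,b]\|_\B\leq F_{2m}=F_{2n_a(b)+\nu_a(b)-1}$.

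The main obstacle is the counting step: one must verify that after the two successive Leibniz expansions every surviving term is genuinely a single element of $\B'$ — which is what lets one bound the $\ell^1$-norm by the total multiplicity, sidestepping sign cancellations — and this rests on a careful, repeated application of \cref{prop:some_elements_hall}, legitimate precisely because $\{x_0,x_1,x_2\}$ is the alphabet of the Hall set $\B'$ and $r(x_0,x_1)=1$. The multiplicity bookkeeping via the Vandermonde identity and the closing Fibonacci inequality are then routine.
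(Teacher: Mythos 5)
Your proof is correct, and for the size estimate it takes a genuinely different route from the paper. The identification of the shape of $b$ is essentially the paper's Step~1 (the paper gets $\lambda(b)=(X_0,X_1)$ from \cref{lem:x0x1min} rather than from \cref{lem:tab-right} plus the length count, but the content is the same). For the norm bound, the paper invokes the general recursive estimate $(\theta-2)2^{\theta-2}+\theta$ of \cref{p:lambda2b<=a-easy}, which with $\theta=m+1$ gives $(m-1)2^{m-1}+m+1$; since this exceeds $F_{2m}$ for small $m$, the paper then falls back on \cref{eq:n2n-9} for $m\geq 8$ and on computer algebra for $m\leq 7$ (mentioning, without carrying it out, an alternative via \cref{cor:atheta-1} and \cref{eq:n2n-f2n2}). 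You instead compute the decomposition explicitly: the double Leibniz expansion inside $\mathcal{L}(\{(X_0,(X_0,X_1)),(X_0,X_1),X_1\})$, legitimized by \cref{Prop:free-lie-alphabetic} and the observation $r(x_0,x_1)=1$ (both of which I checked hold in this Hall set), yields the exact term count $(m-2)2^{m-1}+m+1=A(m+1)-1$, which is smaller by $2^{m-1}$ and is $\leq F_{2m}$ for \emph{all} $m\geq 1$ by \cref{eq:n2n-f2n2} (indeed with equality for $m\leq 4$). In effect you have reproved the relevant special case of \cref{cor:atheta-1} by direct expansion rather than by the $N_1/N_2$ bookkeeping of the paper, buying a self-contained argument with no computer verification, at the cost of the Vandermonde-based multiplicity count. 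All the basis-membership claims you need are indeed covered by \cref{prop:some_elements_hall} with $r=1$, and bounding the $\ell^1$-norm by the total multiplicity via the triangle inequality is sound, so there is no gap.
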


\begin{proof} 
    Let us prove both claims successively.
    
    \step{First item}
    Since $\theta_a(b) > 1$, $\lambda(b) = \e(\lambda(\rf{a}{b}))$ so $\lambda(b) \in \e(\Br(A))$. By \cref{lem:x0x1min}, $\lambda(b) = (X_0, X_1)$. 
    Let $n \geq 1$ such that $b = \ad_{(X_0,X_1)}^n (c)$ with either $c = X_1$ or $\lambda(c) < (X_0,X_1)$. 
    Let us check that $c \neq X_1$ cannot occur.
    If $c\neq X_1$ then $c \in \e(\Br(A))$.
    If $c = (X_0, X_1)$, $b \notin \B$ since $\ev(b) = 0$. 
    Otherwise $\lambda(c) \in \e(\Br(A))$ so $\lambda(c) \geq (X_0, X_1)$ by \cref{lem:x0x1min} which contradicts $\lambda(c) < (X_0, X_1)$.
    So $c = X_1$ and $n = n_a(b)$.
    
    \step{Size estimate}
    Since $\lambda^2(b) = X_0 = a$, estimate \eqref{eq:l2b<=a-n0n1n2} of \cref{p:lambda2b<=a-easy} yields $\|[a,b]\|_\B \leq (n-1)2^{n-1} + n + 1$ where $n = n_a(b)$.
    Moreover, $|b|=2n_a(b)+\nu_a(b) = 2n+1$ so $F_{|b|-1} = F_{2n}$.
    This proves the claimed estimate for $n \geq 8$ thanks to \cref{eq:n2n-9}.
    For $n \in \intset{1,7}$, the estimate $\| [a,b] \|_\B \leq F_{2n}$ can be checked using computer algebra for these short explicit brackets. 
    It could alternatively be proved from the enhanced bound of \cref{cor:atheta-1} and \cref{eq:n2n-f2n2}.
\end{proof}

\begin{proposition} \label{th:X0b-fibo}
    Let $a := X_0$ and $b \in \B \setminus X$ such that $|b| = 2 n_a(b) + \nu_a(b) \geq 2$. Then
    \begin{equation} \label{eq:ab-fibo-moins1}
        \| [a, b] \|_\B \leq F_{2n_a(b)+\nu_a(b)-1}.
    \end{equation}
\end{proposition}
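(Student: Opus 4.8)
The plan is to prove \cref{th:X0b-fibo} by induction on $\theta := \theta_{X_0}(b)$, re-running the case analysis of the proof of \cref{th:fibo-size-new} but now with the sharper induction hypothesis \eqref{eq:ab-fibo-moins1} in hand and with the extra structural information available when $b$ has simple leaves. First I would dispose of the easy situations: if $(X_0,b)\in\B$ then $\theta=1$ and, since the unique leaf of $\rf{X_0}{b}$ has length at most $2$ and is not $X_1$, necessarily $b=(X_0,X_1)$, for which both sides of \eqref{eq:ab-fibo-moins1} equal $1$; and if $\lambda^2(b)=X_0$ (which in particular covers $\theta\le 2$) the estimate is exactly \cref{lem:ab-forme-speciale}. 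So one may assume $\theta\ge 3$ and $X_0<\lambda^2(b)$, i.e.\ $\theta_{X_0}(\lambda(b))\ge 2$; then $\lambda(b)$ and $\mu(b)$ again have simple leaves, with $|\lambda(b)|=2n_1+\nu_1$ and $|\mu(b)|=2n_2+\nu_2$ where $n_i,\nu_i$ denote the relevant leaf counts, and the induction hypothesis applies to $[X_0,\lambda(b)]$ and, when $\mu(b)\ne X_1$, to $[X_0,\mu(b)]$.

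The backbone is Jacobi's identity $[X_0,b]=[[X_0,\lambda(b)],\mu(b)]+[\lambda(b),[X_0,\mu(b)]]$, treated along the same three cases $\mu(b)=X_1$, ($\mu(b)\notin X$ and $\theta_{X_0}(\mu(b))=1$), and $\theta_{X_0}(\mu(b))\ge 2$ as in the proof of \cref{th:fibo-size-new}. In each nested bracket I would use \cref{th:fibo-structure-new} to control the supports: every $d$ appearing in $\supp[X_0,\lambda(b)]$ or $\supp[X_0,\mu(b)]$ satisfies $X_0<\lambda(d)$ by \eqref{eq:a<c'} and $\lambda(d)<\lambda(b)$ (resp.\ $<\mu(b)$), which both makes the subsequent brackets $[d,\mu(b)]$, $[\mu(b),d]$, $[\lambda(b),d]$, $[d,\lambda(b)]$ either fall directly into $\pm\ev(\B)$ or be amenable to the induction hypothesis, and permits invoking \cref{Cor:surcoupe_strict} (with $h=d$, and with $h=(X_0,X_1)$ when that element occurs as the sibling of the distributed leaf) to strictly lower the weighted leaf counts $2n_d(\cdot)+\nu_d(\cdot)$ below $2n_i+\nu_i$; \cref{Lem:Surcoupe_large} supplies the non-strict monotonicity when only that is available. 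The decisive new ingredient is the inequality $n_{X_0}(c)\ge 1$, valid for every $c\in\B\setminus X$ with simple leaves: the leftmost leaf of $\rf{X_0}{c}$ is reached by iterating $\lambda$ until one meets a bracket whose $\lambda$-image is $X_0$ — it cannot reach $X_1$, which is never an iterated left factor in this Hall set — and by \cref{lem:x0x1min} that leaf is $(X_0,X_1)$. Applied to $\lambda(b)$ and to $\mu(b)$ (when $\mu(b)\ne X_1$), this gives $n_1\ge 1$ and $n_2\ge 1$, which is exactly what converts the Fibonacci sums produced by the recursion — of the form $F_{2n_1+\nu_1-1}F_{2n_2+\nu_2-1}$, $F_{2n_1+\nu_1-1}F_{n_2+\nu_2}$, $F_{n_1+\nu_1}+F_{2n_2+\nu_2-1}$, and so on — into a single Fibonacci number of index $2n_1+\nu_1+2n_2+\nu_2-1=2n_{X_0}(b)+\nu_{X_0}(b)-1$, via \eqref{eq:fibo-2+2} and \eqref{eq:fibo-cross}.

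I expect the genuine obstacle to be the arithmetic bookkeeping rather than any conceptual point: the gain of one Fibonacci index over \cref{th:fibo-size-new} is fragile, and in the corner subcases where $\theta_{X_0}(\lambda(b))=2$ — which forces $n_1=1$, $\nu_1=1$, equivalently $\lambda(b)=\dad_{X_1}^{\nu_1}((X_0,X_1))$ — the crude bound $\|[\lambda(b),d]\|_\B\le 2$ inherited from the proof of \cref{th:fibo-structure-new} is no longer sharp enough to close the recursion at the required index. For those configurations the plan is either to introduce, in the spirit of \cref{s:fibo-right-quick}, a finer splitting $\|[X_0,b]\|_\B=N_1+N_2$ of the support according to whether $\theta_{\lambda(b)}(c)$ equals $1$ or $2$, and carry separate recursions for $N_1$ and $N_2$ (so that the ``$2$'' is actually a ``$1$'' on the $N_1$-part while the $N_2$-part alone carries the geometric factor), or else — for the finitely many resulting explicit families, principally $b=\dad_{X_1}^{m}((X_0,X_1))$ — to compute $[X_0,b]$ directly from the formulas of \cref{prop:geometric_general_formulas}, using that $r(X_0,X_1)=+\infty$ in this Hall set, exactly as \cref{lem:ab-forme-speciale} handles its small cases by computer algebra. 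Once every subcase is closed at index $2n_{X_0}(b)+\nu_{X_0}(b)-1$, the induction is complete.
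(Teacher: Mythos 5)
Your plan is essentially the paper's proof: induction on $\theta_{X_0}(b)$, reduction of the base cases to \cref{lem:ab-forme-speciale}, the same Jacobi split with the three cases on $\mu(b)$, the same use of \cref{th:fibo-structure-new}, \cref{Cor:surcoupe_strict} and \cref{Lem:Surcoupe_large}, the observation $n_1,n_2\ge 1$ via \cref{lem:x0x1min} (which is indeed the decisive point), and the fallback to the exact value $\|[X_0,\dad_{X_1}^{k}(X_0)]\|_\B=F_k$ for the thin family --- note that your family $b=\dad_{X_1}^{m}((X_0,X_1))=\dad_{X_1}^{m+1}(X_0)$ is precisely the subcase $\mu(b)=X_1$, $n_{X_0}(b)=1$, where the generic recursion genuinely cannot close and \eqref{eq:X0bn-Fn} is needed. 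The one spot where your diagnosis goes astray is the remaining corner case $\theta_{X_0}(\lambda(b))=2$, i.e.\ $\lambda(b)=((X_0,X_1),X_1)$ with $\mu(b)$ arbitrary: this is not a finite family, so it cannot be settled by computing $[X_0,b]$ directly, and the obstruction is not the bound $\|[\lambda(b),d]\|_\B\le 2$ in the second Jacobi term but the factor $\|[X_0,\lambda(b)]\|_\B$ in the first. The fix is already contained in your own sharpened induction hypothesis: $F_{2n_1+\nu_1-1}=F_2=1$, and indeed $[X_0,((X_0,X_1),X_1)]$ reduces by a Jacobi cancellation to the single basis element $\ev((\ad_{X_0}^2(X_1),X_1))$. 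Replacing the factor $F_3=2$ of \eqref{eq:almu-bis} by $1$ then yields $F_{2n_2+\nu_2-1}+2F_{2n_2+\nu_2}=F_{2n_2+\nu_2+2}$, which is exactly the target index; this is what the paper does, and no $N_1/N_2$ splitting is required.
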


\begin{proof}
    The proof follows along the same lines as the proof of \cref{th:fibo-size-new}, taking advantage of the additional assumptions.
    
     \step{Case 1: $\theta_a(b) = 1$ i.e.\ either $b=X_1$ or $\lambda(b)\leq a$}
    Then $\|[a,b]\|_\B = 1$ and $F_{2n_a(b)+\nu_a(b)-1} \geq F_1 = 1$ since we assumed that $|b| = 2n_a(b)+\nu_a(b) \geq 2$.
    So \eqref{eq:ab-fibo-moins1} holds in this case.
    
    \step{Case 2: $\theta_a(\lambda(b))=1$ i.e.\ $\lambda^2(b)\leq a < \lambda(b)$}
    Since $\lambda^2(b) \leq a = X_0$ and $X_0$ is minimal in $\B$, $\lambda^2(b) = X_0$.
    Then \eqref{eq:ab-fibo-moins1} follows from  \cref{lem:ab-forme-speciale}.
    
    \medskip
    
    From now on, we assume $a < \lambda^2(b)$ (so $\theta_a(\lambda(b)) > 1$).
    We reuse the notations $n_1 := n_a(\lambda(b))$, $\nu_1 := \nu_a(\lambda(b))$, $n_2 := n_a(\mu(b))$, $\nu_2 := \nu_a(\mu(b))$.
    
    \step{Case 3: $\theta_a(\lambda(b))\geq 2$ i.e.\ $a<\lambda^2(b)$ and $\mu(b)=X_1$}
    Then $n_a(b) = n_1$ and $\nu_a(b) = \nu_1 + 1$.
    \begin{itemize}
        \item If $n_a(b) = 1$, then $b = \dad_{X_1}^{\nu_a(b)}(X_0,X_1)
        = \dad_{X_1}^{\nu_a(b)+1}(X_0)$.
        Since $r(X_0,X_1) = + \infty$, \eqref{eq:X0bn-Fn} yields $\|[a,b]\|_\B = F_{\nu_a(b)+1}$.
        Since $n_a(b) = 1$, $\nu_a(b)+1= 2n_a(b) + \nu_a(b)-1$ so \eqref{eq:ab-fibo-moins1} is proved.
        
        \item If $n_a(b) = n_1 > 1$, we write \eqref{eq:jacobi-ab-alambdabu-new}.
        Concerning the first term of \eqref{eq:jacobi-ab-alambdabu-new}, we apply \eqref{eq:ab-fibo-moins1} to $a \gets a$ and $b \gets \lambda(b)$ (the latter indeed satisfies the assumptions of the proposition).
        Hence $\|[a,\lambda(b)]\|_\B \leq F_{2n_1+\nu_1-1}$.
        For each $d \in \supp [a, \lambda(b)]$, $(d, \mu(b)) = (d, X_1) \in \B$, so $\|[[a,\lambda(b)], \mu(b)]\|_\B \leq F_{2n_1+\nu_1-1}$.
        Concerning the second term of \eqref{eq:jacobi-ab-alambdabu-new}, $d := (a, \mu(b)) = (a, X_1) \in \B$.
        If $\lambda(b) < d$, then $(\lambda(b), d) \in \B$ because $a < \lambda(b)$.
        If $d < \lambda(b)$, \eqref{eq:ab-fibo} yields $\| [[a, \mu(b)], \lambda(b)] \|_\B \leq F_{2 n_1'+\nu_1'}$ where $n_1' := n_d(\lambda(b))$ and $\nu_1' := \nu_d(\lambda(b))$.
        By \cref{Cor:surcoupe_strict} (applied to $a \gets a$, $h \gets d= (a, X_1)$, $b \gets \lambda(b)$), $2n_1'+\nu_1' \leq 2n_1 + \nu_1 - n_1 \leq 2n_1+\nu_1-2$.
        Hence $\| [[a, \mu(b)], \lambda(b)] \|_\B \leq F_{2 n_1 + \nu_1 - 2}$.
        Summing our estimates for both terms of \eqref{eq:jacobi-ab-alambdabu-new} provides $
         \| [a, b] \|_\B \leq 
        F_{2n_1+\nu_1-1} + F_{2n_1+\nu_1-2}
        = F_{2n_1 + \nu_1}$,
        which proves \eqref{eq:ab-fibo-moins1} when $\mu(b) = X_1$ since $n_a(b) = n_1$ and $\nu_a(b) = \nu_1 + 1$.
    \end{itemize}
    
    \step{Case 4: $\theta_a(\lambda(b))\geq 2$ i.e.\ $a<\lambda^2(b)$, $\mu(b) \notin X$ and $\theta_a(\mu(b))=1$}
    This case does not happen.
    Indeed, since $\mu(b) \neq X_1$, $\theta_a(\mu(b)) = 1$ implies that $\lambda(\mu(b)) \leq a = X_0$ so, by \cref{lem:x0x1min}, $\mu(b) = (X_0, X_1)$.
    But, by \cref{lem:x0x1min}, $\lambda(b) \geq (X_0,X_1)$, which contradicts $\lambda(b) < \mu(b)$.
    
    \step{Case 5: $\theta_a(\lambda(b))\geq 2$ i.e.\ $a<\lambda^2(b)$, $\mu(b) \notin X$ and $\theta_a(\mu(b))\geq 2$} 
    We proceed exactly as in the \emph{general case} of the proof of    \cref{th:fibo-size-new}.
    When $(n_1,\nu_1) = (2,0)$ or $n_1+\nu_1 > 2$, estimates \eqref{eq:ab-ff-20} and \eqref{eq:ab-ff-n1n2} directly prove the conclusion \eqref{eq:ab-fibo-moins1}.
    When $n_1 = 1$ and $\nu_1 = 1$, $\lambda(b) = ((X_0,X_1),X_1)$ so $[a,\lambda(b)] = \ev(d)$ where $d := (\ad_{X_0}^2(X_1),X_1) \in \B$ and \eqref{eq:almu-bis} becomes
    \begin{equation}
        \| [[a, \lambda(b)], \mu(b)] \|_\B \leq F_{2n_2+\nu_2-n_2}.
    \end{equation}
    Together with \eqref{eq:lamu-max3}, this yields
    \begin{equation}
        \|[a,b]\|_\B \leq F_{2n_2+\nu_2-1} + 2 F_{2n_2+\nu_2}
        = F_{2n_2+\nu_2+2}
        = F_{2(n_1+n_2)+(\nu_1+\nu_2)-1},
    \end{equation}
    which concludes the proof of \eqref{eq:ab-fibo-moins1}.
\end{proof}

As mentioned in \cref{rk:cor-fibo}, the bound \eqref{eq:ab-fibo-moins1} of \cref{th:X0b-fibo} concludes the proof of \cref{thm:fibo-easy} with the optimal index of the Fibonacci sequence.

\color{black}
\subsection{Optimal \texorpdfstring{$\theta$}{theta}-based estimate}
\label{s:fibo-theta}

In this paragraph, we derive an optimal bound for $\|[a,b]\|_\B$ with respect to $\theta_a(b)$.
We define the following integer-valued function which will correspond to the optimal $\theta$-based estimate of the structure constants of $\B$:
\begin{equation} \label{eq:a-theta}
    A(\theta) := (\theta-3) 2^{\theta-2} + \theta + 1.
\end{equation}

First, we give in \cref{s:sharp-example} examples of brackets $a < b \in \B$ such that $\|[a,b]\|_\B = A(\theta_a(b))$, illustrating that the optimal $\theta$-based estimate within $\B$ is in particular larger than $2^{\theta-1}$.
We discuss in \cref{s:sharp-minimality} on the apparent paradox that $\B$ is minimal for length-based estimates but not for $\theta$-based ones due to the previous examples.

Since this question interested us, was involved in an earlier proof of \cref{th:fibo-size-new} and is of independent interest for a deeper understanding of $\B$, we then prove that $\|[a,b]\|_\B \leq A(\theta_a(b))$ for every $a < b \in \B$.
We start in \cref{s:sharp-right} by the particular case where $\rf{a}{b}$ is right-nested (in the sense of \cref{lem:tab-right}).
Eventually, we extend this bound to all brackets in \cref{s:sharp-all}.

\subsubsection{An example of brackets attaining the upper bound}
\label{s:sharp-example}

\begin{proposition} \label{prop:fibo-sature}
    Let $p \geq 1$, $m \geq 2$, $a := (X_0, X_1)$, $h := \ad_a^m(X_1)$ and $b := \ad_h^p(X_1)$.
    Then $\theta_a(b) = p+1$ and $\|[a,b]\|_{\B}=A(\theta_a(b))$.
\end{proposition}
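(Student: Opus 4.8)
The plan is to compute $\|[a,b]\|_{\B}$ explicitly by running the rewriting algorithm on $[a,b]$, where $a=(X_0,X_1)$, $h=\ad_a^m(X_1)$ and $b=\ad_h^p(X_1)$. The crucial first observation is that the three brackets $a$, $h$, $X_1$ live in a free alphabetic subset of $\B$: indeed $\{a, h, X_1\}\subset\B$ with $a< h < X_1$ (since $X_1=\max\B$ and $a<h$ because $\lambda(h)=a<h$ by \cref{sx:order}), and this set is alphabetic and free. By \cref{Prop:free-lie-alphabetic}, the canonical map $\mathcal L(\{a,h,X_1\})\to\mathcal L(X)$ is an isometry for the relevant norms, so I may compute $\|[a,b]\|_\B$ inside the sub-Hall-set $\B'$ on the three-letter alphabet $\{a<h<X_1\}$. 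This reduces the problem to a three-indeterminate computation, at which point I can reuse the machinery already developed: $\theta_a(b)=p+1$ follows from a straightforward induction on $p$ (as in the proof of \cref{prop:lower-theta}), using that $(a,h)\in\B$ and $(a,X_1)\in\B$ but $(a,b)\notin\B$ for $p\geq 1$ when $m\geq 2$.

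**Key steps.** First I would verify the membership and ordering facts: $h=\ad_a^m(X_1)\in\B$ (since $\lambda(h)=a<h$ along the nesting), $b=\ad_h^p(X_1)\in\B$ (since $\lambda(b)=h<b$), and $\lambda^2(b)=\lambda(h)=a=\lambda(b)$'s... more precisely $\mu(b)=\ad_h^{p-1}(X_1)$ and $\lambda(\mu^k(b))=h$ for $0\le k\le p-1$, while $\lambda^2(b)=\lambda(h)=a$. In particular $\rf{a}{b}$ is right-nested in the sense of \cref{lem:tab-right}: its leaves are $h$ (with multiplicity $p$) and one final $X_1$, so $n_a(b)=p$ and $\nu_a(b)=1$, consistently with $\theta_a(b)=p+1$. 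Second, I would apply the iterated Jacobi identity: using \cref{p:ad-2} with the derivation $\ad_h$,
\begin{equation}
    [a,b]=[a,\ad_h^p(X_1)]=\sum_{k=0}^{p}(-1)^k\binom{p}{k}\ad_h^{p-k}[\ad_h^k(a),X_1].
\end{equation}
Here $\ad_h^k(a)$ need further decomposition only through $a$'s position relative to $h$; since $(a,h)\in\B$ but we must compare $\dad$ vs $\ad$ — actually the structure is cleaner the other way, distributing the minimal leaf $a$ over the leaves of $\rf{a}{b}$ as in the proof of \cref{thm:en1}. The point is that in this specific example every admissible path lands in the "last configuration" of that proof (the sibling of $a_\pi$ is a tree containing $X_1$, and $a_\pi$ is minimal), because $a$ is the strict minimum and every intermediate bracket $(a,h)$, $((a,h),h)$, etc. remains below $X_1$ by \cref{sx:order}. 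I would then recompute the sum $\sum_{p'=0}^{\theta-1}\binom{\theta-1}{p'}c_{p'}$ with the actual coefficients $c_{p'}$ arising here, which (unlike the worst-case $(\theta-p'-1)!$) grow like a constant times $2^{p'}$, yielding the closed form $A(\theta)=(\theta-3)2^{\theta-2}+\theta+1$.

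**The main obstacle.** The hard part is the bookkeeping showing that \emph{no collisions} occur among the supporting basis elements produced by the iterated Jacobi identity — i.e.\ that all the $\B$-elements appearing in the final decomposition are pairwise distinct (up to sign), so that the $\ell^1$ norm equals the sum of absolute values of coefficients with no cancellation. This is the analogue of Steps 3 and 6 in the proof of \cref{p:en1-optimal}, and would be handled the same way: exhibit a single free subset $\Delta$ of $\B$ (here built from $a$, $h$, $X_1$ and the brackets $\dad_a^j(\ast)$ and $\ad_h^j(\ast)$ that appear) so that $\e:\Br(\Delta)\to\Br_\Delta$ is injective, via \cref{Lem:libre}; the compatibility of the order with the relevant $\max L$ function is exactly \eqref{OO3}-type reasoning, available because within $\Br_{\{a,h,X_1\}}$ the Hall order of \cref{def:fibo-order} restricts to something governed by $\lambda$ and $\mu$ in a controlled way. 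Once distinctness is established, the norm is the explicit binomial sum, and a routine identity (of the same flavor as \cref{alphas:valeur}, or the OEIS-style evaluation at the end of \cref{thm:en1}) gives $A(p+1)$. I would also cross-check the base cases $p=1,2$ by hand (or note they follow from \cref{p:lambda2b<=a-easy} combined with \cref{cor:atheta-1}) to anchor the induction.
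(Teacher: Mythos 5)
There is a genuine gap at the very first step, and it is precisely the trap the paper itself warns against. You claim that $\{a,h,X_1\}$ is a \emph{free} alphabetic subset of $\B$ and then invoke \cref{Prop:free-lie-alphabetic} to reduce the whole computation to a three-letter free Lie algebra. The set is indeed alphabetic, but it is \emph{not} free: since $h=\ad_a^m(X_1)$, the element $h$ is itself a bracket of $a$ and $\ad_a^{m-1}(X_1)$, both of which lie in the submagma $\Br_A$ generated by $A=\{a,h,X_1\}$ (because $a,X_1\in A$); hence $h\in A\cap(\Br_A,\Br_A)$, violating \cref{def:free-subset}. Consequently the isometry $\mathcal{L}(\{a,h,X_1\})\to\mathcal{L}(X)$ does not exist, and the reduction to ``the sub-Hall-set $\B'$ on three letters'' is not available. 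The same obstruction undermines your proposed no-collision argument: any free subset $\Delta$ built from $a$, $h$, $X_1$ and their iterated brackets via \cref{Lem:libre} would have to avoid exactly this relation, and you cannot get injectivity of $\e$ on $\Br(\Delta)$ for a $\Delta$ containing both $h$ and the generators $a,X_1$ from which $h$ is built.

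The remainder of your plan (the iterated Jacobi expansion via \cref{p:ad-2} with the derivation $\ad_h$, followed by an explicit binomial sum) is the right skeleton and matches what the paper actually does, but without the freeness shortcut you must do two things by hand that your proposal leaves unaddressed: (i) check directly, using the specific order of \cref{def:fibo-order} and the inequalities $\lambda(h)=a<h$ and $(a,h)<h$, that each of the $p+1$ brackets appearing in the rearranged expansion — namely $\ad_h^p((a,X_1))$, $\ad_h^{p-1}(((a,h),X_1))$ and $(\ad_h^{s-2}(((a,h),h)),\ad_h^{p-s}(X_1))$ for $s\in\intset{2,p}$ — is, up to sign, an element of $\B$, and that they are pairwise distinct \emph{as elements of $\B$}; and (ii) actually evaluate the coefficient sum $1+p+\sum_{s=2}^p(s-1)\binom{p}{s}=(p-2)2^{p-1}+p+2=A(p+1)$, rather than asserting that ``a routine identity'' gives $A(\theta)$. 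As written, the proposal's central mechanism fails and the rest is too schematic to stand on its own.
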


\begin{proof} 
    We have 
    \begin{equation} \label{ah<h}
        \lambda(h) = a < h  \qquad \text{and} \qquad (a,h)<h.
    \end{equation}
    In particular, $(a,b) \notin \B$, $(a,h)\in\B$, $(a,X_1)\in\B$ thus $\theta_a(b)=p+1$ and our goal is to prove $\|[a,b]\|_{\B} = (p-2)2^{p-1}+p+2$. 
    Using \cref{eq:X0X1nX2-JacR} with $r=2$ and \cref{alphas:valeur} (although these formulas are stated for a bracket of the form $[X_0, \ad_{X_1}^n(X_2)]$, their derivation solely relies on Jacobi's identity, so remains just as valid for our bracket $[a, \ad_h^p(X_1)]$), we get
    \begin{equation} 
        [a,b]=\ad_{h}^{p}([a,X_1])+p \ad_{h}^{p-1}([(a,h),X_1])+\sum_{s=2}^{p} (s-1)\binom{p}{s} [\ad_{h}^{s-2}((a,h),h),\ad_{h}^{p-s}(X_1)].
    \end{equation}
    Let us prove that the $(p+1)$ brackets in the right-hand side are (evaluations of) different elements of $\B$. 
    The first two ones are by (\ref{ah<h}). Let $s \in \intset{2 , p}$. We have $\ad_{h}^{s-2}((a,h),h) < \ad_{h}^{p-s}(X_1)$, either because $X_1$ is maximal when $(p-s)\leq(s-2)$, or because $(a,h)<h$ when $(s-2)<(p-s)$. Thus $(\ad_{h}^{s-2}((a,h),h),\ad_{h}^{p-s}(X_1))\in\B$,
    because, when $s\leq p-1$, we have $h<\ad_h^{s-2}((a,h),h)$ 
    (this relation holds because $\lambda(h)=a<(a,h)$ when $s=2$). In conclusion,
    \begin{equation} 
        \|[a,b]\|_\B = 1 + p + \sum_{s=2}^{p} (s-1)\binom{p}{s} = 1 + p + \sum_{s=2}^p p \binom{p-1}{s-1} - \left( 2^p-1-p \right),
    \end{equation}
    which concludes the proof.
    (One could wish to rely on \cref{eq:X0X1nX2-JacR} and on \cref{Prop:free-lie-alphabetic} with the triple $\{ a, h, X_1 \}$ to avoid checking that the brackets are indeed part of the basis, but, unfortunately, although this set is alphabetic, it is not free since $h = \ad_a^m(X_1)$).
\end{proof}

\subsubsection{Discussion on the minimality of our basis}
\label{s:sharp-minimality}

\cref{prop:fibo-sature} proves that, for every $\theta \geq 3$, there exist $a < b \in \B$ with $\theta_a(b) = \theta$ and $\|[a,b]\|_\B = A(\theta_a(b)) > 2^{\theta_a(b)-1}$ (and, asymptotically, $A(\theta) \gg 2^{\theta-1}$).
Hence, although the structure constants of the basis constructed in \cref{s:fibo-def} have a minimal growth with respect to the length of the involved brackets, they do not have minimal growth with respect to $\theta_a(b)$.
Indeed, for length-compatible Hall sets (see \cref{Thm:lgth-comp-main}) and for the Lyndon basis (see \cref{thm:lyndon-bound}), one has $\|[a,b]\|_\B \leq 2^{\theta_a(b)-1}$, which is the minimal $\theta$-based bound, due to the lower-bound examples of \cref{prop:lower-theta}.

In \cref{Prop:theta/length}, we proved that for any Hall set with $|X| = 2$, $\theta_a(b) \leq |b|-1$ and that this bound was attained within each Hall set.
Hence, the apparent paradox between the fact that $\B$ is somehow length-minimal but not $\theta$-minimal does not come from a better estimation of $\theta_a(b)$ from~$|b|$.
However, the brackets such that $\theta_a(b) = |b|-1$ are $a = X_0$ and $b = \dad^{n}_{X_1}(X_0)$ for some $n \in \N^*$, which does not match the form of the brackets attaining equality in \cref{prop:fibo-sature}, which explains why there is no contradiction.

\subsubsection{Sharp bound for right-nested brackets}
\label{s:sharp-right}

The goal of this paragraph is to prove \cref{thm:a-theta-droite}, which improves the bound given in \cref{p:lambda2b<=a-easy} from $(\theta-2) 2^{\theta-2} + \theta$ down to $A(\theta)$ (already for $\theta=3$, the latter is strictly smaller than the former). 
The proof builds upon the method developed in \cref{s:fibo-right-quick}.
The tighter bound comes notably from the identification of particular elements of the support that ``change type'' (from being part of $N_2$ to being part of $N_1$) at each induction step, thereby avoiding a two-fold increase of the associated part of the norm. 

We start by a slight precision in the conclusions of \cref{lem:size-n2-init}.

\begin{lemma} 
    \label{lem:size-n2-init-better}
    Let $a < b \in \B$ with $\theta_a(b) = 2$.
    Then $\|[a,b]\|_\B \leq 2$ and $N_2(a,b) \leq 1$.
    Moreover, when $N_2(a,b) = 1$, the unique $c \in \supp [a,b]$ such that $\theta_{\lambda(b)}(c) = 2$ satisfies $\lambda(c) \leq (a, \lambda(b))$.
\end{lemma}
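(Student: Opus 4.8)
The statement refines \cref{lem:size-n2-init} by tracking, in the case $\theta_a(b)=2$, which supporting basis element contributes to $N_2(a,b)$ and by locating its left factor. The plan is to reprove \cref{lem:size-n2-init} while keeping careful track of the two candidate elements produced by Jacobi's identity. Since $\theta_a(b)=2$, we have $a<\lambda(b)<\mu(b)$ with $(a,\lambda(b))\in\B$ and $(a,\mu(b))\in\B$, and Jacobi's identity gives
\begin{equation}
    [a,b]=[(a,\lambda(b)),\mu(b)]+[\lambda(b),(a,\mu(b))].
\end{equation}
Each bracket on the right is a single (signed) basis element (this was checked in \cref{lem:size-n2-init}): call them $c_1$ and $c_2$, so $\supp [a,b]\subset\{c_1,c_2\}$ and $\|[a,b]\|_\B\le 2$. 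The key is to determine $\theta_{\lambda(b)}(c_i)$ for $i\in\{1,2\}$.

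First I would treat $c_2$. As in \cref{lem:size-n2-init}, if $\lambda(b)<(a,\mu(b))$ then $c_2=(\lambda(b),(a,\mu(b)))$ and $\lambda(c_2)=\lambda(b)$, while if $(a,\mu(b))<\lambda(b)$ then $c_2=((a,\mu(b)),\lambda(b))$ and $\lambda(c_2)=(a,\mu(b))<\lambda(b)$; in both cases $\lambda(c_2)\le\lambda(b)$, hence $\theta_{\lambda(b)}(c_2)=1$. So $c_2$ never contributes to $N_2(a,b)$. Consequently, $N_2(a,b)\le 1$, with $N_2(a,b)=1$ only if $\theta_{\lambda(b)}(c_1)=2$ and $c_1\in\supp[a,b]$ (in particular $c_1\neq c_2$, which is automatic since $\lambda(c_1)>\lambda(b)\ge\lambda(c_2)$ when $\theta_{\lambda(b)}(c_1)=2$). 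Then I would analyze $c_1$ in the two subcases: if $(a,\lambda(b))<\mu(b)$, then $c_1=((a,\lambda(b)),\mu(b))$, so $\lambda(c_1)=(a,\lambda(b))$; if $\mu(b)<(a,\lambda(b))$, then $c_1=(\mu(b),(a,\lambda(b)))$, so $\lambda(c_1)=\mu(b)$. In the first subcase $\lambda(c_1)=(a,\lambda(b))$, so trivially $\lambda(c_1)\le (a,\lambda(b))$. In the second subcase $\lambda(c_1)=\mu(b)$; but then $\theta_{\lambda(b)}(c_1)$: since $\lambda(c_1)=\mu(b)>\lambda(b)$ we have $\theta_{\lambda(b)}(c_1)=\theta_{\lambda(b)}(\mu(b))+\theta_{\lambda(b)}((a,\lambda(b)))$. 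Here $(a,\lambda(b))>\lambda(b)$ and $\lambda((a,\lambda(b)))=a<\lambda(b)$, so $\theta_{\lambda(b)}((a,\lambda(b)))=1$; and $\theta_{\lambda(b)}(\mu(b))$ is either $1$ (if $\lambda(\mu(b))\le\lambda(b)$, which holds since $b=(\lambda(b),\mu(b))\in\B$ forces $\lambda(\mu(b))\le\lambda(b)$) giving $\theta_{\lambda(b)}(c_1)=2$. So in the second subcase $\lambda(c_1)=\mu(b)$ and $\theta_{\lambda(b)}(c_1)=2$, which seems to violate the claimed inequality $\lambda(c_1)\le(a,\lambda(b))$ since in general $\mu(b)$ need not be $\le (a,\lambda(b))$.

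The main obstacle is therefore precisely this second subcase: I expect that it actually cannot occur when $N_2(a,b)=1$, and the resolution should come from a closer look at the order relations in $\B$ via \cref{def:fibo-order}. Indeed, $\mu(b)<(a,\lambda(b))$ together with $\lambda((a,\lambda(b)))=a<\lambda(b)<\mu(b)$ means $\mu(b)$ and $(a,\lambda(b))$ are comparable with $(a,\lambda(b))$ having left factor $a$; by \cref{def:fibo-order}, $\mu(b)<(a,\lambda(b))$ forces $\lambda(\mu(b))\le a$, hence $\lambda(\mu(b))\le a<\lambda(b)$, which is consistent, but then also $b=(\lambda(b),\mu(b))$ has $\lambda(\mu(b))\le a<\lambda(b)$. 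I would combine this with the hypothesis $\theta_a(b)=2$ itself: $\theta_a(b)=\theta_a(\lambda(b))+\theta_a(\mu(b))=2$ forces $\theta_a(\lambda(b))=\theta_a(\mu(b))=1$, so $(a,\mu(b))\in\B$ gives $\lambda(\mu(b))\le a$, and one checks that in that regime the element $c_1$ is actually already in the basis with $\lambda(c_1)=\mu(b)$ but then a careful re-examination shows $c_1$ cancels or coincides with a relabeling, or — more likely — that the only genuinely new information is that the statement should be read with the convention that when $c_1=(\mu(b),(a,\lambda(b)))$ we instead have $\theta_{\lambda(b)}(c_1)=1$ because $c_1$ can be rewritten. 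I would resolve the discrepancy by carefully reconsidering which of $c_1,c_2$ actually lies in $\supp[a,b]$ after possible cancellations, using that $[a,b]=\pm\ev(c_1)\pm\ev(c_2)$ and that the two are distinct basis elements. The cleanest route, which I would pursue, is: show that whenever $\theta_{\lambda(b)}(c_1)=2$, the subtree structure of $c_1$ forces $\lambda(c_1)=(a,\lambda(b))$ (ruling out the $\mu(b)<(a,\lambda(b))$ branch by an order argument specific to \cref{def:fibo-order}), so that the unique $c\in\supp[a,b]$ with $\theta_{\lambda(b)}(c)=2$ is $c_1=((a,\lambda(b)),\mu(b))$, which satisfies $\lambda(c)=(a,\lambda(b))\le(a,\lambda(b))$ as required. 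The bulk of the work is this order-theoretic elimination, which should be short given the lemmas \cref{sx:order}, \cref{sx:b-mub} and \cref{sx:a<mub} already available in the excerpt.
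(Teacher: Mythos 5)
Your setup is right and matches the paper's: you reduce to the two basis elements $c_1,c_2$ produced by Jacobi's identity, observe that $\lambda(c_2)\le\lambda(b)$ so $c_2$ never contributes to $N_2(a,b)$, and then split the analysis of $c_1$ according to whether $(a,\lambda(b))<\mu(b)$ or $\mu(b)<(a,\lambda(b))$. But you then manufacture a non-existent obstacle. In the second subcase you write that $\lambda(c_1)=\mu(b)$ ``seems to violate the claimed inequality $\lambda(c_1)\le(a,\lambda(b))$ since in general $\mu(b)$ need not be $\le(a,\lambda(b))$'' — yet that subcase is, by definition, the one where $\mu(b)<(a,\lambda(b))$: that inequality is exactly why the basis element is $c_1=(\mu(b),(a,\lambda(b)))$ rather than $((a,\lambda(b)),\mu(b))$. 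So $\lambda(c_1)=\mu(b)<(a,\lambda(b))$ and the conclusion holds immediately; this is precisely why the lemma is stated with ``$\le$'' rather than ``$=$''. The paper's proof is exactly this one-line observation on top of \cref{lem:size-n2-init}: $\lambda(c_1)$ equals either $(a,\lambda(b))$ or $\mu(b)$, and in the latter case the subcase hypothesis gives the bound.

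The remainder of your proposal — the speculation about cancellations, relabelings, and an order-theoretic argument ``ruling out the $\mu(b)<(a,\lambda(b))$ branch'' — is therefore both unnecessary and misdirected: that branch can occur, does not need to be excluded, and attempting to prove that $\lambda(c_1)=(a,\lambda(b))$ always holds would be proving something stronger than (and likely false compared to) what the lemma asserts. As written, your argument does not close, because you never actually establish the inequality in the second subcase; but the fix is a single sentence, not a new lemma.
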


\begin{proof}
    The first assertions are proved in \cref{lem:size-n2-init}.
    When $N_2(a,b) = 1$, with the notations of the proof of \cref{lem:size-n2-init}, the unique $c \in \supp [a,b]$ such that $\theta_{\lambda(b)}(c) = 2$ is named $c_1$ and satisfies $\lambda(c_1) = (a, \lambda(b))$ or $\lambda(c_1) = \mu(b)$ (when $\mu(b) < (a, \lambda(b))$.
    In both cases, $\lambda(c) \leq (a, \lambda(b))$.
\end{proof}

We now wish to weaken the assumption $\lambda^2(b) < a$ of \cref{p:lambda2b<a-easy} down to $\lambda(b) < (a, \lambda(b))$.
We start by checking that this assumption is stable in the induction process going from $b$ to $\mu(b)$.

\begin{lemma} \label{Lem:bmubgentil}
Let $a<b \in \B$ such that $\theta_a(b)\geq3$, $(a,\lambda(b))\in\B$ and $\lambda(b)<(a,\lambda(b))$. Then $(a,\lambda(\mu(b)))\in\B$ and $\lambda(\mu(b))<(a,\lambda(\mu(b)))$. 
\end{lemma}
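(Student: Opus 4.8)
The plan is to first unpack the hypotheses into two usable facts: $\mu(b)\notin X$ and $\lambda^2(b)\le a$. Since $X=\{X_0,X_1\}$ and $a\ge X_0=\min\B$, $a<\lambda(b)<\mu(b)\le X_1=\max\B$, none of $\lambda(b)$, $\lambda(\mu(b))$ can be a letter; and by \cref{Def2:Laz}, since $\lambda(b)\notin X$, the assumption $(a,\lambda(b))\in\B$ is equivalent to $\lambda^2(b)\le a$. Moreover $(a,\lambda(b))\in\B$ means $\theta_a(\lambda(b))=1$ (by \cref{def:folding}), so from $\theta_a(b)\ge 3>1$ we get $(a,b)\notin\B$ and $\theta_a(b)=\theta_a(\lambda(b))+\theta_a(\mu(b))=1+\theta_a(\mu(b))$, hence $\theta_a(\mu(b))\ge 2$. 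In particular $(a,\mu(b))\notin\B$, $\mu(b)\notin X$ and $a<\lambda(\mu(b))<\mu(\mu(b))$, so that $\lambda(\mu(b))$ and $\lambda^2(\mu(b))$ are well-defined.

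Next I would settle the membership statement. Since $b\in\B$ and $\mu(b)\notin X$, \cref{Def2:Laz} gives $\lambda(\mu(b))\le\lambda(b)$; feeding this into the left-factor-first comparison rule of \cref{def:fibo-order} yields $\lambda^2(\mu(b))\le\lambda^2(b)\le a$, and together with $a<\lambda(\mu(b))$ this is exactly $(a,\lambda(\mu(b)))\in\B$.

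The remaining, harder part is the strict inequality $\lambda(\mu(b))<(a,\lambda(\mu(b)))$. By \cref{def:fibo-order} this amounts to: $\lambda^2(\mu(b))<a$, or $\lambda^2(\mu(b))=a$ and $\mu(\lambda(\mu(b)))<\lambda(\mu(b))$. I would split on whether $\lambda^2(b)<a$ or $\lambda^2(b)=a$. If $\lambda^2(b)<a$, a slightly sharper reading of the chain from the previous paragraph gives $\lambda^2(\mu(b))<a$, and we are done. If $\lambda^2(b)=a$, then reading the hypothesis $\lambda(b)<(a,\lambda(b))$ through \cref{def:fibo-order} forces $\mu(\lambda(b))<\lambda(b)$. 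Now if $\lambda(\mu(b))=\lambda(b)$, then $\lambda^2(\mu(b))=a$ and $\mu(\lambda(\mu(b)))=\mu(\lambda(b))<\lambda(b)=\lambda(\mu(b))$, done. If $\lambda(\mu(b))<\lambda(b)$, we may assume $\lambda^2(\mu(b))=a$ (otherwise done). Here lies the crux: write $\lambda(b)=\ad_a^{p}(v)$ and $\lambda(\mu(b))=\ad_a^{q}(w)$ in the canonical form provided by (the proof of) \cref{sx:b-mub}, with $p,q\ge 1$ and $v,w$ each equal to $X_1$ or satisfying $\lambda(\cdot)<a$ (and $a<v$, $a<w$). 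The condition $\mu(\lambda(b))<\lambda(b)$, i.e.\ $\ad_a^{p-1}(v)<\ad_a^{p}(v)$, rules out $v=X_1$ by \cref{sx:order}, since the $\ad_a$-towers over $X_1$ are \emph{decreasing}. Then, if $w=X_1$, \cref{sx:order} gives $\lambda(b)=\ad_a^{p}(v)<\ad_a^{q}(X_1)=\lambda(\mu(b))$, contradicting $\lambda(\mu(b))<\lambda(b)$; hence $w\ne X_1$, and then $\mu(\lambda(\mu(b)))=\ad_a^{q-1}(w)<\ad_a^{q}(w)=\lambda(\mu(b))$ again by \cref{sx:order}, which is what we want.

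The main obstacle is exactly this last case: ruling out that $\lambda(\mu(b))$ is an $\ad_a$-tower over $X_1$. This is where the hypothesis $\lambda(b)<(a,\lambda(b))$ is genuinely needed, and it must be combined with the canonical $\ad_a$-decomposition of \cref{sx:b-mub} and the monotonicity statements of \cref{sx:order}, all in the boundary regime $\lambda^2(b)=\lambda^2(\mu(b))=a$. Every other branch reduces to routine bookkeeping with the comparison rule of \cref{def:fibo-order}.
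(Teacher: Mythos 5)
Your proof is correct and follows essentially the same route as the paper's: both reduce the strict inequality to the canonical $\ad_a$-decomposition underlying \cref{sx:b-mub}, use the hypothesis $\lambda(b)<(a,\lambda(b))$ to rule out $\lambda(b)$ being an $\ad_a$-tower over $X_1$, and then contradict $\lambda(\mu(b))\le\lambda(b)$ via the monotonicity of \cref{sx:order} in the only remaining case. The paper merely compresses your explicit case split on $\lambda^2(b)<a$ versus $\lambda^2(b)=a$ by phrasing everything directly through the tower characterization, but the content is identical.
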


\begin{proof}
The assumption $\theta_a(b)\geq 3$ implies $(a,b)\notin\B$ thus $b\neq X_1$, so $\lambda(b)$ makes sense and $a<\lambda(b)$. Since $(a,\lambda(b))\in\B$ then $\lambda(b)\neq X_0$, so $\lambda^2(b)$ is well defined and $\lambda^2(b)\leq a$.
Moreover $\theta_a(\mu(b))=\theta_a(b)-1\geq 2$, so $(a,\mu(b))\notin\B$, thus $\mu(b)\neq X_1$, $\lambda(\mu(b))$ makes sense and $a<\lambda(\mu(b))$. Then $\lambda(\mu(b))\neq X_0$ so $\lambda^2(\mu(b))$ makes sense. Since $b\in\B$, $\lambda(\mu(b))\leq\lambda(b)$ thus, using $\lambda(b) < (a, \lambda(b))$ and \cref{def:fibo-order} and
$\lambda^2(\mu(b))\leq \lambda^2(b) \leq a$. 
We deduce that $(a,\lambda(\mu(b)))\in\B$.

Since $\lambda(b) < (a, \lambda(b))$, by \cref{sx:order}, $\lambda(b) = \ad^m_a(v)$ for some $m \in \N$ with $v \neq X_1$ and $\lambda(v) < a$.
By contradiction, if $(a, \lambda(\mu(b))) < \lambda(\mu(b))$, then, by \cref{sx:order}, $\lambda(\mu(b)) = \ad^n_a(X_1)$ with $n \in \N$.
Then, by \cref{sx:order}, $\lambda(\mu(b)) > \lambda(b)$, which contradicts the assumption $b \in \B$.
\end{proof}

We now improve \cref{p:lambda2b<a-easy} by weakening the assumption and adding the slight precision reminiscent from the initialization of \cref{lem:size-n2-init-better}.

\begin{proposition} \label{Prop:lambda2b<a}
    Let $a < b \in \B$ such that $(a, b) \notin \B$, $(a,\lambda(b))\in\B$ and $\lambda(b)<(a,\lambda(b))$.
    Then the estimates \eqref{eq:l2b<a-n0n1n2} still hold and, moreover, there exists $c \in \supp [a,b]$ such that $\theta_{\lambda(b)}(c) = 2$ and $\lambda(c) \leq (a, \lambda(b))$ (except when $\theta_a(b) = 2$ and $N_2(a,b) = 0$).
\end{proposition}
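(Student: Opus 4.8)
The plan is to mirror the induction on $\theta_a(b)$ carried out in the proof of \cref{p:lambda2b<a-easy}, but weakening the hypothesis from $\lambda^2(b) < a$ to the hypothesis ``$(a,\lambda(b)) \in \B$ and $\lambda(b) < (a,\lambda(b))$'', which by \cref{Lem:bmubgentil} is stable under the passage from $b$ to $\mu(b)$. The base case $\theta_a(b) = 2$ is exactly \cref{lem:size-n2-init-better}, which gives both the size estimates \eqref{eq:l2b<a-n0n1n2} and, when $N_2(a,b) = 1$, an element $c \in \supp[a,b]$ with $\theta_{\lambda(b)}(c) = 2$ and $\lambda(c) \leq (a,\lambda(b))$. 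For the inductive step with $\theta_a(b) \geq 3$, one writes Jacobi's identity \eqref{eq:ab-jac-new}: $[a,b] = [(a,\lambda(b)),\mu(b)] + [\lambda(b),[a,\mu(b)]]$, which makes sense because $\theta_a(\mu(b)) = \theta_a(b)-1 \geq 2$ forces $a < \lambda(\mu(b))$.

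The key point is that the hypothesis $\lambda(b) < (a,\lambda(b))$ together with \cref{def:fibo-order} still yields $(a,\lambda(b)) < \mu(b)$ (since $a < \lambda(\mu(b))$), so that the first Jacobi term $[(a,\lambda(b)),\mu(b)]$ is $\pm$ a single basis element $c_1 := ((a,\lambda(b)),\mu(b)) \in \B$ — here one uses $\lambda(\mu(b)) \leq \lambda(b) < (a,\lambda(b))$ — with $\theta_{\lambda(b)}(c_1) = 2$ and $\lambda(c_1) = (a,\lambda(b))$, which is precisely the ``surviving'' element whose existence we must propagate. For the second term, I would apply \cref{Lem:bmubgentil} to see that $(a,\lambda(\mu(b))) \in \B$ and $\lambda(\mu(b)) < (a,\lambda(\mu(b)))$, so the induction hypothesis applies to $[a,\mu(b)]$, giving $[a,\mu(b)] = \sum \alpha_d \ev(d)$ with the size estimates and, if relevant, a distinguished $d$. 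Then, exactly as in the proof of \cref{p:lambda2b<a-easy}, for each $d \in \supp[a,\mu(b)]$ one has either $\theta_{\lambda(b)}(d) = 1$ (so $(\lambda(b),d) \in \B$, contributing to $N_1$) or $\theta_{\lambda(b)}(d) = 2$ (so $\|[\lambda(b),d]\|_\B \leq 2$ by \cref{thm:en1}, and every element of its support keeps $\theta_{\lambda(b)} = 2$ by \cref{lem:theta-dec2/support-absurde}), using $\theta_{\lambda(b)}(d) \leq \theta_{\lambda(\mu(b))}(d)$ from \cref{p:theta-dec}. This yields the recursions $N_2(a,b) \leq 1 + 2N_2(a,\mu(b))$ and $\|[a,b]\|_\B \leq 1 + \|[a,\mu(b)]\|_\B + N_2(a,\mu(b))$, exactly \eqref{eq:ab-n1-n2-rec1}, from which \eqref{eq:l2b<a-n0n1n2} follows by induction since $\theta_a(b) = \theta_a(\mu(b)) + 1$.

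It remains to check the propagation of the ``distinguished element'' claim. The element $c_1 = ((a,\lambda(b)),\mu(b))$ coming from the first Jacobi term always satisfies $\theta_{\lambda(b)}(c_1) = 2$ and $\lambda(c_1) = (a,\lambda(b))$, and it lies in $\supp[a,b]$ with coefficient $\pm 1$, so it is not cancelled by contributions of the second term (whose supporting elements $c$ either have $\lambda(c) = \lambda(b) < (a,\lambda(b))$ when $\theta_{\lambda(b)}(c) = 1$, hence differ from $c_1$, or arise from $[\lambda(b),d]$ with $d$ strictly above $\lambda(b)$ — one should verify that $\lambda$ of such $c$ is $\lambda(b)$ or strictly below $(a,\lambda(b))$, so again $c \neq c_1$, i.e.\ $c_1$ is genuinely in the support). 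Thus $c := c_1$ witnesses the claim, and the exceptional case ``$\theta_a(b) = 2$ and $N_2(a,b) = 0$'' is exactly the case not covered by \cref{lem:size-n2-init-better}. The main obstacle I anticipate is the bookkeeping needed to be sure that $c_1$ does not get cancelled against a term from the second Jacobi bracket: this requires comparing $\lambda(c_1) = (a,\lambda(b))$ against the left factors of all elements of $\supp[\lambda(b),[a,\mu(b)]]$, which one does via \cref{thm:en1} (each such element lies in $\e(\Tr(\lbag \lambda(b)\rbag + \mathbb L_\B(T_{\lambda(b)}(d))))$ for $d \in \supp[a,\mu(b)]$) combined with \cref{def:fibo-order} and the inequalities $\lambda(d) < \mu(b)$, $a < \lambda(d)$; this is the only genuinely delicate verification, the rest being a direct transcription of the proof of \cref{p:lambda2b<a-easy}.
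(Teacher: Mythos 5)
Your argument is, up to the last paragraph, exactly the paper's proof: induction on $\theta_a(b)$, initialization by \cref{lem:size-n2-init-better}, \cref{Lem:bmubgentil} to propagate the hypothesis to $[a,\mu(b)]$, the recursions \eqref{eq:ab-n1-n2-rec1}, and the first Jacobi term $c_1 := ((a,\lambda(b)),\mu(b))$, with $\lambda(c_1) = (a,\lambda(b))$, as the distinguished element of the support.

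The one place where you try to go beyond the paper --- verifying that $c_1$ is not cancelled by the second Jacobi term --- is a legitimate worry (the paper asserts $c_1 \in \supp[a,b]$ without comment), but the verification you sketch does not go through. It is false in general that every $c$ in the support of $[\lambda(b),[a,\mu(b)]]$ has $\lambda(c) = \lambda(b)$ or $\lambda(c)$ strictly below $(a,\lambda(b))$: when $\lambda(\mu(b)) = \lambda(b)$, the element $d := ((a,\lambda(b)),\mu^2(b))$ may belong to $\supp[a,\mu(b)]$, and then
\begin{equation*}
[\lambda(b), d] = [[\lambda(b),(a,\lambda(b))],\mu^2(b)] + [(a,\lambda(b)),(\lambda(b),\mu^2(b))] = [[\lambda(b),(a,\lambda(b))],\mu^2(b)] + \ev(c_1),
\end{equation*}
so $c_1$ itself reappears inside the second Jacobi term. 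Ruling out cancellation therefore cannot be done by the left-factor comparison alone; one must either track the signs of these extra contributions (in the configurations one can work out explicitly, they carry the same sign as the first Jacobi term and reinforce the coefficient of $c_1$ rather than cancel it) or produce a different witness in the support. Everything else in your proposal is sound and coincides with the paper's argument.
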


\begin{proof}
    The proof is the same as the one of \cref{p:lambda2b<a-easy} (by induction on $\theta_a(b) \geq 2$).
    The initialization for $\theta_a(b) = 2$ is performed in \cref{lem:size-n2-init-better}.
    The key point is that \cref{Lem:bmubgentil} allows to apply the induction hypothesis to the bracket $[a, \mu(b)]$.
    When $\theta_a(b) > 2$, the additional assertion on $\supp [a,b]$ comes from the fact that the first term of \eqref{eq:ab-jac-new}, named $c_1$ in the proof of \cref{p:lambda2b<a-easy}, satisfies $\lambda(c_1) = (a, \lambda(b))$.
\end{proof}

When $(a, \lambda(b)) < \lambda(b)$, we give induction relations for $\|[a, b]\|_\B$ and $N_2(a,b)$ based on the quantities $\|[a,\mu(b)]\|_\B$ and $N_2(a,\mu(b))$.

\begin{proposition} \label{Prop:lambdab_mechant}
    Let $a<b\in\B$ with  $\theta_a(b)\geq 3$, $\lambda^2(b)\leq a$ and $(a,\lambda(b))<\lambda(b)$. 
    \begin{itemize}
        \item If $\lambda(\mu(b))\leq(a,\lambda(b))$, then 
        $N_2(a,b) \leq 2(N_2(a,\mu(b))-P)$ and
        $\|[a,b|\|_\B \leq 1+\|[a,\mu(b)]\|_\B + N_2(a,\mu(b))-P$
        
        \item Otherwise
        $N_2(a,b) \leq 2^{\theta_a(\mu(b))-1} - 1 + 2(N_2(a,\mu(b)) - P)$ and $\|[a,b]\|_\B\leq 2^{\theta_a(\mu(b))-1}+\|[a,\mu(b)]\|_\B +N_2(a,\mu(b)) - P$,
    \end{itemize}
    where $P$ is the fraction of $\|[a,\mu(b)]\|_\B$ indexed by $d\in\supp[a,\mu(b)]$ such that $\lambda(\mu(b))<\lambda(d)\leq\lambda(b)$.
\end{proposition}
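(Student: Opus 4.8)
The plan is to prove \cref{Prop:lambdab_mechant} by applying Jacobi's identity to $[a,b]$ exactly as in \eqref{eq:ab-jac-new}, namely
\begin{equation*}
    [a,b] = [(a,\lambda(b)),\mu(b)] + [\lambda(b),[a,\mu(b)]],
\end{equation*}
which makes sense because $\theta_a(b) \geq 3$ forces $(a,b) \notin \B$, hence $b \notin X$ and $a < \lambda(b)$, and moreover $\theta_a(\mu(b)) = \theta_a(b)-1 \geq 2$ so $\lambda(\mu(b))$ is well-defined with $a < \lambda(\mu(b))$. The assumption $\lambda^2(b) \leq a$ guarantees $(a,\lambda(b)) \in \B$ via \cref{lem:theta<=2}, so $(a,\lambda(b))$ is a genuine basis element which, by hypothesis, satisfies $(a,\lambda(b)) < \lambda(b)$.

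First I would analyze the first term $[(a,\lambda(b)),\mu(b)]$. Writing $h := (a,\lambda(b))$, we have $\lambda(h) = a < \lambda(b) \leq \mu(b)$. The two sub-cases in the statement correspond precisely to whether $\lambda(\mu(b)) \leq h$, in which case $(h,\mu(b)) \in \B$ directly and this term contributes a single basis element (with $\theta_{\lambda(b)} \le 2$ since $\lambda(h)=a \leq \lambda^2(b)$... here one checks carefully $\lambda(b) < (h,\mu(b))$ or not using \cref{def:fibo-order}), or $h < \lambda(\mu(b))$, in which case one applies \cref{Prop:lambda2b<a} to the bracket $[h,\mu(b)]$: indeed $\lambda^2(\mu(b)) \leq \lambda^2(b) \leq a < h$ gives $(h,\mu(b)) \notin \B$ only if $\lambda(\mu(b)) > h$, and $(h,\lambda(\mu(b))) \in \B$ with $\lambda(\mu(b)) < (h,\lambda(\mu(b)))$ follows from \cref{Lem:bmubgentil} (applied with $a \gets a$, $b \gets b$, noting its hypotheses hold). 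By \cref{p:theta-dec}, $\theta_h(\mu(b)) \leq \theta_a(\mu(b))$, so \cref{Prop:lambda2b<a} yields $\|[h,\mu(b)]\|_\B \leq 2^{\theta_a(\mu(b))-1}$ and $N_2(h,\mu(b)) \leq 2^{\theta_a(\mu(b))-1}-1$, and then each $c \in \supp[h,\mu(b)]$ satisfies $\theta_{\lambda(b)}(c) \leq \theta_{\lambda(\mu(b))}(c)$ by \cref{p:theta-dec}. This gives the ``base'' terms $1$ or $2^{\theta_a(\mu(b))-1}$ appearing in the two estimates.

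Next I would analyze the second term $[\lambda(b),[a,\mu(b)]]$. By \cref{th:fibo-structure-new} and \cref{lem:l2ba-supp} (legitimate since $\lambda^2(\mu(b)) \leq \lambda^2(b) \leq a$ and $(a,\mu(b)) \notin \B$), write $[a,\mu(b)] = \sum \alpha_d \ev(d)$ with $a \leq \lambda(d) < \mu(b)$, $\lambda(\mu(b)) < d$, and $\theta_{\lambda(\mu(b))}(d) \leq 2$. For each such $d$, since $\lambda^2(b) \leq a < \lambda(d)$ (using \eqref{eq:a<c'}), \cref{def:fibo-order} gives $\lambda(b) < d$, so $(\lambda(b),d) \in \B$ exactly when $\lambda(d) \leq \lambda(b)$, contributing $\|[\lambda(b),d]\|_\B = 1$ with $\theta_{\lambda(b)}((\lambda(b),d)) = 1$; otherwise $\lambda(b) < \lambda(d)$, so $\theta_{\lambda(b)}(d) \geq 2$, hence $\theta_{\lambda(b)}(d) = 2$ and $\|[\lambda(b),d]\|_\B \leq 2$ by \cref{thm:en1}, with each supporting element of type $2$ by \cref{lem:theta-dec2/support-absurde}. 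Crucially, the $d$'s with $\lambda(b) < \lambda(d) \leq \lambda(b)$ — i.e.\ with $\lambda(\mu(b)) < \lambda(d) \leq \lambda(b)$, which is where the quantity $P$ comes in — were already ``type 2'' in $[a,\mu(b)]$ but produce ``type 1'' contributions in $[a,b]$: the point is that the bracket $(\lambda(b),d)$ has $\theta_{\lambda(b)} = 1$. This is why these $P$ units are subtracted from $N_2(a,\mu(b))$ rather than doubled, and why they appear as $N_2(a,\mu(b)) - P$ in both the $\|[a,b]\|_\B$ and the $N_2(a,b)$ estimates (the type-1 part $N_1(a,\mu(b)) = \|[a,\mu(b)]\|_\B - N_2(a,\mu(b))$ never doubles, the type-2 part minus $P$ doubles for $N_2$, and for $\|[a,b]\|_\B$ one combines $N_1(a,\mu(b)) + P + 2(N_2(a,\mu(b))-P) = \|[a,\mu(b)]\|_\B + N_2(a,\mu(b)) - P$).

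The main obstacle will be the careful bookkeeping of the quantity $P$: one must verify that the $d \in \supp[a,\mu(b)]$ with $\lambda(\mu(b)) < \lambda(d) \leq \lambda(b)$ are exactly those that were type-2 for $[a,\mu(b)]$ (i.e.\ $\theta_{\lambda(\mu(b))}(d) = 2$) and yet become type-1 after bracketing with $\lambda(b)$, and conversely that the type-2 elements of $[a,\mu(b)]$ with $\lambda(d) > \lambda(b)$ are the only source of the doubling; one also needs that the type-1 elements of $[a,\mu(b)]$ (those $d$ with $\lambda(d) \leq \lambda(\mu(b))$, hence $\lambda(d) \le \lambda(b)$) stay type-1. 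Once this accounting is pinned down, summing the first-term and second-term contributions yields the two claimed pairs of inequalities directly. I would present the argument as a case split on $\lambda(\mu(b)) \leq (a,\lambda(b))$ versus $(a,\lambda(b)) < \lambda(\mu(b))$, carrying the $N_2$ and total-norm estimates in parallel through each case.
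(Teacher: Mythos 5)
Your overall route is the same as the paper's: the Jacobi splitting \eqref{eq:ab-jac-new}, the case split on $\lambda(\mu(b))$ versus $(a,\lambda(b))$ for the first term, an application of \cref{Prop:lambda2b<a} in the second sub-case, and the $P$-bookkeeping for the second term (your identity $N_1(a,\mu(b)) + P + 2(N_2(a,\mu(b))-P) = \|[a,\mu(b)]\|_\B + N_2(a,\mu(b)) - P$ is exactly the accounting the paper uses). The treatment of $[\lambda(b),[a,\mu(b)]]$ is correct and complete.

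There are, however, two concrete problems in your treatment of the first term. First, in the sub-case $\lambda(\mu(b)) \leq (a,\lambda(b))$ you only claim $\theta_{\lambda(b)}(c) \leq 2$ for $c := ((a,\lambda(b)),\mu(b))$ and defer the check; but the stated bound $N_2(a,b) \leq 2(N_2(a,\mu(b))-P)$ has no additive constant, so you must show this $c$ contributes nothing to $N_2(a,b)$, i.e.\ $\theta_{\lambda(b)}(c)=1$. This is immediate — $\lambda(c) = (a,\lambda(b)) < \lambda(b)$ is precisely the standing hypothesis — but it has to be said, and your parenthetical even points at the wrong comparison (whether $\lambda(b) < c$) and contains the reversed inequality ``$a \leq \lambda^2(b)$''. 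Second, and more seriously, in the sub-case $(a,\lambda(b)) < \lambda(\mu(b))$ you justify the hypotheses of \cref{Prop:lambda2b<a} for the pair $((a,\lambda(b)),\mu(b))$ by invoking \cref{Lem:bmubgentil} with $a \gets a$, $b \gets b$. That lemma requires $\lambda(b) < (a,\lambda(b))$, which is the \emph{negation} of the present hypothesis $(a,\lambda(b)) < \lambda(b)$, so it does not apply; and even if it did, its conclusion concerns $(a,\lambda(\mu(b)))$, whereas you need $((a,\lambda(b)),\lambda(\mu(b))) \in \B$ and $\lambda(\mu(b)) < ((a,\lambda(b)),\lambda(\mu(b)))$. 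The correct justification is direct: $\lambda^2(\mu(b)) \leq \lambda^2(b) \leq a < (a,\lambda(b))$ gives membership in $\B$ by the Hall condition, and the strict inequality $\lambda^2(\mu(b)) < (a,\lambda(b))$ between the left factors gives $\lambda(\mu(b)) < ((a,\lambda(b)),\lambda(\mu(b)))$ by the lexicographic order of \cref{def:fibo-order}. With these two repairs (plus replacing the stray citation of \cref{lem:theta<=2} for $(a,\lambda(b))\in\B$ by the Hall axiom itself), your argument matches the paper's proof.
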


\begin{proof}
    The assumptions imply $a<\lambda(b)$, $a<\lambda(\mu(b))$, $(a,\lambda(b))\in\B$ and $\theta_a(\mu(b))=\theta_a(b)-1 \geq 2$.
    We study both terms of Jacobi's identity \eqref{eq:ab-jac-new}.

    \step{First term $[(a,\lambda(b)),\mu(b)]$} 
    We have $(a,\lambda(b))<\mu(b)$ by \cref{def:fibo-order} because $a<\lambda(\mu(b))$.
    \begin{itemize}
        \item If $\lambda(\mu(b))\leq(a,\lambda(b))$ then $c:=((a,\lambda(b)),\mu(b))\in\B$ and $\theta_{\lambda(b)}(c)=1$ because $\lambda(c)<\lambda(b)$.
        
        \item If $(a,\lambda(b)) < \lambda(\mu(b))$, we apply \cref{Prop:lambda2b<a} to the bracket $[(a,\lambda(b)),\mu(b)]$. 
        Indeed $\lambda^2(\mu(b))\leq \lambda^2(b)\leq a < (a,\lambda(b))$, thus $\lambda(\mu(b)) < ((a, \lambda(b)), \lambda(\mu(b)))$.
        By \cref{p:theta-dec}, $\theta_{(a,\lambda(b))}(\mu(b))\leq\theta_{a}(\mu(b))$. 
        Therefore \cref{Prop:lambda2b<a} gives $\|[(a,\lambda(b)),\mu(b)]\|_\B \leq 2^{\theta_a(\mu(b))-1}$, $N_2((a, \lambda(b)), \mu(b)) \leq 2^{\theta_a(\mu(b))-1}-1$.
        Each $c\in\supp[(a,\lambda(b)),\mu(b)]$ satisfies $\theta_{\lambda(b)}(c) \leq \theta_{\lambda(\mu(b))}(c)$ by \cref{p:theta-dec}.
    \end{itemize}
    Hence, if $\lambda(\mu(b)) \leq (a,\lambda(b))$, the first term does not contribute to $N_2(a,b)$ and contributes at most $1$ to $\|[a,b]\|_\B$.
    Otherwise, the first term contributes at most $2^{\theta_a(\mu(b))-1}-2$ to $N_2(a,b)$ and at most $2^{\theta_a(\mu(b))-1}$ to $\|[a,b]\|_\B$.
    
    \step{Second term $[\lambda(b), [a, \mu(b)]]$} 
    By \cref{th:fibo-structure-new}, $[a,\mu(b)]=\sum \alpha_d \ev(d)$ where the sum ranges over $d\in\B$ such that $a<\lambda(d)<\mu(b)$, $\lambda(\mu(b)) < d$ and $\theta_{\lambda(\mu(b))}(d)\leq 2$ using \cref{lem:l2ba-supp}.
    Let $d \in \supp[a,\mu(b)]$. 
    Then $\lambda(b)<d$, by \cref{def:fibo-order}, because $\lambda^2(b)\leq a < \lambda(d)$. 
    \begin{itemize}
        \item If $\theta_{\lambda(\mu(b))}(d)=1$ then $\theta_{\lambda(b)}(d)=1$ by \cref{p:theta-dec}, thus $c:=(\lambda(b),d)\in\B$ and $\theta_{\lambda(b)}(c)=1$.
        
        \item If $\theta_{\lambda(\mu(b))}(d)=2$ then $\theta_{\lambda(b)}(d) \in \{1, 2\}$ by \cref{p:theta-dec}, 
        \begin{itemize}
            \item if $\theta_{\lambda(b)}(d)=1$ i.e.\ $d$ is an index of the sum defining $P$, then $c:=(\lambda(b),d)\in\B$ and $\theta_{\lambda(b)}(c)=1$,
            
            \item if $\theta_{\lambda(b)}(d)=2$ then $\|[\lambda(b),d]\|_\B\leq 2$.
        \end{itemize}
    \end{itemize}
    Hence, the second term contributes at most $2 (N_2(a, \mu(b)) - P)$ to $N_2(a,b)$ and at most $(N_1(a,\mu(b)) + P) + 2(N_2(a, \mu(b)) - P) = \|[a,\mu(b)]\|_\B + N_2(a,\mu(b)) - P$ (thanks to \cref{lem:l2ba-supp}) to $\|[a,b]\|_\B$, which concludes the proof.
\end{proof}

\begin{lemma} \label{lem:mechant-gentil}
    Let $a < b \in \B$ with $\theta_a(b) \geq 3$, $\lambda^2(b) \leq a$, $(a, \lambda(b)) < \lambda(b)$ and $\lambda(\mu(b)) < (a, \lambda(\mu(b)))$.
    Then $\| [a,b] \|_\B \leq 2^{\theta-1}-1$ and $N_2(a,b) \leq 2^{\theta-1}-4$ where $\theta := \theta_a(b)$.
\end{lemma}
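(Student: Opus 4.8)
The plan is to run the recursion underlying \cref{Prop:lambdab_mechant}: decompose $[a,b]$ by Jacobi's identity as in \eqref{eq:ab-jac-new}, show that the present hypotheses force us into the \emph{first} case of \cref{Prop:lambdab_mechant}, and then feed in the bounds for $[a,\mu(b)]$ supplied by \cref{Prop:lambda2b<a}. Writing $\theta:=\theta_a(b)$ and $\theta':=\theta_a(\mu(b))$, this reduces the whole statement to showing that the quantity $P$ appearing in \cref{Prop:lambdab_mechant} satisfies $P\geq 1$, after which $\|[a,b]\|_\B\leq 1+\|[a,\mu(b)]\|_\B+N_2(a,\mu(b))-P\leq 1+2^{\theta'-1}+(2^{\theta'-1}-1)-1=2^{\theta-1}-1$ and $N_2(a,b)\leq 2(N_2(a,\mu(b))-P)\leq 2^{\theta-1}-4$.

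First I would extract the structural content of the hypotheses. Since $\theta_a(b)\geq 3$ one has $(a,b)\notin\B$ and $a<\lambda(b)<\mu(b)$, so $\lambda^2(b)$ is defined; if $\lambda^2(b)<a$ then \cref{def:fibo-order} gives $\lambda(b)<(a,\lambda(b))$, contradicting $(a,\lambda(b))<\lambda(b)$, hence $\lambda^2(b)=a$. Therefore $\theta_a(\lambda(b))=1$, so $\theta'=\theta-1\geq 2$, and the inequality $(a,\lambda(b))<\lambda(b)$ rewrites, via \cref{def:fibo-order} and \cref{sx:b-mub}, as $\lambda(b)=\ad_a^m(X_1)$ for some $m\geq 1$; in particular $(a,\lambda(b))=\ad_a^{m+1}(X_1)$, $\mu(\lambda(b))=\ad_a^{m-1}(X_1)$, and the elements $\ad_a^k(X_1)$ are strictly decreasing in $k$. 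Next I would apply \cref{Prop:lambda2b<a} to $[a,\mu(b)]$: its hypotheses hold since $\theta_a(\mu(b))\geq 2$ yields $(a,\mu(b))\notin\B$ and $a<\lambda(\mu(b))$, since $\lambda^2(\mu(b))\leq\lambda^2(b)=a$ yields $(a,\lambda(\mu(b)))\in\B$, and since $\lambda(\mu(b))<(a,\lambda(\mu(b)))$ is assumed. This gives $\|[a,\mu(b)]\|_\B\leq 2^{\theta'-1}$, $N_2(a,\mu(b))\leq 2^{\theta'-1}-1$, and — unless $\theta'=2$ and $N_2(a,\mu(b))=0$ — an element $c\in\supp[a,\mu(b)]$ with $\theta_{\lambda(\mu(b))}(c)=2$ and $\lambda(c)\leq(a,\lambda(\mu(b)))$.

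The step I expect to be the main obstacle is checking that we are in the first case of \cref{Prop:lambdab_mechant}, i.e.\ that $\lambda(\mu(b))\leq(a,\lambda(b))$. From $b\in\B$ we get $\lambda(\mu(b))\leq\lambda(b)=\ad_a^m(X_1)$; the hypothesis $\lambda(\mu(b))<(a,\lambda(\mu(b)))$ excludes $\lambda(\mu(b))=\lambda(b)$ (otherwise $(a,\lambda(b))<\lambda(b)$ would read $(a,\lambda(\mu(b)))<\lambda(\mu(b))$), so $\lambda(\mu(b))<\ad_a^m(X_1)$. It then suffices to show that $\B$ has no element strictly between $\ad_a^{m+1}(X_1)$ and $\ad_a^m(X_1)$, which I would prove by unwinding \cref{def:fibo-order}: such a $t$ must have $\lambda(t)=a$ and $\ad_a^m(X_1)<\mu(t)<\ad_a^{m-1}(X_1)$ with $(a,\mu(t))=t\in\B$; iterating this down the ladder of the $\ad_a^k(X_1)$, one eventually produces $s\in\B$ with $\ad_a(X_1)<s<X_1$ and $(a,s)\in\B$, forcing $\lambda(s)\leq a$, whereas $\ad_a(X_1)=(a,X_1)<s$ together with $s\neq X_1$ forces $\lambda(s)>a$ — a contradiction. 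Hence $\lambda(\mu(b))\leq\ad_a^{m+1}(X_1)=(a,\lambda(b))$. Finally, to get $P\geq 1$ in the non-exceptional case, I would note that $\lambda(\mu(b))<\ad_a^m(X_1)<\ad_a^{m-1}(X_1)$ gives $(a,\lambda(\mu(b)))<(a,\ad_a^{m-1}(X_1))=\lambda(b)$, so the element $c$ above satisfies $\lambda(\mu(b))<\lambda(c)\leq(a,\lambda(\mu(b)))<\lambda(b)$ and thus contributes to $P$; since the structure constants are integers, $P\geq 1$, which closes the non-exceptional case with the inequalities displayed in the first paragraph. In the remaining case $\theta'=2$, $N_2(a,\mu(b))=0$ (so $\theta=3$ and $\|[a,\mu(b)]\|_\B\leq 2$), the first-case bounds of \cref{Prop:lambdab_mechant} give directly $\|[a,b]\|_\B\leq 1+2=3=2^{\theta-1}-1$ and $N_2(a,b)\leq 0=2^{\theta-1}-4$.
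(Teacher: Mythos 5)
Your proof is correct and follows essentially the same route as the paper: reduce to the first case of \cref{Prop:lambdab_mechant}, feed in the bounds of \cref{Prop:lambda2b<a} applied to $[a,\mu(b)]$, and show $P\geq 1$ using the distinguished element $c$ with $\lambda(\mu(b))<\lambda(c)\leq(a,\lambda(\mu(b)))<\lambda(b)$. The only real difference is that the step you flag as the main obstacle, namely $\lambda(\mu(b))\leq(a,\lambda(b))$, admits a one-line proof that bypasses your ``no gap between $\ad_a^{m+1}(X_1)$ and $\ad_a^m(X_1)$'' argument (which is valid but unnecessary): the hypothesis gives $\lambda(\mu(b))<(a,\lambda(\mu(b)))$, and since $\lambda(\mu(b))\leq\lambda(b)$ (because $b\in\B$), the lexicographic order of \cref{def:fibo-order} yields $(a,\lambda(\mu(b)))\leq(a,\lambda(b))$ directly.
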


\begin{proof}
    Since $\lambda(\mu(b)) < (a, \lambda(\mu(b)))$, the bracket $[a, \mu(b)]$ satisfies the hypotheses of \cref{Prop:lambda2b<a} and $\theta_a(\mu(b)) = \theta - 1$.
    Thus, $\| [a, \mu(b)] \|_\B \leq 2^{\theta-2}$ and $N_2(a, \mu(b)) \leq 2^{\theta-2}-1$.
    Moreover, there exists $c \in \supp [a, \mu(b)]$ such that $\lambda(\mu(b)) < \lambda(c) \leq (a, \lambda(\mu(b))) \leq (a, \lambda(b)) < \lambda(b)$, except when $\theta = 3$ and $N_2(a, \mu(b)) = 0$.
    The hypothesis $\lambda(\mu(b)) < (a, \lambda(\mu(b)))$ implies that $\lambda(\mu(b)) < (a, \lambda(b))$.
    Hence, we can apply the first item of \cref{Prop:lambdab_mechant}.
    
    \step{Case $\theta > 3$ or $N_2(a,\mu(b)) > 0$}
    Then $P \geq 1$.
    Thus $N_2(a,b) \leq 2 (2^{\theta-2}-1-1) = 2^{\theta-1}-4$ and $\|[a,b]\|_\B \leq 1 - 1 + 2^{\theta-2} + 2^{\theta-2} - 1 = 2^{\theta-1} - 1$. 
    
    \step{Case $\theta = 3$ and $N_2(a, \mu(b)) = 0$}
    Then $N_2(a,b) \leq 2 (0 - 0) = 0 = 2^{\theta-1} - 4$ and $\|[a,b]\|_\B \leq 1 - 0 + 2^{\theta-2} + 0 = 3 = 2^{\theta-1}-1$.
\end{proof}

\begin{lemma} \label{lem:A-mechant}
    Let $a < b \in \B$ with $\theta_a(b) \geq 2$, $\lambda^2(b) \leq a$ and $(a, \lambda(b)) < \lambda(b)$.
    Then
    \begin{equation} \label{eq:A-mechant}
        \|[a,b]\|_\B \leq A(\theta)
        \quad \text{and} \quad 
        N_2(a,b) \leq A(\theta)-\theta,
        \quad \text{where} \quad
        \theta := \theta_a(b).
    \end{equation}
\end{lemma}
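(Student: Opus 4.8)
The plan is to proceed by induction on $\theta := \theta_a(b)$, mirroring the structure of the proof of \cref{p:lambda2b<=a-easy} but using the sharper bookkeeping now available. The base case $\theta = 2$ is handled by \cref{lem:size-n2-init-better}: there $\|[a,b]\|_\B \leq 2 = A(2)$ (since $A(2) = -2 + 3 = 1$... wait, one must check the arithmetic: $A(2) = (2-3)2^0 + 2 + 1 = -1 + 3 = 2$, good) and $N_2(a,b) \leq 1 = A(2) - 2$. For the inductive step with $\theta \geq 3$, we again split $[a,b]$ via Jacobi's identity \eqref{eq:ab-jac-new}. The crucial branching is on the relative position of $(a,\lambda(b))$ and $\lambda(b)$ inside $\mu(b)$, more precisely on whether $\lambda(\mu(b)) < (a,\lambda(\mu(b)))$ or not.

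First I would dispose of the ``easy'' sub-case: if $\lambda(\mu(b)) < (a, \lambda(\mu(b)))$ (the situation of \cref{lem:mechant-gentil}), then we already have $\|[a,b]\|_\B \leq 2^{\theta-1}-1 \leq A(\theta)$ and $N_2(a,b) \leq 2^{\theta-1}-4 \leq A(\theta)-\theta$; the last two inequalities are elementary (one checks $2^{\theta-1}-1 \leq (\theta-3)2^{\theta-2}+\theta+1$ for $\theta \geq 3$, with equality at $\theta = 3$, and similarly for the $N_2$ bound). The remaining sub-case is $(a,\lambda(\mu(b))) \leq \lambda(\mu(b))$, i.e.\ the bracket $[a,\mu(b)]$ itself satisfies the hypotheses of \cref{lem:A-mechant} with $\theta_a(\mu(b)) = \theta-1$, so the induction hypothesis applies: $\|[a,\mu(b)]\|_\B \leq A(\theta-1)$ and $N_2(a,\mu(b)) \leq A(\theta-1)-(\theta-1)$. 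Then I would invoke \cref{Prop:lambdab_mechant} to push these through the Jacobi decomposition. Both items of that proposition give bounds of the shape $N_2(a,b) \leq (\text{first-term contribution}) + 2(N_2(a,\mu(b)) - P)$ and $\|[a,b]\|_\B \leq (\text{first-term contribution}) + \|[a,\mu(b)]\|_\B + N_2(a,\mu(b)) - P$, where $P \geq 0$ is the fraction of the norm of $[a,\mu(b)]$ carried by elements $d$ with $\lambda(\mu(b)) < \lambda(d) \leq \lambda(b)$.

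The heart of the argument, and the step I expect to be the main obstacle, is showing that $P$ is large enough to absorb the doubling in the $N_2$ recursion and keep us at $A(\theta)$ rather than blowing up to something like $2^{\theta}$. The point is that in the present sub-case the element $(a,\lambda(\mu(b)))$ (or elements derived from it during the decomposition of $[a,\mu(b)]$) contributes to $P$: since $(a,\lambda(\mu(b))) \leq \lambda(\mu(b)) \leq \lambda(b)$, such elements have $\theta_{\lambda(b)} = 1$ after re-bracketing by $\lambda(b)$. One must quantify $P$ from below — plausibly $P \geq N_2(a,\mu(b)) - (\text{something like } 2^{\theta-3})$ or a comparable inequality tracking exactly how many ``type-2'' elements of $[a,\mu(b)]$ have their left factor pinched below $\lambda(b)$. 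Combining such a lower bound on $P$ with the induction hypothesis and the recursion $A(\theta) = 2A(\theta-1) - (\theta-3) - 1$ (which one verifies directly from \eqref{eq:a-theta}: $2A(\theta-1) = (\theta-4)2^{\theta-2} + 2\theta$, and $A(\theta) - (\theta-3)2^{\theta-2} = \theta+1$, so indeed the telescoping works) should close the induction. I would organize the final computation as: in the sub-case $\lambda(\mu(b)) \leq (a,\lambda(b))$ use the first item of \cref{Prop:lambdab_mechant} with first-term contribution $\leq 1$ to $\|\cdot\|_\B$ and $0$ to $N_2$; in the sub-case $(a,\lambda(b)) < \lambda(\mu(b))$ use the second item with first-term contribution $\leq 2^{\theta-2}$ and $\leq 2^{\theta-2}-2$ respectively; in each case substitute the inductive bounds and the lower bound on $P$, and check the resulting inequality against $A(\theta)$ and $A(\theta)-\theta$ by the identity above. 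The bookkeeping is delicate but the only genuinely new input needed beyond the cited lemmas is the lower estimate for $P$, which should follow by tracing, in the proof of \cref{Prop:lambda2b<a} applied to $[a,\mu(b)]$, the explicit first-term element whose left factor is $(a,\lambda(\mu(b)))$.
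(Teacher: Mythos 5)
Your skeleton (induction on $\theta$, base case via \cref{lem:size-n2-init-better}, dispatching the sub-case $\lambda(\mu(b)) < (a,\lambda(\mu(b)))$ to \cref{lem:mechant-gentil}, otherwise applying the induction hypothesis to $[a,\mu(b)]$ and feeding it into \cref{Prop:lambdab_mechant}) is exactly the paper's. But the step you single out as ``the heart of the argument'' --- a lower bound on $P$ --- is not needed, and since you leave it as a conjecture (``plausibly $P \geq N_2(a,\mu(b)) - \dots$''), your proof is incomplete precisely where it did not have to be. Taking $P = 0$ in the second item of \cref{Prop:lambdab_mechant} already closes the induction: with $N_2(a,\mu(b)) \leq A(\theta-1)-(\theta-1) = (\theta-4)2^{\theta-3}+1$ one gets $N_2(a,b) \leq 2^{\theta-2}-1+2N_2(a,\mu(b)) \leq (\theta-3)2^{\theta-2}+1 = A(\theta)-\theta$, and $\|[a,b]\|_\B \leq 2^{\theta-2}+2A(\theta-1)-(\theta-1) = A(\theta)$ exactly. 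The source of your confusion appears to be the recursion you wrote for $A$: the correct identity is $A(\theta) = 2A(\theta-1) + 2^{\theta-2} - (\theta-1)$, not $A(\theta) = 2A(\theta-1)-(\theta-3)-1$; your version drops the $2^{\theta-2}$ term, which is precisely what absorbs the first-term contribution of the Jacobi split, so you were led to believe $P$ had to make up the difference. (The doubling $2N_2(a,\mu(b))$ does not blow up past $A$: it is what generates the $(\theta-3)2^{\theta-2}$ term in $A(\theta)$ in the first place.)

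There is a second, smaller gap in your base case: you write $N_2(a,b) \leq 1 = A(2)-2$, but $A(2)-2 = 0$. The bound $N_2(a,b)\leq 1$ from the first part of \cref{lem:size-n2-init-better} is not sufficient; you must use its refinement, namely that the unique $c$ with $\theta_{\lambda(b)}(c)=2$ would satisfy $\lambda(c) \leq (a,\lambda(b))$, which under the hypothesis $(a,\lambda(b)) < \lambda(b)$ forces $\theta_{\lambda(b)}(c)=1$, a contradiction; hence $N_2(a,b)=0$. Without this, the initialization of the $N_2$ half of the induction fails.
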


\begin{proof}
    We proceed by induction on $\theta := \theta_a(b) \geq 2$.
    
    \step{Initialization for $\theta = 2$}
    According to \cref{lem:size-n2-init-better}, $\|[a,b]\|_\B \leq 2 = A(2)$, $N_2(a,b) \leq 1$, and moreover, if $N_2(a,b) = 1$, the unique $c \in \supp [a,b]$ such that $\theta_{\lambda(b)}(c) = 2$ satisfies $\lambda(c) \leq (a, \lambda(b))$.
    Thus, by assumption, $\lambda(c) < \lambda(b)$ so $\theta_{\lambda(b)}(c) = 1$.
    Hence $N_2(a,b) = 0 = A(2)-2$.
    So \eqref{eq:A-mechant} holds.
    
    \step{Induction for $\theta \geq 3$}
    If $\lambda(\mu(b)) < (a, \lambda(\mu(b)))$, estimate \eqref{eq:A-mechant} follows from \cref{lem:mechant-gentil} since $2^{\theta-1}-1 \leq A(\theta)-\theta$ and $2^{\theta-1}-1 \leq A(\theta)$ for $\theta \geq 3$.
    Otherwise, when $(a, \lambda(\mu(b))) < \lambda(\mu(b))$, the bracket $[a, \mu(b)]$ satisfies the hypotheses of \cref{lem:A-mechant}.
    Thus $\|[a, \mu(b)]\|_\B \leq A(\theta-1)$ and $N_2(a, \mu(b)) \leq A(\theta-1) - (\theta-1)$.
    Moreover, by \cref{Prop:lambdab_mechant}, even if $P = 0$,
    \begin{equation}
        \begin{split}
            N_2(a,b) 
            & \leq 2^{\theta-2}-1 + 2 N_2(a, \mu(b)) \\ 
            & \leq 2^{\theta-2}-1 + 2 (\theta-4) 2^{\theta-3} + 2 = A(\theta) - \theta
        \end{split}
    \end{equation}
    and
    \begin{equation} \label{eq:ind-norm-ab}
        \begin{split}
        \|[a,b]\|_\B 
        & \leq 2^{\theta-2} +\|[a,\mu(b)]\|_\B + N_2(a, \mu(b)) \\
        & \leq 2^{\theta-2} + A(\theta-1)
        + A(\theta-1) - (\theta-1) = A(\theta),
        \end{split}
    \end{equation}
    so \eqref{eq:A-mechant} is proved.
\end{proof}

\begin{theorem} \label{thm:a-theta-droite}
    Let $a < b \in \B$ with $(a,b) \notin \B$ such that $\lambda^2(b) \leq a$.
    Then $\|[a,b]\|_\B \leq A(\theta_a(b))$.
\end{theorem}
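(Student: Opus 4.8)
The statement asserts $\|[a,b]\|_\B \le A(\theta_a(b))$ for $a < b \in \B$ with $(a,b)\notin\B$ and $\lambda^2(b) \le a$. Under this last hypothesis, \cref{lem:tab-right} tells us that $\rf{a}{b}$ is right-nested, which is exactly the regime in which the sharp estimates of this subsection were developed. The plan is to deduce the bound from \cref{lem:A-mechant} and \cref{Prop:lambda2b<a-easy} by a simple case split on how $\lambda(b)$ and $(a,\lambda(b))$ compare. Note first that the hypothesis $\lambda^2(b)\le a < \lambda(b)$ together with $(a,b)\notin\B$ forces $\theta_a(b)\ge 2$ and, since $\lambda^2(b)\le a$, the bracket $(a,\lambda(b))$ lies in $\B$ (by the Hall axioms, as $\lambda^2(\lambda(b))=\lambda^2(b)\le a$), so the dichotomy below is exhaustive.

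\textbf{Key steps.} First, if $(a,\lambda(b)) < \lambda(b)$, then the hypotheses of \cref{lem:A-mechant} are exactly met (we have $\theta_a(b)\ge 2$, $\lambda^2(b)\le a$, and $(a,\lambda(b))<\lambda(b)$), and that lemma directly gives $\|[a,b]\|_\B \le A(\theta_a(b))$. Second, if instead $\lambda(b) < (a,\lambda(b))$, then — distinguishing the subcase $\lambda^2(b) < a$ from $\lambda^2(b) = a$ — we are covered either by \cref{p:lambda2b<a-easy} (which yields the far stronger bound $2^{\theta-1}$) or, when $\lambda^2(b)=a$, by \cref{Prop:lambda2b<a} whose hypotheses "$(a,b)\notin\B$, $(a,\lambda(b))\in\B$, $\lambda(b)<(a,\lambda(b))$'' are all satisfied, again giving $\|[a,b]\|_\B \le 2^{\theta-1}$. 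In all these cases one checks the elementary numerical inequality $2^{\theta-1} \le A(\theta) = (\theta-3)2^{\theta-2}+\theta+1$, which holds for every $\theta\ge 2$: for $\theta=2$ both sides equal $2$; for $\theta=3$ we get $4 \le 4$; and for $\theta\ge 4$ the coefficient $(\theta-3)\ge 1$ makes $A(\theta)\ge 2^{\theta-2}+\theta+1 > 2^{\theta-1}$ is false, so one actually argues $(\theta-3)2^{\theta-2}\ge 2^{\theta-2}\ge 2^{\theta-1}$ fails — more carefully, for $\theta\ge 5$, $(\theta-3)\ge 2$ so $A(\theta)\ge 2\cdot 2^{\theta-2}=2^{\theta-1}$, and $\theta=4$ is checked by hand ($A(4)=2^2+5=9\ge 8$). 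Thus the bound follows in every case.

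\textbf{Main obstacle.} There is essentially no obstacle here: the heavy lifting was done in the preceding lemmas (\cref{lem:A-mechant}, via \cref{Prop:lambdab_mechant} and the refined initialization \cref{lem:size-n2-init-better}), so this theorem is just the packaging step that removes the auxiliary hypothesis "$(a,\lambda(b))<\lambda(b)$'' present in \cref{lem:A-mechant} by observing that the complementary case is handled — even more favorably — by the $2^{\theta-1}$ bounds of \cref{p:lambda2b<a-easy} and \cref{Prop:lambda2b<a}. The only mild care needed is to verify that when $\lambda^2(b)=a$ and $\lambda(b)<(a,\lambda(b))$, \cref{Prop:lambda2b<a} genuinely applies (its hypotheses are $(a,b)\notin\B$, $(a,\lambda(b))\in\B$, and $\lambda(b)<(a,\lambda(b))$, all available), and to confirm the little inequality $2^{\theta-1}\le A(\theta)$ for all $\theta\ge 2$ by the casework just indicated.
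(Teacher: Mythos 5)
Your proof is correct and follows essentially the same route as the paper: a dichotomy on whether $(a,\lambda(b))<\lambda(b)$ (handled by \cref{lem:A-mechant}) or $\lambda(b)<(a,\lambda(b))$ (handled by the $2^{\theta-1}$ bound of \cref{Prop:lambda2b<a}, together with the elementary check $2^{\theta-1}\leq A(\theta)$ for $\theta\geq 2$, which the paper leaves implicit). The only blemishes are cosmetic: the notational slip $\lambda^2(\lambda(b))$ where you mean $\lambda(\lambda(b))=\lambda^2(b)$, and the garbled first pass at the inequality $2^{\theta-1}\leq A(\theta)$ before you settle on the correct casework.
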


\begin{proof}
    This follows from \cref{Prop:lambdab_mechant} when $\lambda(b) < (a, \lambda(b))$ and from \cref{lem:A-mechant} when $(a, \lambda(b)) < \lambda(b)$ (the situation $\lambda(b) = (a, \lambda(b))$ being impossible, since $|(a, \lambda(b))| > |\lambda(b)|$).
\end{proof}

We eventually study the following particular subcase, which provides a theoretical proof to a numerical claim made in the proof of \cref{lem:ab-forme-speciale}.  

\begin{corollary} \label{cor:atheta-1}
    Let $\theta \geq 2$. 
    Then $\|[X_0,\ad_{(X_0,X_1)}^{\theta-1}(X_1)]\|_\B \leq A(\theta) - 1$.
\end{corollary}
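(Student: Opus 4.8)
The strategy is to compare the bracket $[X_0,\ad_{(X_0,X_1)}^{\theta-1}(X_1)]$ with the bracket $[a,b]$ studied in \cref{prop:fibo-sature}, where we choose $a := (X_0,X_1)$, $h := \ad_a^m(X_1)$ for some $m \geq 2$, and $b := \ad_h^{\theta-1}(X_1)$, for which we know the exact value $\|[a,b]\|_\B = A(\theta_a(b)) = A(\theta)$. However, the bracket we want to estimate has first argument $X_0$ rather than $(X_0,X_1)$. The key observation is that $X_0 < (X_0,X_1) = a$, so applying \cref{p:theta-dec} gives $\theta_{(X_0,X_1)}(\ad_{(X_0,X_1)}^{\theta-1}(X_1)) = \theta$ as well (a direct induction on $\theta$ shows this $\theta$-value equals $\theta$ since $((X_0,X_1),\ad_{(X_0,X_1)}^{k}(X_1)) \in \B$ for each $k$). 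So the bracket under consideration does satisfy $\theta_{X_0}(b') = \theta$ where $b' := \ad_{(X_0,X_1)}^{\theta-1}(X_1)$, and moreover $\lambda^2(b') = X_0 = a$ for $\theta \geq 3$, so \cref{thm:a-theta-droite} already yields $\|[X_0,b']\|_\B \leq A(\theta)$. The point of the corollary is to shave off $1$ from this bound.

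First I would reduce to the case $\theta \geq 3$, checking $\theta = 2$ by hand: $[X_0,\ad_{(X_0,X_1)}(X_1)] = [X_0,((X_0,X_1),X_1)]$, and a short explicit computation (or \eqref{wi,i+n}) shows $\|[X_0,((X_0,X_1),X_1)]\|_\B = 2 = A(2) - 1$ since $A(2) = (2-3)2^0 + 3 = 2$; wait, $A(2) = -1 + 3 = 2$, so $A(2) - 1 = 1$, which forces us to recheck: $[X_0,((X_0,X_1),X_1)]$ has $\theta = 2$, and by \cref{lem:ab-forme-speciale} with $n_a(b') = 1$ this norm is $F_2 = 1$; indeed $[X_0,((X_0,X_1),X_1)] = \ad_{(X_0,X_1)}^0[X_0,(X_0,X_1)]\ \text{-type}$ collapses. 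So the base case holds. For $\theta \geq 3$, I would run the same Jacobi-based induction as in the proof of \cref{thm:a-theta-droite} / \cref{lem:A-mechant}, but tracking one extra unit of slack: with $b' = \ad_{(X_0,X_1)}^{\theta-1}(X_1)$, one has $\lambda(b') = (X_0,X_1) = a$, and since $\lambda^2(b') = X_0 < (X_0,X_1) < ((X_0,(X_0,X_1)),\dots)$... more precisely $(a,\lambda(b')) = (a,a)$ evaluates to $0$, so the first Jacobi term $[(X_0,\lambda(b')),\mu(b')] = [(X_0,(X_0,X_1)),\mu(b')]$ is genuinely present and one tracks its contribution carefully, finding that the relevant inductive inequality \eqref{eq:ind-norm-ab} can be improved by $1$ because at the bottom of the recursion the $N_2$ term vanishes (the leaf structure forces the last split to produce only $\theta_{\lambda}=1$ elements, as in the initialization of \cref{lem:A-mechant}).

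The cleanest route, which I would actually follow, is to invoke \cref{Prop:lambdab_mechant} directly: since $\lambda(b') = (X_0,X_1) = a$... no, $a = X_0$ here, so $\lambda(b') = (X_0,X_1) > X_0 = a$ and $(a,\lambda(b')) = (X_0,(X_0,X_1))$, and one checks $\lambda(b') = (X_0,X_1) < (X_0,(X_0,X_1)) = (a,\lambda(b'))$ by \cref{def:fibo-order} since $\lambda(X_0,X_1) = X_0 = \lambda((X_0,(X_0,X_1)))$ and $\mu(X_0,X_1) = X_1 > (X_0,X_1) = \mu((X_0,(X_0,X_1)))$ — hmm, that gives the reverse. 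Let me instead just apply \cref{Prop:lambdab\_mechant} in the case $\lambda(b') < (a,\lambda(b'))$, i.e.\ \cref{Prop:lambda2b<a} via \cref{thm:a-theta-droite}'s first alternative, which yields the \emph{sharper} bound $\|[X_0,b']\|_\B \leq 2^{\theta-1}$; and since $2^{\theta-1} \leq A(\theta) - 1$ for all $\theta \geq 2$ (as $A(\theta) - 1 - 2^{\theta-1} = (\theta-3)2^{\theta-2} + \theta - 2^{\theta-1} = (\theta-4)2^{\theta-2} + \theta \geq 0$ for $\theta \geq 4$, and direct checks for $\theta \in \{2,3\}$), we are done. \textbf{The main obstacle} is determining which alternative of \cref{Prop:lambdab_mechant}/\cref{thm:a-theta-droite} actually applies to $b' = \ad_{(X_0,X_1)}^{\theta-1}(X_1)$, i.e.\ correctly computing the order relation between $\lambda(b') = (X_0,X_1)$ and $(a,\lambda(b')) = (X_0,(X_0,X_1))$ under \cref{def:fibo-order}: by \cref{sx:order} applied with $a \gets X_0$, since $\lambda(b') = (X_0,X_1) = \ad_{X_0}^1(X_1)$ and $(X_0,(X_0,X_1)) = \ad_{X_0}^2(X_1)$, and $X_1$ is maximal, one gets $\ad_{X_0}^2(X_1) < \ad_{X_0}^1(X_1)$, i.e.\ $(a,\lambda(b')) < \lambda(b')$, so we are in the \emph{second} alternative and must use \cref{lem:A-mechant}, proceeding as in the induction sketched above, carrying the extra $-1$ through \eqref{eq:ind-norm-ab} by exploiting that $\lambda^2(b') = X_0$ is minimal so the terminal brackets contribute no $N_2$ mass.
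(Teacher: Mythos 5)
Your final route — after you correctly resolve that $(a,\lambda(b')) = (X_0,(X_0,X_1)) < (X_0,X_1) = \lambda(b')$ via \cref{sx:order}, so that \cref{lem:A-mechant} rather than \cref{Prop:lambda2b<a} applies — is essentially the paper's proof: establish $\|[X_0,b_2]\|_\B = 1 = A(2)-1$ by hand, then propagate the extra $-1$ through \eqref{eq:ind-norm-ab} using $\mu(b_\theta) = b_{\theta-1}$. One caveat on your stated mechanism: the slack does not come from "$N_2$ vanishing at the bottom" (the generic bound of \cref{lem:A-mechant} already gives $N_2 \leq A(2)-2 = 0$ at $\theta=2$); it comes from the base-case \emph{norm} being $1$ instead of $2$, due to the Jacobi cancellation $[[X_0,X_1],[X_0,X_1]]=0$ — which your explicit $\theta=2$ computation does supply, so the induction closes.
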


\begin{proof}
    Let $a := X_0$ and $b_\theta := \ad^{\theta-1}_{(X_0,X_1)}(X_1)$.
    When $\theta = 2$, by Jacobi's identity,
    \begin{equation} \label{eq:jacobi-cancel}
        [a, b_2]
        = [[X_0, [X_0, X_1]], X_1] + [[X_0, X_1], [X_0, X_1]]
        = [[X_0, [X_0, X_1]], X_1],
    \end{equation}
    where $((X_0, (X_0, X_1)), X_1) \in \B$.
    Hence $\|[a,b_2]\|_\B = 1 = A(2) - 1$ and $N_2(a,b_2) = 0$, which, thanks to the cancellation in \eqref{eq:jacobi-cancel}, are strictly better estimates than in \cref{lem:size-n2-init-better}.
    
    Moreover, for every $\theta \geq 2$, $\lambda^2(b_\theta) = X_0 = a$ and $(a, \lambda(b)) = (X_0, (X_0, X_1)) < (X_0, X_1) = \lambda(b)$.
    Since $\mu(b_\theta) = b_{\theta-1}$, the estimate $\|[a,b_\theta]\|_\B \leq A(\theta)-1$ is propagated by induction by \eqref{eq:ind-norm-ab}.
\end{proof}

\subsubsection{Sharp bound in the general case}
\label{s:sharp-all}

We prove \cref{thm:a-theta} which extends \cref{thm:a-theta-droite} to all brackets.
We start with a definition and preliminary results concerning situations where the structure constants are much smaller than expected (see \cref{lem:surcoupe_2theta}) and is related to \cref{Cor:surcoupe_strict}.

\begin{definition}
   We define a relation $\ll$ on $\B$ by the following conditions: for $a_0,a \in \mathcal B$, $a_0 \ll a$ when $a_0 \neq X_1$, $a \neq X_0$, and either $a = X_1$, or $a = (a_0,X_1)$, or $a_0<\lambda(a)$. 
\end{definition}

\begin{lemma}
    \label{lem:ll-comp}
    Let $a_0 \ll a \in \B$. 
    Then $a_0 < a$.
    If $b \in \B \setminus X$ is such that $a<b$, then $a_0<\lambda(b)$.
\end{lemma}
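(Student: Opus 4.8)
\textbf{Proof plan for Lemma~\ref{lem:ll-comp}.}

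The plan is to unpack the definition of $\ll$ into its three defining cases and treat each one directly, using the elementary properties of the order on $\B$ established in \cref{def:fibo-order} and \cref{sx:order}. So suppose $a_0 \ll a$, which means $a_0 \neq X_1$, $a \neq X_0$, and one of the following holds: $a = X_1$, or $a = (a_0, X_1)$, or $a_0 < \lambda(a)$. For the first claim, $a_0 < a$, the case $a = X_1$ is immediate since $X_1 = \max \B$ and $a_0 \neq X_1$. The case $a = (a_0, X_1) \in \B$ is immediate from the Hall order property $\lambda(a) < a$, i.e.\ $a_0 < a$. In the case $a_0 < \lambda(a)$ we combine $a_0 < \lambda(a)$ with the Hall order property $\lambda(a) < a$ to conclude $a_0 < a$ by transitivity.

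For the second claim, assume in addition that $b \in \B \setminus X$ with $a < b$; we must show $a_0 < \lambda(b)$. First, since $b \notin X$ and $a < b \in \B$, the Hall axioms give $\lambda(b) \leq a$ \emph{unless} this comparison is what drives the decomposition; but more precisely, from $a<b$, $b=(\lambda(b),\mu(b))$ and the Hall condition, we either have $\lambda(b) \le a$, in which case $a_0 < a$ and hence $a_0 \le a$ does not yet suffice, so we argue differently: the key is that $a < b$ and $b \notin X$ force, by \cref{def:fibo-order}, the comparison of left factors $\lambda(a) \le \lambda(b)$ when $a \notin X$ (and when $a \in X$, $\lambda(a)$ is undefined but $a=a_0$ may fail). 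I would instead run the argument through the three cases of $a_0 \ll a$ again. In the case $a = X_1$: this cannot happen here since $a < b$ and $X_1 = \max\B$, so this case is vacuous. In the case $a = (a_0,X_1)$: then $\lambda(a) = a_0$, and since $a < b$ with $b \notin X$, \cref{def:fibo-order} gives $\lambda(a) \le \lambda(b)$, i.e.\ $a_0 \le \lambda(b)$; equality $a_0 = \lambda(b)$ would force (by comparing right factors, since $a=(a_0,X_1)<b=(a_0,\mu(b))$) that $X_1 < \mu(b) \le X_1$, a contradiction, hence $a_0 < \lambda(b)$. In the case $a_0 < \lambda(a)$: again $a < b$ with $b \notin X$ gives $\lambda(a) \le \lambda(b)$ by \cref{def:fibo-order}, so $a_0 < \lambda(a) \le \lambda(b)$, whence $a_0 < \lambda(b)$ by transitivity.

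The one delicate point, and the main obstacle, is justifying the implication ``$a < b$ and $b \in \B \setminus X$ $\Rightarrow$ $\lambda(a) \le \lambda(b)$'' cleanly: if $a \in X$ this needs care since $\lambda(a)$ is not defined, but then in the relevant case $a = a_0 \in X$ and one checks directly from $X_0$-minimality and \cref{def:fibo-order} that $a_0 \le \lambda(b)$, with equality excluded as above; if $a \notin X$ the comparison $\lambda(a) \le \lambda(b)$ is exactly the first clause of the lexicographic order of \cref{def:fibo-order} applied to $a < b$ (noting $a,b \notin X$). I would state this as a one-line sub-remark and then the three-case analysis closes the proof. No computation beyond comparing left and right factors is needed; the whole argument is a careful bookkeeping of the order definition.
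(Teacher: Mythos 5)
Your proposal is correct and follows essentially the same route as the paper: both arguments reduce to a case analysis on the three clauses of the definition of $\ll$, using $\lambda(a)<a$ for the first claim and the lexicographic comparison of left factors (plus maximality of $X_1$ to exclude the equality $\lambda(b)=a_0$ when $a=(a_0,X_1)$) for the second. The only difference is presentational — your final paragraph's worry about $a\in X$ is moot, since the only such case is $a=X_1$, which you have already shown to be vacuous.
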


\begin{proof}
    The first item follows from the relations $a_0<(a_0,X_1)$ and $\lambda(a)<a$. 
    For the second item, let $b \in \B \setminus X$ such that $a<b$. 
    Then $a \neq X_1$. If $a=(a_0,X_1)$ then the relation $a<b$ implies $a_0<\lambda(b)$ because $\mu(b)<X_1$. 
    If $a_0<\lambda(a)$ then $a_0<\lambda(b)$ because $\lambda(a)\leq\lambda(b)$.
\end{proof}

\begin{remark}
    \cref{lem:ll-comp} implies that the relation $\ll$ is transitive. In fact, it is an order on~$\B$. 
    Indeed, the conditions $a\ll b$ and $b \ll a$ are incompatible since each of them implies the same comparison for the relation $<$. 
    However, since $(X_0,X_1)$ and $(X_0,(X_0,X_1))$ cannot be compared by~$\ll$, this order is not total.
\end{remark}


\begin{lemma}\label{lem:surcoupe_2theta}
    Let $a_0 \ll a < b < X_1 \in \B$.
    Then $\theta_{a_0}(b) \geq 2$ and $\|[a,b]\|_\B \leq 2^{\theta_{a_0}(b)-2}$.
\end{lemma}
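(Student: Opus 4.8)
The statement compares a "coarse" bracket $[a,b]$ with a much finer folding $\theta_{a_0}(b)$ obtained from a smaller base point $a_0\ll a$. The key is that the relation $a_0\ll a$ forces $a_0<\lambda(b)$ (this is exactly \cref{lem:ll-comp}, using $b\in\B\setminus X$, which holds because $a<b<X_1$ forces $|b|\geq 2$). Hence $\theta_{a_0}(b)$ counts leaves of $\rf{a_0}{b}$, and since $a_0<\lambda(b)<\mu(b)$ we get $\theta_{a_0}(b)=\theta_{a_0}(\lambda(b))+\theta_{a_0}(\mu(b))\geq 2$, which gives the first assertion. For the size estimate, the natural route is to express $[a,b]$ in terms of $[a,c]$ for $c\in\supp[\cdot]$ running over the leaves of $\rf{a_0}{b}$ and control things by induction on $\theta_{a_0}(b)$.

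\textbf{Main steps.} First I would set up the induction on $n:=\theta_{a_0}(b)\geq 2$. For the base case $n=2$: then $\theta_{a_0}(b)=2$ means $\theta_{a_0}(\lambda(b))=\theta_{a_0}(\mu(b))=1$, i.e.\ $(a_0,\lambda(b)),(a_0,\mu(b))\in\B$ so in particular $\lambda^2(b)\leq a_0<a$ and $\lambda(\mu(b))\leq a_0<a$; by \cref{lem:theta<=2} applied to $a$ this gives $\theta_a(b)\leq 2$. If $(a,b)\in\B$ then $\|[a,b]\|_\B=1=2^{0}$. If $\theta_a(b)=2$, apply \cref{lem:size-n2-init} (or \cref{lem:size-n2-init-better}) to get $\|[a,b]\|_\B\leq 2=2^{\theta_{a_0}(b)-2}\cdot 2$; but $2^{n-2}=1$ here, so this is not yet good enough — I need to squeeze out the extra factor. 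The cancellation must come from the fact that $a_0\ll a$ gives structural information on the support: indeed $a_0<\lambda(b)<a$ forces things like $\lambda^2(b)\leq a_0<a<\lambda(b)$, so via \cref{def:fibo-order} one of the two terms of the Jacobi identity $[a,b]=[(a,\lambda(b)),\mu(b)]+[\lambda(b),[a,\mu(b)]]$ lands directly in the basis (this is the mechanism of \cref{p:lambda2b<a-easy}, which shows $N_2$ and the norm are controlled once $\lambda^2(b)<a$). So for $n=2$ I would show directly $[a,b]=\ev(c_1)\pm\ev(c_2)$ with both $c_i$ basis elements and $\theta_{a_0}(c_i)=1$, getting norm $\leq 1$ if one term cancels, or I accept norm $\leq 2^0=1$ only if there is genuine cancellation — and if not, I reconcile by noting the statement wants $2^{n-2}$, so for $n=2$ I need $\|[a,b]\|_\B\leq 1$; the cancellation $[c,c]=0$ type phenomena or the fact that $(a,\mu(b))<\lambda(b)$ would be the place to get it.

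\textbf{Inductive step.} For $n\geq 3$: $a_0<\lambda^2(b)$ or $\lambda^2(b)\leq a_0$; in either case apply Jacobi $[a,b]=[[a,\lambda(b)],\mu(b)]+[\lambda(b),[a,\mu(b)]]$. Using \cref{th:fibo-structure-new} and \cref{lem:ll-comp}, every $d\in\supp[a,\lambda(b)]$ satisfies $a_0\ll$-compatibly $a_0<\lambda(d)$, and $\theta_{a_0}(d)\leq\theta_{a_0}(\lambda(b))$ (via \cref{lem:theta-dec2/support-absurde} and \cref{p:theta-dec}, transported through the $a_0$ base point). Then for the inner brackets $[d,\mu(b)]$ and $[\lambda(b),d']$, I would check the hypotheses $a_0\ll d$ (resp.\ $\lambda(b)$) $<\mu(b)$ (resp.\ $d'$) $<X_1$ are preserved — this needs that $\mu(b)<X_1$, i.e.\ $b\neq\ad_{\lambda(b)}^m(X_1)$, which is where \cref{sx:b-mub}/\cref{sx:a<mub} enter — and apply the induction hypothesis, yielding $\|[a,b]\|_\B\leq 2^{\theta_{a_0}(\lambda(b))-2}\cdot 2^{\theta_{a_0}(\mu(b))-2}\cdot(\text{small})$ plus a term that either is a single basis element or is similarly bounded. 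Since $\theta_{a_0}(\lambda(b))+\theta_{a_0}(\mu(b))=n$, the product gives $2^{n-4}$, leaving room for the extra factors of $2$ coming from the two Jacobi terms, landing at $\leq 2^{n-2}$.

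\textbf{Main obstacle.} The delicate point is the base case $n=2$ and, more generally, ensuring the hypothesis "$a_0\ll(\text{new left point})<(\text{new right argument})<X_1$" genuinely propagates through \emph{both} branches of the Jacobi identity and both sub-branches, because the relation $\ll$ is not total and the new left points are elements like $(a,\lambda(b))$ or $\lambda(b)$ whose $\ll$-relation to $a_0$ must be re-derived. I expect this bookkeeping — verifying $a_0\ll(a,\lambda(b))$ (which needs $a_0<\lambda(a,\lambda(b))=a$, true) and $a_0\ll\lambda(b)$ when $\lambda^2(b)\neq a_0$ vs.\ using the $a=(a_0,X_1)$ or $a=X_1$ clauses of the definition of $\ll$ otherwise — together with keeping the exponent accounting tight enough to hit exactly $2^{\theta_{a_0}(b)-2}$ rather than a weaker power, to be the real work; the rest is a routine adaptation of the proofs of \cref{p:lambda2b<a-easy} and \cref{p:rec-theta}.
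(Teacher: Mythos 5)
Your overall plan (induction on $\theta_{a_0}(b)$, Jacobi splitting, propagation of the $\ll$-hypothesis to the sub-brackets) matches the paper's proof in outline, but the base case — which you correctly flag as the sticking point — is left unresolved, and the mechanism you guess at (cancellation, or the machinery of \cref{p:lambda2b<a-easy}) is not the right one. When $\theta_{a_0}(b)=2$ one has $\lambda^2(b)\leq a_0$, and the hypothesis $a_0\ll a$ then forces $\lambda(b)\leq a$ outright: if $a=(a_0,X_1)$ then $\lambda(b)=(\lambda^2(b),\mu(\lambda(b)))\leq(a_0,X_1)=a$ by the lexicographic order of \cref{def:fibo-order} (using that $X_1$ is maximal), and if $a_0<\lambda(a)$ then $\lambda^2(b)\leq a_0<\lambda(a)$ gives $\lambda(b)<a$. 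Hence $(a,b)\in\B$ and $\|[a,b]\|_\B=1=2^{\theta_{a_0}(b)-2}$; no cancellation and no appeal to \cref{lem:size-n2-init} is needed. Your chain ``$a_0<\lambda(b)<a$ forces $\lambda^2(b)\leq a_0<a<\lambda(b)$'' is self-contradictory and signals that you had not pinned down where $\lambda(b)$ sits relative to $a$, which is precisely the content of the base case. (The same observation lets the inductive step assume $a<\lambda(b)$ without loss of generality, since otherwise $(a,b)\in\B$ and the bound is trivial.)

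Two further points in the inductive step are underspecified. First, the case $\mu(b)=X_1$ must be treated separately: there $[a,\mu(b)]=(a,X_1)$ is a single basis element, $(d,X_1)\in\B$ for every $d$, and the sub-bracket $[(a,X_1),\lambda(b)]$ is handled by the induction hypothesis via $a_0\ll a\ll(a,X_1)$, so each Jacobi branch contributes at most $2^{\theta_{a_0}(\lambda(b))-2}=2^{\theta_{a_0}(b)-3}$. Second, for $d\in\supp[a,\mu(b)]$ with $\lambda(b)<d$ you cannot simply ``apply the induction hypothesis'' to $[\lambda(b),d]$: the induction is on $\theta_{a_0}$ and you have no control on $\theta_{a_0}(d)$ relative to $\theta_{a_0}(b)$. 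The paper instead invokes \cref{th:fibo-structure-new} and \cref{p:theta-dec} to get $\theta_{\lambda(b)}(d)\leq\theta_{\lambda(\mu(b))}(d)\leq 2$, hence $\|[\lambda(b),d]\|_\B\leq 2\leq 2^{\theta_{a_0}(\lambda(b))-1}$; this is what makes the exponent count $2^{\theta_{a_0}(b)-4}+2^{\theta_{a_0}(b)-3}\leq 2^{\theta_{a_0}(b)-2}$ close. Your accounting (``leaving room for the extra factors of 2'') is too loose to certify that the final exponent is exactly $\theta_{a_0}(b)-2$.
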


\begin{proof}
    Since $a_0 \ll a <b$, \cref{lem:ll-comp} shows that $a_0 <  \lambda(b)$, so $\theta_{a_0}(b) \geq 2$. 
    We prove the estimate on $\|[a,b]\|_\B$ by induction on $\theta_{a_0}(b) \geq 2$.
    
    \step{Initialization for $\theta_{a_0}(b) = 2$}
    Then $\lambda^2(b) \leq a_0$ so $\lambda(b) \leq a$ (indeed, either $a=(a_0,X_1)$ and then $\lambda(b)=(\lambda^2(b),\mu\lambda(b)) \leq (a_0,X_1)=a$ or $a_0<\lambda(a)$ and then $\lambda^2(b)<\lambda(a)$ thus $\lambda(b)<a$).
    Therefore, $(a,b) \in \B$ and $\|[a,b]\|_\B = 1 = 2^{\theta_{a_0}(b)-2}$.
    
    \step{Inductive step for $\theta_{a_0}(b) \geq 3$}
    Since $a < \lambda(b)$, $a_0 < \lambda^2(b)$ by \cref{lem:ll-comp} and $\theta_{a_0}(\lambda(b)) \geq 2$.
    Moreover, either $\mu(b) = X_1$, or the same argument 
    implies that $\theta_{a_0}(\mu(b)) \geq 2$.
    
    \begin{itemize}
        \item \emph{Case $\mu(b) = X_1$.} 
        In this case, $[a,b] = [[a,\lambda(b)],X_1] + [\lambda(b),(a,X_1)]$. 
        For the first term, by the induction hypothesis, $\|[[a,\lambda(b)],X_1]\|_\B \leq 2^{\theta_{a_0}(\lambda(b)) - 2} = 2^{\theta_{a_0}(b) - 3}$. 
        For the second term, if $\lambda(b) < (a,X_1)$, then $(\lambda(b), (a,X_1)) \in \B$ so $\|[\lambda(b),(a,X_1)]\|_\B = 1 \leq 2^{\theta_{a_0}(b) - 3}$.
        If $(a,X_1) < \lambda(b)$, then $a_0 \ll a \ll (a,X_1)$, so by the induction hypothesis, $\|[(a,X_1),\lambda(b)]\|_\B \leq 2^{\theta_{a_0}(\lambda(b))-2} = 2^{\theta_{a_0}(b) - 3}$. 
        Therefore, summing the estimates proves that $\|[a,b]\|_\B \leq 2^{\theta_{a_0}(b)-2}$.
        
        \item \emph{Case $\mu(b) < X_1$.} 
        We decompose $[a,b]$ using Jacobi's identity.
        
        By the induction hypothesis, $[a,\lambda(b)] = \sum_{d} \alpha_d \ev(d)$ with $\sum_d |\alpha_d| \leq 2^{\theta_{a_0}(\lambda(b))-2}$. 
        Let $d \in \supp[a,\lambda(b)]$. 
        If $\mu(b) < d$, the inequality $\lambda(d) < \lambda(b) < \mu(b)$ from \cref{th:fibo-structure-new} shows that $(\mu(b),d) \in \B$.
        If $d < \mu(b)$, the inequality $\lambda(d) \geq a > a_0$ shows that, by the induction hypothesis, $\|[d,\mu(b)]\|_\B \leq 2^{\theta_{a_0}(\mu(b))-2}$. 
        Therefore $\|[[a,\lambda(b)],\mu(b)]\|_\B \leq 2^{\theta_{a_0}(b)-4}$.
        
        Since $\mu(b) \neq X_1$, $\lambda(\mu(b)) \geq \lambda(\lambda(b))> a_0$. 
        By the induction hypothesis, $[a,\mu(b)] = \sum_d \alpha_d \ev(d)$ with $\sum_d |\alpha_d| \leq 2^{\theta_{a_0}(\mu(b))-2}$.
        Let $d \in \supp[a,\mu(b)]$.
        If $d < \lambda(b)$, then using the induction hypothesis again shows that $\|[d,\lambda(b)]\|_\B \leq 2^{\theta_{a_0}(\lambda(b))-2}$. 
        If $\lambda(b) < d$, then $\lambda(\mu(b)) \leq \lambda(b) < d$ thus, by \cref{th:fibo-structure-new} and \cref{p:theta-dec}, $\theta_{\lambda(b)}(d) \leq \theta_{\lambda(\mu(b))}(d) \leq 2$ thus $\|[\lambda(b),d]\|_\B \leq 2$. 
        Taking into account that $\theta_{a_0}(\lambda(b)) \geq 2$ we conclude that $\|[\lambda(b),[a,\mu(b)]]\|_\B \leq 2^{\theta_{a_0}(b)-3}$.
        
        Combining both parts and using Jacobi's identity, one gets $\|[a,b]\|_\B \leq 2^{\theta_{a_0}(b)-2}$. \qedhere
    \end{itemize}
\end{proof}

\begin{theorem} \label{thm:a-theta}
    Let $a < b \in \B$. 
    Then $\|[a,b]\|_\B \leq A(\theta_a(b))$. Moreover, if $b \neq X_1$ and $(a, \lambda(b)) \notin \B$, then the inequality is strict.
\end{theorem}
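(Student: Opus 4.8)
The plan is to prove both assertions simultaneously by induction on $n := \theta_a(b)$, running the recursive rewriting scheme of \cref{sec:rewriting}; as in the proof of \cref{p:rec-theta}, each recursive call strictly decreases $\theta$, so the induction on $\theta_a(b)$ is legitimate in this setting. When $n = 1$ one has $(a,b) \in \B$, hence $\|[a,b]\|_\B = 1 = A(1)$; and since then either $b = X_1$ or $\lambda(b) \le a$ (so that $(a,\lambda(b)) \in \B$ for the brackets to which the clause is applied), the hypotheses of the strict clause cannot hold, so nothing more is needed. When $n \ge 2$ and $\lambda^2(b) \le a$ — that is, $\rf{a}{b}$ is right-nested in the sense of \cref{lem:tab-right} — the bound $\|[a,b]\|_\B \le A(n)$ is exactly \cref{thm:a-theta-droite}, and here $(a,\lambda(b)) \in \B$, so again the strict clause is vacuous.

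The substantial case is $n \ge 3$ with $a < \lambda^2(b)$, equivalently $\theta_a(\lambda(b)) \ge 2$ and $(a,\lambda(b)) \notin \B$, which is precisely the regime covered by the ``moreover'' clause. I would write Jacobi's identity $[a,b] = [[a,\lambda(b)],\mu(b)] + [\lambda(b),[a,\mu(b)]]$ and estimate both terms via the induction hypothesis. For the first term, decompose $[a,\lambda(b)] = \sum \alpha_d \ev(d)$: by \eqref{eq:a<c'} and \cref{th:fibo-structure-new}, every $d$ in the support satisfies $a < \lambda(d) < \lambda(b) < \mu(b)$, hence $a \ll d$, so recombining $d$ with $\mu(b)$ either yields directly an element of $\pm\ev(\B)$ (when $\mu(b) = X_1$, or $d > \mu(b)$, or $\theta_a(\mu(b)) = 1$) or, when $d < \mu(b) < X_1$, is controlled by \cref{lem:surcoupe_2theta}, giving $\|[d,\mu(b)]\|_\B \le 2^{\theta_a(\mu(b))-2}$; summing over $d$ bounds the first term by $\|[a,\lambda(b)]\|_\B \cdot \max\{1, 2^{\theta_a(\mu(b))-2}\} \le A(\theta_a(\lambda(b))) \cdot \max\{1, 2^{\theta_a(\mu(b))-2}\}$. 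For the second term, decompose $[a,\mu(b)] = \sum \beta_d \ev(d)$: by \cref{th:fibo-structure-new} each $d$ has $\theta_{\lambda(\mu(b))}(d) \le 2$, so for $\lambda(b) < d$ one gets $\|[\lambda(b),d]\|_\B \le 2$ (using $\lambda(\mu(b)) \le \lambda(b)$ and \cref{p:theta-dec}), while for $d < \lambda(b)$ the induction hypothesis applied to $[d,\lambda(b)]$, combined with $\theta_d(\lambda(b)) \le \theta_a(\lambda(b))$ (\cref{p:theta-dec}) and, where available, the strict clause together with \cref{Cor:surcoupe_strict}/\cref{Lem:Surcoupe_large}, provides the required reduction; this bounds the second term by $\|[a,\mu(b)]\|_\B \cdot \max\{2, \dots\}$.

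It then remains to verify, using $\theta_a(b) = \theta_a(\lambda(b)) + \theta_a(\mu(b))$ and the explicit formula $A(\theta) = (\theta-3)2^{\theta-2}+\theta+1$, that the sum of the two term-bounds is $\le A(n)$, and in fact $\le A(n)-1$ (the strict clause). The recursion $A(\theta) = 2^{\theta-2} + 2A(\theta-1) - (\theta-1)$, already exploited in \cref{thm:a-theta-droite}, and the super-multiplicativity of $A$ (an inequality of the form $2^{\theta_1-2}A(\theta_2) + A(\theta_1) \max\{2,\dots\} \le A(\theta_1+\theta_2)-1$ for $\theta_1,\theta_2 \ge 2$) close the estimate. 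The main obstacle, exactly as in the subcases~5.1--5.3 of the proof of \cref{th:fibo-size-new}, will be the boundary cases $\theta_a(\lambda(b)) = 2$ (where $A(2)=2$ is too small for the crude product bound) and $\theta_a(\mu(b)) \le 2$: there one must substitute finer structural information — e.g. the specific shapes $\lambda(b) = ((X_0,X_1),X_1)$ or the $\ad_{(X_0,X_1)}$-nested brackets, and the Jacobi cancellation used in \cref{cor:atheta-1} — to recover the missing slack. The strictness itself comes from the observation (as in the proof of \cref{thm:en1}) that the supporting element of the first term realizing the minimal $\lambda(d)$ never incurs the full doubling, leaving a surplus of at least one; consistency with the equality case is guaranteed by the examples of \cref{prop:fibo-sature}, for which $(a,\lambda(b)) \in \B$.
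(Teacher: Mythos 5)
Your skeleton matches the paper's proof: induction on $\theta_a(b)$, the right-nested case $\lambda^2(b)\leq a$ delegated to \cref{thm:a-theta-droite}, Jacobi's identity in the remaining case, and crucially the use of \cref{lem:surcoupe_2theta} to bound the secondary brackets $[d,\mu(b)]$ arising from the first term by $2^{\theta_a(\mu(b))-2}$ rather than by another factor of $A$. But there is a genuine gap in your treatment of the second term. For $d\in\supp[a,\mu(b)]$ with $d<\lambda(b)$ you propose to bound $\|[d,\lambda(b)]\|_\B$ by the induction hypothesis, i.e.\ by $A(\theta_d(\lambda(b)))\leq A(\theta_a(\lambda(b)))$. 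Writing $p=\theta_a(\lambda(b))$, $q=\theta_a(\mu(b))$, this makes the second term as large as $A(q)\,A(p)$, and since $A(\theta)\sim\theta\,2^{\theta-2}$ one has $A(p)A(q)/A(p+q)\sim pq/(4(p+q))\to\infty$: the product of two $A$'s is not dominated by $A(p+q)$, so the estimate cannot close this way. The lemmas you invoke as a fallback (\cref{Cor:surcoupe_strict}, \cref{Lem:Surcoupe_large}) only shift the index $\theta$ by an additive amount; what is needed is to suppress the linear prefactor of $A(p)$ entirely. The correct tool is the same one you already used for the first term: since $a<\lambda(d)$ by \eqref{eq:a<c'}, one has $a\ll d$, and $\lambda(b)<X_1$, so \cref{lem:surcoupe_2theta} applies to $[d,\lambda(b)]$ and gives $\|[d,\lambda(b)]\|_\B\leq 2^{p-2}$. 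Together with $\|[\lambda(b),d]\|_\B\leq 2$ when $\lambda(b)<d$, the second term is then bounded by $A(q)\,2^{p-1}$, the total by $A(p)2^{q-2}+A(q)2^{p-1}$, and the purely numerical inequality $A(p)2^{q-2}+A(q)2^{p-1}<A(p)2^{q-1}+A(q)2^{p-1}\leq A(p+q)$ holds for \emph{all} $p,q\geq 1$ (checked directly for $p\in\{1,2\}$, and elementarily for $p,q\geq 3$).

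Two further consequences of this fix. First, the boundary cases you anticipate ($\theta_a(\lambda(b))=2$, $\theta_a(\mu(b))\leq 2$, extra structural information as in subcases 5.1--5.3 of \cref{th:fibo-size-new}) do not arise: no patching is needed once the secondary brackets carry the factor $2^{\cdot-2}$ instead of $A(\cdot)$. Second, strictness is not obtained by tracking a non-doubled supporting element as in \cref{thm:en1}; it falls out for free from the strict first inequality above (the first term carries $2^{q-2}$ where $2^{q-1}$ would already suffice), and in the separate case $\mu(b)=X_1$ from the explicit computation $A(\theta-1)+2^{\theta-3}=A(\theta)-((\theta-2)2^{\theta-2}+1)<A(\theta)$. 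Your handling of the initialization and of the case $\lambda^2(b)\leq a$ (where $(a,\lambda(b))\in\B$, so the strict clause is vacuous) is correct.
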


\begin{proof}
    We proceed by induction on $\theta_a(b)$. 
    We may assume that $(a,b) \notin \B$.
    Thanks to \cref{thm:a-theta-droite}, which corresponds to the case $\theta_a(\lambda(b)) = 1$, we may further assume that $\theta_a(\lambda(b)) \geq 2$, i.e.\ that $\lambda(b) \neq X_0$ and $a<\lambda^2(b)$.
    
    \step{Case $\mu(b) = X_1$} 
    In this case, $[a,b] = [[a,\lambda(b)],X_1] - [(a,X_1),\lambda(b)]$. 
    By the induction hypothesis, we have $\|[a,\lambda(b)]\|_\B \leq A(\theta_a(\lambda(b)))$, and by \cref{lem:surcoupe_2theta} (since $(a,X_1)<\lambda(b)<X_1$), we have $\|[(a,X_1),\lambda(b)]\|_\B \leq 2^{\theta_a(\lambda(b))-2}$. Combining both estimates gives:
    \begin{equation}
        \|[a,b]\|_\B \leq A(\theta_a(b)-1) + 2^{\theta_a(b)-3} = A(\theta_a(b)) - ((\theta-2)2^{\theta-2}+1) < A(\theta_a(b)).
    \end{equation}
    
    \step{Case $\mu(b)\neq X_1$} 
    Using Jacobi's identity, we write $[a,b] = [[a, \lambda(b)], \mu(b)] + [\lambda(b), [a, \mu(b)]]$.
    We may assume that $\theta_a(\mu(b)) \geq 2$ (otherwise, $\lambda(\mu(b)) \leq a$ implies that $\lambda^2(b) \leq a$ so $\theta_a(\lambda(b)) = 1$).
    
    \begin{itemize}
        \item For the first term, by the induction hypothesis and \cref{th:fibo-structure-new}, $[a,\lambda(b)] = \sum_{d} \alpha_d \ev(d)$, with $a < \lambda(d) < \lambda(b)$, and $\sum_{d} |\alpha_d| \leq A(\theta_a(\lambda(b)))$. 
        Let $d \in \supp[a,\lambda(b)]$. 
        If $\mu(b) < d$, then $(\mu(b),d) \in \B$ and $\|[\mu(b),d]\|_\B = 1$. 
        If $d<\mu(b)$, then by \cref{lem:surcoupe_2theta} (since $\mu(b) \neq X_1$ and $a \ll d$), we have $\|[d,\mu(b)]\|_\B \leq 2^{\theta_a(\mu(b)) - 2}$. 
        Therefore, $\|[[a,\lambda(b)],\mu(b)]\|_\B \leq A(\theta_a(\lambda(b)))2^{\theta_a(\mu(b))-2}$.
    
        \item For the second term, by the induction hypothesis and \cref{th:fibo-structure-new}, $[a,\mu(b)] = \sum_{d} \alpha_d \ev(d)$, with $a < \lambda(d) < \mu(b)$, and $\sum_{d} |\alpha_d| \leq A(\theta_a(\mu(b)))$. Moreover, we know that either $d \leq \lambda(\mu(b)) \leq \lambda(b)$, or $\theta_{\lambda(b)}(d) \leq \theta_{\lambda(\mu(b))}(d) \leq 2$.
        If $\lambda(b) < d$, then $\|[\lambda(b),d]\|_\B \leq 2$.
        If $d < \lambda(b)$, by \cref{lem:surcoupe_2theta}, $\|[d,\lambda(b)]\|_\B \leq 2^{\theta_{a}(\lambda(b)) - 2}$.
        In both cases $\|[d,\lambda(b)]\|_\B \leq 2^{\theta_{a}(\lambda(b)) - 1}$, and thus $\|[\lambda(b),[a,\mu(b)]]\|_\B \leq A(\theta_a(\mu(b)))2^{\theta_a(\lambda(b)) - 1}$.
    \end{itemize}
    Finally, we get the estimate:
    \begin{equation}
        \|[a,b]\|_\B \leq A(\theta_a(\lambda(b)))2^{\theta_a(\mu(b))-2} + A(\theta_a(\mu(b)))2^{\theta_a(\lambda(b)) - 1}.
    \end{equation}
    Since $\theta_a(\lambda(b)) \geq 1$ and $\theta_a(\mu(b))\geq 1$, the result then comes from following claim:  
    \begin{equation}\label{eq:Apq}
        \forall p,q \geq 1, 
        \quad A(p)2^{q-2} + A(q)2^{p-1} < A(p)2^{q-1} + A(q)2^{p-1} \leq A(p+q).
    \end{equation}
    The first inequality is clear since $A(p) \neq 0$.
    For the second inequality, dividing by $2^{p+q-3}$, and using formula that defines $A$ shows that \cref{eq:Apq} is equivalent to:
    \begin{equation}
        p+q-6 + \frac{p+1}{2^{p-2}} + \frac{q+1}{2^{q-2}} \leq 2(p+q - 3) + \frac{p+q+1}{2^{p+q-3}}.
    \end{equation}
    This inequality can be checked directly when $p\in \{1,2\}$ (and thus holds when $q \in \{1,2\}$ by symmetry), and follows from $\frac{p+1}{2^{p-2}} + \frac{q+1}{2^{q-2}} \leq \frac{p+q+2}{2} \leq p+q$ when $p,q \geq 3$. Therefore \cref{eq:Apq} holds, which concludes the proof.
\end{proof}

\color{black}

\section{A super-geometric Hall set}
\label{sec:super-geom}

When $X$ is infinite, we know from \cref{cor:x-infini-sature} that the structure constants can grow super-geometrically with respect to the length of the involved brackets and to $\theta_a(b)$.
The main goal of this section is to prove \cref{thm:Mn-easy}, which illustrates that, even in the most constrained case when $|X| = 2$, there exists a Hall set on $X$ whose structure constants grow super-geometrically.
We conclude that the structure constants can grow super-geometrically both with respect to the length of the brackets and to $\theta_a(b)$.
Hence, the results obtained in \cref{sec:length} (for the length-compatible Hall sets) and \cref{sec:lyndon} (for the Lyndon sets), where the structure constants grow geometrically with the length of the involved brackets, cannot be extended to all Hall sets, even when only two indeterminates are considered.

We start by describing a natural strategy in \cref{rk:fail-embed} which however fails to prove the desired conclusion.
We then construct a specific order and the associated Hall set in \cref{ss:super-geom-basis}.
We compute the exact decomposition in the basis of a nasty Lie bracket in \cref{ss:Mn-nasty}.
Eventually, we prove a refined version of \cref{thm:Mn-easy} in \cref{ss:Mn-optimization} by optimizing the choice of the nasty bracket.

\subsection{A natural but unfitted strategy}
 \label{rk:fail-embed}
 
Assume that $X$ is finite.
A possible strategy to tackle \cref{thm:Mn-easy} would be to attempt to realize in $\Br(X)$ the critical construction of \cref{cor:x-infini-sature}.

One could attempt to construct a Hall set $\B \subset \Br(X)$, and, for every $p$ large enough, a free alphabetic subset $X_p := \{ x_0, x_1, \dotsc, x_p \} \subset \B$.
Then, using the isometry described in \cref{Prop:free-lie-alphabetic}, one could rely on the construction of \cref{p:en1-optimal} to obtain $\| [x_0, b] \|_\B = \lfloor e (p-1)! \rfloor$, where $b := (\dotsb ((x_1,x_p), x_{p-1}), \dotsc, x_2)$ (proving that $b \in \B$ and achieving the equality  would require that the order on $\B \cap \Br_{X_p}$ has been chosen as described in \cref{p:order-sharp-en1}).

Let us explain why this strategy fails to obtain a super-geometric lower growth with respect to the length of the considered brackets.

\bigskip

Let $\B \subset \Br(X)$ be a Hall set.
The construction would require in particular that $x_1, \dotsc, x_p$ are $p \geq 1$ distinct elements of $\B$.
Let us derive a lower bound for $|b| = |x_1| + \dotsb + |x_p|$ (not even considering the fact that executing the strategy would also require additional constraints, such as~$X_p$ being a free alphabetic subset of $\B$).
In particular $|b| \geq |y_1| + \dotsb + |y_p| =: C_p$ where $y_1, \dotsc, y_p$ are the first $p$ elements of $\B$, sorted by non-decreasing length (breaking ties using for example the order on $\B$).

For any $\ell \in \N^*$, 
\begin{equation} \label{eq:card-bell}
    |\{ t \in \B ; \enskip |t| = \ell \}| \leq |X|^\ell.
\end{equation}
From Witt's formula \cite{zbMATH03116668}, the inequality is strict for every $\ell > 1$.
However, this increases the lower-bound $C_p$ since there are fewer brackets of small length.
Hence, we continue the discussion as if \eqref{eq:card-bell} was an equality for all $\ell \in \N^*$.

Let $L := \max |y_i| = |y_p|$.
Then $|X|^{L+1} \geq p+1$.
And, since the $y_i$ were chosen by non-decreasing length,
\begin{equation} \label{eq:louche}
    C_p = 
    \sum_{i=1}^p |y_i|
    \geq \sum_{\ell = 1}^{L-1}
    \ell |X|^\ell
    \geq (L-1) |X|^{L-1}
    \geq \frac{1}{|X|^2} (p+1) \left(
    \frac{\ln (p+1)}{\ln |X|} - 2
    \right).
\end{equation}
Thus, for $p$ large enough, one has $p \ln p \leq (2 |X|^2 \ln |X|) |b|$.

Even if the construction is performed such that $\| [x_0, b ] \|_\B = \lfloor e(p-1)! \rfloor$, the bounds associated with Stirling's approximation (see e.g.\ \cite{zbMATH03115226}) yield, for $p \geq 3$,
\begin{equation}
    \| [x_0, b ] \|_\B
    \leq p^p
    = e^{p \ln p}
    \leq |X|^{2 |X|^2 |b|},
\end{equation}
which is a geometric bound with respect to $|b|$.

\bigskip

Hence, we develop in the sequel a method which avoids this path.
In particular, the brackets and the order we consider are not the same as in \cref{cor:x-infini-sature}.

\subsection{Construction of an appropriate Hall set}
\label{ss:super-geom-basis}

Let $X_0, X_1 \in X$. 
We start by introducing the elementary building blocks of our construction.
For $i \in \N$, define 
\begin{equation}
    A_i := \ad^i_{X_0}(X_1).
\end{equation}

\begin{lemma}
    The following subset of $\Br(X)$ is free in the sense of \cref{def:free-subset}:
    \begin{equation}
        A := \{ A_i ; \enskip i \in \N \}.
    \end{equation}
\end{lemma}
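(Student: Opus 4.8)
The plan is to verify the criterion of \cref{def:free-subset} head on, namely that $A \cap (\Br_A, \Br_A) = \emptyset$. First I would record the recursive shape of the building blocks: $A_0 = X_1$ is a single leaf, while for $i \geq 1$ one has $A_i = (X_0, A_{i-1})$, so that $\lambda(A_i) = X_0$ and $\mu(A_i) = A_{i-1}$. I would also note that $X_0 \notin A$: indeed $A_0 = X_1 \neq X_0$, and for every $i \geq 1$ the tree $A_i$ has length $i+1 \geq 2 > 1 = |X_0|$.

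The crucial intermediate step is the claim that $X_0 \notin \Br_A$. This is elementary: every element of $\Br_A$ is of the form $\e(\tau)$ for some $\tau \in \Br(A)$. If $|\tau|_A = 1$ then $\e(\tau) = \tau \in A$, and we just saw that $X_0 \notin A$; if $|\tau|_A \geq 2$ then $\e(\tau) = (\e(\lambda(\tau)), \e(\mu(\tau)))$ is a genuine bracket, hence of length $\geq 2$, so it cannot equal $X_0$. Thus $X_0 \notin \Br_A$.

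With this claim in hand I would conclude by contradiction. Suppose some $t \in A$ lies in $(\Br_A, \Br_A)$, say $t = A_i$. Since $A_0 = X_1 \in X$ is a leaf, it cannot be written as an ordered pair, so necessarily $i \geq 1$. Then $A_i = (u, v)$ with $u, v \in \Br_A$, and by uniqueness of the decomposition of a tree as $(\lambda(t), \mu(t))$ we get $u = \lambda(A_i) = X_0 \in \Br_A$, contradicting the previous claim. Hence $A \cap (\Br_A, \Br_A) = \emptyset$, so $A$ is free, and by \cref{def:free-subset} the canonical map $\e : \Br(A) \to \Br_A$ is moreover an isomorphism.

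I do not expect a genuine obstacle here; the argument is purely combinatorial about tree shapes. The only points requiring a line of care are the verification that $X_0 \notin A$ (a length comparison, together with $X_0 \neq X_1$) and the observation that $\Br_A$ contains no length-one tree other than the leaves belonging to $A$ itself — both of which are handled above.
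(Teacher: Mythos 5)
Your proof is correct, but it takes a different route from the paper. The paper's argument is a one-line multiplicity count: every $A_i = \ad_{X_0}^i(X_1)$ contains the letter $X_1$ exactly once, whereas any element of $(\Br_A,\Br_A)$ is a bracket of two elements of $\Br_A$, each of which contains $X_1$ at least once, so it contains $X_1$ at least twice; hence $A \cap (\Br_A,\Br_A) = \emptyset$. You instead argue structurally: if $A_i \in (\Br_A,\Br_A)$ then $i \geq 1$ and, by uniqueness of the decomposition $t = (\lambda(t),\mu(t))$, the left factor $X_0$ of $A_i$ would have to lie in $\Br_A$, which it cannot since $X_0 \notin A$ and every element of $\Br_A \setminus A$ has length at least $2$. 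Both arguments are sound and elementary; the paper's is shorter because the family $A$ happens to have a distinguished letter of multiplicity one, while yours exploits instead that the common left factor $X_0$ sits outside the generated submagma — a criterion that would still apply to families without such a multiplicity-one letter. No gap either way.
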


\begin{proof}
    All elements of $A$ contain $X_1$ exactly once, while elements of $(\Br_A, \Br_A)$ contain $X_1$ at least twice, so $A \cap (\Br_A, \Br_A) = \emptyset$.
\end{proof}

We now introduce a subset of $\Br(X)$ within which we will be working. Let
\begin{equation}
    G := \{ X_0, X_1 \} \cup G^*
    \quad \textrm{where} \quad 
    G^* := \{ b \in \Br_A ; \enskip X_1 \notin \Lambda(b)
	\},
\end{equation}
where $\Lambda(b)$ is the set of iterated left factors of $b$ (see \eqref{eq:LAMBDA}).

\begin{lemma}
    $G$ is $\lambda$-stable and $X_1 \notin G^*$.
\end{lemma}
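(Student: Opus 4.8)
The plan is to check both assertions directly against the definitions, as this is essentially bookkeeping with the free magma $\Br_A$.

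For the statement $X_1 \notin G^*$: since $X_1 \in X$ has length $1$, the formula \eqref{eq:LAMBDA} gives $\Lambda(X_1) = \{X_1\}$, so $X_1 \in \Lambda(X_1)$ and $X_1$ violates the defining condition of $G^*$. (Note that $X_1 = A_0 \in \Br_A$, so it is really the $\Lambda$-condition, not membership in $\Br_A$, that fails here.)

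For $\lambda$-stability I would first record the elementary fact that the only element of $\Br_A$ of length $1$ is $X_1$: for $\beta \in \Br(A)$ the number of occurrences of $X_1$ in $\e(\beta)$ equals the number of leaves of $\beta$, because each generator $A_i = \ad^i_{X_0}(X_1)$ contains exactly one $X_1$; in particular $X_0 \notin \Br_A$, hence $G \cap X = \{X_0, X_1\}$ and, since $X_0, X_1 \notin G^*$, one gets $G \setminus X = G^*$. It then remains to show $\lambda(b) \in G$ for every $b \in G^*$. Such a $b$ lies in $\Br_A$, is distinct from $X_1$ by the first part, hence has length $\geq 2$, so $\lambda(b)$ is well defined. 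Using that $A$ is free (cf.\ \cref{def:free-subset}), write $b = \e(\beta)$ with $\beta \in \Br(A)$: either $\beta$ is a single leaf $A_i$ with $i \geq 1$, in which case $\lambda(b) = X_0 \in G$, or $\beta = \langle \beta_1, \beta_2 \rangle$, in which case $b = (\e(\beta_1), \e(\beta_2))$ forces $\lambda(b) = \e(\beta_1) \in \Br_A$ by uniqueness of the decomposition of $b$ in $\Br(X)$.

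In this latter case I would conclude by observing that $\lambda^k(\lambda(b)) = \lambda^{k+1}(b)$, so $\Lambda(\lambda(b)) \subseteq \Lambda(b)$ and therefore $X_1 \notin \Lambda(\lambda(b))$; moreover $\lambda(b) \notin X$, since $\lambda(b) \in \Lambda(b)$ while the only indeterminate lying in $\Br_A$ is $X_1$, which is excluded from $\Lambda(b)$. Hence $\lambda(b) \in G^* \subseteq G$, which proves $G$ is $\lambda$-stable. The only mildly delicate point is the reduction $G \setminus X = G^*$ together with the well-definedness of $\lambda$ on $G^*$, both of which follow from the length-one observation above, so I do not anticipate any genuine obstacle.
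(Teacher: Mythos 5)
Your proof is correct and follows essentially the same route as the paper's: $X_1 \in \Lambda(X_1)$ rules out $X_1 \in G^*$, and the case split on whether $b \in G^*$ is a generator $A_i$ with $i \geq 1$ (giving $\lambda(b) = X_0$) or a genuine bracket in $\Br_A$ (giving $\lambda(b) = \e(\beta_1) \in G^*$ via $\Lambda(\lambda(b)) \subset \Lambda(b)$) is exactly the paper's argument. The only difference is that you spell out the identification $G \setminus X = G^*$ and the length-one observation about $\Br_A$, which the paper leaves implicit.
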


\begin{proof}
    $X_1 \notin G^*$ because $X_1 \in \Lambda(X_1) = \{ \lambda^0(X_1) \} = \{ X_1 \}$.
    Moreover, if $b \in G^*$, either $b = A_i$ for some $i \geq 1$ so that $\lambda(b) = X_0 \in G$, or $b = (b_1, b_2)$ with $b_1, b_2 \in \Br_A$ and $X_1 \notin \Lambda(b) = \Lambda(b_1) \cup \{ b \}$. Hence $\lambda(b) = b_1 \in G^*$. Hence $\lambda(G\setminus X) = \lambda(G^*) \subset G^* \cup \{ X_0 \} \subset G$.
\end{proof}

\begin{definition}[Score on $\Br_A$]
    We define a score map $s : \Br(A) \to \N$.
    For $i \in \N$, set
    \begin{equation}
    	s(A_i) := 
    	\begin{cases}
    		i & \textrm{if } i \in \{ 0, 1, 2 \}, \\
    		3 \cdot 2^{i-3} & \textrm{if } i \geq 3.
    	\end{cases}
    \end{equation}
    By induction, if $b \in \Br(A)$ is of the form 
    $\langle b_1, b_2 \rangle$ with $b_1, b_2 \in \Br(A)$, we set
    \begin{equation} \label{eq:sb1b2}
        s(\langle b_1, b_2 \rangle) := s(b_1) + s(b_2).
    \end{equation}
    In particular, for all $b \in \Br_A \setminus \{ X_1 \}$, $s(b) > 0$.
    Then, since $A$ is free, by \cref{def:free-subset}, $\Br_A$ is isomorphic to $\Br(A)$.
    Thus, setting $s(\e(b)) := s(b)$ for all $b \in \Br(A)$ extends $s$ to $\Br_A$.
\end{definition}

\begin{definition}[Iteration by $X_1$ and germs]
    For $b \in \Br(X)$ and $\nu \in \N$, we will use in the sequel of this section the notation $b 1^\nu$ instead of $\dad_{X_1}^\nu(b)$ for concision.
    For every $b \in G^*$, there exists a unique couple $(b^*, \nu(b)) \in G^* \times \N$ such that $b = b^* 1^{\nu(b)}$ and $\mu(b^*) \neq X_1$.
    One checks that $b^* \in G^*$ because $X_1 \notin \Lambda(b) \supset \Lambda(b^*)$.
\end{definition}

\begin{definition}[Order on $G$] \label{def:order-G}
    We define an order on $G$ by setting $X_0 < G^* < X_1$ and, inside~$G^*$, the lexicographic order on the quadruple   $s(b), \lambda(b^*), \mu(b^*), \nu(b)$.
\end{definition}

\begin{proposition}
    There exists a Hall order on $\Br(X)$ extending the above order on $G$.
\end{proposition}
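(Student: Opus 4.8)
The plan is to invoke \cref{Prop:Prolonger_ordre}, so the crux is to verify that the order constructed in \cref{def:order-G} on $G$ is a \emph{Hall order} on a $\lambda$-stable subset, i.e.\ that $G$ is $\lambda$-stable (already established in the excerpt) and that for every $b \in G \setminus X$ one has $\lambda(b) < b$. Once this is done, \cref{Prop:Prolonger_ordre} produces a Hall order on all of $\Br(X)$ extending it, which is exactly the claim. (One should remark along the way that the order on $G$ is indeed a \emph{total} order: it is lexicographic on the quadruple $\bigl(s(b), \lambda(b^*), \mu(b^*), \nu(b)\bigr)$, and ties in all four coordinates force $b = b^*1^{\nu(b)}$ to coincide, using that the left and right factors of $b^*$ together with the exponent $\nu(b)$ determine $b^*$ and hence $b$; here the recursion on $\lambda(b^*)$ and $\mu(b^*)$ is legitimate by induction on the length of brackets, as in \cref{rk:lexico}. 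For the recursion to bottom out correctly, one also checks $s$ is well-defined and strictly positive on $G^* \setminus \{X_1\}$, which is already noted.)

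First I would dispose of the trivial cases of the Hall condition: if $b = A_i \in G^*$ with $i \geq 1$, then $\lambda(b) = X_0$, and $X_0 < b$ because $X_0 < G^*$ by \cref{def:order-G}; moreover no $b \in G^*$ equals $X_0$ or $X_1$, so these are the only brackets with a letter as left factor. The main work is the case $b = (b_1, b_2) \in G^*$ with $b_1 = \lambda(b) \in G^*$ (we have $b_1 \in G^*$ since $\Lambda(b_1) \subset \Lambda(b)$ and $X_1 \notin \Lambda(b)$). I would compare $b$ and $b_1$ through the lexicographic quadruple. For the score: by \eqref{eq:sb1b2}, $s(b) = s(b_1) + s(b_2)$, and since $b_2 \in \Br_A$ and $b_2 \neq X_1$ would give $s(b_2) > 0$ — but $b_2$ \emph{can} be $X_1$, in which case $s(b_2) = s(X_1) = s(A_1) = 1 > 0$ as well — so in all cases $s(b_2) > 0$ and hence $s(b) > s(b_1)$, which already yields $b_1 < b$. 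So in fact the score coordinate alone settles the Hall condition, and the remaining three coordinates of the quadruple never need to be examined for this comparison; they are only there to make the order total.

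The point that requires care — and which I expect to be the main obstacle — is precisely the claim $s(b_2) > 0$ for every possible right factor $b_2$ occurring in an element of $G^*$, together with the implicit fact that $s$ takes \emph{strictly positive} values on all of $\Br_A \setminus \{X_1\}$ and, crucially, that $s(X_1) > 0$ as well (since $X_1$ may appear as the right child of a bracket in $G^*$, e.g.\ $(A_i, X_1)$). This is guaranteed by $s(A_0) = 0$ being the \emph{only} zero value of $s$ on letters together with the fact that $A_0 = X_0 \notin A$ — wait, here $A_0 = \ad^0_{X_0}(X_1) = X_1$, so $s(A_0) = s(X_1) = 1 \neq 0$; the score is never zero on $A$, hence never zero on $\Br_A$ except it is defined only via $\Br(A)$ and $X_0$ is not a leaf in $\Br(A)$. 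Thus $s > 0$ on all of $\Br_A$, and the strict inequality $s(b) > s(b_1)$ holds unconditionally. I would write this verification cleanly, note that $s$ being additive under bracketing and positive on generators forces strict monotonicity under taking left factors (a leaf is removed, strictly decreasing the score), conclude $\lambda(b) < b$ for all $b \in G \setminus X$, and then invoke \cref{Prop:Prolonger_ordre} to finish. The whole argument is short; the only subtlety worth a sentence is making explicit that $X_1 \in \Br_A$ (as $A_0$) so that the extension of $s$ to $\Br_A$ covers it and the positivity claim applies to it.

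\begin{proof}
    By \cref{def:order-G}, the order on $G$ is the lexicographic order on the quadruple $\bigl(s(b), \lambda(b^*), \mu(b^*), \nu(b)\bigr)$ within $G^*$, together with $X_0 < G^* < X_1$. It is a total order: as in \cref{rk:lexico}, the recursion on $\lambda(b^*)$ and $\mu(b^*)$ makes sense by induction on the length of brackets, and if two elements $b, b' \in G^*$ agree in all four coordinates then $b^* = b'^*$ (its left and right factors coincide) and $\nu(b) = \nu(b')$, so $b = b^*1^{\nu(b)} = b'^*1^{\nu(b')} = b'$.

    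It remains to check that this order is a Hall order on $G$ in the sense of \cref{def:hall-order}, i.e.\ that $G$ is $\lambda$-stable (already proved above) and that $\lambda(b) < b$ for every $b \in G \setminus X$. Since $G \setminus X = G^*$, let $b \in G^*$.

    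If $b = A_i$ for some $i \geq 1$, then $\lambda(b) = X_0$ and $\lambda(b) < b$ because $X_0 < G^*$.

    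Otherwise $b = (b_1, b_2)$ with $b_1, b_2 \in \Br_A$, and $\lambda(b) = b_1$. Since $X_1 \notin \Lambda(b) = \Lambda(b_1) \cup \{b\}$, we have $b_1 \in G^*$. Now $b_2 \in \Br_A$: either $b_2 = X_1 = A_0$, in which case $s(b_2) = s(A_0) = s(A_1) = 1$, or $b_2 \neq X_1$, in which case, by induction on the length and the additivity \eqref{eq:sb1b2}, $s(b_2) > 0$ (the score is a sum of the strictly positive values $s(A_i)$ over the leaves of $b_2$, and $b_2$ has at least one leaf). In all cases $s(b_2) > 0$, so by \eqref{eq:sb1b2},
    \begin{equation}
        s(b) = s(b_1) + s(b_2) > s(b_1).
    \end{equation}
    Hence $b_1$ and $b$ differ already in the first coordinate of the lexicographic quadruple, and $b_1 < b$, i.e.\ $\lambda(b) < b$.

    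Therefore the order on $G$ is a Hall order on the $\lambda$-stable set $G$. By \cref{Prop:Prolonger_ordre}, there exists a Hall order on $\Br(X)$ extending it.
\end{proof}
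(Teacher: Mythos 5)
Your overall strategy is the right one and matches the paper's: check that the order of \cref{def:order-G} is a total Hall order on the $\lambda$-stable set $G$, then invoke \cref{Prop:Prolonger_ordre}. The totality argument and the case $b = A_i$ are fine. However, there is a genuine error at the heart of your verification of the Hall condition. You assert $s(X_1) = s(A_0) = 1$, but the definition gives $s(A_0) = 0$ (the score is $s(A_i) = i$ for $i \in \{0,1,2\}$, and $A_0 = \ad_{X_0}^0(X_1) = X_1$); this is exactly why the paper states positivity of $s$ only on $\Br_A \setminus \{X_1\}$. Consequently your central claim — that $s(b_2) > 0$ in all cases, so that the score coordinate alone settles $\lambda(b) < b$ and ``the remaining three coordinates never need to be examined'' — is false. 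For $b = (b_1, X_1)$ (for instance $b = (A_2, X_1)$) one has $s(b) = s(b_1) + s(X_1) = s(b_1)$, and the comparison must be decided further down the lexicographic quadruple: in that case $b^* = b_1^*$, so $\lambda(b^*) = \lambda(b_1^*)$ and $\mu(b^*) = \mu(b_1^*)$ as well, and it is the fourth coordinate $\nu(b) = \nu(b_1) + 1 > \nu(b_1)$ that yields $b_1 < b$. This is precisely the case the paper handles separately, and it is the only reason the coordinate $\nu$ appears in the order at all (as you yourself suspected when you wrote that the last three coordinates ``are only there to make the order total''). As written, your proof fails for every bracket of $G^*$ whose right factor is $X_1$; it is repaired by splitting into the two cases $b_2 = X_1$ (use $s(b) = s(b_1)$, $b^* = b_1^*$, and compare $\nu$) and $b_2 \neq X_1$ (use $s(b_2) > 0$ as you do).
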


\begin{proof}
    Since $G$ is $\lambda$-stable, by \cref{Prop:Prolonger_ordre}, it is sufficient to check that the order defined in \cref{def:order-G} is a Hall order on $G$. 
    First, this order is total on $G$. 
    Indeed, the triple $\lambda(b^*), \mu(b^*), \nu(b)$ defines $b$ uniquely.
    Second, this order is compatible with $\lambda$. 
    Indeed, let $b \in G^*$.
    If $b = A_i$ for some $i \geq 1$, $\lambda(b) = X_0 < b$.
    Otherwise $b = (b_1, b_2)$ with $b_1 \in G^*$ and $b_2 \in \Br_A$.
    If $b_2 = X_1$, then $s(b) = s(b_1)$ by \eqref{eq:sb1b2} and $b^* = b_1^*$ so $\lambda(b^*) = \lambda(b_1^*)$ and $\mu(b^*) = \mu(b_1^*)$, and $\nu(b) = \nu(b_1) + 1$. 
    So $b_1 < b$.
    If $b_2 \neq X_1$, $s(b_2)  > 0$ and, by \eqref{eq:sb1b2}, $s(b_1) < s(b)$ which implies that $b_1 < b$.
\end{proof}

In the sequel, we consider $\B \subset \Br(X)$ the Hall set associated with any such order on $\Br(X)$.

\subsection{Decomposition of a nasty Lie bracket}
\label{ss:Mn-nasty}

Let $\nu \geq 0$. We then consider the sequence of brackets defined by induction as
\begin{equation}
    B^\nu_2 := A_2 1^\nu
    \quad \textrm{and} \quad 
    B^\nu_{k+1} := (B^\nu_k, A_{k+1})
    \textrm{ for } k \geq 2.
\end{equation}
We prove the following equality.

\begin{proposition}\label{Prop:estimation_bstar}
    Let $p \geq 2$ and $\nu \geq 0$. Then $A_1$ and $B^\nu_p$ belong to $\B$ and
    \begin{equation}
        \| [ A_1, B^\nu_p ] \|_\B = p^\nu + p - 2.
    \end{equation}
\end{proposition}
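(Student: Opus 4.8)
The plan is to proceed by induction on $p \geq 2$, tracking along the way the explicit decomposition of $[A_1, B^\nu_p]$ in $\B$, not merely the norm. First I would settle the membership claims: that $A_1 = \ad_{X_0}(X_1) \in \B$ is immediate since $A_1 \in G^*$; that $B^\nu_p \in \B$ follows by induction using \cref{Def2:Laz}, since $B^\nu_{k+1} = (B^\nu_k, A_{k+1})$ requires $B^\nu_k < A_{k+1}$ and $\lambda(A_{k+1}) = X_0 \leq B^\nu_k$ (the latter is clear as $X_0 = \min \B$, and the former should follow from the score bookkeeping: $s(A_{k+1}) = 3 \cdot 2^{k-2}$ for $k \geq 2$ grows fast enough to dominate $s(B^\nu_k) = s(A_2) + s(A_3) + \dots + s(A_k) = 2 + 3(1 + 2 + \dots + 2^{k-3}) = 3\cdot 2^{k-2} - 1 < s(A_{k+1})$, so indeed $B^\nu_k < A_{k+1}$). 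Note $s(A_2 1^\nu) = s(A_2) = 2$ regardless of $\nu$, so these estimates are $\nu$-independent, which is the whole point of the construction.

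For the base case $p = 2$, I would compute $[A_1, B^\nu_2] = [A_1, A_2 1^\nu]$. Since $A_1 < A_2$ (scores $1 < 2$) and $\lambda(A_2) = X_0 \leq A_1$, we have $(A_1, A_2) \in \B$; more generally $(A_1, A_2 1^j) \in \B$ for all $j$ as long as $A_1 < A_2 1^j < X_1$ and the Hall condition on the left factor of $A_2 1^j$ holds, which it does since $\lambda(A_2 1^j) = A_2 1^{j-1}$ and the scores are all $2$ while inside $G^*$ the lexicographic order on $(s, \lambda(b^*), \mu(b^*), \nu(b))$ makes $A_2 1^{j-1} < A_2 1^j$. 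Then expanding $[A_1, \dad_{X_1}^\nu(A_2)]$ via \cref{p:ad-2} (applied to the derivation $\dad_{X_1}$) gives a sum of $\nu+1$ terms of the form $\dad_{X_1}^{\nu-i}[\dad_{X_1}^i(A_1), A_2]$; one checks each is $\pm$ a basis element and they are pairwise distinct, giving $\| [A_1, B^\nu_2] \|_\B = \nu + 1$... \emph{but} the claimed value is $2^\nu + 2 - 2 = 2^\nu$, so this naive count is wrong — the point is that $X_1$ is NOT maximal among $\dad_{X_1}^i(A_1)$ versus $A_2$ in this Hall set, so the iterated Jacobi produces a genuine branching. I would instead argue directly: $A_1 1^j$ has score $1$ for all $j$, $A_2$ has score $2$, so $A_1 1^j < A_2$ for all $j$, hence $(A_1 1^i, A_2) \in \B$ for all $i$ (checking $\lambda(A_1 1^i) = A_1 1^{i-1} < A_1 1^i < A_2$), meaning \emph{no} branching occurs and $\| [A_1, B^\nu_2] \|_\B = \nu + 1$. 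This forces me to re-examine the claimed formula — the hard part will be correctly identifying which brackets in the iterated Jacobi decomposition of $[A_1, B^\nu_p]$ fall directly into $\B$ and which must be split, i.e.\ pinning down exactly where $s$-values cross and the $X_1$-maximality/non-maximality interacts with the nested structure of $B^\nu_p = (\dots((A_2 1^\nu, A_3), A_4)\dots, A_p)$.

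For the inductive step, assuming the decomposition of $[A_1, B^\nu_{p-1}]$ is known, I would write $B^\nu_p = (B^\nu_{p-1}, A_p)$ and apply Jacobi:
\begin{equation}
    [A_1, B^\nu_p] = [[A_1, B^\nu_{p-1}], A_p] + [B^\nu_{p-1}, [A_1, A_p]].
\end{equation}
The second term: $[A_1, A_p] = \ad_{X_0}^{?}$-type bracket; since $A_1 < A_p$ and $\lambda(A_p) = X_0 \leq A_1$, $(A_1, A_p) \in \B$, and then $[B^\nu_{p-1}, (A_1, A_p)]$ — one needs $(A_1,A_p)$ versus $B^\nu_{p-1}$: scores are $s((A_1,A_p)) = 1 + s(A_p) = 1 + 3\cdot 2^{p-3} > s(B^\nu_{p-1}) = 3\cdot 2^{p-3} - 1$, so $B^\nu_{p-1} < (A_1,A_p)$ and $(B^\nu_{p-1}, (A_1,A_p)) \in \B$ provided $\lambda((A_1,A_p)) = A_1 \leq B^\nu_{p-1}$, which holds since $A_1$ has score $1 < $ score of $B^\nu_{p-1}$. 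So the second term contributes exactly $1$ basis element. For the first term, each $c \in \supp[A_1, B^\nu_{p-1}]$ must be bracketed with $A_p$; by \cref{thm:en1} these $c$ have $\lambda(c) \in \Lambda$(something of bounded score) so $s(c)$ is controlled and $s(c) < s(A_p)$, forcing $(c, A_p) \in \B$ directly — thus $\|[[A_1,B^\nu_{p-1}],A_p]\|_\B = \|[A_1,B^\nu_{p-1}]\|_\B$ and no new branching. This would give $\|[A_1,B^\nu_p]\|_\B = \|[A_1,B^\nu_{p-1}]\|_\B + 1$, hence $\|[A_1,B^\nu_p]\|_\B = \|[A_1,B^\nu_2]\|_\B + (p-2) = (\nu+1) + (p-2) = \nu + p - 1$, which still disagrees with $p^\nu + p - 2$. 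The resolution must be that the first term \emph{does} branch: specifically, bracketing the $X_1$-tails of $B^\nu_{p-1}$ (the $1^\nu$ part buried deep inside) against $A_p$ triggers Jacobi expansions because $X_1$ sits at a leaf adjacent to material of higher score, and this multiplicativity by roughly a factor $p$ at each of the $\nu$ iterated-$X_1$ levels is what produces $p^\nu$. So the genuine obstacle — and the heart of the proof — is a careful combinatorial analysis of how the $\nu$ copies of $X_1$ in $B^\nu_p$ interact with the increasing-score chain $A_2 < A_3 < \dots < A_p$ under the decomposition algorithm of \cref{sec:rewriting}, showing each $X_1$-level multiplies the term count by exactly $p$ (not $p-1$, not $p+1$) while the non-branching parts contribute the additive $+p - 2$; this is where I would invest the detailed bookkeeping, probably by first establishing a clean closed form for $[A_1, B^0_p]$ (the $\nu = 0$ case, giving $1 + p - 2 = p - 1$) and then an exact recursion relating $[A_1, B^{\nu}_p]$ to $p$ copies of shorter brackets plus boundary terms.
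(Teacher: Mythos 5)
Your membership argument and the score bookkeeping ($s(B^\nu_k)=3\cdot 2^{k-2}-1<3\cdot 2^{k-2}=s(A_{k+1})$, all independent of $\nu$) match the paper and are fine. But the proof then breaks down in two places, and the crucial mechanism is left as a to-do.

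First, the base case. Your count $\|[A_1,B^\nu_2]\|_\B=\nu+1$ is wrong because you dropped the coefficients: \cref{p:ad-2} gives $[A_1,\dad_{X_1}^\nu(A_2)]=\sum_{j=0}^\nu(-1)^j\binom{\nu}{j}\dad_{X_1}^{\nu-j}([\dad_{X_1}^j(A_1),A_2])$, and while there are indeed only $\nu+1$ distinct basis elements here (your ``no branching'' check is correct), the norm is the $\ell^1$ sum of the coefficients, namely $\sum_j\binom{\nu}{j}=2^\nu$. This agrees with the claimed $p^\nu+p-2$ at $p=2$, so your subsequent doubt about the statement, and the detour about ``genuine branching'', are unfounded.

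Second, the inductive step. The assertion ``$s(c)<s(A_p)$, forcing $(c,A_p)\in\B$ directly'' is false: every $c\in\supp[A_1,B^\nu_{p-1}]$ has $s(c)=s(A_1)+s(B^\nu_{p-1})=3\cdot 2^{p-3}=s(A_p)$ — the score function is engineered to make these \emph{equal} — and the tie is broken by $\lambda(\cdot^*)$, where $\lambda(A_p^*)=X_0$ is minimal, so in fact $A_p<c$ for the relevant $c$. The bracket $[c,A_p]$ therefore does not fall into the basis for free, and your recursion $\|[A_1,B^\nu_p]\|_\B=\|[A_1,B^\nu_{p-1}]\|_\B+1$ would yield $2^\nu+p-2$, contradicting the statement for $p\ge 3$, $\nu\ge 1$. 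Your closing intuition (the $\nu$ copies of $X_1$ distributing over $p$ positions produce $p^\nu$) is the right idea, but it is precisely the step you defer. The paper's route avoids the one-step induction entirely: iterate Jacobi to peel off $A_p,\dots,A_3$ in one pass, producing $p-2$ correction terms each of which is checked (via scores) to be a single basis element, plus the fully left-nested bracket $[[\dotsb[[A_1,B^\nu_2],A_3],\dotsc],A_p]$; then apply the generalized Leibniz rule \cref{p:ad-k} to redistribute the $\nu$ trailing $X_1$'s over the $p$ available slots, check that every resulting multinomial term lies in $\B$, and conclude by $\sum_{j_1+\dots+j_p=\nu}\binom{\nu}{j_1,\dots,j_p}=p^\nu$. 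Without that multinomial computation the proof does not close.
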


\begin{proof}
    \emph{Step 1: We check that the considered brackets are indeed in $\B$.}
    By definition of the order, $X_0 < X_1$, so, for all $i \in \N$, $A_i \in \B$. 
    Since $G^* < X_1$, for each $g \in G^*$ and $\nu \in \N$, $g1^\nu \in \B$.
    In particular, $B^\nu_2 = A_2 1^\nu \in \B$.
    Moreover, for all $k \geq 2$, $B^\nu_k \in G^*$ and, by induction, one checks that $s(B^\nu_k) = 2 + \sum_{i=3}^k 3 \cdot 2^{i-3} = 3 \cdot 2^{k-2} - 1$ and $s(A_{k+1}) = 3 \cdot 2^{k-2}$.
    Thus, $B^\nu_k < A_{k+1}$ and $\lambda(A_{k+1}) = X_0 < B^\nu_k$.
    This guarantees that $B^\nu_k \in \B$ for all $\nu \geq 0$ and $k \in \N$.
    
    \step{Step 2: We decompose the bracket on the basis}
    Applying the iterated Jacobi identity to $[A_1,B_p^\nu]$ yields
    \begin{equation} \label{eq:jacobi-a1bpnu}
        [A_1,B_p^\nu] = [[\dotsb[[A_1,B_2^\nu],A_3],\dotsc],A_p] + \sum_{k=2}^{p-1} [[B_k^\nu,[A_1,A_{k+1}]],\cdots],A_p].
    \end{equation}
    First, we remark that for each $k \in \intset{2, p-1}$, the element in the sum belongs to $\pm \ev(\B)$.
    Indeed, $(A_1, A_{k+1}) \in \B$ because $s(A_1) < s(A_{k+1})$ and $\lambda(A_{k+1}) = X_0 < A_1$.
    Then, $(B^\nu_k, (A_1, A_{k+1})) \in \B$ because $s(B^\nu_k) = 3\cdot 2^{k-2}-1 <  3\cdot2^{k-2} + 1 = s((A_1, A_{k+1}))$ and $\lambda((A_1,A_{k+1})) = A_1 < B^\nu_k$.
    Let $C_{k+1} := (B^\nu_k, (A_1, A_{k+1}))$.
    We prove by induction on $\ell \in \intset{k+1,p}$ that $C_\ell \in \B$ where $C_{\ell+1} := (A_\ell, C_\ell)$.
    Indeed, one checks that $s(C_\ell) = 3 \cdot 2^{\ell-2} = s(A_\ell)$ so that $A_\ell < C_\ell$ because $\lambda(A_\ell^*) = \lambda(A_\ell) = X_0 < \lambda(C_\ell)$.
    Moreover, $\lambda(C_\ell) < A_\ell$ because $\lambda(C_\ell) = A_{\ell-1}$ if $\ell \geq k+2$ and $\lambda(C_{k+1}) = B^\nu_k$.
    Hence the sum of $p-2$ terms is already expressed on $\B$.
    
    Second, we take care of the first term of the right-hand side of \eqref{eq:jacobi-a1bpnu}.
    By \cref{p:ad-k} (applied to the derivation $\dad_{X_1}$ on $\mathcal{L}(X)$, $k \leftarrow p$, $b_k \leftarrow A_2$, $b_{k-1} \leftarrow A_1$ and $b_i \leftarrow A_{p+1-i}$ for $1 \leq i \leq k-2$),
    \begin{equation}
        \begin{split}
        (-1)^{p-2} & [[\dotsb[[A_1,B_2^\nu],A_3],\dotsc],A_p] \\
        & =
        \sum_{j_1 + \cdots + j_p = \nu} (-1)^{\nu - j_p} \binom{\nu}{j_1, \dots, j_p}[A_p 1^{j_1},[A_{p-1} 1^{j_2}, [\dotsc , [A_1 1^{j_{p-1}},A_2]\dotsb]]]1^{j_p}.
        \end{split}
    \end{equation}
    One checks that all these brackets lie in $\B$.
    First, $s(A_1) < s(A_2)$ so $A_1 1^{j_{p-1}} < A_2$ and $\lambda(A_2) = X_0 < A_1 1^{j_{p-1}}$, hence $(A_1 1^{j_{p-1}}, A_2) \in \B$. 
    Then, by construction, for $k \geq 3$, $s(A_k) = 3 \cdot 2^{k-3} = \sum_{i=1}^{k-1} s(A_i)$ but $\lambda((A_k 1^{j_{1+p-k}})^*) = \lambda(A_k) = X_0 < \lambda((A_{k-1} 1^{j_{2+p-k}}, \cdots)^*) = A_{k-1}$ so the left members are lower than the right members. 
    Eventually since $s(A_i)$ is increasing, these brackets also satisfy the Hall condition that the left part of the right member is lower than the left member.

    Hence, the summation formula \eqref{eq:multinomial-sum} yields
    \begin{equation}
        \|[\cdots[A_1,B_2^\nu],A_3],\cdots],A_p]\|_\B = p^\nu.
    \end{equation}
    This concludes the proof, because all considered brackets of $\B$ are distinct.
\end{proof}

\subsection{Conclusion of the proof by optimization}
\label{ss:Mn-optimization}

We now prove the following result, which of course implies its unquantified counterpart \cref{thm:Mn-easy} stated in the introduction.
In \cref{rk:super-geom-theta}, we then discuss the consequences of our construction for $\theta$-based lower bounds.

\begin{theorem}
    For the Hall set $\B \subset \Br(X)$ constructed in \cref{ss:super-geom-basis}, there exists $c_0 > 0$ such that, for every $n \in \N^*$ large enough, there exist $a < b \in \B$ with $|a| = 2$ and $|b| = n$ such that
    \begin{equation} \label{eq:sqrt-n-n}
        \| [a, b] \|_\B \geq \left(\sqrt{\frac{n}{c_0 \ln n}}\right)^n.
    \end{equation}
\end{theorem}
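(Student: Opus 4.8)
The plan is to use the family of brackets from \cref{Prop:estimation_bstar}, namely $a := B^\nu_p$... wait, rather $a := A_1$ and $b := B^\nu_p$, and optimize the two free parameters $p$ and $\nu$ subject to the length constraint $|B^\nu_p| = n$. First I would compute $|B^\nu_p|$ exactly: since $|A_i| = i+1$ and $B^\nu_p$ is built from $A_2 1^\nu$ by successively bracketing with $A_3, \dots, A_p$, we get $|B^\nu_p| = |A_2| + \nu + \sum_{k=3}^p |A_k| = 3 + \nu + \sum_{k=3}^p (k+1) = \nu + \tfrac{(p+1)(p+2)}{2} - 3$. So for fixed $p$, choosing $\nu := n - \tfrac{(p+1)(p+2)}{2} + 3$ gives a valid bracket of length exactly $n$, provided $\nu \geq 0$, i.e.\ $p$ is not too large (roughly $p \lesssim \sqrt{2n}$). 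Also $|A_1| = 2$, as required.

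Next I would invoke \cref{Prop:estimation_bstar} to get $\|[A_1, B^\nu_p]\|_\B = p^\nu + p - 2 \geq p^\nu$, and then choose $p$ to maximize $p^\nu$, i.e.\ maximize $\nu \ln p = \big(n - \tfrac{(p+1)(p+2)}{2} + 3\big)\ln p$ over integers $p \geq 2$. Treating $p$ as continuous and writing $f(p) = (n - \tfrac12 p^2 + O(p))\ln p$, the derivative is $f'(p) = -p\ln p + (n - \tfrac12 p^2)/p + \cdots$, which vanishes near $p^2 \sim n/\ln p \sim n/\ln n$, i.e.\ $p \approx \sqrt{n/\ln n}$. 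With this choice (rounded to an integer, which only costs constant factors absorbed into $c_0$), $\nu = n - \tfrac12 p^2 + O(p) \geq n - \tfrac{1}{2}\cdot\tfrac{c n}{\ln n} + O(\sqrt n) = n(1 - o(1)) \geq n/2$ for $n$ large, and $\ln p = \tfrac12(\ln n - \ln\ln n - \ln c_0) \geq \tfrac14 \ln n$ for $n$ large. Then $\|[A_1, B^\nu_p]\|_\B \geq p^\nu = \exp(\nu \ln p)$, and I need to show $\nu\ln p \geq \tfrac{n}{2}\ln(n/(c_0\ln n))$. Since $\nu \geq n/2$ already, it suffices to check $\ln p \geq \tfrac12 \ln(n/(c_0 \ln n))$, which holds by definition of $p \approx \sqrt{n/(c_0\ln n)}$ with $c_0$ chosen appropriately — this is precisely the content of the bound $\left(\sqrt{n/(c_0\ln n)}\right)^n = \exp(\tfrac n2 \ln(n/(c_0\ln n)))$ in \eqref{eq:sqrt-n-n}.

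Concretely, the steps are: (1) establish the exact length formula for $B^\nu_p$; (2) for each large $n$, set $p := \lceil \sqrt{n/\ln n}\,\rceil$ (or a similar explicit choice) and $\nu := n - \tfrac{(p+1)(p+2)}{2} + 3$, verifying $p \geq 2$ and $\nu \geq n/2 > 0$ for $n$ large; (3) apply \cref{Prop:estimation_bstar} to get $\|[A_1, B^\nu_p]\|_\B \geq p^\nu$; (4) take logarithms and bound $\nu \ln p$ from below by $\tfrac n2 \ln(n/(c_0\ln n))$ for a suitable absolute constant $c_0$, using the elementary estimates $\nu \geq n/2$ and $\ln p \geq \tfrac12\ln(n/(c_0\ln n))$. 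The main obstacle — really the only non-bookkeeping point — is getting the constant $c_0$ right in step (4): one must be a little careful that the error terms in $\nu$ (the $O(p) = O(\sqrt{n/\ln n})$ correction and the difference between $n - \tfrac12 p^2$ and the true $\nu$) and the rounding of $p$ to an integer are genuinely lower-order, so that a single universal $c_0$ works for all sufficiently large $n$; this is routine but needs to be written with some care. Everything else (the length computation, the logarithm manipulation) is a direct calculation, and the heavy algebraic lifting has already been done in \cref{Prop:estimation_bstar}.

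Finally, I would add a short \begin{remark}\end{remark} noting, as flagged before the theorem, that since $\theta_a(b) \leq |b|$ always and here $\theta_{A_1}(B^\nu_p)$ is large (one checks $\theta_{A_1}(B^\nu_p) = \nu + p - 1$ by the same induction as in the proof of \cref{Prop:estimation_bstar}, since none of the leaves $A_2, A_3, \dots, A_p$ nor the $X_1$'s bracket directly with $A_1$), the same example shows the structure constants grow super-geometrically with respect to $\theta_a(b)$ as well, matching the super-geometric growth in $|b|$ and contrasting with the geometric $\theta$-based bounds of \cref{sec:length} and \cref{sec:lyndon}.
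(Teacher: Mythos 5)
Your construction is exactly the paper's: same brackets $a = A_1$, $b = B^\nu_p$, same length formula $|B^\nu_p| = \nu + \tfrac{(p+1)(p+2)}{2} - 3$, same choice $p \approx \sqrt{n/\ln n}$, and the same appeal to \cref{Prop:estimation_bstar} for the lower bound $p^\nu$. The only issue is a quantitative slip in your step (4): from $\nu \geq n/2$ and $\ln p \geq \tfrac12\ln(n/(c_0\ln n))$ you only get $\nu\ln p \geq \tfrac n4\ln(n/(c_0\ln n))$, i.e.\ $\|[a,b]\|_\B \geq (n/(c_0\ln n))^{n/4}$, which is strictly weaker than the claimed $(n/(c_0\ln n))^{n/2}$ and cannot be repaired by enlarging $c_0$ (the deficit is a factor $\exp(\tfrac n4\ln(n/(c_0\ln n)))$, not a constant-to-the-$n$). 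The fix is already in your own estimates: use the sharp bound $\nu \geq n - p^2 \geq n(1 - 1/\ln n)$ rather than the weakened $\nu \geq n/2$. Then
\begin{equation*}
    \nu \ln p \geq n\left(1 - \tfrac{1}{\ln n}\right)\cdot\tfrac12\bigl(\ln n - \ln\ln n - O(1)\bigr) = \tfrac n2\bigl(\ln n - \ln\ln n\bigr) - O(n),
\end{equation*}
and the $O(n)$ error term is absorbed by taking $c_0$ large enough that $\tfrac n2\ln c_0$ dominates it; this is precisely how the paper concludes, via $M(n) = (\sqrt{n/\ln n}-1)^{1-1/\ln n}$. Your closing remark on the $\theta$-based consequence (with $\theta_{A_1}(B^\nu_p) = \nu + p - 1$) also matches the paper's \cref{rk:super-geom-theta}, though there the optimization is redone with $p = \lfloor \theta/\ln\theta\rfloor$ to get a bound in terms of $\theta$ alone.
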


\begin{proof}
    Let $n \geq 4$.
    As in the previous paragraph, we consider $a := A_1$ and $b := B^\nu_p$.
    We now optimize the choice of $\nu \in \N$ and $p \geq 2$ to obtain the claimed estimate.
    For $i \in \N$, we have $|A_i| = i + 1$.
    Moreover, for $\nu \in \N$ and $p \geq 2$,
    \begin{equation}
        \begin{split}
            |B^\nu_p| 
            & = |B^\nu_2| + \sum_{i=3}^{p} |A_i| 
            = \nu +3 + \sum_{i=3}^{p} (1+i) 
            = \nu + \frac{(p+1)(p+2)}{2} - 3.
        \end{split}
    \end{equation}
    For $p \geq 2$ such that $p^2 \leq n$, we define $\nu_p := n + 3 - \frac{(p+1)(p+2)}{2} \geq 0$.
    From now on $\nu := \nu_p$, which ensures that $|b| = n$.
    Moreover, $\nu_p \geq n - p^2$.
    By \cref{Prop:estimation_bstar},
    \begin{equation}
        \| [a, b] \|_\B \geq p^{\nu_p} \geq p^{n - p^2}.
    \end{equation}
    We must now choose $p \in \intset{2, \lfloor\sqrt{n}\rfloor}$ to take advantage of this lower bound. 
    We set
    \begin{equation}
        p := \left \lfloor \sqrt{\frac{n}{\ln n}} \right \rfloor. 
    \end{equation}
    For $n$ large enough, $p \geq 2$ and the construction is valid.
    Moreover
    \begin{equation}
        n-p^2 \geq n - \frac{n}{\ln n} = n \left(1 - \frac{1}{\ln n}\right).
    \end{equation}
    Thus
    \begin{equation}
        \| [a, b] \|_\B \geq M(n)^n
    \end{equation}
    where
    \begin{equation}
        M(n) := \left( \sqrt{\frac{n}{\ln n}} - 1 \right)^{1 - \frac{1}{\ln n}}
        \geq \sqrt{\frac{n}{c_0 \ln n}}
    \end{equation}
    for some appropriate choice of $c_0 > 0$ and $n$ large enough using elementary asymptotic analysis, which concludes the proof.
\end{proof}

\begin{remark} \label{rk:super-geom-theta}
    Let $a := A_1$ and $b := B^\nu_p$ as above.
    Then $\theta_a(b) = p - 1 + \nu$. 
    Indeed, for every $i \geq 2$, $a < A_i$ and $(a, A_i) \in \B$, and $a < X_1$ and $(a, X_1) \in \B$.
    Hence, a similar procedure as the one above proves that, 
    for every $\theta \in \N$ large enough, there exists $a, b \in \B$ with $\theta_a(b) = \theta$ and
    \begin{equation}
        \|[a,b]\|_\B \geq \frac{1}{\theta^2} \left( \frac{\theta}{e \ln \theta} \right)^\theta,
    \end{equation}
    which is therefore super-geometric (relative to $\theta$) and qualitatively not very far from the general $\theta$-based upper bound \eqref{eq:e-theta}.
    This lower bound can for example be obtained with the choice $p := \lfloor \theta / \ln \theta \rfloor$ and $\nu_p = \theta+1-p$ and then follows from elementary asymptotic analysis.
    
    In particular, this proves that there exists a Hall set exceeding the general geometric $\theta$-based lower bound of \cref{prop:lower-theta} and the geometric $\theta$-based upper bounds which were valid for length-compatible and Lyndon Hall sets.
\end{remark}

\section{Asymmetric estimates}
\label{sec:asym}

The main goal of this section is to prove \cref{p:thm-asym} which provides asymmetric estimates for the structure constants relative to any Hall set.
In particular, this gives a positive answer in the case of Hall bases to the open problem raised in \cite[Section 2.4.3]{2020arXiv201215653B} and allows to apply the conditional result in \cite[Section 4.4.3]{2020arXiv201215653B}) to such bases, which was our main motivation.

Throughout this section, $\B \subset \Br(X)$ is a Hall set where we single out the role of a particular indeterminate $X_0 \in X$.
For $b \in \B$, we denote by $n_0(b)$ the number of occurrences of $X_0$ in $b$, and $n(b) := |b|-n_0(b)$ the number of leaves of $b$ that are different from $X_0$.

We start with preliminary definitions in \cref{sec:asym-def} where we introduce a way to represent trees which reflects the asymmetry between $X_0$ and $X \setminus \{ X_0 \}$.
Then, in \cref{sec:asym-main}, we prove estimates for the norm of such brackets, culminating with \cref{p:asym-main}.
Eventually, we explain in \cref{sec:asym-reduction} how these notions allow to prove \cref{p:thm-asym}.

\subsection{An asymmetric representation of trees} \label{sec:asym-def}

We start by defining a new family of brackets, which stores additional $X_0$ factors throughout the whole tree and will allow us to manipulate these $X_0$ factors differently from the non-$X_0$ leaves.
 
\begin{definition}[Weighted brackets] \label{def:weighted-brackets}
    Let $A \subset \B$ with $X_0 \notin A$.
    We define $\Br^\star(A)$ by induction as follows, together with maps $\rho$ and $\omega$ from $\Br^\star(A)$ to $\N$.
    \begin{itemize}
        \item For each $a \in A$ and $\nu \in \N$, the pair $\ell := a,\nu$ is called a \emph{leaf} and belongs to $\Br^\star(A)$. 
        We will use the notations $\ell = a 0^\nu$ (instead of the pair notation), $\omega(\ell) = \rho(\ell) := \nu$, $\alpha(\ell) := a$, $|\ell| := 1$ and $L(\ell) := \{ a \}$.
        
        \item For each $t_1, t_2 \in \Br^\star(A)$ and $\nu \in \N$, the triple $t := t_1,t_2,\nu$ belongs to $\Br^\star(A)$. 
        We will use the notations $t = \langle t_1, t_2 \rangle 0^\nu$, $\omega(t) := \nu$, $\rho(t) := \rho(t_1) + \rho(t_2) + \omega(t)$, $|t| := |t_1|+|t_2|$ and $L(t) := L(t_1) \cup L(t_2)$.
    \end{itemize}
\end{definition}

\begin{example}
    If $A = \{ a_1, a_2, a_3 \}$, $t := \langle a_1 0^2, \langle a_2 0^7, a_1 \rangle 0^4 \rangle 0^3 \in \Br^\star(A)$ and can be visualised as
    \begin{equation}
        \Tree [.{$0^3$} ${a_1 0^2}$ [.$0^4$ ${a_2 0^7}$ $a_1$ ] ]
    \end{equation}
    In particular, $|t| = 3$ and $L(t) = \{ a_1, a_2 \}$.
\end{example}

\begin{remark}
    In this section, we use the notation $\langle \cdot, \cdot \rangle$ to denote the weighted bracket in $\Br^\star(A)$, while, in all previous sections of this paper, this notation is used for the bracket in $\Br(\Br(X))$.
    No confusion is possible here since we only manipulate $\Br^\star(A)$ in this section, and not $\Br(\Br(X))$.
    Also, we used the notations $|t|$ and $L(t)$ to denote the \emph{length} and \emph{leaves} of $t \in \Br^\star(A)$, without indexing them by $A$, to avoid overloading the formulas in the sequel.
    Again, no confusion is possible in this section.
\end{remark}

\begin{definition}[Canonical evaluation]
    Let $A \subset \B$ with $X_0 \notin A$.
    There is a natural evaluation mapping $\e^\star$ from $\Br^\star(A)$ to $\Br(X)$, defined by induction as
    \begin{itemize}
        \item for a leaf $\ell = a0^\nu$ with $a \in A$ and $\nu \in \N$, $\e^\star(\ell) := \ad_{X_0}^\nu(a)$,
        \item for a tree $t = \langle t_1, t_2 \rangle 0^\nu$ with $t_1, t_2 \in \Br^\star(A)$ and $\nu \in \N$, $\e^\star(t) := \ad_{X_0}^\nu((\e^\star(t_1), \e^\star(t_2)))$.
    \end{itemize}
    So elements of $\Br^\star(A)$ can be evaluated in $\Br(X)$ then in $\mathcal{L}(X)$.
\end{definition}

\begin{definition}
    We define
    \begin{itemize}
        \item $\B_\sharp := \B \setminus \{ X_0 \}$,
        \item $\BrStarAlpha(\B_\sharp)$ the subset of trees $t \in \Br^\star(\B_\sharp)$ such that $\{ X_0 \} \cup L(t)$ is alphabetic.
    \end{itemize}
\end{definition}

Along the decomposition algorithm, the additional $X_0$ factors of the considered trees will ``move upwards''. 
To track this phenomenon, we introduce the following definition.

\begin{definition}[Total depth of the $X_0$ factors]
    For $t \in \Br^\star(\B_\sharp)$ and $d \in \N$, let
	\begin{equation}
	    P_d(t) := 
	    \begin{cases}
	        d \nu & \textrm{if $t = a0^\nu$ is a leaf}, \\
	        d \nu + P_{d+1}(t_1) + P_{d+1}(t_2) & \textrm{if $t = \langle t_1, t_2 \rangle 0^\nu$}.
	    \end{cases}
	\end{equation}
	Hence, $P(t) := P_0(t)$ represents the sum of the $0$-based depths of the additional $X_0$ factors in $t$.
	As an example, if $t = \langle a_1 0^2, \langle a_2 0^7, a_3 \rangle 0^4 \rangle 0^3$, $P(t) = 0 \times 3 + (1 \times 2) + (1 \times 4 + 2 \times 7 + 2 \times 0) = 20$.
\end{definition}

\subsection{Estimates for the norm of asymmetric trees} \label{sec:asym-main}

The goal of this paragraph is to prove \cref{p:asym-main}, which yields an estimate of the norm of (evaluations of) brackets of $\BrStarAlpha(\B_\sharp)$.
We start with a particular case in \cref{p:asym-X0-min} and a decomposition result in \cref{p:asym-jac-main}.

\begin{lemma} \label{p:asym-X0-min}
    Let $t \in \BrStarAlpha(\B_\sharp)$. 
    Assume that $X_0 < \min L(t)$.
    Then
    \begin{equation} \label{eq:asym-X0-min}
        \| \ev(\e^\star(t)) \|_\B \leq |t|^{\rho(t)} (|t|-1)!.
    \end{equation}
\end{lemma}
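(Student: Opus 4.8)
The plan is to unfold the $X_0$-weights of $t$ and reduce to a purely combinatorial statement about higher brackets, then run a refined version of the Jacobi-distribution argument used in \cref{Prop:n!}, being careful to distribute the copies of $X_0$ one at a time. Concretely, unfolding each weight $0^\nu$ into a left comb $\ad_{X_0}^\nu$ shows that $\ev(\e^\star(t)) = \ev(\e(u))$ for some higher bracket $u$ over the set $\{X_0\} \cup L(t)$, which, by hypothesis, is alphabetic with $X_0 = \min$. Let $\ell := |t|$ be the number of leaves of $u$ lying in $L(t)$ and $m := \rho(t)$ the number of leaves of $u$ equal to $X_0$. I will prove the following more general statement by induction on $m$: if $A' \subset \B$ is alphabetic with $X_0 \notin A'$ and $X_0 < a$ for all $a \in A'$, and if $u \in \Br(\{X_0\} \cup A')$ has $\ell \geq 1$ leaves in $A'$ and $m$ leaves equal to $X_0$, then $\|\ev(\e(u))\|_\B \leq \ell^m(\ell-1)!$.

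For the base case $m = 0$, the tree $u$ lives in $\Br(A')$ with $L_\B(u) \subseteq A'$ alphabetic, so \cref{Prop:n!} gives $\|\ev(\e(u))\|_\B \leq (\ell - 1)! = \ell^0(\ell-1)!$. For the inductive step $m \geq 1$, we may assume no subtree of $u$ equals $\langle X_0, X_0\rangle$ (otherwise $\ev(\e(u)) = 0$). Pick an occurrence of $X_0$ as a leaf of $u$ of maximal depth; its sibling subtree $v$ then contains no occurrence of $X_0$, so all leaves of $v$ lie in $A'$ and their number $q := |v|$ satisfies $1 \leq q \leq \ell$. Since $X_0$ is a left child in this comb, the relevant subtree is $\langle X_0, v\rangle$, and the iterated Jacobi identity (i.e. the fact that $\ad_{X_0}$ is a derivation) yields $\ev(\e(\langle X_0, v\rangle)) = \sum_{i=1}^{q} \pm\, \ev(\e(v_i))$, where $v_i$ is obtained from $v$ by replacing its $i$-th leaf $a_i \in A'$ by $(X_0, a_i)$. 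Replacing the subtree $\langle X_0, v\rangle$ in $u$ by $v_i$ and using multilinearity produces, after enlarging the alphabet to $A'' := A' \cup \{(X_0, a_i)\}$ (which is still alphabetic with minimum $X_0$ by \cref{lem:alphabetic}, since $a_i \neq X_0$), at most $\ell$ trees $u_i \in \Br(\{X_0\} \cup A'')$, each still having $\ell$ leaves in the non-$X_0$ alphabet but only $m - 1$ leaves equal to $X_0$. By the induction hypothesis, $\|\ev(\e(u_i))\|_\B \leq \ell^{m-1}(\ell - 1)!$, and summing over $i$ gives $\|\ev(\e(u))\|_\B \leq \ell \cdot \ell^{m-1}(\ell-1)! = \ell^m(\ell-1)!$, as claimed.

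Unwinding this, after exactly $m$ such steps one reaches a flat bracket whose $\ell$ leaves are all of the form $\ad_{X_0}^{\nu}(a)$ with $a \in L(t)$; each such element lies in $\B$ because $(X_0, a) \in \B$ and $X_0 = \min \B$ (so iterating $\ad_{X_0}$ stays in $\B$), and any two distinct such leaves $\ad_{X_0}^{\nu_1}(a_1) < \ad_{X_0}^{\nu_2}(a_2)$ bracket to an element of $\B$: either $\nu_2 \geq 1$ and $\lambda$ of the larger one is $X_0$, or $\nu_2 = 0$ and $a_2 \in X$, or $\nu_2 = 0$ with $a_2 \notin X$, in which case $(X_0, a_2) \in \B$ forces $\lambda(a_2) = X_0$; in all cases the Hall condition for the bracket to be in $\B$ holds. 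Hence this flat set of leaves is alphabetic and \cref{Prop:n!} applies, accounting for the final factor $(\ell-1)!$, while the $m$ Jacobi steps contribute the factor $\prod q_t \leq \ell^m$. Applying this to the unfolding $u$ of $t$, with $\ell = |t|$ and $m = \rho(t)$, gives exactly \eqref{eq:asym-X0-min}.

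The main obstacle, and the point requiring the most care in the final write-up, is the bookkeeping: one must check that the set of (current, possibly merged) leaves remains alphabetic with minimum $X_0$ throughout the induction — this is where \cref{lem:alphabetic} is used repeatedly — and that the merged leaves $(X_0, a)$, and ultimately all the final leaves $\ad_{X_0}^\nu(a)$, genuinely belong to $\B$ and form an alphabetic subset, which crucially relies on $X_0$ being the global minimum. The combinatorial heart, namely that a deepest occurrence of $X_0$ has an $X_0$-free sibling of size at most $\ell$, is elementary once set up correctly.
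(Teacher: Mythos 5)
Your proof is correct and rests on the same core mechanism as the paper's: distribute the stored copies of $X_0$ down onto the non-$X_0$ leaves via the Jacobi/derivation identity, observe that the resulting leaves $\ad_{X_0}^{\nu}(a)$ form an alphabetic subset of $\B$ (the paper's set $G:=\{\ad_{X_0}^k(g);\, g\in L(t),\, k\in\N\}$), and invoke \cref{Prop:n!} for the factor $(|t|-1)!$. The organization differs slightly: the paper inducts on $|t|$ and splits the root weight $\nu$ between the two subtrees by the Leibniz formula, obtaining $|t|^{\rho(t)}$ from the identity $\sum_{j}\binom{\nu}{j}|t_1|^{\rho(t_1)+j}|t_2|^{\rho(t_2)+\nu-j}\le |t|^{\rho(t)}$, whereas you induct on $\rho(t)$ and peel off one deepest occurrence of $X_0$ at a time, each step costing a factor $q\le|t|$; both are valid and of comparable length, yours trading the multinomial identity for the observation that a deepest $X_0$ has an $X_0$-free sibling. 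Two small points to repair in a final write-up, neither of which is a genuine gap: (i) your general inductive statement should assume that $\{X_0\}\cup A'$ is alphabetic (not merely that $A'$ is alphabetic with $X_0<A'$), since that is exactly what guarantees $(X_0,a_i)\in\B$ and is the invariant propagated by \cref{lem:alphabetic}; (ii) you invoke $X_0=\min\B$, which is not among the hypotheses of the lemma (only $X_0<\min L(t)$ is assumed) --- it is also unnecessary, since $\ad_{X_0}^{k}(a)\in\B$ for all $k\ge 1$ already follows from $(X_0,a)\in\B$ together with $\lambda(\ad_{X_0}^{k}(a))=X_0\le X_0$ and the Hall axiom $X_0<(X_0,\cdot)$, and likewise $(X_0,a_2)\in\B$ forces only $\lambda(a_2)\le X_0$, which suffices.
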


\begin{proof}
    Since $X_0$ is minimal, the proof consists in ``pushing'' all the additional $X_0$ factors all the way down on the leaves of $t$ by iterating the Jacobi identity.
    We introduce
    \begin{equation}
        G := \{ \ad^k_{X_0}(g) ; \enskip g \in L(t), k \in \N \} \subset \B_\sharp.
    \end{equation}
    Since $\{ X_0 \} \cup L(t)$ is alphabetic and $X_0$ is minimal within this set, iterating \cref{lem:alphabetic} proves that $G$ is alphabetic.
    Hence, by \cref{Prop:n!}, for any $h \in \Br(G)$, $\| \e(h) \|_\B \leq (|h|-1)!$.
    Thus, to prove \eqref{eq:asym-X0-min}, it is sufficient to prove that $\ev (\e^\star(t))$ is the sum of at most $|t|^{\rho(t)}$ evaluations of brackets of $\Br(G)$ of length $|t|$.
    
    We proceed by induction on $|t| \geq 1$.
    When $|t| = 1$, $t = g 0^\nu$, so, by definition, $ \e^\star(t) = \e (h)$ where $h \in \Br(G)$ is the leaf $\ad_{X_0}^\nu(g) \in G$.
    Assume that $|t| \geq 2$, then $t = \langle t_1, t_2 \rangle 0^\nu$ with $t_1, t_2 \in \Br^\star(\B_\sharp)$, $|t_1| + |t_2| = |t|$, $\rho(t) = \rho(t_1) + \rho(t_2) + \nu$.
    The Leibniz formula proves that
    \begin{equation}
        \ev (\e^\star(t))
         = \sum_{j=0}^\nu \binom{\nu}{j}
         \left[
         \e^\star(t_1 0^j),
         \e^\star(t_2 0^{\nu-j})
         \right].
    \end{equation}
    Since $|t_1 0^j| < |t|$ and $|t_2 0^{\nu-j}| < |t|$, we can apply the induction hypothesis.
    Hence, we know that $t_1 0^j$ is the sum of at most $|t_1|^{\rho(t_1)+j}$ elements of $\Br(G)$ of length $|t_1|$, and $t_2 0^{\nu-j}$ of at most $|t_2|^{\rho(t_2)+\nu-j}$ elements of $\Br(G)$ of length $|t_2|$.
    Moreover, since
    \begin{equation}
        \sum_{j=0}^\nu \binom{\nu}{j}
        |t_1|^{\rho(t_1)+j}
        |t_2|^{\rho(t_2)+\nu-j}
        = |t_1|^{\rho(t_1)} |t_2|^{\rho(t_2)}
        (|t_1|+|t_2|)^{\nu}
        \leq |t|^{\rho(t)},
    \end{equation}
    we conclude that $t$ is indeed the sum of at most $|t|^{\rho(t)}$ elements of $\Br(G)$ of length $|t|$.
\end{proof}

\begin{lemma} \label{p:asym-jac-main}
    Let $t \in \BrStarAlpha(\B_\sharp)$ with $|t| \geq 2$.
    Assume that there exists a leaf $\ell$ of $t$ such that $\alpha(\ell) = \min L(t) < X_0$ and $\omega(\ell) = 0$.
    Then
    \begin{equation} \label{eq:asym-jac-main}
        \pm \ev(\e^\star(t)) = \sum_{k=1}^{q}  \ev(\e^\star(h_k)) 
        + \sum_{k=1}^{r} \ev(\e^\star(t_k)) 
    \end{equation}
    where $q \in \intset{1,|t|-1}$, $r \in \intset{0,\rho(t)}$ and
    \begin{itemize}
        \item $h_k \in \Br^\star(L(t) \cup (\alpha(\ell), L(t)))$ with $|h_k| = |t|-1$, $\rho(h_k) = \rho(t)$ and $P(h_k) \leq P(t)$,
        \item $t_k \in \Br^\star(L(t))$ with $|t_k| = |t|$, $\rho(t_k) = \rho(t)-1$ and $P(t_k) \leq P(t) - k$. 
    \end{itemize}
    In particular, $h_k \in \BrStarAlpha(\B_\sharp)$ by \cref{lem:alphabetic}.
\end{lemma}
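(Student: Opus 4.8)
The statement is essentially the single ``Jacobi step'' in the asymmetric decomposition algorithm, analogous to the inductive step in the proof of \cref{Prop:n!} (and to \cref{def:J}), but carefully tracking the three bookkeeping quantities $|t|$, $\rho(t)$ and $P(t)$. First I would locate the leaf $\ell$ with $\alpha(\ell) = a_0 := \min L(t) < X_0$ and $\omega(\ell) = 0$ inside the tree $t = \langle t_1, t_2 \rangle 0^\nu$, and let $t'$ denote the sibling subtree of $\ell$ (so $\langle \ell, t' \rangle 0^\mu$ or $\langle t', \ell \rangle 0^\mu$ is a subtree of $t$ for some $\mu$). The key computation is to expand $\ev(\e^\star(\langle \ell, t' \rangle 0^\mu))$ by: (i) using the Leibniz formula to distribute the $\mu$ factors $\ad_{X_0}$ sitting at the root of this subtree across $\ell$ and $t'$ — this produces $\mu+1$ terms, in $\mu$ of which one of the $\ad_{X_0}$ has moved onto $\ell$ or deeper into $t'$, and in one of which the root weight is cleared; (ii) in the term where the root weight is cleared, iterating the Jacobi identity to distribute the single leaf $\e^\star(\ell) = \ev(a_0)$ onto each leaf of $t'$, exactly as in the proof of \cref{Prop:n!}, using that $\{X_0\} \cup L(t)$ is alphabetic together with \cref{lem:alphabetic} so that each $(a_0, \cdot)$ created lies in $\B$. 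Re-inserting this expansion into $t$ and using bilinearity gives an expression of the form \eqref{eq:asym-jac-main}.

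\textbf{Identification of the two families.} The ``$h_k$'' family collects the $q$ terms coming from step (ii): each $h_k$ is obtained from $t$ by deleting $\ell$ and replacing one leaf $\beta 0^\sigma$ of $t'$ by $\langle (a_0, \beta) 0^0 \rangle 0^\sigma$ — more precisely by the leaf $(a_0,\beta)0^\sigma$ viewed in $\Br^\star$ over the enlarged alphabet $L(t) \cup (a_0, L(t))$. Hence $|h_k| = |t| - 1$; the additional $X_0$-factors of $t$ are untouched, so $\rho(h_k) = \rho(t)$; and $P(h_k) \le P(t)$ because the leaf $\beta$ that absorbs $a_0$ has only had its depth possibly changed, while the weight $\mu$ at the sibling root is removed and the $X_0$-factors previously attached below $\beta$ either keep or decrease their depth (collapsing the $\langle \ell, t'\rangle$ node by one level). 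The number $q$ of such terms is the number of leaves of $t'$, which is $|t| - 1$ if $t' $ is the whole complement of $\ell$, and in general at most $|t|-1$ and at least $1$. The ``$t_k$'' family collects the $r \le \mu \le \rho(t)$ terms from step (i) where an $\ad_{X_0}$ has been pushed down: each such $t_k$ has the same underlying shape and leaves as $t$, so $|t_k| = |t|$ and $L(t_k) = L(t)$; it has one fewer total $X_0$-factor among those at the root of the $\ell$-sibling (that factor now sits strictly lower), so $\rho(t_k) = \rho(t) - 1$; and, indexing the terms so that the $k$-th term has pushed the factor down by at least $k$ additional levels relative to where it started, $P(t_k) \le P(t) - k$. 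One must be a little careful that $r$ is bounded by $\rho(t)$ rather than $\mu$ in the statement: since $\mu \le \rho(t)$ this is automatic, but if $t'$ is itself only a sub-part of the complement of $\ell$, the Leibniz expansion at intermediate nodes along the path from the root to the $\langle \ell, t'\rangle$-subtree must also be accounted for, and then $r$ is the total number of such pushed-down-$X_0$ terms, still bounded by the total number $\rho(t)$ of $X_0$-factors strictly above $\ell$'s position. The final clause, $h_k \in \BrStarAlpha(\B_\sharp)$, follows since $L(h_k) = L(t) \cup \{(a_0,\beta) : \beta \text{ a leaf of } t'\}$ and $\{X_0\} \cup L(t)$ alphabetic implies $\{X_0\} \cup L(t) \cup \{(a_0,\beta)\}$ alphabetic by repeated application of \cref{lem:alphabetic} (using $a_0 = \min(\{X_0\} \cup L(t))$ only if $a_0 < X_0$, which holds by hypothesis — but note $a_0 < X_0$, so $a_0$ is the minimum of $L(t)$ yet not of $\{X_0\}\cup L(t)$; one rather applies \cref{lem:alphabetic} with respect to the minimum $a_0$ after first observing that alphabeticity is about pairs, and $(a_0,\beta)\in\B$ already since $\{X_0\}\cup L(t)$, hence $L(t)$, is alphabetic).

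\textbf{Main obstacle.} The genuine difficulty is purely combinatorial bookkeeping: getting the indexing of the $t_k$ right so that $P(t_k) \le P(t) - k$ holds simultaneously for all $k$ (not merely $P(t_k) < P(t)$ for each $k$), and making sure the Leibniz expansion is organised so that distinct terms get distinct ``amounts of push-down''. The clean way is to fix the unique maximal path $\pi$ in $t$ from the root down to the node $N$ whose children are $\ell$ and $t'$, let $\nu_0, \nu_1, \dots, \nu_m$ ($\nu_m = \mu$) be the weights along $\pi$, apply Leibniz at each node of $\pi$ from the bottom up, and bundle together all terms in which $a_0$ has \emph{not} yet been distributed as leaves onto $t'$. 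Each such term corresponds to a choice of how many $X_0$-factors were peeled off at each node of $\pi$ and pushed to the left (towards $\ell$) versus staying; pushing $j$ factors down past the subtree rooted at $N$ decreases $P$ by exactly $j$ (each such factor drops at least one level and all other factors are unchanged), so ordering terms by total number pushed gives $P(t_k)\le P(t)-k$ with $k$ ranging over $1,\dots,r$ and $r$ at most the total weight on $\pi$, hence at most $\rho(t)$. One last check is the signs: all the $\pm$ in \eqref{eq:asym-jac-main} are absorbed into the single global $\pm$ in front of $\ev(\e^\star(t))$ only after noting that the Jacobi distribution in step (ii) and the Leibniz expansion in step (i) produce signs depending only on the fixed data $(t,\ell)$, exactly as in \cref{def:J}; I would simply state that the signs play no role for the norm estimates to follow and suppress them, which is the convention already used throughout the paper (e.g. in \cref{Prop:n!,thm:en1}).
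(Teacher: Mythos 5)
Your decomposition of the $t_k$ family does not work, and the problem is structural rather than a matter of bookkeeping. In the paper's argument, every term of the sum comes from one and the same operation: the minimal weight-zero leaf $a_0=\alpha(\ell)$ is distributed over its \emph{sibling} subtree $w$ by the derivation property of $[a_0,\cdot]$, where $\e^\star(w)=\ad_{X_0}^{\omega(w)}([\e^\star(w_1),\e^\star(w_2)])$ is treated as an iterated bracket whose atoms are the two subtrees \emph{and} the $\omega(w)$ copies of $X_0$ (and recursively the $X_0$'s deeper in $w$). The $h_k$ are the terms where $a_0$ eventually hits a non-$X_0$ leaf; the $t_k$ are the terms where $a_0$ hits one of those $X_0$'s and is absorbed into the new basis leaf $(a_0,X_0)\in\B$ (legitimate because $a_0<X_0$). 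It is this absorption that makes $\rho$ drop by one, and it simultaneously lifts the $X_0$ factors that sat outside the one consumed up by one level, which is why $P$ drops by at least $k$ under the natural ordering. Your step (ii) distributes $a_0$ only onto the non-$X_0$ leaves of $t'$, so these absorption terms — which necessarily appear when you expand $[a_0,\e^\star(t')]$ by Jacobi — are simply missing from your identity; the expansion you write is not an identity in $\mathcal{L}(X)$.

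You try to recover the $t_k$ from a different source, namely a Leibniz expansion of the weight $\mu$ at the root of $\langle\ell,t'\rangle 0^\mu$ (and of the weights along the path above it), pushing $X_0$ factors down onto $\ell$. This cannot produce terms with the stated properties. First, merely relocating an $X_0$ factor does not change $\rho$, which counts all additional $X_0$ factors wherever they sit; $\rho$ only decreases when a factor is consumed into a new leaf of $\B_\sharp$, so your justification ``that factor now sits strictly lower, so $\rho(t_k)=\rho(t)-1$'' is false. Second, the monotonicity of $P$ is backwards: $P_d$ weights a factor at depth $d$ by $d$, so pushing a factor \emph{down} increases $P$, whereas the required inequality $P(t_k)\le P(t)-k$ reflects factors moving \emph{up} (or being consumed). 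Third, Leibniz introduces binomial coefficients and leaves of the form $\ad_{X_0}^j(\ell)$ with $j\ge 2$, neither of which fits the unit-coefficient sum \eqref{eq:asym-jac-main} or the two families described in the statement. Finally, note that the induction needed to handle a leaf $\ell$ sitting at positive depth requires propagating the depth-indexed inequalities $P_d(h_k)\le P_d(t)$ and $P_d(t_k)\le P_d(t)-k$ for every $d$, not only $d=0$, before substituting the expanded subtree back into $t$; your sketch does not carry this stronger invariant.
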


\begin{proof}
    Let $g := \alpha(\ell)$ be the minimal element of $L(t)$.
    Heuristically, the decomposition \eqref{eq:asym-jac-main} comes from distributing $g$ over all the non-$X_0$ leaves and $X_0$ factors of its neighbor thanks to the Jacobi identity.
    The first sum corresponds to terms where $g$ hits a non-$X_0$ leaf of $t$, yielding a shorter tree on a new alphabetic subset.
    The second sum corresponds to terms where $g$ hits an additional $X_0$ factor.
    An appropriate ordering of the created brackets $t_k$ yields the delicate point of the estimate $P(t_k) \leq P(t)-k$, which proves that the $X_0$ factors move upwards in this process.

    \bigskip \noindent
    \emph{Step 1: Case when $t = \langle g, w \rangle$.}
    We proceed by induction on $|w| \geq 1$.
    We actually propagate the following stronger equality (without the sign alternative in \eqref{eq:asym-jac-main})
    \begin{equation}
        \ev(\e^\star(t)) = \sum_{k=1}^{q}  \ev(\e^\star(h_k)) 
        + \sum_{k=1}^{r} \ev(\e^\star(t_k))
    \end{equation}
    and the stronger conditions that for all $d \in \N$, $P_d(h_k) \leq P_d(t)$ and $P_d(t_k) \leq P_d(t) - k$.
    We will use that, since we assumed  $\omega(\ell) = 0$, $P_d(g) = 0$.
    
    \medskip
    
    First, if $|w| = 1$, then $w = h 0^\nu$ for some $h \in \B_\sharp$ and $\nu := \omega(w) \geq 0$.
    Distributing $g$ using the Jacobi identity yields
    \begin{equation}
        \ev(\e^\star(\langle g, h0^\nu \rangle))
        = \ev(\e^\star((g, h) 0^\nu))
        + \sum_{k=1}^{\nu} \ev(\e^\star( 
            \langle (g, X_0), h 0^{\nu-k} \rangle 0^{k-1}
        )),
    \end{equation}
    which is \eqref{eq:asym-jac-main} with $h_1 := (g, h) 0^\nu$ and $t_k := \langle (g, X_0), h 0^{\nu-k} \rangle 0^{k-1}$.
    Moreover, for every $d \in \N$, $P_d(\langle g, h0^\nu \rangle) = (d+1) \nu$ and $P_d(t_k) = d(k-1) + (d+1) (\nu-k) = P_d(t) - k - d$. 
    So $P_d(t_k) \leq P_d(t) - k$.
    
    Now assume that $|w| \geq 2$. 
    Then $w = \langle w_1, w_2 \rangle 0^\nu$ with $w_1, w_2 \in \Br^\star(L(w))$ and $\nu := \omega(w) \geq 0$.
    Distributing $g$ using the Jacobi identity yields
    \begin{equation}
        \begin{split}
            \ev(\e^\star(\langle g, w \rangle))
        & = \ev(\e^\star( \langle \langle g, w_1 \rangle, w_2 \rangle 0^\nu ))
        + \ev(\e^\star( \langle w_1, \langle g, w_2 \rangle \rangle 0^\nu )) \\
       & \quad + \sum_{k=1}^{\nu} \ev(\e^\star( 
            \langle (g, X_0), \langle w_1, w_2 \rangle  0^{\nu-k} \rangle 0^{k-1} ))
        \end{split}
    \end{equation}
    We apply the induction hypothesis to $\langle g, w_1 \rangle$ and $\langle g, w_2 \rangle$, indexing all variables with $1$ or $2$ respectively.
    Thus
    \begin{equation}
        \begin{split}
            \ev(\e^\star(\langle g, w \rangle))
        & = 
        \sum_{k=1}^{q_1}  \ev(\e^\star(\langle h^1_k, w_2 \rangle 0^\nu)) 
        + \sum_{k=1}^{q_2}  \ev(\e^\star(\langle w_1, h^2_k \rangle 0^\nu)) 
          \\
        & + \sum_{k=1}^{r_1} \ev(\e^\star(\langle t^1_k, w_2 \rangle 0^\nu))
        + \sum_{k=1}^{r_2} \ev(\e^\star(\langle w_1, t^2_k \rangle 0^\nu))
        \\
       & \quad + \sum_{k=1}^{\nu} \ev(\e^\star( 
            \langle (g, X_0), \langle w_1, w_2 \rangle  0^{\nu-k} \rangle 0^{k-1} )).
        \end{split}
    \end{equation}
    Let us check that this is the expected decomposition.
    \begin{itemize}
        \item The first two sums yield the first sum of \eqref{eq:asym-jac-main}. 
        One checks that
        \begin{itemize}
            \item $|\langle h^1_k, w_2 \rangle 0^\nu| = |h^1_k| + |w_2| = |\langle g, w_1 \rangle|-1+|w_2| = |t| -1$,
            \item $\rho(\langle h^1_k, w_2 \rangle 0^\nu) = \nu + \rho(h^1_k) + \rho(w_2) = \nu + \rho(\langle g, w_1 \rangle) + \rho(w_2) = \rho(t)$,
            \item for every $d \in \N$, $P_d(\langle h^1_k, w_2 \rangle 0^\nu) = d \nu + P_{d+1}(h^1_k) + P_{d+1}(w_2) \leq d \nu + P_{d+1}(\langle g, w_1 \rangle) + P_{d+1}(w_2) = d \nu + P_{d+2}(w_1) + P_{d+1}(w_2) 
            \leq (d+1) \nu + P_{d+1}(\langle w_1, w_2 \rangle) = P_d(t)$,
            \item and similarly for the trees $\langle w_1, h^2_k \rangle 0^\nu$,
            \item and eventually, $q_1 + q_2 = |w_1| + |w_2| = |t|-1$.
        \end{itemize}
        
        \item The last three sums yield the second sum of \eqref{eq:asym-jac-main}.
        The properties on the lengths, the leaves and the values of $\rho$ of the trees are straightforward.
        We focus on the delicate estimate of the values of $P$.
        Let $d \in \N$.
        \begin{itemize}
            \item First, $P_d(t) = P_d(\langle g, \langle w_1, w_2 \rangle 0^\nu \rangle) = (d+1) \nu + P_{d+2}(w_1) + P_{d+2}(w_2)$.
            \item For $k \in \intset{1,\nu}$, $P_d(\langle (g, X_0), \langle w_1, w_2 \rangle  0^{\nu-k} \rangle 0^{k-1}) = d (k-1) + (d+1) (\nu-k) + P_{d+2}(w_1) + P_{d+2}(w_2) = P_d(t) - k - d \leq P_d(t) - k$.
            \item For $k \in \intset{1,r_1}$, since, by the induction hypothesis, $P_{d+1}(t^1_k) \leq P_{d+1}(\langle g, w_1 \rangle) - k = P_{d+2}(w_1) - k$, we have 
            $P_d(\langle t^1_k, w_2 \rangle 0^\nu)
            = d \nu + P_{d+1}(w_2) + P_{d+1}(t^1_k)
            \leq d \nu + P_{d+2}(w_2) - \rho(w_2) + P_{d+2}(w_1) - k
            = P_{d}(t) - \nu - \rho(w_2) - k$.
            \item Similarly, for $k \in \intset{1,r_2}$, $P_d(\langle w_1, t^2_k \rangle 0^\nu) \leq P_d(t) - \nu - \rho(w_1) - k$.
        \end{itemize}
        Recall that $r_1 \leq \rho(w_1)$ and $r_2 \leq \rho(w_2)$.
        Let $r := \nu + r_1 + r_2$ and define, for $k \in \intset{1,r}$
        \begin{equation}
            t_k := \begin{cases}
                \langle (g, X_0), \langle w_1, w_2 \rangle  0^{\nu-k} \rangle 0^{k-1}
                & \textrm{for } k \in \intset{1,\nu}, \\
                \langle t^1_{k-\nu}, w_2 \rangle 0^\nu
                & \textrm{for } k \in \intset{\nu+1, \nu+r_1}, \\
                \langle w_1, t^2_{k-\nu-r_1} \rangle 0^\nu
                & \textrm{for } k \in \intset{\nu+r_1+1, \nu+r_1+r_2}.
            \end{cases}
        \end{equation}
        The previous estimates prove that, for this labeling, $P_d(t_k) \leq P_d(t) - k$.
    \end{itemize}

    \bigskip
    \noindent \emph{Step 2: General case.}
    Up to the left-right anti-symmetry (so up to a sign in $\mathcal{L}(X)$), we can assume that there is a $w \in \Br^\star(L(t))$ and $\mu \geq 0$ such that $t' 0^\mu$ where $t' := \langle g, w \rangle$ is a subtree of $t$.
    By the previous step,  
    \begin{equation}
        \ev(\e^\star(t')) = \sum_{k=1}^{q'} \ev(\e^\star(h_k')) 
        + \sum_{k=1}^{r'} \ev(\e^\star(t_k')) 
    \end{equation}
    where $q' \in \intset{1,|w|}$, $r' \in \intset{0,\rho(w)}$ and
    \begin{itemize}
        \item $h_k' \in \Br^\star(L(w) \cup (g, L(w)))$ with $|h_k'| = |w|$, $\rho(h_k') = \rho(w)$ and $P(h_k') \leq P(w)$,
        \item $t_k' \in \Br^\star(L(w))$ with $|t_k'| = |w|+1$, $\rho(t_k') = \rho(w)-1$ and $P(t_k') \leq P(w) - k$. 
    \end{itemize}
    This proves \eqref{eq:asym-jac-main} where $h_k$ is obtained by replacing $t'$ by $h_k'$ in $t$, and $t_k$ is obtained by replacing $t'$ by $t_k'$ in $t$.
    The properties on the leaves, the lengths and the values of $\rho$ for the $h_k$ and $t_k$ can be checked easily.
    The only delicate part is to verify the values of $P$.
    Let $d$ be the $0$-based depth of $t'$ in $t$.
    Then $P(h_k) = P(t) - P_d(t') + P_d(h_k') \leq P(t)$ because we proved $P_d(h_k') \leq P_d(t')$.
    Similarly, $P(t_k) = P(t) - P_d(t') + P_d(t_k') \leq P(t) - k$.
\end{proof}

\begin{proposition} \label{p:asym-main}
    Let $t \in \BrStarAlpha(\B_\sharp)$. 
    Then
    \begin{equation} \label{eq:asym-t-gen}
        \| \ev(\e^\star(t)) \|_\B \leq C(|t|)^{\rho(t)} (|t|-1)!,
    \end{equation}
    where, for $n \in \N^*$,
    \begin{equation} \label{eq:asym-Cn}
        C(n) := \begin{cases}
            1 & \textrm{for } n = 1, \\
            2^{\frac{n(n-1)}{2}+1} & \textrm{when } n\geq 2.
        \end{cases}
    \end{equation}
\end{proposition}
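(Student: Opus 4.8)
The plan is to establish, for every $t\in\BrStarAlpha(\B_\sharp)$, the sharper bound
$\|\ev(\e^\star(t))\|_\B \le 2^{(|t|-1)\rho(t)+P(t)}(|t|-1)!$,
by induction on the pair $(|t|,\rho(t))$ ordered lexicographically. This suffices: a straightforward induction on $t$ gives $P(t)\le(|t|-1)\rho(t)$ (each of the $\rho(t)$ stored $X_0$-factors lies at depth at most $|t|-1$ in the tree), so the displayed bound is at most $2^{2(|t|-1)\rho(t)}(|t|-1)! = \bigl(2^{2(|t|-1)}\bigr)^{\rho(t)}(|t|-1)!$, and $2^{2(n-1)}\le 2^{n(n-1)/2+1}=C(n)$ for every $n\ge1$, since for $n\ge2$ this amounts to $n^2-5n+6=(n-2)(n-3)\ge0$ and it reads $1\le1$ for $n=1$.

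Base and easy cases. If $|t|=1$, then $t=g0^\nu$ with $g\in\B_\sharp$ and $\e^\star(t)=\ad_{X_0}^\nu(g)$; if $X_0<g$ one concludes by \cref{p:asym-X0-min}, while if $g<X_0$ then $(g,X_0)\in\B$ because $\{X_0,g\}=\{X_0\}\cup L(t)$ is alphabetic, so by \cref{lem:some_elements_hall} together with the identity $\ev(\ad_{X_0}^\nu(h))=(-1)^\nu\ev(\dad_{X_0}^\nu(h))$ valid in $\mathcal{L}(X)$, the element $\ev(\ad_{X_0}^\nu(g))$ is, up to sign, a single basis element, whence the norm is $1=2^0\cdot0!$. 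If $\rho(t)=0$, then $\e^\star(t)\in\Br_{L(t)}$ and \cref{Prop:n!} applied to the alphabetic set $L(t)$ gives $\|\ev(\e^\star(t))\|_\B\le(|t|-1)!$. If $X_0<\min L(t)$, then \cref{p:asym-X0-min} gives $\le|t|^{\rho(t)}(|t|-1)!$, below the claimed bound since $|t|\le2^{|t|-1}$. There remains the case $|t|\ge2$, $\rho(t)\ge1$, and $g:=\min L(t)<X_0$.

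Main case. If no leaf of $t$ labelled $g$ has weight $0$, pick one such leaf $g0^\nu$ (so $\nu\ge1$), say at depth $d\ge1$, and replace it by the leaf $(g,X_0)0^{\nu-1}$: this is legitimate because $(g,X_0)\in\B_\sharp$ and, by \cref{lem:alphabetic} applied with $a_0=g=\min(\{X_0\}\cup L(t))$ and $a=X_0$, the set $\{X_0\}\cup L(t)\cup\{(g,X_0)\}$ is alphabetic, so the modified tree $\widetilde t$ lies in $\BrStarAlpha(\B_\sharp)$; moreover $\ev(\e^\star(t))=-\ev(\e^\star(\widetilde t))$ by multilinearity and $\ev(\ad_{X_0}^\nu(g))=-\ev(\ad_{X_0}^{\nu-1}((g,X_0)))$, while $|\widetilde t|=|t|$, $\rho(\widetilde t)=\rho(t)-1$ and $P(\widetilde t)=P(t)-d\le P(t)$, so the induction hypothesis applies. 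Otherwise some leaf labelled $g$ has weight $0$, and \cref{p:asym-jac-main} yields $\pm\ev(\e^\star(t))=\sum_{k=1}^q\ev(\e^\star(h_k))+\sum_{k=1}^r\ev(\e^\star(t_k))$ with $q\le|t|-1$, each $h_k$ of length $|t|-1$ with $\rho(h_k)=\rho(t)$ and $P(h_k)\le P(t)$, each $t_k$ of length $|t|$ with $\rho(t_k)=\rho(t)-1$ and $P(t_k)\le P(t)-k$, all of them in $\BrStarAlpha(\B_\sharp)$ (again via \cref{lem:alphabetic}); note $P(t_k)\ge0$ forces $r\le P(t)$. Applying the induction hypothesis to the $h_k$ and to the $t_k$ and using $\sum_{k\ge1}2^{-k}\le1$, one gets, writing $\rho:=\rho(t)$ and $P:=P(t)$, $\|\ev(\e^\star(t))\|_\B\le 2^{(|t|-2)\rho+P}(|t|-1)!+2^{(|t|-1)(\rho-1)+P}(|t|-1)!$, which is at most $2^{(|t|-1)\rho+P}(|t|-1)!$ because, after dividing, the inequality reduces to $1+2^{\rho-|t|+1}\le2^\rho$, i.e.\ $2^\rho\bigl(1-2^{-(|t|-1)}\bigr)\ge1$, which holds as $\rho\ge1$ and $|t|\ge2$.

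The main obstacle is that the naive double induction on $(|t|,\rho(t))$ with the unrefined target $C(|t|)^{\rho(t)}(|t|-1)!$ does not close: the number $r$ of trees $t_k$ produced by \cref{p:asym-jac-main} can be as large as $\rho(t)$, and $\rho(t)\,C(n)^{\rho(t)-1}$ is not dominated by $C(n)^{\rho(t)}$ when $\rho(t)>C(n)$, which can occur for small $|t|$. Carrying the extra factor $2^{P(t)}$ and exploiting that the $t_k$ satisfy $P(t_k)\le P(t)-k$ — so that their contributions form a convergent geometric series rather than $r$ equal terms — is precisely what lets the recursion absorb this multiplicity. The remaining work is bookkeeping: verifying $P(t)\le(|t|-1)\rho(t)$, checking that $h_k$, $t_k$ and $\widetilde t$ all remain in $\BrStarAlpha(\B_\sharp)$ (the repeated role of \cref{lem:alphabetic}), and organising the case split according to whether $\min L(t)$ exceeds $X_0$, occurs as a weight-$0$ leaf, or occurs only as a weighted leaf.
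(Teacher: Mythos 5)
Your proof is correct, and it follows the paper's own skeleton up to the last step: the same three-way case split (the $X_0$-minimal case via \cref{p:asym-X0-min}, the Jacobi-distribution case via \cref{p:asym-jac-main}, and the "all minimal leaves are weighted" case handled by converting $g0^\nu$ into $(g,X_0)0^{\nu-1}$), with the same appeals to \cref{lem:alphabetic} to stay inside $\BrStarAlpha(\B_\sharp)$. Where you diverge is in how the resulting recursion is closed. The paper introduces $F_n(\rho,P)$, derives the functional inequality \eqref{eq:asym-fnrhop-ineq}, iterates it, and bounds the resulting multiplicity by the number of compositions of integers at most $P$, paying a factor $2^P \le 2^{(n-1)\rho}$ at \emph{each} length level; accumulating these over $n$ levels is what produces $C(n)=2^{n(n-1)/2+1}$. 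You instead verify directly that the explicit ansatz $2^{(|t|-1)\rho(t)+P(t)}(|t|-1)!$ is stable under the recursion, the point being that $P$ is carried through the whole induction and only ever decreases (the terms $P(t_k)\le P(t)-k$ giving a geometric series $\sum_k 2^{-k}\le 1$ exactly where the naive count would give a factor $r\le\rho(t)$). Your diagnosis that the unrefined target $C(n)^{\rho}(n-1)!$ cannot close by itself because $\rho\,C(n)^{\rho-1}\not\le C(n)^{\rho}$ for $\rho>C(n)$ is the same obstruction the paper's $2^P$ factor is designed to absorb. The closing inequality $2^{-\rho}+2^{-(|t|-1)}\le 1$ and the final conversion via $P(t)\le(|t|-1)\rho(t)$ and $(n-2)(n-3)\ge 0$ all check out. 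Note that your argument in fact yields the proposition with $C(n)$ replaced by $4^{\,n-1}$, which is strictly smaller than $2^{n(n-1)/2+1}$ for $n\ge 4$ and would propagate to a sharper geometric rate in \cref{p:thm-asym-refined} and \cref{p:thm-asym}; this is a genuine, if modest, quantitative improvement over the paper's statement.
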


\begin{proof}
    For $n \geq 1$, $\rho \geq 0$ and $P \geq 0$, we introduce
    \begin{equation} \label{eq:def-fnrhop}
        F_n(\rho,P) := \max \left\{ \frac{\| \ev(\e^\star(t)) \|_\B}{(|t|-1)!} ; \enskip t \in \BrStarAlpha(\B_\sharp), |t| = n, \rho(t) \leq \rho, P(t) \leq P \right\}.
    \end{equation}
    In particular, for each $n \geq 1$, $F_n$ is a non-decreasing function of $\rho$ and $P$.
    Moreover, since, for every $t \in \Br^\star(\B_\sharp)$, $P(t) \leq (|t|-1) \rho(t)$, we have, for every $P\geq 0$, 
    \begin{equation} \label{eq:fn-barfn}
        F_n(\rho, P) \leq \overline{F}_n(\rho) := F_n(\rho, (n-1)\rho).
    \end{equation}
    
    \bigskip
    \noindent \emph{Step 1: We compute the values of $\overline{F}_n(\rho)$ for $n = 1$ or $\rho = 0$.}
    First, for every $\rho \geq 0$,
    \begin{equation} \label{eq:f-n1}
        \overline{F}_1(\rho) = 1.
    \end{equation}
    The proof is by induction on $\rho \geq 0$.
    Let $t \in \BrStarAlpha(\B_\sharp)$ such that $|t| = 1$ and $\rho(t) = \rho$, so $t = g0^\rho$ for some $g \in \B_\sharp$.
    If $\rho = 0$, $t$ is already a leaf labeled by $g \in \B$, without additional $X_0$ factors, so $\e^\star(t) = g \in \B$ and thus $\|\ev(\e^\star(t))\|_\B = 1$.
    Otherwise, $\rho > 0$. 
    If $X_0$ is minimal among $\{ X_0, g \}$, $\e^\star(t) = \ad_{X_0}^\rho(g) \in \B$ so  $\|\ev(\e^\star(t))\|_\B = 1$.
    If $g$ is minimal among $\{ X_0, g \}$,  $\ev(\e^\star(t)) = - \ev( \e^\star( (g, X_0)0^{\rho-1} ))$, so the proof follows by induction.
    
    Second, for every $n \geq 1$,
    \begin{equation} \label{eq:f-rho0}
        \overline{F}_n(0) \leq 1.
    \end{equation}
    Indeed, for $t \in \BrStarAlpha(\B_\sharp)$ with $\rho(t) = 0$ and $|t| = n$, since there is no additional $X_0$ factors, $\e^\star(t)$ is equal, in $\Br(X)$, to the evaluation of a bracket of length $n$ over an alphabetic subset of $\B_\sharp$.
    So \cref{Prop:n!} yields $\| \ev(\e^\star(t)) \|_\B \leq (n-1)!$.
    
    \bigskip
    \noindent \emph{Step 2: Proof of a functional inequality using the Jacobi identity.} We prove that, for every $n \geq 2$, $\rho \geq 1$ and $P \geq 0$,
    \begin{equation} \label{eq:asym-fnrhop-ineq}
        F_n(\rho,P) \leq 
        \max \left\{ 
        n^\rho, \enskip  
        F_{n-1}(\rho,P) + \sum_{k=1}^{+\infty} F_n( \rho-1,P-k) \right\},
    \end{equation}
    with the convention that $F_n(\rho', P') = 0$ when $P' < 0$.
    
    \medskip
    
    Let $t \in \BrStarAlpha(\B_\sharp)$ with $|t| = n$, $\rho(t) \leq \rho$ and $P(t) \leq P$. 
    We separate three cases.
    Let $g := \min (\{ X_0 \} \cup L(t))$.
    \begin{itemize}
        \item \emph{Case $g = X_0$.}
        Then \cref{p:asym-X0-min} proves that $\frac{\| \ev( \e^\star(t)) \|_\B}{(|t|-1)!} \leq n^\rho$.
        This case yields the first term of the right-hand side maximum in \eqref{eq:asym-fnrhop-ineq}.
        
        \item \emph{Case $g \in L(t)$ and there is a leaf $\ell$ of $t$ such that $g = \alpha(\ell)$ and $\omega(\ell) = 0$.}
        Using the decomposition \eqref{eq:asym-jac-main} with the notations of \cref{p:asym-jac-main} and the monotony of $F_{n-1}$ and $F_n$, we obtain
        \begin{equation}
            \begin{split}
            \frac{\| \ev(\e^\star(t)) \|_\B}{(n-1)!}
            & \leq \frac{1}{(n-1)!} \sum_{k=1}^q \| \e^\star(h_k) \|_\B
            + \frac{1}{(n-1)!} \sum_{k=1}^r \| \e^\star(t_k) \|_\B \\
            & \leq \frac{(n-1)}{(n-1)!} (n-2)! F_{n-1}(\rho,P) + \sum_{k=1}^r F_n(\rho-1,P-k), 
            \end{split}
        \end{equation}
        which yields the second term of the right-hand side maximum in \eqref{eq:asym-fnrhop-ineq}.
        
        \item \emph{Case $g \in L(t)$ but for each leaf $\ell$ of $t$ such that $g = \alpha(\ell)$, $\omega(\ell) > 0$.}
        Take such a leaf and let $\nu := \omega(\ell) > 0$, we have $\ev (\e^\star(t)) = - \ev(\e^\star(\bar t))$ where $\ell = g0^\nu$ in $t$ has been replaced by $(g,X_0) 0^{\nu-1}$ in $\bar t$.
        By \cref{lem:alphabetic}, $\{ X_0 \} \cup L(\bar t)$ is alphabetic.
        Moreover, $|\bar t| = |t|$, $\rho(\bar t) = \rho(t)-1$ and $P(\bar t) \leq P(t)$.
        Iterating this procedure if necessary brings us eventually back to the first two cases.
    \end{itemize}
    
    \bigskip
    \noindent \emph{Step 3: Resolution of the functional inequality and conclusion.}
    Let $n \geq 2$, $\rho \geq 1$ and $P \geq 0$.
    Let $f_{n,\rho,P} := \max \{ n^\rho, F_{n-1}(\rho, P) \} \in \R_+$.
    Iterating \eqref{eq:asym-fnrhop-ineq} and using the monotony of $f_{n,\rho,P}$ with respect to $\rho$ and $P$ yields
    \begin{equation}
        \begin{split}
            F_n(\rho, P) & \leq f_{n,\rho,P} + \sum_{k_1=1}^{+\infty} F_n(\rho-1,P-k_1) \\
            & \leq f_{n,\rho,P} + \sum_{k_1=1}^{+\infty} \left(
            f_{n,\rho-1,P-k_1} 
            + \sum_{k_2=1}^{+\infty} F_{n}(\rho-2,P-k_1-k_2)
            \right) \\
            & \leq \cdots \\
            & \leq f_{n,\rho,P}
            \left( 1 + \sum_{r=1}^{\rho-1} \left| 
            \left\{  (k_1, \dotsc, k_r) \in (\N^*)^r ; \enskip k_1 + \dotsb + k_r \leq P \right\} \right| \right) \\
           & \quad + \overline{F}_n(0) \left| 
            \left\{  (k_1, \dotsc, k_\rho) \in (\N^*)^\rho ; \enskip k_1 + \dotsb + k_\rho \leq P \right\} \right|.
        \end{split}
    \end{equation}
    Using the classical bound for the number of compositions of an integer therefore yields
    \begin{equation}
        F_n(\rho, P) \leq 2^P \max \{ n^\rho, F_{n-1}(\rho,P), \overline{F}_n(0) \}.
    \end{equation}
    By \eqref{eq:f-rho0}, $\overline{F}_n(0) \leq 1 \leq n^\rho$.
    Thus, using \eqref{eq:fn-barfn},
    \begin{equation}
        \overline{F}_n(\rho) \leq 2^{(n-1)\rho} \max \{ n^\rho, \overline{F}_{n-1}(\rho) \}.
    \end{equation}
    Recalling \eqref{eq:f-n1}, we obtain $\overline{F}_2(\rho) \leq 2^{\rho} \max \{ 2^\rho, 1 \} = 2^{2\rho} = C(2)^\rho$.
    Then for $n \geq 3$, with $C$ defined as in \eqref{eq:asym-Cn}, one can ignore the term $n^\rho$ in the maximum since $n \leq C(n-1)$ for $n \geq 3$. We thus obtain
    \begin{equation} \label{eq:fnbar-est}
        \overline{F}_n(\rho) \leq C(n)^\rho,
    \end{equation}
    Hence, if $t \in \BrStarAlpha(\B_\sharp)$, recalling \eqref{eq:def-fnrhop}, 
    \begin{equation}
        \| \ev(\e^\star(t)) \|_\B \leq (|t|-1)! \times  \overline{F}_{|t|}(\rho(t)),
    \end{equation}
    which, together with \eqref{eq:fnbar-est}, concludes the proof of \eqref{eq:asym-t-gen} for $n \geq 2$ and $\rho \geq 1$ (the cases $n = 1$ or $\rho = 0$ were already covered in the initializations step).
\end{proof}

\subsection{Proof of the main generic asymmetric estimate} \label{sec:asym-reduction}

We prove the following refined version of \cref{p:thm-asym}.
For $a < b \in \B$, let $\rho_a(b)$ denote the number of occurrences of $X_0$ as a leaf of $\rf{a}{b}$ and $n_a(b) := \theta_a(b) - \rho_a(b)$.
In particular, by construction, $\rho_a(b) \leq n_0(b)$ and $n_a(b) \leq n(b)$.

\begin{theorem} \label{p:thm-asym-refined}
    Let $a < b \in \B$. 
    Then
    \begin{equation} \label{eq:asym-main-ab}
        \| [a, b] \|_\B \leq C(n_a(b) + 1)^{\rho_a(b)} n_a(b)!,
    \end{equation}
    where $C$ is the non-decreasing sequence defined in \eqref{eq:asym-Cn}.
\end{theorem}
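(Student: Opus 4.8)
\textbf{Proof plan for \cref{p:thm-asym-refined}.}

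The plan is to reduce the estimate on $\|[a,b]\|_\B$ to an application of \cref{p:asym-main} by exhibiting a suitable weighted bracket $t \in \BrStarAlpha(\B_\sharp)$ whose evaluation is (up to sign) $[a,b]$. The starting point is the relative folding: by \cref{Prop:Ta(b)_feuille_b} we have $b = \e(\rf{a}{b})$, and by \cref{Prop:Ta(b)_alphabetic} the set $\{a\} \cup L_\B(\rf{a}{b})$ is an alphabetic subset of $\B$. So $[a,b] = \ev(\e(\langle a, \rf{a}{b} \rangle))$ is the evaluation of a bracket of $\Br(\B)$ over an alphabetic subset. The point of this section's machinery is to record separately, among the leaves of $\langle a, \rf{a}{b}\rangle$, those equal to $X_0$ and those different from $X_0$. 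Concretely, I would build $t \in \Br^\star(\B_\sharp)$ from $\langle a, \rf{a}{b} \rangle$ by a bottom-up procedure: every maximal collection of sibling leaves equal to $X_0$ (or, more carefully, every subtree that evaluates to an iterated $\ad_{X_0}$) gets folded into the weight $0^\nu$ decoration of the appropriate node. Since $X_0 \in X$ is a single indeterminate, each leaf of $\rf{a}{b}$ labeled $X_0$ and each occurrence of $a = X_0$ (if $a = X_0$) can be absorbed this way; one must check this collapse is well-defined and that the resulting $t$ satisfies $\e^\star(t) = \pm b$ resp. $\e^\star(\langle\text{-},\text{-}\rangle 0^{\dots})= \pm[a,b]$ in $\Br(X)$ (up to the left-right antisymmetry, which only changes a sign in $\mathcal{L}(X)$). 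The key bookkeeping identities are: the number of genuine (non-$X_0$) leaves of $t$ is exactly $n_a(b)$ when $a \neq X_0$, and $n_a(b)$ again when $a = X_0$ since then $a$ contributes only to a weight; the total weight is $\rho(t) = \rho_a(b)$ (number of $X_0$-leaves of $\rf{a}{b}$, plus possibly one for $a$); and $|t| = n_a(b) + 1$ when $a \neq X_0$, while $|t| = n_a(b)$ (or $n_a(b)+1$, depending on the bookkeeping convention) when $a = X_0$. One needs $\{X_0\} \cup L(t)$ to be alphabetic: this is immediate from $L(t) \subset \{a\} \cup L_\B(\rf{a}{b})$ and the latter being an alphabetic subset of $\B$ (using that subsets of alphabetic sets are alphabetic, and $\{X_0\} \cup L(t) \subset \B_a$ when $a = X_0$), so $t \in \BrStarAlpha(\B_\sharp)$.

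Once $t$ is constructed with $|t| \leq n_a(b)+1$, $\rho(t) = \rho_a(b)$, and $\e^\star(t)$ evaluating to $\pm[a,b]$, \cref{p:asym-main} gives directly
\begin{equation*}
    \|[a,b]\|_\B = \|\ev(\e^\star(t))\|_\B \leq C(|t|)^{\rho(t)} (|t|-1)! \leq C(n_a(b)+1)^{\rho_a(b)} \, n_a(b)!,
\end{equation*}
using monotonicity of $C$ and of the factorial if $|t| < n_a(b)+1$. This yields \eqref{eq:asym-main-ab}. To obtain \cref{p:thm-asym} from \cref{p:thm-asym-refined}, one observes $\rho_a(b) \leq n_0(b)$ and $n_a(b) \leq n(b)$ (each $X_0$-leaf of $\rf{a}{b}$ is a leaf of $b$ equal to $X_0$, hence $\rho_a(b) \le n_0(b)$; and $n_a(b) = \theta_a(b) - \rho_a(b)$, where the $n_a(b)$ non-$X_0$ leaves of $\rf{a}{b}$ each carry length $\geq 1$ and contain at least one non-$X_0$ leaf of $b$, so $n_a(b) \le n(b)$), hence $C(n_a(b)+1)^{\rho_a(b)} n_a(b)! \le C(n(b)+1)^{n_0(b)} n(b)!$ by monotonicity, and one renames $C(n(b)+1)$ as the universal sequence in the statement.

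The main obstacle I anticipate is the careful construction of $t$ and the verification that it correctly encodes $[a,b]$ — in particular making precise the bottom-up folding of $X_0$-subtrees into weights and checking $\e^\star(t)=\pm\e(\langle a, \rf{a}{b}\rangle)$ in $\Br(X)$. The subtlety is that in $\rf{a}{b}$ an $X_0$ leaf need not appear as an iterated left-bracketing head $\ad_{X_0}$; it can appear as a right factor or deeper inside. The weighted-bracket formalism only natively handles $\ad_{X_0}$-type accumulation (via $0^\nu$), so one must argue that any $X_0$ leaf adjacent to a subtree can be pushed into a weight decoration up to the left-right antisymmetry (a sign), possibly after rewriting $(h, X_0)$ as $-(X_0, h)$ at the level of $\mathcal{L}(X)$ — exactly the kind of move already used in the third case of the proof of \cref{p:asym-jac-main} and in Step 1 of \cref{p:asym-main}. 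A clean way to organize this is: first prove a lemma that for any alphabetic $A' \subset \B$ containing $X_0$ and any $s \in \Br(A')$, there is $t \in \Br^\star(A' \setminus \{X_0\})$ with $\{X_0\} \cup L(t)$ alphabetic, $\e^\star(t) = \e(s)$ up to sign in $\mathcal{L}(X)$, $\rho(t) = $ (number of $X_0$-leaves of $s$), and $|t| = $ (number of non-$X_0$-leaves of $s$) — by induction on $|s|_{A'}$, treating the case where one of the two subtrees is the leaf $X_0$ separately. Applying this lemma to $s := \langle a, \rf{a}{b}\rangle$ (viewed in $\Br(\{a\} \cup L_\B(\rf{a}{b}))$) then finishes the argument.
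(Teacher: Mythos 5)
Your overall route is the paper's: view $[a,b]$ as $\ev(\e(\langle a,\rf{a}{b}\rangle))$, fold the $X_0$'s into the weights of a tree $t\in\Br^\star(\B_\sharp)$, and apply \cref{p:asym-main}; the folding lemma you propose (push each $X_0$ leaf into a weight up to sign, by induction on the tree) is exactly the step the paper compresses into the phrase ``collecting all the $X_0$ factors in the map $\omega$'', and your reduction to \cref{p:thm-asym} via $\rho_a(b)\le n_0(b)$ and $n_a(b)\le n(b)$ is also the paper's.

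The gap is in the case $X_0\notin L_\B(\rf{a}{b})$, and in particular $a=X_0$. Two things go wrong if you funnel this case through \cref{p:asym-main}. First, your bookkeeping for $\rho(t)$ is inconsistent: you assert $\rho(t)=\rho_a(b)$ while also absorbing $a=X_0$ into a weight; since $X_0\notin\B_\sharp$, when $a=X_0$ you are forced to take $\rho(t)=\rho_a(b)+1=1$ and $|t|=n_a(b)$, and \cref{p:asym-main} then yields $C(n_a(b))\,(n_a(b)-1)!$, which is strictly larger than the claimed $n_a(b)!$ as soon as $n_a(b)\ge 2$ (because $C(n)>n$), so the theorem is not established in this case. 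Second, when $a\ne X_0$ but no leaf of $\rf{a}{b}$ equals $X_0$, your justification that $\{X_0\}\cup L(t)$ is alphabetic collapses: $L(t)=\{a\}\cup L_\B(\rf{a}{b})$ is alphabetic, but adjoining the extraneous element $X_0$ need not preserve alphabeticity, so $t\in\BrStarAlpha(\B_\sharp)$ is unjustified and \cref{p:asym-main} does not apply. Both problems disappear with the case split the paper makes: when $X_0\notin L_\B(\rf{a}{b})$ one has $\rho_a(b)=0$ and $\theta_a(b)=n_a(b)$, and \cref{Prop:n!} applied directly to the alphabetic bracket $\langle a,\rf{a}{b}\rangle$ of length $n_a(b)+1$ gives $\|[a,b]\|_\B\le n_a(b)!$, which is exactly the claimed bound there. (Alternatively, for $a=X_0$ one could invoke the sharper \cref{p:asym-X0-min}, whose bound $|t|^{\rho(t)}(|t|-1)!=n_a(b)!$ does close; but you cite only \cref{p:asym-main}.) The remaining case, $X_0\in L_\B(\rf{a}{b})$ (which forces $a\ne X_0$), is where your construction works verbatim and matches the paper.
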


\begin{proof}
    Let $a < b \in \B$.
    When $b = X_0$, by the axioms of a Hall set, $(a,b) \in \B$ so $\| [a,b] \|_\B = 1$, which corresponds to the estimate with $n_a(b) = 0$, $\rho_a(b) = 1$ and $C(1) = 1$.
    When $b \neq X_0$, we separate two cases.
    \begin{itemize}
    
	\item \emph{Case $X_0 \notin L_\B(\rf{a}{b})$} (which happens in particular when $a = X_0$ or if $X_0$ is minimal in $\B$).
    Then $\rho_a(b) = 0$ and $\theta_a(b) = n_a(b)$.
	By \cref{Prop:Ta(b)_alphabetic}, $\{ a \} \cup L_\B(\rf{a}{b})$ is an alphabetic subset of $\B$.
	Seeing $(a, b)$ as $\e (\langle a , \rf{a}{b} \rangle)$, a bracket of $\theta_a(b) + 1 = n_a(b) + 1$ elements of an alphabetic subset, yields $\| [a, b] \|_\B \leq n_a(b)!$ by \cref{Prop:n!}.
	
	\item \emph{Case $X_0 \in L_\B(\rf{a}{b})$} (which implies $a \neq X_0$).
    Let $A := (\{ a \} \cup L_\B(\rf{a}{b})) \setminus \{ X_0 \} \subset \B_\sharp$.
	Then, collecting all the $X_0$ factors in the map $\omega$, there exists a tree $t \in \Br^\star(A)$ with $|t| = n_a(b) + 1$ and $\rho(t) = \rho_a(b)$ such that $\ev(\e(\langle a , \rf{a}{b} \rangle)) = \pm \ev (\e^\star( t ))$.
	Moreover, $L(t) \cup \{ X_0 \} = A \cup \{ X_0 \} = \{ a \} \cup L_\B(\rf{a}{b})$ is alphabetic by \cref{Prop:Ta(b)_alphabetic}.
	So $t \in \BrStarAlpha(\B_\sharp)$ and the estimate follows from \cref{p:asym-main}.
    \end{itemize}
    Hence, \eqref{eq:asym-main-ab} is valid in both cases.
\end{proof}

\begin{remark}
    Estimate \eqref{eq:asym-main-ab} implies that, for each Hall set and with respect to each indeterminate, the asymmetric asymptotic growth is at most geometric.
    
    \cref{thm:LB2} proves that, for each Hall set, the asymmetric asymptotic growth with respect to $\max X$ is at least geometric.
    
    Since, in each Hall set, $\min X = \min \B$, the first case of the proof of \cref{p:thm-asym-refined} implies that the asymmetric asymptotic growth with respect to $\min X$ is in fact bounded.
\end{remark}

\section{Perspectives and open problems}

We present some possible extensions related with the growth of structure constants of free Lie algebras, which we find interesting.

\medskip

In \cref{sec:rewriting}, we mentioned that structure constants are related with the (space and time) complexity of the rewriting algorithm which allows to decompose $[a,b]$ on the basis.
In this direction, one could start by considering that the comparison operation $a < b$ and the bracket creation operation are elementary operations of time-complexity $\mathcal{O}(1)$ and then investigate the time-complexity of the $\texttt{Rewrite}$ function described in \cref{p:rec-theta}.

\medskip

In \cref{Sec:refined_general_bound}, we proved that $\| [a, b] \|_\B \leq \lfloor e(n-1)! \rfloor$ when $|b| = n$ and that this estimate is sharp is the sense of \cref{cor:x-infini-sature}.
The asymptotic optimality case for large $n$ uses an infinite set $X$.
When $|X| = 2$, our construction of \cref{sec:super-geom} only provides an asymptotic growth behaving roughly like $\sqrt{n}^n$ (see \eqref{eq:sqrt-n-n}), which is quite far from $n^n$.
Therefore, an interesting direction would be to investigate the dependency of the optimal estimates with respect to the cardinal of $X$.

\medskip

The examples in \cref{sec:geom-lower} prove that the structure constants of free Lie algebras relative to Hall bases grow at least geometrically with the length of the involved brackets.
Hence, in this sense, the product operation has an exponential size within these bases.
An interesting open problem would be to determine if there exist other bases of $\mathcal{L}(X)$ (see \cref{s:other-bases} for a short discussion) having a bounded size, a polynomial size, or at least a sub-exponential size, in the sense that
\begin{equation}
    \limsup_{n \to + \infty} \| [a, b] \|_\B < + \infty, \quad
    \limsup_{n \to + \infty} \frac{\ln \| [a, b] \|_\B}{\ln n} < + \infty
    \quad \mathrm{or} \quad 
    \limsup_{n \to + \infty} \frac{\ln \| [a, b] \|_\B}{n} = 0,
\end{equation}
where the suprema are taken over $a$ and $b$ elements of the basis such that $[a,b] \in \mathcal{L}_n(X)$.

\medskip

In \cref{sec:asym}, we proved asymmetric estimates of the form $C(n_a(b)+1)^{\rho_a(b)} n_a(b)!$, where $C(n)$ behaves roughly like $2^{n^2}$ (see \eqref{eq:asym-Cn}).
It would be interesting to investigate whether this behavior is optimal or if one can improve our proof to obtain a better dependency on $n$ of the geometric rate $C(n)$, along with a ``critical asymmetric basis'' which achieves this rate.
One could also investigate how the maximal or minimal growth of $C(n)$ depends on $|X|$.

\medskip

Eventually, this paper focuses mainly on the ``worst case'' size of the Lie bracket operation within Hall bases.
From the point of view of computational applications, it could also be interesting to investigate its average size for fixed length, e.g.\ through the quantities
\begin{equation}
    \gamma_n(\B) 
    := \operatorname{mean} \left\{
    \| [a, b] \|_\B ; \enskip
    a < b \in \B, |a|+|b| = n
    \right\}.
\end{equation}
What is the asymptotic growth of $\gamma_n(\B)$?
As in our case, one could start by considering this question for the classical length-compatible or Lyndon bases.

\appendix

\section{Detailed example of the decomposition algorithm}
\label{sec:ex-bpi}

In this paragraph, to facilitate the comprehension of the proof of \cref{thm:en1}, we propose an illustration of the execution of the construction of $B_\pi$ along the path $\pi = (4, 5, 7)$ for $\rf{a}{b}$ having the structure of \eqref{ex:tab-b1min}.
The construction starts with $B_\emptyset = \langle a, \rf{a}{b} \rangle$, so
\begin{equation}
   B_\emptyset = \Tree [. $a$ [. [. $b_3$ [. $b_1$ $b_2$ ] ] [. $b_4$ [. [. $b_5$ [. $b_6$ $b_7$ ] ] $b_8$ ] ] ] ]
\end{equation}
Then, $a$ is pushed on $b_4$. So
\begin{equation}
   B_{(4)} = \Tree [. [. $b_3$ [. $b_1$ $b_2$ ] ] [. $(a,b_4)$ [. [. $b_5$ [. $b_6$ $b_7$ ] ] $b_8$ ] ] ]
\end{equation}
Then, assuming that it is minimal, $(a,b_4)$ is pushed on $b_5$. So
\begin{equation}
   B_{(4,5)} = \Tree [. [. $b_3$ [. $b_1$ $b_2$ ] ] [. [. $((a,b_4),b_5)$ [. $b_6$ $b_7$ ] ] $b_8$ ] ]
\end{equation}
Eventually, assuming that $((a,b_4),b_5)$ is minimal, it is pushed on $b_7$. So
\begin{equation}
   B_{(4,5,7)} = \Tree [. [. $b_3$ [. $b_1$ $b_2$ ] ] [. [. $b_6$ $a_{4,5,7}$ ] $b_8$ ] ]
\end{equation}
where $a_{(4,5,7)} = (((a,b_4),b_5),b_7)$.
The process stops here since the sibling of $a_{4,5,7}$ in $B_{(4,5,7)}$ is a leaf, so there is no longer path starting with $(4,5,7)$.

\section{A Leibniz-type inversion rule}

We prove a formula linked with the general Leibniz rule, which is used throughout the paper to balance iterated Lie brackets away from a particular term.

\begin{definition}[Multinomial coefficient]
    For $k \in \N^*$, $j_1, \dots, j_k \in \N$ and $\nu = j_1 + \dotsb + j_k$, we define the multinomial coefficient
    \begin{equation}
        \binom{\nu}{j_1, \dotsc, j_k} := \frac{\nu!}{j_1!\dotsb j_k!}.
    \end{equation}
    Moreover, for all $\nu \geq 0$,
    \begin{equation} \label{eq:multinomial-sum}
        \sum_{j_1 + \dotsb + j_k = \nu} \binom{\nu}{j_1, \dots, j_k} = k^\nu
    \end{equation}
\end{definition}

\begin{lemma} \label{p:ad-2}
    Let $\mathcal{A}$ be an algebra and $D$ a derivation on $\mathcal{A}$. 
    For every $\nu \in \N$ and $b_1, b_2 \in \mathcal{A}$,
    \begin{equation}
        b_1 (D^\nu b_2) = \sum_{j=0}^\nu (-1)^j \binom{\nu}{j} D^{\nu-j} ((D^j b_1) b_2).
    \end{equation}
\end{lemma}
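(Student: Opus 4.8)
The statement is the ``inverse Leibniz'' rule: expressing $b_1(D^\nu b_2)$, where the whole power of $D$ falls on the second factor, as a signed combination of the genuine Leibniz expansions $D^{\nu-j}((D^j b_1)b_2)$. The natural approach is induction on $\nu$, using the defining property of a derivation, $D(xy)=(Dx)y+x(Dy)$, together with Pascal's rule for binomial coefficients. This is a purely formal computation, so the plan is to write it cleanly rather than invoke anything deep.

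\textbf{Base case.} For $\nu=0$ both sides equal $b_1 b_2$, since the only term in the sum is $j=0$, with coefficient $(-1)^0\binom{0}{0}=1$ and $D^0=\mathrm{id}$.

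\textbf{Inductive step.} Assume the formula for some $\nu\geq 0$. The cleanest route is to start from the genuine Leibniz rule applied to $D^{\nu+1}(b_1 b_2)$, namely
\begin{equation}
    D^{\nu+1}(b_1 b_2) = \sum_{j=0}^{\nu+1} \binom{\nu+1}{j} (D^j b_1)(D^{\nu+1-j} b_2),
\end{equation}
and isolate the term $j=0$, which is exactly $b_1(D^{\nu+1}b_2)$. This gives
\begin{equation}
    b_1(D^{\nu+1}b_2) = D^{\nu+1}(b_1 b_2) - \sum_{j=1}^{\nu+1} \binom{\nu+1}{j} (D^j b_1)(D^{\nu+1-j} b_2).
\end{equation}
Now for each $j\geq 1$ one applies the induction hypothesis with $b_1$ replaced by $D^j b_1$ and $\nu$ replaced by $\nu+1-j$ to rewrite $(D^j b_1)(D^{\nu+1-j}b_2) = \sum_{i=0}^{\nu+1-j} (-1)^i \binom{\nu+1-j}{i} D^{\nu+1-j-i}((D^{i+j}b_1)b_2)$. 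Substituting, re-indexing the double sum by $s=i+j$ (so the total power applied to $b_1$ is $s$, and $D^{\nu+1-s}$ appears on the outside), and collecting, one reduces the claim to the binomial identity
\begin{equation}
    \sum_{j=1}^{s} (-1)^{s-j} \binom{\nu+1}{j}\binom{\nu+1-j}{s-j}
    = (-1)^{s}\binom{\nu+1}{s} - [\,s=0\,],
\end{equation}
which follows from $\binom{\nu+1}{j}\binom{\nu+1-j}{s-j}=\binom{\nu+1}{s}\binom{s}{j}$ and $\sum_{j=0}^{s}(-1)^j\binom{s}{j}=[\,s=0\,]$ (the same trick used in \eqref{alphas:valeur}). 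Matching the $D^{\nu+1}(b_1 b_2)$ term with the $s=0$ contribution then yields precisely $\sum_{s=0}^{\nu+1}(-1)^s\binom{\nu+1}{s} D^{\nu+1-s}((D^s b_1)b_2)$, completing the induction.

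\textbf{Main obstacle.} There is no conceptual obstacle; the only point requiring care is the bookkeeping in the double-sum re-indexation and the sign, i.e.\ checking the elementary binomial identity above. An alternative, perhaps even shorter, route is to argue directly: both sides are $\K$-bilinear in $(b_1,b_2)$ and one may replace $D$ by left multiplication plus an abstract derivation argument, but the induction on $\nu$ above is the most transparent and self-contained, so that is the one I would write out.
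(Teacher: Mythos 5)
Your proof is correct, and it is an induction on $\nu$ like the paper's, but the inductive step is genuinely different. The paper peels off a single derivative: since $D$ is a derivation, $b_1(D^{\nu+1}b_2)=D\bigl(b_1(D^\nu b_2)\bigr)-(Db_1)(D^\nu b_2)$, and it applies the induction hypothesis at level $\nu$ to both terms (with $b_1$ and with $Db_1$), after which only Pascal's rule is needed. You instead invert the full Leibniz expansion of $D^{\nu+1}(b_1b_2)$ in one shot, which forces you to apply the hypothesis at all lower levels $\nu+1-j$ (so you need strong induction, not just "the formula for some $\nu$" as you wrote) and to evaluate a double sum via trinomial revision plus the alternating binomial sum. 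The paper's route is lighter on combinatorics; yours makes the "inversion of Leibniz" structure more visible and is closer in spirit to the computation in \eqref{alphas:valeur}. Both are fine.

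One slip to fix: your displayed binomial identity has the wrong sign. Tracking the minus sign in front of the double sum, the coefficient of $D^{\nu+1-s}((D^s b_1)b_2)$ is $[\,s=0\,]-\sum_{j=1}^{s}(-1)^{s-j}\binom{\nu+1}{j}\binom{\nu+1-j}{s-j}$, so the identity you need is
\begin{equation}
    \sum_{j=1}^{s} (-1)^{s-j} \binom{\nu+1}{j}\binom{\nu+1-j}{s-j} = [\,s=0\,] - (-1)^{s}\binom{\nu+1}{s},
\end{equation}
the negative of what you wrote. It does follow from exactly the two facts you cite: $\binom{\nu+1}{j}\binom{\nu+1-j}{s-j}=\binom{\nu+1}{s}\binom{s}{j}$ gives $\binom{\nu+1}{s}\sum_{j=1}^{s}(-1)^{s-j}\binom{s}{j}=\binom{\nu+1}{s}\bigl([\,s=0\,]-(-1)^s\bigr)$, which matches the corrected right-hand side. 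With that sign fixed, your argument closes correctly.
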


\begin{proof}
    The proof is by induction on $\nu \geq 0$.
    The case $\nu = 0$ is trivial.
    We assume the property up to some $\nu \in \N$ and prove it for $\nu + 1$.
    Since $D$ is a derivation,
    \begin{equation}
        b_1 (D^{\nu+1} b_2) = 
        D( b_1 (D^\nu b_2)) - (D b_1) (D^\nu b_2).
    \end{equation}
    Applying the induction hypothesis on both terms and Pascal's formula concludes the proof.
\end{proof}

\begin{lemma} \label{p:ad-k}
    Let $\mathcal{A}$ be an algebra and $D$ a derivation on $\mathcal{A}$. 
    For $k \geq 2$, $\nu \in \N$ and $b_1, \dotsc, b_k \in \mathcal{A}$,
    \begin{equation}\label{equilibre_0_k}
            b_1 \dotsb b_{k-1} (D^\nu b_k)
            = \sum_{j_1 + \cdots + j_k = \nu} (-1)^{\nu - j_k} \binom{\nu}{j_1, \dots, j_k}
            D^{j_k} ((D^{j_1} b_1) \dotsb (D^{j_{k-1}} b_{k-1}) b_k),
    \end{equation}
    where all products of $k$ terms are understood as being right-nested if $\mathcal{A}$ is non-associative (i.e.\ by convention in this result $a b c d$ denotes $(a(b(cd)))$).
\end{lemma}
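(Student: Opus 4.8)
\textbf{Proof strategy for \cref{p:ad-k}.}
The plan is to prove \eqref{equilibre_0_k} by induction on $k \geq 2$, using \cref{p:ad-2} as the base case after a suitable rewriting. For $k = 2$, formula \eqref{equilibre_0_k} reads $b_1 (D^\nu b_2) = \sum_{j_1 + j_2 = \nu} (-1)^{\nu-j_2} \binom{\nu}{j_1, j_2} D^{j_2}((D^{j_1} b_1) b_2)$, which, writing $j := j_1$ so that $j_2 = \nu - j$ and $(-1)^{\nu-j_2} = (-1)^{j}$ and $\binom{\nu}{j_1,j_2} = \binom{\nu}{j}$, is exactly the statement of \cref{p:ad-2}. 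So the base case is immediate.

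For the inductive step, suppose \eqref{equilibre_0_k} holds for some $k \geq 2$ and all choices of $\nu$ and $b_1, \dots, b_k$. Consider $k+1$ elements $b_1, \dots, b_{k+1}$. First I would apply \cref{p:ad-2} with the pair $b_1$ and $b_2 \gets b_2 \dotsb b_k (D^\nu b_{k+1})$ — but more cleanly, I would instead first group the product as $b_1 \bigl( b_2 \dotsb b_k (D^\nu b_{k+1}) \bigr)$ (this is meaningful given the right-nesting convention), apply the induction hypothesis to the inner product $b_2 \dotsb b_k (D^\nu b_{k+1})$ (which involves $k$ terms), obtaining a sum over $j_2 + \dots + j_{k+1} = \nu$ of signed terms of the form $D^{j_{k+1}}\bigl((D^{j_2} b_2) \dotsb (D^{j_k} b_k) b_{k+1}\bigr)$. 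Then I multiply on the left by $b_1$ and, for each such term, apply \cref{p:ad-2} to move $b_1$ across the power $D^{j_{k+1}}$ applied to the bracketed product: that is, $b_1 \bigl( D^{j_{k+1}} w \bigr) = \sum_{i=0}^{j_{k+1}} (-1)^i \binom{j_{k+1}}{i} D^{j_{k+1}-i}\bigl( (D^i b_1) w \bigr)$ where $w = (D^{j_2} b_2) \dotsb (D^{j_k} b_k) b_{k+1}$. Setting $j_1 := i$ and relabeling the outermost derivation exponent, the collected sum runs over $j_1 + j_2 + \dots + j_{k+1} = \nu$.

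The remaining work is bookkeeping on the coefficients. The coefficient attached to a given tuple $(j_1, \dots, j_{k+1})$ is $(-1)^{j_1} \binom{j_1 + j_{k+1}'}{j_1}$ times $(-1)^{\nu - j_{k+1}} \binom{\nu}{j_2, \dots, j_{k+1}}$ where $j_{k+1}' = j_1 + j_{k+1}$ before relabeling; after carefully tracking the substitution $j_{k+1}^{\mathrm{old}} = j_1 + j_{k+1}^{\mathrm{new}}$ the sign becomes $(-1)^{\nu - j_{k+1}^{\mathrm{new}}}$ and the product of binomials telescopes via the identity $\binom{j_1 + m}{j_1} \binom{\nu}{j_2, \dots, j_k, j_1+m} = \binom{\nu}{j_1, j_2, \dots, j_{k+1}}$ (which is just $\frac{\nu!}{j_2! \cdots j_k! \, (j_1+m)!} \cdot \frac{(j_1+m)!}{j_1! \, m!} = \frac{\nu!}{j_1! j_2! \cdots j_{k+1}!}$ with $m = j_{k+1}^{\mathrm{new}}$). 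This produces exactly the multinomial coefficient $\binom{\nu}{j_1, \dots, j_{k+1}}$ and the claimed sign, completing the induction.

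The only genuine subtlety — and the step I would be most careful about — is handling the right-nesting convention consistently in the non-associative case: one must check that \cref{p:ad-2} is being applied to the correct ``outermost-left'' factor at each stage, and that the bracketing $(D^{j_1} b_1)(D^{j_2} b_2) \dotsb$ produced is indeed the right-nested one. Because a derivation satisfies the Leibniz rule on a product regardless of associativity, and because at each step we only ever peel off the leftmost factor $b_1$ (resp.\ $b_2$ in the inner application), the nesting is preserved automatically; still, this is where a careless computation would go wrong, so I would state the convention explicitly and verify it on the $k=3$ case before invoking the general induction. Alternatively, one could bypass the sign/coefficient juggling entirely by noting that \eqref{equilibre_0_k} is the formal inversion of the general Leibniz rule $D^\nu(b_1 \dotsb b_k) = \sum_{j_1 + \dots + j_k = \nu} \binom{\nu}{j_1,\dots,j_k} (D^{j_1} b_1) \dotsb (D^{j_k} b_k)$ solved for the single term with all derivatives on the last factor, but carrying out that inversion rigorously still amounts to the same binomial identity, so the direct induction above is the cleanest route.
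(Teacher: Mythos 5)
Your proposal is correct and follows essentially the same route as the paper: induction on $k$, with the base case given by \cref{p:ad-2}, the induction hypothesis applied to the inner right-nested product of $k$ terms, then \cref{p:ad-2} again to move $b_1$ across $D^{j_{k+1}}$, and finally the sign and multinomial bookkeeping via $\binom{j_1+m}{j_1}\binom{\nu}{j_2,\dots,j_k,j_1+m}=\binom{\nu}{j_1,\dots,j_k,m}$. Your coefficient and sign computations match the paper's exactly.
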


\begin{proof}
    The case $k = 2$ is covered in \cref{p:ad-2}.
    We now proceed by induction on $k$, assuming that the formula holds for some $k \geq 2$, and proving it for $k+1$ elements.
    By the induction hypothesis and linearity, one has:
    \begin{equation}
            b_1 b_2 \dotsb b_k (D^\nu b_{k+1})
            = \sum_{j_2 + \cdots + j_{k+1} = \nu} (-1)^{\nu - j_{k+1}} \binom{\nu}{j_2, \dots, j_{k+1}}
            b_1 D^{j_{k+1}} ((D^{j_2} b_2) \dotsb (D^{j_k} b_k) b_{k+1})
    \end{equation}
    Applying the two-elements case to each term yields
    \begin{equation}
        \begin{split}
            b_1 D^{j_{k+1}} ((D^{j_2} b_2) & \dotsb (D^{j_k} b_k) b_{k+1}) \\
            & = \sum_{j_1+j_{k+1}'=j_{k+1}} (-1)^{j_{1}} \binom{j_{k+1}}{j_1}  D^{j_{k+1}'} ( 
                (D^{j_1} b_1) (D^{j_2} b_2) \dotsc (D^{j_k} b_k) b_{k+1}
            ).
        \end{split}
    \end{equation}
    If $j_2+\cdots+j_{k+1} = \nu$ and $j_1 + j_{k+1}' = j_{k+1}$, then $\nu = j_{1} + \cdots + j_k + j_{k+1}'$, and:
    \begin{equation}
        (-1)^{\nu - j_{k+1}} \binom{\nu}{j_2, \dots, j_{k+1}} (-1)^{j_{1}} \binom{j_{k+1}}{j_1} = (-1)^{\nu-j_{k+1}'}\binom{\nu}{j_1,j_2, \dots, j_{k+1}'},
    \end{equation}
    which concludes the proof.
\end{proof}

\section{Computations and estimates on some binomial sums}

In this purely numerical section, we state and prove numerical formulas and estimates on the quantities $A^r_s(n)$ defined in \eqref{eq:def-arsn} which are involved in the derivation of our lower bounds for the growth of the structure constants.
They follow from elementary manipulations of sums using famous binomial identities.

\begin{lemma} \label{p:sum-r-binom}
	Let $1 \leq r \leq s$ and $n \geq 2s + 1$. Then
	\begin{equation}
		\sum_{p=0}^{r-1} (-1)^p \binom{n-p-1}{n-s-1} \binom{n-s}{p} 
		= (-1)^r \frac{r(r-n)}{s(n-s)} \binom{n-r-1}{n-s-1} \binom{n-s}{r}.
	\end{equation}
\end{lemma}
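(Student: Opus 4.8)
The plan is to prove the identity by exhibiting an explicit \emph{antidifference} of the summand, so that the partial sum telescopes. For $0\le p\le s$ set
\begin{equation*}
    f(p):=(-1)^p\binom{n-p-1}{n-s-1}\binom{n-s}{p},\qquad
    g(p):=(-1)^p\,\frac{p(p-n)}{s(n-s)}\binom{n-p-1}{n-s-1}\binom{n-s}{p}.
\end{equation*}
These are well defined: the hypothesis $n\ge 2s+1$ gives $s\ge r\ge 1$, $n-s\ge s+1>0$, $0\le n-s-1$, $p\le s\le n-s$, and $n-p-1\ge n-s-1\ge 0$, so all binomial coefficients are of standard type and no denominator vanishes. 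Two observations make the whole proof: first, $g(0)=0$ thanks to the factor $p$; second, the right-hand side of the statement is \emph{exactly} $g(r)$. Hence it suffices to establish the telescoping identity
\begin{equation*}
    g(p+1)-g(p)=f(p)\qquad\text{for }0\le p\le s-1,
\end{equation*}
since summing it over $p\in\{0,\dots,r-1\}$ (legitimate because $r\le s$) yields $\sum_{p=0}^{r-1}f(p)=g(r)-g(0)=g(r)$.

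To prove this telescoping identity I would clear the common factors using the two elementary relations
\begin{equation*}
    (p+1)\binom{n-s}{p+1}=(n-s-p)\binom{n-s}{p},\qquad
    \binom{n-p-2}{n-s-1}=\frac{s-p}{\,n-p-1\,}\binom{n-p-1}{n-s-1},
\end{equation*}
both valid in the range $0\le p\le s-1$ (in particular $n-p-1>0$ and $p+1\le n-s$ there, using $n\ge 2s+1$). Substituting these into $g(p+1)$ and noting $p+1-n=-(n-p-1)$, the denominator $n-p-1$ cancels and one gets
\begin{equation*}
    g(p+1)=(-1)^p\,\frac{(s-p)(n-s-p)}{s(n-s)}\binom{n-p-1}{n-s-1}\binom{n-s}{p}.
\end{equation*}
Subtracting $g(p)$ and comparing with $f(p)$, the identity $g(p+1)-g(p)=f(p)$ collapses to the purely polynomial statement
\begin{equation*}
    (s-p)(n-s-p)=s(n-s)+p(p-n),
\end{equation*}
which holds because both sides expand to $s(n-s)-pn+p^2$.

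The base case $r=1$ is subsumed by this argument (it is $g(1)=f(0)$, i.e.\ the case $p=0$ of the telescoping identity together with $g(0)=0$), and can also be checked by hand since $f(0)=\binom{n-1}{s}$ and $g(1)=\tfrac{n-1}{s}\binom{n-2}{s-1}=\binom{n-1}{s}$. I do not expect a genuine obstacle here: the only real care required is bookkeeping of the index ranges, namely verifying that every binomial coefficient that appears is of the standard form and that the denominators $s$, $n-s$, $n-p-1$ never vanish for $0\le p\le s-1$ — all of which follow immediately from $1\le r\le s$ and $n\ge 2s+1$ — after which the conclusion reduces to the one-line polynomial identity above.
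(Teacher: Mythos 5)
Your proof is correct, and it is essentially the paper's argument in a cleaner packaging: the paper proves the identity by induction on $r$ (base case $r=1$, then absorption identities to verify $S_{r+1}=S_r+f(r)$), which is exactly your telescoping relation $g(p+1)-g(p)=f(p)$ with the trivial anchor $g(0)=0$. The computations match (your two elementary relations are the paper's absorption identities, and your final polynomial identity is the paper's "simplifying the fraction" step), so no further comparison is needed.
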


\begin{proof}
	We proceed by induction on $r$.
	For $r = 1$, using the absorption and symmetry identities, one checks that both sides are equal to $\binom{n-1}{s}$.
	We assume that the result holds for some $r \geq 1$ and we prove it for $r+1$.
	Let $s \geq r+1$ and $n \geq 2s+1$ (so in particular $s \geq r$ so that we can apply the induction equality).
	Hence
	\begin{equation}
		\begin{split}
			S_{r+1} := \sum_{p=0}^{(r+1)-1} (-1)^p \binom{n-p-1}{n-s-1} \binom{n-s}{p} 
		&	=
			(-1)^r \frac{r(r-n)}{s(n-s)} \binom{n-r-1}{n-s-1} \binom{n-s}{r} \\
		& \quad + (-1)^r \binom{n-r-1}{n-s-1} \binom{n-s}{r}.
		\end{split}
	\end{equation}
	Using the absorption identities twice yields
	\begin{equation}
		S_{r+1} = (-1)^r \left( \frac{r(r-n)}{s(n-s)} + 1 \right) \frac{(r+1)}{n-s-r} \frac{n-(r+1)}{s-r} \binom{n-(r+1)-1}{n-s-1} \binom{n-s}{r+1}.
	\end{equation}
	Simplifying the fraction yields indeed
	\begin{equation}
		S_{r+1} = (-1)^{r+1} \frac{(r+1)((r+1)-n)}{s(n-s)} \binom{n-(r+1)-1}{n-s-1} \binom{n-s}{r+1},
	\end{equation}
	which concludes the proof.
\end{proof}

\begin{lemma} \label{p:ars-final}
	Let $1 \leq r \leq s$ and $n \geq 2s+1$. Then
	\begin{equation} \label{eq:ars-final}
		A^r_s(n) = \frac{n-2s}{s} \binom{n-r}{n-s} \binom{n-s-1}{r-1}
	\end{equation}
\end{lemma}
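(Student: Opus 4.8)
\textbf{Proof plan for \cref{p:ars-final}.}

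The plan is to start from the definition \eqref{eq:def-arsn}, namely
\begin{equation*}
    A^r_s(n) = \left| \sum_{p=r}^s (-1)^p \binom{n-1-p}{p} \left( \binom{n-1-2p}{s-p} - \binom{n-1-2p}{s-p-1} \right) \right|,
\end{equation*}
and to first reduce the $p$-sum running from $r$ to $s$ to one running from $0$ to $r-1$ plus a closed-form ``full sum'' term, exactly as was done in the derivation of \eqref{alphas:valeur}. Concretely, I would extend the summation to the full range $p \in \intset{0,s}$: the full sum $\sum_{p=0}^s (-1)^p \binom{n-1-p}{p}(\binom{n-1-2p}{s-p} - \binom{n-1-2p}{s-p-1})$ should itself collapse to a single binomial (or a short explicit expression) via a Vandermonde/Chu identity — this is the natural analogue of the computation $\sum_{k=r}^s (-1)^k \binom{n}{k}\binom{n-k}{s-k} = \binom{n}{s}\sum_{k=r}^s(-1)^k\binom{s}{k}$ used earlier. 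Then $A^r_s(n)$ equals, up to sign, this full-sum value minus $\sum_{p=0}^{r-1} (-1)^p \binom{n-1-p}{p}(\binom{n-1-2p}{s-p}-\binom{n-1-2p}{s-p-1})$, and the latter is precisely the kind of partial alternating binomial sum handled by \cref{p:sum-r-binom}.

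The key technical step will be to massage the summand into the shape appearing in \cref{p:sum-r-binom}. Using the symmetry identity $\binom{n-1-2p}{s-p} = \binom{n-1-2p}{n-1-s-p}$ and the absorption identities, I expect $\binom{n-1-p}{p}\binom{n-1-2p}{s-p}$ to be rewritable as a constant (depending only on $n,s$) times $\binom{n-p-1}{n-s-1}\binom{n-s}{p}$ — the exact product that \cref{p:sum-r-binom} sums. The difference $\binom{n-1-2p}{s-p}-\binom{n-1-2p}{s-p-1}$ is the usual ``ballot-type'' combination; after factoring, it should contribute a rational factor like $\frac{n-2s}{n-1-s-p}$ or similar, which combines cleanly with the absorption coefficients. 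Once everything is in that normal form, I invoke \cref{p:sum-r-binom} for the partial sum, plug in the closed form for the full sum, and simplify. I would then verify the sign: the absolute value in \eqref{eq:def-arsn} should turn the $(-1)^r$ factors from \cref{p:sum-r-binom} into a positive quantity, matching the manifestly positive right-hand side $\frac{n-2s}{s}\binom{n-r}{n-s}\binom{n-s-1}{r-1}$ (positivity of the RHS uses $n \geq 2s+1 > 2s$, hence $n-2s \geq 1 > 0$, and $r \leq s$).

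The main obstacle I anticipate is purely bookkeeping: correctly tracking the rational prefactors through the repeated applications of the absorption identity $\binom{m}{k} = \frac{m}{k}\binom{m-1}{k-1}$ and the various symmetry rewrites, so that the telescoped expression from \cref{p:sum-r-binom} simplifies to exactly $\frac{n-2s}{s}\binom{n-r}{n-s}\binom{n-s-1}{r-1}$ with no stray factors and the right sign. A useful sanity check along the way is the boundary case $r = s$: then $\binom{n-s-1}{r-1} = \binom{n-s-1}{s-1}$ and $\binom{n-r}{n-s} = \binom{n-s}{n-s} = 1$, so the claimed value is $\frac{n-2s}{s}\binom{n-s-1}{s-1}$, which can be checked directly against $A^s_s(n) = |(-1)^s\binom{n-1-s}{s}(\binom{n-1-2s}{0}-\binom{n-1-2s}{-1})| = \binom{n-1-s}{s}$; matching these two expressions confirms the simplification (indeed $\binom{n-1-s}{s} = \binom{n-1-s}{n-1-2s}$ and $\frac{n-2s}{s}\binom{n-s-1}{s-1} = \binom{n-s-1}{s}$ by absorption). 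A second check at $n = 2s+1$, where the RHS vanishes, should match $A^r_s(2s+1) = 0$, giving further confidence before committing to the general simplification.
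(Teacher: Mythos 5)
Your plan is essentially the paper's proof: the paper also rewrites the difference of binomials via $\binom{b}{a}-\binom{b}{a-1}=\frac{b+1-2a}{b+1}\binom{b+1}{a}$ (producing the $n-2s$ factor), massages the summand via symmetry, absorption and trinomial revision into the exact form $\binom{n-1-p}{n-1-s}\binom{n-s}{p}$ of \cref{p:sum-r-binom}, observes that the full sum over $p$ is a closed form (in fact it vanishes, being $(-1)^s\binom{2-n}{s}=0$), and then evaluates the complementary partial sum over $p\in\intset{0,r-1}$ with \cref{p:sum-r-binom}. One small correction to your second sanity check: at $n=2s+1$ the right-hand side does not vanish, since $n-2s=1$ there (e.g.\ for $r=s$ both sides equal $1$); the check at $r=s$ is the valid one.
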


\begin{proof}
	We start with the following elementary formula, valid for $0 \leq a \leq b$ (including the cases $0 = a < b$ and $a = b = 0$ with the convention that $\binom{\cdot}{-1} = 0$),
	\begin{equation} \label{eq:binom-diff}
		\binom{b}{a} - \binom{b}{a-1} = \frac{b+1-2a}{b+1} \binom{b+1}{a}
	\end{equation}
	Starting from the definition \eqref{eq:def-arsn} of $A^r_s(n)$ and using \eqref{eq:binom-diff} to simplify the difference and the symmetry identity for the first binomial, we obtain
	\begin{equation}
		A^r_s(n) = (n-2s) \left| 
		\sum_{p=r}^s \frac{(-1)^p}{n-2p} \binom{n-1-p}{n-1-2p} \binom{n-2p}{s-p}
		\right|.
	\end{equation}
	Then, using the absorption identity for the upper index of the second binomial,
	\begin{equation}
		A^r_s(n) = (n-2s) \left| 
		\sum_{p=r}^s \frac{(-1)^p}{n-p-s} \binom{n-1-p}{n-1-2p} \binom{n-1-2p}{s-p}
		\right|.
	\end{equation}
	Using the ``trinomial revision'' formula (see e.g.\ \cite[(5.21)]{zbMATH00718142}) yields
	\begin{equation}
		A^r_s(n) = (n-2s) \left| 
		\sum_{p=r}^s \frac{(-1)^p}{n-p-s} \binom{n-1-p}{s-p} \binom{n-1-s}{n-1-p-s}
		\right|.
	\end{equation}
	Using the absorption identity for the upper index of the second binomial and the symmetry identity twice yields
	\begin{equation}
		A^r_s(n) = \frac{n-2s}{n-s} \left| 
		\sum_{p=r}^s (-1)^p \binom{n-1-p}{n-1-s} \binom{n-s}{p}
		\right|.
	\end{equation}	
	Moreover, thanks to \cite[(5.25)]{zbMATH00718142},
	\begin{equation}
		\sum_{p=0}^{n-1} (-1)^p \binom{n-1-p}{n-1-s} \binom{n-s}{p}
		= (-1)^s \binom{2-n}{s} = 0.
	\end{equation}
	when $n \geq 2$.
	Since the summand vanishes for $p > s$, we have
	\begin{equation}
		A^r_s(n) = \frac{n-2s}{n-s} \left| 
		\sum_{p=0}^{r-1} (-1)^p \binom{n-1-p}{n-1-s} \binom{n-s}{p}
		\right|.
	\end{equation}
	Thus, using \cref{p:sum-r-binom} to compute the sum and the absorption identity twice to absorb the leading factors leads to the formula \eqref{eq:ars-final}, which concludes the proof.
\end{proof}

\begin{lemma} \label{p:arsn-fibo}
	Let $1 \leq r \leq s$ and $n \geq 2s+1$.
	Then
	\begin{equation} \label{leq:ars-fn}
		A^r_s(n) \geq \binom{n-s-1}{s}.
	\end{equation}
\end{lemma}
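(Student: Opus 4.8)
The plan is to start from the closed-form expression for $A^r_s(n)$ established in \cref{p:ars-final}, namely
\begin{equation} \label{eq:arsn-plan-1}
    A^r_s(n) = \frac{n-2s}{s} \binom{n-r}{n-s} \binom{n-s-1}{r-1},
\end{equation}
and compare it with the right-hand side $\binom{n-s-1}{s}$ of \eqref{leq:ars-fn}. The key observation is that $\binom{n-r}{n-s} = \binom{n-r}{s-r}$ by symmetry, and that for $n \geq 2s+1$ we have $n-2s \geq 1$, so the leading factor $\tfrac{n-2s}{s}$ is at least $\tfrac1s$. Thus it suffices to prove the cleaner inequality
\begin{equation} \label{eq:arsn-plan-2}
    \binom{n-r}{s-r} \binom{n-s-1}{r-1} \geq s \binom{n-s-1}{s},
\end{equation}
and in fact it will be convenient to first dispose of the easy extreme cases $r = 1$ (where the left side is $\binom{n-1}{s-1} \geq s\binom{n-s-1}{s}$, checked directly, since $\binom{n-1}{s-1}/\binom{n-s-1}{s}$ telescopes into a product of $s$ ratios each $\geq$ something manageable) and $r = s$ (where the left side is $\binom{n-s}{1}\binom{n-s-1}{s-1} = (n-s)\binom{n-s-1}{s-1}$, and one uses $(n-s)\binom{n-s-1}{s-1} = s\binom{n-s}{s} \geq s\binom{n-s-1}{s}$).

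For the general case $1 \le r \le s$, I would rewrite everything in terms of factorials and reduce \eqref{eq:arsn-plan-2} to a statement about monotonicity in $r$. The natural approach is to show that the ratio
\begin{equation} \label{eq:arsn-plan-3}
    \Phi(r) := \binom{n-r}{s-r}\binom{n-s-1}{r-1}
\end{equation}
is unimodal (or at least that it is bounded below by its value at one of the endpoints $r=1$ or $r=s$, or by a convenient midpoint bound), by examining the sign of $\Phi(r+1)/\Phi(r) - 1$. Writing this ratio out,
\begin{equation} \label{eq:arsn-plan-4}
    \frac{\Phi(r+1)}{\Phi(r)} = \frac{\binom{n-r-1}{s-r-1}}{\binom{n-r}{s-r}} \cdot \frac{\binom{n-s-1}{r}}{\binom{n-s-1}{r-1}} = \frac{s-r}{n-r}\cdot\frac{n-s-r}{r},
\end{equation}
one gets an explicit rational function of $r$ whose comparison with $1$ amounts to a quadratic inequality in $r$; this shows $\Phi$ first increases then decreases, so its minimum over $\intset{1,s}$ is attained at $r=1$ or $r=s$, both of which are handled above. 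Combining these pieces with the factor $\tfrac{n-2s}{s} \geq \tfrac1s$ gives \eqref{leq:ars-fn}; one small point to watch is whether the factor $\tfrac1s$ is enough in the worst case, which is precisely why the reduction must keep the extra factor of $s$ on the right-hand side of \eqref{eq:arsn-plan-2} rather than absorbing it too early.

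\textbf{Main obstacle.} The delicate part is not any single identity but getting the constants to line up: after extracting the (small) leading factor $\tfrac{n-2s}{s}$, one is left with only a factor of roughly $\tfrac1s$ of slack, so the binomial comparison \eqref{eq:arsn-plan-2} has to be essentially tight, and a crude bound will not suffice. I expect to spend the most effort verifying the endpoint cases $r=1$ and $r=s$ with exactly the right factor of $s$, and checking that the unimodality argument via \eqref{eq:arsn-plan-4} genuinely reduces the general case to those endpoints (in particular that $\Phi$ has no interior local minimum, only an interior maximum). An alternative, possibly cleaner route — which I would try if the direct manipulation gets messy — is to interpret $A^r_s(n)$ combinatorially or to relate the sum defining it directly to partial sums of $\binom{n-1-p}{p}$ (whose total is $F_n$ by the classical Fibonacci identity), thereby reading off \eqref{leq:ars-fn} and, with a bit more work, the full lower bound $\sum_p \binom{n-1-p}{p} = F_n$ used in \eqref{eq:X0bn-FnCr}; but the factorial-manipulation route above is the most direct given that \cref{p:ars-final} is already available.
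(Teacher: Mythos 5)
Your starting point (the closed form from \cref{p:ars-final}) and your unimodality analysis of $\Phi(r)$ via the ratio $\Phi(r+1)/\Phi(r)=\tfrac{s-r}{n-r}\cdot\tfrac{n-s-r}{r}$ are both correct, but the reduction on which the whole plan rests is fatally lossy: bounding $\tfrac{n-2s}{s}\geq\tfrac1s$ and reducing to \eqref{eq:arsn-plan-2} replaces the lemma by a \emph{false} statement. Take $s=1$, $r=1$ and $n$ large: the left-hand side of \eqref{eq:arsn-plan-2} is $\binom{n-1}{0}\binom{n-2}{0}=1$ while the right-hand side is $1\cdot\binom{n-2}{1}=n-2$; already $(r,s,n)=(1,1,5)$ gives $1\geq 3$. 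Correspondingly, your claimed endpoint verification at $r=1$, namely $\binom{n-1}{s-1}\geq s\binom{n-s-1}{s}$, is false in general (e.g.\ $s=2$, $n=10$ gives $9\geq 42$). The point you flagged as the "small point to watch" is exactly where the argument breaks: the factor $n-2s$ is not a constant close to $1$ but can be of order $n$, and the lemma is tight (equality holds for $r=s=1$, where $A^1_1(n)=n-2=\binom{n-2}{1}$), so none of that factor can be discarded. Only the $r=s$ endpoint and the case $n=2s+1$ survive your reduction.

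The fix is to keep $n-2s$ inside the factorials rather than peeling it off. Writing out \eqref{eq:arsn-plan-1} against $\binom{n-s-1}{s}$ and using $(n-2s)\,(n-2s-1)!=(n-2s)!$, the desired inequality is \emph{equivalent} to
\begin{equation}
    \binom{s-1}{r-1}\,\frac{(n-r)!}{(n-s)!}\,\frac{(n-2s)!}{(n-s-r)!}\;=\;\binom{s-1}{r-1}\prod_{j=r}^{s-1}\frac{n-j}{n-s-j}\;\geq\;1,
\end{equation}
which holds term by term: the binomial coefficient is a positive integer and each ratio $\tfrac{n-j}{n-s-j}$ is $\geq 1$. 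This is the route the paper takes; no case distinction on $r$ and no unimodality argument are needed. If you prefer to salvage your structure, you would have to carry the full factor $n-2s$ through the endpoint cases and the monotonicity comparison, at which point you are essentially reproving the displayed identity anyway.
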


\begin{proof}
	Expanding the binomials in \eqref{eq:ars-final}, inequality \eqref{leq:ars-fn} is equivalent to
	\begin{equation}
		\begin{split}
			& \frac{n-2s}{s} \frac{(n-r)!}{(n-s)!(s-r)!} \frac{(n-s-1)!}{(r-1)!(n-s-r)!} \geq \frac{(n-s-1)!}{s!(n-2s-1)!} \\
			\Longleftrightarrow \quad & \frac{(s-1)! (n-2s)! (n-r)!}{(n-s)!(s-r)!(r-1)!(n-s-r)!} \geq 1 \\
			\Longleftrightarrow \quad & \binom{s-1}{r-1} \frac{(n-2s)!}{(n-s-r)!} \frac{(n-r)!}{(n-s)!} \geq 1 \\
			\Longleftrightarrow \quad & \binom{s-1}{r-1}
			\prod_{j=r}^{s-1} \frac{n-j}{n-s-j}
			\geq 1,
		\end{split}
	\end{equation}
	which is always true since the binomial is an integer and $n-j \geq n-s-j$ for each $j \in \intset{r, s-1}$.
\end{proof}

\begin{lemma} \label{p:arsn-2n}
	For every $r \geq 1$, there exists $C_r > 0$ such that, for every $n \geq 2r + 1$, 
	\begin{equation}
		A^r_{\lfloor \frac{n-1}{2} \rfloor}(n) \geq C_r n^{r-\frac 5 2} 2^n.
	\end{equation}
\end{lemma}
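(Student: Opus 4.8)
The starting point is the explicit formula \eqref{eq:ars-final} from \cref{p:ars-final}, which, with $s := \lfloor \frac{n-1}{2} \rfloor$, reads
\begin{equation*}
    A^r_s(n) = \frac{n-2s}{s} \binom{n-r}{n-s} \binom{n-s-1}{r-1}.
\end{equation*}
Since $n - 2s \in \{1,2\}$ depending on the parity of $n$, the prefactor $\frac{n-2s}{s}$ is of order $n^{-1}$. The plan is to estimate each of the two binomial coefficients separately and show that their product is of order $n^{r-\frac32} 2^n$.

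First I would handle $\binom{n-r}{n-s} = \binom{n-r}{s-r}$. Writing $m := n-r$, this is $\binom{m}{\lfloor m/2 \rfloor + O(1)}$, a central (or near-central) binomial coefficient, so by the standard Stirling estimate it is comparable to $2^{m}/\sqrt{m}$, hence comparable to $2^{n-r}/\sqrt{n} = 2^{-r}\, 2^n n^{-1/2}$ up to a constant depending only on $r$ (the $O(1)$ shift away from the exact center costs only a bounded multiplicative factor, which can be absorbed into $C_r$). Second, I would handle $\binom{n-s-1}{r-1}$: since $n - s - 1 = \lceil (n-1)/2 \rceil - 1$ is of order $n/2$ and $r-1$ is fixed, this binomial coefficient is a polynomial in $n$ of degree exactly $r-1$ with positive leading coefficient, hence bounded below by $c_r\, n^{r-1}$ for some $c_r > 0$ and all $n$ large enough (and, after shrinking $c_r$, for all $n \geq 2r+1$). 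Multiplying the three contributions: $n^{-1} \cdot 2^n n^{-1/2} \cdot n^{r-1} = n^{r - 5/2} 2^n$, which is exactly the claimed order; collecting all the implicit constants into a single $C_r > 0$ gives the inequality.

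The only mild subtlety is making the estimate uniform down to $n = 2r+1$ rather than merely asymptotic: for the central binomial lower bound $\binom{m}{\lfloor m/2\rfloor} \geq \frac{2^m}{m+1}$ is elementary and valid for all $m \geq 0$ (it follows from $\binom{m}{\lfloor m/2\rfloor}$ being the largest of the $m+1$ terms summing to $2^m$), and handling the $O(1)$ parity shift by $\binom{m}{\lfloor m/2\rfloor + 1} \geq \frac{1}{m}\binom{m}{\lfloor m/2 \rfloor}$ keeps everything explicit; similarly $\binom{n-s-1}{r-1} \geq \frac{(n-s-r)^{r-1}}{(r-1)!} \geq c_r n^{r-1}$ is a direct lower bound on a product of $r-1$ linear factors each of size $\asymp n/2$. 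So I do not expect a genuine obstacle here — the main work is bookkeeping the powers of $n$ and of $2$ correctly and verifying that the parity cases $n$ even versus $n$ odd both lead to the same exponent $r - \frac52$, the even case contributing an extra factor $2$ in $n - 2s$ which is harmless. Assembling these pieces yields $A^r_{\lfloor (n-1)/2\rfloor}(n) \geq C_r n^{r - \frac52} 2^n$ for a suitable $C_r > 0$ and all $n \geq 2r+1$.
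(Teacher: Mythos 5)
Your proposal is correct and follows essentially the same route as the paper: both start from the closed form \eqref{eq:ars-final} with $s = \lfloor (n-1)/2\rfloor$, extract the order $n^{r-\frac52}2^n$ (the paper by writing $n=2m+q$ and applying Stirling to the whole product, you by bounding the near-central binomial $\binom{n-r}{n-s}$ and the degree-$(r-1)$ polynomial factor $\binom{n-s-1}{r-1}$ separately), and absorb the finitely many small values of $n$ into $C_r$ via positivity of $A^r_s(n)$. One small caution: the fallback bound $\binom{m}{\lfloor m/2\rfloor+1} \geq \tfrac1m\binom{m}{\lfloor m/2\rfloor}$ loses a full power of $m$ per unit of shift and would spoil the exponent if used as the main estimate for large $n$; you should instead rely on your (correct) earlier observation that an $O_r(1)$ shift from the center costs only a bounded multiplicative factor, reserving crude bounds for the finitely many small $n$.
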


\begin{proof}
	Let $r \geq 1$. 
	For $n \geq 2r+1$, we use the notation $m := \lfloor \frac{n-1}{2} \rfloor$.
	Thus, $n = 2m+q$ with $q = 1$ or $q = 2$ and $m \geq r$.
	Substituting these values in \eqref{eq:ars-final} yields
	\begin{equation}
		A^r_m(2m+q) = \frac{q}{m} \binom{2m+q-r}{m+q} \binom{m+q-1}{r-1}.
	\end{equation}
	Since $r$ and $q$ are fixed, Stirling's formula provides the following asymptotic for large $m$ (or, equivalently, large $n$)
	\begin{equation}
		A^r_m(2m+q) \sim \frac{q}{m} \frac{(2m)^{2m+q-r} \sqrt{2\pi m}}{m^{m+q}m^{m-r} 2 \pi m} \frac{m^{m+q-1}\sqrt{2 \pi m}}{(r-1)! m^{m+q-r} \sqrt{2 \pi m}}
		= \frac{q m^{r-\frac{5}{2}} 2^{2m+q-r}}{(r-1)!\sqrt{2 \pi}}.
	\end{equation}
	Hence,
	\begin{equation}
		A^r_{\lfloor \frac{n-1}{2} \rfloor}(n)
		\sim \frac{4^{1-r} q}{(r-1)! \sqrt{\pi}} n^{r - \frac 5 2} 2^n,
	\end{equation}
	where $q$ alternates between $1$ and $2$ so is bounded below.
	Since $A^r_{\lfloor \frac{n-1}{2} \rfloor}(n) > 0$ for all values of $n$, choosing $C_r$ sufficiently small concludes the proof.
\end{proof}

\begin{lemma} \label{p:sum-a1sn}
    For every $n \geq 3$,
    \begin{equation}
        1 + \sum_{s=1}^{\lfloor \frac{n-1}{2} \rfloor} A^1_s(n) 
        = \binom{n-1}{\lfloor \frac{n-1}{2} \rfloor}
        \leq 2^{n-2}.
    \end{equation}
\end{lemma}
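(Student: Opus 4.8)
The plan is to prove the identity $1 + \sum_{s=1}^{\lfloor (n-1)/2 \rfloor} A^1_s(n) = \binom{n-1}{\lfloor (n-1)/2 \rfloor}$ by first obtaining a closed form for $A^1_s(n)$ from the general formula \eqref{eq:ars-final}, then recognizing the resulting sum telescopes or collapses via a standard binomial identity, and finally bounding the central binomial coefficient by $2^{n-2}$.

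First I would specialize \cref{p:ars-final} to $r=1$. For $1 \leq s$ and $n \geq 2s+1$, \eqref{eq:ars-final} gives
\begin{equation}
    A^1_s(n) = \frac{n-2s}{s} \binom{n-1}{n-s} \binom{n-s-1}{0} = \frac{n-2s}{s} \binom{n-1}{s}.
\end{equation}
Using the absorption identity $\binom{n-1}{s} = \frac{n-1}{s}\binom{n-2}{s-1}$ one can rewrite this, but the cleaner route is to observe that $\frac{n-2s}{s}\binom{n-1}{s} = \frac{n-s}{s}\binom{n-1}{s} - \binom{n-1}{s} = \binom{n-1}{s-1}\cdot\frac{n-s}{s}\cdot\frac{s}{n-s} \cdot \dots$; more directly, I would use the well-known identity $\frac{n-2s}{s}\binom{n-1}{s} = \binom{n-1}{s-1} - \binom{n-2}{s}\cdot 0 + \dots$. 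The efficient identity is
\begin{equation}
    \frac{n-2s}{s}\binom{n-1}{s} = \binom{n}{s} - \binom{n-1}{s} - \binom{n-1}{s-1} + \dots
\end{equation}
— rather than guess, I would simply verify that $A^1_s(n) = \binom{n-1}{s} - \binom{n-1}{s-1}$ by direct computation: $\binom{n-1}{s} - \binom{n-1}{s-1} = \binom{n-1}{s}\left(1 - \frac{s}{n-s}\right) = \binom{n-1}{s}\frac{n-2s}{n-s}$, which is not quite $\frac{n-2s}{s}\binom{n-1}{s}$. So instead $A^1_s(n) = \frac{n-2s}{s}\binom{n-1}{s}$, and using $\frac{1}{s}\binom{n-1}{s} = \frac{1}{n}\binom{n}{s}\cdot\frac{n-s}{n-1}\cdot\dots$ is getting complicated; the robust approach is to note $\frac{n-2s}{s}\binom{n-1}{s} = \binom{n-1}{s-1}\frac{(n-2s)(n-s)}{s\cdot s}$ via absorption on $\binom{n-1}{s}=\frac{n-s}{s}\binom{n-1}{s-1}$.

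Given the delicacy, the key step I would actually carry out is: show $1 + \sum_{s=1}^{m} A^1_s(n) = \binom{n-1}{m}$ where $m = \lfloor(n-1)/2\rfloor$ by establishing the telescoping relation $A^1_s(n) = \binom{n-1}{s} - \binom{n-1}{s-1}$ when $n$ is odd (so $n = 2m+1$, and $\binom{n-1}{m}$ is a central binomial of an even number) and handling the even case $n = 2m+2$ separately, or — more uniformly — proving by induction on $m$ using Pascal's rule that the partial sums equal $\binom{n-1}{s}$ at each stage. Concretely, I would check the base case $s=1$ directly ($1 + A^1_1(n) = 1 + (n-2) = n-1 = \binom{n-1}{1}$), then verify the inductive step $\binom{n-1}{s} + A^1_{s+1}(n) = \binom{n-1}{s+1}$, i.e. $A^1_{s+1}(n) = \binom{n-1}{s+1} - \binom{n-1}{s}$, which reduces to the algebraic identity $\frac{n-2(s+1)}{s+1}\binom{n-1}{s+1} = \binom{n-1}{s+1} - \binom{n-1}{s}$; expanding both sides and clearing denominators gives a polynomial identity in $n$ and $s$ that can be checked by direct computation. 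Summing to $s = m$ yields $\binom{n-1}{m}$. For the final inequality, $\binom{n-1}{m} \leq \frac{1}{2}\sum_{k=0}^{n-1}\binom{n-1}{k} = 2^{n-2}$ because the central binomial coefficient is at most half the total when $n - 1 \geq 1$ (the row $\binom{n-1}{\cdot}$ has at least two entries for $n \geq 3$, hence the maximum is at most half the sum).

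The main obstacle I anticipate is pinning down the exact closed form for $A^1_s(n)$ and confirming the telescoping identity $A^1_{s+1}(n) = \binom{n-1}{s+1} - \binom{n-1}{s}$ — the sign conventions and the absolute value in the definition \eqref{eq:def-arsn} (already resolved by \cref{p:ars-final}, which gives a manifestly positive expression) mean one must be careful that $n - 2s \geq 0$ throughout the range $s \leq m = \lfloor(n-1)/2\rfloor$, which holds since $2m \leq n-1 < n$. Once the telescoping identity is secured, everything else is immediate. An alternative that sidesteps the closed-form gymnastics entirely: since $\sum_{s=0}^{n-1}A^1_s(n)$-type sums relate to the full alternating identity $\sum_p (-1)^p\binom{n-1-p}{n-1-s}\binom{n-s}{p} = 0$ used in the proof of \cref{p:ars-final}, one may also be able to read off the partial-sum value from \cref{p:sum-r-binom} with $r = s+1$; I would keep that in reserve in case the direct induction proves messier than expected.
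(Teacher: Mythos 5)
Your overall plan — specialize \cref{p:ars-final} to $r=1$, telescope, then bound the central binomial coefficient — is sound and in fact yields a cleaner argument than the paper's, but your execution contains a concrete slip that derails the key step. From \eqref{eq:ars-final} you get $A^1_s(n) = \frac{n-2s}{s}\binom{n-1}{n-s}$, and you simplify $\binom{n-1}{n-s}$ to $\binom{n-1}{s}$; by symmetry it is $\binom{n-1}{(n-1)-(n-s)} = \binom{n-1}{s-1}$. (Sanity check: $A^1_1(5)=3=\frac{3}{1}\binom{4}{0}$, not $12=\frac{3}{1}\binom{4}{1}$.) This is why your attempted verification goes in circles: you correctly compute $\binom{n-1}{s}-\binom{n-1}{s-1}=\binom{n-1}{s}\frac{n-2s}{n-s}$ and conclude it ``is not quite'' $A^1_s(n)$, when in fact $\binom{n-1}{s}\frac{n-2s}{n-s}=\binom{n-1}{s-1}\frac{n-s}{s}\cdot\frac{n-2s}{n-s}=\frac{n-2s}{s}\binom{n-1}{s-1}$, which \emph{is} exactly $A^1_s(n)$. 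So the telescoping identity $A^1_s(n)=\binom{n-1}{s}-\binom{n-1}{s-1}$ holds on the nose for $1\le s\le \lfloor(n-1)/2\rfloor$, and the sum collapses immediately to $\binom{n-1}{\lfloor(n-1)/2\rfloor}$ with no induction and no parity split. As written, however, the inductive-step identity you propose to ``check by direct computation,'' namely $\frac{n-2(s+1)}{s+1}\binom{n-1}{s+1}=\binom{n-1}{s+1}-\binom{n-1}{s}$, is false (e.g.\ $n=7$, $s+1=2$ gives $22.5\ne 9$), so that check would fail and you would need to trace the error back to the symmetry step.

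Two further remarks. First, the paper proceeds differently: it reindexes the sum as $\sum_{s=0}^{m}\binom{n}{s}-2\sum_{s=0}^{m-1}\binom{n-1}{s}$ via absorption, evaluates it with half-row summation formulas separately for $n$ odd and even, and then proves $\binom{n-1}{m}\le 2^{n-2}$ by Stirling's bounds for $n\ge 25$ plus a finite verification for $3\le n\le 24$. Your telescoping route (once corrected) avoids all of that, and your elementary bound on the central binomial coefficient replaces the Stirling argument entirely — a genuine simplification. Second, your justification of that bound is too loose: ``the row has at least two entries, hence the maximum is at most half the sum'' is not a valid inference in general. It is easily repaired: for $n-1$ odd the two equal central entries $\binom{n-1}{m}=\binom{n-1}{m+1}$ already sum to at most $2^{n-1}$; for $n-1$ even use $\binom{n-1}{m}=\binom{n-2}{m}+\binom{n-2}{m-1}$, the two equal (for $n-1=2m$) entries of the preceding odd row, which again sum to at most $2^{n-2}\cdot 2/2\cdot\ldots$ — more simply, $\binom{2m}{m}=2\binom{2m-1}{m-1}\le \sum_j\binom{2m-1}{j}=2^{2m-1}$. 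With those two fixes the proof is complete and shorter than the paper's.
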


\begin{proof}
    Let $n \geq 3$ and $m := \lfloor \frac{n-1}{2} \rfloor \geq 1$.
    Using \eqref{eq:ars-final} and the absorption identity,
    \begin{equation}
        \begin{split}
            \alpha_n := 1 + \sum_{s=1}^{\lfloor \frac{n-1}{2} \rfloor} A^1_s(n) 
            & = 1 + \sum_{s=1}^m \frac{n-2s}{s} \binom{n-1}{n-s} \\
            & = 1 + \sum_{s=1}^m \frac{n}{s} \binom{n-1}{s-1} - 2 \sum_{s=1}^m \binom{n-1}{s-1} \\
            & = \sum_{s=0}^m \binom{n}{s}
            - 2 \sum_{s=0}^{m-1} \binom{n-1}{s}.
        \end{split}
    \end{equation}
    Using elementary half summation formulas of binomial coefficients yields, when $n = 2m+1$,
    \begin{equation}
        \alpha_{2m+1}
        = \sum_{s=0}^m \binom{2m+1}{s}
            - 2 \sum_{s=0}^{m-1} \binom{2m}{s}
        = \frac{1}{2} 2^{2m+1} - 2 \frac{1}{2} \left(2^{2m} - \binom{2m}{m}\right)
        = \binom{2m}{m},
    \end{equation}
    and, when $n = 2m+2$, thanks to the absorption identity,
    \begin{equation}
        \begin{split}
            \alpha_{2m+2} & = \sum_{s=0}^m \binom{2m+2}{s} - 2 \sum_{s=0}^{m-1} \binom{2m+1}{s} \\
            & = \frac{1}{2} \left( 2^{2m+2} - \binom{2m+2}{m+1} \right)  
            - 2 \left( \frac{1}{2} 2^{2m+1} - \binom{2m+1}{m} \right) \\
            & = 2 \binom{2m+1}{m} - \frac{1}{2} \binom{2m+2}{m+1} = \binom{2m+1}{m}.
        \end{split}
    \end{equation}
    Hence, in both cases, $\alpha_n = \binom{n-1}{m}$, which is the claimed equality.
    We now prove the upper bound.
    Using the bounds associated with Stirling's approximation (see e.g.\ \cite{zbMATH03115226}),
    \begin{equation}
        \alpha_n = \binom{n-1}{m} \leq
        \frac{e^{\frac{1}{12}} \sqrt{2\pi(n-1)}(n-1)^{n-1}} 
        {\sqrt{2\pi m} m^m \sqrt{2\pi (n-1-m)} (n-1-m)^{(n-1-m)}}.
    \end{equation}
    When $n = 2m+1$, this yields
    \begin{equation}
        \alpha_{2m+1} \leq \frac{e^{\frac{1}{12}}}{\sqrt{\pi m}} 2^{2m}.
    \end{equation}
    When $n = 2m+2$, this yields
    \begin{equation}
        \alpha_{2m+2} \leq \frac{e^{\frac{1}{12}}}{\sqrt{\pi m}} 2^{2m+1} \left( 1 + \frac 1 m \right)^m
        \leq \frac{e^{\frac{13}{12}}}{\sqrt{\pi m}} 2^{2m+1}.
    \end{equation}
    Hence, in both cases, $\alpha_n \leq \frac{1}{2} 2^{n-1} = 2^{n-2}$ for $m \geq 12$ (so $n \geq 25$).
    One concludes the proof by checking by hand or with computer algebra that $\alpha_n \leq 2^{n-2}$ for $n \in \intset{3,24}$.
\end{proof}

\section{Some estimates with Fibonacci numbers}

As in \cref{s:fibo}, we denote by $(F_\nu)_{\nu \in \N}$ the 0-based Fibonacci numbers.
We gather here elementary estimates involving these numbers which are used in \cref{s:fibo}.

\begin{lemma}
    For all $p, q \in \N^*$,
    \begin{align}
        \label{eq:fibo-2+2}
        2 F_p & \leq F_{p+2}, \\
        \label{eq:fibo-cross}
        F_{p} F_{q-1} + F_{p-1} F_{q} & \leq F_{p+q-1}.
    \end{align}
\end{lemma}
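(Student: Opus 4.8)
Both inequalities will be derived from the classical Fibonacci addition formula together with the trivial monotonicity facts $F_{p+1}-F_p=F_{p-1}\ge 0$ for $p\ge 1$ (recall $F_0=0$, $F_1=F_2=1$, and $F_{p}=F_{p-1}+F_{p-2}$ for $p\ge 2$). First I would record the addition identity
\begin{equation} \label{eq:fibo-add}
    F_{m+n} = F_m F_{n+1} + F_{m-1} F_n, \qquad m \ge 1,\ n \ge 0,
\end{equation}
which is proved by a short induction on $n$: the cases $n=0$ and $n=1$ are immediate from the definition, and the inductive step follows by adding the instances of \eqref{eq:fibo-add} for $n$ and $n-1$ and using $F_{n+1}+F_n=F_{n+2}$ together with $F_n+F_{n-1}=F_{n+1}$. (Alternatively one may simply cite \eqref{eq:fibo-add} as a standard identity.)

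For \eqref{eq:fibo-2+2}, I would write $F_{p+2}=F_{p+1}+F_p$, so that $F_{p+2}-2F_p=F_{p+1}-F_p=F_{p-1}\ge 0$ for every $p\ge 1$ (with equality exactly when $p=1$). This settles the first inequality in one line.

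For \eqref{eq:fibo-cross}, the key step is to apply \eqref{eq:fibo-add} with $m=p$ and $n=q-1$, which gives $F_{p+q-1}=F_pF_q+F_{p-1}F_{q-1}$. Subtracting the left-hand side of \eqref{eq:fibo-cross} and factoring yields
\begin{equation}
    F_{p+q-1} - \big(F_p F_{q-1} + F_{p-1} F_q\big)
    = F_p(F_q - F_{q-1}) - F_{p-1}(F_q - F_{q-1})
    = (F_p - F_{p-1})(F_q - F_{q-1}),
\end{equation}
and since $F_p-F_{p-1}\ge 0$ and $F_q-F_{q-1}\ge 0$ for all $p,q\ge 1$ (namely $F_1-F_0=1$ and $F_k-F_{k-1}=F_{k-2}\ge 0$ for $k\ge 2$), the right-hand side is nonnegative, which is exactly \eqref{eq:fibo-cross}. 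Since both arguments are purely elementary manipulations of binomial-free sums, I do not expect any genuine obstacle; the only point requiring a little care is the handling of the boundary indices $p=1$ or $q=1$, where terms such as $F_{q-1}=F_0=0$ appear and where the identity \eqref{eq:fibo-add} must still be applied with the convention $F_0=0$.
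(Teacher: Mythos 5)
Your proof is correct and takes essentially the same route as the paper: the first inequality via $F_{p+2}-2F_p=F_{p-1}\ge 0$, and the second via the addition identity $F_{p+q-1}=F_pF_q+F_{p-1}F_{q-1}$ together with the factorization of the difference as $(F_p-F_{p-1})(F_q-F_{q-1})\ge 0$. The only cosmetic difference is that you prove the addition identity by induction rather than citing it, and your factorization handles the boundary cases $p=1$ or $q=1$ uniformly instead of treating them separately.
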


\begin{proof}
    Inequality \eqref{eq:fibo-2+2} is consequence of the monotonicity of the sequence: $F_{p+2}=F_{p+1}+F_p \geq 2 F_p$.
    Inequality \eqref{eq:fibo-cross} clearly holds if either $p$ or $q$ equals 1, and otherwise follows from the fact that:
    \begin{equation}
        0 \leq F_{p-2}F_{q-2} = (F_{p} - F_{p-1})(F_{q} - F_{q-1}) = F_{p}F_{q} + F_{p-1}F_{q-1} - (F_{p} F_{q-1} + F_{p-1} F_{q})
    \end{equation}
    and from the classical identity $F_{p}F_{q} + F_{p-1}F_{q-1} = F_{p+q-1}$  (see \cite[eq.\ (1)]{tagiuri1901}).
\end{proof}

\begin{lemma}
    The following estimates holds:
    \begin{align}
        \label{eq:n2n-light}
        & \forall n \geq 2, &
        (n-2) 2^{n-2} + n
        & \leq F_{2n-1},
        \\
        \label{eq:n2n-f2n2}
        & \forall n \geq 2, &
        (n-3) 2^{n-2} + n + 0
        & \leq F_{2n-2},
        \\
        \label{eq:n2n-9}
        & \forall n \geq 9, &
        (n-2) 2^{n-2} + n & \leq F_{2n-2}
    \end{align}
\end{lemma}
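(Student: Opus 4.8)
These three inequalities compare an exponential-plus-linear quantity with a Fibonacci number of index roughly twice as large. Since $F_{2n-1}$ and $F_{2n-2}$ grow like $\varphi^{2n} = (\varphi^2)^n$ with $\varphi^2 = \frac{3+\sqrt5}{2} \approx 2.618 > 2$, each inequality is true for $n$ large; the work is to nail down the threshold and handle the finitely many small cases. The plan is to prove all three by induction on $n$, with explicit base cases checked numerically, and then deduce them from a single ``master'' induction step. First I would record the recursion $F_{m+2} = F_{m+1} + F_m$ and the immediate consequence $F_{2n} = F_{2n-1} + F_{2n-2} = 3F_{2n-2} - F_{2n-4}$ (equivalently $F_{2(n+1)-2} = F_{2n} = 2F_{2n-2} + F_{2n-3} \geq 2F_{2n-2} + F_{2n-4} = 3F_{2n-2} - F_{2n-4}$), so that each step of the induction the Fibonacci side at least doubles. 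The left-hand sides $f(n) := (n-c)2^{n-2} + n$ (with $c=2$ or $c=3$) satisfy $f(n+1) = 2f(n) + 2^{n-2} - n + (n+1) = 2f(n) + 2^{n-2} + 1$, so at each step the gap between the doubled RHS and the new LHS changes by (RHS growth beyond doubling) minus $(2^{n-2}+1)$, and since $F_{2n-2} \gg 2^{n-2}$ for $n$ large, the gap widens.

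Concretely, for \eqref{eq:n2n-f2n2}: I would check $n=2$ directly ($0\cdot 1 + 2 = 2 = F_2$, so equality) and $n=3$ ($0\cdot 2 + 3 = 3 \le F_4 = 3$, equality again), then show by induction that if $(n-3)2^{n-2} + n \le F_{2n-2}$ then $(n-2)2^{n-1} + (n+1) \le F_{2n}$; this uses $F_{2n} \ge 2F_{2n-2} + F_{2n-4}$ together with $F_{2n-4} \ge 2^{n-1} + n - 1$ for $n \ge 4$ — the latter being itself a trivial consequence of the growth rate, provable by a sub-induction or simply by the observation that $F_{m} \ge 2^{(m-2)/2}\cdot(\text{something})$; alternatively I would just fold $F_{2n-4}$-type lower bounds into the induction hypothesis so only one induction is needed. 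For \eqref{eq:n2n-light}: check $n=2$ ($0+2=2 \le F_3 = 2$, equality), then the same induction, now comparing with $F_{2n-1}$ and using $F_{2n+1} = 2F_{2n-1} + F_{2n-3}$ and $F_{2n-3} \ge 2^{n-2}+1$ for $n\ge 2$. For \eqref{eq:n2n-9}: this is the weakest statement, asserting $(n-2)2^{n-2}+n \le F_{2n-2}$ only for $n \ge 9$; I would verify $n = 9$ by hand ($7 \cdot 128 + 9 = 905 \le F_{16} = 987$) and then run the same induction step as for \eqref{eq:n2n-f2n2} but with $c = 2$, the surplus $F_{2n-4} \ge 2^{n-2}+1$ being easy for $n \ge 9$. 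In all three cases the inductive step reduces to a clean inequality of the form $F_{2n-4} \ge (\text{small polynomial})\cdot 2^{n-k} + O(n)$, which can be discharged either by citing monotonicity plus a single explicit estimate such as $F_{12} = 144 \ge 2^6 \cdot 2$ or by a one-line auxiliary induction.

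\textbf{Main obstacle.} The only real delicacy is bookkeeping the finitely many small $n$ and choosing the precise form of a strengthened induction hypothesis so that a \emph{single} induction closes — one must carry along an auxiliary lower bound on $F_{2n-4}$ (or $F_{2n-3}$) of exponential type, and verify that the claimed thresholds ($n \ge 2$, $n \ge 2$, $n \ge 9$ respectively) are exactly where the inequalities flip from equality/failure to comfortably-true. I expect the sharpest case to be \eqref{eq:n2n-9} near $n = 9$, where $905$ vs.\ $987$ leaves only modest slack, so the base case there must be computed explicitly; once past it the doubling of Fibonacci versus the near-doubling $2f(n)+2^{n-2}+1$ of the left side makes the gap grow without further effort. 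No deep input is needed — just the Fibonacci recursion, monotonicity, and Stirling-free elementary estimates; everything is routine arithmetic once the induction is set up, which is why I would present it as a short induction with the base cases tabulated rather than grinding through closed forms.
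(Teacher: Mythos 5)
Your approach is essentially the paper's: the paper's entire proof is the one-line observation that the right-hand sides grow with ratio $\varphi^2 \approx 2.618 > 2$ so the inequalities hold for large $n$, with small cases checked numerically, and your induction is just an explicit implementation of that. Two small slips to fix: your base case for \eqref{eq:n2n-f2n2} at $n=2$ should read $(2-3)\cdot 2^{0}+2 = 1 = F_2$ (not $2 = F_2$, since the coefficient is $n-3=-1$ and $F_2=1$ in the paper's $0$-based convention); and the auxiliary bound you quote, $F_{2n-4}\ge 2^{n-1}+n-1$ for $n\ge 4$, is false for $n=4,\dots,7$ — the surplus actually needed in your step is $F_{2n-3}\ge 2^{n-1}-n+1$, which does hold for $n\ge 3$ (note \eqref{eq:n2n-f2n2} is an equality at $n=2,3,4,5$, so there is no slack to waste there).
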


\begin{proof}
    These estimates hold for large values of $n$ because the geometric growth rate of the right-hand side is $\varphi^2 \approx 2.618 > 2$ and can be checked numerically for small values of $n$.
\end{proof}

\section{Other bases of free Lie algebras}
\label{s:other-bases}

Hall bases are of course not the only bases of $\mathcal{L}(X)$.
For example, one can construct bases whose elements are linear combinations of Lie monomials (see e.g.\ \cite{zbMATH00238486}).
For such ``polynomial bases'' one could study the growth of the structure constants with respect to $|a|+|b|$ where $|\cdot|$ would denote the degree of the Lie polynomial, i.e.\ the length of the longest Lie monomial involved.

Staying within the scope of ``monomial bases'' (i.e.\ bases of the form $\ev(\mathcal{A})$ for some $\mathcal{A} \subset \Br(X)$, not necessarily a Hall set), one can use other construction processes than the one yielding Hall sets, as illustrated by \cite{zbMATH05078139} or \cite{zbMATH05246413}. 
However, even if one uses an alternative construction to obtain a monomial basis of $\mathcal{L}(X)$, one could wonder if there exists a Hall set yielding, up to sign, the same basis.
We give below a short argument showing that this is not the case in general.
Hence, there indeed exist monomial bases which cannot be seen as  Hall bases.

\begin{proposition}
    Let $X$ be a set with $|X| \geq 2$.
    There exists $\mathcal{A} \subset \Br(X)$ such that $\ev(\mathcal{A})$ is a basis of $\mathcal{L}(X)$ but such that, for every Hall set $\B \subset \Br(X)$, $\ev(\mathcal{A}) \not\subset \pm \ev(\B)$.
\end{proposition}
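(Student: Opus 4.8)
The plan is to reduce to the two–indeterminate case, and then to block any Hall set on a single degree-$5$ multidegree component, using the rigidity of Hall sets against the extra freedom coming from a Jacobi relation.

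\medskip

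\noindent\textbf{Reduction to $|X|=2$.} Fix two distinct indeterminates $X_0,X_1\in X$. I claim it is enough to build $\mathcal A\subset\Br(X)$ such that $\ev(\mathcal A)$ is a basis of $\mathcal L(X)$ and such that already the collection of those $\ev(t)$, $t\in\mathcal A$, whose leaves all lie in $\{X_0,X_1\}$ fails to be, up to sign, the evaluation of a Hall basis of $\mathcal L(\{X_0,X_1\})$. Indeed, suppose $\ev(\mathcal A)\subset\pm\ev(\B)$ for a Hall set $\B\subset\Br(X)$. A dimension count in each graded (indeed each multidegree) piece shows that distinct elements of $\mathcal A$ must hit sign-variants of distinct elements of $\B$ (two opposite vectors cannot both lie in a linearly independent family), so for every multidegree $\mathbf d$ one gets $\{\ev(t):t\in\mathcal A,\ \mathrm{mdeg}\,t=\mathbf d\}=\{\pm\ev(b):b\in\B,\ \mathrm{mdeg}\,b=\mathbf d\}$ as sets up to sign. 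A bracket $b$ satisfies $\ev(b)\in\mathcal L(\{X_0,X_1\})$ exactly when all its leaves lie in $\{X_0,X_1\}$, i.e.\ $b\in\Br_{\{X_0,X_1\}}$; since $\{X_0,X_1\}$ is free and alphabetic, \cref{Prop:free-lie-alphabetic} gives that $\ev(\B\cap\Br_{\{X_0,X_1\}})$ is (the image of) a Hall basis of $\mathcal L(\{X_0,X_1\})$. So the claimed restriction of $\ev(\mathcal A)$ would be, up to sign, a Hall basis of $\mathcal L(\{X_0,X_1\})$, contrary to hypothesis.

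\medskip

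\noindent\textbf{The degree-$5$ obstruction.} Work in multidegree $(3,2)$ (three $X_0$, two $X_1$), where $\dim\mathcal L_{(3,2)}=2$. With $a:=[X_0,X_1]$ and $p:=[X_0,[X_0,X_1]]$, put
\[
m_1:=[X_1,[X_0,[X_0,[X_0,X_1]]]],\quad m_2:=[[X_0,X_1],[X_0,[X_0,X_1]]],\quad m_3:=[X_0,[X_1,[X_0,[X_0,X_1]]]].
\]
Splitting a bracket $[A,B]$ of this multidegree according to the lengths of $A,B$ and using that $\mathcal L_{(2,1)},\mathcal L_{(1,2)},\mathcal L_{(2,2)},\mathcal L_{(3,1)}$ are one–dimensional, one checks that the only nonzero Lie monomials of multidegree $(3,2)$ are $\pm m_1,\pm m_2,\pm m_3$, and Jacobi gives $m_3=m_1+m_2$; hence $\{m_1,m_3\}$ is a basis of $\mathcal L_{(3,2)}$. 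Now fix a Hall set $\B'\subset\Br(X)$ with $X_0<_{\B'}X_1$, which has exactly two multidegree-$(3,2)$ elements; by (F1) below they evaluate to $\pm m_1$ and $\pm m_2$. Let $\mathcal A$ be obtained from $\B'$ by replacing the one evaluating to $\pm m_2$ by the bracket $t_3:=(X_0,(X_1,(X_0,(X_0,X_1))))\in\Br(\{X_0,X_1\})$, which satisfies $\ev(t_3)=m_3$. Only one multidegree component is changed, and a basis is kept there, so $\ev(\mathcal A)$ is again a basis of $\mathcal L(X)$. The restriction of $\ev(\mathcal A)$ to multidegree $(3,2)$ is $\{\pm m_1,m_3\}$, which contains neither $m_2$ nor $-m_2$ (since $m_2$ is independent of $m_1$, and $m_2\neq\pm m_3=\pm(m_1+m_2)$). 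So, by the reduction, it remains to prove: \emph{every Hall set of $\Br(\{X_0,X_1\})$ has, up to sign, $m_2$ among its multidegree-$(3,2)$ elements.}

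\medskip

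\noindent\textbf{The rigidity statement.} Let $Z_0:=\min_\B\{X_0,X_1\}$ and $Z_1$ the other. I would prove, by a finite case analysis on $b=(b_1,b_2)\in\B$ (of length $5$, the relevant multidegree) sorted by $(|b_1|,|b_2|)\in\{(1,4),(2,3),(3,2),(4,1)\}$ and by the multidegrees of $b_1,b_2$, using at each step that the pertinent $\mathcal L_{\mathbf d}$ (whenever one variable occurs at most once) is one–dimensional together with the Hall conditions $b_1<b_2$, $\lambda(b_2)\le b_1$ and $Z_0=\min\B$:
\[
\text{(F1) no }b\in\B\text{ evaluates to }\pm[Z_0,[Z_1,[Z_0,[Z_0,Z_1]]]];\qquad
\text{(F2) no }b\in\B\text{ evaluates to }\pm[Z_0,[Z_1,[Z_1,[Z_0,Z_1]]]].
\]
From (F1), if $X_0<_\B X_1$ the multidegree-$(3,2)$ part of $\B$ omits $\pm m_3$, hence equals $\{m_1,m_2\}$ up to sign. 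Applying (F2) with $(Z_0,Z_1)=(X_1,X_0)$ and rewriting with $[X_1,X_0]=-a$, $[X_0,[X_1,X_0]]=-p$ shows the excluded monomial becomes $\pm m_1$, so if $X_1<_\B X_0$ the multidegree-$(3,2)$ part equals $\{m_2,m_3\}$ up to sign. In both cases $\pm m_2$ occurs, as required.

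\medskip

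The main obstacle is precisely the verification of (F1) and (F2): it is the only place where one uses the combinatorics of Hall sets rather than plain linear algebra, and it requires care with the convention that $\lambda$ is the \emph{immediate} left factor (so that, e.g., realizing $\ev(b)=\pm m_3$ as $b=(Z_0,b_2)$ forces $b_2=(Z_0,c)$, then $c=(Z_0,d)$, and finally $\ev(c)=0$, a contradiction; and the cases $b=(b_1,Z_0)$ are impossible since $Z_0=\min\B$).
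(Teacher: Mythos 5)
Your construction is correct, and it goes by a genuinely different route than the paper. The paper works in degree $3$: its key lemma is that $[X_i,[X_j,X_k]]\in\pm\ev(\B)$ forces $X_i\neq\min\{X_i,X_j,X_k\}$, so four well-chosen degree-$3$ brackets over four letters impose contradictory order constraints; for $|X|\in\{2,3\}$ it then substitutes the free alphabetic family $N_i=\ad_{X_0}^i(X_1)$ and invokes \cref{Prop:free-lie-alphabetic} to fall back on the four-letter case. You instead work natively over two letters in the single multidegree $(3,2)$, exploit the Jacobi relation $m_3=m_1+m_2$ together with the classification of nonzero Lie monomials there, and show that every Hall set is forced to contain $\pm m_2$ — so swapping $\pm m_2$ for $m_3$ in any Hall basis produces the desired $\mathcal A$. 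Your approach buys a counterexample that lives entirely in one multidegree component and avoids the substitution gadget; the paper's buys a shorter combinatorial core (degree $3$, one-line Hall-axiom check) at the cost of the reduction step for small alphabets.

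Concerning the step you flag as the main obstacle: (F1) and (F2) are both true, and their verification is shorter than you fear. By your own classification of the monomials of the relevant multidegree according to the split $(|b_1|,|b_2|)$, the splits $(2,3)$, $(3,2)$ produce only $\pm m_2$ and the splits $(1,4)$, $(4,1)$ with length-one factor $Z_1$ produce only $\pm m_1$; since $m_1,m_2$ are linearly independent and $m_3=m_1+m_2$, none of these can equal $\pm m_3$, so no Hall axiom is needed there. The only case requiring the Hall axioms is the one you sketch, $b=(Z_0,b_2)$ with $\ev(b_2)$ spanning the $(2,2)$-component (the case $b=(b_2,Z_0)$ being excluded by $Z_0=\min\B$): the condition $\lambda(b_2)\le Z_0=\min\B$ forces $\lambda(b_2)=Z_0$, then $\lambda(\mu(b_2))=Z_0$, and the innermost factor lands in a multidegree component where $\mathcal L$ vanishes, so $\ev(b_2)=0$, a contradiction. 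The argument for (F2) is identical with the tower $\ad_{Z_1}^3(Z_0)$ in place of the $(2,2)$-monomial. With those two half-page verifications written out, your proof is complete.
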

    
\begin{proof}
    The proof consists in constructing an example of a finite subset $\mathcal{A'} \subset \Br(X)$ such that $\dim \vect \ev(\mathcal{A}') = |\mathcal{A}'|$ (so $\mathcal{A}'$ can be completed into an $\mathcal{A}$ such that $\ev(\mathcal{A})$ is a basis of $\mathcal{L}(X)$) and assume by contradiction that there exists a Hall set $\B \subset \Br(X)$ such that, for every $a \in \mathcal{A}'$, there exists $b \in \B$ such that $\ev(a) = \pm \ev(b)$.
    We start by an elementary remark requiring three letters, before proving the result when $|X| \geq 4$, and eventually extending the result to $|X| \in \{2,3\}$.
    
    \step{Step 1: 
    We prove that, if $[X_i, [X_j, X_k]] \in \pm \ev(\B)$, where $X_i, X_j, X_k$ are distinct elements of $X$ and~$\B$ is a Hall set, then $X_i \neq \min \{ X_i, X_j, X_k \}$, where the minimum is relative to the order in $\B$}
    Indeed, this implies that one of the following four cases occur.
    \begin{itemize}
        \item If $(X_i, (X_j, X_k)) \in \B$, then $\lambda((X_j, X_k)) \leq X_i$, so $X_j \leq X_i$, so $X_j < X_i$ (since $X_j \neq X_i$).
        \item If $(X_i, (X_k, X_j)) \in \B$, then $\lambda((X_k, X_j)) \leq X_i$, so $X_k \leq X_i$, so $X_k < X_i$ (since $X_k \neq X_i$).
        \item If $((X_j, X_k), X_i) \in \B$, then $X_j = \lambda((X_j, X_k)) < (X_j, X_k) < X_i$, so $X_j < X_i$.
        \item If $((X_k, X_j), X_i) \in \B$, then $X_k = \lambda((X_k, X_j)) < (X_k, X_j) < X_i$, so $X_k < X_i$.
    \end{itemize}
    
    \step{Step 2: We prove that the result holds when $|X| \geq 4$}
    Let $X_1, X_2, X_3, X_4 \in X$.
    Define
    \begin{equation}
        \mathcal{A}' := \{ 
        (X_1, (X_2, X_3)),
        (X_2, (X_1, X_3)),
        (X_3, (X_4, X_1)),
        (X_4, (X_3, X_2))
        \}.
    \end{equation}
    Assume that there exists a Hall set $\B \subset \Br(X)$ such that $\ev(\mathcal{A}') \subset \pm \ev(\B)$.
    Using the previous step and the first two brackets, we obtain that $X_1 \neq \min \{ X_1, X_2, X_3 \}$ and $X_2 \neq \min \{ X_1, X_2, X_3 \}$, so $X_3 < X_1$.
    By symmetry, using the last two brackets, $X_1 < X_3$, which is a contradiction.
    
    \step{Step 3: Proof when $|X| \in \{ 2, 3 \}$}
    Let $X_0, X_1 \in X$.
    For $i \in \N$ let $N_i := \ad_{X_0}^i(X_1)$ and $M_i := \ad_{X_1}^i(X_0)$.
    Define
    \begin{align}
        \mathcal{A}'_N & := \{ 
        (N_1, (N_2, N_3)),
        (N_2, (N_1, N_3)),
        (N_3, (N_4, N_1)),
        (N_4, (N_3, N_2))
        \}, \\
        \mathcal{A}'_M & := \{ 
        (M_1, (M_2, M_3)),
        (M_2, (M_1, M_3)),
        (M_3, (M_4, M_1)),
        (M_4, (M_3, M_2))
        \}
    \end{align}
    and $\mathcal{A}' := \mathcal{A}'_N \cup \mathcal{A}'_M$.
    Assume that there exists a Hall set $\B \subset \Br(X)$ such that $\ev(\mathcal{A}') \subset \pm \ev(\B)$.
    Assume that, in $\B$, $X_0 < X_1$ (the other case being symmetric using $M$ instead of $N$).
    Since $X_0 < X_1$, for each $i \in \N$, $N_i \in \B$ and the set $N := \{ N_i ; i \in \N \}$ is a free alphabetic subset of $\B$.
    By \cref{Prop:free-lie-alphabetic}, $\B_N := \e^{-1}(\B \cap \Br_N)$ is a Hall set of $\Br(N)$, and the canonical morphism of Lie algebras $\mathcal{L}(N) \to \mathcal{L}(X)$ induces an isomorphism onto the Lie subalgebra generated by $N$. Thus, $\B_N \subset \Br(N)$ is a Hall set such that $\ev(\mathcal{A}'_N) \subset \ev(\B_N)$ with $|N| \geq 4$. This contradicts Step 2.
\end{proof}    

\bibliographystyle{plain}
\bibliography{biblio}

\end{document}